\crefname{section}{§}{§§}
\Crefname{section}{§}{§§}
\tikzset{node distance=2em, ch/.style={circle,draw,on chain,inner sep=2pt},chj/.style={ch,join},every path/.style={shorten >=4pt,shorten <=4pt},line width=1pt,baseline=-1ex}
\newcommand\be{\begin{equation}}
\newcommand\ee{\end{equation}}
\newcommand\bea{\begin{eqnarray}}
\newcommand\eea{\end{eqnarray}}
\newcommand\bi{\begin{itemize}}
	\newcommand\ei{\end{itemize}}
\newcommand\ben{\begin{enumerate}}
	\newcommand\een{\end{enumerate}}
\newtheorem{theorem}{Theorem}[section]
\newtheorem{conjecture}[theorem]{Conjecture}
\newtheorem{corollary}[theorem]{Corollary}
\newtheorem{lemma}[theorem]{Lemma}
\newtheorem{proposition}[theorem]{Proposition}
\newtheorem{claim}[theorem]{Claim}
\theoremstyle{definition}
\newtheorem{definition}[theorem]{Definition}
\theoremstyle{remark}
\newtheorem{example}[theorem]{Example}
\newtheorem{remark}[theorem]{Remark}
\newcommand{\Z}{\ensuremath{\mathbb{Z}}}
\newcommand{\simrightarrow}{\stackrel{\sim}{\rightarrow}}
\newcommand{\Q}{\mathbb{Q}}
\newcommand{\N}{\mathbb{N}}
\newcommand{\fM}{\mathcal{M}}
\newcommand{\cF}{\mathcal{F}}
\newcommand{\cL}{\mathcal{L}}
\newcommand{\cO}{\mathcal{O}}
\newcommand{\fg}{\mathfrak{g}}
\newcommand{\ft}{\mathfrak{t}}
\newcommand{\fb}{\mathfrak{b}}
\newcommand{\fp}{\mathfrak{p}}
\newcommand{\fq}{\mathfrak{q}}
\newcommand{\fm}{\mathfrak{m}}
\newcommand{\fn}{\mathfrak{n}}
\newcommand{\fu}{\mathfrak{u}}
\newcommand{\cD}{\mathcal{D}}
\newcommand{\cN}{\mathcal{N}}
\newcommand{\cS}{\mathcal{S}}
\newcommand{\cR}{\mathcal{R}}
\newcommand{\cC}{\mathcal{C}}
\newcommand{\cM}{\mathcal{M}}
\newcommand{\Gal}{\mathrm{Gal}}
\newcommand{\cG}{\mathcal{G}}
\newcommand{\cT}{\mathcal{T}}
\newcommand{\Qp}{\mathbb{Q}_p}
\newcommand{\BdR}{\mathrm{B}_{\mathrm{dR}}}
\newcommand{\GL}{\mathrm{GL}}
\newcommand{\gr}{\mathrm{gr}}
\newcommand{\wt}{\mathrm{wt}}
\newcommand{\Ad}{\mathrm{Ad}}
\newcommand{\Hom}{\mathrm{Hom}}
\newcommand{\Spec}{\mathrm{Spec}}
\newcommand{\Spf}{\mathrm{Spf}}
\newcommand{\Sp}{\mathrm{Sp}}
\newcommand{\Fil}{\mathrm{Fil}}
\newcommand{\pdR}{\mathrm{pdR}}
\newcommand{\dw}{\dot{w}}
\newcommand{\fX}{\mathfrak{X}}
\newcommand{\trivar}{X_{\mathrm{tri}}(\overline{r})}
\newcommand{\tildefg}{\widetilde{\mathfrak{g}}}
\numberwithin{equation}{section}
\title[Local models for the trianguline variety and partially classical families]{Local models for the trianguline variety and partially classical families\\\small Modèles locaux pour la variété trianguline et familles partiellement classiques}
\author{Zhixiang Wu} 
\date{}
\begin{document}
\address{Mathematisches Institut, Universität Münster, Einsteinstrasse 62, 48149 Münster, Germany}
\email{zhixiang.wu@uni-muenster.de}
\begin{abstract}
	We generalize Breuil-Hellmann-Schraen's local model for the trianguline variety to certain points with non-regular Hodge-Tate weights. With the local models we are able to prove, under the Taylor-Wiles hypothesis, the existence of certain companion points on the global eigenvariety and the appearance of related companion constituents in the completed cohomology for non-regular crystalline Galois representations. The new ingredients in the proof of the global applications are results relating the partial classicality of locally analytic representations (the existence of non-zero locally algebraic vectors in the parabolic Emerton's Jacquet modules), the partially de Rham properties of Galois representations (the de Rhamness of graded pieces along the paraboline filtrations of the associated $(\varphi,\Gamma)$-modules over the Robba rings) and the relevant properties of cycles on the generalized Steinberg varieties. We prove that partial classicality implies partial de Rhamness in finite slope cases using Ding's partial eigenvarieties.\\
	\\
	\textbf{Résumé.} Nous généralisons le modèle local de Breuil-Hellmann-Schraen pour la variété trianguline à certains points à poids de Hodge-Tate non régulier. Avec les modèles locaux, nous prouvons, sous l'hypothèse de Taylor-Wiles, l'existence de certains points compagnons sur la variété de Hecke et l'apparition de constituants compagnons correspondants dans la cohomologie complétée pour les représentations galoisiennes cristallines non régulières. Les nouveaux ingrédients dans la preuve des applications globales sont des résultats mettant en relation la classicité partielle des représentations localement analytiques (l'existence de vecteurs localement algébriques non nuls dans les modules de Jacquet-Emerton paraboliques), les propriétés de De Rham partiel des représentations galoisiennes (propriété de De Rham des morceaux gradués des filtrations parabolines des $(\varphi,\Gamma)$-modules associés sur les anneaux de Robba) et les propriétés correspondantes des cycles sur les variétés de Steinberg généralisées. Nous prouvons que la classicité partielle implique les propriétés de De Rham partiel dans les cas de pente finie en utilisant les variétés de Hecke partielles de Ding.
\end{abstract}
\maketitle

\tableofcontents
\section{Introduction}
Let $p$ be a prime number. This paper concerns about $p$-adic automorphic forms of definite unitary groups and the locally analytic aspect of the $p$-adic local Langlands program. Its aim is to generalize several results of Breuil-Hellmann-Schraen in \cite{breuil2019local} (local model for the trianguline variety, existence of companion points on the eigenvariety, locally analytic socle conjecture, etc.) to the cases when the Hodge-Tate weights are non-regular (i.e., not pairwise distinct). 
\subsection{Companion points and main results}
Let $F^+$ be a totally real number field and $S_p$ be the set of places of $F^+$ above $p$. Let $F$ be a quadratic imaginary extension of $F^+$ such that every place in $S_p$ splits in $F$, $n\geq 2$ be an integer and $\mathbb{G}$ be a totally definite unitary group in $n$ variables over $F^+$ that is split over $F$. We fix an open compact subgroup $U^p=\prod_{v\nmid p} U_v$ of $\mathbb{G}(\mathbf{A}_{F^+}^{p\infty})$ and a finite extension $L$ of $\Q_p$ with residue field $k_L$. For all $v\in S_p$, let $\Sigma_v:=\{\tau:F_v^{+}\hookrightarrow L\}$ and we assume $|\Sigma_v|=[F_v^{+}:\Q_p]$. For each $v\in S_p$, we fix a place $\widetilde{v}$ of $F$ above $v$ and identify $F_v^+\simeq F_{\widetilde{v}}$. The space of $p$-adic automorphic forms on $\mathbb{G}$ of tame level $U^p$, denoted by $\widehat{S}(U^p,L)$, consists of continuous functions $\mathbb{G}(F^+)\setminus \mathbb{G}(\mathbf{A}^{\infty}_{F^+})/U^p\rightarrow L$. Let $G_p=\prod_{v\in S_p}G_v$ be the $p$-adic Lie group $\mathbb{G}(F^+\otimes_{\Q}\Q_p)=\prod_{v\in S_p}\mathbb{G}(F_v^+)$. Let $B_p=\prod_{v\in S_p}B_v$ (resp. $T_p=\prod_{v\in S_p}T_v$) be the Borel subgroup (resp. the maximal torus) of $G_p\simeq \prod_{v\in S_p}\GL_n(F^+_v)$ consisting of upper-triangular (resp. diagonal) matrices. Then $G_p$ acts on $\widehat{S}(U^p,L)$ via right translations. We assume furthermore that $p>2$ and $\mathbb{G}$ is quasi-split at all finite places of $F^{+}$. Let $\overline{F}$ be an algebraic closure of $F$. We fix a (modular) absolutely irreducible Galois representation $\overline{\rho}:\Gal(\overline{F}/F)\rightarrow \GL_n(k_L)$ so that $\overline{\rho}$ is associated with a maximal ideal of some usual Hecke algebra acting on $\widehat{S}(U^p,L)$. \par

After Emerton \cite{emerton2006interpolation}, one way to construct eigenvarieties, rigid analytic varieties parameterizing finite slope overconvergent $p$-adic eigenforms, is using Emerton's Jacquet module functor for locally analytic representations of $p$-adic Lie groups. There exists a rigid space over $L$ (our eigenvariety), denoted by $Y(U^p,\overline{\rho})$, on which a point is a pair $(\rho,\underline{\delta})$, where $\rho$ is a $p$-adic continuous $n$-dimensional representation of $\Gal(\overline{F}/F)$ and $\underline{\delta}$ is a continuous character of $T_p$ which appears in $J_{B_p}(\Pi(\rho)^{\mathrm{an}})$. Here $\Pi(\rho)$ is the sub-$G_p$-representation of $\widehat{S}(U^p,L)$ associated with $\rho$ cut out by a prime ideal of certain Hecke algebra, $\Pi(\rho)^{\mathrm{an}}$ is the subspace of $\Pi(\rho)$ consisting of locally analytic vectors which is an admissible locally analytic representation of $G_p$ and $J_{B_p}(-)$ denotes the Emerton's Jacquet module functor so that $J_{B_p}(\Pi(\rho)^{\mathrm{an}})$ is a locally analytic representation of the Levi subgroup $T_p$ of $B_p$. \par

Take a point $(\rho,\underline{\delta})$ on $Y(U^p,\overline{\rho})$. The problem of companion forms seeks to determine the set of characters $\underline{\delta}'$ of $T_p$, denoted by $W(\rho)$, such that pairs $(\rho,\underline{\delta}')$ appear on $Y(U^p,\overline{\rho})$. The existence of such \emph{companion points} $(\rho,\underline{\delta}')$ is closely related to the appearance of certain irreducible locally analytic representations of $G_p$ explicitly determined by $\underline{\delta}'$ and $\rho$, which we call \emph{companion constituents}, inside $\Pi(\rho)^{\mathrm{an}}$. The existence of such companion constituents is a special case of the locally analytic socle conjecture of Breuil \cite{breuil2016versI,breuil2015versII}. For $v\in S_p$, we let $\rho_v:=\rho\mid_{\Gal(\overline{F_{v}^{+}}/F_v^{+})}$. The general recipe for $W(\rho)$ has been conjectured by Hansen \cite{hansen2017universal} which depends only on those local Galois representations $\rho_v$ for $v\in S_p$ and the notion of \emph{trianguline representations} introduced by Colmez \cite{colmez2008trianguline}. One could view the problem of companion forms or locally analytic socles as a locally analytic analogue of the weight part of Serre's modularity conjecture.

Let $D_{\mathrm{rig}}(\rho_v)$ be the \'etale $(\varphi,\Gamma)$-module over the Robba ring associated with $\rho_v$ for $v\in S_p$. In the $p$-adic local Langlands program, locally analytic representations of $p$-adic Lie groups are expected to be related to $(\varphi,\Gamma)$-modules over the Robba rings which is the case for $\GL_2(\Q_p)$ by Colmez \cite[V]{colmez2010representations}. Beyond the foundational works of Kisin, Colmez and Emerton for $\GL_2(\Q_p)$ \cite{kisin2003overconvergent,colmez2008trianguline,emerton2011local}, we know in general and especially in our setting by the global triangulation results of Liu \cite{liu2015triangulation} or Kedlaya-Porttharst-Xiao \cite{kedlaya2014cohomology} that the non-triviality of the Borel Emerton's Jacquet module $J_{B_p}(\Pi(\rho)^{\mathrm{an}})$ (i.e. in the finite slope case) implies that $\rho_v$ is trianguline, i.e. $D_{\mathrm{rig}}(\rho_v)$ admits a full filtration 
\begin{equation}\label{equationintroductiontriangulinefiltration}
    \Fil^{\bullet}D_{\mathrm{rig}}(\rho_v): D_{\mathrm{rig}}(\rho_v)=\Fil^nD_{\mathrm{rig}}(\rho_v)\supsetneq \cdots \supsetneq \Fil^1D_{\mathrm{rig}}(\rho_v)\supsetneq \Fil^0D_{\mathrm{rig}}(\rho_v)=\{0\}
\end{equation}
of sub-$(\varphi,\Gamma)$-modules such that the graded pieces are rank one $(\varphi,\Gamma)$-modules.\par

Under the Taylor-Wiles hypothesis on $\overline{\rho}$, Breuil-Hellmann-Schraen proved in \cite{breuil2019local} the existence of all companion forms for regular generic crystalline points. In this paper, we generalize their results to non-regular generic crystalline points. To be precise, we take a point $(\rho,\underline{\delta})\in Y(U^p,\overline{\rho})$. We say $\rho$ (or the point $(\rho,\underline{\delta})$) is crystalline if for all $v\in S_p$, $\rho_v$ is crystalline. If $\rho$ is crystalline, let $(\varphi_{v,i})_{i=1,\cdots,n}$ be the eigenvalues of $\varphi^{f_v}$ where $\varphi$ is the crystalline Frobenius acting on $D_{\mathrm{cris}}(\rho_v)$ and $q_v=p^{f_v}$ is the cardinality of the residue field of $F_v^+$. Then we say $\rho$ (or the point $(\rho,\underline{\delta})$) is \emph{generic} if for any $v\in S_p$, $\varphi_{v,i}\varphi_{v,j}^{-1}\notin \{1,q_v\}$ for $i\neq j$. Assume that $\rho$ is generic crystalline. A refinement $\cR_v$ of $\rho_v$ is a choice of an ordering of the pairwise distinct eigenvalues $\varphi_{v,1},\cdots,\varphi_{v,n}$ and a refinement $\cR=(\cR_v)_{v\in S_p}$ of $\rho$ is a choice of a refinement $\cR_v$ for each $v\in S_p$. In fact the refinements $\cR_v$ correspond to triangulations of $D_{\mathrm{rig}}(\rho_v)$ as (\ref{equationintroductiontriangulinefiltration}) by \cite{berger2008equations}. Then the conjectural set of characters $W(\rho)$ admits a partition $W(\rho)=\coprod_{\cR}W_{\cR}(\rho)$ where $W_{\cR}(\rho)=\prod_{v\in S_p} W_{\cR_v}(\rho_v)$ and each $W_{\cR_v}(\rho_v)$ is a finite set which can be explicitly described by $\cR_v$ and $\rho$. Remark that the partition of $W(\rho)$ according to the refinements is also the partition under the equivalence relation that $\underline{\delta}\sim \underline{\delta}'$ if and only if $\underline{\delta}^{-1} \underline{\delta}'$ is a $\Q_p$-algebraic character of $T_p$. Our main theorem is the following.
\begin{theorem}[Theorem \ref{theoremmainautomorphicforms}]\label{theoremintroductionmain}
    Assume that $U^p$ is small enough and assume the Taylor-Wiles hypothesis (cf. \S\ref{sectionglobalsettings}): $F$ is unramified over $F^+$, $F$ doesn't contain non-trivial $p$-th root of unity, $U^p$ is hyperspecial at any finite place of $F^+$ that is inert in $F$ and $\overline{\rho}(\Gal(\overline{F}/F(\sqrt[p]{1})))$ is adequate. Let $(\rho,\underline{\delta})\in Y(U^p,\overline{\rho})$ be a point such that $\rho$ is generic crystalline. Then there exists a refinement $\cR$ of $\rho$ such that $\underline{\delta}\in W_{\cR}(\rho)$ and for any $\underline{\delta}'\in W_{\cR}(\rho)$, the point $(\rho,\underline{\delta}')$ exists on $Y(U^p,\overline{\rho})$. Moreover, all the companion constituents associated with $W_{\cR}(\rho)$ appear in $\Pi(\rho)$.
\end{theorem}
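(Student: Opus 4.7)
The plan is to follow the Taylor-Wiles patching strategy of Breuil-Hellmann-Schraen, but to replace their Grothendieck-Springer local model (valid only at regular Hodge-Tate weights) by a local model drawn from generalized Steinberg varieties, and to bridge from this refined local geometry back to the global eigenvariety using the partial classicality/partial de Rhamness dictionary announced in the abstract. The architecture then has three layers: a local geometric model for $\trivar$ at $(\rho,\underline{\delta})$; a global patching that promotes local cycle information to information on the patched eigenvariety $X_p(\overline{\rho})$; and a descent from $X_p(\overline{\rho})$ back to $Y(U^p,\overline{\rho})$ and to constituents of $\Pi(\rho)^{\mathrm{an}}$ via Emerton's Jacquet module.

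First, I would pin down the refinement $\cR$: the global triangulation theorem of Kedlaya-Pottharst-Xiao and Liu, together with the genericity of $\rho$ and the fact that $\underline{\delta}$ appears in $J_{B_p}(\Pi(\rho)^{\mathrm{an}})$, produces a triangulation of each $D_{\mathrm{rig}}(\rho_v)$ whose graded pieces read off the unique $\cR$ with $\underline{\delta}\in W_{\cR}(\rho)$. Next, I would establish a non-regular local model theorem: étale-locally at $(\rho,\underline{\delta})$, $\trivar$ is isomorphic, up to smooth factors coming from unramified and crystalline deformation parameters, to a generalized Steinberg variety attached to the parabolic $P\subset \GL_n$ stabilizing the Hodge-Tate gradation, specializing to the local model of \cite{breuil2019local} when $P$ is Borel. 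This is where paraboline $(\varphi,\Gamma)$-modules replace triangulations and where the geometry of generalized Steinberg varieties, rather than that of the Grothendieck-Springer resolution, governs the picture.

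Patching $\widehat{S}(U^p,L)$ under the Taylor-Wiles hypothesis then gives a big module $M_\infty$ with support cycle $Z_\infty$ inside a product of $\trivar$ with an affine space of deformation parameters. Density of classical crystalline points on $Y(U^p,\overline{\rho})$, combined with the classical-automorphic comparison at those points, shows that the irreducible components of the local model indexed by $W_{\cR}(\rho)$ all appear in the support of $Z_\infty$. For each $\underline{\delta}'\in W_{\cR}(\rho)$ one then lifts a point of $\trivar$ over $(\rho,\underline{\delta}')$ to a point of $X_p(\overline{\rho})$ and descends to a companion point on $Y(U^p,\overline{\rho})$. To upgrade companion points to companion constituents inside $\Pi(\rho)^{\mathrm{an}}$, I would feed the partial de Rhamness of the relevant paraboline graded pieces, controlled by the partial classicality theorem proved via Ding's partial eigenvarieties, into a parabolic Jacquet-module adjunction that cuts out the expected irreducible constituents.

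The main obstacle lies in the geometry of the generalized Steinberg variety: at non-regular Hodge-Tate weights, many distinct triangulations refine the same paraboline filtration, so the cycles on the local model are parameterized by orbits whose irreducible components, closures, and multiplicities are noticeably more delicate than in the Grothendieck-Springer case and must be matched precisely with those of $Z_\infty$. Coupled with this, the descent from the patched eigenvariety to $\Pi(\rho)^{\mathrm{an}}$ is obstructed by the absence of locally algebraic vectors in the Borel Jacquet module, forcing one to pass through non-Borel parabolic Jacquet modules; making this passage rigorous is exactly the partial classicality/partial de Rhamness equivalence, whose proof through Ding's partial eigenvarieties is itself a central technical contribution of the paper.
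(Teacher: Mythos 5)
Your proposal correctly identifies the high-level ingredients (local models for the trianguline variety at non-regular points, generalized Steinberg varieties, Taylor--Wiles patching, the partial classicality/partial de Rhamness story via Ding's partial eigenvarieties), but it misses the actual mechanism of the proof and contains several conceptual errors that would cause the argument to fail.

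First, the local model for $\widehat{\trivar}_x$ is \emph{not} a generalized Steinberg variety. It is the completion $\widehat{X}_{P,w,x_{\mathrm{pdR}}}$ of the variety $X_P=\widetilde{\fg}\times_{\fg}\widetilde{\fg}_P$ (a partial Grothendieck--Springer type variety); the generalized Steinberg variety $Z_P$ appears only as the scheme-theoretic fiber of the weight map $\kappa_1\colon X_P\to\ft$ over $0$. The distinction matters, because the cycles $\mathfrak{Z}_w$ live in that fiber, and it is their behavior under containment in $Z_{Q,P}$ for well-chosen parabolics $Q$ that drives the whole argument.

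Second and most importantly, your claim that ``density of classical crystalline points, combined with the classical-automorphic comparison, shows that the irreducible components indexed by $W_{\cR}(\rho)$ all appear in the support of $Z_\infty$'' does not hold. Density of classical points gives you only the dominant companion constituent $\cL(w_0w_0\cdot\lambda,\underline{\delta}_{\cR,w_0})$; it says nothing about the other constituents, precisely because there are no locally algebraic vectors at non-regular weights. Likewise, your plan to ``lift a point of $\trivar$ over $(\rho,\underline{\delta}')$ to a point of $X_p(\overline{\rho})$'' is not available: the existence of local companion points on $\trivar$ (Theorem~\ref{theoremlocalcompanionpoint}) is not in question; the hard part is showing they lie in the \emph{support} of the patched module, i.e.\ on $X_p(\overline{\rho})$, and this cannot be done by a simple descent. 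The paper's actual engine is a descending induction on $\lg_{P_p}(w)$ (Theorem~\ref{theoremmaincrystalline}, Proposition~\ref{propositionmaincycle}): one chooses a simple reflection $s_{\alpha}$ with $s_{\alpha}w_{\cR}>w_{\cR}$ and a parabolic $Q$ via Lemma~\ref{lemmakeyinductionweylgroup} so that $Z_{P,s_{\alpha}w_{\cR}}\subset Z_{Q,P}$ but $Z_{P,w_{\cR}}\not\subset Z_{Q,P}$; then the exactness of $\Hom_{U(\fg)}(-,\Pi_\infty^{\mathrm{an}})^{U_0}[\fm_{r_x}^{\infty}][\fm_{\underline{\delta}_{\mathrm{sm}}}^{\infty}]$ on category $\cO$ translates this into an exact sequence of Hom-spaces; and if the desired constituent were absent, the support of the family would be entirely $Q$-partially de Rham by Theorem~\ref{theoremcyclepartialderham}, contradicting the fact that $\mathfrak{Z}_{w_{\cR}}$ is not contained in the $Q$-partially de Rham locus. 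None of this appears in your sketch.

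Third, you speak of a ``partial classicality/partial de Rhamness equivalence'' as if it were a two-way street. The paper proves only one direction (Theorem~\ref{theoremQderham}: partial classicality implies partial de Rhamness); the converse is Conjecture~\ref{conjecturepartialclassicality} and is not used in the proof. The proof uses the implication purely as an obstruction, via the contradiction argument described above, not as a machine that ``cuts out the expected irreducible constituents'' from a parabolic Jacquet-module adjunction. In fact the companion constituents are constructed from the Borel Jacquet module via the Orlik--Strauch functor; the non-Borel parabolic Jacquet modules appear only through Ding's partial eigenvariety to detect partial de Rhamness, not to produce constituents directly.
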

\begin{remark}
    In \cite{breuil2019local}, the above theorem was proved under the extra assumption that for each $v\in S_p$, the Hodge-Tate weights of $\rho_v$ are regular (pairwise distinct). But a stronger version was proved in \cite{breuil2019local}: in regular cases, $(\rho,\underline{\delta}')$ exists on $Y(U^p,\overline{\rho})$ for any refinement $\cR'$ of $\rho$ and $\underline{\delta}'\in W_{\cR'}(\rho)$. This stronger result is easy to get from Theorem \ref{theoremintroductionmain} in regular cases using locally algebraic vectors in $\Pi(\rho)$ and is not available in this paper for general crystalline points due to the non-existence of non-zero locally algebraic vectors in $\Pi(\rho)$ when $\rho$ is non-regular (the non-existence can be seen using the results on infinitesimal characters in \cite{dospinescu2020infinitesimal}). See Remark \ref{remarksmoothcompanionpoints} for a partial result. The existence of all companion points in generic non-regular crystalline cases will need other methods.
\end{remark}
The method in \cite{breuil2019local} was firstly replacing the eigenvariety $Y(U^p,\overline{\rho})$ by a larger \emph{patched eigenvariety} $X_p(\overline{\rho})$ in \cite{breuil2017interpretation,breuil2017smoothness} constructed from the patching module in \cite{caraiani2016patching}. The patching method allows us to reduce the study of the geometry of the patched eigenvariety to that of its local component, called \emph{trianguline variety}, which parameterizes local trianguline Galois representations. Then Breuil-Hellmann-Schraen used a local model to describe the local geometry of the trianguline variety at certain points. We prove Theorem \ref{theoremintroductionmain} by developing further the theory of local models. The major new inputs are the following two results.\par

Firstly, we construct local models of the trianguline variety for certain points with possibly non-regular Hodge-Tate weights and prove that the trianguline variety is irreducible at those points (Theorem \ref{theoremintroductionlocalmodel}). Those local models are algebraic varieties which are similar to the regular cases and reflect the phenomenon of the existence of companion points or companion constituents on the eigenvariety or in the space of $p$-adic automorphic forms. 

Secondly, we show that for a general point $(\rho,\underline{\delta})\in Y(U^p,\overline{\rho})$ where $\rho$ may not be de Rham above $p$, the existence of certain companion constituents, which we call \emph{partially classical} constituents, will force the local Galois representations $\rho_v,v\in S_p$ satisfy certain special properties for which we say $\rho_v$ are \emph{partially de Rham} (Theorem \ref{theoremintroductionpartiallyclassical}). The partially classical constituents are locally analytic representations of $G_p$ which will give rise to the existence of certain \emph{locally algebraic vectors} inside some non-Borel parabolic Emerton's Jacquet module $J_{Q_p}(\Pi(\rho)^{\mathrm{an}})$ of $\Pi(\rho)^{\mathrm{an}}$, where $Q_p$ is some parabolic subgroup of $G_p$ containing $B_p$ and is not equal to $B_p$. Let $M_{Q_p}$ be the Levi subgroup of $Q_p$ containing $T_p$. Then $J_{Q_p}(\Pi(\rho)^{\mathrm{an}})$ is a locally analytic representation of $M_{Q_p}$ which, in analogue with the case of Borel Emerton's Jacquet module, should correspond to some so called (after Chenevier \cite{chenevier2011infinite}, see also \cite{bergdall2017paraboline}) \emph{paraboline filtrations} of $D_{\mathrm{rig}}(\rho_v)$
\[\Fil_{Q_p}^{\bullet}D_{\mathrm{rig}}(\rho_v): D_{\mathrm{rig}}(\rho_v)=\Fil_{Q_p}^{t_v}D_{\mathrm{rig}}(\rho_v)\supsetneq \cdots \supsetneq \Fil_{Q_p}^{1}D_{\mathrm{rig}}(\rho_v)\supsetneq \Fil_{Q_p}^{0}D_{\mathrm{rig}}(\rho_v)=\{0\}, v\in S_p\]
where the ranks of the graded pieces of the above filtrations should be sizes of the blocks of the Levi subgroup $M_{Q_p}$. Since we are always in the finite slope cases, we only focus on those paraboline filtrations that are sub-filtrations of the trianguline filtrations (\ref{equationintroductiontriangulinefiltration}). This means that there exist integers $0=s_{v,0}<s_{v,1}<\cdots<s_{v,t_v-1}<s_{v,t_v}=n$ such that 
\[\Fil_{Q_p}^{i}D_{\mathrm{rig}}(\rho_v)=\Fil^{s_{v,i}}D_{\mathrm{rig}}(\rho_v).\]
From $(\varphi,\Gamma)$-modules over the Robba rings one can always obtain semi-linear Galois representations over Fontaine's ring $\BdR$ after Berger (e.g. \cite{berger2008construction}), thus we can define the de Rham property for $(\varphi,\Gamma)$-modules as for $p$-adic Galois representations. Our result then states that the appearance of certain locally algebraic vectors in $J_{Q_p}(\Pi(\rho)^{\mathrm{an}})$ implies that the graded pieces 
\[\Fil^{s_{v,i}}D_{\mathrm{rig}}(\rho_v)/\Fil^{s_{v,i-1}}D_{\mathrm{rig}}(\rho_v)\]
are de Rham $(\varphi,\Gamma)$-modules for $i=1,\cdots t_v,v\in S_p$. Recall that locally algebraic vectors in $\widehat{S}(U^p,L)$ with respect to the action of $G_p$ are $p$-adic avatars of algebraic regular automorphic forms which correspond to $p$-adic Galois representations that are de Rham over $p$ with regular Hodge-Tate weights. Hence the result on partially classical constituents can be viewed as a form of generalization with some functoriality of the classical correspondence, beyond ordinary cases (\cite{ding2019ordinary}, etc.).
\begin{remark}
    Partial de Rhamness as well as partial classicality was proposed by Ding in a narrow sense for $2$-dimensional Galois representations \cite{ding2017formes,ding2017partiallyderham,ding2019companion} and partial classicality was also mentioned by Ding for his partial eigenvariety for $\GL_n(\Q_p)$ \cite{ding2019some} which we will use. Our results combine and generalize both Ding's works.
\end{remark}
Our key step (Proposition \ref{propositionmaincycle}) to prove the existence of the companion constituents or companion points for a point $x=(\rho,\underline{\delta})$ as in Theorem \ref{theoremintroductionmain} goes roughly in the following way (see also \S\ref{sectionintroductionsectionexistence}, especially Example \ref{exampleintroduction}). It will also simplify the relavent arguments in \cite{breuil2019local} even for the regular case. As in \cite{breuil2019local}, there are cycles (closed subspaces) passing through $x$ on the eigenvariety which correspond to the appearance of companion constituents in $\Pi(\rho)^{\mathrm{an}}$. By the second result above, those cycles corresponding to partially classical constituents (with respect to some parabolic subgroups) are partially de Rham which means that the corresponding Galois representations are partially de Rham. On the other hand, the local model also gives rise to cycles near $x$ on the patched eigenvariety, which are expected to match those cycles corresponding to companion constituents. The point is that the partially de Rham properties are determined by the datum of local models, and it turns out that the partially de Rham cycles on the local models are exactly those cycles that should match partially classical constituents (Theorem \ref{theoremintroductionsteinberg}). Then a finer study of the local models tells that there exist non-partially de Rham cycles passing through $x$ which implies the existence of non-partially classical companion constituents inside $\Pi(\rho)$. In this way, we can obtain all companion constituents that can be seen by the local models (those constituents in Theorem \ref{theoremintroductionmain}). \par
In the remaining parts of this introduction, we give more details on the above results and their proofs.
\subsection{Local models for the trianguline variety}
We now explain our local results on the trianguline variety. For $v\in S_p$, the trianguline variety $X_{\mathrm{tri}}(\overline{\rho}_v)$ with respect to $\overline{\rho}_v:=\overline{\rho}\mid_{\Gal(\overline{F_{v}^{+}}/F_v^{+})}$ is a rigid analytic variety, a point of which is given by a pair $(r,\underline{\delta})$ where $r$ is a deformation of $\overline{\rho}_v$ and $\underline{\delta}=(\delta_i)_{1\leq i\leq n}$ is a character of $T_v=((F_v^{+})^{\times})^{n}$, such that the subset of points $(r,\underline{\delta})$, where $r$ is trianguline and $\underline{\delta}$ corresponds to the graded pieces of certain trianguline filtration of $r$ as (\ref{equationintroductiontriangulinefiltration}), is Zariski dense. \par
We take an $L$-point $x=(r,\underline{\delta})$ of $X_{\mathrm{tri}}(\overline{\rho}_v)$. The weight $\mathrm{wt}(\delta_i)$ of each character $\delta_i$ is a number in $F_v^{+}\otimes_{\Q_p}L\simeq \oplus_{\tau\in\Sigma_v} L$ and we write $\mathrm{wt}_{\tau}(\delta_i)\in L$ for the $\tau$-part of $\mathrm{wt}(\delta_i)$ for each $\tau\in \Sigma$. The multiset $\{\mathrm{wt}_{\tau}(\delta_i)\mid i\in \{1,\cdots,n\} ,\tau\in\Sigma_v\}$ is also the $\tau$-Sen weights of $r$ (the generalized Hodge-Tate weights, counted with multiplicities).
Then $\underline{\delta}$ is locally $\Q_p$-algebraic if for all $i=1,\cdots,n,\tau\in \Sigma_v$, $\mathrm{wt}_{\tau}(\delta_i)\in\Z$. \par
We say $\underline{\delta}$ is \emph{generic} if for any $i\neq j$, both $\delta_i^{-1}\delta_j$ and $\delta_i^{-1}\delta_j|\mathrm{Norm}_{F_v^{+}/\Q_p}|_p$, where $|p|_p=p^{-1}$, are not $\Q_p$-algebraic characters (i.e. not of the form $z\mapsto \prod_{\tau\in \Sigma_v}\tau(z)^{k_{\tau}}$ where $k_{\tau}\in \Z$ for every $\tau\in\Sigma_v$).\par
In the case when $\underline{\delta}$ is generic and locally $\Q_p$-algebraic, $r$ is almost de Rham in the sense of Fontaine \cite{fontaine2004arithmetique}. Fontaine's theory associates $r$ with a finite free $F_{v}^+\otimes_{\Q_p}L$-module $D_{\mathrm{pdR}}(r)$ of rank $n$ and a linear nilpotent operator $N$ acting on $D_{\mathrm{pdR}}(r)$. The space $D_{\mathrm{pdR}}(r)$ is equipped with two filtrations, both are stable under the action of $N$. One filtration is the Hodge filtration denoted by $\Fil_{\bullet}$. Another filtration $\cD_{\bullet}:=D_{\mathrm{pdR}}(\Fil^{\bullet}D_{\mathrm{rig}}(r))$ comes from a trianguline filtration $\Fil^{\bullet}D_{\mathrm{rig}}(r)$ on $D_{\mathrm{rig}}(r)$ determined by the point $x$ (we emphasize that the functor $D_{\mathrm{pdR}}(-)$ is also defined for these $(\varphi,\Gamma)$-modules). \par
For $\tau\in\Sigma_v$, define $D_{\mathrm{pdR},\tau}(r):=D_{\mathrm{pdR}}(r)\otimes_{L\otimes_{\Q_p} F_v^{+},1\otimes\tau}L$, $\Fil_{\tau,\bullet}:=\Fil_{\bullet}\otimes_{L\otimes_{\Q_p} F_v^{+},1\otimes\tau}L$ and $\cD_{\tau,\bullet}:=\cD_{\bullet}\otimes_{L\otimes_{\Q_p} F_v^{+},1\otimes\tau}L$.
Then for each $\tau\in \Sigma_v$, $\cD_{\tau, \bullet}:\cD_{\tau, 1}\subsetneq \cdots\subsetneq \cD_{\tau, n} $ is a complete flag of the $L$-space $D_{\mathrm{pdR},\tau}(r)$. The graded pieces of the Hodge filtration $\Fil_{\tau,\bullet}$ have $L$-dimensions that are equal to the multiplicities of the $\tau$-Sen weights. \par
Let $G:=\mathrm{Res}_{F_v^{+}/\Q_p}(\GL_{n/F_v^+})\otimes_{\Q_p}L=\prod_{\tau\in\Sigma_v} \GL_{n/L}$ be the algebraic group which acts on $ D_{\mathrm{pdR}}(r)\simeq (F_{v}^+\otimes_{\Q_p}L)^n\simeq \prod_{\tau\in \Sigma_v} L^n$, $P$ be the standard parabolic subgroup of block upper-triangular matrices in $G$ that is conjugate to the stabilizer subgroup of the Hodge filtration $\Fil_{\bullet}$ and $B$ the Borel subgroup of upper-triangular matrices of $G$. Let $\fg$ (resp. $\fb$, resp. $\fp$) be the Lie algebra of $G$ (resp. $B$, resp. $P$). The datum $(N,\cD_{\bullet},\Fil_{\bullet})$ associated with the point $x$ can define a point $x_{\mathrm{pdR}}$ of the following algebraic scheme 
\begin{equation}\label{equationintroductionXP}
    X_P:=\left\{\left(\nu,g_1B,g_2P\right)\in \fg\times G/B\times G/P\mid \Ad\left(g_1^{-1}\right)\nu\in\fb, \Ad\left(g_2^{-1}\right)\nu\in\fp\right\}
\end{equation}
where $G/B,G/P$ are flag varieties and $\Ad$ denotes the adjoint action. Let $W\simeq \prod_{\tau\in\Sigma_v}\mathcal{S}_n$ (resp. $W_P$) be the Weyl group of $G$ (resp. of the standard Levi subgroup of $P$) where $\mathcal{S}_n$ denotes the $n$-th symmetric group. Then $X_P$ is equidimensional and its irreducible components $X_{P,w}$ are parameterized by $w\in W/W_P$ (see Definition \ref{definitionirreduciblecomponents}). Let $w=(w_{\tau})_{\tau\in\Sigma_v}\in \prod_{\tau\in\Sigma_v}\mathcal{S}_n$ be an element such that $\mathrm{wt}_{\tau}(\delta_{w_{\tau}(1)})\leq \cdots \leq \mathrm{wt}_{\tau}(\delta_{w_{\tau}(n)})$ for all $\tau\in \Sigma_v$ and we use the same notation $w$ to denote the image of $w$ in $W/W_P$. The following theorem is proved in \cite{breuil2019local} when $P=B$, i.e. when $r$ has regular Hodge-Tate weights ($\wt_{\tau}(\delta_i)\neq \wt_{\tau}(\delta_j)$ for all $\tau\in \Sigma_v,i\neq j$).
\begin{theorem}[Theorem \ref{theoremirreducibletriangullinevariety}]\label{theoremintroductionlocalmodel}
    Let $x=(r,\underline{\delta})$ be an $L$-point of $X_{\mathrm{tri}}(\overline{\rho}_v)$ such that $\underline{\delta}$ is generic and locally $\Q_p$-algebraic. Then up to formally smooth morphisms of formal schemes, the completion $\widehat{X}_{\mathrm{tri}}(\overline{\rho}_v)_x$ of the trianguline variety $X_{\mathrm{tri}}(\overline{\rho}_v)$ at $x$ is isomorphic to the completion $\widehat{X}_{P,w,x_{\mathrm{pdR}}}$ of $X_{P,w}$ at $x_{\mathrm{pdR}}$. Moreover, the trianguline variety $X_{\mathrm{tri}}(\overline{\rho}_v)$ is irreducible at $x$. 
\end{theorem}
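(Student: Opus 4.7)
The plan is to extend the strategy of Breuil-Hellmann-Schraen for regular points (where $P=B$) to the parabolic setting, working at the level of formal completions through Fontaine's almost de Rham functor. First I would check that under the genericity assumption on $\underline{\delta}$ together with local $\Q_p$-algebraicity, $r$ is indeed almost de Rham (so the datum $(D_{\mathrm{pdR}}(r), N, \cD_\bullet, \Fil_\bullet)$ is well-defined) and the trianguline filtration deforms uniquely along deformations of $(r, \underline{\delta})$ into a nearby trianguline $(\varphi,\Gamma)$-module. This relies on vanishing and one-dimensionality results for $(\varphi,\Gamma)$-cohomology of the rank-one graded pieces of $\Fil^\bullet D_{\mathrm{rig}}(r)$ and their twists, which follow from the generic hypothesis on $\underline{\delta}$ (no $\delta_i^{-1}\delta_j$ is algebraic of weight compatible with producing extensions in the wrong direction).

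Next I would construct a morphism of formal schemes
\[
\widehat{X}_{\mathrm{tri}}(\overline{\rho}_v)_x \longrightarrow \widehat{X}_{P,x_{\mathrm{pdR}}}
\]
via $D_{\mathrm{pdR}}$: each deformation $(r', \underline{\delta}')$ equipped with its deformed triangulation is sent to the triple $(N', \cD_\bullet', \Fil_\bullet')$ on $D_{\mathrm{pdR}}(r')$, which by construction satisfies the defining conditions of $X_P$. The formal smoothness of this morphism would be proved by tangent space comparison. The tangent space of $\widehat{X}_{\mathrm{tri}}(\overline{\rho}_v)_x$ is computed by $(\varphi,\Gamma)$-cohomology of the triangulated module, while the tangent space of $\widehat{X}_{P,x_{\mathrm{pdR}}}$ is the linear-algebraic space of compatible first-order deformations of $(N, \cD_\bullet, \Fil_\bullet)$. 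Under genericity, the obstructions vanish and the comparison reduces to the Sen-theoretic content of $D_{\mathrm{pdR}}$: the extra data $(N, \Fil_\bullet)$ carries exactly the information lost when passing from $D_{\mathrm{rig}}$ to deformations of the Sen-polynomial in the non-regular case, and up to a formally smooth factor (accounting for ambiguities in the choice of ordering and in the unramified pro-representability) the map is an isomorphism.

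To identify the component $X_{P,w}$ and deduce irreducibility, I would compute the relative position of the flag $\cD_\bullet$ against the partial flag $\Fil_\bullet$ at $x_{\mathrm{pdR}}$. The ordering condition $\mathrm{wt}_\tau(\delta_{w_\tau(1)}) \leq \cdots \leq \mathrm{wt}_\tau(\delta_{w_\tau(n)})$ reflects that the $\tau$-Hodge-Tate weights collected by the triangulation along $\cD_\bullet$ induce the Hodge filtration in the order prescribed by the coset $w \in W/W_P$, which places $x_{\mathrm{pdR}}$ on the component $X_{P,w}$. Once formal smoothness onto $\widehat{X}_{P,w,x_{\mathrm{pdR}}}$ is established, the irreducibility of $X_{\mathrm{tri}}(\overline{\rho}_v)$ at $x$ follows from the irreducibility of $X_{P,w}$, itself provable by exhibiting the projection $X_{P,w} \to G/B \times G/P$ onto the closure of the relevant $G$-orbit as having affine-space fibers (the compatible nilpotents), so that $X_{P,w}$ is a vector bundle over an irreducible base.

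The main obstacle is the formal smoothness step in the non-regular case. With $P \neq B$, the Hodge filtration is only a partial flag, so Hodge-Tate weights of the same value can be permuted infinitesimally; unlike in the regular BHS setting, the Hodge filtration alone no longer rigidifies the deformation problem. One must package the data in terms of $(D_{\mathrm{pdR}}(r), N)$ together with the two $N$-stable filtrations and verify that the interaction of $N$ with $\cD_\bullet$ and $\Fil_\bullet$ captures exactly the extra infinitesimal freedom among equal-valued weights. Establishing this matching rigorously, and transporting the $(\varphi,\Gamma)$-module-theoretic computation of the tangent space into the shape dictated by $X_P$, is the technical heart of the argument.
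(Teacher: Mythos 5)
Your outline for constructing the local model morphism $\widehat{X}_{\mathrm{tri}}(\overline{\rho}_v)_x \to \widehat{X}_{P,x_{\mathrm{pdR}}}$ via $D_{\mathrm{pdR}}$, through the groupoid/deformation framework of Breuil--Hellmann--Schraen adapted to the parabolic $P$, and the identification of the component $w$ from the relative position of the two filtrations (tracked via the Sen polynomial, see Proposition~\ref{propositionfactorsthroughTPw}), is in line with the paper. The real problem is the final irreducibility step, where there is a genuine gap.

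You assert that the irreducibility of $X_{\mathrm{tri}}(\overline{\rho}_v)$ at $x$ ``follows from the irreducibility of $X_{P,w}$, itself provable by exhibiting the projection $X_{P,w}\to G/B\times G/P$~\dots\ as having affine-space fibers, so that $X_{P,w}$ is a vector bundle over an irreducible base.'' This only establishes that $X_{P,w}$ is irreducible \emph{as a variety}. What the theorem requires, and what the formal-smooth identification $\widehat{X}_{\mathrm{tri}}(\overline{\rho}_v)_x \simeq \widehat{X}_{P,w,x_{\mathrm{pdR}}}$ (up to smooth factor) reduces the question to, is the irreducibility of the \emph{completed local ring} $\widehat{\cO}_{X_{P,w},x_{\mathrm{pdR}}}$. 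These are not equivalent: an irreducible variety can have a completion with several branches at a point (a nodal cubic is the standard example). The needed property is that $X_{P,w}$ is \emph{unibranch} at $x_{\mathrm{pdR}}$. When $P=B$ this is free from normality of $X_{B,w}$ (Bezrukavnikov--Riche plus BHS), but for $P\neq B$ normality, and even Cohen--Macaulayness, of $X_{P,w}$ are unknown, so one cannot shortcut this. The paper's new technical contribution, Theorem~\ref{theoremunibranchYP}, is to prove unibranchness by factoring the birational proper surjection $X_{B,w}\to X_{P,w}$ (taking $w$ longest in its coset), showing the fiber over a point with nilpotent $\nu$ is connected via reduction to connectedness of Springer fibers for the Levi $M_P$ (Proposition~\ref{propositionconnectedfibersnilpotent}), and invoking the normality of the source together with Zariski-type arguments (Proposition~\ref{propfiberconnectedunibranch}). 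Without a substitute for that step your argument yields neither the ``irreducible at $x$'' conclusion nor, in fact, a single well-defined formal model: Corollary~\ref{corollaryrepXPxw} (that $|X_{D,\cM_\bullet}^{\square,w}|$ is pro-represented by a local \emph{domain}) already leans on it.
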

The proof of Theorem \ref{theoremintroductionlocalmodel} follows the strategy for regular cases in \cite{breuil2019local}. The difficulty in our situation is to show that $X_{P,w}$ is \emph{unibranch} at $x_{\mathrm{pdR}}$, i.e. the completion of the local ring of $X_{P,w}$ at $x_{\mathrm{pdR}}$ is irreducible (Theorem \ref{theoremunibranchYP}). When $P=B$, it was proved by Bezrukavnikov-Riche in \cite{bezrukavnikov2012affine} that for $w\in W$, $X_{B,w}$ is Cohen-Macaulay and based on the Cohen-Macaulay result, Breuil-Hellmann-Schraen proved that $X_{B,w}$ is normal in \cite{breuil2019local} which in particular implies that $X_{B,w}$ is unibranch. We prove that $X_{P,w}$ is unibranch at $x_{\mathrm{pdR}}$ based on the normality of $X_{B,w}$ (here $w\in W$). There is a natural birational proper map $f:X_{B,w}\rightarrow X_{P,w}$ of integral varieties. We can prove that the fiber $f^{-1}(x_{\mathrm{pdR}})$ is connected (Proposition \ref{propositionconnectedfibersnilpotent}). Since $X_{B,w}$ is normal, the connectedness of the fiber is enough to establish the unibranch property that we need (Proposition \ref{propfiberconnectedunibranch}). The important problem whether $X_{P,w}$ (or $X_{\mathrm{tri}}(\overline{\rho}_v)$ at $x$) is Cohen-Macaulay or normal remains unsolved.
\subsection{Partially classical families and partial de Rhamness}\label{sectionintroductionpartiallyclassical}
We need some more notation to state the result on partially classical companion constituents. For a point $(\rho,\underline{\delta})\in Y(U^p,\overline{\rho})$, $\underline{\delta}=(\underline{\delta}_v)_{v\in S_p}=((\delta_{v,i})_{i=1,\cdots,n})_{v\in S_p}$ is a character of $T_p=\prod_{v\in S_p}T_v\simeq \prod_{v\in S_p} ((F_v^{+})^{\times})^{n}$. Let 
\[\lambda=(\lambda_{\tau})_{\tau\in \Sigma_v,v\in S_p}=((\lambda_{\tau,i})_{i=1,\cdots,n})_{\tau\in \Sigma_v,v\in S_p}:=((\wt_{\tau}(\delta_i))_{i=1,\cdots,n})_{\tau\in \Sigma_v,v\in S_p}\] 
be the weight of $\underline{\delta}$. When $\underline{\delta}$ is locally algebraic for which we mean that $\lambda_{\tau,i}\in \Z$ for all $\tau\in \Sigma_v,v\in S_p$, Orlik-Strauch's theory \cite{orlik2015jordan} can construct a (generically irreducible) locally analytic representation $\cL(\lambda,\underline{\delta})$ of $G_p$ from $\lambda$ and (the smooth part of) $\underline{\delta}$ which is a subquotient of some locally analytic principal series representation (cf. \S\ref{sectionOrlikStrauch}). \par
We fix $v_0\in S_p$, $\tau_{0}\in \Sigma_{v_0}$ and a parabolic subalgebra $\fq_{\tau_0}$ of block upper-triangular matrices of $\mathfrak{gl}_n$, the Lie algebra of $\GL_{n/L}$. We assume that the standard Levi subalgebra $\fm_{\tau_0}$ of $\fq_{\tau_0}$ is isomorphic to $\mathfrak{gl}_{s_1-s_0}\times\cdots\times\mathfrak{gl}_{s_i-s_{i-1}}\cdots\times \mathfrak{gl}_{s_t-s_{t-1}}$ where $ 0=s_0<\cdots< s_i<\cdots< s_t=n$. Suppose that $\lambda_{\tau_0}$ is a dominant weight of $\fm_{\tau_0}$, or explicitly, $\lambda_{\tau_0,a}\geq \lambda_{\tau_0,b}$ for every $a\leq b,a,b\in [s_{i}+1,s_{i+1}]$ and $0\leq i\leq t-1$. Let $L_{\fm_{\tau_0}}(\lambda_{\tau_0})$ be the finite-dimensional irreducible $\fm_{\tau_0}$-representation of the highest weight $\lambda_{\tau_0}$. Let $Q_p=\prod_{v\in S_p}Q_{v}$ be a standard parabolic subgroup of $G_p$ such that its $\tau_0$-part Lie algebra $\mathrm{Lie}(Q_{v_0})\otimes_{F_{v_0}^{+},\tau_{0}}L$ is equal to $\fq_{\tau_0}$. Then the companion constituent $\cL(\lambda,\underline{\delta})$ is partially classical (with respect to $Q_p$ and the set $\{\tau_0\}$) in the sense that we have (by an adjunction formula, see \S\ref{sectionadjunctionformula})
\begin{equation}\label{equationintroductionpartialclassicality}
    \Hom_{G_p}\left( \cL(\lambda,\underline{\delta}),\Pi(\rho)^{\mathrm{an}}\right)\neq 0\quad\Rightarrow\quad\Hom_{\fm_{\tau_0}}\left(L_{\fm_{\tau_0}}(\lambda_{\tau_0}),J_{Q_p}(\Pi(\rho)^{\mathrm{an}})\right)\neq 0.
\end{equation}
\indent Assume that $(\rho,\underline{\delta})$ is a point on $Y(U^p,\overline{\rho})$ such that $\underline{\delta}$ is locally algebraic and satisfies certain generic condition. Then $D_{\mathrm{rig}}(\rho_{v_0})$ is associated via the point $(\rho,\underline{\delta})$ with a trianguline filtration $\Fil^{\bullet}D_{\mathrm{rig}}(\rho_{v_0})$ as (\ref{equationintroductiontriangulinefiltration}) which in turn leads to a filtration $\cD_{v_0,\bullet}=\prod_{\tau\in\Sigma_{v_0}}\cD_{\tau,\bullet}$ of $D_{\mathrm{pdR}}(\rho_{v_0})$ with a nilpotent operator $N_{v_0}=(N_{\tau})_{\tau\in \Sigma_{v_0}}$ where every $ N_{\tau}$ keeps the filtration $\cD_{\tau,\bullet}$. For $i=1,\cdots,t$, we let $\mathrm{gr}^{s_i}D_{\mathrm{rig}}(\rho_{v_0}):=\Fil^{s_i}D_{\mathrm{rig}}(\rho_{v_0})/\Fil^{s_{i-1}}D_{\mathrm{rig}}(\rho_{v_0})$ be the graded pieces of the paraboline sub-filtration $\Fil^{s_{\bullet}}D_{\mathrm{rig}}(\rho_{v_0})$ corresponding to $\fm_{\tau_0}$. In the case of Galois representations, an almost de Rham representation $\rho_v$ is de Rham if and only if the nilpotent operator on $D_{\mathrm{pdR}}(\rho_v)$ is zero. We can identify $D_{\mathrm{pdR},\tau_0}(\mathrm{gr}^{s_i}D_{\mathrm{rig}}(\rho_{v_0})):={D_{\mathrm{pdR}}(\mathrm{gr}^{s_i}D_{\mathrm{rig}}(\rho_{v_0}))\otimes_{L\otimes_{\Q_p}F_{v_0}^{+},1\otimes\tau_0}L}$ with $\cD_{\tau_0,s_{i}}/\cD_{\tau_0,s_{i-1}}$ equipped with the restriction of the action of $N_{\tau_0}$. We say that the $(\varphi,\Gamma)$-module $\mathrm{gr}^{s_i}D_{\mathrm{rig}}(\rho_{v_0})$ is $\{\tau_0\}$-partially de Rham if the restriction of the nilpotent operator $N_{\tau_0}$ on $\cD_{\tau_0,s_{i}}/\cD_{\tau_0,s_{i-1}}$ is zero.
\begin{theorem}[Theorem \ref{theoremQderham}]\label{theoremintroductionpartiallyclassical}
    Let $(\rho,\underline{\delta})\in Y(U^p,\overline{\rho})$ be a point such that $\underline{\delta}$ is locally algebraic and generic (Definition \ref{definitiongenericglobal}). If the $\tau_0$-part weight $\lambda_{\tau_0}$ of $\underline{\delta}$ is a dominant weight for $\fm_{\tau_0}$ and $\Hom_{G_p}\left( \cL(\lambda,\underline{\delta}),\Pi(\rho)^{\mathrm{an}}\right)\neq 0$, then for every $1\leq i\leq t$, the graded piece $\mathrm{gr}^{s_i}D_{\mathrm{rig}}(\rho_{v_0})$ is a $\{\tau_0\}$-partially de Rham $(\varphi,\Gamma)$-module.
\end{theorem}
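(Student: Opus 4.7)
The plan is to reduce the statement, via an adjunction, to a statement about locally $\fm_{\tau_0}$-algebraic vectors inside $J_{Q_p}(\Pi(\rho)^{\mathrm{an}})$, and then to deduce the partial de Rhamness of graded pieces by interpolating the point $(\rho,\underline{\delta})$ in Ding's $\tau_0$-partial eigenvariety and using that sufficiently many classical points in such a family are automatically $\{\tau_0\}$-partially de Rham.

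First, I would apply the adjunction formula (\ref{equationintroductionpartialclassicality}). The hypothesis $\Hom_{G_p}(\cL(\lambda,\underline{\delta}),\Pi(\rho)^{\mathrm{an}}) \neq 0$ produces a nonzero $\fm_{\tau_0}$-equivariant map $L_{\fm_{\tau_0}}(\lambda_{\tau_0})\hookrightarrow J_{Q_p}(\Pi(\rho)^{\mathrm{an}})$. Together with the smooth character data carried by $\underline{\delta}$, this exhibits $(\rho,\underline{\delta})$ as a \emph{partially classical} point with respect to $Q_p$ and $\{\tau_0\}$: on $J_{Q_p}$ the point carries a vector on which $\fm_{\tau_0}$ acts through the finite-dimensional algebraic module $L_{\fm_{\tau_0}}(\lambda_{\tau_0})$ (in the $\tau_0$-direction), while in the other $\tau$-directions the action is merely locally analytic. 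The genericity assumption on $\underline{\delta}$ guarantees that this vector lifts functorially to an eigenvector for the same character on any space of finite slope, avoiding multiplicity issues in the Jacquet functor.

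Next I would invoke Ding's partial eigenvariety $Y^{\tau_0}(U^p,\overline{\rho})$ (or the $Q_p$-analogue of it) which parameterises exactly such $\tau_0$-partially locally algebraic pairs coming from $J_{Q_p}$-eigenvectors in $\widehat{S}(U^p,L)^{\mathrm{an}}$. The point $(\rho,\underline{\delta})$ determines a point $x'$ on $Y^{\tau_0}(U^p,\overline{\rho})$ and a partial weight map $\omega_{\tau_0}\colon Y^{\tau_0}\to \widehat{T}_p^{\tau_0}$ through which the $\tau_0$-weight $\lambda_{\tau_0}$ is read off. The standard machinery (Chenevier-style small slope / classicality criteria extended to Ding's setting) ensures that in any irreducible component of $Y^{\tau_0}$ passing through $x'$, the locus where $\omega_{\tau_0}$ takes a sufficiently dominant integral $\fm_{\tau_0}$-weight is Zariski dense. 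At such accumulation points the corresponding automorphic form is genuinely $Q_p$-classical, hence the associated Galois representation is de Rham at $\tau_0$, and in particular every graded piece of any paraboline filtration of $D_{\mathrm{rig}}(-)_{v_0}$ is $\{\tau_0\}$-partially de Rham.

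It remains to show that $\{\tau_0\}$-partial de Rhamness of the graded pieces $\mathrm{gr}^{s_i}D_{\mathrm{rig}}(-)_{v_0}$ is a Zariski-closed condition in such a family. Using the global triangulation theorem of Kedlaya-Pottharst-Xiao, after a suitable modification (passing to a blow-up of $Y^{\tau_0}$ along the weight map if necessary) the triangulation of $D_{\mathrm{rig}}$ and hence the paraboline sub-filtration $\Fil^{s_\bullet}$ interpolates in families. Applying Berger's construction of $D_{\mathrm{pdR}}$ to these families and restricting to the $\tau_0$-component, one obtains a locally free module with a nilpotent operator $N_{\tau_0}$; the $\{\tau_0\}$-partial de Rhamness of $\mathrm{gr}^{s_i}$ translates into the vanishing of a specific block of $N_{\tau_0}$, which is a closed analytic condition. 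Together with the density from the previous step, this gives the vanishing on the whole component through $x'$, and in particular at our original point $(\rho,\underline{\delta})$.

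The main obstacle I expect is in the last step: making the construction of the $\tau_0$-part of $D_{\mathrm{pdR}}$ together with its nilpotent operator interpolate genuinely in families on Ding's partial eigenvariety, including ensuring that the paraboline graded pieces are still well-defined objects over a dense open subset. Almost de Rhamness is in general not preserved in arbitrary $p$-adic families, so one has to work on the locus where $\underline{\delta}$ remains locally algebraic (hence almost de Rham by Berger-Kedlaya), and verify that accumulation of classical points in the sense above takes place inside this very locus. Ding's formalism is designed for precisely this, but matching his conventions with the paraboline filtrations coming from the Borel-triangulation of $\rho_{v_0}$ is the delicate bookkeeping that underlies the proof.
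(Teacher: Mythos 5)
Your plan follows the paper's proof of Theorem~\ref{theoremQderham} very closely: the adjunction formula (Proposition~\ref{propositionadjunctionJacquetmodule} together with Proposition~\ref{propositionpointspartialeigenvariety}) places the point on Ding's partial eigenvariety $X_p(\overline{\rho})(\lambda_J')$, classical points are Zariski dense and accumulate there (Proposition~\ref{propositiondenseclassicalpoints}), classicality forces de Rhamness by local-global compatibility (Proposition~\ref{propositionclassicalpointsderham}), and one concludes by showing that $\{\tau_0\}$-partial de Rhamness of graded pieces is a Zariski-closed condition on the partial eigenvariety (Proposition~\ref{propositionpartialvarietypartialderham}, backed by the family result in the appendix, Proposition~\ref{propositionfamilydpdr}). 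You also correctly identify that last step as the delicate one.

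The one place your proposal, as written, would not quite close is the workaround you suggest for the non-preservation of almost de Rhamness in arbitrary families: restricting to the locus where the full character $\underline{\delta}$ is locally algebraic. On Ding's partial eigenvariety only the $\tau_0$-weight of $\underline{\delta}$ is pinned; the weights at $\tau\neq\tau_0$ still move freely. So the ``locally algebraic locus'' you propose to restrict to is a proper Zariski-closed analytic subset of positive codimension, and the Zariski density of classical points on $X_p(\overline{\rho})(\lambda_J')$ does not induce density inside that subset. The paper avoids this: the defining constraint of $X_p(\overline{\rho})(\lambda_J')$ forces the $\tau_0$-Sen weights of the paraboline graded pieces, \emph{after twisting} by $\cR(\delta_{v,\widetilde q_v}^{-1})$, to be the \emph{same fixed integers at every point} of the partial eigenvariety, regardless of whether the rest of $\underline{\delta}$ is locally algebraic. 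This is what makes Proposition~\ref{propositionfamilydpdr} applicable over a whole affinoid: one obtains a locally free $\tau_0$-almost de Rham family of $(\varphi,\Gamma)$-modules with a nilpotent operator $N_{\tau_0}$ whose vanishing locus is closed. The density of classical points in the full partial eigenvariety then gives the conclusion. So your decomposition of the argument matches the paper's in every step except the precise device used to tame the family, where the paper uses the twist-to-constant-$\tau_0$-weights trick rather than restriction to a sublocus.
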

Theorem \ref{theoremintroductionpartiallyclassical} is in fact a corollary of the global triangulation results and Ding's construction of partial eigenvarieties in \cite{ding2019some} which was based on the work of Hill-Loeffler \cite{hill2011emerton}. Assume that $\lambda_{\tau_0}$ is dominant for $\fm_{\tau_0}$. The partial eigenvariety of Ding, denoted by $Y(U^p,\overline{\rho})(\lambda_{\tau_0}')$, is a subvariety of $Y(U^p,\overline{\rho})$ roughly consisting of points $x=(\rho_x,\underline{\delta}_x)\in Y(U^p,\overline{\rho})$ such that $\Hom_{\fm_{\tau_0}'}\left(L_{\fm_{\tau_0}}(\lambda_{\tau_0}),J_{Q_p}(\Pi(\rho_x)^{\mathrm{an}})\right)\neq \emptyset$ where $\fm_{\tau_0}':=[\fm_{\tau_0},\fm_{\tau_0}]$ is the derived subalgebra of $\fm_{\tau_0}$. Such construction forces that for any point $(\rho_x,\underline{\delta}_x)\in Y(U^p,\overline{\rho})(\lambda_{\tau_0}')$, the $\tau_0$-part weight $\lambda_{x,\tau_0}=(\lambda_{x,\tau_0,i})_{i=1,\cdots,n}$ of $\underline{\delta}_x$ satisfies that $\lambda_{x,\tau_0,a}-\lambda_{x,\tau_0,b}=\lambda_{\tau_0,a}-\lambda_{\tau_0,b}$ are non-negative integers independent of $x$ for every $a\leq b,a,b\in [s_{i}+1,s_{i+1}]$ and $0\leq i\leq t-1$. Then the arguments of Berger-Colmez in \cite{berger2008familles} and the global triangulation show that the subset of points $x\in Y(U^p,\overline{\rho})(\lambda_{\tau_0}')$ such that a suitable twist of $\mathrm{gr}^{s_{i}}D_{\mathrm{rig}}(\rho_{x,v_0})$ is $\{\tau_0\}$-partially de Rham is Zariski closed. The feature of Ding's construction is that such constrain on the $\tau_0$-weights $\lambda_{x,\tau_0}$ still allows $\lambda_{x,\tau_0}$ to vary and to be dominant with respect to $\fg_{\tau_0}:=\mathfrak{gl}_n$ even if $\lambda_{\tau_0}$ is not. The usual eigenvariety arguments imply that the subset of classical points, where there exist non-zero locally algebraic vectors in $\Pi(\rho_x)$ and $\underline{\delta}_x$ admit dominant weights, is Zariski dense in $Y(U^p,\overline{\rho})(\lambda_{\tau_0}')$. It follows from the classical local-global compatibility when $\ell=p$ that classical points are de Rham ($\rho_x$ are de Rham) where $\mathrm{gr}^{s_{i}}D_{\mathrm{rig}}(\rho_{x,v_0})$ is automatically $\{\tau_0\}$-partially de Rham. Combining the Zariski dense and closed statements leads to Theorem \ref{theoremintroductionpartiallyclassical}.
\subsection{Existence of companion constituents}\label{sectionintroductionsectionexistence}
The key observation to prove Theorem \ref{theoremintroductionmain} is that Theorem \ref{theoremintroductionpartiallyclassical} is reflected by the local models of the trianguline variety. There is a closed embedding $Y(U^p,\overline{\rho})\hookrightarrow X_p(\overline{\rho})$. Here the patched eigenvariety $X_p(\overline{\rho})$ is equidimensional and can be identified as a union of irreducible components of $X_{\mathrm{tri}}(\overline{\rho}_p)\times\fX_{\overline{\rho}^p}\times \mathbb{U}^g$ where $X_{\mathrm{tri}}(\overline{\rho}_p):=\prod_{v\in S_p}X_{\mathrm{tri}}(\overline{\rho}_v)$, $\mathbb{U}^g$ is an open polydisk and $\fX_{\overline{\rho}^p}$ is certain tame part. \par
Let $x=(\rho,\underline{\delta})\in Y(U^p,\overline{\rho})\subset X_p(\overline{\rho})$ be a generic crystalline point as in Theorem \ref{theoremintroductionmain}. Let $G:=\prod_{v\in S_p}\mathrm{Res}_{F_v^{+}/\Q_p}(\GL_{n/F_v^+})\otimes_{\Q_p}L$ and let $W\simeq \prod_{v\in S_p,\tau\in\Sigma_v} \mathcal{S}_n$ be its Weyl group. A companion character in $W_{\cR}(\rho)$ is certain character $\underline{\delta}_{\cR,w}$ for some $w\in W$ (Definition \ref{definitioncharacterrefinementglobal}) with weight $ww_0\cdot \lambda$. Here $\lambda=(\lambda_{\tau})_{\tau\in \Sigma_v,v\in S_p}\in\prod_{v\in S_p,\tau\in \Sigma_v}\Z^{n}$ is a ``dominant'' weight in the sense that $\lambda_{\tau,i}-\lambda_{\tau,i+1}\geq -1$ for all $\tau$ and $1\leq i\leq n-1$ which is determined by the Hodge-Tate weights of $\rho$, $w_0$ is the longest element in $W$ and $ww_0\cdot\lambda$ denotes the usual dot action. In \cite{breuil2019local}, for each companion constituent $\cL(ww_{0}\cdot \lambda,\underline{\delta}_{\cR,w})$, there is an associated cycle $[\mathcal{L}(ww_0\cdot \lambda)]$ on $X_p(\overline{\rho})$ in the infinitesimal neighbourhood of $x$ such that $ [\mathcal{L}(ww_0\cdot \lambda)]\neq \emptyset$ if and only if $\Hom_{G_p}\left( \cL(ww_{0}\cdot \lambda,\underline{\delta}_{\cR,w}),\Pi(\rho)^{\mathrm{an}}\right)\neq 0$. \par
The idea of \cite{breuil2019local} is to compare the cycles $[\mathcal{L}(ww_0\cdot \lambda)]$ with the cycles pulled back from \emph{Steinberg varieties} via the theory of local models for $X_{\mathrm{tri}}(\overline{\rho}_p)$. Let $P=\prod_{v\in S_p}P_v$ be the standard parabolic subgroup of $G$ where each $P_v$ is the parabolic subgroup determined by the Hodge filtration of $\rho_v$ as in 
Theorem \ref{theoremintroductionlocalmodel} and let $B$ be the standard Borel subgroup of $G$. Let $\fg,\fp,\fb$ be the Lie algebras as before and let $\fu$ be the nilpotent radical of $\fb$. The (generalized) Steinberg variety
\[Z_P:=\left\{\left(\nu,g_1B,g_2P\right)\in \fg\times G/B\times G/P\mid \Ad\left(g_1^{-1}\right)\nu\in\fu, \Ad\left(g_2^{-1}\right)\nu\in\fp\right\}\]
is a subvariety of $X_P$. Let $W_P$ be the Weyl group of the standard Levi subgroup of $P$. Then any $w\in W_P$ fixes $w_0\cdot\lambda$ under the dot action. The irreducible components $Z_{P,w}$ of $Z_{P}$ are also parameterized by cosets $w\in W/W_P$ (see \S\ref{sectionsteinbergvarieties} for details). Pulling back each $Z_{P,w}$ defines a cycle $\mathfrak{Z}_{P,w}$ on $X_p(\overline{\rho})$. The spirit of \cite{breuil2019local} expects that $\mathfrak{Z}_{P,w}\subset [\mathcal{L}(ww_0\cdot\lambda)]$.\par 
Let $\fq=\fm_Q+\fn_Q\subset \fg$ be the Lie algebra of a standard parabolic subgroup $Q$ of upper-triangular block matrices of $G$ where $\fn_{Q}$ is the nilpotent radical and $\fm_Q$ is the Levi factor of diagonal block matrices. Recall that partial de Rhamness means the vanishing of the nilpotent operator on the graded pieces of the paraboline sub-filtration which, in the notion of local models (\ref{equationintroductionXP}), is translated to that the entries of the upper-triangular matrix $\Ad(g_1^{-1})\nu$ in certain Levi diagonal blocks are zero. Hence if $ww_0\cdot\lambda$ is a dominant weight for $\fm_Q$, then Theorem \ref{theoremintroductionpartiallyclassical} implies that the cycle $[\mathcal{L}(ww_0\cdot\lambda)]$ is contained in the locus pulled back from the subspace of $Z_P$ cut out by the condition $\Ad(g_1^{-1})\nu\in\fn_Q$. The following elementary result for which we state as a theorem is the counterpart on the local models.
\begin{theorem}[Theorem \ref{theoremvarietyweight}]\label{theoremintroductionsteinberg}
    For each $w\in W/W_P$, the irreducible component $Z_{P,w}$ is contained in the subspace of $Z_P$ where $\Ad(g_1^{-1})\nu\in \fn_Q$ if and only if $ww_0\cdot\lambda$ is a dominant weight for $\fm_Q$.
\end{theorem}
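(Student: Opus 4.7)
The plan is to reduce both sides of the equivalence to root-combinatorial statements on $w$ and check them directly.

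I first make the components $Z_{P,w}$ explicit. By the Bruhat decomposition, the diagonal $G$-action on $G/B \times G/P$ has orbits $O_w$ indexed by cosets $w \in W/W_P$; the projection $Z_P \to G/B \times G/P$ restricts over each $O_w$ to a $G$-equivariant vector bundle whose fiber at the base point $(B, \dot{w}P)$ is the linear subspace $\fu \cap \Ad(\dot{w})\fp$, and $Z_{P,w}$ is the closure of this vector bundle. Since the subspace $\{\Ad(g_1^{-1})\nu \in \fn_Q\}$ is closed and $G$-stable, the containment $Z_{P,w} \subset \{\Ad(g_1^{-1})\nu \in \fn_Q\}$ reduces to the fiber inclusion $\fu \cap \Ad(\dot{w})\fp \subset \fn_Q$. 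Using the root-space decompositions $\Phi(\fp) = \Phi^+ \sqcup \Phi_{M_P}^-$ and $\Phi(\fn_Q) = \Phi^+\setminus\Phi_{M_Q}^+$, this becomes the following root condition $(\dagger)$: for every positive root $\alpha$ of $M_Q$, the root $\dot{w}^{-1}\alpha$ lies in $\Phi^-\setminus\Phi_{M_P}^-$, i.e.\ in the roots of the opposite nilradical $\overline{\fn}_P$.

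Next, I translate dominance of $ww_0\cdot\lambda$ for $\fm_Q$ into a similar root condition. The hypothesis $\lambda_{\tau,i} - \lambda_{\tau,i+1} \geq -1$ together with the description of $P$ via the Hodge filtration forces $\lambda$ to be dot-dominant with stabilizer exactly $W_P$; equivalently, $\lambda + \rho$ has weakly decreasing coordinates with equalities precisely within $M_P$-blocks. Consequently the pairing $\langle \lambda + \rho, \beta^\vee\rangle$ is positive, negative, or zero according as $\beta$ is a positive root not in $M_P$, a negative root not in $M_P$, or a root of $M_P$. Rewriting $\langle ww_0\cdot\lambda + \rho, \alpha^\vee\rangle = \langle \lambda + \rho, w_0 w^{-1}\alpha^\vee\rangle$ and using $w_0(\Phi^\pm) = \Phi^\mp$, the dominance inequality $\langle ww_0\cdot\lambda, \alpha^\vee\rangle \geq 0$ for every simple root $\alpha \in \Phi_{M_Q}^+$ unfolds into exactly the condition $(\dagger)$ for the representative $\dot w$ of $wW_P$ fixed by the paper's parameterization.

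Matching the two conditions gives the equivalence. The main obstacle is the careful tracking of the Weyl-group conventions — regular versus dot action, and the precise representative $\dot{w}$ of the coset $wW_P$ used to parameterize the components — so that the pairings and the Bruhat orbit parameterization are normalized compatibly; once these conventions are fixed the combinatorics itself is elementary.
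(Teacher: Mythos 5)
Your reduction to a fiber inclusion is correct and genuinely different from the paper's route. The paper first decomposes $Z_{Q,P}$ into pieces $Z_{Q,P,w}$ indexed by double cosets $W_Q\backslash W/W_P$, computes their dimensions (Proposition~3.1), characterizes which are full-dimensional by the condition $w_{Q,0}w\in W^P\cap {^QW}$, and only then translates this to a pairing condition against a $P$-regular coweight $\mathbf{h}$. Your argument instead observes directly that since $Z_{Q,P}$ is closed and $G$-stable, and $Z_{P,w}$ is the closure of a $G$-equivariant vector bundle over the relative-position-$w$ orbit, the containment is equivalent to the fiber inclusion $\fu\cap\Ad(\dw)\fp\subset\fn_Q$. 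That is a shorter path to the theorem, though the paper's framework is also needed for other results (e.g.\ Corollary~3.5) and so the longer route is not wasted there.

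Two points in your write-up are not actually settled, and both sit in exactly the place you flag as ``careful tracking of the Weyl-group conventions.'' First, $W_P$ in this theorem is the stabilizer of the \emph{anti-dominant} $w_0\cdot\lambda$ (equivalently of $\mathbf{h}$), not of $\lambda$; the stabilizer of $\lambda+\rho$ is $w_0 W_P w_0^{-1}$, which is a standard parabolic with the block sizes of $P$ reversed. So your parity criterion for $\langle\lambda+\rho,\beta^\vee\rangle$ should be phrased with $\fm_{w_0Pw_0^{-1}}$, not $\fm_P$, and in turn the passage ``using $w_0(\Phi^\pm)=\Phi^\mp$'' must carry $\Phi^-_{M_{w_0Pw_0^{-1}}}$ back to $\Phi^+_{M_P}$ rather than leave $\Phi_{M_P}$ unchanged. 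As it stands you have made two compensating slips — the stabilizer and the image of $R_P^+$ under $w_0$ are both off by the same conjugation — and the final condition $(\dagger)$ comes out right only because the two errors cancel. Second, your condition $(\dagger)$ quantifies over all of $\Phi_{M_Q}^+$ (this is what the fiber inclusion demands), whereas the dominance condition only tests $\Delta_Q$; these are equivalent but not trivially so, and some argument is needed — for instance that $\Phi(\overline{\fn}_P)=\Phi^-\setminus\Phi_{M_P}^-$ is closed under root addition together with induction on $Q$-height, or, as the paper does implicitly, that $\langle\alpha,w(\mathbf{h})\rangle>0$ on $\Delta_Q$ forces $\langle\alpha,w(\mathbf{h})\rangle>0$ on $R_Q^+$ by linearity of the pairing in $\alpha$. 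Once these two points are made explicit the argument is correct.
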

\begin{remark}
    When $P=B$, we can replace the irreducible component $Z_{P,w}$ in Theorem \ref{theoremintroductionsteinberg} by the characteristic variety associated with the $G$-equivariant $\mathcal{D}$-module of the localization of the irreducible $U(\fg)$-module $L(ww_0\cdot 0)$ of the highest weight $ww_0\cdot 0$ (Proposition \ref{propositioncharacteristiccycle}). This will give a more conceptual proof of the ``if'' part of the theorem. However, we do not need characteristic cycles in contrast to \cite{breuil2019local} (our new argument will be simpler than that in \textit{loc. cit.}, even for regular cases). Moreover, it is the ``only if'' part that will play a role. 
\end{remark}
We illustrate how Theorem \ref{theoremintroductionsteinberg} works in the proof of Theorem \ref{theoremintroductionmain} and the difference between regular and non-regular cases by the following basic example. 
\begin{example}\label{exampleintroduction}
    We assume $n=3$, $x=(\rho,\underline{\delta})\in Y(U^p,\overline{\rho})$ and that $\underline{\delta}=\underline{\delta}_{\cR,w_0}$ has weight $\lambda$ which is ``dominant''. Take $\tau_0\in\Sigma_{v_0}, v_0\in S_p$. Assume $\lambda_{\tau_0,1}\geq\lambda_{\tau_0,2}$ and that for any $\tau\neq \tau_0$, $\lambda_{\tau,1}\geq\lambda_{\tau,2}\geq \lambda_{\tau,3}$. Suppose that we are in the case when $W_{\cR}(\rho)=\{\underline{\delta}_{\cR,w_0},\underline{\delta}_{\cR,sw_0}\}$ where $s=(s_{\tau})_{\tau\in\Sigma_v,v\in S_p}$ is a simple reflection such that $s_{\tau}\cdot\lambda_{\tau}=\lambda_{\tau}$ if $\tau\neq \tau_0$ and $s_{\tau_0}\cdot (\lambda_{\tau_0,1},\lambda_{\tau_0,2},\lambda_{\tau_0,3})=(\lambda_{\tau_0,2}-1,\lambda_{\tau_0,1}+1,\lambda_{\tau_0,3})$. Then there is an equality of the underlying closed subspaces of cycles near $x$:
\begin{equation}\label{equationintroductioncycles}
    [\mathcal{L}(w_0w_0\cdot \lambda)]\cup [\mathcal{L}(sw_0w_0\cdot\lambda)]= \mathfrak{Z}_{P,w_0}\cup \mathfrak{Z}_{P,sw_0}
\end{equation}
where both sides describe the fibers of the infinitesimal neighbourhood of $x$ over the weight $w_0w_0\cdot \lambda$.  The left-hand side of (\ref{equationintroductioncycles}) comes from the construction of the eigenvarietiy using $J_{B_p}(-)$, and the knowledge of possible companion constituents for $\rho$ in the situation.  The right-hand side is provided by the local model where both $\mathfrak{Z}_{P,w_0}$ and $\mathfrak{Z}_{P,sw_0}$ are non-empty. By methods in \cite{breuil2019local} and Theorem \ref{theoremintroductionlocalmodel} we know $[\mathcal{L}(w_0w_0\cdot \lambda)]\neq \emptyset$ (Proposition \ref{propositionsocleappear}). We need to prove that $ [\mathcal{L}(sw_0w_0\cdot\lambda)]\neq \emptyset$ which will imply $(\rho,\underline{\delta}_{\cR,sw_0})\in Y(U^p,\overline{\rho})$ and $\Hom_{G_p}\left( \cL(sw_{0}w_{0}\cdot \lambda,\underline{\delta}_{\cR,sw_0}),\Pi(\rho)^{\mathrm{an}}\right)\neq 0$.\par
Firstly assume that the Hodge-Tate weights of $\rho$ are regular. In this case $\lambda_{\tau,1}\geq \lambda_{\tau,2}\geq \lambda_{\tau,3}$ for all $\tau$. Hence $\lambda$ is a dominant weight. The locally analytic representation of the form $\cL(\lambda,\delta_{\cR,w_0})$ is locally algebraic and the cycle $[\mathcal{L}(w_0w_0\cdot \lambda)]$ is then contained in the de Rham locus. However, $Z_{P,w_0}$ is equal to the locus where $\nu=0$ in $Z_P=Z_B$. From which we get $[\mathcal{L}(w_0w_0\cdot \lambda)]\subset \mathfrak{Z}_{P,w_0}$ and $\mathfrak{Z}_{P,sw_0}\nsubseteq [\mathcal{L}(w_0w_0\cdot\lambda)]$. Hence $[\mathcal{L}(sw_0w_0\cdot\lambda)]\neq \emptyset$ by (\ref{equationintroductioncycles}). This is the strategy used in \cite{breuil2019local} for such situation.\par
Now assume that the $\tau_0$-Hodge-Tate weights of $\rho_{v_0}$ are not regular and are equal to $(2,1,1)$ so that $\lambda_{\tau_0}=(2,2,3)$ and $s_{\tau_0}\cdot \lambda_{\tau_0}=(1,3,3)$. Theorem \ref{theoremintroductionpartiallyclassical} implies that the cycle $[\mathcal{L}(w_0w_0\cdot \lambda)]$ is $\{\tau_0\}$-partially de Rham with respect to the standard Levi subalgebra $\mathfrak{gl}_2\times\mathfrak{gl}_1$ of $\mathfrak{gl}_3$. Since $(1,3,3)$ is not dominant with respect to $\mathfrak{gl}_2\times\mathfrak{gl}_1$, Theorem \ref{theoremintroductionsteinberg} tells that the cycle $\mathfrak{Z}_{P,sw_0}$ is not fully contained in the $\{\tau_0\}$-partially de Rham locus (with respect to $\mathfrak{gl}_2\times\mathfrak{gl}_1$). Hence $\mathfrak{Z}_{P,sw_0}\nsubseteq[\mathcal{L}(w_0w_0\cdot \lambda)]$ which forces $[\mathcal{L}(sw_0w_0\cdot\lambda)]\neq \emptyset$ by (\ref{equationintroductioncycles}).
\end{example}
\begin{remark}
    The above strategy also allows obtaining certain companion points or constituents for non-de Rham trianguline representations. Theorem \ref{theoremintroductionsteinberg} suggests a partial classicality conjecture (a converse of Theorem \ref{theoremintroductionpartiallyclassical}) for almost de Rham representations with regular Hodge-Tate weights which is closely related to the locally analytic socle conjecture (Proposition \ref{propositiontwoconjecture}).
\end{remark}
\subsection{Outline of the paper}
We give a brief overview of the contents of the paper.\par
\S\ref{sectionunibranchness} studies the varieties appearing for the local models. The unibranch property of the local models is proved in \S\ref{sectionconnectedness} (Theorem \ref{theoremunibranchYP}). \S\ref{sectionsteinbergvarieties} contains the results on the generalized Steinberg varieties (Theorem \ref{theoremvarietyweight}). \S\ref{sectioncharacteristiccycles} is a complement of \S\ref{sectionsteinbergvarieties} to provide a point of view from geometric representation theory.\par
\S\ref{sectionlocalmodeltriangulinevariety} establishes the local models for the trianguline variety in the non-regular cases. This part follows closely with \cite{breuil2019local} in the regular cases. The first sections are devoted to recall and generalize the deformation theory of trianguline $(\varphi,\Gamma)$-modules. \S\ref{sectioncycles} transports the results on the Steinberg varieties in \S\ref{sectionsteinbergvarieties} to the trianguline variety via the local models.\par
\S\ref{sectionapplocations} contains our main results on companion points and constituents and their proofs. \S\ref{sectionlocalcompanionpoints} concerns the existence of local companion points on the trianguline variety. \S\ref{sectionglobalsettings} is to recall the global settings and \S\ref{sectionOrlikStrauch} recalls the theory of locally analytic representations and the definition of companion constituents. \S\ref{sectionthesocleconjection} is the core part where we prove the main theorems (Theorem \ref{theoremmaincrystalline} and Theorem \ref{theoremmainautomorphicforms}). The key induction step is Proposition \ref{propositionmaincycle} which uses the results on Steinberg varieties in \S\ref{sectionsteinbergvarieties} and Theorem \ref{theoremcyclepartialderham}. The proof of Theorem \ref{theoremcyclepartialderham} absorbs the results, postponed in \S\ref{sectionthepartialeigenvariety}, on the partially classical companion constituents. \par
\S\ref{sectionthepartialeigenvariety} concerns the partially classical families and the partial classicality. A large effort (\S\ref{sectionthepartialemertonjacquetmodule}-\S\ref{sectiondesityofclassicalpoints}) is for the construction of the partial eigenvarieties and studying their basic properties where most results have been obtained by Ding. We adapt his method in our setting to get the patched and more partial versions. The aim is to prove Theorem \ref{theoremQderham} in \S\ref{sectionpartiallyderhamtrianguline} on the partial de Rhamness of partially classical constituents which have been used in \S\ref{sectionthesocleconjection}. \S\ref{sectionconjectures} discusses the conjecture on partial classicality and several results for almost de Rham representations.\par
In Appendix \ref{sectionfamiliesofalmostdeRhamrepresetnations}, we generalize certain result of Berger-Colmez in \cite{berger2008familles} on de Rham families of Galois representations to almost de Rham families of $(\varphi,\Gamma)$-modules (Proposition \ref{propositionfamilydpdr}). The result we get is stronger than what we need (in the proof of Proposition \ref{propositionpartialvarietypartialderham} and Theorem \ref{theoremcyclepartialderham}) and is possibly known to experts. We include a proof as it might be of use in the future.
\subsection{Acknowledgments}
This is part of the author's PhD thesis. I would like to express my sincere gratitude to my advisor Benjamin Schraen for introducing me to the subject, for many helpful discussions and suggestions, and for his reading and comments on earlier drafts of this paper. I would like to thank Simon Riche for answering my questions and helpful suggestions and thank Yiwen Ding for answering my questions. Part of the work in this paper was presented at the Paris-London Number Theory Seminar (online) in November 2020, and I would like to thank the organizers for their invitation. This work was supported by Ecole Doctorale de Math\'ematique Hadamard (EDMH). I would like to thank the anonymous referee for helpful comments and corrections.
%I also want to thank the organization of the conference ``$p$-adic Langlands correspondence: a constructive and algorithmic approach'' in Rennes, September 2019, where I learned to use SageMath and \href{https://cocalc.com/}{Cocalc}.
\subsection{Notation}\label{sectionintroductionnotation}
\subsubsection{Reductive groups}
Let $G$ denote a connected split reductive group over a field $k$ with a maximal torus $T$, a Borel subgroup $B$ containing $T$ and the Levi decomposition $B=TU$. Write $\overline{B}$ for the opposite Borel subgroup and $\overline{U}$ for the unipotent radical of $\overline{B}$. Write $R^+$ (resp. $R$, resp. $R^-$) for the set of all positive roots with respect to $B$ (resp. roots, resp. negative roots with respect to $B$) of $G$. Write $\Delta$ for the set of positive simple roots. For a root $\alpha$, denote by $\alpha^{\vee}$ the corresponding coroot. We have in particular $\langle \alpha^{\vee},\alpha\rangle=2$, where $\langle-,-\rangle$ denotes the pairing between the lattices of coweights $X_{*}(T)$ and weights $X^*(T)$ of $T$. Write $W$ for the Weyl group of $G$ and for $\alpha\in R$, denote by $s_{\alpha}\in W$ for the corresponding reflection. Let $S=\{s_{\alpha}\mid \alpha\in\Delta\}$ be the set of simple reflections. For every $w\in W$, we fix an element $\dw\in N_G(T)(k)$ that is sent to $w$ via the isomorphism $N_G(T)/T\simeq W$ where $N_G(T)$ denotes the normalizer of $T$ in $W$. We have $\langle \alpha,s_{\beta}(\mu) \rangle=\langle s_{\beta}(\alpha),\mu \rangle$ for $\alpha\in R,\beta \in \Delta,\mu\in X_{*}(T)$.\par

We use fraktur letters $\fg$ (resp. $\fb$, resp. $\fp$, resp. $\ft$, resp. $\mathfrak{u}$, resp. $\overline{\fu}$, etc.) for the Lie algebra of $G$ (resp. $B$, resp. $P$, resp. $T$, resp. $U$, resp. $\overline{U}$, etc.). Denote by $\Ad:G\rightarrow \mathrm{End}(\fg)$ the adjoint representation. For a Lie algebra $\fg$, denote by $U(\fg)$ the universal enveloping algebra.

If $P$ is a standard parabolic subgroup of $G$ containing $B$, let $P=M_PN_P$ be the standard  Levi decomposition, where $M_P$ is the standard Levi subgroup containing $T$. Let $B_{M_P}=B\cap M_P$ and $U_{M_P}=U\cap M_P$. Let $R_P\subset R$ be the set of roots of $M_P$ and let $R_P^+=R^+\cap R_P,R_P^-=R^-\cap R_P,\Delta_P=\Delta\cap R_P$. Let $\fm_P$ (resp. $\fn_P$, resp. $\fb_{M_P}$, resp. $\fu_{M_P}$) be the Lie algebra of $M_P$ (resp. $N_P$, resp. $B_{M_P}$, resp. $U_{M_P}$). In particular, $\fn_B=\mathfrak{u}$.

Write $W_P$ for the Weyl group of $M_P$. Let $\lg(-)$ denote the length of elements in $W$ with respect to the set of simple reflections in $S$. We use the symbols $\leq,\geq,<,>$ to denote the strong Bruhat order (resp. partial Bruhat order) on $W$ (resp. $W/W_P$) with respect to the Coxeter system $(W,S)$ \cite[\S2.1, \S2.5]{bjorner2006combinatorics}. Write $W^P$ for the set of elements $w\in W$ that are the unique shortest elements in the cosets $wW_P$ (cf. \cite[\S2.4]{bjorner2006combinatorics}). Then $W=W^PW_P$. If $w\in W$, let $w=w^{P}w_P$ be the unique decomposition such that $w^P\in W^{P},w_P\in W_P$ (\cite[Prop. 2.4.4]{bjorner2006combinatorics}). The map $W\rightarrow W^P: w\mapsto w^P$ is order preserving (\cite[Prop. 2.5.1]{bjorner2006combinatorics}) and the partial order on $W/W_P$ is induced by the order on $W$ via the bijection $W^P\leftrightarrow W/W_P$. For $w\in W/W_P$, let $\mathrm{lg}_P(w):=\mathrm{lg}(w^P)$ where $w^P\in W^P\cap wW_P$. When it is clear from the context, for $w\in W$, we use the same notation $w$ to denote the coset $wW_P\in W/W_P$. Write $w_0$ (resp. $w_{P,0}$) for the longest element in $W$ (resp. $W_P$). 

We write $\overline{BwP/P}$ for the Schubert variety in the flag variety $G/P$ corresponding to $w\in W/W_P$. It is the closure of the Schubert cell $BwP/P$ in $G/P$ (cf. \cite[II.13.8]{jantzen2007representations}).\par

A weight $\lambda\in X^{*}(T)$ which is also viewed as a weight of $\ft$ is said to be a dominant (resp. antidominant) weight for a standard Levi subgroup $M_P$ or its Lie algebra $\fm_P$ (with respect to $B_{M_P}$ or $\fb_{M_P}$) if $\langle \alpha^{\vee}, \lambda \rangle\geq 0$ (resp. $\langle \alpha^{\vee}, \lambda \rangle\leq 0$) for all $\alpha\in \Delta_P$.\par
The dot action is given by $w\cdot\lambda=w(\lambda+\rho)-\rho$ for all $w\in W$ and $\lambda\in X^{*}(T)$ where $\rho$ is the half sum of all positive roots.
\subsubsection{Local fields}
Let $K$ be a finite extension of $\Q_p$ with a uniformizer $\varpi_K$. Write $\cO_K$ for the ring of integers of $K$ and $k_K$ for the residue field. Let $\overline{K}$ be an algebraic closure of $K$ and $C$ be the completion of $\overline{K}$. Let $K_0$ be the maximal unramified subfield of $K$. Write $\cG_K:=\Gal(\overline{K}/K)$ for its Galois group. Let $\BdR^{+}=\BdR^+(C),\BdR=\BdR^{+}[\frac{1}{t}]$ be Fontaine's de Rham period rings, where $t$ is Fontaine's $2\pi i$. Let $K(\mu_{\infty})$ be the extension of $K$ by adding all $p$-th power roots of unity, and we define $\Gamma_K:=\Gal(K(\mu_{\infty})/K)$.\par
Take $L$ a finite extension of $\Q_p$ that splits $K$. Let $\cC_L$ denote the category of commutative local Artinian $L$-algebras with residue field $L$. If $A\in\cC_L$, let $\fm_A$ be its maximal ideal and the tensor product $-\otimes_AL$ is always with respect to the map modulo $\fm_A$. Let $\Sigma$ be the set of embeddings $\tau:K\hookrightarrow L$. \par
Write $\cT$ for the $\Q_p$-rigid analytic space parametrizing continuous characters of $K^{\times}$ (cf. \cite[Exam. 6.1.5]{kedlaya2014cohomology}) and $\cT_L=\cT\times_{\Q_p}L$. If $A$ is an affinoid $L$-algebra, and $\delta:K^{\times}\rightarrow A^{\times}$ is a continuous character, i.e. locally $\Q_p$-analytic, then define the weight $\mathrm{wt}(\delta)\in\mathrm{Hom}_{\Q_p}(K,A):x\mapsto \frac{d}{dt}\delta(\exp(tx))\mid_{t=0}$. We identify $\mathrm{Hom}_{\Q_p}(K,A)$ with $K\otimes_{\Q_p}A$ via the trace pairing of $K$ and let $\mathrm{wt}_{\tau}(\delta)$ be the $\tau$-part of $\mathrm{wt}(\delta)\in K\otimes_{\Q_p}A=\prod_{\tau\in\Sigma}A$. \par
Let $\epsilon$ be the cyclotomic character of $\cG_K$ and we still use $\epsilon$ to denote the character $\mathrm{N}_{K/\Q_p}|\mathrm{N}_{K/\Q_p}|_{\Q_p}$ of $K^{\times}$ where $\mathrm{N}_{K/\Q_p}$ is the norm map and $|-|_{\Q_p}$ is the standard valuation of $\Q_p$. For any $a\in L^{\times}$, let $\mathrm{unr}(a):K^{\times}\rightarrow L^{\times}$ be the unramified character sending $\varpi_K$ to $a$. If $\mathbf{k}=(k_{\tau})_{\tau\in\Sigma}\in\Z^{\Sigma}$, write $z^{\mathbf{k}}$ for the character $K^{\times}\rightarrow L^{\times}:z\mapsto \prod_{\tau\in\Sigma}\tau(z)^{k_{\tau}}$. If $\underline{\delta}=(\delta_i)_{i\in I}:(K^{\times})^{I}\rightarrow A^{\times}$ is a continuous character of $(K^{\times})^I$ for a finite set $I$, we write $\wt(\underline{\delta}):=(\wt(\delta_i))_{i\in I}\in (K\otimes_{\Q_p}A)^{I}$ and similarly for $\wt_{\tau}(\underline{\delta})$. If $A$ is a finite local $L$-algebra and $\delta:K^{\times}\rightarrow A^{\times}$, then we say $\delta$ is ($\Q_p$-)algebraic (resp. locally ($\Q_p$-)algebraic, resp. smooth) if $\delta=z^{\mathbf{k}}$ for some $\mathbf{k}\in\Z^{\Sigma}$ (resp. $\wt_{\tau}(\delta)\in\Z\subset A,\forall\tau\in\Sigma$, resp. $\wt(\delta)=0$). We say $\underline{\delta}:(K^{\times})^I\rightarrow A^{\times}$ is ($\Q_p$-)algebraic (resp. locally ($\Q_p$-)algebraic, resp. smooth) if $\delta_i$ is ($\Q_p$-)algebraic (resp. locally ($\Q_p$-)algebraic, resp. smooth) for all $i\in I$.\par
If $X$ is a rigid space, we write $\cR_{X,K}$ for the Robba ring of $K$ over $X$ (\cite[Def. 6.2.1]{kedlaya2014cohomology}, our notation follows \cite{breuil2019local}) and if $A$ is an affinoid algebra, write $\cR_{A,K}:=\cR_{\mathrm{Sp}(A),K}$. If $\delta:K^{\times}\rightarrow\Gamma(X, \cO_X)^{\times}$ is a continuous character, let $\cR_{X,K}(\delta)$ (or $\cR_{A,K}(\delta)$ if $X=\Sp(A)$) be the rank one $(\varphi,\Gamma_K)$-module over $\cR_{X,K}$ constructed in \cite[Cons. 6.2.4]{kedlaya2014cohomology}. If $D_X$ is a $(\varphi,\Gamma_K)$-module over $\cR_{X,K}$, set $D_X(\delta):=D_X\otimes_{\cR_{X,K}}\cR_{X,K}(\delta)$.
\subsubsection{Miscellaneous}
For a positive integer $n$, write $\mathcal{S}_n$ for the $n$-th symmetric group.\par
If $x$ is a point on $X$, a scheme locally of finite type over a field or a rigid analytic variety, then we denote by $k(x)$ the residue field at $x$. Write $X^{\mathrm{red}}$ for the underlying reduced subspace. 

If $X$ is a scheme locally of finite type over a finite extension $L$ of $\Q_p$, then we write $X^{\mathrm{rig}}$ for its rigid analytification (\cite[\S5.4]{bosch2014lectures}). If $R$ is a commutative Noetherian complete local ring over $\cO_L$ of residue field finite over $k_L$, then we denote by $\Spf(R)$ the formal scheme defined by $R$ with its maximal ideal and we write $\Spf(R)^{\mathrm{rig}}$ for its rigid generic fiber in the sense of Berthelot (\cite[\S7]{de1995crystalline}). 

If $R$ is a commutative ring, then write $R^{\mathrm{red}}:=R/J$ for its nilreduction where $J$ is the nilradical of $R$. If $R$ is a commutative Noetherian local ring, denote by $\widehat{R}$ the completion of $R$ with respect to the maximal ideal of $R$. 

If $Z$ is a topologically finitely generated abelian $p$-adic Lie group, then we write $\widehat{Z}$ for the rigid analytic space over $\Q_p$ parameterizing continuous characters of $Z$ (cf. \cite[Prop. 6.1.1]{kedlaya2014cohomology}).

If $\fg$ is a finite-dimensional Lie algebra over a field $k$, then we use the same notation $\fg$ to denote the affine scheme over $k$ such that $\fg(A)=A\otimes_k\fg$ for any commutative $k$-algebra $A$.

If $V$ is a module over a ring $R$ and $I$ is an ideal of $R$, then write $V[I]$ for the subset $\{v\in V\mid av=0,\forall a\in I \}$. If $V$ is a vector space over a field with a linear action of a group $G$, then write $V^G$ for the subspace $\{v\in V\mid gv=v,\forall g\in G\}$. 
\section{Unibranchness}\label{sectionunibranchness}
In this section, we study some generalized version of the varieties built from Grothendieck's simultaneous resolution in \cite{riche2008geometric}, \cite{bezrukavnikov2012affine} and \cite{breuil2019local}. \par
We fix a connected split reductive group $G$ over a field $k$ with characteristic that is very good for $G$ (\cite[Def. VI.1.6]{kiehl2013weil}, we will only need the case when $\mathrm{char}(k)=0$), with a maximal torus $T$, a Borel subgroup $B=TU$ containing $T$ and a standard parabolic subgroup $P=M_PN_P$. Let $\fg$ (resp. $\fb$, resp. $\fp$, resp. $\ft$, etc.) be the Lie algebra of $G$ (resp. $B$, resp. $P$, resp. $T$, etc.). Let $W$ be the Weyl group of $G$.
\subsection{The varieties}\label{sectionthevariety}
We shall define the varieties that we are going to study. Define the following schemes over $k$:
\begin{align*}
X_P&:=\left\{\left(\nu,g_1B,g_2P\right)\in \fg\times G/B\times G/P\mid \Ad\left(g_1^{-1}\right)\nu\in\fb, \Ad\left(g_2^{-1}\right)\nu\in\fp\right\}\\
Y_P&:=\left\{(\nu,gP)\in \fb\times G/P\mid \Ad\left(g^{-1}\right)\nu\in\fp\right\}.
\end{align*}
If $P=B$, then $X_P$ is defined in \cite{breuil2019local} and we denote by $X:=X_B,Y:=Y_B$. The scheme $X_P$ (resp. $Y_P$) is equipped with a left $G$-action (resp. $B$-action) given by $g\left(\nu,g_1B,g_2P\right)=(\Ad\left(g\right)\nu,gg_1B,gg_2P)$ for any $g\in G,\left(\nu,g_1B,g_2P\right)\in X_P$ (resp. $b(\nu,gP)=(\Ad(b)\nu,bgP)$ for any $b\in B,(\nu,gP)\in Y_P$). The morphism $G\times^BY_P\rightarrow X_P$ sending $(g,(\nu,g_1P))$ to $(\Ad\left(g\right)\nu,gB,gg_1P)$ is an isomorphism, where the notation of $G\times^BY_P$ is taken from \cite[I.5.14]{jantzen2007representations}. Let $\overline{U}$ be the opposite unipotent subgroup with respect to $B$. The projection $G\rightarrow G/B$ is locally trivial: $G$ is covered by open subsets of the form $g\overline{U}B,g\in G$ and $g\overline{U}B\simeq \overline{U}\times B$ as varieties. Hence $X_P\simeq G\times^BY_P$ is covered by open subschemes that are isomorphic to $\overline{U}\times Y_P$. Note that $\overline{U}$ is smooth. \par
Suppose $w\in W/W_P$. Let $U_{P,w}=\left\{(g_1B,g_2P)\in G/B\times G/P\mid g_1^{-1}g_2P\in BwP/P\right\}$ be the equivariant partial Schubert cell in $G/B\times G/P$. Then $U_{P,w}$ is a locally closed subscheme in $G/B\times G/P$ of dimension $\dim G-\dim B+ \mathrm{lg}_P(w)$ (\cite[II.13.8]{jantzen2007representations}). We let 
\begin{equation}\label{equationdefinitionVPw}
	V_{P,w}:=\left\{\left(\nu,g_1B,g_2P\right)\in X_P\mid g_1^{-1}g_2P\in BwP \right\}
\end{equation}
be the preimage of $U_{P,w}$ in $X_P$ under the natural projection $X_P\rightarrow G/B\times G/P$ and define $V^Y_{P,w}:=\left\{(\nu,gP)\in \fb\times BwP/P\mid \Ad\left(g^{-1}\right)\nu\in\fp\right\}$ similarly. 
\begin{definition}\label{definitionirreduciblecomponents}
	For every $w\in W/W_P$, let $X_{P,w}$ (resp. $Y_{P,w}$) be the Zariski closure of $V_{P,w}$ in $X_P$ (resp. the Zariski closure of $V^Y_{P,w}$ in $Y_P$) equipped with the reduced induced subscheme structure. When $P=B$, we write $X_{w}$ (resp. $Y_{w}$) for $X_{B,w}$ (resp. $Y_{B,w}$).	
\end{definition}
We define a variety $\widetilde{\fg}_P:=\left\{(\nu,gP)\in\fg\times G/P\mid \Ad\left(g^{-1}\right)\nu\in\fp\right\}\simeq G\times^P\fp$ and denote by $\widetilde{\fg}:=\widetilde{\fg}_B$. The projection to the first factor $q_P:\widetilde{\fg}_P\rightarrow \fg$ is the (partial) Grothendieck simultaneous resolution (\cite[VI.8]{kiehl2013weil}). The scheme $X_P$ is isomorphic to $\widetilde{\fg}\times_{\fg}\widetilde{\fg}_P$ as in \cite{breuil2019local} for $X$.\par
Define $\mathrm{pr}_{P}: X_P=\widetilde{\fg}\times_{\fg}\widetilde{\fg}_P\rightarrow \widetilde{\fg}$ to be the projection to the first factor and $\mathrm{pr}_{P,w}$ to be its restriction to $X_{P,w}$. Similarly, we can define morphisms $\mathrm{pr}^Y_P$ and $\mathrm{pr}^Y_{P,w}$ which send $(\nu,gP)\in Y_P\subset \fb\times G/P$ to $\nu\in \fb$. We let $\fg^{\mathrm{reg}}\subset \fg$ (resp. $\fg^{\mathrm{reg-ss}}$) be the open subscheme of $\fg$ consisting of regular elements which by definition are those elements in $\fg$ whose orbits under the adjoint action of $G$ have the maximal possible dimension (resp. regular semisimple elements). Let $\tildefg^{\mathrm{reg}}:=q_B^{-1}(\fg^{\mathrm{reg}})$ (resp. $\tildefg^{\mathrm{reg-ss}}:=q_B^{-1}(\fg^{\mathrm{reg-ss}})$).
\begin{proposition}\label{propositionXPbasic}
	\begin{enumerate}
		\item The scheme $X_P$ (resp. $Y_P$) is reduced, is a locally complete intersection, hence Cohen-Macaulay, and is equidimensional of dimension $\dim G$ (resp. $\dim B$). Its irreducible components are $X_{P,w}$ (resp. $Y_{P,w}$) for $w\in W/W_P$.
		\item For each $w\in W/W_P$, the morphism $\mathrm{pr}_{P,w}:X_{P,w}\rightarrow \tildefg$ (resp. $\mathrm{pr}^Y_{P,w}:Y_{P,w}\rightarrow \fb$) is proper birational surjective and is an isomorphism over $\tildefg^{\mathrm{reg}}$ (resp. $\fb^{\mathrm{reg}}:=\fb\cap \fg^{\mathrm{reg}}$). 
	\end{enumerate}
\end{proposition}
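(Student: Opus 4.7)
The plan is to reduce both statements to the already-established case $P=B$ (which is in \cite{breuil2019local}) and to a careful dimension analysis of the Schubert-type decomposition of $Y_P$. The key observation is that the paper's preamble already identifies $X_P \simeq G \times^B Y_P$, and since $G \to G/B$ is locally trivial with smooth $\overline{U}$-fibers, the local properties (reduced, locally complete intersection, Cohen--Macaulay, equidimensional of a given dimension, irreducibility of components) all transfer between $Y_P$ and $X_P$. Thus I would first prove (1) for $Y_P$ and then transfer.

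For the core dimension count on $Y_P$, I would analyze each locally closed piece $V^Y_{P,w}$ indexed by $w \in W/W_P$. Picking the shortest representative $w^P \in W^P$, every $gP \in Bw^PP/P$ can be written as $b\dw^P P$ with $b \in B$ (well-defined modulo $H:= B \cap \dw^P P (\dw^P)^{-1}$), and the change of variables $\nu' := \Ad(\dw^{-P})\Ad(b^{-1})\nu$ produces an isomorphism
\[
    V^Y_{P,w} \;\simeq\; B \times^{H} \bigl(\fp \cap \Ad(\dw^{-P})\fb\bigr),
\]
from which $V^Y_{P,w}$ is irreducible. Writing $\dim H = \dim T + N_2$ and $\dim(\fp \cap \Ad(\dw^{-P})\fb) = \dim T + N_1$ in terms of root sets, the substitution $\beta = \dw^{-P}\alpha$ immediately gives $N_1 = N_2$, hence $\dim V^Y_{P,w} = \dim B$ independently of $w$. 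Since the $V^Y_{P,w}$ are disjoint, locally closed, cover $Y_P$, and each is irreducible of dimension $\dim B$, their closures $Y_{P,w}$ form the set of irreducible components of $Y_P$, and $Y_P$ is equidimensional.

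To obtain the LCI/CM/reduced properties, I would view $Y_P \subset \fb \times G/P$ as the zero locus of the canonical section of the rank $\dim G/P$ vector bundle on $\fb \times G/P$ with fiber $\fg/\Ad(g)\fp$ over $(\nu,gP)$. The codimension computed above matches the rank, so $Y_P$ is a local complete intersection in a smooth ambient, hence Cohen--Macaulay; since each cell $V^Y_{P,w}$ is smooth (via the fiber bundle description), $Y_P$ is generically reduced, and Cohen--Macaulay plus generically reduced forces reduced. Transferring via $X_P \simeq G \times^B Y_P$ yields the same properties for $X_P$, of dimension $\dim G - \dim B + \dim B = \dim G$, with irreducible components $X_{P,w} \simeq G \times^B Y_{P,w}$ indexed by $W/W_P$.

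For (2), properness of $\mathrm{pr}_{P,w}$ follows from properness of $G/P$: the projection $q_P:\widetilde{\fg}_P \to \fg$ is proper, so its base change $\mathrm{pr}_P:X_P = \widetilde{\fg} \times_\fg \widetilde{\fg}_P \to \widetilde{\fg}$ is proper, and restriction to the closed subscheme $X_{P,w}$ remains proper. Surjectivity and birationality come from analyzing fibers over $\widetilde{\fg}^{\mathrm{reg\text{-}ss}}$: for $(\nu,g_1B)$ with $\nu$ regular semisimple, the parabolics of type $P$ containing $\nu$ are in bijection with $W/W_P$, and exactly one of them lies in the cell $V_{P,w}$ (the one of relative position $w$ with $g_1B$). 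Thus $\mathrm{pr}_{P,w}$ has degree one over a dense open, and properness plus source/target both irreducible of dimension $\dim G$ yields birationality and surjectivity. The isomorphism statement over $\widetilde{\fg}^{\mathrm{reg}}$ follows from the known case $P = B$ (proven in \cite{breuil2019local} for $X_w$, and trivially for $Y_w$ since $\mathrm{pr}^Y_w: Y_w \to \fb$ is identity over $\fb^{\mathrm{reg}}$) combined with factoring $\mathrm{pr}_{P,w}$ through $X_w \to X_{P,w}$ for a lift $w \in W$; since $X_w \to \widetilde{\fg}$ is an isomorphism over $\widetilde{\fg}^{\mathrm{reg}}$, the composition and hence $\mathrm{pr}_{P,w}$ is injective on the preimage of $\widetilde{\fg}^{\mathrm{reg}}$, and Zariski's Main Theorem (normality of $\widetilde{\fg}^{\mathrm{reg}}$, which is smooth) upgrades this to an isomorphism. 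The main obstacle is the combinatorial identity $N_1 = N_2$ making the cell dimensions uniformly $\dim B$; once that symmetry is extracted, the rest is formal.
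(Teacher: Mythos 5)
Your approach is essentially the same as the paper's: both rest on the Schubert cell decomposition with fibers of complementary dimension (you work on $Y_P$ over $BwP/P$ and transfer via $X_P\simeq G\times^BY_P$, the paper does the count directly on $X_P$ over $U_{P,w}\subset G/B\times G/P$; your root-count identity $N_1=N_2$ is exactly the content behind the formula $\dim(\fb\cap\Ad(\dw^P)\fp)=\dim B-\lg_P(w)$ from Lemma~\ref{lemmaweylgroupshortestelement}), on counting equations for LCI hence CM, on CM plus generic smoothness for reducedness, and on the $W/W_P$-covering over $\fg^{\mathrm{reg-ss}}$ for surjectivity and birationality. These differences are cosmetic.

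The one genuinely soft spot is the last step of part (2), deducing the isomorphism over $\tildefg^{\mathrm{reg}}$. You factor $\mathrm{pr}_{P,w}$ through $p_{P,w}\colon X_w\to X_{P,w}$ and assert that injectivity of the composite over $\tildefg^{\mathrm{reg}}$ forces injectivity of $\mathrm{pr}_{P,w}$; that inference requires $p_{P,w}$ to be surjective over $\tildefg^{\mathrm{reg}}$, which you never justify. That surjectivity is established in the paper only in Proposition~\ref{propositionsequencebirational}, whose proof cites the present proposition, so you would need to supply it independently to avoid circularity. This is a small fix: take the lift $w\in W^P$ (not an arbitrary lift), note that $V_{B,w}\to V_{P,w}$ is an isomorphism by Lemma~\ref{lemmaweylgroupshortestelement}(2) (cf.\ Remark~\ref{remarkisoopensubvarieties}), observe that $p_{P,w}$ is proper as a restriction of the base change $p_P$ of $q_{B,P}$, and conclude that its image is a closed subset of $X_{P,w}$ containing the dense open $V_{P,w}$, hence all of $X_{P,w}$. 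Alternatively, you can dispense with the factorization entirely and argue directly from Lemma~\ref{lemmagaloiscovering}: since $q_P$ is finite over $\fg^{\mathrm{reg}}$, by base change $\mathrm{pr}_P$, and hence its restriction $\mathrm{pr}_{P,w}$, is finite over $\tildefg^{\mathrm{reg}}$, and a finite birational morphism to the normal variety $\tildefg^{\mathrm{reg}}$ is an isomorphism.
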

\begin{proof}
	The proof goes in the same way as that in \cite[\S2.2]{breuil2019local} and we only give a sketch here. The fiber of the projection $V_{P,w}\rightarrow U_{P,w}$ over a point $(g_1B,g_2P)\in U_{P,w}\subset  G/B\times G/P$ is 
	\[\left\{\left(\nu,g_1B,g_2P\right)\in X_{P}\mid \nu\in \Ad(g_1)\fb\cap \Ad(g_2)\fp\right\}\]
	which is isomorphic to $\fb\cap \Ad(\dw)\fp$ as schemes where $\dw\in W^P$ is the shortest element in $wW_P$. The variety $\fb\cap \Ad(\dw)\fp$ is an affine space of dimension $\dim B-\lg_P(w)$ (\cite[Prop. 3.9 (ii)]{borel1972complements} or Lemma \ref{lemmaweylgroupshortestelement} below). Using \cite[Lem. 2.2.2]{breuil2019local} we see that $V_{P,w}$ is a geometric vector bundle over $U_{P,w}$ of total dimension $\dim G$. Hence $X_{P,w}$ is irreducible of dimension $\dim G$ for every $w\in W/W_P$. The scheme $X_P$ is a union of the subsets $X_{P,w},w\in W/W_P$ and is locally cut out by $\dim G-\dim B+\dim G-\dim P$ equations from a smooth variety $\fg\times G/B\times G/P$. Thus $X_P$ is locally of complete intersection, hence Cohen-Macaulay and equidimensional of dimension $\dim G$. The reducedness of $X_P$ and (2) for $X_P$ can be proved by the same arguments in the proof of \cite[Thm. 2.2.6]{breuil2019local} using Lemma \ref{lemmagaloiscovering} below to argue that each $X_{P,w}$ contains one point in the fiber over any point of $\tildefg^{\mathrm{reg-ss}}$ of the map $\mathrm{pr}_P:X_P\rightarrow\widetilde{\fg}$. The proof of results for $Y_P$ is similar or using the results for $X_P$ together with the isomorphism $G\times^BY_P\simeq X_P$.  
\end{proof}
There is a natural proper surjective morphism of schemes $q_{B,P}:\widetilde{\fg}\rightarrow\widetilde{\fg}_P, (\nu,gB)\mapsto (\nu,gP)$. In fact, the surjectivity can be tested over an algebraically closed field and for closed points since the source and the target are both algebraic varieties. For any geometric point $(\nu,g)\in \fg\times G$ such that $\Ad\left(g^{-1}\right)\nu\in \fp$, one can always find an element $h\in M_P$ such that $\Ad(h^{-1}g^{-1})\nu\in\fb$, then the point $(\nu,ghB)\in X$ is sent to $(\nu,gP)\in X_P$ by $q_{B,P}$. Now we have a factorization of $q_B:\tildefg\stackrel{q_{B,P}}{\rightarrow}\widetilde{\fg}_P\stackrel{q_P}{\rightarrow} \fg$. The following lemma is a plain generalization of \cite[Prop. 2.1.1]{breuil2019local}.
\begin{lemma}\label{lemmagaloiscovering}
	The morphism $q_P:\tildefg_P\rightarrow \fg$ is proper and surjective. It is finite over $\fg^{\mathrm{reg}}$ and is \'etale of degree $|W/W_P|$ over $\fg^{\mathrm{reg-ss}}$.
\end{lemma}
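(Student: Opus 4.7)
My plan is to reduce everything to the well-known case $P=B$ (i.e.\ the classical Grothendieck--Springer simultaneous resolution $q_B:\tildefg\rightarrow\fg$), via the factorization $q_B = q_P\circ q_{B,P}$ already recorded in the paragraph preceding the lemma. Recall that $q_B$ is known to be proper surjective, finite over $\fg^{\mathrm{reg}}$, and étale Galois with group $W$ over $\fg^{\mathrm{reg-ss}}$; this is what Proposition 2.1.1 of \cite{breuil2019local} records and is the $P=B$ version of our statement.

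For the two easy assertions: properness of $q_P$ follows from the closed embedding $\tildefg_P\hookrightarrow \fg\times G/P$ combined with the properness of the second projection (since $G/P$ is projective). Surjectivity then follows formally from the factorization together with the surjectivity of $q_B$ (or, equivalently, from the surjectivity of $q_{B,P}$ already established in the text plus surjectivity of $q_B$). For finiteness over $\fg^{\mathrm{reg}}$, it suffices to observe that $q_P$ is proper with finite fibers on this locus: any geometric fiber of $q_P$ over $\nu\in\fg^{\mathrm{reg}}$ is the image under $q_{B,P}$ of the fiber of $q_B$ over $\nu$, which is finite by the $P=B$ case; a proper morphism with finite fibers is finite.

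For the degree and étaleness over $\fg^{\mathrm{reg-ss}}$, the plan is to identify the restriction of $q_P$ to $q_P^{-1}(\fg^{\mathrm{reg-ss}})$ as the quotient of the restriction of $q_B$ to $q_B^{-1}(\fg^{\mathrm{reg-ss}})$ by the free action of $W_P$. Concretely, for a geometric $\nu\in\fg^{\mathrm{reg-ss}}$ the centralizer of $\nu$ is the unique maximal torus $T_\nu$ containing $\nu$, so Borel subalgebras containing $\nu$ are in bijection with $W$ (the set of Borels containing $T_\nu$), while parabolic subalgebras of type $P$ containing $\nu$ are in bijection with $W/W_P$ (two Borels containing $T_\nu$ produce the same type-$P$ parabolic iff they are $W_P$-related). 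This gives $|W/W_P|$ geometric points in the fiber, and the map $q_{B,P}$ restricted to this locus identifies with the étale quotient by $W_P$. Combining with the fact that $q_B$ is étale over $\fg^{\mathrm{reg-ss}}$, we deduce that $q_P$ is étale of degree $|W/W_P|$ there (one can formalize this either by descent along the free $W_P$-action on $q_B^{-1}(\fg^{\mathrm{reg-ss}})$, or by noting that a finite surjective map onto a smooth variety which is étale after composition with an étale map is itself étale).

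The only non-routine point is the identification of geometric fibers over $\fg^{\mathrm{reg-ss}}$ with $W/W_P$; once this is in hand, the rest is formal from the $P=B$ case. I therefore expect this fiber analysis to be the substantive step, and everything else to be bookkeeping with the factorization $q_B=q_P\circ q_{B,P}$.
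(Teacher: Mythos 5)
Your proposal follows essentially the same route as the paper's proof: properness via the closed embedding into $\fg\times G/P$, surjectivity and finiteness over $\fg^{\mathrm{reg}}$ by comparing with $q_B$ through the factorization $q_B=q_P\circ q_{B,P}$, and the étale degree over $\fg^{\mathrm{reg-ss}}$ via the identification of $q_{B,P}$ there with the quotient by the free $W_P$-action. The paper makes this last step precise by first fixing the isomorphism $\ft^{\mathrm{reg}}\times G/T\simrightarrow\tildefg^{\mathrm{reg-ss}}$ from \cite[Thm.~VI.9.1]{kiehl2013weil}, checking the induced map $(\ft^{\mathrm{reg}}\times G/T)/W_P\to q_P^{-1}(\fg^{\mathrm{reg-ss}})$ is a bijection on $k$-points, and then using an infinitesimal argument plus smoothness of both sides; your geometric-fiber description (parabolics of type $P$ containing a regular semisimple $\nu$ all contain the centralizer torus $T_\nu$ and are indexed by $W/W_P$) is the same underlying content, stated less carefully. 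One small caveat: the ``cancellation'' alternative you offer (``a finite surjective map onto a smooth variety which is étale after composition with an étale map is itself étale'') is not really a second route, since étaleness of the intermediate map $q_{B,P}$ over $\fg^{\mathrm{reg-ss}}$ is precisely what the quotient description furnishes; both of your alternatives reduce to establishing that description.
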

\begin{proof}
		The properness of $q_P$ comes from the factorization $q_P: \tildefg_P \hookrightarrow\fg\times G/P\rightarrow \fg$ and the fact that the flag variety $G/P$ is proper. Since $q_B=q_P\circ q_{P,B}$ and $q_B$ is surjective by \cite[Prop. 2.1.1]{breuil2019local}, $q_P$ must be surjective. Since $q_B$ is quasi-finite over $ \fg^{\mathrm{reg}}$, for any point $s\in \fg^{\mathrm{reg}}$, the fiber $q_B^{-1}(s)$ is finite, hence the fiber $q_P^{-1}(s)=q_{B,P}\left(q_B^{-1}(s)\right)$ is finite using the fact that the map $q_{B,P}$ is surjective. Hence $q_{P}$ is quasi-finite over $\fg^{\mathrm{reg}}$ and thus is also finite over $\fg^{\mathrm{reg}}$ since $q_{P}$ is proper. Let $\ft^{\mathrm{reg}}$ be the open subscheme of $\ft$ consisting of regular elements in the Lie algebra $\ft$ of the torus $T$. By the proof of \cite[Thm. VI.9.1]{kiehl2013weil} and the assumption that the characteristic of $k$ is good for $G$, the morphism $\ft^{\mathrm{reg}}\times G/T\rightarrow \tildefg^{\mathrm{reg-ss}}: (t,gT)\mapsto (\Ad\left(g\right)t,gB)$ is an isomorphism. The Weyl group $W$ acts on the right on $\ft^{\mathrm{reg}}\times G/T$ by $\dw(t,gT)=(\Ad(\dw^{-1})t,g\dw T)$ for $w\in W$. Then the composite map $q'_B:\ft^{\mathrm{reg}}\times G/T\stackrel{\sim}{\rightarrow}\tildefg^{\mathrm{reg-ss}}\stackrel{q_B}{\rightarrow} \fg^{\mathrm{reg-ss}}$ is a Galois covering with Galois group $W$. Consider the morphism $q'_{B,P}:\ft^{\mathrm{reg}}\times G/T\stackrel{\sim}{\rightarrow}q^{-1}_B(\fg^{\mathrm{reg-ss}})\stackrel{q_{B,P}}{\rightarrow} q_{P}^{-1}(\fg^{\mathrm{reg-ss}})$. One check that $q'_{B,P}$ factors through $(\ft^{\mathrm{reg}}\times G/T)/W_P$, the \'etale sub-covering of $q'_B$ associated with the subgroup $W_P$. We only need to verify that the induced morphism of varieties $(\ft^{\mathrm{reg}}\times G/T)/W_P\rightarrow q_{P}^{-1}(\fg^{\mathrm{reg-ss}})$ is an isomorphism. We may assume $k$ is algebraically closed. If two $k$-points $(t_1,g_1),(t_2,g_2)\in (\ft^{\mathrm{reg}}\times G)(k)$ are sent to the same point in $\tildefg_P$, then $\Ad(g_1)t_1=\Ad(g_2)t_2$ and $g_1^{-1}g_2\in P(k)$. Since $t_1,t_2$ are regular, their centralizer in $G$ is $T$. Comparing the centralizer of $\Ad(g_1)t_1$ and $\Ad(g_2)t_2$ we get $g_2^{-1}g_1\in N_G(T)(k)\cap P(k)$. Thus the image of $g_2^{-1}g_1$ in the Weyl group lies in $W_P$. Hence the map on $k$-points $\left((\ft^{\mathrm{reg}}\times G/T)/W_P\right)(k)\rightarrow q_{P}^{-1}(\fg^{\mathrm{reg-ss}})(k)$ is a bijection. Now using an infinitesimal argument as in Step 2 and Step 5 of the proof of \cite[Thm. VI.9.1]{kiehl2013weil}, and notice that both $(\ft^{\mathrm{reg}}\times G/T)/W_P$ (being an \'etale covering of the smooth variety $\fg^{\mathrm{reg-ss}}$) and $q_{P}^{-1}(\fg^{\mathrm{reg-ss}})$ (being an open subscheme of the smooth variety $\tildefg_P$) are smooth varieties, we conclude that the map $(\ft^{\mathrm{reg}}\times G/T)/W_P\rightarrow q_{P}^{-1}(\fg^{\mathrm{reg-ss}})$ is an isomorphism.
\end{proof}
We have a surjective proper morphism $p_{P}:X=\widetilde{\fg}\times_{\fg}\widetilde{\fg}\rightarrow X_P=\widetilde{\fg}\times_{\fg}\widetilde{\fg}_P$ from $q_{B,P}:\tildefg\rightarrow\tildefg_P$ by base change. For $w\in W$, we will use the same notation $w$ to denote the image $wW_P$ in $W/W_P$ when it is clear from the context and write $X_{P,w},V_{P,w},U_{P,w}$, etc. for simplicity. The natural morphism $G/B\times G/B\rightarrow G/B\times G/P$ sends $U_{B,w}$ to $U_{P,w}$ for every $w\in W$. Thus the open dense subscheme $V_{B,w}$ of $X_w$ is sent into the open dense subscheme $V_{P,w}$ of $X_{P,w}$ by $p_P$. Since $X_w$ is a reduced closed subscheme of $X$ and $X_{P,w}$ is Zariski closed in $X_{P}$, $p_P$ sends each $X_w$ into $X_{P,w}$. We let $p_{P,w}$ be the restriction of $p_P$ on $X_w$. Then there is a factorization $\mathrm{pr}_{B,w}:X_w\stackrel{p_{P,w}}{\rightarrow} X_{P,w}\stackrel{\mathrm{pr}_{P,w}}{\rightarrow}\tildefg$. Since both $\mathrm{pr}_{B,w}$ and $\mathrm{pr}_{P,w}$ are proper birational morphisms by Proposition \ref{propositionXPbasic}, so is $p_{P,w}$. We have a similarly defined morphism $p_{P,w}^Y:Y_{w}\rightarrow Y_{P,w}$ for every $w\in W$ and we get a sequence of proper birational morphisms $Y_w\stackrel{p_{P,w}^Y}{\rightarrow}Y_{P,w}\stackrel{\mathrm{pr}^Y_{P,w}}{\rightarrow}\fb$. Since $X_{P,w}$ (resp. $Y_{P,w}$) is irreducible, we have
\begin{proposition}\label{propositionsequencebirational}
	For every $w\in W$, the morphism $p_{P,w}: X_w\rightarrow X_{P,w}$ (resp. $p_{P,w}^Y: Y_w\rightarrow Y_{P,w}$) is a proper birational surjection.
\end{proposition}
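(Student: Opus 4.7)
My plan is to observe that the statement is essentially a formal consequence of Proposition \ref{propositionXPbasic} combined with the factorization $\mathrm{pr}_{B,w} = \mathrm{pr}_{P,w}\circ p_{P,w}$ (and its $Y$-analogue $\mathrm{pr}_{B,w}^Y = \mathrm{pr}_{P,w}^Y\circ p_{P,w}^Y$). I shall handle the $X$-case; the $Y$-case is identical in spirit, or alternatively can be deduced using the $B$-equivariant isomorphism $G\times^B Y_P\simeq X_P$ (and similarly $G\times^B Y\simeq X$) together with the compatibility of $p_{P,w}$ with $p_{P,w}^Y$ under this isomorphism.

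First I address properness. Proposition \ref{propositionXPbasic}(2) gives that $\mathrm{pr}_{B,w}$ and $\mathrm{pr}_{P,w}$ are both proper. The standard cancellation lemma (\emph{if} $g\circ f$ \emph{is proper and} $g$ \emph{is separated then} $f$ \emph{is proper}) applied with $f=p_{P,w}$ and $g=\mathrm{pr}_{P,w}$, together with the fact that proper morphisms are separated, yields that $p_{P,w}\colon X_w\to X_{P,w}$ is proper. Concretely one may also argue directly: $X_w$ is closed in $X$, the map $X\to X_P$ is proper (it is obtained by base change from the proper morphism $q_{B,P}\colon\widetilde\fg\to\widetilde\fg_P$, which is proper as $G/B\to G/P$ is proper), so its restriction to the closed subscheme $X_w$ with target $X_{P,w}$ remains proper.

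Next, birationality. By Proposition \ref{propositionXPbasic}(2), $\mathrm{pr}_{B,w}$ restricts to an isomorphism $\mathrm{pr}_{B,w}^{-1}(\tildefg^{\mathrm{reg}})\simeq \tildefg^{\mathrm{reg}}$, and $\mathrm{pr}_{P,w}$ restricts to an isomorphism $\mathrm{pr}_{P,w}^{-1}(\tildefg^{\mathrm{reg}})\simeq\tildefg^{\mathrm{reg}}$. Since $\mathrm{pr}_{B,w}=\mathrm{pr}_{P,w}\circ p_{P,w}$, it follows that $p_{P,w}$ restricts to an isomorphism between these two open subschemes. Both open subschemes are non-empty (Lemma \ref{lemmagaloiscovering} ensures $\tildefg^{\mathrm{reg}}$ meets the image), hence dense in $X_w$ and in $X_{P,w}$ respectively, because $X_w$ and $X_{P,w}$ are irreducible. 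This shows $p_{P,w}$ is birational.

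Finally, surjectivity. The image $p_{P,w}(X_w)\subset X_{P,w}$ is closed (because $p_{P,w}$ is proper, hence universally closed) and contains the dense open subscheme $\mathrm{pr}_{P,w}^{-1}(\tildefg^{\mathrm{reg}})$ by the preceding paragraph, hence is dense. A closed dense subset of the irreducible scheme $X_{P,w}$ is all of $X_{P,w}$, so $p_{P,w}$ is surjective. The expected main obstacle is essentially nothing — it is a cancellation plus irreducibility argument — but one small point to keep straight is that one must verify that $V_{B,w}$ maps \emph{into} (and densely onto) $V_{P,w}$, which was already observed in the discussion preceding the proposition, so everything is in place.
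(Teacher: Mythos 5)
Your proposal is correct and takes essentially the same route as the paper: the paper also defines $p_{P,w}$ by restricting $p_P$, observes the factorization $\mathrm{pr}_{B,w}=\mathrm{pr}_{P,w}\circ p_{P,w}$ with both sides proper birational by Proposition~\ref{propositionXPbasic}, and concludes by irreducibility of $X_{P,w}$. You have simply unpacked the one-line ``so is $p_{P,w}$'' into the explicit cancellation-for-properness step and the restriction-to-$\tildefg^{\mathrm{reg}}$ argument for birationality, which is a fair elaboration of the same idea rather than a different proof.
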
 
\begin{remark}\label{remarkisoopensubvarieties}
	Assume $w\in W^P$, then the map $p_{P,w}:X_{w}\rightarrow X_{P,w}$ (reps. $p_{P,w}^Y:Y_{w}\rightarrow Y_{P,w}$) induces an isomorphism of open subvarieties $V_{B,w}\simrightarrow V_{P,w}$ (resp. $V_{B,w}^Y\simrightarrow V_{P,w}^Y$): if $\nu\in\fb$ and $g=b\dw\in BwP$ satisfy $\Ad\left(g^{-1}\right)\nu\in\fp$, then $\Ad\left(g^{-1}\right)\nu\in \fb$ by Lemma \ref{lemmaweylgroupshortestelement} below. 
\end{remark}
The following lemma is elementary (see \cite[Prop. 3.9 (ii)]{borel1972complements}). But it is the combinatorial reason for several results in this section, therefore we include a proof here.
\begin{lemma}\label{lemmaweylgroupshortestelement}
	Let $w\in W$, then the following statements are equivalent:
	\begin{enumerate}
		\item $w\in W^P$;
		\item If $\nu\in\fb$, then $\Ad(\dw)^{-1}\nu\in \fp$ if and only if $\Ad(\dw)^{-1}\nu\in \fb$;
		\item $\{\alpha\in R^+\mid w(\alpha)\in R^-\}\cap  R_P^+=\emptyset$;
		\item $w(R_P^{+})\subset R^+$;
		\item $\Ad(\dw)(\fm_{P}\cap\mathfrak{u})\subset \mathfrak{u}$.
	\end{enumerate}
	and for any $w\in W$, $\lg_P(w)=|\{\alpha\in R^+\setminus R_P^+\mid w(\alpha)\in R^-\}|$.
\end{lemma}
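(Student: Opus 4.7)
The plan is to verify the chain of equivalences $(1)\Leftrightarrow(4)\Leftrightarrow(3)\Leftrightarrow(5)\Leftrightarrow(2)$ using only standard combinatorics of root systems plus the root space decomposition of $\fg$, and then to derive the length formula from the decomposition $w=w^Pw_P$.

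First, the equivalences $(3)\Leftrightarrow(4)$ and $(4)\Leftrightarrow(5)$ are essentially formal. For $(3)\Leftrightarrow(4)$ one just rewrites the condition. For $(4)\Leftrightarrow(5)$, I would use that $\fm_P\cap\fu$ is the sum of root spaces $\fg_\alpha$ for $\alpha\in R_P^+$, so $\Ad(\dw)(\fm_P\cap\fu)=\bigoplus_{\alpha\in R_P^+}\fg_{w(\alpha)}$, which lies in $\fu=\bigoplus_{\beta\in R^+}\fg_\beta$ exactly when $w(R_P^+)\subset R^+$.

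For $(1)\Leftrightarrow(4)$, I would invoke the standard characterization of $W^P$: $w\in W^P$ iff $w(\alpha)\in R^+$ for every simple root $\alpha\in\Delta_P$ (see \cite[Prop. 2.4.4, Cor. 2.4.5]{bjorner2006combinatorics}). To upgrade simple roots to all of $R_P^+$, write any $\alpha\in R_P^+$ as a non-negative integer combination $\alpha=\sum_{\beta\in\Delta_P}n_\beta\beta$; then $w(\alpha)=\sum n_\beta w(\beta)$ is a non-negative combination of elements of $R^+$, and since $w(\alpha)\in R$ is itself a root, it must lie in $R^+$. Next, for $(5)\Leftrightarrow(2)$, observe that $\fp=\ft\oplus\bigoplus_{\alpha\in R^+\cup R_P^-}\fg_\alpha$ and $\fb=\ft\oplus\bigoplus_{\alpha\in R^+}\fg_\alpha$, so for $\nu\in\fb$ the element $\Ad(\dw^{-1})\nu$ has root space components in $\fg_{w^{-1}(\alpha)}$ for $\alpha\in R^+$ appearing in $\nu$; the condition $(2)$ (testing on each $x_\alpha\in\fg_\alpha$ for $\alpha\in R^+$) forces $w^{-1}(\alpha)\notin R_P^-$ for all $\alpha\in R^+$, i.e.\ $w(R_P^+)\subset R^+$, and conversely if $w(R_P^+)\subset R^+$ then no $w^{-1}(\alpha)$ with $\alpha\in R^+$ can land in $R_P^-$, so $\Ad(\dw^{-1})\nu\in\fp$ immediately forces $\Ad(\dw^{-1})\nu\in\fb$.

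For the length formula, I would use $\lg_P(w)=\lg(w^P)=\lg(w)-\lg(w_P)$ coming from the decomposition $w=w^Pw_P$ with $w^P\in W^P$, $w_P\in W_P$, together with the standard inversion-count $\lg(u)=|\{\alpha\in R^+\mid u(\alpha)\in R^-\}|$. For $\alpha\in R_P^+$ I would check, using $(4)$ for $w^P$, that $w(\alpha)=w^Pw_P(\alpha)\in R^-$ if and only if $w_P(\alpha)\in R_P^-$; hence $\{\alpha\in R_P^+\mid w(\alpha)\in R^-\}=\{\alpha\in R_P^+\mid w_P(\alpha)\in R_P^-\}$, whose cardinality is $\lg(w_P)$. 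Splitting the inversion set of $w$ as $\{\alpha\in R^+\setminus R_P^+\mid w(\alpha)\in R^-\}\sqcup\{\alpha\in R_P^+\mid w(\alpha)\in R^-\}$ then gives $\lg_P(w)=|\{\alpha\in R^+\setminus R_P^+\mid w(\alpha)\in R^-\}|$.

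The lemma is almost entirely bookkeeping; the only mildly delicate point is the upgrade from simple roots to all of $R_P^+$ in the $(1)\Leftrightarrow(4)$ step and the verification that inversions of $w$ inside $R_P^+$ are exactly the inversions of $w_P$, both of which rest on the observation that a non-negative integer combination of positive roots which is itself a root must be positive.
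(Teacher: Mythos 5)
Your proposal is correct, and it follows essentially the same route as the paper: characterize $W^P$ via positivity of $w(R_P^+)$, pass between that combinatorial condition and the Lie-theoretic statements (2) and (5) via the root space decomposition, and extract the length formula from the decomposition $w=w^Pw_P$. The only cosmetic difference is in the length formula: you compute $\lg_P(w)=\lg(w)-\lg(w_P)$ and show the inversions of $w$ inside $R_P^+$ account for $\lg(w_P)$, while the paper instead uses that $w_P$ permutes $R^+\setminus R_P^+$ to identify the inversions of $w^P$ outside $R_P^+$ with those of $w$ directly — two equivalent bookkeeping choices.
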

\begin{proof}
	We have $s_{\alpha}(R^+)=\{-\alpha\}\cup R^+\setminus \{\alpha\}$ for every $\alpha\in\Delta$ (\cite[Lem. 10.2.B]{humphreys2012introduction}). Hence for $\alpha\in\Delta$, $|\{\alpha'\in R^+\mid  ws_{\alpha}(\alpha')\in R^- \}|$ is equal to $|\{\alpha'\in R^+\mid w(\alpha')\in R^-\}|-1$ if and only if $w(\alpha)\in R^-$. Hence $\lg(ws_{\alpha})=\lg(w)-1$ if and only if $w(\alpha)\in R^-$, from which we deduce (3) $\Rightarrow$ (1). Conversely we assume (1). Then $w(\Delta_P)\subset R^+$. Hence (1) $\Rightarrow$ (3). The equivalence between (3), (4), and (5) is trivial. The assertion (2) is equivalent to that $w^{-1}(R^+)\cap R_P^{-}=\emptyset$ or $w(R_P^{-})\cap R^+=\emptyset$ which is just (4) with a minus sign. Now if $w\in W$ and we write $w=w^Pw_P,w_P\in W_P$. Since $w_P(R^+\setminus R_P^+)=R^+\setminus R_P^+$, we get $\lg(w^P)=\{\alpha\in R^+\setminus R_P^+\mid w^P(\alpha)\in R^-\}=\{\alpha\in R^+\setminus R_P^+\mid w(\alpha)\in R^-\}$. 
\end{proof}
We will also need the following lemma.
\begin{lemma}\label{lemmaXpwVpw}
	If $w,w'\in W/W_P$, then $X_{P,w}\cap V_{P,w'}\neq \emptyset$ only if $w\geq w'$ with respect to the Bruhat order on $W/W_P$. 
\end{lemma}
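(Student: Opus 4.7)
My plan is to reduce the statement to a standard fact about Schubert cells in $G/P$ via the natural projection that forgets $\nu$. Let $\pi\colon X_P\to G/B\times G/P$ denote the projection $(\nu,g_1B,g_2P)\mapsto (g_1B,g_2P)$. By the very definition of $V_{P,w}$ in \eqref{equationdefinitionVPw} we have $\pi^{-1}(U_{P,w})=V_{P,w}$ for every $w\in W/W_P$, and by definition $X_{P,w}$ is the Zariski closure of $V_{P,w}$ in $X_P$. Since $\pi$ is continuous, the inclusion $\pi(X_{P,w})\subset \overline{\pi(V_{P,w})}=\overline{U_{P,w}}$ holds, so the key point is to identify $\overline{U_{P,w}}$ combinatorially.

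Next I would show that
\[
\overline{U_{P,w}}=\bigl\{(g_1B,g_2P)\in G/B\times G/P\ \big|\ g_1^{-1}g_2P\in \overline{BwP/P}\bigr\}.
\]
The set on the right is closed: the multiplication map $G\times G\to G$, $(g_1,g_2)\mapsto g_1^{-1}g_2$, pulls back the closed subset $\overline{BwP}\subset G$ to a closed $B\times P$-stable subset of $G\times G$, which descends to a closed subscheme of $G/B\times G/P$ that coincides with the displayed set. It clearly contains $U_{P,w}$, and using the Bruhat decomposition $\overline{BwP/P}=\bigsqcup_{w'\leq w}Bw'P/P$ in $W/W_P$ (see the notation subsection), it decomposes as a disjoint union $\bigsqcup_{w'\leq w}U_{P,w'}$ where the terms are the $G$-orbits under the diagonal action on $G/B\times G/P$. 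Irreducibility of $U_{P,w}$ (it is a smooth orbit) then forces it to equal the closure of $U_{P,w}$.

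With this in hand, suppose $X_{P,w}\cap V_{P,w'}\neq\emptyset$. Applying $\pi$ gives a point of $\pi(X_{P,w})\cap U_{P,w'}\subset \overline{U_{P,w}}\cap U_{P,w'}$, so this intersection is nonempty. Since distinct orbits $U_{P,w''}$ are pairwise disjoint in $G/B\times G/P$, the only way $\overline{U_{P,w}}$ can meet $U_{P,w'}$ is if $U_{P,w'}\subset \overline{U_{P,w}}$, which by the decomposition above is equivalent to $w'\leq w$ in $W/W_P$. This yields the desired implication. There is no significant obstacle: the proof is just a matter of transporting the Bruhat order comparison from $\overline{BwP/P}\subset G/P$ up through the smooth projection $\pi$.
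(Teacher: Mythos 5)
Your proof is essentially the same as the paper's, which also reduces to the closure relation $\overline{U_{P,w}}=\bigcup_{w'\leq w}U_{P,w'}$ (the paper obtains it via the identification $G/B\times G/P\simeq G\times^B G/P$ and Jantzen I.5.21(2), then uses that the $V_{P,w'}$ are pairwise disjoint, exactly as you do). One small imprecision: in your step identifying the closure, what you actually need is irreducibility of the \emph{closed set} $\{(g_1B,g_2P):g_1^{-1}g_2P\in\overline{BwP/P}\}$, not of $U_{P,w}$; this follows since that set is a fiber bundle over $G/B$ with irreducible fiber $\overline{BwP/P}$, after which $U_{P,w}$ is a nonempty open and hence dense.
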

\begin{proof}
	For $w\in W/W_P$, let $\overline{U}_{P,w}$ be the closure of $U_{P,w}$ in $G/B\times G/P$. As $U_{P,w}\simeq G\times^BBwP/P$ under the isomorphism $G/B\times G/P\simrightarrow G\times^BG/P:(g_1B,g_2P)\mapsto (g_1,g_1^{-1}g_2P)$, by \cite[I.5.21 (2)]{jantzen2007representations}, $\overline{U}_{P,w}\simeq G\times^B\overline{BwP/P}$. For $w\in W/W_P$, write $w^P\in wW_P$ for the shortest representative. Then by definition, $w\geq w'$ in $W/W_P$ if and only if $w^P\geq (w')^P$ in $W$. Hence by \cite[II.13.8 (4)]{jantzen2007representations}, $\overline{Bw'P/P}\subset \overline{BwP/P}$ if and only if $w\geq w'$ in $W/W_P$. In particular, $Bw'P/P\subset \overline{BwP/P}$ if and only if $w\geq w'$ in $W/W_P$. Since $\overline{BwP/P}$ is $B$-invariant, we get $\overline{BwP/P}=\cup_{w'\leq w}Bw'P/P$. Hence $\overline{U}_{P,w}=\cup_{w'\leq w}U_{P,w'}$. Thus $X_{P,w}$ is contained in the closed subspace 
	\[\left\{\left(\nu,g_1B,g_2P\right)\in X_P\mid (g_1B,g_2P)\in \overline{U}_{P,w} \right\}=\cup_{w'\leq w} V_{P,w'}\]
	of $X_P$ by definition. As $V_{P,w'},w'\in W/W_P$ are pairwise disjoint, $X_{P,w}\cap V_{P,w'}=\emptyset$ if $w'$ is not $\leq w$ in $W/W_P$.
\end{proof}\subsection{Unibranchness}
We recall the notion of unibranchness. A local ring $R$ is called unibranch if the reduced reduction $R^{\mathrm{red}}$ is a domain and the integral closure $R'$ of $R^{\mathrm{red}}$ in its field of fractions is local. We say that a locally Noetherian scheme $S$ is unibranch at a point $s\in S$ if the local ring $\cO_{S,s}$ is unibranch.
\begin{lemma}
Let $R$ be a reduced excellent Noetherian local ring, then $R$ is unibranch if and only if its completion $\widehat{R}$ with respect to the maximal ideal is irreducible.
\end{lemma}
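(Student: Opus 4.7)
The plan is to use two standard properties of excellent rings, namely that completion preserves reducedness and that completion commutes with normalization. Together these let me match the minimal primes of $\widehat{R}$ with the maximal ideals of the integral closure of $R$, from which both directions of the equivalence follow.

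First I would note that since $R$ is excellent and reduced, $\widehat{R}$ is also reduced (the completion morphism is regular, hence has geometrically reduced fibers). Thus $\widehat{R}$ is irreducible if and only if it is an integral domain, and the statement reduces to the equivalence of unibranchness with $\widehat{R}$ being a domain. The main technical input is the theorem that for excellent local rings, normalization commutes with completion: if $R'$ denotes the integral closure of $R$ in its total ring of fractions, then $R'$ is finite over $R$ (hence semi-local), and $R'\otimes_R\widehat{R}$ is the integral closure of $\widehat{R}$ in its total ring of fractions. Moreover, the left-hand side decomposes as $\prod_{i=1}^{k}\widehat{R'_{\mathfrak{m}_i}}$ over the maximal ideals $\mathfrak{m}_1,\dots,\mathfrak{m}_k$ of $R'$, and each factor is a complete normal local domain.

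For the direction ``unibranch $\Rightarrow$ $\widehat{R}$ is a domain'': if $R$ is unibranch, then $R$ is a domain and $R'$ is local, so $R'\otimes_R\widehat{R}$ is a single complete normal local domain. Flatness of $R\to\widehat{R}$ combined with the injection $R\hookrightarrow R'$ gives an injection $\widehat{R}\hookrightarrow R'\otimes_R\widehat{R}$, exhibiting $\widehat{R}$ as a subring of a domain, hence a domain. For the converse: if $\widehat{R}$ is a domain, then $R\hookrightarrow\widehat{R}$ forces $R$ to be a domain; moreover $\widehat{R}$, being a complete Noetherian local domain, is Nagata and henselian, so its integral closure in $\mathrm{Frac}(\widehat{R})$ is a finite domain extension of a henselian local ring, and therefore local. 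Combining this with the product decomposition above forces $k=1$, i.e., $R'$ has a unique maximal ideal and $R$ is unibranch.

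The only nontrivial ingredient is the commutation of normalization with completion for excellent rings, which is precisely what the excellence hypothesis is designed to provide; without it the matching of maximal ideals of $R'$ with minimal primes of $\widehat{R}$ can fail, as in Nagata's classical examples.
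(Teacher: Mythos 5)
Your proof is correct; the paper itself gives no argument and simply cites EGA IV $7.8.3$(vii), which is precisely the statement you prove. Your argument — reducedness of $\widehat{R}$ from excellence, commutation of normalization with completion, the decomposition $R'\otimes_R\widehat{R}\simeq\prod_i\widehat{R'_{\mathfrak{m}_i}}$ into complete normal local domains, and henselian-ness of $\widehat{R}$ for the converse — is the standard proof underlying that EGA reference and fills in the details the paper leaves to the citation.
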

\begin{proof}
	This is \cite[Sch. 7.8.3(vii)]{grothendieck1965EGAIV2}.
\end{proof}
\begin{proposition}\label{propfiberconnectedunibranch}
	Let $f:Y\rightarrow X$ be a morphism of integral algebraic varieties over a field $k$. Assume that $Y$ is normal and that $f$ is proper and surjective. If $x\in X$ is a point such that the fiber $f^{-1}(x)$ of $f$ over $x$ is connected, then the local ring $\cO_{X,x}$ is unibranch and its completion $\widehat{\cO}_{X,x}$ with respect to the maximal ideal is irreducible.
\end{proposition}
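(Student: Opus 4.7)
The plan is to use Stein factorization to reduce the proposition to a statement about a finite surjective morphism from a normal variety, and then invoke the standard characterization of unibranchness via the normalization, together with the lemma stated immediately before the proposition.

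First I would apply Stein factorization to the proper morphism $f$, writing $f = g \circ h$ with $h : Y \to X' := \underline{\Spec}_X(f_*\cO_Y)$ having (geometrically) connected fibers and $g : X' \to X$ finite. Two checks would come next: that $X'$ is integral (because $Y$ is integral, so $f_*\cO_Y$ injects into the constant sheaf $K(Y)$) and that $X'$ is normal (because for any affine open $V \subset X'$ the ring $\Gamma(V,\cO_{X'}) = \Gamma(h^{-1}V,\cO_Y)$ is integrally closed in $K(Y)$ by normality of $Y$, hence in its own fraction field).

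Next I would exploit the connectedness hypothesis on the fiber. Since $f$ is surjective, so is $g$, and $g^{-1}(x)$ is a non-empty finite set $\{x'_1,\ldots,x'_r\}$. The decomposition
\[
f^{-1}(x) \;=\; \bigsqcup_{i=1}^{r} h^{-1}(x'_i)
\]
expresses $f^{-1}(x)$ as a disjoint union of non-empty closed subsets, each connected by Stein factorization. The hypothesis that $f^{-1}(x)$ is connected therefore forces $r = 1$, so $g^{-1}(x)$ consists of a single point.

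Finally I would factor $g$ through the normalization $\widetilde g : \widetilde X \to X$ of $X$ in $K(X)$. Because $X'$ is normal, integral, and finite over $X$, the map $g$ factors as $X' \to \widetilde X \to X$ with $X' \to \widetilde X$ finite and surjective. By lying-over applied to this last map, each point of $\widetilde g^{-1}(x)$ is the image of some point of $g^{-1}(x)$, so $|\widetilde g^{-1}(x)| \leq |g^{-1}(x)| = 1$; surjectivity of $\widetilde g$ turns this into an equality. Equivalently, the integral closure of $\cO_{X,x}$ in $K(X)$ is local, which (together with $\cO_{X,x}$ being a domain since $X$ is integral) is exactly unibranchness. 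The irreducibility of $\widehat{\cO}_{X,x}$ then follows from the preceding lemma, as $\cO_{X,x}$ is a reduced excellent Noetherian local ring.

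The only mildly delicate step is the normality of $X'$ and the passage to the normalization: one needs to know that $X'$ factors through $\widetilde X$ and that the resulting map $X' \to \widetilde X$ is surjective so that the bound $|\widetilde g^{-1}(x)|\le |g^{-1}(x)|$ is genuine. Everything else is a formal consequence of Stein factorization and the excellent-ring criterion recalled before the statement.
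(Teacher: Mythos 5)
Your proof is correct, but it takes a genuinely different route from the paper's and invokes heavier machinery along the way. The paper skips Stein factorization entirely: it directly factors $f$ through the normalization $\nu: X^{\nu} \to X$, using that a dominant morphism from a normal integral scheme to $X$ must factor through $X^{\nu}$ (as you yourself use later). It then observes that $f' : Y \to X^{\nu}$ is proper and surjective because $f$ is, so $f^{-1}(x) \to \nu^{-1}(x)$ is a surjection; since $f^{-1}(x)$ is connected and $\nu^{-1}(x)$ is finite and discrete, $\nu^{-1}(x)$ is a single point. That is the whole argument — no formal-functions theorem and no intermediate scheme $X'$. Your version reaches the same conclusion, with two extra layers: you first pay for Stein factorization (which relies on the theorem on formal functions), then argue separately that $X'$ is normal and integral, then use the connected-fibers property to cut down $g^{-1}(x)$ to one point, and finally still have to pass through the normalization $\widetilde{X}$ to translate this into unibranchness. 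All these steps are sound — in particular your observation that each $h^{-1}(x'_i)$ is nonempty closed and connected, so connectedness of $f^{-1}(x)$ forces $r=1$, is correct, as is the surjectivity of $X' \to \widetilde{X}$ needed for the comparison $|\widetilde{g}^{-1}(x)| \le |g^{-1}(x)|$. But the detour through $X'$ is unnecessary: the ``connected surjects onto connected'' observation that powers the paper's proof is doing exactly the same work as your Stein-factorization counting, more cheaply. Your approach does have the virtue of making visible the intermediate scheme with connected fibers, but in this problem nothing further is done with it, so the simpler normalization-only argument is preferable.
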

\begin{proof}
	Let $\nu:X^{\nu}\rightarrow X$ be the normalization of $X$. Since $X$ is integral, $X^{\nu}$ is integral and the morphism $\nu$ is a finite surjection (see \cite[\href{https://stacks.math.columbia.edu/tag/035Q}{Tag 035Q}]{stacks-project} and \cite[\href{https://stacks.math.columbia.edu/tag/035S}{Tag 035S}]{stacks-project}). Since $Y$ is normal, there exists a unique factorization $f: Y\stackrel{f'}{\rightarrow}X^{\nu}\stackrel{\nu}{\rightarrow}X$ (see also \cite[\href{https://stacks.math.columbia.edu/tag/035Q}{Tag 035Q}]{stacks-project}). Since $f$ is proper and $\nu$ is finite, $f'$ is also proper (\cite[\href{https://stacks.math.columbia.edu/tag/01W6}{Tag 01W6}]{stacks-project}). The image of $f'$ is then a closed subset of $X^{\nu}$. If $f'$ is not dominant, the generic point of $X^{\nu}$ is not in the image of $f'$, then the generic point of $X$ is not in the image of $f$, which contradicts that $f$ is surjective. Hence $f'$ is surjective.
	We have morphisms $$f^{-1}(x)\stackrel{f'}{\rightarrow} \nu^{-1}(x)\rightarrow \Spec(k(x)),$$
	where $k(x)$ denotes the residue field at $x\in X$.
	Since the morphism $f^{-1}(x)\stackrel{f'}{\rightarrow} \nu^{-1}(x)$ is surjective and $f^{-1}(x)$ is connected, we get that $\nu^{-1}(x)$ is connected. Now assume that $x$ is contained in an affine open subset $\Spec(A)$ of $X$. Then $\nu^{-1}(\Spec(A))=\Spec(A')$ where $A'$ is the integral closure of $A$ in its field of fractions. Suppose that $x$ corresponds to a prime ideal $\fp$ of $A$. Then $\nu^{-1}(x)=\Spec(A'_{\fp}/\fp A'_{\fp})$ where $A'_{\fp}$ is also the normalization of $A_{\fp}$ in its field of fractions (\cite[\href{https://stacks.math.columbia.edu/tag/0307}{Tag 0307}]{stacks-project}). As $A'_{\fp}$ is finite over $A_{\fp}$, the fiber $\Spec(A'_{\fp}/\fp A'_{\fp})$ is finite over $\Spec(k(x))$ and thus is a finite union of discrete points as a topological space. The connectedness of the fiber means that $\Spec(A'_{\fp}/\fp A'_{\fp})$ consists of only one point. Hence there is only one prime ideal $\fp'$ of $A'_{\fp}$ which lies above $\fp$. Since finite morphisms are closed, $\fp'$ is the unique maximal ideal of $A'_{\fp}$ and thus $A'_{\fp}$ is local. Since $\cO_{X,x}$ is excellent, we get $\cO_{X,x}=A_{\fp}$ is unibranch and $\widehat{\cO}_{X,x}$ is irreducible by \cite[Prop. 7.6.1]{grothendieck1965EGAIV2} and \cite[Sch. 7.8.3(vii)]{grothendieck1965EGAIV2}.
\end{proof}
\subsection{Connectedness of fibers over nilpotent elements}\label{sectionconnectedness}
We establish the unibranch property for $X_{P,w}$ (or $Y_{P,w}$) at certain points using Proposition \ref{propfiberconnectedunibranch} and the normality of $X_w$ (\cite[Thm. 2.3.6]{breuil2019local}).\par
Recall by Proposition \ref{propositionsequencebirational}, we have a sequence of birational proper surjective morphisms: $$Y_w\stackrel{p_{P,w}^Y}{\rightarrow}Y_{P,w}\stackrel{\mathrm{pr}^Y_{P,w}}{\rightarrow}\fb.$$
Let $\cN$ be the nilpotent subvariety of $\fg$ consisting of nilpotent elements (cf. \cite[VI.3]{kiehl2013weil}). Then $\mathfrak{u}=\fb\cap \cN$. Denote by $\mathrm{pr}^Y_{w}:=\mathrm{pr}^Y_{B,w}:Y_w\rightarrow \fb$. 
\begin{proposition}\label{propositionsetsovernilpotent}
	If $\nu\in\mathfrak{u}\subset \fb$ is a closed point and $w\in W$, then the closed subset $(\mathrm{pr}_{w}^Y)^{-1}(\nu)$ of $Y_w$ is equal to 
	\[\left\{(\nu,gB)\in \fb\times \overline{BwB/B}\mid \Ad(g^{-1})\nu\in \fb\right\}.\]
\end{proposition}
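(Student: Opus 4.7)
The inclusion $\subseteq$ is immediate from the definitions: since $Y_w$ is the reduced closure of $V_w^Y\subseteq \fb\times BwB/B$, we have $Y_w\subseteq \fb\times \overline{BwB/B}$; and since $Y_w\subseteq Y$, the condition $\Ad(g^{-1})\nu\in \fb$ holds at every point of $Y_w$.

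For the reverse inclusion, take $(\nu,gB)$ in the right-hand side. Note that $Y_w$ is preserved under the diagonal $B$-action $b\cdot(\nu',g'B)=(\Ad(b)\nu',bg'B)$, which also preserves the nilpotent locus $\mathfrak{u}\subset \fb$ because $\Ad(b)$ stabilises $\mathfrak{u}$. I would first apply a suitable $B$-translation to reduce to the case $g=\dw'$ for some $w'\le w$ in $W$. Since both $\nu$ and $\Ad(\dw'^{-1})\nu$ are then nilpotent elements of $\fb$, they both lie in $\mathfrak{u}$, whence
\[
\nu\in \mathfrak{u}\cap \Ad(\dw')\mathfrak{u}=\bigoplus_{\alpha\in R^+,\,(w')^{-1}\alpha\in R^+}\fg_\alpha.
\]

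The central task is to exhibit $(\nu,\dw'B)$ as the $t=0$ specialization of a one-parameter family $(\nu(t),g(t)B)\in V_w^Y$ for $t\ne 0$, which immediately yields $(\nu,\dw'B)\in \overline{V_w^Y}=Y_w$. My plan is to induct on the length difference $\lg(w)-\lg(w')$, the base case $w=w'$ being trivial since then $(\nu,\dw'B)\in V_w^Y$. For the inductive step, fix a chain $w'=w_0<w_1<\cdots<w_r=w$ of consecutive Bruhat neighbours in $[w',w]$; at each stage I use a root-subgroup deformation $u_\beta(t)$ to move $g(t)B$ into the next higher stratum of $\overline{BwB/B}$, and simultaneously adjust $\nu(t)\in \fb$ by a correction vanishing at $t=0$ so that the defining condition $\Ad(g(t)^{-1})\nu(t)\in \fb$ continues to hold. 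The linear system encoding this adjustment has a leading-order solvability in $t$ controlled precisely by the torus component of $\nu$; the nilpotency of $\nu$ (vanishing of this torus part) is exactly what permits a solution with $\nu(t)\to \nu$.

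The main obstacle will be the root-space bookkeeping in the inductive step: verifying at each stage that the correction $\nu(t)-\nu$ can be chosen inside $\fb$ so as to cancel every lower-triangular component of $\Ad(g(t)^{-1})\nu(t)$, via combinatorial identities of the sort provided by Lemma~\ref{lemmaweylgroupshortestelement}. The necessity of the nilpotency hypothesis is already visible in the smallest nontrivial case $G=\mathrm{SL}_2$, $w=s$, $w'=e$: for $\nu=E_{12}$ one finds an explicit family in $V_s^Y$ with $\nu(t)$ having diagonal $(-t/2,t/2)$ and $(1,2)$-entry equal to $1$, specializing to $(E_{12},B)$; whereas for the non-nilpotent $\nu=\mathrm{diag}(1,-1)$ the leading-order equation forces the diagonal part of $\nu(t)$ away from $\mathrm{diag}(1,-1)$ as $t\to 0$, consistent with the fact that $(\mathrm{diag}(1,-1),B)\notin Y_s$.
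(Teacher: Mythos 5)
The paper takes a completely different route: it translates the statement via $G\times^B(-)$ into a statement about $\overline{X}_w$ and the Steinberg variety $Z$, and then invokes the result $\overline{X}_w=\cup_{w'\leq w}Z_{w'}$ from \cite[Thm.~2.4.7]{breuil2019local}, which BHS establish through characteristic cycles of Verma modules. Your plan is to prove the needed closure relation directly by exhibiting each point of the right-hand side as a limit of a one-parameter family inside $V_{B,w}^Y$. This is a genuinely elementary alternative, and the opening reductions you make are correct: the inclusion $\subseteq$ is immediate, the $B$-equivariance reduction to a point $(\nu,\dw'B)$ with $w'\leq w$ and $\nu\in\mathfrak{u}\cap\Ad(\dw')\mathfrak{u}$ is legitimate, your $\mathrm{SL}_2$ computation is right, and the paper's Remark~2.3.4 even confirms that a proof avoiding \cite[Thm.~2.4.7]{breuil2019local} exists.

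However there is a genuine gap: the central deformation argument is never carried out, and it is not a routine detail — it is the entire mathematical content of the proposition. You write that the ``root-space bookkeeping in the inductive step'' is ``the main obstacle'', which is an accurate self-assessment: at present the proposal asserts solvability of a linear system at leading order without producing it. Two points should make clear that the difficulty is structural rather than notational. First, the induction as phrased is circular in danger: if at a covering step $w'\lessdot w_1\leq w$ you land on a point of the form $(\nu'',\dw_1 B)$ with $\nu''$ nilpotent and then appeal to prior knowledge, you would need $Y_{w_1}\subseteq Y_w$, which is false since these are \emph{distinct} irreducible components; so the family must in fact pass through non-nilpotent $\nu(t)$ all the way from $\dw'B$ to the open stratum $BwB/B$, not stratum by stratum. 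Second, the nilpotent loci over every cell $Bw'B/B$, $w'\leq w$, all have the same dimension $|R^+|$, so there is no dimension count forcing the lower strata into the closure of the top one; that the closure relation nonetheless holds is precisely what must be shown, and the only reason it does is the delicate interplay (visible already in your $\mathrm{SL}_2$ example) between the direction of approach in $G/B$ and the torus component of $\nu(t)$. Until that interplay is pinned down in general — most plausibly via a Bott–Samelson-type chart as in \cite[\S1.7]{riche2008geometric}, or by re-deriving $\overline{X}_w=\cup_{w'\leq w}Z_{w'}$ — the argument is incomplete.
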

\begin{proof} 
	The result is equivalent to that $(\mathrm{pr}_w^Y)^{-1}(\mathfrak{u})=\left\{(\nu,gB)\in \mathfrak{u}\times \overline{BwB/B} \mid \Ad(g^{-1})\nu\in \fb\right\}$ as closed subsets of $Y_w$. We have 
	\[G\times^B(\mathrm{pr}_w^Y)^{-1}(\mathfrak{u})=\left\{(\nu,g_1B,g_2B)\in \cN\times G/B\times G/B\mid (\nu,g_1B,g_2B)\in X_w\right\}=\overline{X}_w\] 
	as closed subsets of $X_w$ in the notation of \cite[\S2.4]{breuil2019local}. 
	The Steinberg variety 
	\[Z:=\left\{(\nu,g_1B,g_2B)\in \cN\times G/B\times G/B\mid \Ad\left(g_1^{-1}\right)\nu\in\fu, \Ad\left(g_2^{-1}\right)\nu\in\fu \right\}\] 
	has irreducible components $Z_{w'},w'\in W$ which are the Zariski closures of the following subsets (see \S\ref{sectionsteinbergvarieties} for more details) 
	\[\left\{(\nu,g_1B,g_2B)\in \cN\times G/B\times G/B\mid \Ad\left(g_1^{-1}\right)\nu\in\fu, \Ad\left(g_2^{-1}\right)\nu\in\fu, g_1^{-1}g_2\in Bw'B/B\right\}.\]
	The union of the irreducible components $Z_{w'}$ for $w'\leq w$ of the Steinberg variety is then the closed subset 
	\begin{align*}
		\left\{(\nu,g_1B,g_2B)\in\cN\times G/B\times G/B\mid \Ad\left(g_1^{-1}\right)\nu\in\fu, \Ad\left(g_2^{-1}\right)\nu\in\fu,g_1^{-1}g_2\in\overline{BwB/B}\right\}\\
		=G\times^B\left\{(\nu,gB)\in\mathfrak{u}\times \overline{BwB/B}\mid \Ad(g^{-1})\nu\in \mathfrak{u}\right\},
	\end{align*} 
	by the usual closure relation $\overline{BwB/B}=\cup_{w' \leq w}Bw'B/B$ (\cite[II.13.7]{jantzen2007representations}).
	By \cite[Thm. 2.4.7]{breuil2019local} and the discussion after it, we have $\overline{X}_w=\cup_{w'\leq w}Z_{w'}$. Then we get \[G\times^B(\mathrm{pr}_w^Y)^{-1}(\mathfrak{u})=G\times^B\left\{(\nu,gB)\in\mathfrak{u}\times \overline{BwB/B}\mid \Ad(g^{-1})\nu\in \mathfrak{u}\right\}\] 
	as closed subsets of $X_w$. Hence $(\mathrm{pr}_w^Y)^{-1}(\mathfrak{u})=\left\{(\nu,gB)\in \mathfrak{u}\times \overline{BwB/B} \mid \Ad(g^{-1})\nu\in \mathfrak{u}\right\}$ as closed subsets of $Y_w$. Finally, we have an equality
	\[\left\{(\nu,gB)\in \mathfrak{u}\times \overline{BwB/B} \mid \Ad(g^{-1})\nu\in \mathfrak{u}\right\}=\left\{(\nu,gB)\in \mathfrak{u}\times \overline{BwB/B} \mid \Ad(g^{-1})\nu\in \fb\right\}\]
	of closed subsets since the two sides are both closed subschemes in $Y_w$ and contain same closed points.
\end{proof}
Now if $w\in W$ is the \emph{longest} element in $wW_P$, then $\overline{BwB/B}$ is the preimage of $\overline{BwP/P}$ via the natural projection $G/B\rightarrow G/P$ (cf. \cite[II.13.8 (2)]{jantzen2007representations}). In particular, $\overline{BwB}P=\overline{BwB}$.
\begin{proposition}\label{propositionconnectedfibersnilpotent}
	Assume that $w\in W$ is the \emph{longest} element in $wW_P$. If $x=(\nu_x,g_xP)\in Y_{P,w}\subset \fb\times \overline{ BwP/P}$ is a closed point such that $\nu_x$ is nilpotent (i.e. $\nu_x\in\fu$), then the fiber $(p_{P,w}^Y)^{-1}(x)$ is connected.
\end{proposition}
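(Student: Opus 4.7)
The plan is to identify the fibre $(p_{P,w}^Y)^{-1}(x)$, as a topological space, with a Springer fibre for the Levi subgroup $M_P$ at a nilpotent element of $\fm_P$, and then invoke the classical connectedness of such Springer fibres.

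First I would apply Proposition~\ref{propositionsetsovernilpotent}: since $\nu_x\in\fu$, a closed point of $Y_w$ lying above $\nu_x$ is a pair $(\nu_x,gB)$ with $gB\in\overline{BwB/B}$ and $\Ad(g^{-1})\nu_x\in\fb$. Such a point lies in $(p_{P,w}^Y)^{-1}(x)$ iff $gP=g_xP$, equivalently $g=g_xp$ for some $p\in P$. Here the hypothesis that $w$ is the \emph{longest} element of $wW_P$ is crucial: as recalled just before the statement, $\overline{BwB/B}$ is then the preimage of $\overline{BwP/P}$ under $G/B\to G/P$, so the entire fibre $g_xP/B$ sits inside $\overline{BwB/B}$ and the Schubert condition on $g_xpB$ becomes automatic. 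Setting $\nu':=\Ad(g_x^{-1})\nu_x\in\fp$, which is nilpotent because it is conjugate to $\nu_x$, one obtains set-theoretically
\begin{equation*}
(p_{P,w}^Y)^{-1}(x)=\bigl\{g_xpB\in g_xP/B : p\in P,\ \Ad(p^{-1})\nu'\in\fb\bigr\}.
\end{equation*}

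Next I would pass through the natural isomorphism $P/B\simrightarrow M_P/B_{M_P}$ coming from $P=M_PN_P$ and $N_P\subset B=B_{M_P}N_P$. Writing $p=mn$ with $m\in M_P$, $n\in N_P$, and letting $\pi:\fp\twoheadrightarrow\fm_P=\fp/\fn_P$ denote the quotient, a direct calculation (using that $\Ad(M_P)$ preserves both $\fm_P$ and $\fn_P$, and that $\Ad(N_P)$ acts trivially on $\fp/\fn_P$) shows that the $\fm_P$-component of $\Ad(p^{-1})\nu'\in\fp=\fm_P\oplus\fn_P$ equals $\Ad(m^{-1})\pi(\nu')$. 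Since $\fb=\fb_{M_P}\oplus\fn_P$, the condition $\Ad(p^{-1})\nu'\in\fb$ becomes $\Ad(m^{-1})\pi(\nu')\in\fb_{M_P}$, which depends only on the coset $mB_{M_P}$. This identifies the fibre, as a topological space, with
\begin{equation*}
\bigl\{mB_{M_P}\in M_P/B_{M_P} : \Ad(m^{-1})\pi(\nu')\in\fb_{M_P}\bigr\}.
\end{equation*}

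Finally, since $\pi$ is a Lie algebra homomorphism, $\pi(\nu')$ is nilpotent in $\fm_P$, and the set above is exactly the Springer fibre of $\pi(\nu')$ in the flag variety $M_P/B_{M_P}$ of the reductive group $M_P$. I would conclude by the classical fact that Springer fibres over nilpotent elements of a reductive Lie algebra (in very good characteristic) are connected; this can be deduced, for instance, from the normality of the nilpotent cone $\cN_{M_P}$ combined with Zariski's connectedness theorem applied to the proper birational Springer resolution $\widetilde{\cN}_{M_P}\to\cN_{M_P}$. The main obstacle I expect is the bookkeeping in the $P=M_PN_P$ decomposition together with the verification that the Schubert condition $g_xpB\in\overline{BwB/B}$ drops out when $w$ is the longest element of its coset; the remainder reduces to a well-known geometric fact.
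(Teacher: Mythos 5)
Your proposal follows essentially the same route as the paper: Proposition~\ref{propositionsetsovernilpotent} to rewrite the fibre, the observation that the Schubert condition in $\overline{BwB/B}$ drops out because $w$ is longest in $wW_P$, the isomorphism $P/B\simeq M_P/B_{M_P}$, and connectedness of Springer fibres via normality of $\cN_{M_P}$ plus Zariski's theorem. The only spot worth tightening is the assertion that ``$\pi(\nu')$ is nilpotent since $\pi$ is a Lie algebra homomorphism'': a morphism of Lie algebras preserves $\mathrm{ad}$-nilpotency, but membership in the nilpotent cone $\cN_{M_P}$ (which is what the Springer-fibre argument actually needs) also requires vanishing central component, which doesn't follow formally. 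This is still true --- for instance because $\gamma_G|_{\fp}$ factors through $\gamma_{M_P}\circ\pi$ and the fibre of $\ft/W_P\rightarrow\ft/W$ over $0$ is reduced to $\{0\}$, or because a central component would in any case not affect the variety $\{mB_{M_P}\mid\Ad(m^{-1})\pi(\nu')\in\fb_{M_P}\}$. The paper sidesteps the point entirely by choosing $g_xB$ inside the (nonempty) fibre of $p_{P,w}^Y$, forcing $\Ad(g_x^{-1})\nu_x\in\fb\cap\cN=\fu=\fu_{M_P}\oplus\fn_P$, whence the $\fm_P$-component lies in $\fu_{M_P}\subset\cN_{M_P}$ by inspection.
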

\begin{proof}
	We have a commutative diagram
	\begin{center}
		\begin{tikzpicture}[scale=1.4]
			\node (A) at (0,0) {$x$};
			\node (B) at (2,0) {$(\mathrm{pr}_{P,w}^Y)^{-1}(\nu_x)$};
			\node (C) at (4,0) {$Y_{P,w}$};
			\node (D) at (6,0) {$\fb\times \overline{BwP/P}$};
			\node (E) at (0,1) {$(p_{P,w}^Y)^{-1}(x)$};
			\node (F) at (2,1) {$(\mathrm{pr}_{w}^Y)^{-1}(\nu_x)$};
			\node (G) at (4,1) {$Y_w$};
			\node (H) at (6,1) {$\fb\times \overline{BwB/B}$};
			\path[right hook ->,font=\scriptsize,>=angle 90]
			(A) edge node[above]{} (B)
			(B) edge node[above]{} (C)
			(C) edge node[above]{} (D)
			(E) edge node[above]{} (F)
			(F) edge node[above]{} (G)
			(G) edge node[above]{} (H);
			\path[->>,font=\scriptsize,>=angle 90]
			(E) edge node[above]{} (A)
			(F) edge node[above]{} (B)
			(G) edge node[above]{} (C)
			(H) edge node[above]{} (D)
			;
			\end{tikzpicture}
	 \end{center}
	where each horizontal arrow is a closed embedding and each vertical arrow is surjective and projective. One sees that the formation of the varieties $Y_{P,w}$ commutes with base change by fields: the formation of the varieties $V^Y_{P,w}$ and $Y_P$ commutes with base change by definition and after base change to a separable closure of $k$, the Zariski closure of $V^Y_{P,w}$ with the reduced structure is still irreducible and descends (cf. \cite[Cor. AG.14.6]{borel2012linear}). And the fiber $k(x)\times_{Y_{P,w}\times_k k(x)}(Y_w\times_k k(x))=k(x)\times_{Y_{P,w}\times_k k(x)}(Y_{P,w}\times_k k(x))\times_{Y_{P,w}}Y_w=k(x)\times_{Y_{P,w}}Y_w$. Thus we may assume $k(x)=k$ by base change.
	The composition $(\mathrm{pr}_{w}^Y)^{-1}(\nu_x)=\left\{(\nu_x,gB)\in \fb\times\overline{BwB/B}\mid \Ad(g^{-1})\nu_x\in \fb\right\}\rightarrow (\mathrm{pr}_{P,w}^Y)^{-1}(\nu_x)\hookrightarrow \nu_x\times \overline{BwP/P}$ can be identified with the morphism 
	\begin{align}\label{equationconnectedfibers}
		\left\{gB\in \overline{BwB/B}\mid \Ad(g^{-1})\nu_x\in \fb\right\}\rightarrow \overline{BwP/P}:gB\mapsto gP,
	\end{align} 
	where we only consider the underlying reduced varieties and have used Proposition \ref{propositionsetsovernilpotent}. To show that the fiber $(p_{P,w}^Y)^{-1}(x)$ is connected, we only need to show that the morphism (\ref{equationconnectedfibers})
	has connected fibers. We pick a closed point $g_xB$ in the fiber of $g_xP$. The fiber over $g_xP$ is 
	$$\left\{gB\in \overline{BwB/B}\mid \Ad(g^{-1})\nu_x\in\fb, g_x^{-1}g\in P/B\right\}\simeq \left\{gB\in P/B \mid \Ad(g^{-1})\left(\Ad (g_x^{-1})\nu_x\right)\in\fb\right\}$$
	since $g\in P/B$ implies that $g_xg\in \overline{BwB}P/B=\overline{BwB/B}$ by the assumption on $w$. 
	To show that the latter is connected, we can assume that $g_x$ is trivial and $\nu_x\in \mathfrak{u}$ by replacing $(\nu_x,g_xB)$ with $(\Ad(g_x^{-1})\nu_x,B)$.
	Assume that $P=M_PN_P$ is the standard Levi decomposition and $\fp=\fm_P+\mathfrak{n}_P$ where $\fm_P$ (resp. $\mathfrak{n}_P$) is the Lie algebra of $M_P$ (resp. $N_P$). Let $B_{M_P}=B\cap M_P$, $\fb_{M_P}$ be its Lie algebra and $\mathfrak{u}_{M_P}$ be the variety of nilpotent elements in $\fb_{M_P}$. We have $P/B\simeq M_P/B_{M_P}$ (\cite[II.1.8 (5)]{jantzen2007representations}). We can decompose $\nu_x=m_x+n_x$ where $m_x\in\mathfrak{u}_{M_P}$ and $n_x\in\mathfrak{n}_P$. Since $\Ad(P)\mathfrak{n}_P\subset \mathfrak{n}_P$, an element $gB_{M_P}\in M_P/B_{M_P}$ satisfies $\Ad(g^{-1})\nu_x\in\fb$ if and only if $\Ad(g^{-1})m_x\in \fb_{M_P}$. Hence there is an isomorphism 
	$$\left\{gB\in P/B \mid \Ad(g^{-1})(\nu_x)\in\fb\right\}\simeq\left\{gB_{M_P}\in M_P/B_{M_P} \mid \Ad(g^{-1})(m_x)\in\fb_{M_P}\right\}.$$
	As a closed subspace of $M_P/B_{M_P}$, this is the Springer fiber: the fiber of the Springer resolution $\widetilde{\cN}_{M_P}\rightarrow \cN_{M_P}$ over the point $m_x\in\cN_{M_P}$ where $\cN_{M_P}$ is the nilpotent variety of $\fm_P$\footnote{The Springer fiber is the reduced subvariety associated with the subscheme $\left\{gB_{M_P}\in M_P/B_{M_P} \mid \Ad(g^{-1})(m_x)\in\mathfrak{u}_{M_P}\right\}$ which shares the same underlying topological space with the subscheme $\left\{gB_{M_P}\in M_P/B_{M_P} \mid \Ad(g^{-1})(m_x)\in\fb_{M_P}\right\}$ since they have the same closed points (cf. \cite[\S1.2]{yun2016lectures}).}. The nilpotent variety is normal and, unlike the Grothendieck resolution, the Springer resolution is birational. Hence Springer fibers are connected by Zariski's main theorem (cf. \cite[Rem. 3.3.26]{Chriss1997RepresentationTA} or \cite[\S1.4.1]{yun2016lectures}). 
\end{proof}
\begin{theorem}\label{theoremunibranchYP}
	If $x=(\nu,g_1B,g_2P)\in X_{P,w}$ (resp. $x=(\nu,gP)\in Y_{P,w}$) is a closed point such that $\nu$ is nilpotent, then the local ring $\cO_{X_{P,w},x}$ (resp. $\cO_{Y_{P,w},x}$) is unibranch and the completion $\widehat{\cO}_{X_{P,w},x}$ (resp. $\widehat{\cO}_{Y_{P,w},x}$) is irreducible.
\end{theorem}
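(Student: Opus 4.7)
The plan is to apply Proposition \ref{propfiberconnectedunibranch} to the proper birational surjections of Proposition \ref{propositionsequencebirational}. The ingredients are in place: $Y_w$ (hence, by the local product structure, $X_w$) is normal by \cite[Thm. 2.3.6]{breuil2019local}, and Proposition \ref{propositionconnectedfibersnilpotent} supplies the connectedness of fibers over nilpotent points, provided one chooses a suitable representative of the coset $wW_P$.

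For the case of $Y_{P,w}$, I would replace the given representative $w\in W/W_P$ by the longest element $\tilde{w}$ in $wW_P$, so that the hypothesis of Proposition \ref{propositionconnectedfibersnilpotent} is met; note that $Y_{P,w}$ only depends on the coset. The morphism $p_{P,\tilde{w}}^Y:Y_{\tilde{w}}\twoheadrightarrow Y_{P,w}$ from Proposition \ref{propositionsequencebirational} is then a proper birational surjection of integral varieties with normal source. For the closed point $x=(\nu,gP)\in Y_{P,w}$ with $\nu\in\fu$, the fiber $(p_{P,\tilde{w}}^Y)^{-1}(x)$ is connected by Proposition \ref{propositionconnectedfibersnilpotent}, and Proposition \ref{propfiberconnectedunibranch} yields that $\cO_{Y_{P,w},x}$ is unibranch with irreducible completion.

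For the case of $X_{P,w}$, I would transfer the result through the local structure $X_P\simeq G\times^B Y_P$ combined with the local triviality of $G\to G/B$ along trivializations $g\overline{U}B\simeq \overline{U}\times B$ recalled in \S\ref{sectionthevariety}. Under this $G$-equivariant isomorphism, each irreducible component $X_{P,w}$ corresponds to $G\times^B Y_{P,w}$, since $V_{P,w}\simeq G\times^B V^Y_{P,w}$ by the definitions; hence locally on the base $X_{P,w}$ is of the form $\overline{U}\times Y_{P,w}$. Acting by $g_1^{-1}\in G$ on $X_{P,w}$ if necessary, I may assume $x=(\nu,B,g_2P)$ so that $x$ corresponds to the point $x'=(\nu,g_2P)\in Y_{P,w}$ whose first coordinate is still nilpotent. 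Since $\overline{U}$ is an affine space, $\widehat{\cO}_{X_{P,w},x}$ is a formal power series ring over $\widehat{\cO}_{Y_{P,w},x'}$; irreducibility and the unibranch property both descend along smooth base change by \cite[Sch. 7.8.3]{grothendieck1965EGAIV2}, so the statement for $X_{P,w}$ follows from the one for $Y_{P,w}$.

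The principal obstacle is the restriction in Proposition \ref{propositionconnectedfibersnilpotent} to the \emph{longest} representative of the coset $wW_P$, which dictates that the argument be run through $Y_{\tilde{w}}$ and not an arbitrary $Y_{w'}$ with $w'\in wW_P$; this is why we must insist on the specific representative at the outset. A secondary bookkeeping point is matching the irreducible components across the local product decomposition for $X_{P,w}$, but this is immediate from the $G$-equivariance of the isomorphism $G\times^B Y_P\simeq X_P$ and the definition of $V_{P,w}$ via the Schubert cell $U_{P,w}$.
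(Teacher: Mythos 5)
Your proof is correct and follows essentially the same strategy as the paper: replace $w$ by the longest representative in $wW_P$, apply Proposition \ref{propositionconnectedfibersnilpotent} to get connected fibers over nilpotent points, and conclude via Proposition \ref{propfiberconnectedunibranch} using the normality of $Y_w$ from \cite[Thm. 2.3.6]{breuil2019local}. The only minor difference is in how you treat $X_{P,w}$: the paper applies Proposition \ref{propfiberconnectedunibranch} directly to $p_{P,w}:X_w\to X_{P,w}$ (the fiber connectedness implicitly transferring by $G$-equivariance from the $Y$ case), whereas you descend the unibranch and irreducible-completion statements from $Y_{P,w}$ via the smooth local trivialization $X_{P,w}\simeq\overline{U}\times Y_{P,w}$; both are valid and amount to the same underlying mechanism.
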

\begin{proof}
	Let $w\in W$ such that $w$ is the longest element in $wW_P$. Consider the surjective birational proper morphism $p_{P,w}:X_w\rightarrow X_{P,w}$ (resp. $p_{P,w}^Y:Y_w\rightarrow Y_{P,w}$) of integral varieties. The fiber $p_{P,w}^{-1}(x)$ (resp. $(p_{P,w}^Y)^{-1}(x)$) is connected by Proposition \ref{propositionconnectedfibersnilpotent} and $X_w$ (resp. $Y_w$) is normal by \cite[Thm. 2.3.6]{breuil2019local}. Hence $X_{P,w}$ (resp. $Y_{P,w}$) is unibranch at $x$ and the completion $\widehat{\cO}_{X_{P,w},x}$ (resp. $\widehat{\cO}_{Y_{P,w},x}$) is irreducible by Proposition \ref{propfiberconnectedunibranch}.
\end{proof}
\begin{remark}\label{remarkunibranchallpoints}
	The above results are true in generality. The assumptions in Proposition \ref{propositionconnectedfibersnilpotent} that $\nu_x$ is nilpotent and $w$ is the longest element in $wW_P$ can be removed. Using a Bott-Samelson-Demazure type resolution for $X_{P,w}$ in \cite[\S1.7]{riche2008geometric}, one can show that for any $w\in W$, the fiber of $p_{P,w}:X_w\rightarrow X_{P,w}$ over any point $x\in X_{P,w}$ is connected. Thus by Proposition \ref{propositionconnectedfibersnilpotent} and \cite[Thm. 2.3.6]{breuil2019local}, $X_{P,w}$ is unibranch at all points. One can see also from the proof of Proposition \ref{propositionconnectedfibersnilpotent} that $X_{P,w}$ is in fact geometrically unibranch (\cite[\S23.2.1]{grothendieck1960EGAIV1}). Moreover, Proposition \ref{propositionsetsovernilpotent} can be proved directly without using \cite[Thm. 2.4.7]{breuil2019local}.
\end{remark}
\begin{corollary}\label{corollaryirreduciblecomponentscompletion}
	If $x=(\nu,g_1B,g_2P)\in X_{P,w}$ is a closed point such that $\nu$ is nilpotent, then the irreducible components of $\Spec(\widehat{\cO}_{X_P,x})$ are $\Spec(\widehat{\cO}_{X_{P,w},x})$ for $w\in W/W_P$ such that $x\in X_{P,w}$.
\end{corollary}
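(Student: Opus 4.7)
The plan is to combine the decomposition of $X_P$ into the irreducible components $X_{P,w}$ (Proposition~\ref{propositionXPbasic}) with the unibranch property of each $X_{P,w}$ at points with nilpotent $\nu$ (Theorem~\ref{theoremunibranchYP}), via a standard statement about minimal primes of the completion of a Noetherian local ring. Concretely, write $R := \cO_{X_P,x}$. Since $X_P$ is reduced with irreducible components $\{X_{P,w}\}_{w \in W/W_P}$, the minimal primes of $R$ are in bijection with those $w$ such that $x \in X_{P,w}$; denoting by $\fp_w$ the minimal prime corresponding to $X_{P,w}$, one has $R/\fp_w = \cO_{X_{P,w},x}$.

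I would first recall that $R \to \widehat{R}$ is faithfully flat, so every minimal prime of $\widehat{R}$ lies over a unique minimal prime of $R$; and for each such $w$, the minimal primes of $\widehat{R}$ lying over $\fp_w$ are in bijection with the minimal primes of $\widehat{R}/\fp_w\widehat{R} = \widehat{R/\fp_w} = \widehat{\cO}_{X_{P,w},x}$. The key input is then Theorem~\ref{theoremunibranchYP}: since $\nu$ is nilpotent, $\widehat{\cO}_{X_{P,w},x}$ is irreducible for each $w$ with $x \in X_{P,w}$, so each $\fp_w$ contributes exactly one minimal prime of $\widehat{R}$, whose associated reduced irreducible closed subscheme of $\Spec(\widehat{R})$ is precisely $\Spec(\widehat{R}/\fp_w\widehat{R}) = \Spec(\widehat{\cO}_{X_{P,w},x})$.

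All the substantive geometry has already been carried out in Theorem~\ref{theoremunibranchYP}, and the statement is a purely formal extraction, so I do not expect a genuine obstacle. The only minor point worth stating carefully is the identification of $R/\fp_w$ with $\cO_{X_{P,w},x}$, which uses the reducedness of $X_P$ from Proposition~\ref{propositionXPbasic} together with the fact that $X_{P,w}$ is (by definition) taken with the reduced induced subscheme structure.
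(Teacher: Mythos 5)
Your argument is correct and takes essentially the same route as the paper: both decompose $\Spec R$ into the minimal primes $\fp_w$ corresponding to the $X_{P,w}$ through $x$ and both rest on Theorem~\ref{theoremunibranchYP} for the irreducibility of each $\widehat{R/\fp_w}$. The only difference is how the minimal primes of $\widehat{R}$ are tallied: you use flatness of $R\to\widehat{R}$ and going-down, whereas the paper passes to the normalization $R'=\prod(R/\fp_w)'$ and invokes \cite[Sch. 7.8.3(vii)]{grothendieck1965EGAIV2}; both also silently need excellence to ensure each $\widehat{R/\fp_w}$ is reduced (hence a domain), which the paper states and you may wish to flag explicitly.
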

\begin{proof}
	Suppose that $R$ is a reduced local excellent Noetherian ring with minimal prime ideals $\fp_1,\cdots,\fp_m$ such that every $R/\fp_{i}$ is unibranch. By definition, the normalization $R'$ of $R$ is a product of $(R/\fp_i)'$, the normalizations of $R/\fp_i$. Since $R/\fp_i$ is unibranch, $(R/\fp_i)'$ is local and thus the number of maximal ideals of $R'$ is $m$. By \cite[Sch. 7.8.3(vii)]{grothendieck1965EGAIV2}, minimal prime ideals of $\widehat{R}$ correspond bijectively to maximal ideals of $R'$. Hence there are exactly $m$ minimal ideals of $\widehat{R}$. Since the quotients $\widehat{R}/\fp_i\widehat{R}=\widehat{R/\fp_i}$ of $\widehat{R}$ are integral, they correspond to all irreducible components of $\Spec(\widehat{R})$.
\end{proof}
\subsection{The weight map}\label{subsectiontheweightmapthevariety}
We prove some results for the weight map of $X_{P,w}$. Since the characteristic of $k$ is very good for $G$, the ring morphisms $S(\fg^{*})\hookleftarrow S(\fg^{*})^{G}\simrightarrow S(\ft^{*})^W$ induce a morphism $\gamma_G:\fg\rightarrow \ft/W$ of $k$-schemes (\cite[VI.8]{kiehl2013weil}). Applying this fact to the standard Levi subgroup $M_P$ of $P$ we get a map $\gamma_{M_P}:\fm_P\rightarrow \ft/W_P$. We define a map $\kappa_{P}:\tildefg_P\rightarrow \ft/W_P:(\nu,gP)\mapsto \gamma_{M_P}\left(\overline{\Ad(g^{-1})\nu}\right)$ where $\overline{\Ad(g^{-1})\nu}$ denotes the image of $\Ad(g^{-1})\nu$ in $\fm_P$ under the projection $\fp\twoheadrightarrow\fp/\mathfrak{n}_P\simrightarrow \fm_P$. Let $\kappa_1: X_P\rightarrow \ft$ be the map sending $(\nu,g_1B,g_2P)$ to the image of $\Ad(g_1^{-1})\nu$ in $\ft=\fb/\mathfrak{u}$ and $\kappa_2$ be the composition $X_P=\tildefg\times_{\fg}\tildefg_P\rightarrow \tildefg_P\stackrel{\kappa_P}{\rightarrow}\ft/W_P$. We have the following commutative diagram
\begin{center}
	\begin{tikzpicture}[scale=1.3]
		\node (A) at (0,0) {$\ft$};
		\node (B) at (2,0) {$\ft/W_P$};
		\node (C) at (4,0) {$\ft/W$};
		\node (D) at (0,1) {$\tildefg$};
		\node (E) at (2,1) {$\tildefg_P$};
		\node (F) at (4,1) {$\fg$};
		\path[->,font=\scriptsize,>=angle 90]
		(A) edge node[above]{} (B)
		(B) edge node[above]{} (C)
		(D) edge node[above]{} (E)
		(E) edge node[above]{} (F)
		(D) edge node[left]{$\kappa_{B}$} (A)
		(E) edge node[left]{$\kappa_{P}$} (B)
		(F) edge node[left]{$\kappa_{G}=\gamma_G$} (C)
		;
		\end{tikzpicture}
\end{center}
where the horizontal arrows are natural projections.
For $i=1,2,w\in W/W_P$, let $\kappa_{i,w}$ be the restriction of $\kappa_i$ to the closed subscheme $X_{P,w}$.
\begin{lemma}\label{lemmaweightmap}
	We have the following commutative diagram 
	\begin{center}
		\begin{tikzpicture}[scale=1.3]
			\node (A) at (0,0) {$\ft$};
			\node (B) at (2,0) {$\ft/W$};
			\node (C) at (0,2) {$X_{P,w}$};
			\node (D) at (2,2) {$\ft/W_P$};
			\path[->,font=\scriptsize,>=angle 90]
			(A) edge node[above]{} (B)
			(D) edge node[above]{} (B)
			(C) edge node[left]{$\kappa_{1,w}$} (A)
			(C) edge node[above]{$\kappa_{2,w}$} (D)
			(A) edge node[left]{$\alpha$} (D)
			;
			\end{tikzpicture}
	 \end{center}
	where the map $\ft\rightarrow \ft/W$ and $\ft/W_P\rightarrow \ft/W$ are natural projections and the map $\alpha:\ft\rightarrow \ft/W_P$ is the composition of the map $\Ad(\dw^{-1}):\ft\rightarrow\ft$ with the projection $\ft\rightarrow \ft/W_P$ (thus $\alpha$ depends only on the class of $w$ in $W/W_P$).
\end{lemma}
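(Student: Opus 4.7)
The proof splits into two checks: (i) the outer (lower) triangle $\ft \xrightarrow{\alpha} \ft/W_P \to \ft/W$ coincides with the projection $\ft \to \ft/W$, and (ii) the upper identity $\kappa_{2,w} = \alpha\circ\kappa_{1,w}$ of morphisms $X_{P,w}\to\ft/W_P$. For (i), by the definition $\alpha = \pi_{W_P} \circ \Ad(\dw^{-1})$ where $\pi_{W_P}\colon\ft\to\ft/W_P$ is the natural projection, post-composing with $\ft/W_P\to\ft/W$ gives $\pi_W\circ\Ad(\dw^{-1})=\pi_W$ because $\dw\in N_G(T)(k)$ represents a Weyl element. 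This also shows that $\alpha$ depends only on the coset $wW_P$, as any two representatives differ by an element of $W_P$ absorbed by $\pi_{W_P}$.

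For (ii), I reduce to the dense open subscheme $V_{P,w}\subset X_{P,w}$. Since $X_{P,w}$ is reduced (Proposition~\ref{propositionXPbasic}) and $\ft/W_P$ is separated, two morphisms agreeing on $V_{P,w}$ coincide on $X_{P,w}$. Given a closed point $x=(\nu,g_1B,g_2P)\in V_{P,w}$, fix $\dw$ a representative of the shortest element in $wW_P$, so $\dw$ represents an element of $W^P$, and write the Bruhat decomposition $g_1^{-1}g_2 = b\dw p$ with $b\in B$, $p\in P$. Setting $\mu := \Ad(b^{-1})\Ad(g_1^{-1})\nu = t + u$ with $t\in\ft$ and $u\in\mathfrak{u}$, the element $t$ coincides with $\kappa_{1,w}(x)$ because $\Ad(b^{-1})$ acts trivially on $\fb/\mathfrak{u}\simeq\ft$. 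The identity $\Ad(g_2^{-1})\nu = \Ad(p^{-1})\Ad(\dw^{-1})\mu$ together with the $M_P$-invariance of $\gamma_{M_P}$ reduces the computation of $\kappa_{2,w}(x)$ to $\gamma_{M_P}\bigl(\overline{\Ad(\dw^{-1})(t+u)}\bigr)\in\ft/W_P$, where bar denotes the image in $\fm_P=\fp/\fn_P$.

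Since $\dw\in N_G(T)$, the element $\Ad(\dw^{-1})t$ lies in $\ft$. For the nilpotent summand, $\Ad(\dw^{-1})u\in\bigoplus_{\alpha\in R^+}\fg_{w^{-1}\alpha}$, and the components surviving the projection $\fp\twoheadrightarrow\fm_P$ are exactly those with $w^{-1}\alpha\in R_P$. The key combinatorial input is Lemma~\ref{lemmaweylgroupshortestelement}(4): for $w\in W^P$, one has $w(R_P^+)\subset R^+$, which forces $w^{-1}(R^+)\cap R_P\subset R_P^+$. Hence $\overline{\Ad(\dw^{-1})u}\in\mathfrak{u}_{M_P}$, and therefore $\overline{\Ad(\dw^{-1})(t+u)} = \Ad(\dw^{-1})t + \overline{\Ad(\dw^{-1})u}\in\fb_{M_P}$. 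Applying $\gamma_{M_P}$ to an element of $\fb_{M_P}$ amounts to reading off its $\ft$-component modulo $W_P$ (the analogue for $M_P$ of the commutative diagram between $\widetilde{\fg}$, $\fg$, $\ft$ and $\ft/W$ preceding the lemma), which gives $\Ad(\dw^{-1})t = \alpha(t) = \alpha(\kappa_{1,w}(x))$, as desired.

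The main obstacle is the combinatorial containment $w^{-1}(R^+)\cap R_P\subset R_P^+$ for $w\in W^P$; once it is in hand, the rest is routine linear algebra plus invariance under $B$ and $P$. An alternative approach would exploit the proper birational surjection $p_{P,w}\colon X_w\to X_{P,w}$ of Proposition~\ref{propositionsequencebirational} to deduce the statement from the $P=B$ analogue, either known from \cite{breuil2019local} or proved identically by running the argument above on $V_{B,w}$; the two strategies are equivalent in substance and rely on the same combinatorial fact.
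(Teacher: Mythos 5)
Your proof is correct and follows essentially the same strategy as the paper's: reduce to $V_{P,w}$ using reducedness and density, and then use the combinatorics of Lemma~\ref{lemmaweylgroupshortestelement} for the shortest coset representative to compare $\kappa_1$ and $\kappa_2$. The only cosmetic difference is that you track an explicit Bruhat decomposition $g_1^{-1}g_2 = b\dw p$ at a closed point and compute the effect of $\Ad(b^{-1})$ and $\Ad(p^{-1})$ on the two weight maps directly, whereas the paper passes to an fppf base change $S' \to S$ to normalize $g_2 = g_1\dw$ so that $b$ and $p$ drop out from the start; both localizations are standard and your combinatorial step $w^{-1}(R^+)\cap R_P\subset R_P^+$ is exactly what the paper's appeal to Lemma~\ref{lemmaweylgroupshortestelement}(2) packages.
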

\begin{proof}
	This is a generalization of \cite[Lem. 2.3.4]{breuil2019local}. We only need to show $\kappa_{2,w}=\alpha\circ\kappa_{1,w}$. Since $X_{P,w}$ is the closure of $V_{P,w}=\left\{(\nu,g_1B,g_2P)\in X_P\mid g_1^{-1}g_2\in BwP \right\}$ in $X_P$, we only need to verify $\kappa_{2,w}=\Ad(\dw^{-1})\kappa_{1,w}$ when restricted to $V_{P,w}$. Let $x=(\nu,g_1B,g_2P)\in X_P(S)$ for a $k$-algebra $S$ and by replacing $S$ by some fppf extension, we assume $g_2=g_1\dw\in G(S)$. Then $\Ad(g_1^{-1})\nu\in\fb(S)$ and $\Ad(\dw^{-1})\Ad(g_1^{-1})\nu\in\fp(S)$. We assume $w\in W^P$. Then we have $\Ad(\dw^{-1})\Ad(g_1)^{-1}\nu\in\fb(S)$ (cf. Lemma \ref{lemmaweylgroupshortestelement}). The image of $\Ad(g_2^{-1})\nu$ in $\fm_P(S)=\fp/\mathfrak{n}_P(S)$, denoted by $\overline{\Ad(g_2^{-1})\nu}$, lies in the subset $\fb_{M_P}(S)=\fm_P(S)\cap \fb(S)$. Let $t$ denote the image of $\overline{\Ad(g_2^{-1})\nu}$ in $\ft(S)=\ft_{M_P}(S)=\fb_{M_P}(S)/\mathfrak{u}_{M_P}(S)$. Then $\kappa_2(x)=\gamma_{M_P}\left(\overline{\Ad(g_2^{-1})\nu}\right)$ is the image of $t$ in $(\ft/W_P)(S)$ via the map $\ft\rightarrow \ft/W_P$ (cf. \cite[Thm. VI.8.3]{kiehl2013weil}). We have that $\kappa_{1,w}(x)$ is the image of $\Ad(g_1)^{-1}\nu$ in $\ft(S)=\fb/\mathfrak{u}(S)$, thus $t=\Ad(\dw^{-1})\kappa_{1,w}(x)$. Hence $\kappa_{2,w}=\alpha\circ\kappa_{1,w}$.
\end{proof}
Now let $T_P:=\ft\times_{\ft/W}\ft/W_P$ and for all $w\in W/W_P$, let $T_{P,w}=\left\{(z,\Ad(\dw^{-1})z)\mid z\in\ft\right\}\subset {\ft\times_{\ft/W}\ft/W_P}$ be closed subschemes of $T_P$. Then $T_{P,w}\simeq\ft$ is smooth for any $w\in W/W_P$. Similar to \cite[Lem. 2.5.1]{breuil2019local}, $T_P$ is equidimensional and $\left\{T_{P,w}\mid w\in W/W_P\right\}$ is the set of irreducible components of $T_P$. We have a map $(\kappa_1,\kappa_2):X_P\rightarrow T_P$ and $X_{P,w}$ is the unique irreducible component of $X_P$ that dominates $T_{P,w}$ by Lemma \ref{lemmaweightmap} (the dominance comes from the factorization $\kappa_1:X_{P,w}\stackrel{\mathrm{pr}_{P,w}}{\rightarrow}\widetilde{\fg}\stackrel{\kappa_B}{\rightarrow}\ft\simeq T_{P,w}$ and that $\mathrm{pr}_{P,w}$ is surjective by Proposition \ref{propositionXPbasic}). Suppose that $x=(\nu,g_1B,g_2P)\in X_{P}$ is a closed point such that $\nu$ is nilpotent and let $(0,0)=(\kappa_1(x),\kappa_2(x))\in T_P$. If $x\in X_{P,w}\subset X_P$ for some $w\in W/W_P$, we let $\widehat{X}_{P,x}$ (resp. $\widehat{X}_{P,w,x}$, resp. $\widehat{T}_{P,(0,0)}$, resp. $\widehat{T}_{P,w,(0,0)}$) be the completion of $X_P$ at $x$ (resp. $X_{P,w}$ at $x$, resp. $T_P$ at $(0,0)$, resp. $T_{P,w}$ at $(0,0)$). Since by Theorem \ref{theoremunibranchYP}, the structure ring of $\widehat{X}_{P,w,x}$ is irreducible, using the same argument for \cite[Lem. 2.5.2]{breuil2019local}, we get the following lemma.
\begin{lemma}\label{lemmafactorthroughXPw}
	The map $\widehat{X}_{P,w',x}\hookrightarrow \widehat{X}_{P,x}\rightarrow \widehat{T}_{P,(0,0)}$ induced by the completions of the closed embedding $X_{P,x}\hookrightarrow X_P$ and the map $(\kappa_1,\kappa_2)$ factors through $\widehat{T}_{P,w,(0,0)}\hookrightarrow \widehat{T}_{P,(0,0)}$ if and only if $w'=w$ in $W/W_P$.
\end{lemma}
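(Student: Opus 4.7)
The ``if'' direction is immediate from Lemma \ref{lemmaweightmap}: the morphism $(\kappa_1,\kappa_2)\colon X_{P,w}\to T_P$ factors through $T_{P,w}\hookrightarrow T_P$ by construction, and this factorization is preserved on completion at $x$ and at $(0,0)$.

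For the ``only if'' direction, suppose the completed map $\widehat{X}_{P,w',x}\to\widehat{T}_{P,(0,0)}$ factors through $\widehat{T}_{P,w,(0,0)}$ for some $w\in W/W_P$. Combining with the ``if'' direction applied to $w'$, the map must factor through the scheme-theoretic intersection $\widehat{T}_{P,w,(0,0)}\cap\widehat{T}_{P,w',(0,0)}$. Identifying $T_{P,w'}\simeq\ft$ via the first projection, this intersection pulls back to the finite union $\bigcup_{\sigma\in W_P}\ft^{w\sigma w'^{-1}}$ of Weyl-fixed subspaces of $\ft$. Since $W$ acts faithfully on $\ft$, each such subspace is proper unless $w\sigma w'^{-1}=1$ in $W$ for some $\sigma\in W_P$, i.e.\ unless $w=w'$ in $W/W_P$. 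Hence for $w\neq w'$ the corresponding ideal $I\subset\widehat{\cO}_{T_{P,w'},(0,0)}$ is nonzero, and the problem reduces to proving that the ring homomorphism $\widehat{\cO}_{T_{P,w'},(0,0)}\to\widehat{\cO}_{X_{P,w'},x}$ induced by $\kappa_{1,w'}$ (via the identification $T_{P,w'}\simeq\ft$) is injective.

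I propose to derive this injectivity from flatness of $\kappa_{1,w'}$ at $x$ via the miracle flatness criterion. Set $R=\cO_{T_{P,w'},0}$ and $S=\cO_{X_{P,w'},x}$. By Proposition \ref{propositionXPbasic}, $S$ is Cohen-Macaulay of dimension $\dim G$, while $R$ is regular of dimension $\dim\ft$. Krull's height theorem gives $\dim S/\fm_R S\geq\dim G-\dim\ft$. For the matching upper bound, the special fibre $\kappa_{1,w'}^{-1}(0)$ is contained in the generalized Steinberg variety $Z_P\subset X_P$, which is equidimensional of dimension $\dim G-\dim\ft$---a combinatorial count using that each cell $Z_P\cap V_{P,w}$ is a vector bundle over $U_{P,w}$ of fibre dimension $\dim(\fu\cap\Ad(\dw)\fp)=\dim\fu-\mathrm{lg}_P(w)$ for $w\in W^P$, via Lemma \ref{lemmaweylgroupshortestelement}. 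Consequently $\dim S/\fm_R S=\dim S-\dim R$, so miracle flatness yields flatness of $R\to S$; preservation of flatness under passage to maximal-ideal-adic completions then gives a flat local homomorphism $\widehat{R}\to\widehat{S}$, which is automatically faithfully flat and hence injective.

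The main combinatorial input is the dimension identity $\dim Z_P=\dim G-\dim\ft$; the rest is a formal package combining Proposition \ref{propositionXPbasic} (Cohen-Macaulay), miracle flatness, and preservation of flatness under completion. The statement of Theorem \ref{theoremunibranchYP} (unibranchness) is not strictly needed for the argument as I have set it up, but it guarantees that $\widehat{\cO}_{X_{P,w'},x}$ is a domain, which gives an alternative route via the scheme-theoretic image: the image of an irreducible formal scheme is irreducible, and the dimension count above would equally show this image cannot lie in the proper closed subscheme cut out by $I$.
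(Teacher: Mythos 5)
Your ``if'' direction and the reduction of ``only if'' to injectivity of $\widehat{\cO}_{T_{P,w'},(0,0)}\to\widehat{\cO}_{X_{P,w'},x}$ are both fine. The genuine gap is in the claimed application of miracle flatness: you set $S=\cO_{X_{P,w'},x}$ and cite Proposition \ref{propositionXPbasic} for $S$ being Cohen--Macaulay, but that proposition establishes Cohen--Macaulayness only for the ambient scheme $X_P$ (as a local complete intersection), not for its individual irreducible components $X_{P,w}$. An irreducible component of a Cohen--Macaulay scheme need not be Cohen--Macaulay, and indeed the introduction explicitly records as an open problem ``whether $X_{P,w}$ $\ldots$ is Cohen--Macaulay or normal.'' (When $P=B$ this is available from Bezrukavnikov--Riche, which is exactly why the argument in \cite[Lem.~2.5.2]{breuil2019local} can be set up that way; it is not available here.) So the flatness route cannot close the injectivity step.

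What you dismiss in your last sentence as an ``alternative route'' is in fact the argument the paper needs, and it works without any Cohen--Macaulay input. Since $\widehat{\cO}_{X_{P,w'},x}$ is an integral domain by Theorem \ref{theoremunibranchYP}, the kernel $\fp$ of $\widehat{\cO}_{\ft,0}\to\widehat{\cO}_{X_{P,w'},x}$ is a prime ideal of the regular (hence integral) complete local ring $\widehat{\cO}_{\ft,0}$. Applying the dimension inequality for local homomorphisms of Noetherian local rings to $\widehat{\cO}_{\ft,0}/\fp\to\widehat{\cO}_{X_{P,w'},x}$ gives $\dim\bigl(\widehat{\cO}_{X_{P,w'},x}/\fm_{\ft}\widehat{\cO}_{X_{P,w'},x}\bigr)\geq \dim G - \dim\bigl(\widehat{\cO}_{\ft,0}/\fp\bigr)$, while the containment of the special fibre in the generalized Steinberg variety shows this left-hand side is at most $\dim G-\dim T$. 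Together these force $\dim\bigl(\widehat{\cO}_{\ft,0}/\fp\bigr)=\dim T$, hence $\fp=0$, giving the injectivity without any appeal to flatness or to Cohen--Macaulayness of $X_{P,w'}$. The unibranchness theorem is therefore not optional---it is exactly the input that replaces the missing Cohen--Macaulay/normality statement used in the $P=B$ case.
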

\subsection{Generalized Steinberg varieties}\label{sectionsteinbergvarieties}
We shall study certain vanishing properties of irreducible components of generalized Steinberg varieties which might be well-known from the perspective of geometric representation theory (at least for the case when $P=B$, see \S\ref{sectioncharacteristiccycles}). These vanishing properties will be the major new ingredients in the global applications of the local models for the trianguline variety.\par
We pick a standard parabolic subgroup $Q=M_{Q}N_Q$ of $G$ with Lie algebra $\fq=\fm_{Q}+\fn_{Q}$ and Weyl group $W_Q$. Let ${^QW}$ be the set of elements $w\in W$ such that $w$ is the shortest element in the coset $W_Qw$. We consider the following scheme depending on the choice of the two parabolic subgroups $P$ and $Q$
\[Z_{Q,P}:=\left\{(\nu,g_1B,g_2P)\in \cN\times G/B\times G/P\mid \Ad(g_1^{-1})\nu\in \fn_{Q},\Ad(g_2^{-1})\nu\in \fp\right\}.\]
As there is an isomorphism $\widetilde{\cN}:=\left\{(\nu,g)\in\cN\times G/B \mid\Ad(g^{-1})\nu\in\fb  \right\}\simeq G\times^{B}\mathfrak{u}$, we can replace $\cN$ in the above definition by $\fg$ (cf. \cite[\S3.2]{Chriss1997RepresentationTA}).  
When $Q=B$, $Z_P:=Z_{B,P}$ is some generalized Steinberg variety considered in \cite{douglass2004geometry}. We have a natural closed embedding \[Z_{Q,P}\hookrightarrow Z_{P}\] 
and generally, $Z_{Q',P}\subset Z_{Q,P}$ if $Q\subset Q'$. For any $w\in  W_Q\backslash W/W_P$, we let $Z_{Q,P,w}$ be the Zariski closure of the subset $H_{Q,P,w}:=\left\{(\nu,g_1B,g_2P)\in Z_{Q,P}\mid g_1^{-1}g_2\in QwP/P \right\}$ in $Z_{Q,P}$ with the reduced induced scheme structure. We write $Z_{P,w}:=Z_{B,P,w}$ for every $w\in W/W_P$. There is a unique shortest element $w\in W$ in each double coset $W_QwW_P\in W_Q\backslash W/W_P$ and $w\in W$ is the shortest element in $W_QwW_P$ if and only if $w\in W^P\cap  {^QW}$ (\cite[Prop. 2]{tenner2020parabolic}).
\begin{proposition}\label{propositionsteinbergvarieties}
    \begin{enumerate}
        \item The scheme $Z_{Q,P,w}$ is irreducible and has dimension no more than $\dim G-\dim T$. 
        \item $Z_P$ is equidimensional of dimension $\dim G-\dim T$ with irreducible components $Z_{P,w},w\in W/W_P$.
        \item For any $w\in W_Q\backslash W/ W_P$, the following statements are equivalent:
        \begin{enumerate}[label=(\alph*)]
            \item $Z_{Q,P,w}=Z_{P,w'}$ for some $w'\in W^P$;
            \item $\Ad(\dw)\fm_P \cap \mathfrak{u}\subset \fn_Q$ (this condition is independent of the representative of $w$ in $W$);
            \item if we take a representative $w\in W^P\cap  {^QW}$, then $w_{Q,0}w\in W^P$.
        \end{enumerate}
            And if the above statements hold, $w'W_P=w_{Q,0}wW_P$ where $w\in W^P\cap  {^QW}$.
        \item $Z_{Q,B}$ is equidimensional of dimension $\dim G-\dim T$ with irreducible components $Z_{B,w_{Q,0}w},w\in {^QW}$.    
    \end{enumerate}
\end{proposition}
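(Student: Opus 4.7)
The four parts are intertwined, and my plan is to establish them together via a stratification of $Z_P$ and $Z_{Q,P}$ by the $B$-orbits on $G/P$ followed by root-theoretic bookkeeping of fiber dimensions using Lemma~\ref{lemmaweylgroupshortestelement}. The key preliminary observation is that for $w\in W^P$, combining the root-space decomposition of $\fu\cap\Ad(\dw)\fp$ with Lemma~\ref{lemmaweylgroupshortestelement}(4) (which gives $w(R_P^-)\subset R^-$, killing any contribution with $w^{-1}(\alpha)\in R_P^-$) yields $\dim(\fu\cap\Ad(\dw)\fp)=|R^+|-\lg(w)=\dim\fu-\lg_P(w)$.

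For (2), the natural projection $H_{P,w}\to U_{P,w}$ is a $G$-equivariant affine bundle with fiber $\fu\cap\Ad(\dw)\fp$ over the basepoint, whence $\dim H_{P,w}=(\dim G-\dim B+\lg_P(w))+(\dim\fu-\lg_P(w))=\dim G-\dim T$ for every $w\in W^P$. Since $H_{P,w}$ is automatically irreducible (affine bundle over an irreducible base) and $Z_P=\sqcup_{w\in W^P}H_{P,w}$ by the Bruhat decomposition of $G/B\times G/P$, the $Z_{P,w}=\overline{H_{P,w}}$ exhaust the irreducible components of $Z_P$.

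For (1), take the shortest representative $w\in W^P\cap{^QW}$ of the double coset. Irreducibility of $H_{Q,P,w}$ follows from the surjective parameterization
\[G\times Q\times(\fn_Q\cap\Ad(\dw)\fp)\times P\twoheadrightarrow H_{Q,P,w},\quad (g,q,\eta,p)\mapsto(\Ad(gq)\eta,gB,gq\dw pP),\]
whose source is irreducible. A parallel root computation, using both $w\in W^P$ and the analogous property $w^{-1}(R_Q^+)\subset R^+$ of $w\in{^QW}$, computes $\dim Q\dw P/P$ and $\dim(\fn_Q\cap\Ad(\dw)\fp)$ and combines to give $\dim H_{Q,P,w}=\dim G-\dim T-m$ with $m:=|R_Q^+\cap w(R_P^+)|\geq 0$, proving (1). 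For (3), both (b) and (c) translate to $m=0$: (b) directly from the root decomposition of $\Ad(\dw)\fm_P\cap\fu$, and (c) from the identity $w_{Q,0}(R^+\setminus R_Q^+)=R^+\setminus R_Q^+$ (every $W_Q$-reflection preserves $R^+\setminus R_Q^+$) combined with Lemma~\ref{lemmaweylgroupshortestelement}(4). When $m=0$, $Z_{Q,P,w}$ is an irreducible closed subset of $Z_P$ of top dimension $\dim G-\dim T$, hence by (2) equals a component $Z_{P,w'}$; comparing with the open dense $B$-orbit of $Q\dw P/P$ — which under (c) is $Bw_{Q,0}\dw P/P$, since $w_{Q,0}w\in W^P$ realizes the maximal length $\lg(w_{Q,0})+\lg(w)$ — forces $w'=w_{Q,0}w$. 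The converse (a)$\Rightarrow$(b) is pure dimension counting.

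Part (4) is then immediate: when $P=B$, condition (c) reads $w_{Q,0}w\in W$, which is vacuous, so every $Z_{Q,B,w}$ equals $Z_{B,w_{Q,0}w}$ for $w\in{^QW}$, and the length-additive injection $w\mapsto w_{Q,0}w$ on ${^QW}$ yields $|{^QW}|$ distinct top-dimensional components. The step I expect to be the main technical obstacle is the combinatorial identification of the open dense $B$-orbit of $Q\dw P/P$ as $Bw_{Q,0}\dw P/P$ when $m=0$, which underlies the identification $w'=w_{Q,0}w$ in (3): one must verify that $\lg_P((w'')^P)$ is maximized on $w''\in W_QwW_P$ at $w''=w_{Q,0}w$. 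Under (c) this element is already its own $W^P$-representative, of length $\lg(w_{Q,0})+\lg(w)$ by the length-additivity $\lg(uw)=\lg(u)+\lg(w)$ for $u\in W_Q$, $w\in{^QW}$, which matches the root computation of $\dim Q\dw P/P$ and settles the identification.
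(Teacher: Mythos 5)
Your proposal is correct and takes essentially the same route as the paper: the same fibration of $H_{Q,P,w}$ over the Bruhat stratum $\{(g_1B,g_2P)\mid g_1^{-1}g_2\in Q\dw P/P\}$ with fiber $\fn_Q\cap\Ad(\dw)\fp$, the same reduction of the maximal-dimension condition to the root-count $|R_Q^+\cap w(R_P^+)|=0$ (the paper writes this as $\dim(\fu\cap\Ad(\dw)\fm_P)-\dim(\fn_Q\cap\Ad(\dw)\fm_P)=0$, which is the same quantity), the same argument for (b)$\Leftrightarrow$(c) via $w_{Q,0}$ stabilizing $R^+\setminus R_Q^+$, and the same identification of the open dense $B$-orbit of $Q\dw P/P$ as $Bw_{Q,0}\dw P/P$ via length-additivity for $w\in {}^QW$. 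The only cosmetic difference is that you establish irreducibility of $H_{Q,P,w}$ by an explicit surjection from $G\times Q\times(\fn_Q\cap\Ad(\dw)\fp)\times P$, whereas the paper passes through the auxiliary variety $\overline{Z}_{Q,P}$ over $G/Q\times G/P$ and quotes Douglass for the dimension of $G\cdot(Q,\dw P)$.
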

\begin{proof}
    Take a representative $w\in W$ for $w\in W_Q\setminus W/W_P$ and we write $w$ instead of $\dw$ for simplicity.\par
    (1) Let $\overline{Z}_{Q,P}:=\left\{(\nu,g_1Q,g_2P)\in \cN\times G/Q\times G/P\mid \Ad(g_1^{-1})\nu\in \fn_{Q},\Ad(g_2^{-1})\nu\in \fp\right\}$ be a generalized Steinberg variety in \cite{douglass2004geometry}. Then $Z_{Q,P}=\overline{Z}_{Q,P}\times_{G/Q}G/B$ and the natural morphism $Z_{Q,P}\rightarrow \overline{Z}_{Q,P}$ is a locally trivial fibration of relative dimension $ \dim Q-\dim B$. Let 
    \[\overline{H}_{Q,P,w}:=\left\{(\nu,g_1Q,g_2P)\in \cN\times G/Q\times G/P\mid \Ad(g_1^{-1})\nu\in \fn_{Q},\Ad(g_2^{-1})\nu\in \fp,g_1^{-1}g_2\in QwP/P\right\}\] 
    and let $\overline{Z}_{Q,P,w}$ be the Zariski closure of $\overline{H}_{Q,P,w}$ in $\overline{Z}_{Q,P}$. Then $H_{Q,P,w}=\overline{H}_{Q,P,w}\times_{G/Q}G/B$.\par 
    We work as in Proposition \ref{propositionXPbasic} or \cite[Prop. 2.2.5]{breuil2019local}. The projection $\overline{H}_{Q,P,w}\rightarrow G\cdot(Q, w P)\subset G/Q\times G/P$ is $G$-equivariant (with respect to the diagonal action of $G$ on the double flag variety). The fiber over the point $(Q, wP)$ is the affine space $\fn_{Q}\cap \Ad(w)\fp$. The $G$-orbit $G\cdot(Q, wP)$ is smooth, irreducible of dimension $\dim G-\dim Q\cap wPw^{-1}$. By Lemma \cite[Lem. 2.3]{douglass2004geometry}, $\dim G-\dim Q\cap wPw^{-1}=\dim \fn_{wPw^{-1}}+\dim \fn_{Q}-\dim (\fn_{Q}\cap \fn_{wPw^{-1}})=\dim \fn_P+\dim \fn_{Q}-\dim (\fn_{Q}\cap \Ad(w)\fn_P)$. Thus by \cite[Lem. 2.2.2]{breuil2019local}, $H_{Q,P,w}$ is a vector bundle over $G\cdot(Q,wP)\times_{G/Q}G/B=\left\{(g_1B,g_2P)\in G/B\times G/P\mid g_1^{-1}g_2\in QwP\right\}$ and is smooth of dimension $(\dim G-\dim T)-\dim \fn_Q-\dim \mathfrak{u}+\dim \fn_P+\dim \fn_{Q}+\dim \fn_{Q}\cap \Ad(w)\fm_P= (\dim G-\dim T)-(\dim \mathfrak{u}-\dim\fn_P-\dim \fn_{Q}\cap \Ad(w)\fm_P)=(\dim G-\dim T)-(\dim \mathfrak{u}\cap\Ad(w)\fm_P-\dim \fn_{Q}\cap \Ad(w)\fm_P)$ (the last equality can be deduced from Lemma \ref{lemmaweylgroupshortestelement} (4) if we take $w\in W^P$). Hence $Z_{Q,P,w}$ is also irreducible of dimension $(\dim G-\dim T)-(\dim \mathfrak{u}\cap\Ad(w)\fm_P-\dim \fn_{Q}\cap \Ad(w)\fm_P)\leq \dim G-\dim T$.\par
    (2) If $Q=B$, then $\dim \mathfrak{u}\cap\Ad(w)\fm_P-\dim \fn_{Q}\cap \Ad(w)\fm_P=0$, the result follows.\par
    (3) By the proof in (1), we see that the dimension of $Z_{Q,P,w}$ is equal to $\dim G-\dim T$ if and only if $\Ad(w)\fm_P\cap\fu\subset \fn_{Q}$. This proves $(a)\Leftrightarrow (b)$. In fact, for any $w'\in W_{Q}$, $\dim \Ad(w'w)\fm_P\cap\mathfrak{u}= \dim (\mathfrak{u}-\fn_P)=\dim \Ad(w)\fm_P\cap\mathfrak{u}$. Thus if $\Ad(w)\fm_P\cap\mathfrak{u}\subset \fn_Q$, then $\Ad(w'w)\fm_P\cap\mathfrak{u}\supset \Ad(w')(\Ad(w)\fm_P\cap\mathfrak{u})$ of the same dimension and hence $\Ad(w'w)\fm_P\cap\mathfrak{u}=\Ad(w')(\Ad(w)\fm_P\cap\mathfrak{u})\subset \fn_Q$. \par
    Now we take $w\in W^P$. Then $\Ad(w)(\fm_{P}\cap\mathfrak{u})\subset \mathfrak{u}$ by Lemma \ref{lemmaweylgroupshortestelement} and in this case, we have (similarly) $\Ad(w)(\fm_{P}\cap\overline{\fu})\subset\overline{\fu}$ and therefore $\Ad(w)\fm_{P}\cap\mathfrak{u}=\Ad(w)(\fm_{P}\cap\mathfrak{u})$. \par
    (b) $\Rightarrow$ (c): Since $w\in W^P$, we get $\Ad(w)(\fm_P\cap \mathfrak{u})= \Ad(w)\fm_P \cap \mathfrak{u}\subset \fn_Q$. As for any $w'\in W_{Q}$, $\Ad(w')\fn_Q=\fn_Q$, we have $\Ad(w'w)(\fm_P\cap \mathfrak{u})\subset\fn_Q\subset \fu$ and we conclude by Lemma \ref{lemmaweylgroupshortestelement} that $w'w\in W^P$ for any $w'\in W_Q$. \par
    (c) $\Rightarrow$ (b): We have $\Ad(w_{Q,0})(\Ad(w)\fm_P\cap\mathfrak{u})=\Ad(w_{Q,0}w)(\fm_{P}\cap\mathfrak{u})$ is contained in $\mathfrak{u}$ by Lemma \ref{lemmaweylgroupshortestelement}. Since $\Ad(w_{Q,0})\fn_Q=\fn_Q,\Ad(w_{Q,0})(\fm_Q\cap\mathfrak{u})=\fm_Q\cap\overline{\fu}$ and $\fu=\fm_Q\cap\fu+\fn_{Q}$, we get 
    \[\Ad(w)\fm_P\cap\mathfrak{u}=\Ad(w_{Q,0}w_{Q,0})(\Ad(w)\fm_P\cap\mathfrak{u})\subset \Ad(w_{Q,0})\fu\cap\fu =\fn_Q.\]
    \indent Assume above statements hold. We take $w\in W^P\cap {^QW}$ in (c), then $\lg(w_{Q,0}w)=\lg(w_{Q,0})+\lg(w)$. Hence $Bw_{Q,0}BwB=Bw_{Q,0}wB$ (cf. \cite[II.13.5 (7)]{jantzen2007representations}). As $Q=\overline{Bw_{Q,0}B}$ (\cite[II.13.2 (6)]{jantzen2007representations}), $Bw_{Q,0}wP=Bw_{Q,0}BwP\subset QwP$. Similarly, $QwP=\overline{Bw_{Q,0}B}BwBP\subset \overline{Bw_{Q,0}BwB}P=\overline{Bw_{Q,0}wP}$. Let 
    \[H_{Q,P,w}':=\left\{(\nu,g_1B,g_2P)\in Z_{Q,P}\mid g_1^{-1}g_2\in Bw_{Q,0}wP/P\right\}.\] 
    By the discussions in (1) and that $Bw_{Q,0}wP$ is open dense in $QwP$ (since $Bw_{Q,0}wP$ is open dense in $\overline{QwP}=\overline{Bw_{Q,0}wP}$ which contains $QwP$), the Zariski closure of $H'_{Q,P,w}$ is $Z_{Q,P,w}$. As $H'_{Q,P,w}\subset Z_{P,w_{Q,0}w}$, we get $Z_{Q,P,w}\subset  Z_{P,w_{Q,0}w}$. Since $\dim Z_{Q,P,w}=\dim Z_{P,w_{Q,0}w}$, we conclude that $Z_{Q,P,w}=Z_{P,w_{Q,0}w}$.\par
    (4) When $P=B$, we have $\Ad(w)\fm_B\cap\mathfrak{u}=\{0\}\subset \fn_Q$ for any $w\in W_Q\backslash W$, thus the result follows from (3).
\end{proof}
\begin{remark}
    The result that the scheme $\overline{Z}_{Q,B}$ in the proof of (1) of Proposition \ref{propositionsteinbergvarieties} (resp. $Z_{B,P}$) is equidimensional with the irreducible components parameterized by ${^QW}$ (resp. $W^P$) is already known by \cite[Thm. 4.1]{douglass2004geometry} (resp. \cite[Thm. 3.1]{douglass2004geometry})
\end{remark}
\begin{corollary}\label{corollarysteinbergvarieties}
    For any $w\in W/W_P$, $Z_{P,w}$ is contained in $Z_{Q,P}$ if and only if $wW_P=w_{Q,0}w_1W_P$ for some $w_1\in W^P\cap {^QW}$ such that $w_{Q,0}w_1\in W^P$.
\end{corollary}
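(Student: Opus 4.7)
The plan is to deduce this corollary as a direct consequence of parts (1), (2), and (3) of Proposition~\ref{propositionsteinbergvarieties}, combining a simple dimension-plus-irreducibility argument with the characterization of ``top-dimensional'' components of $Z_{Q,P}$ provided by part (3).

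For the \emph{if} direction, I would start from a representative $w_1 \in W^P \cap {^QW}$ with $w_{Q,0}w_1 \in W^P$ and with $wW_P = w_{Q,0}w_1 W_P$. Then condition (c) of Proposition~\ref{propositionsteinbergvarieties}(3) is satisfied for $w_1$, and the equivalence with condition (a), together with the identification of the resulting $w'W_P$ in the final clause of (3), yields $Z_{Q,P,w_1} = Z_{P,w_{Q,0}w_1} = Z_{P,w}$. Since $Z_{Q,P,w_1}$ is by definition a closed subscheme of $Z_{Q,P}$, this gives $Z_{P,w} \subseteq Z_{Q,P}$.

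For the \emph{only if} direction, assume $Z_{P,w} \subseteq Z_{Q,P}$. The scheme $Z_{Q,P}$ is a (finite) union of its irreducible components $Z_{Q,P,w_1'}$ indexed by $w_1' \in W_Q \backslash W / W_P$, so the irreducible variety $Z_{P,w}$ must be contained in some single $Z_{Q,P,w_1'}$. By Proposition~\ref{propositionsteinbergvarieties}(2), $\dim Z_{P,w} = \dim G - \dim T$, whereas by part (1), $\dim Z_{Q,P,w_1'} \leq \dim G - \dim T$. The containment $Z_{P,w} \subseteq Z_{Q,P,w_1'}$ between irreducible schemes forces $\dim Z_{P,w} \leq \dim Z_{Q,P,w_1'}$, and then equality of dimensions together with irreducibility gives $Z_{P,w} = Z_{Q,P,w_1'}$. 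Choosing the shortest representative $w_1 \in W^P \cap {^QW}$ of the double coset $W_Q w_1' W_P$, the equality $Z_{Q,P,w_1} = Z_{P,w}$ together with $w \in W^P$ (taken as the unique shortest representative) means condition (a) of part (3) holds; the equivalence with (c) yields $w_{Q,0}w_1 \in W^P$, and the final clause of (3) identifies $wW_P$ with $w_{Q,0}w_1 W_P$, completing the proof.

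There is no real obstacle in this argument beyond correctly invoking parts (1)--(3) of Proposition~\ref{propositionsteinbergvarieties}; the only subtle point is making sure that when passing from a double coset $w_1' \in W_Q \backslash W / W_P$ to its shortest representative $w_1 \in W^P \cap {^QW}$, the variety $Z_{Q,P,w_1}$ is unchanged (which holds by definition since it depends only on the double coset) and that the chosen $w_1$ lies in $W^P$ so that part (3) applies with $w \in W^P$ playing the role of $w'$.
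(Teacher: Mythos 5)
Your proof is correct and follows essentially the same route as the paper's: both deduce the ``only if'' direction by writing $Z_{Q,P}$ as the finite union of the irreducible subsets $Z_{Q,P,w'}$, trapping $Z_{P,w}$ inside one of them, using the dimension bound from parts (1)--(2) of Proposition~\ref{propositionsteinbergvarieties} to force equality $Z_{P,w}=Z_{Q,P,w'}$, and then applying part (3); you simply spell out the ``if'' direction and the reduction to the shortest double-coset representative more explicitly than the paper does.
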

\begin{proof}
    Assume $Z_{P,w}\subset Z_{Q,P}$. Since $Z_{Q,P}=\cup_{w'\in W_Q\backslash W/W_P}Z_{Q,P,w'}$ and each $Z_{Q,P,w'}$ is irreducible, we get $Z_{P,w}\subset Z_{Q,P,w'}$ for some $w'$. But $Z_{Q,P,w'}$ has dimension no more than $\dim Z_{P,w}$. Hence $Z_{P,w}=Z_{Q,P,w'}$. Now the result follows from (3) of Proposition \ref{propositionsteinbergvarieties}. 
\end{proof}
\begin{corollary}\label{corollarysteinbergvarietypartialclassicality}
    Let $x$ be a point of $Z_{Q,B}$, then there exists an irreducible component $Z_{B,w}$ of $Z_B$ such that $x\in Z_{B,w}$ and $Z_{B,w}\subset Z_{Q,B}$.
\end{corollary}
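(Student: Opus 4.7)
The plan is to deduce this immediately from Proposition \ref{propositionsteinbergvarieties}, essentially as a bookkeeping consequence of parts (2) and (4). Part (2) (specialized to the parabolic $B$) identifies the irreducible components of $Z_B$ as the varieties $Z_{B,w}$ for $w\in W$, each of dimension $\dim G-\dim T$. Part (4), on the other hand, tells us that the irreducible components of $Z_{Q,B}$ are precisely the subvarieties $Z_{B,w_{Q,0}v}$ for $v\in {}^{Q}W$; in particular every irreducible component of $Z_{Q,B}$ is already one of the irreducible components of $Z_B$ listed in (2).

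Given $x\in Z_{Q,B}$, I would first use that $Z_{Q,B}$ is the union of its (finitely many) irreducible components to pick $v\in {}^{Q}W$ with $x\in Z_{B,w_{Q,0}v}$. Setting $w:=w_{Q,0}v\in W$, Proposition \ref{propositionsteinbergvarieties}(2) then asserts that $Z_{B,w}$ is an irreducible component of $Z_B$, so it is of the required form. Finally, since $Z_{B,w}=Z_{B,w_{Q,0}v}$ is by Proposition \ref{propositionsteinbergvarieties}(4) an irreducible component of $Z_{Q,B}$, it is in particular contained in $Z_{Q,B}$. This gives both $x\in Z_{B,w}$ and $Z_{B,w}\subset Z_{Q,B}$, which is the assertion.

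There is no real obstacle to this argument: the entire content is packaged in Proposition \ref{propositionsteinbergvarieties}. The only thing worth checking is the compatibility between the labeling of irreducible components of $Z_B$ by $W$ in (2) and by elements of the form $w_{Q,0}v$ with $v\in {}^{Q}W$ in (4); but since ${}^{Q}W$ is a system of coset representatives for $W_Q\backslash W$, the map $v\mapsto w_{Q,0}v$ simply selects a subset of $W$, and the two labelings agree on this subset by construction in the proof of Proposition \ref{propositionsteinbergvarieties}(3). Hence no extra input is needed.
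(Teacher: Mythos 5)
Your proof is correct and is essentially the argument the paper leaves implicit: Corollary \ref{corollarysteinbergvarietypartialclassicality} is stated without proof precisely because, once Proposition \ref{propositionsteinbergvarieties}(4) identifies the irreducible components of $Z_{Q,B}$ as a subset of those of $Z_B$, the statement follows by simply picking the component of $Z_{Q,B}$ through $x$. Your bookkeeping check that the two labelings are compatible (via part (3), with $W_B$ trivial so $W^B=W$ and $w'=w_{Q,0}w$ for $w\in{}^{Q}W$) is the right thing to verify and is consistent with the paper's proof of part (4).
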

\begin{remark}
    The above result for points on $Z_{Q,B}$ doesn't hold in general for $P\neq B$. For example, if $Q=G$, then $Z_{G,P}=Z_{G,P,e}\subsetneq Z_{P,w_0}$ if $P\neq B$.
\end{remark}
\begin{definition}\label{definitionstrictlyQdominant}
    \begin{enumerate}
        \item Let $\mathbf{h}\in X_{*}(T)^{W_P}$ be an antidominant coweight (namely $\mathbf{h}\in X_{*}(T)$, $\langle \alpha, \mathbf{h} \rangle=0,\forall \alpha\in\Delta_P$ and $\langle \alpha, \mathbf{h} \rangle\leq 0, \forall \alpha\in \Delta$). We say $\mathbf{h}$ is $P$-regular if $\langle \alpha, \mathbf{h} \rangle<0, \forall \alpha\in \Delta\setminus \Delta_P$.
        \item For $\mathbf{h}\in X_{*}(T)$, we say $\mathbf{h}$ is strictly $Q$-dominant if $\langle \alpha, \mathbf{h} \rangle>0,\forall \alpha\in\Delta_Q$.
    \end{enumerate}
\end{definition}
\begin{lemma}
    If $\mathbf{h}\in X_{*}(T)^{W_P}$ is $P$-regular antidominant, then the set of $\alpha\in R$ such that $\langle \alpha ,\mathbf{h} \rangle<0$ (resp. $=0$, resp. $>0$) is $R^+\setminus R_{P}^+$ (resp. $R_P$, resp. $R^-\setminus R_{P}^-$).
\end{lemma}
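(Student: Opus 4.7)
The plan is to reduce the statement to an elementary computation on simple roots, using the standard fact that every positive root is a non-negative integer combination of simple roots and that $R_P^+$ consists exactly of those positive roots whose expansion only involves simple roots in $\Delta_P$.

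First I would unpack the hypotheses. Since $\mathbf{h}\in X_*(T)^{W_P}$ is fixed by every $s_\alpha$ with $\alpha\in\Delta_P$, the identity $s_\alpha(\mathbf{h})=\mathbf{h}-\langle\alpha,\mathbf{h}\rangle\alpha^\vee$ forces $\langle\alpha,\mathbf{h}\rangle=0$ for all $\alpha\in\Delta_P$ (this is actually already built into the antidominance condition of Definition \ref{definitionstrictlyQdominant}). By $P$-regularity, $\langle\alpha,\mathbf{h}\rangle<0$ for all $\alpha\in\Delta\setminus\Delta_P$.

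Next I would handle positive roots. For $\alpha\in R^+$, write uniquely $\alpha=\sum_{\beta\in\Delta}n_\beta\beta$ with $n_\beta\in\Z_{\geq 0}$, so that
\[
\langle\alpha,\mathbf{h}\rangle=\sum_{\beta\in\Delta_P}n_\beta\langle\beta,\mathbf{h}\rangle+\sum_{\beta\in\Delta\setminus\Delta_P}n_\beta\langle\beta,\mathbf{h}\rangle=\sum_{\beta\in\Delta\setminus\Delta_P}n_\beta\langle\beta,\mathbf{h}\rangle.
\]
If $\alpha\in R_P^+$ then $n_\beta=0$ for every $\beta\in\Delta\setminus\Delta_P$, so $\langle\alpha,\mathbf{h}\rangle=0$. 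Conversely, if $\alpha\in R^+\setminus R_P^+$, then some $n_\beta>0$ with $\beta\in\Delta\setminus\Delta_P$; each term of the sum is $\leq 0$ and this particular term is strictly negative (by $P$-regularity), so $\langle\alpha,\mathbf{h}\rangle<0$. This handles the signs on $R^+$.

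Finally I would pass to negative roots by the symmetry $\langle -\alpha,\mathbf{h}\rangle=-\langle\alpha,\mathbf{h}\rangle$ and the fact that $R^-=-R^+$ and $R_P^-=-R_P^+$. Combining the two cases, an arbitrary $\alpha\in R$ satisfies $\langle\alpha,\mathbf{h}\rangle=0$ iff $\alpha\in R_P^+\cup R_P^-=R_P$, $\langle\alpha,\mathbf{h}\rangle<0$ iff $\alpha\in R^+\setminus R_P^+$, and $\langle\alpha,\mathbf{h}\rangle>0$ iff $\alpha\in R^-\setminus R_P^-$, giving the three claimed equalities. The argument is entirely elementary; there is no real obstacle beyond correctly book-keeping the $\Delta_P$ vs.\ $\Delta\setminus\Delta_P$ split.
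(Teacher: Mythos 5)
Your proof is correct; it is precisely the elementary support-in-simple-roots argument the paper implicitly relies on (the lemma is stated in the paper without proof, being labelled as trivial). The only thing worth noting is that you correctly assert the standard fact that a positive root lies in $R_P^+$ exactly when its expansion in simple roots involves only $\Delta_P$, which is what makes the zero/negative split on $R^+$ clean.
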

\begin{theorem}\label{theoremvarietyweight}
    For any $w\in W/W_P$, $Z_{P,w}$ is contained in $Z_{Q,P}$ if and only if $w(\mathbf{h})$ is strictly $Q$-dominant for some (or every) $P$-regular antidominant coweight $\mathbf{h}\in X_*(T)^{W_P}$.
\end{theorem}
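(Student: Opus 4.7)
The plan is to translate both sides of the equivalence into the same explicit root-theoretic condition on $w\in W/W_P$. First I would dispatch the unproved sign lemma preceding the theorem: for $\mathbf{h}\in X_{*}(T)^{W_P}$ that is $P$-regular antidominant, the $W_P$-invariance forces $\langle \alpha,\mathbf{h}\rangle=0$ for $\alpha\in\Delta_P$ and hence for all $\alpha\in R_P$; writing an arbitrary $\alpha\in R^+$ as a non-negative integer combination of $\Delta$ and invoking $P$-regularity shows $\langle \alpha,\mathbf{h}\rangle<0$ precisely when some coefficient on $\Delta\setminus\Delta_P$ is positive, i.e.\ precisely on $R^+\setminus R_P^+$, and negation gives $\langle\alpha,\mathbf{h}\rangle>0$ exactly on $R^-\setminus R_P^-$.

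Next I would convert the containment $Z_{P,w}\subset Z_{Q,P}$ into a Lie-algebra inclusion. Both $Z_P$ and $Z_{Q,P}$ are stable under the diagonal $G$-action, and by the description of the dense stratum of $Z_{P,w}$ used in the proof of Proposition \ref{propositionsteinbergvarieties}(1), this stratum is $G\cdot F_w$ where $F_w$ is the fiber of $Z_{P,w}\to G/B\times G/P$ over $(B,\dw P)$. Moreover $\Ad(\dw)\fp$ is independent of the representative in $wW_P$ since $M_P$ normalizes $\fp$. Hence $Z_{P,w}\subset Z_{Q,P}$ is equivalent to $F_w=\mathfrak{u}\cap \Ad(\dw)\fp\subset \fn_Q$. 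Decomposing into $T$-weight spaces---the roots of $\fp$ are $R^+\cup R_P^-$, and those of $\fn_Q$ are $R^+\setminus R_Q^+$---a root-by-root check turns this into the purely combinatorial condition
\[
(*)\qquad w^{-1}(R_Q^+)\subset R^-\setminus R_P^-.
\]

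To finish, by the sign lemma applied to each root $w^{-1}(\alpha)$, strict $Q$-dominance of $w(\mathbf{h})$ is equivalent to $w^{-1}(\Delta_Q)\subset R^-\setminus R_P^-$. Clearly $(*)$ implies this. For the converse, any $\beta\in R_Q^+$ is a non-negative integer combination of $\Delta_Q$ with at least one positive coefficient, so $\langle w^{-1}(\beta),\mathbf{h}\rangle$ is a strictly positive sum, and the lemma applied to the root $w^{-1}(\beta)$ then forces $w^{-1}(\beta)\in R^-\setminus R_P^-$. Independence from the choice of $\mathbf{h}$ (``some vs.\ every'') is automatic since $(*)$ no longer mentions $\mathbf{h}$. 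The main delicate point will be carefully verifying the reduction of Step 2---both that $G$-equivariance of the defining condition of $Z_{Q,P}$ reduces the question to a single fiber, and that both $\Ad(\dw)\fp$ and the condition $(*)$ are well-defined on the coset $wW_P$ (using $M_P$-normalization of $\fp$ together with the $W_P$-invariance of $\mathbf{h}$); once set up, the remainder is routine root-system combinatorics.
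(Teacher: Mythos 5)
Your proof is correct, and it takes a genuinely different and more direct route than the paper's. The paper reaches the criterion indirectly: it first shows (Proposition \ref{propositionsteinbergvarieties}(3), via a dimension count of the open stratum of $Z_{Q,P,w}$) that the maximal-dimensional components of $Z_{Q,P}$ are exactly those $Z_{P,w'}$ with $\Ad(\dw')\fm_P\cap\fu\subset\fn_Q$, combined with the combinatorial condition $w_{Q,0}w\in W^P$ for $w\in W^P\cap{^QW}$ (Corollary \ref{corollarysteinbergvarieties}); only then does it translate this pair of conditions into strict $Q$-dominance of $w(\mathbf{h})$. You instead observe that $Z_{Q,P}$ being closed and $G$-invariant reduces $Z_{P,w}\subset Z_{Q,P}$ in one step to the inclusion of a single fiber $\fu\cap\Ad(\dw)\fp\subset\fn_Q$, which is strictly stronger than the condition $\fu\cap\Ad(\dw)\fm_P\subset\fn_Q$ that the paper works with; the extra piece $\fu\cap\Ad(\dw)\fn_P\subset\fn_Q$ is exactly where the positivity (as opposed to mere nonvanishing) in ``strict $Q$-dominance'' enters. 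A root-by-root translation then gives $w^{-1}(R_Q^+)\subset R^-\setminus R_P^-$ and the sign lemma closes the loop, with the passage from $\Delta_Q$ to $R_Q^+$ handled by positivity of the pairings. Your version is shorter for this particular theorem; what the paper's route buys is the intermediate structural facts about $Z_{Q,P}$ (the identification of its top-dimensional pieces and the $w_{Q,0}$-translate description), which are reused elsewhere, e.g.\ in Corollary \ref{corollarysteinbergvarietypartialclassicality} and Lemma \ref{lemmakeyinductionweylgroup}. One small sharpening: when reducing to the single fiber over $(B,\dw P)$ you should explicitly invoke that $\fn_Q$ is $\Ad(B)$-stable (as $B\subset Q$); the $G$-orbit of $(\nu,B,\dw P)$ also contains points $(\Ad(b)\nu, B, b\dw P)$ for $b\in B$, so one needs $\Ad(b)(\fu\cap\Ad(\dw)\fp)\subset\fn_Q$ for all $b$, which is where the $B$-stability of $\fn_Q$ is used to pass back to a single fiber.
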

\begin{proof}
    We take an arbitrary $P$-regular antidominant coweight $\mathbf{h}\in X_*(T)^{W_P}$. Take the representative $w\in W^P$ for $w\in W/W_P$ and write $w$ for $\dw$. The statement $\Ad(w)\fm_P\cap \mathfrak{u}\subset \fn_{Q}$ (which is implied by that $Z_{P,w}$ is contained in $Z_{Q,P}$ by (3) of Proposition \ref{propositionsteinbergvarieties}) is equivalent to that $w(R_P)\cap R_Q^+=\emptyset$, or $w^{-1}(R_{Q}^+)\subset R \setminus R_P$. Since $\mathbf{h}$ is $P$-regular, for $\alpha\in R$, $\langle\alpha, \mathbf{h} \rangle\neq 0$ if and only if $\alpha\notin R_P$. Thus $\Ad(w)\fm_P\cap \mathfrak{u}\subset \fn_{Q}$ if and only if $\langle w^{-1}(\alpha), \mathbf{h} \rangle=\langle \alpha, w(\mathbf{h}) \rangle\neq 0,\forall \alpha\in R_Q^+$. On the other hand, if $w(\mathbf{h})$ is strictly $Q$-dominant, then $\langle\alpha, w(\mathbf{h}) \rangle> 0,\forall \alpha\in R_Q^+$. We now only need to prove that in the case when $\langle\alpha, w(\mathbf{h}) \rangle\neq 0$ for all $\alpha\in R_Q^+$, we have $w(\mathbf{h})$ is strictly $Q$-dominant if and only if $w_{Q,0}w\in W^P\cap {^QW}$ by (3) of Proposition \ref{propositionsteinbergvarieties}. \par
    Since now $\Ad(w)\fm_P\cap \mathfrak{u}\subset \fn_{Q}$ and $w\in W^P$, we have $w_{Q,0}w\in W^P$ as in the proof of (3.b) $\Rightarrow$ (3.c) of Proposition \ref{propositionsteinbergvarieties}. Let $w_1=w_{Q,0}w$. Then $w_1\in {^QW}$ if and only if $w_1^{-1}\in W^Q$. The latter is equivalent to $w_1^{-1}(R_{Q}^+)\subset R^+$ by Lemma \ref{lemmaweylgroupshortestelement}. We calculate that $\langle \alpha, w(\mathbf{h}) \rangle=\langle w_{Q,0}(\alpha), w_1(\mathbf{h}) \rangle= -\langle -w_{Q,0}(\alpha), w_1(\mathbf{h}) \rangle$ for every $\alpha\in R_Q^+$. As $-w_{Q,0}(R_{Q}^+)=-R_{Q}^{-}=R_{Q}^+$ (cf. \cite[II.1.5]{jantzen2007representations}), we get that $\langle \alpha, w(\mathbf{h}) \rangle\geq 0$ (resp. $>0$) for all $\alpha\in R_{Q}^+$ if and only if $\langle \alpha, w_1(\mathbf{h}) \rangle\leq 0$ (resp. $<0$) for all $\alpha\in R_{Q}^+$.\par 
    If $w_1^{-1}(R_{Q}^+)\subset R^+$, then $\langle \alpha, w_1(\mathbf{h}) \rangle=\langle w_1^{-1}(\alpha), \mathbf{h} \rangle \leq 0$ for all $\alpha\in R_Q^+$ since $\mathbf{h}$ is $P$-regular antidominant. Thus $\langle \alpha, w(\mathbf{h}) \rangle\geq 0$ for all $\alpha\in R_{Q}^+$. But we know $\langle \alpha, w(\mathbf{h}) \rangle\neq 0$, hence $\langle \alpha, w(\mathbf{h}) \rangle>0$ for any $\alpha\in R_Q^+$. Thus $w(\mathbf{h})$ is strictly $Q$-dominant.\par
    Conversely if $\langle \alpha, w(\mathbf{h}) \rangle>0$ for all $\alpha\in R_Q^+$, we get $\langle w_1^{-1}(\alpha), \mathbf{h} \rangle<0$ for all $\alpha\in R_Q^+$. Since $\mathbf{h}$ is $P$-regular and antidominant, we get $w_1^{-1}(\alpha)\in R^+\setminus R_P^{+},\forall\alpha\in R_Q^+$. In particular, $w_1^{-1}(R_{Q}^+)\subset R^+$. Thus $w_1\in W^P\cap{^QW}$. 
\end{proof}
The following lemma will be very important for us.
\begin{lemma}\label{lemmakeyinductionweylgroup}
    If $wW_P\neq w_0W_P$, then there exists $\alpha\in \Delta$ such that $s_{\alpha}w>w$ in $W/W_P$ and a standard parabolic subgroup $Q$ of $G$ satisfying both $Z_{P,w}\not\subset Z_{Q,P}$ and $Z_{P,s_{\alpha}w}\subset Z_{Q,P}$ (which implies that $s_{\alpha}w(\mathbf{h})$ is strictly $Q$-dominant by Theorem \ref{theoremvarietyweight} for every $P$-regular antidominant coweight $\mathbf{h}\in X_{*}(T)^{W_P}$).
\end{lemma}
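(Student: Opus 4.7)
The plan is to extract the simple reflection $\alpha$ from the Bruhat order on $W^P$, and then to build $Q$ by hand from the weight criterion supplied by Theorem \ref{theoremvarietyweight}.

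First I would fix the representative $w \in W^P$ and show that if $wW_P \neq w_0 W_P$ then there exists $\alpha\in\Delta$ with $w^{-1}(\alpha)\in R^+\setminus R_P^+$. Suppose not. Since $w\in W^P$, Lemma \ref{lemmaweylgroupshortestelement} gives $w(\Delta_P)\subset R^+$, which immediately rules out $w^{-1}(\alpha)\in R_P^-$ for any $\alpha\in\Delta$ (else $-w^{-1}(\alpha)\in R_P^+$, so applying $w$ would give $-\alpha\in R^+$, impossible). Hence $w^{-1}(\Delta)\subset(R^-\setminus R_P^-)\cup R_P^+$. Now $w_{P,0}$ swaps $R_P^+\leftrightarrow R_P^-$, and the standard fact that $W_P$ preserves $R^+\setminus R_P^+$ (a root in $R^+\setminus R_P^+$ has a strictly positive coefficient on some $\alpha\in\Delta\setminus\Delta_P$, and $W_P$ acts by identity modulo $\mathrm{span}(\Delta_P)$) implies $w_{P,0}(R^-\setminus R_P^-)=R^-\setminus R_P^-$. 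Therefore $(ww_{P,0})^{-1}(\Delta)=w_{P,0}w^{-1}(\Delta)\subset R^-$, which forces $ww_{P,0}=w_0$ and hence $wW_P=w_0W_P$, contradicting the hypothesis. For this $\alpha$, Lemma \ref{lemmaweylgroupshortestelement} applied to $s_\alpha w$ confirms $s_\alpha w\in W^P$, and $\lg(s_\alpha w)=\lg(w)+1$, so $s_\alpha w>w$ in $W/W_P$.

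Next I would compute signs at a $P$-regular antidominant coweight $\mathbf{h}\in X_*(T)^{W_P}$ (which exist by Definition \ref{definitionstrictlyQdominant}). By the lemma stated immediately before Theorem \ref{theoremvarietyweight}, $w^{-1}(\alpha)\in R^+\setminus R_P^+$ forces
\[\langle \alpha, w(\mathbf{h})\rangle = \langle w^{-1}(\alpha), \mathbf{h}\rangle < 0.\]
Using $s_\alpha(\mu)=\mu-\langle\alpha,\mu\rangle\alpha^\vee$, this gives $\langle\alpha,s_\alpha w(\mathbf{h})\rangle=-\langle\alpha,w(\mathbf{h})\rangle>0$.

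Finally I would build $Q$ tautologically: let
\[\Delta_Q:=\{\beta\in\Delta\mid \langle\beta,s_\alpha w(\mathbf{h})\rangle>0\},\]
which contains $\alpha$, and let $Q$ be the standard parabolic with simple roots $\Delta_Q$. By construction $s_\alpha w(\mathbf{h})$ is strictly $Q$-dominant, so Theorem \ref{theoremvarietyweight} yields $Z_{P,s_\alpha w}\subset Z_{Q,P}$. Conversely, $\alpha\in\Delta_Q$ together with $\langle\alpha,w(\mathbf{h})\rangle<0$ shows that $w(\mathbf{h})$ is not strictly $Q$-dominant, whence Theorem \ref{theoremvarietyweight} gives $Z_{P,w}\not\subset Z_{Q,P}$. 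The only delicate step is the initial Bruhat-order argument producing $\alpha$; after that, Theorem \ref{theoremvarietyweight} together with the obvious choice of $Q$ does everything.
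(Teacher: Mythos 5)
The proposal is correct. The argument for producing the simple root $\alpha$ is essentially the same as the paper's, phrased slightly differently: you show that if no $\alpha\in\Delta$ satisfies $w^{-1}(\alpha)\in R^+\setminus R_P^+$, then $(ww_{P,0})^{-1}(\Delta)\subset R^-$ forces $ww_{P,0}=w_0$, whereas the paper argues directly that $R^+\cap w(R^+\setminus R_P^+)=\emptyset$ forces $w_0w\in W_P$ via Lemma~\ref{lemmaweylgroupshortestelement}; these are the same contradiction. The step asserting $s_\alpha w\in W^P$ and $\lg(s_\alpha w)=\lg(w)+1$ is stated a bit tersely — the paper spells out the root-counting argument $\{\beta\in R^+\setminus R_P^+\mid s_\alpha w(\beta)\in R^-\} = \{\beta\in R^+\setminus R_P^+\mid w(\beta)\in R^-\}\sqcup\{w^{-1}(\alpha)\}$ — but the claims are correct and follow from $w^{-1}(\alpha)\in R^+\setminus R_P^+$.

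Where you genuinely diverge is the choice of $Q$. The paper picks the minimal parabolic, $R_Q=\{\alpha\}$, so that $w_{Q,0}=s_\alpha$, and checks $w\in W^P\cap {}^QW$, $s_\alpha w\notin {}^QW$, $s_\alpha w\in W^P$, then invokes Corollary~\ref{corollarysteinbergvarieties}. You instead take the \emph{largest} $Q$ for which $s_\alpha w(\mathbf{h})$ is strictly $Q$-dominant, namely $\Delta_Q=\{\beta\in\Delta\mid\langle\beta,s_\alpha w(\mathbf{h})\rangle>0\}$ (which by the lemma preceding Theorem~\ref{theoremvarietyweight} is independent of the $P$-regular antidominant $\mathbf{h}$ chosen), and apply Theorem~\ref{theoremvarietyweight} directly. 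Both choices are licensed and the computation $\langle\alpha,s_\alpha w(\mathbf{h})\rangle=-\langle\alpha,w(\mathbf{h})\rangle>0$ is correct. Your route is a touch shorter because it bypasses Corollary~\ref{corollarysteinbergvarieties}, at the cost of constructing a $Q$ whose shape depends on $w$; the paper's $Q$ is uniformly $B(\alpha)$ and fits the proof of Corollary~\ref{corollarysteinbergvarieties} exactly.
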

\begin{proof}
    We assume $w\in W^P$. We claim that we can take a simple root $\alpha\in \Delta $ such that $\alpha\in w(R^+\setminus R_P^+)$ (or equivalently $\langle \alpha,w(\mathbf{h})\rangle<0$). If two roots $\alpha_1,\alpha_2\notin R^+\setminus R_P^+$ (equivalently $\langle \alpha_{i},\mathbf{h}\rangle\geq 0, i=1,2$), then $\alpha_1+\alpha_2\notin R^+\setminus R_P^+$. Thus $w^{-1}(R^+)\cap (R^+\setminus R_P^+)=\emptyset$ if and only if $w^{-1}(\Delta)\cap (R^+\setminus R_P^+)=\emptyset$. Assume that $R^+\cap w(R^+\setminus R^+_P)=\emptyset$. Then $R^-\cap w_0w(R^+\setminus R^+_P)=\emptyset$. This is only possible if $w_0w\in W_P$ (cf. Lemma \ref{lemmaweylgroupshortestelement}) which contradicts our assumption. Hence we can take $\alpha\in w(R^+\setminus R_P^+)\cap\Delta$. Since $s_{\alpha}(R^+\setminus \{\alpha\})=R^+\setminus \{\alpha\}$ and $s_{\alpha}(\alpha)=-\alpha$, we get $\left\{\alpha'\in R^+\setminus R_P^+\mid s_{\alpha} w(\alpha')\in R^- \right\}=\left\{\alpha'\in R^+\setminus R_P^+\mid w(\alpha')\in R^- \right\}\coprod \left\{w^{-1}(\alpha) \right\}$. Thus $\lg(s_{\alpha}w)=\lg(w)+1$ and $s_{\alpha}w\in W^P$ by Lemma \ref{lemmaweylgroupshortestelement}. Now take $Q=B(\alpha)=\overline{Bs_{\alpha}B}$ the standard parabolic subgroup with $R_Q=\{\alpha\}$. Then $w_{Q,0}=s_{\alpha}$, $w\in W^P\cap {^QW}$ and $s_{\alpha}w\notin {^QW}$. By Corollary \ref{corollarysteinbergvarieties}, $Z_{P,s_{\alpha}w}\subset Z_{Q,P}$ and $Z_{P,w}\not\subset Z_{Q,P}$ (and now $s_{\alpha}w(\mathbf{h})$ is strictly $Q$-dominant and $w(\mathbf{h})$ is not).
\end{proof}
The projection $p_P:X_{B}\rightarrow X_P$ induces a proper surjective morphism $Z_{B}\rightarrow Z_{P}$. Since $p(H_{B,B,w})\subset H_{B,P,w}$, we see $p_P$ sends $Z_{B,w}$ to $Z_{P,w}$ for any $w\in W$. When $w\in W^P$, the morphism $H_{B,B,w}\rightarrow H_{B,P,w}$ is an isomorphism (cf. Remark \ref{remarkisoopensubvarieties}) and $p_P$ induces a proper birational surjection $Z_{B,w}\twoheadrightarrow Z_{P,w}$ if and only if $w\in W^P$ (for the only if part, see \cite[Thm. 3.3]{douglass2004geometry}). 
For any $w\in W/W_P$, let $\overline{X}_{P,w}:=\kappa_{1,w}^{-1}(0)$ and $\overline{X}_{P}:=\kappa_{1}^{-1}(0)$ be the scheme-theoretic fiber over the zero weight. The underlying reduced space $\overline{X}_{P,w}^{\mathrm{red}}$ is contained in $Z_{P}$. It follows from the discussions after \cite[Thm. 2.4.7]{breuil2019local} that $\overline{X}_{B,w}=\cup_{w'\leq w}Z_{B,w'}$ which we have used in the proof of Proposition \ref{propositionsetsovernilpotent}. For $w\in W/W_P$, since $p_{P,w}:X_{B,w}\rightarrow X_{P,w}$ is surjective for any representative $w\in W$, we get $\overline{X}_{P,w}=\cup_{w'\leq w,w'\in W/W_P}Z_{P,w'}$.\par
We pick an arbitrary closed point $x\in \overline{X}_{P}\subset X_P$ and assume $x\in H_{B,P,w_x}$ for some $w_x\in W/W_P$ (or equivalently $x$ is in $V_{P,w_x}$ which is defined in \S\ref{sectionthevariety}). We have always $x\in Z_{P,w_x}\subset X_{P,w}$ for any $w\in W/W_P, w\geq w_x$. \par
Recall that if $A$ is an excellent Noetherian local ring and $\widehat{A}$ is the completion of $A$ at the maximal ideal, then the set of irreducible components of $\Spec(\widehat{A})$ is the disjoint union of the sets of irreducible components of $\Spec(\widehat{A}\otimes_AA/\fp_i),i\in I$ where $\{\fp_i,i\in I \}$ is the set of minimal prime ideals of $A$ (to see this, use \cite[Prop. 7.6.1, Sch. 7.8.3(vii)]{grothendieck1965EGAIV2} and that the normalization of $\Spec(A)$ is the disjoint union of normalizations of its irreducible components, see \cite[\href{https://stacks.math.columbia.edu/tag/035P}{Tag 035P}]{stacks-project}). Moreover, $\Spec(\widehat{A}\otimes_AA/\fp_i)=\Spec(\widehat{A/\fp_i})$ is equidimensional with the same dimension as $\Spec(A/\fp_i)$ (\cite[Sch. 7.8.3(x)]{grothendieck1965EGAIV2} and \cite[\href{https://stacks.math.columbia.edu/tag/07NV}{Tag 07NV}]{stacks-project}).\par
Since $\Spec(\cO_{\overline{X}_{P},x})$ is equidimensional with irreducible components $\Spec(\cO_{Z_{P,w},x})$ for $w$ such that $x\in Z_{P,w}$, $\Spec(\widehat{\cO}_{\overline{X}_{P},x})$ is equidimensional and its set of irreducible components is the disjoint union of the sets of irreducible components of $\Spec(\widehat{\cO}_{Z_{P,w},x})$ for $w$ such that $x\in Z_{P,w}$. Similarly, the subspace $\Spec(\widehat{\cO}_{\overline{X}_{P,w},x})$ is equidimensional and its set of irreducible components is the disjoint union the sets of irreducible components of $\Spec(\widehat{\cO}_{Z_{P,w'},x})$ for $w'$ such that $x\in Z_{P,w'}$ and $w\geq w'$ in $W/W_P$. In summary, we have, as topological spaces, for $w\geq w_x$:
\begin{equation}\label{formulairreduciblecomponents}
    \Spec(\widehat{\cO}_{\overline{X}_{P,w},x})=\bigcup_{x\in Z_{P,w'},w\geq w'} \Spec(\widehat{\cO}_{Z_{P,w'},x})
\end{equation}
where each term in the right hand side is non-empty. The closed immersion $Z_{Q,P}\hookrightarrow \overline{X}_P $ induces a closed immersion $\Spec(\widehat{\cO}_{Z_{Q,P},x})\hookrightarrow \Spec(\widehat{\cO}_{\overline{X}_{P},x})$ after completion at $x$. The dimensions of irreducible components of $\Spec(\widehat{\cO}_{Z_{Q,P},x})$ are no more than $\dim G-\dim T$ and the set of all irreducible components of dimension $\dim G-\dim T$ is the disjoint union of the sets of irreducible components of $\Spec(\widehat{\cO}_{Z_{P,w},x})$ with $w\in W/W_P$ such that $w(\mathbf{h})$ is strictly $Q$-dominant (Theorem \ref{theoremvarietyweight}). \par
Now we assume furthermore that the image of $x$ in $\fg$ is $0$. Since $Z_{P,w}$ is closed and contains the closed subset $\left\{(0,g_1B,g_2P)\in \fg\times G/B\times G/P\mid g_1^{-1}g_2\in BwP/P\right\}$, we get 
\[\left\{(0,g_1B,g_2P)\in \fg\times G/B\times G/P\mid g_1^{-1}g_2\in \overline{BwP/P}\right\}\subset Z_{P,w}.\] 
Hence in this case $x\in Z_{P,w}$ if and only if $w\geq w_x$ in $W/W_P$,  and for $w\geq w_x$,
\begin{equation}\label{formulairreduciblecomponents1}
    \Spec(\widehat{\cO}_{\overline{X}_{P,w},x})=\bigcup_{w\geq w'\geq w_x} \Spec(\widehat{\cO}_{Z_{P,w'},x})
\end{equation}
where each term in the right hand side is non-empty. A more practical form of Lemma \ref{lemmakeyinductionweylgroup} is the following.
\begin{lemma}\label{lemmakeyinductionweylgroup1}
    If the image of $x\in \overline{X}_P$ in $\fg$ is $0$ and $w_xW_P\neq w_0W_P$, then there exists $\alpha\in \Delta, s_{\alpha}w_x>w_x$ in $W/W_P$ and a standard parabolic subgroup $Q$ of $G$ such that $s_{\alpha}w_x(\mathbf{h})$ is strictly $Q$-dominant for every $P$-regular antidominant coweight $\mathbf{h}\in X_{*}(T)^{W_P}$ and the space 
    \[\Spec(\widehat{\cO}_{\overline{X}_{P,s_{\alpha}w_x},x})=\Spec(\widehat{\cO}_{Z_{P,s_{\alpha}w_x},x})\bigcup \Spec(\widehat{\cO}_{Z_{P,w_x},x}),\]
    where $\Spec(\widehat{\cO}_{Z_{P,s_{\alpha}w_x},x}),\Spec(\widehat{\cO}_{Z_{P,w_x},x})\neq\emptyset$, satisfies that $\Spec(\widehat{\cO}_{Z_{P,s_{\alpha}w_x},x})\subset \Spec(\widehat{\cO}_{Z_{Q,P},x})$ and $\Spec(\widehat{\cO}_{Z_{P,w_x},x})\not\subset \Spec(\widehat{\cO}_{Z_{Q,P},x})$ (where all spaces are viewed as subspaces of $\Spec(\widehat{\cO}_{\overline{X}_P,x})\subset \Spec(\widehat{\cO}_{X_P,x})$).
\end{lemma}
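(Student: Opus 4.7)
The plan is to combine the combinatorial input of Lemma \ref{lemmakeyinductionweylgroup} with the equidimensional decomposition of $\Spec(\widehat{\cO}_{\overline{X}_P, x})$ recalled just before (\ref{formulairreduciblecomponents1}), and to upgrade the global non-inclusion $Z_{P,w_x} \not\subset Z_{Q,P}$ to a non-inclusion after completion at $x$ by exploiting the disjointness of irreducible components across different strata.

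First I would apply Lemma \ref{lemmakeyinductionweylgroup} to $w_x \in W/W_P$ (permissible since $w_x W_P \neq w_0 W_P$) to obtain a simple root $\alpha \in \Delta$ and a standard parabolic subgroup $Q$ satisfying $s_\alpha w_x > w_x$ in $W/W_P$, $Z_{P, s_\alpha w_x} \subset Z_{Q,P}$, and $Z_{P, w_x} \not\subset Z_{Q,P}$. Theorem \ref{theoremvarietyweight} then immediately yields that $s_\alpha w_x(\mathbf{h})$ is strictly $Q$-dominant for every $P$-regular antidominant coweight $\mathbf{h}\in X_*(T)^{W_P}$, while $w_x(\mathbf{h})$ is not. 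Inspecting the proof of Lemma \ref{lemmakeyinductionweylgroup} shows that the preferred representatives of $w_x$ and $s_\alpha w_x$ both lie in $W^P$ and satisfy $\lg(s_\alpha w_x) = \lg(w_x) + 1$; therefore $\lg_P(s_\alpha w_x) = \lg_P(w_x) + 1$, so the Bruhat interval $[w_x, s_\alpha w_x]$ in $W/W_P$ reduces to $\{w_x, s_\alpha w_x\}$. Applying formula (\ref{formulairreduciblecomponents1}) with $w = s_\alpha w_x$ (applicable because the image of $x$ in $\fg$ is $0$) then gives
\[
\Spec(\widehat{\cO}_{\overline{X}_{P, s_\alpha w_x}, x}) \;=\; \Spec(\widehat{\cO}_{Z_{P, s_\alpha w_x}, x}) \,\cup\, \Spec(\widehat{\cO}_{Z_{P, w_x}, x}),
\]
with both terms non-empty by that same formula. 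The inclusion $\Spec(\widehat{\cO}_{Z_{P, s_\alpha w_x}, x}) \subset \Spec(\widehat{\cO}_{Z_{Q,P}, x})$ then follows by functoriality of completion from the scheme-theoretic inclusion $Z_{P, s_\alpha w_x} \subset Z_{Q,P}$.

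The hardest part will be the non-inclusion $\Spec(\widehat{\cO}_{Z_{P, w_x}, x}) \not\subset \Spec(\widehat{\cO}_{Z_{Q,P}, x})$, since a priori one must rule out that the global non-inclusion collapses locally after completing at $x$. My approach would be to compare dimensions and labels of irreducible components. Since $Z_{P, w_x}$ is irreducible of dimension $\dim G - \dim T$, every irreducible component of $\Spec(\widehat{\cO}_{Z_{P, w_x}, x})$ still has dimension $\dim G - \dim T$, and by the disjoint-union description of irreducible components of $\Spec(\widehat{\cO}_{\overline{X}_P, x})$ recalled before (\ref{formulairreduciblecomponents}) such a component is not an irreducible component of $\Spec(\widehat{\cO}_{Z_{P, w'}, x})$ for any $w' \neq w_x$. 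On the other hand, by Proposition \ref{propositionsteinbergvarieties}(3) and Theorem \ref{theoremvarietyweight} the top-dimensional irreducible components of $\Spec(\widehat{\cO}_{Z_{Q,P}, x})$ are precisely the irreducible components of $\Spec(\widehat{\cO}_{Z_{P, w'}, x})$ for those $w' \in W/W_P$ with $x \in Z_{P, w'}$ and $w'(\mathbf{h})$ strictly $Q$-dominant — a list which excludes $w_x$. An inclusion $\Spec(\widehat{\cO}_{Z_{P, w_x}, x}) \subset \Spec(\widehat{\cO}_{Z_{Q,P}, x})$ would then force each top-dimensional component of the source to equal some top-dimensional component of the target, hence a component of $\Spec(\widehat{\cO}_{Z_{P, w'}, x})$ for some $w' \neq w_x$, contradicting the disjointness of component labels.
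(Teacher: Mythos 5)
Your proposal is correct and follows essentially the same reasoning the paper leaves implicit: Lemma \ref{lemmakeyinductionweylgroup} produces $\alpha$ and $Q$, formula (\ref{formulairreduciblecomponents1}) together with $\lg_P(s_\alpha w_x)=\lg_P(w_x)+1$ yields the two-term decomposition with both pieces non-empty, and the non-inclusion follows from the disjoint-union description of irreducible components recalled before (\ref{formulairreduciblecomponents}) combined with Theorem~\ref{theoremvarietyweight}. The paper states the lemma as a ``more practical form'' of Lemma~\ref{lemmakeyinductionweylgroup} without written proof; your argument makes that implicit derivation explicit.
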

\subsection{Characteristic cycles}\label{sectioncharacteristiccycles}
Contents in this subsection will not be used subsequently. We assume that $k$ has characteristic $0$ and keep the notation in the last section. Theorem \ref{theoremvarietyweight} can be explained using geometric representation theory at least when $P=B$ and is true if we replace $Z_{B,w}$ by the \emph{Kazhdan-Lusztig cycles} denoted by $[\mathfrak{L}(ww_0\cdot 0)]$ in \cite[Thm. 2.4.7]{breuil2019local} (see below).\par
We assume $P=B$. For each weight $\mu$ of $\ft$, we let $M(\mu):=U(\fg)\otimes_{U(\fb)}\mu$ be the Verma module and let $L(\mu)$ be the irreducible quotient of $M(\mu)$. Then for any $w\in W$, the localization functor of Beilinson-Bernstein associates $M(ww_0\cdot 0)$ (resp. $L(ww_0\cdot 0)$) with a $G$-equivariant (regular holonomic) $\mathcal{D}$-module $\mathfrak{M}(ww_0\cdot 0)$ (resp. $\mathfrak{L}(ww_0\cdot 0)$) on $G/B\times G/B$ (\cite[Rem. 2.4.3]{breuil2019local}). Let $T^{*}(G/B\times G/B)\simeq \widetilde{\cN}\times\widetilde{\cN}$ be the cotangent bundle of $G/B\times G/B$ and identify the Steinberg variety $Z_{B}$ as a closed subscheme of $T^{*}(G/B\times G/B)$. Let $Z^0(Z_B)$ be the free abelian group generated by the codimension $0$ points in $Z_B$. The characteristic cycle $[\mathfrak{M}]$ of a coherent $\mathcal{D}$-module $\mathfrak{M}$ on $G/B\times G/B$ is the associated cycle in $Z^0(Z_B)$ of the characteristic variety $\mathrm{Ch}(\mathfrak{M})$, the scheme-theoretic support of some $\cO_{T^{*}(G/B\times G/B)}$-module $\mathrm{gr}(\mathfrak{M})$ constructed from $\mathfrak{M}$ with respect to some good filtration (\cite[\S2.4]{breuil2019local}). For $w\in W$, let $[Z_{B,w}]$ be the cycles associated with the irreducible components $Z_{B,w}$ which form a basis of $Z^0(Z_{B})$. It follows from \cite[Thm. 2.4.7(iii)]{breuil2019local} that the coefficient of $[Z_{B,w}]$ in $[\mathfrak{L}(ww_0\cdot 0)]$ is equal to $1$. Hence Theorem \ref{theoremvarietyweight} in this case ($P=B$) can be deduced from the same statement replacing $Z_{B,w}$ by $[\mathfrak{L}(ww_0\cdot 0)]$ (viewed as a union of irreducible components).\par
For a finitely generated $U(\fg)$-module $M$, there exists a good filtration $\{0\}=M_{-1}\subset M_0\subset M_1\subset\cdots$ of $M$ such that $\fg M_i\subset M_{i+1}$ and the $S(\fg)$-module $\mathrm{gr}(M):=\oplus_{i=0}^{\infty}M_{i}/M_{i-1}$ is finitely-generated, where $S(\fg)$ is the symmetric algebra of $\fg$ (cf. \cite[\S4.1]{borho1982differential}). The \emph{associated variety} $V(M)$ is the support of $\mathrm{gr}(M)$ in $\Spec(S(\fg))=\fg^{*}$, the dual space of $\fg$, and is independent of the choice of the good filtration (cf. \textit{loc. cit.}). We only consider $V(M)$ as an algebraic subset. Let $Q=M_{Q}N_Q$ be a standard parabolic subgroup of $G$ and $L_{\fm_{Q}}(ww_0\cdot 0)$ be a finite-dimensional irreducible representation of $\fm_{Q}$ of the highest weight $ww_0\cdot 0$ for some $w$ (which means that $ww_0\cdot 0$ is a dominant weight for $\fm_{Q}$) inflated to a representation of $\fq$. Then $M_{Q}(ww_0\cdot 0):=U(\fg)\otimes_{U(\fq)}L_{\fm_{Q}}(ww_0\cdot 0)$ is a parabolic Verma module in the category $\cO^{\fq}$ (\cite[\S9.4]{humphreys2008representations}) of the highest weight $ww_0\cdot 0$ and is in the principal block of the category $\cO$. Let $\fq^{\bot}$ be the subspace of $\fg^*$ consisting of elements that vanish on $\fq$.
\begin{lemma}\label{lemmaassociatedvariety}
    We have $V(M_{Q}(ww_0\cdot 0))\subset \fq^{\bot}$.
\end{lemma}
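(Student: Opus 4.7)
The plan is to exhibit an explicit good filtration on $M_Q(ww_0\cdot 0)$ and read off the annihilator of the associated graded module. By the PBW theorem, the decomposition $\fg=\overline{\fn}_Q\oplus\fq$ gives $U(\fg)\cong U(\overline{\fn}_Q)\otimes_k U(\fq)$ as a $(U(\overline{\fn}_Q),U(\fq))$-bimodule, so that
\[
M_Q(ww_0\cdot 0)\;=\;U(\fg)\otimes_{U(\fq)}L_{\fm_Q}(ww_0\cdot 0)\;\cong\;U(\overline{\fn}_Q)\otimes_k L_{\fm_Q}(ww_0\cdot 0)
\]
as $U(\overline{\fn}_Q)$-modules. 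First I would take $M_i$ to be the image of $U_i(\fg)\otimes_k L_{\fm_Q}(ww_0\cdot 0)$ in $M_Q(ww_0\cdot 0)$, where $U_\bullet(\fg)$ is the standard PBW filtration. Since $L_{\fm_Q}(ww_0\cdot 0)$ is finite-dimensional over $k$, this is a good filtration on $M_Q(ww_0\cdot 0)$ in the sense of \cite[\S4.1]{borho1982differential}.

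Next I would compute $\mathrm{gr}(M_Q(ww_0\cdot 0))$ as an $S(\fg)$-module. Under PBW, one has the identification
\[
\mathrm{gr}\bigl(M_Q(ww_0\cdot 0)\bigr)\;\cong\;S(\fg/\fq)\otimes_k L_{\fm_Q}(ww_0\cdot 0),
\]
and in particular it is generated, as an $S(\fg)$-module, by its degree zero piece $M_0/M_{-1}=L_{\fm_Q}(ww_0\cdot 0)$. The key point is that $\fq\subset S^1(\fg)$ annihilates this generating subspace: for any $x\in\fq$ and $v\in L_{\fm_Q}(ww_0\cdot 0)$, the action of $x$ in the Verma module gives $xv\in L_{\fm_Q}(ww_0\cdot 0)=M_0$ (because $\fn_Q$ acts as zero on $L_{\fm_Q}(ww_0\cdot 0)$ and $\fm_Q$ preserves it), so the class of $xv$ in $M_1/M_0$ vanishes.

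Finally, since $S(\fg)$ is commutative, the fact that $\fq$ kills the generating subspace $L_{\fm_Q}(ww_0\cdot 0)$ implies that the entire ideal $\fq\cdot S(\fg)$ annihilates $\mathrm{gr}(M_Q(ww_0\cdot 0))$. Identifying $\fg\subset S(\fg)$ with the linear coordinate functions on $\fg^*=\Spec S(\fg)$, the vanishing locus of this ideal is exactly $\fq^{\bot}\subset\fg^*$. Therefore $V(M_Q(ww_0\cdot 0))=\mathrm{Supp}\,\mathrm{gr}(M_Q(ww_0\cdot 0))\subset\fq^{\bot}$, as required. There is no real obstacle here; the only minor thing to verify carefully is the PBW-type identification of $\mathrm{gr}(M_Q(ww_0\cdot 0))$ with $S(\overline{\fn}_Q)\otimes L_{\fm_Q}(ww_0\cdot 0)$, which is standard.
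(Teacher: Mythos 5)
Your proof is correct, and it takes a genuinely more direct route than the paper. The paper follows Borho--Brylinski: it first handles the special case $w=w_0$ (where $M_Q(0)=U(\fg)/U(\fq)\fn_Q$ and the result is immediate from \cite[Lem. 4.1]{borho1982differential}), then treats general $w$ by realizing $M_Q(ww_0\cdot 0)$ as a subquotient of $U(\fg)\otimes_{U([\fq,\fq])}W$ for a finite-dimensional $\fg$-module $W$, applying the tensor identity to get $V\subset[\fq,\fq]^{\bot}$, and finally intersects with $\fb^{\bot}$ (since $M_Q(ww_0\cdot 0)$ is a quotient of $M(ww_0\cdot 0)$, itself a submodule of $M(0)^\vee$-type considerations) to arrive at $\fq^{\bot}=\fb^{\bot}\cap[\fq,\fq]^{\bot}$. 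You instead work with the explicit good filtration induced by $U_\bullet(\fg)$, identify $\mathrm{gr}\,M_Q(ww_0\cdot 0)\cong S(\fg/\fq)\otimes_k L_{\fm_Q}(ww_0\cdot 0)$ generated in degree zero, observe that $\fq\subset S^1(\fg)$ already kills the degree-zero generators (because $\fq$ maps $1\otimes L_{\fm_Q}$ back into $M_0$), and conclude by commutativity of $S(\fg)$ that the ideal $\fq\cdot S(\fg)$ annihilates the whole associated graded, hence the support sits in $\fq^{\bot}$. Your argument is self-contained (no appeal to the tensor identity or to the intersection $\fb^{\bot}\cap[\fq,\fq]^{\bot}=\fq^{\bot}$), works uniformly for all $w$, and makes the key mechanism transparent; the paper's version has the advantage of being a one-line reduction to cited results. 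Both are valid.
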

\begin{proof}
    We follow the proof of \cite[Thm. 4.6, Cor. 4.7]{borho1982differential}. If $w=w_0$, then $M_{Q}(0)=U(\fg)/U(\fq)$, the result is obvious. In general, $M_{Q}(ww_0\cdot 0)$ is a subquotient of $U(\fg)\otimes_{U([\fq,\fq])}W$ for some finite-dimensional $\fg$-module $W$ where $[\fq,\fq]$ denotes the commutator and the latter is equal to $U(\fg)/U([\fq,\fq])\otimes W$ by the tensor identity (\cite[Prop. 6.5]{knapp1988lie}). By \cite[Lem. 4.1]{borho1982differential}, we have $V(U(\fg)/U([\fq,\fq])\otimes W)=V(U(\fg)/U([\fq,\fq]))=[\fq,\fq]^{\bot}$. Thus $V(M_{Q}(ww_0\cdot 0))\subset [\fq,\fq]^{\bot}$. Moreover $V(M_{Q}(ww_0\cdot 0))\subset \fb^{\bot}$ since $M_Q(ww_0\cdot 0)$ is a subquotient of $M(0)$. Hence $V(M_{Q}(ww_0\cdot 0))\subset \fb^{\bot}\cap [\fq,\fq]^{\bot}=\fq^{\bot}$.
\end{proof}
Now we can prove a stronger version of Theorem \ref{theoremvarietyweight}. Remark that the statement ``$ww_0\cdot 0$ is a dominant weight for $\fm_Q$'' is equivalent to the statement ``$w(\mathbf{h})$ is strictly $Q$-dominant for some (or every) regular antidominant coweight $\mathbf{h}\in X_*(T)$''.
\begin{proposition}\label{propositioncharacteristiccycle}
    If $L(ww_0\cdot 0)\in \cO^{\fq}$, then the subset $\mathrm{Ch}(\mathfrak{L}(ww_0\cdot 0))$ of $Z_B$ is contained in $Z_{Q,B}$.
\end{proposition}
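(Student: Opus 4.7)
The plan is to reduce the claim to a statement about the parabolic Verma module $M_Q(ww_0\cdot 0)$, and then to compute its characteristic cycle via a geometric pullback description. Since the hypothesis $L(ww_0\cdot 0)\in\cO^{\fq}$ is equivalent to $ww_0\cdot 0$ being $\fm_Q$-dominant, $L(ww_0\cdot 0)$ is a simple quotient of the parabolic Verma module $M_Q(ww_0\cdot 0)$. By exactness of Beilinson--Bernstein localization, $\mathfrak{L}(ww_0\cdot 0)$ is a quotient of $\mathfrak{M}_Q(ww_0\cdot 0)$, the $\cD$-module on $G/B\times G/B$ attached to $M_Q(ww_0\cdot 0)$; hence $\mathrm{Ch}(\mathfrak{L}(ww_0\cdot 0))\subseteq \mathrm{Ch}(\mathfrak{M}_Q(ww_0\cdot 0))$, and it suffices to prove $\mathrm{Ch}(\mathfrak{M}_Q(ww_0\cdot 0))\subset Z_{Q,B}$.

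For the main step, I would use the induction formula $M_Q(ww_0\cdot 0)=U(\fg)\otimes_{U(\fq)}L_{\fm_Q}(ww_0\cdot 0)$: parabolic induction from the finite-dimensional $\fq$-module $L_{\fm_Q}(ww_0\cdot 0)$ corresponds under the (suitably twisted) Beilinson--Bernstein equivalence to pullback along the smooth projection $\pi:G/B\to G/Q$ of the $G$-equivariant $\cO$-coherent $\cD$-module $\mathcal{V}=G\times^Q L_{\fm_Q}(ww_0\cdot 0)$ on $G/Q$. The characteristic variety of the pullback of an $\cO$-coherent $\cD$-module along a smooth morphism is contained in the relative cotangent bundle, which for $\pi$ is $G\times^B\fn_Q\subset T^*(G/B)\cong\widetilde{\cN}$: under the Killing pairing, $\fn_Q$ is exactly the annihilator in $\fu$ of the relative tangent direction $\fq/\fb\cong\overline{\fu}_{M_Q}$. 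Transporting this through the standard equivalence between $G$-equivariant $\cD$-modules on $G/B\times G/B$ and $B$-equivariant $\cD$-modules on $G/B$, the condition $\Ad(g^{-1})\nu\in\fn_Q$ on $(\nu,gB)\in T^*(G/B)$ promotes, after spreading out by the diagonal $G$-action, to the condition $\Ad(g_1^{-1})\nu\in\fn_Q$ on $(\nu,g_1B,g_2B)\in T^*(G/B\times G/B)$, which together with the $Z_B$-constraint $\Ad(g_2^{-1})\nu\in\fu$ is exactly the defining condition of $Z_{Q,B}$.

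Lemma~\ref{lemmaassociatedvariety} serves as a consistency check: the moment map $Z_B\to\cN$, $(\nu,g_1B,g_2B)\mapsto\nu$, sends $Z_{Q,B}$ into $G\cdot\fn_Q$, matching the bound $V(M_Q(ww_0\cdot 0))\subset\fn_Q$ furnished by that lemma. The hard part will be making precise the pullback description of $\mathfrak{M}_Q(ww_0\cdot 0)$ and the ensuing characteristic variety computation; note that Lemma~\ref{lemmaassociatedvariety} by itself controls only the image of $\mathrm{Ch}$ under the Springer projection $\widetilde{\cN}\to\cN$, so it does not suffice to pin down the refined position condition $\Ad(g_1^{-1})\nu\in\fn_Q$, and the geometric pullback argument above is genuinely necessary.
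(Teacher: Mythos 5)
There is a genuine gap: the central assertion that $\mathfrak{M}_Q(ww_0\cdot 0)'$, the localization of $M_Q(ww_0\cdot 0)$ on $G/B$, equals the pullback $\pi^*\mathcal{V}$ of the $G$-equivariant $\cO$-coherent $\cD$-module $\mathcal{V}=G\times^QL_{\fm_Q}(ww_0\cdot 0)$ along the smooth fibration $\pi: G/B\to G/Q$ is false. The $\cD$-module pullback of a vector bundle with flat connection along $\pi$ is again a vector bundle with flat connection, i.e.\ $\cO_{G/B}$-coherent with characteristic variety equal to the zero section, and its global sections are $\Gamma(G/Q,\mathcal{V})$, which is finite-dimensional; this cannot recover the infinite-dimensional parabolic Verma module $M_Q(ww_0\cdot 0)$. (Indeed, by \cite[Prop.\ 8.1]{ginsburg1986g} the moment-map image of $\mathrm{Ch}(\mathfrak{M}_Q(ww_0\cdot 0)')$ is the associated variety $V(M_Q(ww_0\cdot 0))\subset\fq^{\bot}$, which for $Q\neq G$ is strictly larger than $\{0\}$, so the localization cannot be $\cO$-coherent.) Parabolic induction does not localize as an $\cO$-module pullback along $\pi$; your containment $\mathrm{Ch}(\pi^*\mathcal{V})\subset G\times^B\fn_Q$ is true only because the left-hand side is the zero section, and it says nothing about $\mathfrak{M}_Q(ww_0\cdot 0)'$.

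You also dismiss the moment-map route on incorrect grounds. You state that Lemma~\ref{lemmaassociatedvariety} only controls the image under the projection $(\nu,g_1B,g_2B)\mapsto\nu$ on $Z_B$ and hence cannot see the position condition $\Ad(g_1^{-1})\nu\in\fn_Q$. That projection is the moment map for the diagonal $G$-action on $T^*(G/B\times G/B)$, and it is not the map the argument uses. The paper applies \cite[Prop.\ 8.1]{ginsburg1986g} to the $\cD$-module $\mathfrak{M}_Q(ww_0\cdot 0)'$ on $G/B$ itself, with the moment map $q:\widetilde{\cN}=G\times^B\fb^{\bot}\to\fg^*$, $[g,v]\mapsto\Ad(g)v$; together with Lemma~\ref{lemmaassociatedvariety} this gives $\mathrm{Ch}(\mathfrak{M}_Q(ww_0\cdot 0)')\subset q^{-1}(\fn_Q)$, a constraint that is already position-sensitive since it records the flag $gB$ and the nilpotent direction $v$ jointly. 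Passing to the $G$-equivariant $\cD$-module on $G/B\times G/B$ via $\mathrm{Ch}(\mathfrak{M}_Q(ww_0\cdot 0))=G\times^B\mathrm{Ch}(\mathfrak{M}_Q(ww_0\cdot 0)')$ and identifying $G\times^Bq^{-1}(\fn_Q)$ inside $T^*(G/B\times G/B)$ yields precisely the locus $\{(\nu,g_1B,g_2B)\in Z_B\mid\Ad(g_1^{-1})\nu\in\fn_Q\}=Z_{Q,B}$. So the associated-variety bound is sufficient and is exactly what the paper uses; Lemma~\ref{lemmaassociatedvariety} is the main input, not a consistency check, and no pullback description of $\mathfrak{M}_Q(ww_0\cdot 0)$ is needed.
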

\begin{proof}
    By \cite[Prop. 9.3(e), \S9.4]{humphreys2008representations}, $L(ww_0\cdot 0)$ is a subquotient of $M_{Q}(ww_0\cdot 0)$. By \cite[Thm. 2.2.1(ii)]{hotta2007dmodules}, we only need to prove the same result replacing $L(ww_0\cdot 0)$ by $M_{Q}(ww_0\cdot 0)$. Now let $\mathfrak{M}_Q(ww_0\cdot 0)$ be the localization of $M_Q(ww_0\cdot 0)$ on $G/B\times G/B$ and $\mathfrak{M}_Q(ww_0\cdot 0)'$ be the corresponding $\mathcal{D}$-module on $G/B$ which is the usual localization of $M_Q(ww_0\cdot 0)$. Let $q:\widetilde{\cN}=G\times^B\fb^{\bot}\rightarrow \fg^{*}:(g,v)\mapsto \Ad(g)v$ be the moment map. By \cite[Prop. 8.1]{ginsburg1986g} and Lemma \ref{lemmaassociatedvariety}, $q(\mathrm{Ch}(\mathfrak{M}_Q(ww_0\cdot 0)'))=\fq^{\bot}$. As in the proof of \cite[Prop. 2.4.4]{breuil2019local}, we get $\mathrm{Ch}(\mathfrak{M}_Q(ww_0\cdot 0))=G\times^B\mathrm{Ch}(\mathfrak{M}_Q(ww_0\cdot 0)')$ is contained in $G\times^{B}q^{-1}(\fq^{\bot})$. Under the usual identification $\fg^*\simeq \fg$ given by the Killing form, $\fq^{\bot}$ is identified with $\fn_{Q}$. One can check under the isomorphism \cite[(2.15)]{breuil2019local}, we have $G\times^{B}q^{-1}(\fn_{Q})=Z_{Q,B}$. Hence $\mathrm{Ch}(\mathfrak{L}(ww_0\cdot 0))\subset Z_{Q,B}$.
\end{proof}
\begin{remark}\label{remarkcyclesKtheory}
    We discuss here some possible generalization of some results of cycles on $Z_B$ in \cite[\S2.13, \S2.14]{bezrukavnikov2012affine} and \cite[\S2.4]{breuil2019local} for the generalized Steinberg varieties. There exists already a theory of localization for singular blocks in characteristic $0$ (\cite{backelin2015singular}). However, the characteristic cycles on $Z_B$ can be produced via $K$-theory by \cite[Prop. 2.14.2]{bezrukavnikov2012affine} and \cite[Prop. 2.13.5]{bezrukavnikov2012affine}. The $K$-theory of generalized Steinberg variety $Z_P$ is well-behaved (\cite{douglass2014equivariant}) and can produce Kazhdan-Lusztig cycles on $Z_P$ corresponding to elements in the Weyl group by roughly pushing forward the cycles $[\mathfrak{M}(ww_0\cdot 0)]$ and $[\mathfrak{L}(ww_0\cdot 0)]$ on $Z_B$ via the map $Z_B\rightarrow Z_P$ (\cite[Thm. 2.1]{douglass2014equivariant}). We do not know for general $Z_P$ whether the similar formula as $[\overline{X}_{B,w}]=[\mathfrak{M}(ww_0\cdot 0)]$ (\cite[Prop. 2.4.6]{breuil2019local}) of cycles on $Z_B$, which was crucially used in \cite{breuil2019local} and proved in \cite[Prop. 2.14.2]{bezrukavnikov2012affine}, holds in general for the cycles on $Z_P$ from the $K$-theory. It is mentioned in \cite[Rem. 2.14.3]{bezrukavnikov2012affine} that the previous formula for $[\overline{X}_{B,w}]$ can be deduced by deformation arguments (\cite[\S6]{ginsburg1986g} or \cite[\S7.3]{Chriss1997RepresentationTA}). To get a generalized formula for $[\overline{X}_{P,w}]$, it seems that Cohen-Macaulayness of $X_{P,w}$ would be needed, which is unknown to the author for $P\neq B$.
\end{remark}
\section{Local models for the trianguline variety}\label{sectionlocalmodeltriangulinevariety}
We apply the results of \S\ref{sectionunibranchness} to study local geometry of the trianguline variety at certain points, generalizing the results in \cite[\S3]{breuil2019local}.

We fix a finite extension $K$ of $\Q_p$ with a uniformizer $\varpi_K$. Let $L$ be a finite extension of $\Q_p$ that splits $K$ with residue field $k_L$ and set $\Sigma=\Hom(K,L)$. 
\subsection{Almost de Rham trianguline $(\varphi,\Gamma_K)$-modules}\label{sectionalmostderham}
We recall some basic notions for the deformation theory of trianguline $(\varphi,\Gamma_K)$-modules. For details and notation, see \cite[\S3]{breuil2019local}. \par
Let $\mathrm{Rep}_{\BdR}(\cG_K)$ (resp. $\mathrm{Rep}_{\BdR^+}(\cG_K)$) be the category of $\BdR$-representations (resp. $\BdR^+$-representations) of $\cG_K$ (free of finite rank, continuous for the natural topology and semi-linear). We have rings $\mathrm{B}_{\mathrm{pdR}}^+=\mathrm{B}_{\mathrm{dR}}^+[\log(t)]$ and $\mathrm{B}_{\mathrm{pdR}}=\mathrm{B}_{\mathrm{dR}}[\log(t)]$ with the actions of $\cG_K$ satisfying that $g(\log(t))=\log(t)+\log(\epsilon(g))$ and $\mathrm{B}_{\mathrm{pdR}}$ admits a $\mathrm{B}_{\mathrm{dR}}$-derivative $\nu_{\mathrm{pdR}}$ such that $\nu_{\mathrm{pdR}}(\log(t))=-1$ which preserves $\mathrm{B}_{\mathrm{pdR}}^+$. \par
If $W$ is a $\BdR$-representation of $\cG_K$, then $D_{\mathrm{pdR}}(W):=\left(\mathrm{B}_{\mathrm{pdR}}\otimes_{\BdR}W\right)^{\cG_K}$ is a finite-dimensional $K$-vector space  of dimension no more than $\mathrm{dim}_{\BdR}W$ with a linear nilpotent endomorphism $\nu_W$. In other words, $D_{\mathrm{pdR}}(W)$ is a $K$-representation of the additive algebraic group $\mathbb{G}_a$. The $\mathrm{B}_{\mathrm{dR}}$-representation $W$ is called \emph{almost de Rham} if $\dim_KD_{\pdR}(W)=\dim_{\mathrm{B}_{\mathrm{dR}}}W$. A $\BdR^+$-representation $W^+$ is called almost de Rham if the $\BdR$-representation $W=W^+[\frac{1}{t}]$ is almost de Rham. The $\BdR^+$-lattices which are stable under $\cG_K$ in an almost de Rham $\BdR$-representation $W$ are in bijection with filtrations of $D_{\mathrm{pdR}}(W)$ as $K$-representations of $\mathbb{G}_a$ via $W^+\mapsto \mathrm{Fil}_{W^+}^{\bullet}\left(D_{\mathrm{pdR}}(W)\right)$ where $\mathrm{Fil}_{W^+}^{i}\left(D_{\mathrm{pdR}}(W)\right):=(t^i\mathrm{B}_{\mathrm{pdR}}^+\otimes_{\BdR^+}W^+)^{\cG_K}$ for $i\in\Z$ (\cite[Prop. 3.2.1]{breuil2019local}).\par
Let $A\in\cC_L$ be a local Artinian $L$-algebra with the maximal ideal $\fm_A$. Let $\mathrm{Rep}_{\mathrm{pdR},A}(\cG_K)$ be the category of almost de Rham $\mathrm{B}_{\mathrm{dR}}$-representations $W$ of $\cG_K$ together with a morphism of $\Q_p$-algebras $A\rightarrow \mathrm{End}_{\mathrm{Rep}_{\BdR}(\cG_K)}(W)$ such that $W$ is finite free over $\BdR\otimes_{\Q_p}A$. Let $\mathrm{Rep}_{A\otimes_{\Q_p}K}(\mathbb{G}_a)$ be the category of pairs $(V_A,\nu_A)$ where $\nu_A$ is a nilpotent endomorphism of a finite free $A\otimes_{\Q_p}K$-module $V_A$. The functor $D_{\mathrm{pdR}}$ induces an equivalence of categories between $\mathrm{Rep}_{\mathrm{pdR},A}(\cG_K)$ and $\mathrm{Rep}_{A\otimes_{\Q_p}K}(\mathbb{G}_a)$ (\cite[Lemma. 3.1.4]{breuil2019local}). \par
We have the Robba ring $\mathcal{R}_{A,K}$ of $K$ with $A$-coefficients (cf. \cite[Def. 6.2.1]{kedlaya2014cohomology}). A $(\varphi,\Gamma_K)$-module $\cM_A$ over $\cR_{A,K}[\frac{1}{t}]$ is defined to be a finite free $\cR_{A,K}[\frac{1}{t}]$-module equipped with commuting semilinear actions of $\varphi$ and $\Gamma_K$ such that $\cM_A$ admits a $(\varphi,\Gamma_K)$-stable $\cR_{K}$-lattice $D_A$ which is a $(\varphi,\Gamma_K)$-module over $\cR_K$ with the actions of $\varphi$ and $\Gamma_K$ given by those of $\cM_A$. Denote by $\Phi\Gamma_{A,K}^{+}$ (resp. $\Phi\Gamma_{A,K}$) the category of $(\varphi,\Gamma_K)$-modules over $\cR_{A,K}$ (resp. over $\cR_{A,K}[\frac{1}{t}]$).\par
A rank one $(\varphi,\Gamma_K)$-module $\cM_A$ over $\cR_{A,K}[\frac{1}{t}]$ is called of character type if $\cM_A$ is isomorphic to $\cR_{A,K}(\delta_A)[\frac{1}{t}]$ for some continuous character $\delta_A:K^{\times}\rightarrow A^{\times}$ (\cite[Cons. 6.2.4]{kedlaya2014cohomology}). A $(\varphi,\Gamma_K)$-module $\cM_A$ over $\cR_{A,K}[\frac{1}{t}]$ of rank $n$ is called trianguline if there exists an increasing filtration $\cM_{A,\bullet}:\left\{0\right\}=\cM_{A,0}\subset \cM_{A,1}\subset \cdots\subset \cM_{A,n}=\cM_A$ of $(\varphi,\Gamma_K)$-modules over $\cR_{A,K}[\frac{1}{t}]$ such that $\cM_{A,i}/\cM_{A,i-1}$ is a $(\varphi,\Gamma_K)$-module over $\cR_{A,K}[\frac{1}{t}]$ of character type for every $1\leq i\leq n$ and the filtration $\cM_{A,\bullet}$ is called a triangulation of $\cM_A$. Moreover, if $\cM_{A,i}/\cM_{A,i-1}\simeq\cR_{A,K}(\delta_{A,i})[\frac{1}{t}]$ for some characters $\delta_{A,i}: K^{\times}\rightarrow A^{\times},1\leq i\leq n$, then we say $\underline{\delta}_A=(\delta_{A,1},\cdots,\delta_{A,n})$ is a parameter of $\cM_A$. \par
We have an exact functor $W_{\mathrm{dR}}^+(-)$ (resp. $W_{\mathrm{dR}}(-)$) from $\Phi\Gamma^+_{A,K}$ (resp. $\Phi\Gamma_{A,K}$) to $\mathrm{Rep}_{\BdR^+,A}(\cG_K)$ (resp. $\mathrm{Rep}_{\BdR,A}(\cG_K)$) (\cite{berger2008construction} and \cite[Lem. 3.3.5]{breuil2019local}). If $\cM_A\in \Phi\Gamma_{A,K}$ is trianguline with a parameter $(\delta_{A,i})_{i=1,\cdots,n}$ and if the characters $\overline{\delta}_{A,i}:=\delta_{A,i} \mod \fm_A, 1\leq i\leq n$ are locally algebraic, then $W_{\mathrm{dR}}(\cM_A)\in \mathrm{Rep}_{\pdR,A}(\cG_K)$ and is a successive extension of rank one de Rham  $\BdR$-representations of $\cG_K$ (\cite[Lem. 3.3.6]{breuil2019local}). 
\subsection{Groupoids}\label{subsectiongroupoids}
We recall the definitions of some groupoids over $\cC_L$ defined in \cite[\S3]{breuil2019local}.\par 

Let $D$ be a fixed $(\varphi,\Gamma_K)$-module over $\cR_{L,K}$ of rank $n$. Let $\cM=D[\frac{1}{t}]$. We assume there exists and fix a triangulation $\cM_{\bullet}$ of $\cM$ of parameter $\underline{\delta}=(\delta_1,\cdots,\delta_n)$. We assume that $\delta_i$ is locally algebraic for any $i=1,\cdots,n$. \par
We let $\cT_0^n $ be the subset of $\cT_L^n$ that is the complement of characters $(\delta_1,\cdots,\delta_n)$ where $\delta_i/\delta_j$ or $\epsilon\delta_i/\delta_j$ is algebraic for some $i\neq j$. Let $\cT_0$ be the subset of $\cT_L$ which is the complement of the set of all $L$-points corresponding to characters of the form $z^{\mathbf{k}}$ or $\epsilon z^{\mathbf{k}}$ for some $\mathbf{k}\in \Z^{\Sigma}$. Remark that $\cT_0^n\neq (\cT_0)^n$.

We assume that the parameter $\underline{\delta}$ of $\cM$ (which we have assumed to be locally algebraic) lies in $\cT_0^n(L)$. \par

Let $W^{+}=W_{\mathrm{dR}}^{+}(D)\in \mathrm{Rep}_{\BdR^+,L}(\cG_K), W:=W_{\mathrm{dR}}(\cM)\in \mathrm{Rep}_{\BdR,L}(\cG_K).$ Then $ W\in \mathrm{Rep}_{\pdR,L}(\cG_K)$ and $W$ is filtered in $\mathrm{Rep}_{\pdR,L}(\cG_K)$ with a filtration $\cF_{\bullet}: \cF_1=W_{\mathrm{dR}}(\cM_1)\subset\cdots\subset \cF_n=W_{\mathrm{dR}}(\cM_n)$. We fix an isomorphism $\alpha:(L\otimes_{\Q_p}K)^n\simrightarrow D_{\mathrm{pdR}}(W)$.\par

The groupoid $X_W$ over $\cC_L$ consists of triples $(A,W_A,\iota_A)$ where $A\in\cC_L$, $W_A\in\mathrm{Rep}_{\mathrm{pdR},A}(\cG_K)$ and $\iota_A:W_A\otimes_A L\simrightarrow W$. A morphism $(A,W_A,\iota_A)\rightarrow (B,W_B,\iota_B)$ in $X_W$ is a morphism $A\rightarrow B$ in $\cC_L$ and an isomorphism $W_A\otimes_AB\simrightarrow W_B$ compatible with $\iota_A$ and $\iota_B$. The groupoid $X_W^{\square}$ consists of $(A,W_A,\iota_A,\alpha_A)$ where $(A,W_A,\iota_A)\in X_W$ and $\alpha_A:(A\otimes_{\Q_p}K)^{n}\simrightarrow D_{\mathrm{pdR}}(W_A)$ such that $\alpha_A$ modulo $\fm_A$ coincides with $\alpha$. Similarly we have $X_{W^{+}},X_{W^{+}}^{\square}$ by replacing $W,W_A$ with $W^{+},W_A^{+}$. We have a forgetful morphism $X_W^{\square}\rightarrow X_W$.\par

The groupoid $X_{W,\cF_{\bullet}} $ over $\cC_L$ consists of $(A,W_A,\cF_{A,\bullet},\iota_A)$ where $(A,W_A,\iota_A)\in X_W$ and $\cF_{A.\bullet}=(\cF_{A.i})_{i=1,\cdots,n}$ is a filtration of $W_A$ in $\mathrm{Rep}_{\BdR,A}(\cG_K)$ such that $\cF_{A,i}/\cF_{A,i-1}$ for $i=2,\cdots,n$ and $\cF_{A,1}$ are $A\otimes_{\Q_p}\BdR$-modules free of rank one and $\iota_A$ induces $ \cF_{A,\bullet}\otimes_AL\simrightarrow\cF_{\bullet}$. We let $X_{W,\cF_{\bullet}}^{\square}=X_{W,\cF_{\bullet}}\times_{X_W}X_W^{\square}$ where the morphism $X_{W,\cF_{\bullet}}\rightarrow X_W$ is the obvious one.\par

The groupoid $X_{\cM,\cM_{\bullet}}$ over $\cC_L$ consists of trianguline $(\varphi,\Gamma_K)$-modules $\cM_A$ over $\cR_{A,K}[\frac{1}{t}]$ for some $A\in\cC_L$ with a triangulation $\cM_{A,\bullet}$ of $\cM_A$ and an isomorphism $j_A:\cM_A\otimes_AL\simrightarrow \cM$ which is compatible with the filtrations. \par

The functor $W_{\mathrm{dR}}(-)$ induces a morphism $X_{\cM,\cM_{\bullet}}\rightarrow X_{W,\cF_{\bullet}}$. By our generic assumption on $\underline{\delta}$, the morphism is formally smooth by \cite[Cor. 3.5.6]{breuil2019local} and is relatively representable (\cite[Lem. 3.5.3]{breuil2019local}).\par

Let $X_D$ (resp. $X_{\cM}$) be the groupoid over $\cC_L$ of deformations of $D$ (resp. $\cM$). Then essentially due to Berger's equivalence of $B$-pairs and $(\varphi,\Gamma_K)$-modules (\cite{berger2008construction}), the morphism induced by inverting $t$ and the functors $W_{\mathrm{dR}}(-),W_{\mathrm{dR}}^+(-)$ 
$$X_D\rightarrow X_{\cM}\times_{X_{W}}X_{W^+}$$
is an equivalence of groupoids over $\cC_L$ (\cite[Prop. 3.5.1]{breuil2019local}). \par

Let $X_{D,\cM_{\bullet}}=X_D\times_{X_{\cM}}X_{\cM,\cM_{\bullet}}$, $X_D^{\square}=X_D\times_{X_W}X_W^{\square}$ and $X_{D,\cM_{\bullet}}^{\square}=X_{D,\cM_{\bullet}}\times_{X_{W}} X_{W}^{\square}$. We let $X_{W^+,\cF_{\bullet}}=X_{W^+}\times_{X_{W}}X_{W,\cF_{\bullet}}$ and $X_{W^+,\cF_{\bullet}}^{\square}=W_{W^+,\cF_{\bullet}}\times_{X_{W}}X_{W}^{\square}$. Then $W_{\mathrm{dR}}^{+}$ and $W_{\mathrm{dR}}$ induce morphisms
\begin{align*}
	X_{D,\cM_{\bullet}}&\rightarrow X_{W^{+},\cF_{\bullet}}\\
	X_{D,\cM_{\bullet}}^{\square}&\rightarrow X_{W^{+},\cF_{\bullet}}^{\square}
\end{align*}
which are formally smooth and relatively representable (as the base changes of $X_{\cM,\cM_{\bullet}}\rightarrow X_{W,\cF_{\bullet}}$ up to equivalence \cite[Cor. 3.5.4]{breuil2019local}). 

\subsection{Representability}\label{subsectionrepresentability}
We start with some slight generalization of some results in \cite[\S3.2]{breuil2019local} for cases of possibly non-regular Hodge-Tate weights.\par
We keep the notation in \S\ref{subsectiongroupoids}. If $A\in\cC_L$, $W_{A}\in \mathrm{Rep}_{\pdR,A}(\cG_K)$ and $W_{A}^+$ is a $\BdR^+\otimes_{\Q_p}A$-lattice of $W_A$, set 
\begin{align*}
	&D_{\pdR,\tau}(W_{A}):=D_{\pdR}(W_{A})\otimes_{A\otimes_{\Q_p}K,1\otimes \tau} A,\\
	&\Fil_{W_{A}^+}^{\bullet}\left(D_{\pdR,\tau}(W_{A})\right):=\Fil_{W_{A}^+}^{\bullet}(D_{\pdR}(W_{A}))\otimes_{A\otimes_{\Q_p}K,1\otimes \tau} A,\\
	&\gr_{\Fil_{W_{A}^+}^{\bullet}}^{i}\left(D_{\pdR,\tau}(W_{A})\right):=\Fil_{W_{A}^+}^{i}\left(D_{\pdR,\tau}(W_{A})\right)/\Fil_{W_{A}^+}^{i+1}\left(D_{\pdR,\tau}(W_{A})\right)	
\end{align*} 
for $i\in\Z,\tau\in\Sigma$.\par
Assume that for $\tau\in \Sigma$, the integers $i$ such that $\mathrm{gr}^i_{\mathrm{Fil}^{\bullet}_{W^+}}(D_{\mathrm{pdR},\tau}(W))\neq 0$ are
$$-k_{\tau,1}>\cdots >-k_{\tau,s_{\tau}}$$
for some positive integer $s_{\tau}$ and we set $m_{\tau,i}=\mathrm{dim}_{L}\mathrm{gr}^{-k_{\tau,i}}_{\mathrm{Fil}^{\bullet}_{W^+}}(D_{\mathrm{pdR},\tau}(W))$ for $1\leq i\leq s_{\tau}$. Then $m_{\tau,1}+\cdots+m_{\tau,s_\tau}=n$ for each $\tau\in\Sigma$. We get (partial) flags $D_{\mathrm{pdR},\tau}(W)=\Fil_{W^+}^{-k_{\tau,s_{\tau}}}(D_{\mathrm{pdR},\tau}(W))\supsetneq\cdots\supsetneq \Fil_{W^+}^{-k_{\tau,1}}(D_{\mathrm{pdR},\tau}(W))\supsetneq \left\{0\right\}$ inside $D_{\pdR,\tau}(W)$ for $\tau\in\Sigma$. \par

We set 
\[G:=\mathrm{Res}_{K/\Qp}(\GL_{n/K})\times_{\Q_p}L=\prod_{\tau\in\Sigma}\GL_{n/L}.\] 
Then $G$ acts on $D_{\mathrm{pdR}}(W)=\prod_{\tau\in\Sigma}D_{\mathrm{pdR},\tau}(W)$ via $\alpha:(L\otimes_{\Q_p}K)^n\simrightarrow D_{\mathrm{pdR}}(W)$. We let $P$ be the stabilizer of the filtration $\mathrm{Fil}_{W^{+}}^{\bullet}\left(D_{\mathrm{pdR}}(W)\right)$. Then $P=\prod_{\tau\in\Sigma} P_{\tau}$, where $P_{\tau}$ is the parabolic subgroup of $\GL_{n/L}$ which stabilizes the (partial) flag $\mathrm{Fil}^{-k_{\tau,\bullet}}_{W^+}(D_{\pdR,\tau}(W))$ via $\alpha$. \par

Recall we have the variety $\tildefg_P=\left\{(\nu,gP)\in\fg\times G/P\mid \Ad(g^{-1})\nu\in \fp\right\}$. For any $A\in\cC_L$, $A$-points of the (partial) flag variety $G/P=\prod_{\tau\in\Sigma}\GL_{n/L}/P_{\tau}$ correspond to (partial) flags $A^{n}=\Fil_{\tau,s_{\tau}}\supsetneq \cdots \supsetneq \Fil_{\tau,1}\supsetneq \Fil_{\tau,0}=\left\{0\right\}$ where for each $i=1,\cdots,s_{\tau}$, $\Fil_{\tau,i}/\Fil_{\tau,i-1}$ is a free $A$-module of rank $m_{\tau,i}$. An  $A$-point of $\tildefg_P$ then corresponds to a (partial) flag $(\Fil_{\tau,i})_{\tau\in\Sigma, 1\leq i\leq s_{\tau}}$ and a linear operator $\nu_A=\prod_{\tau\in \Sigma}\nu_{A,\tau}\in \prod_{\tau\in\Sigma}\mathrm{End}_A(A^n)$ which preserves the filtration. We thus have a point 
\[x_{W^{+}}:=\left(\alpha^{-1}\left(\left( \mathrm{Fil}_{W^{+}}^{-k_{\tau,\bullet}}\left(D_{\mathrm{pdR},\tau}(W)\right)\right)_{\tau\in\Sigma}\right),N_W:=\alpha^{-1}\circ \nu_{W}\circ \alpha\right)\in \tildefg_P(L)\]
where $\nu_W$ is the nilpotent operator acting on $D_{\pdR}(W)$. Given $\left(A,W_A^+,\iota_A,\alpha_A\right)\in X_{W^+}^{\square}$, the $A\otimes_{\Q_p}K$-module $D_{\pdR}(W_A)$ is equipped with a filtration $\Fil_{W_A^{+}}^{\bullet}(D_{\pdR}(W_A))$ together with an $A\otimes_{\Q_p}K$-linear nilpotent operator $\nu_{W_A}$ which preserves the filtration. Via the isomorphism $\alpha_A$, these datum give rise to an $A$-point 
\[\left(\alpha_A^{-1}\left(\left(\mathrm{Fil}_{W_A^+}^{-k_{\tau,\bullet}}(D_{\pdR,\tau}(W_A))\right)_{\tau\in \Sigma}\right),N_{W_A}:=\alpha_A^{-1}\circ \nu_{W_{A}}\circ \alpha_A\right)\] 
of $\widetilde{\fg}_P$ as in the proof of the following proposition.
\begin{proposition}\label{proprepresentablegP}
	The groupoid $X_{W^{+}}^{\square}$ is pro-representable and the functor 
	\[\left(A, W_A^{+},\iota_A,\alpha_A\right)\mapsto\left(\alpha_A^{-1}\left(\left(\mathrm{Fil}_{W_A^+}^{-k_{\tau,\bullet}}(D_{\pdR,\tau}(W_A))\right)_{\tau\in \Sigma}\right),N_{W_A}\right)\] 
	induces an isomorphism of functors between $|X_{W^+}^{\square}|$ and $\widehat{\tildefg}_P$, the completion of $\widetilde{\fg}_P$ at $x_{W^+}$.
\end{proposition}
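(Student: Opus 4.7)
The plan is to use the explicit description of almost de Rham $\BdR^+$-representations in terms of filtered modules with a nilpotent endomorphism, together with the rigidification coming from the framing $\alpha_A$, to translate $X_{W^+}^{\square}(A)$ directly into $A$-points of $\widetilde{\fg}_P$ factoring through $x_{W^+}$.

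First, I would note that the groupoid $X_{W^+}^{\square}$ is in fact a setoid: if $(A,W_A^+,\iota_A,\alpha_A)$ is an object, any automorphism is an automorphism of $W_A^+$ over $\BdR^+\otimes_{\Q_p}A$ commuting with the $\cG_K$-action, compatible with $\iota_A$, and fixing $\alpha_A$ on $D_{\pdR}(W_A)$. Since $D_{\pdR}$ is faithful on $\mathrm{Rep}_{\pdR,A}(\cG_K)$ by \cite[Lem. 3.1.4]{breuil2019local}, and the framing $\alpha_A$ determines the map on $D_{\pdR}$, any such automorphism is the identity. Therefore $|X_{W^+}^{\square}|$ is a genuine functor $\cC_L\rightarrow \mathrm{Sets}$.

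Next, I would apply the equivalence of \cite[Lem. 3.1.4]{breuil2019local} to reinterpret an object of $X_{W^+}^{\square}$ over $A$ as the following data: a free $A\otimes_{\Q_p}K$-module $V_A$ of rank $n$ with a nilpotent endomorphism $\nu_A$, a $\cG_K$-stable $\BdR^+\otimes_{\Q_p}A$-lattice in the associated $\BdR$-representation — equivalently, by \cite[Prop. 3.2.1]{breuil2019local}, an $A\otimes_{\Q_p}K$-subrepresentation filtration $\Fil^{\bullet}_{W_A^+}$ of $V_A$ (i.e. a filtration stable under $\nu_A$) — plus an isomorphism $\iota_A$ of the reduction mod $\fm_A$ with $(W,\Fil_{W^+}^{\bullet},\nu_W)$, and finally a framing $\alpha_A\colon (A\otimes_{\Q_p}K)^n\simrightarrow V_A$ reducing to $\alpha$. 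Via $\alpha_A$, these data become exactly: a nilpotent endomorphism $N_{W_A}\in \fg(A)=\mathrm{End}_{A\otimes_{\Q_p}K}((A\otimes_{\Q_p}K)^n)$ together with a filtration $F_{A,\bullet}=\alpha_A^{-1}(\Fil_{W_A^+}^{-k_{\tau,\bullet}})$ of $(A\otimes_{\Q_p}K)^n$ preserved by $N_{W_A}$, whose reduction mod $\fm_A$ coincides with $x_{W^+}=(N_W,\alpha^{-1}(\Fil_{W^+}^{-k_{\tau,\bullet}}))$.

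Because each graded piece of the reduction of $F_{A,\bullet}$ is free of rank $m_{\tau,i}$ over the residue field $L$, Nakayama's lemma applied to the local Artinian ring $A$ shows that $\gr^i F_{A,\bullet}$ is locally free of the same rank; hence $F_{A,\bullet}$ defines an $A$-point of the partial flag variety $G/P$ (which represents exactly such filtrations). The nilpotency-preserving condition means $\Ad(g_A^{-1})N_{W_A}\in \fp(A)$, where $g_A\in G(A)$ is any lift of the point $F_{A,\bullet}\in (G/P)(A)$ (at least after an fppf cover, which can always be done since $G\rightarrow G/P$ is a $P$-torsor). Thus the association $(A,W_A^+,\iota_A,\alpha_A)\mapsto (N_{W_A},F_{A,\bullet})$ realizes a natural transformation
\[|X_{W^+}^{\square}|\longrightarrow \widetilde{\fg}_P(-),\quad \text{factoring through }\widehat{\widetilde{\fg}}_{P,x_{W^+}},\]
and the construction is manifestly reversible: starting from an $A$-point of $\widetilde{\fg}_P$ deforming $x_{W^+}$, one recovers $(V_A,\nu_A,\Fil^{\bullet}_{W_A^+},\alpha_A)$ uniquely by declaring $V_A=(A\otimes_{\Q_p}K)^n$ with the given filtration and nilpotent endomorphism, and then transporting back through the equivalence of \cite[Lem. 3.1.4]{breuil2019local} to obtain the $\BdR^+\otimes_{\Q_p}A$-representation $W_A^+$.

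Pro-representability of $X_{W^+}^{\square}$ follows at once from this, since $\widehat{\widetilde{\fg}}_{P,x_{W^+}}$ is pro-represented by the completed local ring $\widehat{\cO}_{\widetilde{\fg}_P,x_{W^+}}$. The main point to watch is the bookkeeping in step two — checking that the data of the filtration and nilpotent endomorphism on the $\alpha_A$-framed module is precisely equivalent to an $A$-point of $\widetilde{\fg}_P$; once the equivalence of \cite[Lem. 3.1.4]{breuil2019local} and \cite[Prop. 3.2.1]{breuil2019local} are in place, the remaining verification is formal and the rest of the argument is essentially a translation of definitions.
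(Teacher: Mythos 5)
Your proof follows the same overall strategy as the paper (translate the almost de Rham datum into a filtered $\mathbb{G}_a$-representation, then compare with $A$-points of $\widetilde{\fg}_P$), and the initial observation that $X_{W^+}^{\square}$ is a setoid is a nice touch that the paper leaves implicit. However, there is a genuine gap in the step where you invoke Nakayama's lemma. For a finitely generated module $M$ over a local Artinian ring $A$ with residue field $L$, knowing that $\dim_L(M\otimes_A L)=r$ only tells you that $M$ is generated by $r$ elements; it does \emph{not} tell you that $M$ is free. For example, with $A=L[\epsilon]/\epsilon^2$, the filtration $0\subset\epsilon A\subset A$ of the free rank-one module has $\gr^0=\epsilon A$ with $\dim_L(\gr^0\otimes_A L)=1$, yet $\gr^0$ is not free over $A$. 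What saves the situation is the requirement, built into the definition of $X_{W^+}^{\square}$, that $W_A^+$ is free over $A\otimes_{\Q_p}\BdR^+$; this is precisely equivalent to $\oplus_i \gr^i_{\Fil^{\bullet}_{W_A^+}}(D_{\pdR}(W_A))$ being projective over $A\otimes_{\Q_p}K$, which is the content of \cite[Lem. 3.2.2]{breuil2019local} (the lemma the paper cites and you do not). Only after establishing this projectivity does Nakayama give freeness of the $\tau$-graded pieces, and only then does the base-change compatibility $\gr^i\otimes_A L\simeq\gr^i_{\Fil^{\bullet}_{W^+}}(D_{\pdR,\tau}(W))$ hold.

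Relatedly, your appeal to \cite[Prop. 3.2.1]{breuil2019local} is not quite sufficient: that proposition is a bijection between $\cG_K$-stable $\BdR^+$-lattices in a $\BdR$-representation over $L$ and filtrations of $D_{\pdR}$ as a $K$-representation of $\mathbb{G}_a$ \emph{over the field $L$}. You need the $A$-linear version, which simultaneously handles which lattices are free over $A\otimes_{\Q_p}\BdR^+$ in terms of projectivity of the associated filtration — again \cite[Lem. 3.2.2]{breuil2019local}. With that reference in place of your two-step factorisation, the rest of your argument — transport through $\alpha_A$, identification of the filtration datum with an $A$-point of $G/P$, the stability condition $\Ad(g_A^{-1})N_{W_A}\in\fp(A)$, and reversibility — goes through and is the same argument as the paper's.
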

\begin{proof}
	The proof is essentially the same as \cite[Thm. 3.2.5]{breuil2019local}. For any $A\in\cC_L$, by \cite[Lem. 3.2.2]{breuil2019local}, the functor $W^+_A\rightarrow \left(D_{\pdR}(W_A), \Fil^{\bullet}_{W_A^+}(D_{\pdR}(W_A)), \nu_{W_A}\right)$ induces an equivalence between the category of almost de Rham $A\otimes_{\Q_p} \BdR^+$-representations of $\cG_K$ and the category of filtered $A\otimes_{\Q_p}K$-representations of $\mathbb{G}_a$ (the definition before \cite[Lem. 3.2.2]{breuil2019local} should be that the graded pieces are projective $A\otimes_{\Q_p}K$-modules, see discussions below). If $\left(A,W_A^+,\iota_A,\alpha_A\right)\in X_{W^+}^{\square}$, by the proof of \textit{loc. cit.}, $\oplus_{i\in\Z}\gr^i_{\Fil_{W_A^+}^{\bullet}}(D_{\pdR}(W_A))$ is projective over $A\otimes_{\Qp}K$ (which is equivalent to the condition that $W_A^+$ is free over $A\otimes_{\Q_p}\BdR^+$) and moreover for any finite $A$-module $M$, there is an isomorphism 
	$$M\otimes_A\gr^i_{\Fil_{W_A^+}^{\bullet}}(D_{\pdR}(W_A))\simeq \gr^i_{\Fil_{M\otimes_AW_A^+}^{\bullet}}(D_{\pdR}(M\otimes_AW_A))$$
	for each $i\in\Z$. We have decompositions $D_{\pdR}(W_A)=\oplus_{\tau\in\Sigma} D_{\pdR,\tau}(W_A), \Fil^i_{W_A^+}(D_{\pdR}(W_A))=\oplus_{\tau\in\Sigma}\Fil^{i}_{W_A^+}(D_{\pdR,\tau}(W_A))$ and $\gr^i_{\Fil_{W_A^+}^{\bullet}}(D_{\pdR}(W_A))=\oplus_{\tau\in\Sigma}\gr^{i}_{\Fil_{W_A^+}^{\bullet}}(D_{\pdR,\tau}(W_A))$ for $i\in\Z$. Then $\gr^{i}_{\Fil_{W_A^+}^{\bullet}}(D_{\pdR,\tau}(W_A))$ is free over $A$ for each $\tau\in\Sigma$ and $i\in \Z$ since $A$ is local. And we have (\cite[Cor. 3.2.3]{breuil2019local}) $$\gr^{i}_{\Fil_{W_A^+}^{\bullet}}(D_{\pdR,\tau}(W_A))\otimes_A L\simeq \gr^{i}_{\Fil_{W^+}^{\bullet}}(D_{\pdR,\tau}(W)).$$ 
	Thus $\gr^{i}_{\Fil_{W_A^+}^{\bullet}}(D_{\pdR,\tau}(W_A))\neq 0$ if and only if $i\in \{-k_{\tau,1},\cdots,-k_{\tau,s_{\tau}}\}$ and $\gr^{-k_{\tau,i}}_{\Fil_{W_A^+}^{\bullet}}(D_{\pdR,\tau}(W_A))$ is free of rank $m_{\tau,i}$ for $i=1,\cdots,s_{\tau}$. The datum 
	\[\left(\left(\Fil^{-k_{\tau,\bullet}}_{W_A^+}\left(D_{\pdR,\tau}(W_A)\right)\right)_{\tau\in\Sigma},\nu_{W_A}\right)\]
	together with $\alpha_A,\iota_A$ up to isomorphisms is then equivalent to a morphism $\Spec(A)\rightarrow \tildefg_P$ whose image is in the infinitesimal neighbourhood of the $L$-point $x_{W^+}$ of $\tildefg_P$.
\end{proof}

Recall that from the split group $G$, the parabolic subgroup $P$ and a fixed Borel subgroup $B=TU$ contained in $P$, we have defined a scheme $X_P=\widetilde{\fg}\times_{\fg}\widetilde{\fg}_P$ in \S\ref{sectionthevariety} whose irreducible components are $X_{P,w}$ for $w\in W/W_P$ where $W$ denotes the Weyl group of $G$. We may assume that $B$ is the group of upper-triangular matrices and $T$ is the diagonal torus in $G=\prod_{\tau\in\Sigma}\GL_{n/L}$. Let $x$ be the $L$-point of the scheme $X_P$ corresponding to 
\[\left(\alpha^{-1}\left(D_{\mathrm{pdR}}\left(\cF_{\bullet}\right)\right),\alpha^{-1}\left(\Fil_{W^+}^{\bullet}(D_{\pdR}(W))\right), N_W\right)\]
and let $\widehat{X}_{P,x}=\mathrm{Spf}(\widehat{\cO}_{X_P,x})$ be the completion of $X_P$ at $x$. The groupoid $X_{W,\cF_{\bullet}}^{\square}$ is pro-represented by the completion $\widehat{\tildefg}$ of $\tildefg$ at the point $(\alpha^{-1}(D_{\pdR}(\cF_{\bullet})),N_{W})\in \tildefg(L)$ (\cite[Cor. 3.1.9]{breuil2019local}). Then by the same proof as for Proposition \ref{proprepresentablegP} and \cite[Cor. 3.1.9, Cor. 3.5.8]{breuil2019local}, we have the following generalization of \cite[Cor. 3.5.8]{breuil2019local}.
\begin{proposition}\label{propositionrepXPx}
(1) The groupoid $X_{W^+,\cF_{\bullet}}^{\square}$ is pro-representable and the functor $|X_{W^+,\cF_{\bullet}}^{\square}|$ is pro-represented by $\Spf(\widehat{\cO}_{X_P,x})$.\\
(2) The groupoid $X_{D,\cM_{\bullet}}^{\square}$ is pro-representable and the functor $|X_{D,\cM_{\bullet}}^{\square}|$ is pro-represented by a formal scheme which is formally smooth of relative dimension $[K:\Q_p]\frac{n(n+1)}{2}$ over $\widehat{X}_{P,x}$.
\end{proposition}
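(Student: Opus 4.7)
The plan is to reduce both parts to the representability results for the constituent groupoids together with the formal smoothness machinery of \S\ref{subsectiongroupoids}, generalizing the argument of \cite[Cor. 3.5.8]{breuil2019local}.

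\textbf{Part (1).} Tautologically $X_{W^+,\cF_\bullet}^\square \simeq X_{W^+}^\square \times_{X_W^\square} X_{W,\cF_\bullet}^\square$. By Proposition \ref{proprepresentablegP}, the groupoid $X_{W^+}^\square$ is pro-represented by $\widehat{\widetilde{\fg}}_P$ (completion of $\widetilde{\fg}_P$ at $x_{W^+}$); by \cite[Cor. 3.1.9]{breuil2019local}, $X_{W,\cF_\bullet}^\square$ is pro-represented by $\widehat{\widetilde{\fg}}$ (completion of $\widetilde{\fg}$ at the point $(\alpha^{-1}(D_{\pdR}(\cF_\bullet)), N_W)$); and $X_W^\square$ is pro-represented by $\widehat{\fg}$ at $N_W$. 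Both projections factor through the map sending an object to its underlying nilpotent operator, so the fibered product pro-represents $X_{W^+,\cF_\bullet}^\square$ and equals $\widehat{\widetilde{\fg}} \times_{\widehat{\fg}} \widehat{\widetilde{\fg}}_P = \widehat{X}_{P,x}$ by the definition $X_P = \widetilde{\fg} \times_\fg \widetilde{\fg}_P$. Concretely, an object $(W_A^+, \iota_A, \cF_{A,\bullet}, \alpha_A)$ over $A \in \cC_L$ is sent under $D_{\pdR}$ (using the equivalence of Lemma 3.2.2 of \cite{breuil2019local}) to the triple consisting of the partial flag $\alpha_A^{-1}(\Fil^{-k_{\tau,\bullet}}_{W_A^+}(D_{\pdR,\tau}(W_A)))_\tau \in (G/P)(A)$, the full flag $\alpha_A^{-1}(D_{\pdR}(\cF_{A,\bullet})) \in (G/B)(A)$, and the common nilpotent operator $N_{W_A} = \alpha_A^{-1} \circ \nu_{W_A} \circ \alpha_A \in \fg(A)$, which is precisely an $A$-point of $X_P$ lifting $x$; the inverse equivalence recovers $(W_A^+, \cF_{A,\bullet})$ from these data.

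\textbf{Part (2).} By the equivalence $X_D \simeq X_\cM \times_{X_W} X_{W^+}$ (\cite[Prop. 3.5.1]{breuil2019local}), one has
\[
X_{D,\cM_\bullet}^\square \simeq X_{\cM,\cM_\bullet} \times_{X_W} X_{W^+}^\square \simeq X_{\cM,\cM_\bullet} \times_{X_{W,\cF_\bullet}} X_{W^+,\cF_\bullet}^\square,
\]
where the second equivalence uses that $X_{\cM,\cM_\bullet} \to X_W$ factors through $X_{W,\cF_\bullet}$ via $W_{\dR}$. Hence $X_{D,\cM_\bullet}^\square \to X_{W^+,\cF_\bullet}^\square$ is the base change of $X_{\cM,\cM_\bullet} \to X_{W,\cF_\bullet}$, which is formally smooth and relatively representable by \cite[Cor. 3.5.4, Cor. 3.5.6]{breuil2019local}; crucially, the arguments there require only the genericity condition $\underline{\delta} \in \cT_0^n(L)$ and are insensitive to the regularity of Hodge--Tate weights. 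Combined with Part (1), $X_{D,\cM_\bullet}^\square$ is pro-represented by a formal scheme formally smooth over $\widehat{X}_{P,x}$. The relative dimension equals that of $X_{\cM,\cM_\bullet} \to X_{W,\cF_\bullet}$, which is computed as in \cite[Cor. 3.5.8]{breuil2019local} by devissage along the triangulation together with an Euler characteristic computation for the $(\varphi,\Gamma_K)$-cohomology of rank one modules over $\cR_{A,K}[\tfrac{1}{t}]$; this depends only on $\underline{\delta}$ and yields $[K:\Q_p]\tfrac{n(n+1)}{2}$.

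The argument is almost entirely formal: given the groundwork of \S\ref{subsectiongroupoids} and Proposition \ref{proprepresentablegP}, both statements are consequences of pulling back the corresponding diagrams of pro-representable groupoids. The only genuinely new ingredient over the regular case is Proposition \ref{proprepresentablegP} itself, which provides the target $\widehat{\widetilde{\fg}}_P$ in place of $\widehat{\widetilde{\fg}}$; the main subtlety to check is then that completion at a closed point commutes with the fibered product $\widetilde{\fg} \times_\fg \widetilde{\fg}_P$, which is straightforward since both factors are of finite type over $\fg$ and the point $x$ lies over a single point of $\fg$.
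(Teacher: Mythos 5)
Your proof is correct and follows the same route the paper takes: the paper's own proof is essentially a pointer ("by the same proof as for Proposition \ref{proprepresentablegP} and [BHS Cor. 3.1.9, Cor. 3.5.8]"), and you have made the implicit fiber-product decomposition $X_{W^+,\cF_\bullet}^\square \simeq X_{W^+}^\square \times_{X_W^\square} X_{W,\cF_\bullet}^\square$ and the base-change argument explicit. Nothing in your argument diverges from the intended proof; it just spells out the tautologies the paper elides.
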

The scheme $\Spec(\widehat{\cO}_{X_P,x})$ is equidimensional of dimension $\dim \fg=[K:\Q_p]n^2$ with irreducible components $\Spec(\widehat{\cO}_{X_{P,w},x})$ for $w\in W/W_P$ such that $x\in X_{P,w}(L)$ (Corollary \ref{corollaryirreduciblecomponentscompletion}). Let $\widehat{X}_{P,w,x}=\Spf(\widehat{\cO}_{X_{P,w},x})$ for $w\in W/W_P$ such that $x\in X_{P,w}$.\par
For any $w\in W/W_P$, we define $X_{W^+,\cF_{\bullet}}^{\square,w}=X_{W^+,\cF_{\bullet}}^{\square}\times_{|X_{W^+,\cF_{\bullet}}^{\square}|}\widehat{X}_{P,w,x}$ and let $X_{W^+,\cF_{\bullet}}^{w}$ be the subgroupoid of $X_{W^+,\cF_{\bullet}}$ which is the image of $X_{W^+,\cF_{\bullet}}^{\square,w}$ under the forgetful map $X_{W^+,\cF_{\bullet}}^{\square}\rightarrow X_{W^+,\cF_{\bullet}}$. Let $X_{D,\cM_{\bullet}}^{w}=X_{D,\cM_{\bullet}}\times_{X_{W^+,\cF_{\bullet}}}X_{W^+,\cF_{\bullet}}^{w}$ and $X_{D,\cM_{\bullet}}^{\square,w}=X_{D,\cM_{\bullet}}^{\square}\times_{X_{W^+,\cF_{\bullet}}^{\square}}X_{W^+,\cF_{\bullet}}^{\square,w}$. Literally the same proof of \cite[Cor. 3.5.11]{breuil2019local} except that now we do not have the normalness result (but still have the irreducibility of $\Spec(\widehat{\cO}_{X_{P,w},x})$ by Theorem \ref{theoremunibranchYP}) shows that
\begin{corollary}\label{corollaryrepXPxw}
	For any $w\in W/W_P$ such that $x\in X_{P,w}(L)$, the functor $|X_{D,\cM_{\bullet}}^{\square,w}|$ is pro-represented by a complete Noetherian local domain of residue field $L$ and dimension $[K:\Q_p](n^2+\frac{n(n+1)}{2})$ and is formally smooth over $\widehat{X}_{P,w,x}$.
\end{corollary}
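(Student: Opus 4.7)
The plan is to deduce the corollary by base change from Proposition \ref{propositionrepXPx}(2), using Theorem \ref{theoremunibranchYP} as the substitute for the normality input exploited in the regular case in \cite[Cor. 3.5.11]{breuil2019local}. By Proposition \ref{propositionrepXPx}(2), $|X_{D,\cM_\bullet}^\square|$ is pro-represented by a Noetherian formal scheme $\mathcal{Z}$ that is formally smooth over $\widehat{X}_{P,x}$ of relative dimension $d:=[K:\Q_p]\frac{n(n+1)}{2}$. The closed immersion $\widehat{X}_{P,w,x} \hookrightarrow \widehat{X}_{P,x}$ is induced by the surjection $\widehat{\cO}_{X_P,x} \twoheadrightarrow \widehat{\cO}_{X_{P,w},x}$, and by the very definition of $X_{D,\cM_\bullet}^{\square,w}$ as a fiber product, the functor $|X_{D,\cM_\bullet}^{\square,w}|$ is pro-represented by the base change $\mathcal{Z}\times_{\widehat{X}_{P,x}}\widehat{X}_{P,w,x}$, which is formally smooth of relative dimension $d$ over $\widehat{X}_{P,w,x}$.

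It remains to check that the pro-representing ring is a complete Noetherian local domain of the asserted dimension. First, the point $x\in X_P(L)$ lies over the nilpotent element $N_W\in\fg(L)$, since $\nu_W$ is the nilpotent monodromy operator on the almost de Rham module $D_{\pdR}(W)$. Hence Theorem \ref{theoremunibranchYP} applies and yields that $\widehat{\cO}_{X_{P,w},x}$ is irreducible. Moreover, $X_{P,w}$ is reduced by Definition \ref{definitionirreduciblecomponents}, and, being locally of finite type over $L$, it is excellent, so $\widehat{\cO}_{X_{P,w},x}$ is reduced as well. A reduced irreducible Noetherian local ring is a domain, and by Proposition \ref{propositionXPbasic}(1) its Krull dimension equals $\dim X_{P,w}=\dim G=[K:\Q_p]n^2$.

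Finally, a formally smooth morphism of relative dimension $d$ between complete Noetherian local rings with the same residue field $L$ identifies, after choosing a section on cotangent spaces, with the map $R\to R[[t_1,\ldots,t_d]]$; this preserves the property of being a domain and adds $d$ to the Krull dimension. Applied to $\widehat{\cO}_{X_{P,w},x}$ this produces a complete Noetherian local domain of residue field $L$ and dimension $[K:\Q_p]\bigl(n^2+\tfrac{n(n+1)}{2}\bigr)$ pro-representing $|X_{D,\cM_\bullet}^{\square,w}|$, as asserted.

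The main way the argument departs from \cite{breuil2019local} is that in the non-regular Hodge-Tate case we do not have normality of $X_{P,w}$ at our disposal; however the weaker unibranch property furnished by Theorem \ref{theoremunibranchYP} already guarantees irreducibility of $\widehat{\cO}_{X_{P,w},x}$, which is precisely what the base change requires. No further modification of the argument in \cite[Cor. 3.5.11]{breuil2019local} is therefore needed.
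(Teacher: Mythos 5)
Your proposal is correct and follows essentially the same approach the paper indicates: the paper explicitly says Corollary \ref{corollaryrepXPxw} is obtained by "literally the same proof" as \cite[Cor. 3.5.11]{breuil2019local} with the normality input replaced by the irreducibility of $\widehat{\cO}_{X_{P,w},x}$ from Theorem \ref{theoremunibranchYP}, which is precisely the substitution you make. Your supplementary observations — that $x_{\mathrm{pdR}}$ lies over the nilpotent $N_W$ so Theorem \ref{theoremunibranchYP} applies, and that reducedness of $X_{P,w}$ plus excellence passes to the completion so that irreducible $+$ reduced gives a domain — fill in exactly the details the paper leaves implicit.
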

\subsection{The weight map}\label{subsectiontheweightmapthelocalmodel}
We keep the notation in \ref{subsectionrepresentability}. We view the parameter $\underline{\delta}$ of $\cM$ as an $L$-point of the rigid analytic space $\cT^n_L$. The completion of $\cT_L^n$ at the point $\underline{\delta}$ denoted by $\widehat{\cT^n_{\underline{\delta}}}$ pro-represents the functor from $\cC_L$ to deformations of the continuous character $\underline{\delta}: (K^{\times})^n\rightarrow L^{\times}$. Given an object $(A,\cM_A,\cM_{A,\bullet}, j_A)\in X_{\cM,\cM_{\bullet}}$, there exists a unique parameter $\underline{\delta}_{A}\in \cT^n_{L}(A)$ of $\cM_A$ such that $\delta_{A,i}\otimes_AL=\delta_i$ for $i=1,\cdots,n$ by \cite[Lem. 3.3.4]{breuil2019local}, which defines a morphism $\omega_{\underline{\delta}}:X_{\cM,\cM_{\bullet}}\rightarrow \widehat{\cT^n_{\underline{\delta}}}$ of groupoids over $\cC_L$. Recall that $\ft$ is the Lie algebra of $T$. Let $\widehat{\ft}$ be the completion of $\ft$ at $0\in\ft$. Composing the morphism $\mathrm{wt}-\mathrm{wt}(\underline{\delta}):\widehat{\cT^n_{\underline{\delta}}}\rightarrow \widehat{\ft}$ (cf. \cite[(3.16)]{breuil2019local}), we get a morphism $(\mathrm{wt}-\mathrm{wt}(\underline{\delta}))\circ \omega_{\underline{\delta}}:X_{\cM,\cM_{\bullet}}\rightarrow \widehat{\ft}$ of groupoids over $\cC_L$.\par
The map $\kappa=\kappa_B:\tildefg\rightarrow \ft$ (\S\ref{subsectiontheweightmapthevariety}) induces a map $\widehat{\tildefg}\rightarrow \widehat{\ft}$ by completion (since $N_W$ is nilpotent). The map $\kappa_{W,\cF_{\bullet}}:X_{W,\cF_{\bullet}}^{\square}\rightarrow |X_{W,\cF_{\bullet}}^{\square}|\simeq \widehat{\tildefg}\rightarrow\widehat{\ft}$ factors through a map $X_{W,\cF_{\bullet}}\rightarrow \widehat{\ft}$ which is also denoted by $\kappa_{W,\cF_{\bullet}}$. The composition $X_{\cM,\cM_{\bullet}}\stackrel{W_{\mathrm{dR}}(-)}{\rightarrow} X_{W,\cF_{\bullet}}\stackrel{\kappa_{W,\cF_{\bullet}}}{\rightarrow}\widehat{\ft}$ coincides with the morphism $(\mathrm{wt}-\mathrm{wt}(\underline{\delta}))\circ \omega_{\underline{\delta}}$ (\cite[Cor. 3.3.9]{breuil2019local}). Thus there is a morphism of groupoids over $\cC_L$
$$X_{\cM,\cM_{\bullet}}\rightarrow \widehat{\cT^n_{\underline{\delta}}}\times_{\widehat{\ft}}X_{W,\cF_{\bullet}}$$
which is formally smooth by \cite[Thm. 3.4.4]{breuil2019local}. 
\subsection{The trianguline variety}\label{sectiontriangulinevariety}
We now prove our main results on the local models for the trianguline variety.\par
We fix a continuous representation $\overline{r}:\cG_K\rightarrow \GL_{n}(k_L)$ of $\cG_K$. Let $R_{\overline{r}}$ be the usual \emph{framed} Galois deformation ring of $\overline{r}$ which is a complete Noetherian local ring over $\cO_L$ with residue field $k_L$ and let $\mathfrak{X}_{\overline{r}}$ be the rigid analytic space over $L$ associated with $\mathrm{Spf}(R_{\overline{r}})$ (the rigid generic fiber in the sense of Berthelot, cf. \cite[\S7]{de1995crystalline}). Let $\cT^n_{\mathrm{reg}}$ be the Zariski open subset of $\cT_{L}^n$ which consists of points $\underline{\delta}=(\delta_1,\cdots,\delta_n)$ such that $\delta_i/\delta_j\neq z^{-\mathbf{k}},\epsilon z^{\mathbf{k}}$ for all $i\neq j$ and $\mathbf{k}\in\Z_{\geq 0}^{\Sigma}$. The trianguline variety $X_{\mathrm{tri}}(\overline{r})$ defined to be the Zariski closure in $\mathfrak{X}_{\overline{r}}\times \cT_L^{n}$ of the subset $U_{\mathrm{tri}}(\overline{r})=\left\{(r,\underline{\delta})\in \mathfrak{X}_{\overline{r}}\times \cT_{\mathrm{reg}}^{n}\mid r\text{ is trianguline of parameter }\underline{\delta} \right\}\footnote{Here a representation $r:\cG_K\rightarrow \GL_n(L')$ for some finite extension $L'/\Q_p$ is said to be trianguline of character $\underline{\delta}$ means that the $(\varphi,\Gamma_K)$-module $D_{\mathrm{rig}}(r)$ over $\cR_{L',K}$ admits a filtration of sub-$(\varphi,\Gamma_K)$-modules whose graded pieces are isomorphic to $\cR_{L',K}(\delta_1),\cdots, \cR_{L',K}(\delta_n)$.}$ is a reduced rigid space over $L$, equidimensional of dimension $n^{2}+\frac{n(n+1)}{2}[K:\Q_p]$ with a Zariski open dense smooth subset $U_{\mathrm{tri}}(\overline{r})$ (\cite[Thm. 2.6]{breuil2017interpretation}).\par

Assume that $x=(r,\underline{\delta})\in X_{\mathrm{tri}}(\overline{r})\subset \mathfrak{X}_{\overline{r}}\times \cT_{L}^n$ is an $L$-point, then $r$ is trianguline of some parameter $\underline{\delta}'=(\delta_i')_{1\leq i\leq n}$ such that $\delta^{-1}_i\delta'_i$ is an algebraic character of $K^{\times}$ for all $i$ after possibly enlarging the coefficient field (\cite[Thm. 6.3.13]{kedlaya2014cohomology}). Let $D=D_{\mathrm{rig}}(r)$ be the \'etale $(\varphi,\Gamma_K)$-module over $\cR_{L,K}$ associated with $r$ and $\cM=D[\frac{1}{t}]$. We assume that $\underline{\delta}\in\cT_{0}^n$ is locally algebraic, then $\cM$ is equipped with a unique triangulation $\cM_{\bullet}$ of parameter $\underline{\delta}$ (\cite[Prop. 3.7.1]{breuil2019local}) and thus we are in the situation of \S\ref{subsectiongroupoids}. We let $W^{+}=W_{\mathrm{dR}}(D), W=W^{+}[\frac{1}{t}]$ and $\cF_{\bullet}=W_{\mathrm{dR}}(\cM_{\bullet})$ as before. Assume that the $\tau$-Sen weights of $r$ are integers $h_{\tau,n}\geq \cdots \geq h_{\tau,1}$ where each $k_{\tau,s_{\tau}}>\cdots>k_{\tau,1}$ appears in the sequence $(h_{\tau,n},\cdots,h_{\tau,1})$ with multiplicities $m_{\tau,s_{\tau}},\cdots,m_{\tau,1}$ and $m_{\tau,s_{\tau}},+\cdots+m_{\tau,1}=n$ for $\tau\in \Sigma$. The Hodge-Tate weights $(k_{\tau,i})_{1\leq i\leq s_{\tau},\tau\in\Sigma}$ coincide with what we have defined in \S\ref{subsectionrepresentability}. We fix an isomorphism $\alpha:(L\otimes_{\Q_p}K)^n\simrightarrow D_{\mathrm{pdR}}(W)$ and let $P$ be the parabolic subgroup of $G$ defined in \S\ref{subsectionrepresentability} determined by $\mathrm{Fil}_{W^+}^{\bullet}\left(D_{\mathrm{pdR}}(W)\right)$ and $\alpha$.\par
Let $V$ be the representation of $\cG_K$ associated with $r:\cG_K\rightarrow\GL_n(L)$ by forgetting the framing. Then there are groupoids $X_r$ (resp. $X_V$) over $\cC_L$ parameterizing liftings of $r$ (resp. deformations of $V$) (\cite[\S3.6]{breuil2019local}). The functor $D_{\mathrm{rig}}$ induces an equivalence $X_{V}\simrightarrow X_D$ and there is an isomorphism of formal schemes $X_{r}\simeq \widehat{\mathfrak{X}}_{\overline{r},r}$ where $\widehat{\mathfrak{X}}_{\overline{r},r}$ is the completion of $\mathfrak{X}_{\overline{r}}$ at the point $r$ (\cite[Lem. 2.3.3, Prop. 2.3.5]{kisin2009moduli}). Let $\widehat{\trivar}_x$ be the completion of the trianguline variety at the point $x$. Then we get a morphism $\widehat{\trivar}_x\rightarrow \widehat{\mathfrak{X}}_{\overline{r},r}\simeq X_r$ by projection. \par
Let $X_{r,\cM_{\bullet}}:=X_r\times_{X_V}\times X_D\times_{X_D} X_{D,\cM_{\bullet}}$ where the map $X_r\rightarrow X_V$ is forgetting the framing. Recall that the natural map $X_{\cM,\cM_{\bullet}}\rightarrow X_{\cM}$ as well as its base change $X_{r,\cM_{\bullet}}\rightarrow X_{r}$ is a closed immersion since we have assumed $\underline{\delta}\in \cT_0^n$ (\cite[Prop. 3.4.6]{breuil2019local}). 
\begin{proposition}\label{propositiontriangulinevarietyclosedimmersion}
	The morphism $\widehat{\trivar}_x\rightarrow X_r$ factors through a closed immersion $\widehat{\trivar}_x\rightarrow X_{r,\cM_{\bullet}}$.	
\end{proposition}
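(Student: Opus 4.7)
The plan is to mimic the proof of \cite[Cor.~3.7.8]{breuil2019local} in the regular case. Its only regularity-dependent input is the existence and uniqueness of a triangulation lifting $\cM_\bullet$ in the generic situation, but that uniqueness in fact depends only on $\underline{\delta}\in \cT_0^n$ (via \cite[Prop.~3.7.1]{breuil2019local}) and not on the regularity of the Hodge-Tate weights. So the strategy carries over verbatim once that input is granted.

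First I would check that $X_{r,\cM_\bullet}\to X_r$ is a closed immersion. By definition $X_{r,\cM_\bullet} = X_r \times_{X_V} X_D \times_{X_D} X_{D,\cM_\bullet} \simeq X_r \times_{X_\cM} X_{\cM,\cM_\bullet}$ (using the equivalence $X_V \simeq X_D$ and $X_{D,\cM_\bullet} \simeq X_D \times_{X_\cM} X_{\cM,\cM_\bullet}$), and $X_{\cM,\cM_\bullet} \to X_\cM$ is a closed immersion by the application of \cite[Prop.~3.4.6]{breuil2019local} already recalled above. Pairing this closed immersion with the parameter morphism $\omega_{\underline{\delta}}\colon X_{r,\cM_\bullet} \to \widehat{\cT^n_{\underline{\delta}}}$ (from \S\ref{subsectiontheweightmapthelocalmodel}) yields a closed immersion
\[ X_{r,\cM_\bullet} \hookrightarrow X_r \times_L \widehat{\cT^n_{\underline{\delta}}}. \]

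Next I would reduce the proposition to a statement purely about $A$-points. On the other side, $\widehat{\trivar}_x$ is itself a closed sub-formal-scheme of $\widehat{(\mathfrak{X}_{\overline{r}} \times_L \cT_L^n)}_x \simeq X_r \times_L \widehat{\cT^n_{\underline{\delta}}}$, since $X_{\mathrm{tri}}(\overline{r})$ is Zariski closed in $\mathfrak{X}_{\overline{r}} \times_L \cT_L^n$. Thus both $\widehat{\trivar}_x$ and $X_{r,\cM_\bullet}$ are closed sub-formal-schemes of the common ambient formal scheme $X_r \times_L \widehat{\cT^n_{\underline{\delta}}}$. Once I prove that $\widehat{\trivar}_x$ is contained in $X_{r,\cM_\bullet}$ inside this ambient space, the factored morphism $\widehat{\trivar}_x \to X_{r,\cM_\bullet}$ will be automatically a closed immersion, and this is the asserted statement. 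The containment amounts to checking: for every $A \in \cC_L$ and every $A$-point $(r_A,\underline{\delta}_A) \in \widehat{\trivar}_x(A)$, the $(\varphi,\Gamma_K)$-module $\cM_A := D_{\mathrm{rig}}(r_A)[\tfrac{1}{t}]$ admits a (necessarily unique by the generic version of \cite[Prop.~3.7.1]{breuil2019local}) triangulation $\cM_{A,\bullet}$ of parameter $\underline{\delta}_A$ lifting $\cM_\bullet$.

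I expect the main obstacle to be precisely the production of such a universal triangulation over $\widehat{\trivar}_x$. On the Zariski dense open subset $U_{\mathrm{tri}}(\overline{r}) \subset X_{\mathrm{tri}}(\overline{r})$ a triangulation with the prescribed parameter exists tautologically, and the uniqueness provided by the genericity $\underline{\delta} \in \cT_0^n$ rigidifies it. To extend to $\widehat{\trivar}_x$, I would pick an affinoid neighbourhood of $x$ in $X_{\mathrm{tri}}(\overline{r})$, construct the triangulation on the universal $(\varphi,\Gamma_K)$-module over the intersection with $U_{\mathrm{tri}}(\overline{r})$ via rigidity (it is canonical wherever it exists), use the reducedness of $X_{\mathrm{tri}}(\overline{r})$ together with the density of $U_{\mathrm{tri}}(\overline{r})$ to glue/propagate this triangulation to the whole affinoid, and finally complete at $x$ to obtain the desired triangulation over $\widehat{\trivar}_x$. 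Granting this extension step, the factorization is constructed and the proposition follows from the two closed immersions into the common ambient $X_r \times_L \widehat{\cT^n_{\underline{\delta}}}$.
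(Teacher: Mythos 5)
Your framing — two closed subschemes inside the common ambient $X_r\times_L\widehat{\cT^n_{\underline{\delta}}}$, reduce to containment, reduce to producing a triangulation lifting $\cM_\bullet$ for every $A$-point of $\widehat{\trivar}_x$ — is fine and matches the reduction in the paper. The gap is in your final step, where you "use reducedness of $X_{\mathrm{tri}}(\overline{r})$ together with the density of $U_{\mathrm{tri}}(\overline{r})$ to glue/propagate this triangulation to the whole affinoid." This is not a justified move; it is precisely the point in the argument where the real difficulty sits. On $U_{\mathrm{tri}}(\overline{r})$ the filtration exists by fiat, but in general it does \emph{not} extend to an open affinoid neighbourhood of a non-regular boundary point $x$: the filtration degenerates, and the standard phenomenon (and the content of the global triangulation theorem of Kedlaya--Pottharst--Xiao used in the paper, \cite[Cor.\ 6.3.10]{kedlaya2014cohomology}) is that a genuine filtration by sub-$(\varphi,\Gamma_K)$-modules exists only after a proper birational modification $f\colon\widetilde{U}\to U$, and even then the graded pieces only embed into rank-one twists with cokernel killed by $t$. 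Reducedness plus density of $U_{\mathrm{tri}}$ is not enough to extend a subbundle of the universal $(\varphi,\Gamma_K)$-module across the missing locus. Moreover, even granting a filtration on $\widetilde{U}$, you cannot simply "complete at $x$": the fibre $f^{-1}(x)$ consists of points whose residue fields may exceed $L$, so the $A$-points of $\widehat{\trivar}_x$ (with $A\in\cC_L$) do not directly lift along $f$.

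The actual proof has to navigate both of these issues. It establishes the factorization through $R_{r,\cM_\bullet}$ first at each point $x'\in f^{-1}(x)$ on the modified space $\widetilde{U}$, handling the residue-field issue by passing to the subring $\widetilde{A}=A\times_{k(x')}L\in\cC_L$ and showing (by an elementary but careful basis argument on $\cR_{A,K}[\tfrac{1}{t}]$) that the triangulation over $A$ has a model over $\widetilde{A}$. It then has to descend the factorization from $\widehat{\cO}_{\widetilde{U},x'}$ to $\widehat{\cO}_{U,x}$, which uses the Stein factorization of the proper surjection $f$, the theorem on formal functions, and a chase through completions to show that an element of $\ker(R_r\to R_{r,\cM_\bullet})$ with nonzero image in $\widehat{\cO}_{U,x}$ would have nonzero image in some $\widehat{\cO}_{\widetilde{U},x'}$, a contradiction. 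None of this is in your sketch, and no version of "reducedness and density imply the triangulation spreads out" can substitute for it.
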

\begin{proof}
	This is \cite[Prop. 3.7.2]{breuil2019local} (based on \cite[Cor. 6.3.10]{kedlaya2014cohomology}) and \cite[Prop. 3.7.3]{breuil2019local}. The original proof for \cite[Prop. 3.7.2]{breuil2019local} is not complete and we write a proof here. The argument will be needed for Theorem \ref{theoremcyclepartialderham}.\par
	Pick an affinoid neighbourhood $U$ of $x$ in $X_{\mathrm{tri}}(\overline{r})$. Let $D_U$ be the $(\varphi,\Gamma_K)$-module over $\cR_{U,K}$ associated with the universal Galois representation of $\cG_{K}$ pulled back from $\fX_{\overline{r}}$ (cf. \cite[Thm. 2.2.17]{kedlaya2014cohomology}). By \cite[Cor. 6.3.10]{kedlaya2014cohomology}, there is a birational proper morphism $f:\widetilde{U}\rightarrow U$, a filtration of sub-$(\varphi,\Gamma_{K})$-modules $D_{\widetilde{U},\bullet}$ over $\cR_{\widetilde{U},K}$ of $D_{\widetilde{U}}:=f^*D_U$ and invertible sheaves $(\cL_{i})_{i=1,\cdots,n}$ such that $D_{\widetilde{U},0}=0,D_{\widetilde{U},n}=D_{\widetilde{U}}$ and there are inclusions $D_{\widetilde{U},i}/D_{\widetilde{U},i-1}\hookrightarrow \cR_{\widetilde{U},K}(\delta_{\widetilde{U},i})\otimes_{\cO_{\widetilde{U}}}\cL_i$ the cokernels of which are killed by $t$ for $i=1,\cdots,n$ where the characters $\underline{\delta}_{\widetilde{U}}$ is the pullback of the character on $\cT_L^n$ via $\widetilde{U}\rightarrow U\subset X_{\mathrm{tri}}(\overline{r})\rightarrow \cT_L^n$.\par
	Let $R_r$ be the completion of $R_{\overline{r}}[\frac{1}{p}]$ at the maximal ideal corresponding to $r$ so that $\widehat{\fX}_{\overline{r},r}\simeq \Spf(R_r)$ and let $R_{r,\cM_{\bullet}}$ be the quotient of $R_r$ such that $R_{r,\cM_{\bullet}}$ pro-represents $X_{r,\cM_{\bullet}}$. Take an arbitrary point $x'\in f^{-1}(x)$. We firstly prove that the map $R_{r}\rightarrow\widehat{\cO}_{U,x}\rightarrow \widehat{\cO}_{\widetilde{U},x'}$ induced by $\widetilde{U}\rightarrow U\subset X_{\mathrm{tri}}(\overline{r})\rightarrow \fX_{\overline{r}}$ factors through the quotient $R_{r,\cM_{\bullet}}$. Take $A$ a local Artin $L$-algebra with residue field $k(x')$ ($A$ is a $k(x')$-algebra, \cite[\href{https://stacks.math.columbia.edu/tag/0323}{Tag 0323}]{stacks-project}) with a composite $x=\Sp(k(x'))\rightarrow \Sp(A)\rightarrow V$ where $x\rightarrow \Sp(A)$ corresponds to the reduction map $A\rightarrow k(x')$. Then the pullback along the map $\Sp(A)\rightarrow \widetilde{U}$ of $D_{\widetilde{U},\bullet}[\frac{1}{t}]$ gives a triangulation $\cM_{A,\bullet}$ of $D_A[\frac{1}{t}]=D_{\mathrm{rig}}(r_A)[\frac{1}{t}]$ of parameter $\underline{\delta}_{A}=\underline{\delta}_{\widetilde{U}}\otimes_{\cO_{\widetilde{U}}}A$ where $r_A$ is the pullback of the universal Galois representation to $A$ via the map $R_r\rightarrow \widehat{\cO}_{\widetilde{U},x'}\rightarrow A$. Remark that the triangulation $\cM_{A,\bullet}\otimes_{A}k(x')$ of $\cM_A\otimes_Ak(x')$ coincides with ${\cM_{\bullet}\otimes_Lk(x')}$ where $\cM_{\bullet}$ is the unique triangulation on $\cM=D_{\mathrm{rig}}\left(r\right)[\frac{1}{t}]$ of parameter $\underline{\delta}$ obtained by Lemma \cite[Prop. 3.7.1]{breuil2019local} (and by \cite[Thm. 4.4.3]{kedlaya2014cohomology} and \cite[Lem. 3.4.3]{breuil2019local}). Let $\widetilde{A}:=A\times_{k(x')}L\in \cC_L$ be the subring of $A$ consisting of elements whose reduction modulo the maximal ideal $\fm_A$ of $A$ lie in $L$ (cf. \cite[\href{https://stacks.math.columbia.edu/tag/08KG}{Tag 08KG}]{stacks-project}). By writing out some $L$-bases of $\widetilde{A}$ and $A$, we see that the preimage of $\cR_{L,K}[\frac{1}{t}]\subset \cR_{k(x'),K}[\frac{1}{t}]$ under the map $\cR_{A,K}[\frac{1}{t}]\rightarrow\cR_{k(x'),K}[\frac{1}{t}]$ is $\cR_{\widetilde{A},K}[\frac{1}{t}]$. As the reduction of $r_A$ is $r\otimes_L k(x')$ (resp. the reduction of $\delta_{A,i}:\delta_{A}\otimes_Ak(x')$ is $\delta_{i}\otimes_Lk(x')$), $r_A$ (resp. $\delta_{A,i}$) can be defined over $\widetilde{A}$ and we denote the model by $r_{\widetilde{A}}$ (resp. $\delta_{\widetilde{A},i}$) whose reduction modulo $\fm_{\widetilde{A}}$ is $r$ (resp. $\delta_{i}$). Then $D_{\mathrm{rig}}\left(r_A\right)[\frac{1}{t}]=D_{\mathrm{rig}}(r_{\widetilde{A}})[\frac{1}{t}]\otimes_{\widetilde{A}}A$.\par 
	We need to show that the triangulation $\cM_{A,\bullet}$ also has a model $\cM_{\widetilde{A},\bullet}$ over $\cR_{\widetilde{A},K}$. We extend the injection 
	\[\cM_1=\cR_{L,K}(\delta_{1})[\frac{1}{t}]\hookrightarrow D_{\mathrm{rig}}\left(r\right)[\frac{1}{t}]\]
	to an isomorphism $\oplus_{i=1}^n\cR_{L,K}[\frac{1}{t}]\bar{e}_{i}\simrightarrow D_{\mathrm{rig}}\left(r\right)[\frac{1}{t}]$ of $\cR_{L,K}[\frac{1}{t}]$-modules where $\bar{e}_1$ is the image of a generator of the rank one free module $\cR_{L,K}(\delta_{1})[\frac{1}{t}]$. Since the reduction of the injection $\cR_{A,K}(\delta_{A,1})[\frac{1}{t}]\hookrightarrow D_{\mathrm{rig}}\left(r_A\right)[\frac{1}{t}]$ is identified via pull back with ${\cR_{L,K}(\delta_{1})[\frac{1}{t}]\otimes_Lk(x')}\hookrightarrow {D_{\mathrm{rig}}\left(r\right)[\frac{1}{t}]\otimes_Lk(x')}$, we can pick a lift $e_1\in D_{\mathrm{rig}}\left(r_A\right)[\frac{1}{t}]$ of $\bar{e}_1\otimes_L 1$ generating the image of the first injection. We can extend the injection to an isomorphism $\cR_{A,K}[\frac{1}{t}]^{n}\simeq D_{\mathrm{rig}}\left(r_A\right)[\frac{1}{t}]$ of $\cR_{A,K}[\frac{1}{t}]$-modules and we may assume, after changing basis given by a matrix in $\GL_{n-1}(\cR_{A,K}[\frac{1}{t}])$, the reduction of the extended basis $e_2,\cdots, e_n$ is equal to $\bar{e}_2\otimes_L 1, \cdots, \bar{e}_n\otimes_L 1$ since we have a surjection $\GL_{n-1}\left(\cR_{A,K}[\frac{1}{t}]\right)\twoheadrightarrow \GL_{n-1}\left(\cR_{k(x'),K}[\frac{1}{t}]\right)$. We also take a basis $\widetilde{e}_1,\cdots,\widetilde{e}_n$ of $D_{\mathrm{rig}}(r_{\widetilde{A}})[\frac{1}{t}]$ such that the reduction modulo $\fm_{\widetilde{A}}$ of $\widetilde{e}_1,\cdots,\widetilde{e}_n$ is equal to $\overline{e}_1,\cdots,\overline{e}_n$. The translation matrix $M$ between the basis $\widetilde{e}_1\otimes_{\widetilde{A}}1,\cdots,\widetilde{e}_n\otimes_{\widetilde{A}}1$ and $e_1,\cdots,e_n$ of ${D_{\mathrm{rig}}\left(r_A\right)[\frac{1}{t}]}$ has trivial reduction modulo $\fm_A$. In particular, $M\in\GL_n(\cR_{\widetilde{A},K}[\frac{1}{t}])$. This means that we can choose $\widetilde{e}_i$ such that $e_i=\widetilde{e}_i\otimes_{\widetilde{A}}1$. Then we see the element $\widetilde{e}_1$ defines an injection $\cR_{\widetilde{A},K}(\delta_{\widetilde{A},1})[\frac{1}{t}]\hookrightarrow {D_{\mathrm{rig}}(r_{\widetilde{A}})[\frac{1}{t}]}$ with the quotient a $(\varphi,\Gamma_{K})$-module over $\cR_{\widetilde{A},K}[\frac{1}{t}]$. Applying the same argument on the quotient and by induction, we see that ${D_{\mathrm{rig}}(r_{\widetilde{A}})[\frac{1}{t}]}$ admits a filtration $\cM_{\widetilde{A},\bullet}$ such that $\cM_{\widetilde{A},\bullet}\otimes_{\widetilde{A}}A=\cM_{A,\bullet}$ and $\cM_{\widetilde{A},\bullet}\otimes_{\widetilde{A}}L=\cM_{\bullet}$. Let $\widetilde{\cO}_{\widetilde{U},x'}$ be the subring $\widehat{\cO}_{\widetilde{U},x'}\times_{k(x')}L$ of $\widehat{\cO}_{\widetilde{U},x'}$. The composite map $R_{r}\rightarrow \widehat{\cO}_{U,x}\rightarrow \widehat{\cO}_{\widetilde{U},x'}\rightarrow A$ factors through $R_{r}\rightarrow \widetilde{\cO}_{\widetilde{U},x'}\rightarrow \widetilde{A}$ which gives rise the deformation $r_{\widetilde{A}}$ of $r$, i.e. an object $(\widetilde{A},r_{\widetilde{A}})\in X_r$. The discussion above shows that $r_{\widetilde{A}}$ admits a triangulation $\cM_{\widetilde{A},\bullet}$ on $D_{\mathrm{rig}}(r_{\widetilde{A}})[\frac{1}{t}]$ whose reduction modulo $\fm_{\widetilde{A}}$ is $\cM_{\bullet}$ and defines an object in $X_{r,\cM_{\bullet}}$. Hence the morphism $R_{r}\rightarrow \widetilde{A}$, as well as $R_{r}\rightarrow \widetilde{A}\rightarrow A$, factors through $R_{r,\cM_{\bullet}}$. This implies that the morphism $R_{r}\rightarrow \widehat{\cO}_{\widetilde{U},x'}$ factors through the quotient $R_{r,\cM_{\bullet}}$.\par
	We now prove that the map $R_{r}\rightarrow \widehat{\cO}_{U,x}$ also factors though $R_{r,\cM_{\bullet}}$. Otherwise assume there exists a non-zero element $a$ in the kernel of $R_{r}\rightarrow R_{r,\cM_{\bullet}}$ such that the image of $a$ in $\widehat{\cO}_{U,x}$ is not zero. As the morphism $f:\widetilde{U}\rightarrow U$ is proper and surjective, there is the Stein decomposition $f:\widetilde{U}\stackrel{f'}{\rightarrow} Z\stackrel{g}{\rightarrow} U$ such that $g$ is a finite surjective morphism, $f'$ is a surjective proper morphism and $\cO_Z\simrightarrow f'_*\cO_{\widetilde{U}}$ (\cite[Prop. 9.6.3/5]{bosch1984non}). Then if we write $U=\Sp(A),Z=\Sp(B)$, the map $A\rightarrow B$ induced by $g$ is a finite injection (since $U$ is reduced and $g$ is a surjection). Let $\fm$ be the maximal ideal of $A$ corresponding to the point $x$ and let $\fn_1,\cdots,\fn_m$ be the maximal ideals of $B$ above $\fm$. Then there is an injection $\widehat{A}_{\fm}\hookrightarrow B\otimes_A\widehat{A}_{\fm}\simeq \oplus_{i=1}^m \widehat{B}_{\fn_i}$ (\cite[\href{https://stacks.math.columbia.edu/tag/07N9}{Tag 07N9}]{stacks-project}) where $\widehat{A}_{\fm}$ (resp. $\widehat{B}_{\fn_i}$) denotes the completion of $A$ (resp. $B$) with respect to the ideal $\fm$ (resp. $\fn_i$). Since the image of $a$ in $\widehat{A}_{\fm}$ is not zero, the image of $a$ in one of $\widehat{B}_{\fn_i}$ is not zero, which we may assume to be $\widehat{B}_{\fn_1}$. Let $z$ be the point on $Z$ corresponding to $\fn_1$. By the theorem on formal functions (\cite[Thm. 9.6.3/2]{bosch1984non}), $(f'_*\cO_{\widetilde{U}})_{z}^{\wedge}\simrightarrow \varprojlim_s (\cO_{\widetilde{U}}/\fn_1^s)(\widetilde{U})$. Hence $\widehat{B}_{\fn_1}\simrightarrow \varprojlim_s (\cO_{\widetilde{U}}/\fn_1^s)(\widetilde{U})$. Thus the image of $a$ in $(\cO_{\widetilde{U}}/\fn_1^s)(\widetilde{U})$ is not zero for some $s$. It turns out that there exists $x'\in (f')^{-1} (z)$ such that the image of $a$ in $\cO_{\widetilde{U},x'}/\fn_1^s$ is not zero (\cite[Cor. 9.4.2/7]{bosch1984non} and \cite[Prop. 9.5.3/1(iii)]{bosch1984non}) and hence the image of $a$ in $\widehat{C}_{\fn_1}:=\varprojlim_s \cO_{\widetilde{U},x'}/\fn_1^s$ is not zero where $C:=\cO_{\widetilde{U},x'}$. The completion of $\widehat{C}_{\fn_1}$ with respect to the maximal ideal $\fm_{x'}\widehat{C}_{\fn_1}$ is then equal to (\cite[Prop. 10.12, 10.13]{atiyah1969introduction} and that $\cO_{\widetilde{U},x'}/\fm_{x'}^s\cO_{\widetilde{U},x'}$ is $\fn_1$-adically complete for any $s$) 
	\[ \varprojlim_s\widehat{C}_{\fn_1}/\fm_{x'}^s\widehat{C}_{\fn_1}=\varprojlim_sC/\fm_{x'}^sC=\varprojlim_s \cO_{\widetilde{U},x'}/\fm_{x'}^s\cO_{\widetilde{U},x'}=\widehat{\cO}_{\widetilde{U},x'}.\]
	Hence the morphism $\widehat{C}_{\fn_1}\rightarrow \widehat{\cO}_{\widetilde{U},x'}$ is the completion of the Noetherian local ring $\widehat{C}_{\fn_1}$ (\cite[Thm. 10.26]{atiyah1969introduction}) with respect to the maximal ideal, thus is injective (\cite[Cor. 10.19]{atiyah1969introduction}). We conclude that $a$ is sent to a non-zero element in $\widehat{\cO}_{\widetilde{U},x'}$. This contradicts that the morphism $R_{r}\rightarrow \widehat{\cO}_{\widetilde{U},x'}$ factors through the quotient $R_{r,\cM_{\bullet}}$ which we just proved! Thus we get the conclusion.
\end{proof}
We fix an isomorphism $W\simeq (\mathcal{S}_n)^{\Sigma}$ by identifying $n$-tuples $(a_{\tau,1},\cdots,a_{\tau,n})_{\tau\in\Sigma}\in (\Z^{n})^{\Sigma}$ with the diagonal matrix $\prod_{\tau\in\Sigma}\mathrm{diag}(a_{\tau,1},\cdots,a_{\tau,n})\in\fg$. By \cite[Prop. 2.9]{breuil2017interpretation} (and the proof of \cite[Lem. 3.7.6]{breuil2019local}), the multisets of Sen weights of $V$ are exactly the multisets $\left\{\mathrm{wt}_{\tau}(\delta_{i})\mid i=1,\cdots,n\right\},\tau\in\Sigma$. Hence there exists a unique $w=(w_{\tau})_{\tau\in\Sigma}\in W/W_P$ such that \[\left(h_{\tau,1},\cdots,h_{\tau,n}\right)=w_{\tau}^{-1}\left(\mathrm{wt}_{\tau}(\delta_1),\cdots,\mathrm{wt}_{\tau}(\delta_n)\right)=\left(\mathrm{wt}_{\tau}(\delta_{w_{\tau}(1)}),\cdots,\mathrm{wt}_{\tau}(\delta_{w_{\tau}(n)})\right)\] 
for any $\tau\in\Sigma$. We denote by $w\in W/W_P$ the element associated with the point $x$ in this way.\par

Recall that there is a morphism $(\kappa_{1},\kappa_{2}):X_{P}\rightarrow T_P=\ft\times_{\ft/W}\ft/W_P$ in \S\ref{subsectiontheweightmapthevariety} which induces a morphism $X_{D,\cM_{\bullet}}^{\square}\rightarrow X_{W^+,\cF_{\bullet}}^{\square}\rightarrow |X_{W^+,\cF_{\bullet}}^{\square}|\simrightarrow \widehat{X}_{P,x_{\mathrm{pdR}}}\rightarrow \widehat{T}_{P,(0,0)}$ where $x_{\mathrm{pdR}}$ is the point in $X_{P}(L)$ associated with the point $x\in \trivar(L)$ together with the fixed framing $\alpha$ as in Proposition \ref{propositionrepXPx}. The morphism $X_{D,\cM_{\bullet}}^{\square}\rightarrow \widehat{T}_{P,(0,0)}$ factors through a morphism $X_{D,\cM_{\bullet}}\rightarrow \widehat{T}_{P,(0,0)}$ (see the end of \cite[\S3.5]{breuil2019local}).
We let $\Theta_x:\widehat{X_{\mathrm{tri}}}(\overline{r})_x\rightarrow \widehat{T}_{P,(0,0)}$ be the composite map $\widehat{\trivar}_x\hookrightarrow X_{r,\cM_{\bullet}}\rightarrow X_{V,\cM_{\bullet}}\simeq X_{D,\cM_{\bullet}}\rightarrow \widehat{T}_{P,(0,0)}$. Recall that there are closed formal subschemes $\widehat{T}_{P,w',(0,0)}$ of $\widehat{T}_{P,(0,0)}$ for $w'\in W/W_P$.
\begin{proposition}\label{propositionfactorsthroughTPw}
	The morphism $\Theta_{x}$ factors through $\widehat{T}_{P,w,(0,0)}\hookrightarrow \widehat{T}_{P,(0,0)}$.
\end{proposition}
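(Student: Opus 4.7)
The plan is to factor $\Theta_x$ through the irreducible component $\widehat{X}_{P, w, x_{\pdR}}$ and then invoke Lemma \ref{lemmafactorthroughXPw}. By construction, after base change with a framing, $\Theta_x$ lifts to a composite
\begin{equation*}
    \widehat{\trivar}_x^{\square} \to X_{D, \cM_{\bullet}}^\square \to X_{W^+, \cF_{\bullet}}^\square \to |X_{W^+, \cF_{\bullet}}^\square| \simeq \widehat{X}_{P, x_{\pdR}} \to \widehat{T}_{P, (0,0)},
\end{equation*}
so by Lemma \ref{lemmafactorthroughXPw} it suffices to show that the image in $\widehat{X}_{P, x_{\pdR}}$ of $\widehat{\trivar}_x^{\square}$ lies in the closed subscheme $\widehat{X}_{P, w, x_{\pdR}}$.

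To verify this, I would use local constancy of Schubert strata under Artinian deformation. An Artinian $A$-point of $X_{D, \cM_{\bullet}}^\square$ above $x$ is a deformation $(\cM_A, \cM_{A, \bullet}, W_A^+, \alpha_A)$, from which one obtains via $W_{\mathrm{dR}}$ and $D_{\pdR}$ a complete flag $\cD_{A, \bullet} := D_{\pdR}(W_{\mathrm{dR}}(\cM_{A, \bullet}))$ on $D_{\pdR}(W_A)$ whose $\tau$-graded pieces have weight $\wt_\tau(\delta_{A, i})$ (the graded pieces are de Rham by \cite[Lem. 3.3.6]{breuil2019local} thanks to the genericity hypothesis $\underline{\delta} \in \cT_0^n$), while $W_A^+$ gives a coarser partial flag $\Fil_{W_A^+}^\bullet$ of the prescribed dimensions $(m_{\tau, i})$. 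Using $\alpha_A$, the pair $(\cD_{A, \bullet}, \Fil_{W_A^+}^\bullet)$ determines a morphism $\Spec(A) \to G/B \times G/P$ whose closed point lands in the Schubert cell $U_{P, w}$: this is precisely the content of the definition of $w$, since the equality $(h_{\tau, 1}, \ldots, h_{\tau, n}) = (\wt_\tau(\delta_{w_\tau(1)}), \ldots, \wt_\tau(\delta_{w_\tau(n)}))$ encodes the relative position of the two flags at $x_{\pdR}$. Since $A$ is Artinian and the Schubert cells $\{U_{P, w'}\}_{w' \in W/W_P}$ partition $G/B \times G/P$, the entire morphism stays in $U_{P, w}$; hence the $A$-point of $\widehat{X}_{P, x_{\pdR}}$ lies in $V_{P, w}$, and therefore in $X_{P, w}$, as desired.

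I expect the main subtlety to lie in the closed-point identification of the relative position: one must unwind the combinatorial definition of $w$ (a reordering of the parameter weights placing them in the order prescribed by the Hodge partial flag) and match it against the scheme-theoretic Schubert incidence, i.e., verify that $\Fil_{W^+}^{\bullet}$ intersects $\cD_\bullet$ in the subspaces picked out by $w_\tau$ in each $\tau$-component (as in the definition of $w$ from the ordering of Sen weights preceding the proposition). Once this matching is in place, the partition of $G/B \times G/P$ by Schubert cells gives the Artinian local constancy for free, and the conclusion then follows formally from Lemma \ref{lemmafactorthroughXPw}.
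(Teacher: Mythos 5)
The key step in your proposal fails, and the failure is twofold. You assert that the closed point $x_{\pdR}$ lies in the Schubert stratum $V_{P,w}$ and that an Artinian thickening $\Spec(A) \to \widehat{X}_{P,x_{\pdR}}$ must stay in that stratum. Neither claim is correct. First, the element $w$ introduced in \S\ref{sectiontriangulinevariety} is defined by sorting the list $(\wt_\tau(\delta_i))_i$ to match the monotone list $(h_{\tau,1},\ldots,h_{\tau,n})$; it does \emph{not} encode the relative position of the flags $\cD_\bullet$ and $\Fil_{W^+}^\bullet$. The relative position is the distinct element $w_x$ (the unique class with $x_{\pdR} \in V_{P,w_x}$, introduced just before Theorem \ref{theoremirreducibletriangullinevariety}), and the paper only proves $w_x \leq w$; equality fails at every companion point other than the one of minimal weight for the given triangulation, so in general $x_{\pdR} \notin V_{P,w}$. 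Second, even after replacing $w$ by $w_x$, the ``local constancy of Schubert strata under Artinian deformation'' you invoke is false unless the cell is open, because Schubert cells are only locally closed: a nontrivial tangent vector at the $B$-fixed point of $\mathbb{P}^1$ already escapes the point cell $U_e$. Thus the Schubert argument gives no control at all when $w_x \neq w_0 W_P$, and your reduction collapses at the outset.

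The paper's argument is a direct algebraic identity at the level of the weight maps $\kappa_1,\kappa_2$ of \S\ref{subsectiontheweightmapthevariety}, and it never passes through the Schubert stratification of $X_P$. One computes the Sen polynomial of $W_A^+/tW_A^+$ in two ways: via the characteristic polynomials of the graded operators $\gr^{\tau,-k_{\tau,i}}(\nu_{W_A})$ on the graded pieces of $\Fil_{W_A^+}^\bullet$, which packages into $\kappa_2(x_{A,\pdR})$, and via the parameters $\delta_{A,i}$, which recovers $\kappa_1(x_{A,\pdR})$ by (\ref{equationfactorskappa1}). The resulting product identity in $A[Y]$ is then split into a unique factor-by-factor matching: since the $k_{\tau,i}$ are pairwise distinct and each $\wt_\tau(\delta_{A,j})-\wt_\tau(\delta_j)$ is in $\fm_A$, Lemma \ref{lemmapolynomial} forces the grouping of linear factors according to their residues modulo $\fm_A$, and this yields $\kappa_2(x_{A,\pdR}) = w^{-1}(\kappa_1(x_{A,\pdR}))$ in $(\ft/W_P)(A)$, i.e.\ the $A$-point of $T_P$ lands in $T_{P,w}$. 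The identification of which irreducible component $X_{P,w'}$ the completed trianguline variety lands on is obtained only afterward, in Corollary \ref{corollarylocalmodelisomorphism}, using the present proposition together with Lemma \ref{lemmafactorthroughXPw} --- exactly the reverse of your intended order, so even if your Schubert step were salvageable it would be circular.
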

\begin{proof}
	Assume that $x_A:\mathrm{Spf}(A)\rightarrow \widehat{\trivar}_x$ is an $A$-point for some $A\in\cC_L$. Via the morphism  $\widehat{\trivar}_x\rightarrow X_{D,\cM_{\bullet}}\rightarrow X_{W^+,\cF_{\bullet}}$, the point $x_A$ is associated with a $\BdR^+\otimes_{\Q_p} A$-representation $W_A^+$ of $\cG_K$, a full filtration $\cF_{A,\bullet}$ of $W_A=W_A^+[\frac{1}{t}]$ and a parameter $\underline{\delta}_A=(\delta_{A,1},\cdots,\delta_{A,n})$ of the associated $(\varphi,\Gamma_K)$-module $\cM_A$ over $\cR_{A,K}[\frac{1}{t}]$, which is a deformation of the datum $(W^+,\cF_{\bullet},\underline{\delta})$. We can choose a framing $\alpha_A:(A\otimes_{\Q_p}K)^{n}\simrightarrow D_{\mathrm{pdR}}(W_A)$ such that $\alpha_A$ modulo $\fm_A$ coincides with $\alpha$. Let $x_{A,\mathrm{pdR}}$ be the point
	\[\left(\alpha_A^{-1}\left(D_{\mathrm{pdR}}(\cF_{A,\bullet})\right),\alpha_A^{-1}\left(\left(\mathrm{Fil}_{W_A^+}^{-k_{\tau,\bullet}}(D_{\pdR,\tau}(W_A))\right)_{\tau\in \Sigma}\right),N_{W_A}=\alpha^{-1}_A\circ \nu_{W_A}\circ\alpha_A\right)\in \widehat{X}_{P,x_{\mathrm{pdR}}}(A)\] 
	corresponding to the element in $X_{W^+,\cF_{\bullet}}(A)$ given by $x_A$. Then $\Theta_x$ sends $x_A$ to 
	\[\left(\kappa_1(x_{A,\mathrm{pdR}}),\kappa_2(x_{A,\mathrm{pdR}})\right)\in (\ft\times _{\ft/W}\ft/W_P) (A)\] 
	whose reduction module $\mathfrak{m}_A$ is $(0,0)$ where $\kappa_1,\kappa_2$ are defined in \S\ref{subsectiontheweightmapthevariety}. \par
	Explicitly, $\kappa_1(x_{A,\mathrm{pdR}})\in\ft(A)$ equals to
	\begin{equation}\label{equationfactorskappa1}
		(\nu_{W_{\mathrm{dR}}\left(\cR_{A,K}(\delta_{A,1})[\frac{1}{t}]\right)},\cdots, \nu_{W_{\mathrm{dR}}\left(\cR_{A,K}(\delta_{A,n})[\frac{1}{t}]\right)})=\left(\mathrm{wt}(\delta_{A,1})-\mathrm{wt}(\delta_1),\cdots,\mathrm{wt}(\delta_{A,n})-\mathrm{wt}(\delta_n)\right)
	\end{equation}
	by \cite[Cor. 3.3.9]{breuil2019local} ($\kappa_1$ and $\kappa_{W,\cF_{\bullet}}$ in \S\ref{subsectiontheweightmapthelocalmodel} are both defined using $\kappa_B:\tildefg\rightarrow\ft$).\par
	For each $\tau\in\Sigma, i\in\Z$, we let $ \mathrm{gr}^{\tau,i}(\nu_{W_A})$ be the restriction of $\nu_{W_A}$ on $\gr^{i}_{\Fil_{W_A^+}^{\bullet}}(D_{\pdR,\tau}(W_A))$ (see \S\ref{subsectionrepresentability} for notation). Then \[\kappa_2(x_{A,\mathrm{pdR}})\in\ft/W_P(A)=\prod_{\tau\in\Sigma}\ft_{\tau}/W_{P_{\tau}}(A)=\prod_{\tau\in\Sigma}\prod_{i=1}^{s_{\tau}} \ft_{\tau,i}/W_{M_{\tau,i}}(A)\] 
	is (to simplify the notation we omit the identification $\alpha_A$)
	\begin{equation}\label{equationfactorskappa2}
		\left(\gamma_{M_{\tau,1}} \left(\mathrm{gr}^{\tau,-k_{\tau,1}}(\nu_{W_A})\right),\cdots,\gamma_{M_{\tau,s_{\tau}}} \left(\mathrm{gr}^{\tau,-k_{\tau,s_{\tau}}}(\nu_{W_A})\right)\right)_{\tau\in\Sigma}
	\end{equation}
	where we decompose the Levi subalgebra $\fm_{P_{\tau}}=\oplus_{i=1}^{s_{\tau}}\fm_{\tau,i}$ of $\fp_{\tau}$, where $\fp_{\tau}$ denotes the Lie algebra of $P_{\tau}$, according to the projections to the graded pieces of the filtrations, $M_{\tau,i}$ is the subgroup associated with $\fm_{\tau,i}$, $\ft_{\tau,i}=\ft_{\tau}\cap \fm_{\tau,i}$ and we identify every $\mathrm{gr}^{\tau,-k_{\tau,i}}(\nu_{W_A})$ as an element in $\fm_{\tau,i}$. The map $\gamma_{M_{\tau,i}}:\fm_{\tau,i}(A)\rightarrow (\ft_{\tau,i}/W_{M_{\tau,i}})(A)$ defined in \S\ref{subsectiontheweightmapthevariety} is no more than sending a matrix to (the coefficients of) its characteristic polynomial. \par
	We need to prove that the $\Theta_x(x_A)\in \widehat{T}_{P,w,(0,0)}(A)$. By the definition of $ T_{P,w}$ in \S\ref{subsectiontheweightmapthevariety}, we only need to verify the following equality
	\begin{equation}\label{equationkappa1kappa2}
		\kappa_2(x_{A,x_{\mathrm{pdR}}})=w^{-1}(\kappa_{1}(x_{A,x_{\mathrm{pdR}}}))
	\end{equation}
	in $\ft/W_P(A)$. The strategy is as in \cite[Lem. 3.7.4]{breuil2019local}. We will compare two factorizations of the Sen polynomial of $W_A^+/tW_A^+$. The first factorization (\ref{equationfactorsthroughsen1}) will be related to the Hodge filtrations and the second one (\ref{equationfactorsthroughsen2}) will be given by the trianguline filtrations.\par
	First, by \cite[Lem. 3.7.5]{breuil2019local}, the Sen polynomial of $W_A^+/tW_A^+$ in 
	\[A\otimes_{\Q_p}K[Y]=\prod_{\tau\in\Sigma}(A\otimes_{\Q_p}K)\otimes_{A\otimes_{\Q_p}K, 1\otimes \tau} A[Y]=\prod_{\tau\in\Sigma} A[Y]\]
	is equal to 
	\begin{align}\label{equationfactorsthroughsen1}
		&\prod_{i\in\Z}\mathrm{det}\left( Y\mathrm{Id}+i\mathrm{Id}-\mathrm{gr}^{i}(\nu_{W_A}) \mid \mathrm{gr}^{i}_{\Fil^{\bullet}_{W_A^+}}\left(D_{\pdR}(W_A)\right) \right) \\
		=&\prod_{\tau\in\Sigma}\prod_{i=1}^{s_{\tau}}\mathrm{det}\left(Y\mathrm{Id}-k_{\tau,i}\mathrm{Id}-\mathrm{gr}^{\tau,-k_{\tau,i}}(\nu_{W_A})\mid \mathrm{gr}^{-k_{\tau,i}}_{\Fil^{\bullet}_{W_A^+}}(D_{\pdR,\tau}(W_A))\right).\nonumber
	\end{align}
	On the other hand, by \cite[Lem. 3.7.6]{breuil2019local}, the Sen polynomial is 
	\begin{align}\label{equationfactorsthroughsen2}
		\prod_{i=1}^n\left(Y-\mathrm{wt}(\delta_{A,i})\right)&=\prod_{\tau\in\Sigma}\prod_{i=1}^n\left(Y-\mathrm{wt}_{\tau}(\delta_i)-(\mathrm{wt}_{\tau}(\delta_{A,i})-\mathrm{wt}_{\tau}(\delta_i))\right).
	\end{align}
	Comparing (\ref{equationfactorsthroughsen1}) and (\ref{equationfactorsthroughsen2}), we get an equality in $A[Y]$ of the $\tau$-Sen polynomial of $W_A^+/tW_A^+$ for each $\tau\in\Sigma$:
	\begin{align}\label{equationfactorskappa}
		&\prod_{i=1}^n(Y-\mathrm{wt}_{\tau}(\delta_i)-(\mathrm{wt}_{\tau}(\delta_{A,i})-\mathrm{wt}_{\tau}(\delta_i)))\\
		=&\prod_{i=1}^{s_{\tau}}\mathrm{det}\left(Y\mathrm{Id}-k_{\tau,i}\mathrm{Id}-\mathrm{gr}^{\tau,-k_{\tau,i}}(\nu_{W_A})\mid \mathrm{gr}^{-k_{\tau,i}}_{\Fil^{\bullet}_{W_A^+}}\left(D_{\pdR,\tau}(W_A)\right)\right).\nonumber
	\end{align}
	Modulo $\fm_A$, the right hand side of (\ref{equationfactorskappa}) calculates the $\tau$-Sen polynomial of $W^+/tW^+$ in $L[Y]$ in the usual way using the Hodge-Tate weights:
	\begin{align*}
		&\prod_{i=1}^{s_{\tau}}\mathrm{det}\left(Y\mathrm{Id}-k_{\tau,i}\mathrm{Id}-\mathrm{gr}^{\tau,-k_{\tau,i}}(\nu_{W_A})\mid \mathrm{gr}^{-k_{\tau,i}}_{\Fil^{\bullet}_{W_A^+}}(D_{\pdR,\tau}(W_A))\right)\\
		\equiv &\prod_{i=1}^{s_{\tau}}\mathrm{det}\left(Y\mathrm{Id}-k_{\tau,i}\mathrm{Id}-\mathrm{gr}^{\tau,-k_{\tau,i}}(\nu_{W})\mid \mathrm{gr}^{-k_{\tau,i}}_{\Fil^{\bullet}_{W^+}}(D_{\pdR,\tau}(W))\right) \nonumber\\
		=&\prod_{i=1}^{s_{\tau}}(Y-k_{\tau,i})^{m_{\tau,i}}\nonumber
	\end{align*}
	where the last equality uses the fact that $\mathrm{gr}^{\tau,-k_{\tau,i}}(\nu_{W})\in \fm_{\tau,i}(L)$ is nilpotent (since $\nu_W$ is nilpotent). Actually, for each $i$,
	\begin{equation}\label{equationfactorsmoduo}
		\mathrm{det}\left(Y\mathrm{Id}-k_{\tau,i}\mathrm{Id}-\mathrm{gr}^{\tau,-k_{\tau,i}}(\nu_{W_A})\mid \mathrm{gr}^{-k_{\tau,i}}_{\Fil^{\bullet}_{W_A^+}}(D_{\pdR,\tau}(W_A))\right)\equiv (Y-k_{\tau,i})^{m_{\tau,i}} \mod \fm_A.
	\end{equation}
	As we have assumed that $k_{\tau,i}\neq k_{\tau,j}$ if $i\neq j$ and each $\mathrm{wt}_{\tau}(\delta_{A,j})-\mathrm{wt}_{\tau}(\delta_j)$ is in $\fm_A$, we get that $\mathrm{wt}_{\tau}(\delta_j)+(\mathrm{wt}_{\tau}(\delta_{A,j})-\mathrm{wt}_{\tau}(\delta_j))-k_{\tau,i}\in\fm_A$ if and only if $\mathrm{wt}_{\tau}(\delta_{j})=k_{\tau,i}$. Apply Lemma \ref{lemmapolynomial} below (where we take $k_i=k_{\tau,i},m_{i}=m_{\tau,i}$ and $a_{i,1},\cdots,a_{i,m_{i}}$ are those $\mathrm{wt}_{\tau}(\delta_j)+(\mathrm{wt}_{\tau}(\delta_{A,j})-\mathrm{wt}_{\tau}(\delta_j))$ such that $\mathrm{wt}_{\tau}(\delta_j)=k_{\tau,i}$) for (\ref{equationfactorskappa}) and using (\ref{equationfactorsmoduo}), we get 
	\begin{align*}
		&\prod_{\mathrm{wt}_{\tau}(\delta_{j})=k_{\tau,i}}\left(Y-\mathrm{wt}_{\tau}(\delta_j)-(\mathrm{wt}_{\tau}(\delta_{A,j})-\mathrm{wt}_{\tau}(\delta_j))\right)\\
		=&\mathrm{det}\left(Y\mathrm{Id}-k_{\tau,i}\mathrm{Id}-\mathrm{gr}^{\tau,-k_{\tau,i}}(\nu_{W_A})\mid \mathrm{gr}^{-k_{\tau,i}}_{\Fil^{\bullet}_{W_A^+}}(D_{\pdR,\tau}(W_A))\right).
	\end{align*}
	Replace $Y$ with $Y+k_{\tau,i}$ in the above identity, we get 
	\begin{equation}\label{equationfactoridentity}
		\prod_{\mathrm{wt}_{\tau}(\delta_{j})=k_{\tau,i}}(Y-(\mathrm{wt}_{\tau}(\delta_{A,j})-\mathrm{wt}_{\tau}(\delta_j)))=\mathrm{det}\left(Y\mathrm{Id}-\mathrm{gr}^{\tau,-k_{\tau,i}}(\nu_{W_A})\mid \mathrm{gr}^{-k_{\tau,i}}_{\Fil^{\bullet}_{W_A^+}}(D_{\pdR,\tau}(W_A))\right)
	\end{equation}
	for any $\tau\in\Sigma, i=1,\cdots,s_{\tau}$. \par
	In the following, we verify that (\ref{equationfactoridentity}) above for all $\tau$ and $i$ implies (and is equivalent to) the equality (\ref{equationkappa1kappa2}) which we want to prove.\par 
	Fix an arbitrary lift $w=(w_{\tau})_{\tau\in\Sigma}$ in $W$ with the same notation for $w\in W/W_P$. The $\tau$-part of $w^{-1}(\kappa_{1}(x_{A,x_{\mathrm{pdR}}}))$ is (by (\ref{equationfactorskappa1}))
	\begin{align*}
		&w_{\tau}^{-1}(\mathrm{wt}\left(\delta_{A,1})-\mathrm{wt}(\delta_1),\cdots,\mathrm{wt}(\delta_{A,n})-\mathrm{wt}(\delta_n)\right)\\
		=&\left(\mathrm{wt}(\delta_{A,w_{\tau}(1)})-\mathrm{wt}(\delta_{w_{\tau}(1)}),\cdots,\mathrm{wt}(\delta_{A,w_{\tau}(n)})-\mathrm{wt}(\delta_{w_{\tau}(n)})\right)	
	\end{align*} 
	whose image in $\ft_{\tau}/W_{P_{\tau}}(A)=\ft_{\tau,1}/W_{M_{\tau,1}}(A)\times\cdots \ft_{\tau,s_{\tau}}/W_{M_{\tau,s_{\tau}}}(A)$ is (we use characteristic polynomials to denote the image of an element of $\ft_{\tau,i}(A)$ in $\ft_{\tau,i}/W_{M_{\tau,i}}(A)$)
	\begin{equation*}
		\left(\prod_{i=1}^{m_{\tau,1}}\left(Y-\left(\mathrm{wt}_{\tau}(\delta_{A,w_{\tau}(i)})-\mathrm{wt}_{\tau}(\delta_{w_{\tau}(i)})\right)\right),\cdots, \prod_{i=n-m_{\tau,s_{\tau}}+1}^n\left(Y-\left(\mathrm{wt}_{\tau}(\delta_{A,w_{\tau}(i)})-\mathrm{wt}_{\tau}(\delta_{w_{\tau}(i)})\right)\right) \right).
	\end{equation*}
	Recall $w$ is chosen so that
	\[\left(\mathrm{wt}_{\tau}(\delta_{w_{\tau}(1)}),\cdots,\mathrm{wt}_{\tau}(\delta_{w_{\tau}(n)})\right)=(h_{\tau,1},\cdots,h_{\tau,n})=(\underbrace{k_{\tau,1},\cdots,k_{\tau,1}}_{m_{\tau,1}},\cdots,\underbrace{k_{\tau,s_{\tau}},\cdots,k_{\tau,s_{\tau}}}_{m_{\tau,s_{\tau}}})\] 
	for all $\tau\in\Sigma$. Hence the $\tau$-part of $w^{-1}(\kappa_{1}(x_{A,x_{\mathrm{pdR}}}))$ can be furthermore rewritten as 
	\begin{equation*}
		\left(\prod_{\mathrm{wt}_{\tau}(\delta_{j})=k_{\tau,1}}\left(Y-\left(\mathrm{wt}_{\tau}(\delta_{A,j})-\mathrm{wt}_{\tau}(\delta_j)\right)\right),\cdots, \prod_{\mathrm{wt}_{\tau}(\delta_{j})=k_{\tau,s_{\tau}}}\left(Y-\left(\mathrm{wt}_{\tau}(\delta_{A,j})-\mathrm{wt}_{\tau}(\delta_j)\right)\right) \right).
	\end{equation*}
	Using (\ref{equationfactoridentity}), the above element is equal to 
	\begin{align*}
		&\left(\mathrm{det}\left(Y\mathrm{Id}-\mathrm{gr}^{\tau,-k_{\tau,1}}(\nu_{W_A})\right),\cdots,\mathrm{det}\left(Y\mathrm{Id}-\mathrm{gr}^{\tau,-k_{\tau,s_{\tau}}}(\nu_{W_A})\right)\right)\\
		=&\left( \gamma_{M_{\tau,1}}\left(\mathrm{gr}^{\tau,-k_{\tau,1}}(\nu_{W_A})\right),\cdots,\gamma_{M_{\tau,s_{\tau}}}\left(\mathrm{gr}^{\tau,-k_{\tau,s_{\tau}}}(\nu_{W_A})\right)\right)
	\end{align*}
	which is exactly the $\tau$-part of $\kappa_2(x_{A,x_{\mathrm{pdR}}})$ by (\ref{equationfactorskappa2}). We conclude that \[\kappa_2(x_{A,x_{\mathrm{pdR}}})=w^{-1}(\kappa_{1}(x_{A,x_{\mathrm{pdR}}}))\]
	in $\ft/W_P(A)$. Thus the image of $x_A$ in $\widehat{T}_{P,(0,0)}(A)$ is in $\widehat{T}_{P,w,(0,0)}(A)$. Hence the morphism $\Theta_{x}$ factors through $\widehat{T}_{P,w,(0,0)}\hookrightarrow \widehat{T}_{P,(0,0)}$.
\end{proof}
\begin{lemma}\label{lemmapolynomial}
	Let $k_{1},\cdots,k_{s}$ be pairwise different numbers in $L$. For each $i=1,\cdots,s$, assume that $a_{i,1},\cdots,a_{i,m_i}$ are elements in $A\in\cC_L$ such that $a_{i,j}-k_i\in\fm_A$ for any $j=1,\cdots,m_i$. For each $i=1,\cdots,s$, let $P_{i}(Y)\in A[Y]$ be a monic polynomial of degree $m_i$ such that $P_i(Y)\equiv (Y-k_i)^{m_i} \mod \fm_A$. Assume that $$\prod_{i=1}^{s}\prod_{j=1}^{m_i}(Y-a_{i,j})=\prod_{i=1}^s P_i(Y)$$ in $A[Y]$, then $\prod_{j=1}^{m_i}(Y-a_{i,j})=P_i(Y)$ for each $i=1,\cdots,s$.
\end{lemma}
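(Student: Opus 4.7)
The statement is a uniqueness-of-factorization result for monic polynomials over the local Artinian ring $A$, and the natural way to prove it is via Hensel's lemma. Let $Q_i(Y):=\prod_{j=1}^{m_i}(Y-a_{i,j})\in A[Y]$. Both $P_i$ and $Q_i$ are monic of degree $m_i$, and by the assumption $a_{i,j}-k_i\in\fm_A$ (resp.\ the assumption $P_i(Y)\equiv(Y-k_i)^{m_i}\bmod\fm_A$), we have $Q_i(Y)\equiv(Y-k_i)^{m_i}\bmod\fm_A$ and $P_i(Y)\equiv(Y-k_i)^{m_i}\bmod\fm_A$. Set $f(Y):=\prod_{i=1}^s P_i(Y)=\prod_{i=1}^s Q_i(Y)\in A[Y]$, which therefore satisfies $f(Y)\equiv\prod_{i=1}^s(Y-k_i)^{m_i}\bmod\fm_A$.

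The key point is that the polynomials $(Y-k_1)^{m_1},\ldots,(Y-k_s)^{m_s}$ are pairwise coprime in $L[Y]=(A/\fm_A)[Y]$ since the $k_i$ are pairwise distinct elements of $L$. The ring $A$ is a finite local $L$-algebra, hence a complete Noetherian local ring, so Hensel's lemma applies: given any monic $f\in A[Y]$ whose reduction modulo $\fm_A$ factors as a product of pairwise coprime monic polynomials $\bar g_1\cdots \bar g_s$, there exists a \emph{unique} factorization $f=g_1\cdots g_s$ in $A[Y]$ with each $g_i$ monic and $g_i\equiv \bar g_i\bmod \fm_A$ (one applies the two-factor version iteratively, splitting off $(Y-k_1)^{m_1}$ first, then proceeding by induction on $s$).

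Applying this uniqueness to $f(Y)$ with $\bar g_i=(Y-k_i)^{m_i}$, we conclude that $P_i=Q_i$ in $A[Y]$ for each $i=1,\ldots,s$, which is the desired equality $\prod_{j=1}^{m_i}(Y-a_{i,j})=P_i(Y)$.

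There is no real obstacle: the only thing to verify carefully is that Hensel's lemma is applicable, which it is because $A$ is complete (being Artinian) and the residual factors are coprime. Alternatively, one could avoid quoting Hensel by arguing directly: since $(Y-k_i)^{m_i}$ and $\prod_{j\neq i}(Y-k_j)^{m_j}$ are coprime in $L[Y]$, a Nakayama/Bezout argument shows $Q_i$ and $\prod_{j\neq i}Q_j$ are coprime in $A[Y]$, so the Chinese remainder theorem gives $A[Y]/f\simeq\prod_i A[Y]/Q_i$, and the same for the $P_i$; comparing the two identifications forces $P_i=Q_i$. Either route is short and routine.
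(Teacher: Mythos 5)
Your proof is correct, but it takes a different route from the paper. The paper's argument is elementary and computational: it shows directly that $a_{t,1}$ is a root of $P_t$, by observing that $F(a_{t,1})=0$ where $F=\prod_i\prod_j(Y-a_{i,j})$ and that $P_i(a_{t,1})$ reduces to $(k_t-k_i)^{m_i}\neq 0$ modulo $\fm_A$ hence is a unit for $i\neq t$; it then factors out $(Y-a_{t,1})$ from both sides (using cancellation of monic factors in $A[Y]$) and iterates. Your argument instead appeals to Hensel's lemma (or equivalently the CRT/Bezout argument you sketch): since $A\in\cC_L$ is Artinian local, hence $\fm_A$-adically complete, and the residual monic factors $(Y-k_i)^{m_i}$ are pairwise coprime in $L[Y]$, the lifted factorization of $f=\prod_i P_i=\prod_i Q_i$ into monic factors congruent to $(Y-k_i)^{m_i}$ is unique, forcing $P_i=Q_i$. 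Both routes are valid; yours is shorter and more conceptual but relies on a nontrivial imported theorem, whereas the paper's version is self-contained and only uses the unit/root observation plus polynomial cancellation. The one point you should make explicit if you keep the Hensel route is that an Artinian local ring is indeed complete, which is what licenses Hensel's lemma -- you do say this, so there is no gap.
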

\begin{proof}
	Let $F(Y)=\prod_{i=1}^{s}\prod_{j=1}^{m_i}(Y-a_{i,j})$. Take any $t\in\left\{1,\cdots,s\right\}$. We have $\prod_{i=1}^sP_i(a_{t,1})=F(a_{t,1})=0$. If $i\neq t$, then $P_i(a_{t,1})\equiv (k_t-k_i)^{m_i}\mod \fm_A$ is not in $\fm_A$ since $k_i\neq k_t$. Hence $P_i(a_{t,1})\in A^{\times}$ if $i\neq t$. We get $P_t(a_{t,1})=0$ in $A$. Hence $P_t(Y)=(Y-a_{t,1})\widetilde{P}_t(Y)$ where $\widetilde{P}_t(Y)$ is a monic polynomial and $\widetilde{P}_t(Y)\equiv (Y-k_t)^{m_{t}-1}\mod \fm_A$. Using the fact that
	if there is a monic polynomial $G(Y)\in A[Y]$ such that $G(Y)=(Y-a)G_1(Y)=(Y-a)G_2(Y)$ for some $a\in A$ and $G_1(Y),G_2(Y) \in A[Y]$, then $G_1(Y)=G_2(Y)$, we get
	\[\left(\prod_{i\neq t}\prod_{j=1}^{m_i}(Y-a_{i,j})\right)\prod_{h=2}^{m_t}(Y-a_{t,h})=\left(\prod_{i\neq t} P_i(Y)\right)\widetilde{P}_t(Y).\] 
	Repeat the argument we find $P_t(Y)=(Y-a_{t,1})\cdots(Y-a_{t,m_t})$.
\end{proof}
For $w'\in W/W_P$, we define $X_{r,\cM_{\bullet}}^{w'}=X_{r,\cM_{\bullet}}\times_{X_{D,\cM_{\bullet}}}X_{D,\cM_{\bullet}}^{w'}$. The functor $|X_{r,\cM_{\bullet}}|$ is pro-represented by a reduced equidimensional local ring $R_{r,\cM_{\bullet}}$ of dimension $n^{2}+[K:\Q_p]\frac{n(n+1)}{2}$ with minimal ideals $\fp_{w'},w'\in W/W_P$ such that $R_{r,\cM_{\bullet}}^{w'}:=R_{r,\cM_{\bullet}}/\fp_{w'}$ pro-represents $ |X_{r,\cM_{\bullet}}^{w'}|$ (cf. \cite[Thm. 3.6.2 (i)(ii)]{breuil2019local}, using Proposition \ref{propositionrepXPx}, Corollary \ref{corollaryrepXPxw}).  
\begin{corollary}\label{corollarylocalmodelisomorphism}
	The closed immersion $\widehat{\trivar}_x\hookrightarrow X_{r,\cM_{\bullet}}$ induces an isomorphism $\widehat{\trivar}_x\simrightarrow X_{r,\cM_{\bullet}}^{w}$.
\end{corollary}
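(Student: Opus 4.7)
The plan is to identify the closed subscheme $\widehat{\trivar}_x$ of $X_{r,\cM_{\bullet}}$, provided by Proposition~\ref{propositiontriangulinevarietyclosedimmersion}, with the specific irreducible component $X_{r,\cM_{\bullet}}^{w}$. The argument has two parts: first show that $\widehat{\trivar}_x$ must be a union of some of the irreducible components $X_{r,\cM_{\bullet}}^{w'}$, and then use the weight map to rule out every component except the one indexed by $w$.

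First I would verify that $\widehat{\trivar}_x$ is reduced and equidimensional of dimension $n^2+\tfrac{n(n+1)}{2}[K:\Q_p]$. The rigid space $\trivar$ is reduced and equidimensional of this dimension, and its local rings at classical points are excellent, so both properties pass to the completion. On the other hand, by Proposition~\ref{propositionrepXPx}(2) and Corollary~\ref{corollaryrepXPxw}, the local ring $R_{r,\cM_{\bullet}}$ pro-representing $|X_{r,\cM_{\bullet}}|$ is itself reduced and equidimensional of the same dimension, with minimal primes $\fp_{w'}$ indexed by those $w'\in W/W_P$ for which $x_{\pdR}\in X_{P,w'}$, and each quotient $R_{r,\cM_{\bullet}}^{w'}=R_{r,\cM_{\bullet}}/\fp_{w'}$ a domain. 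Writing $\widehat{\trivar}_x=V(I)$ for a radical ideal $I\subset R_{r,\cM_{\bullet}}$, the equality of dimensions forces every minimal prime of $R_{r,\cM_{\bullet}}/I$ to be a minimal prime of $R_{r,\cM_{\bullet}}$. Consequently $I=\bigcap_{w'\in S}\fp_{w'}$ for some subset $S\subseteq W/W_P$, and $\widehat{\trivar}_x=\bigcup_{w'\in S}X_{r,\cM_{\bullet}}^{w'}$ as a union of irreducible components.

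To pin down $S$, I would next apply Proposition~\ref{propositionfactorsthroughTPw}: the weight map $\Theta_x\colon\widehat{\trivar}_x\to\widehat{T}_{P,(0,0)}$ factors through $\widehat{T}_{P,w,(0,0)}$. Restricting to any component $X_{r,\cM_{\bullet}}^{w'}$ with $w'\in S$, the composition $X_{r,\cM_{\bullet}}^{w'}\to\widehat{T}_{P,(0,0)}$ factors through $\widehat{T}_{P,w,(0,0)}$. I would then pass through the chain of formally smooth (hence faithfully flat) morphisms $X_{r,\cM_{\bullet}}^{\square,w'}\twoheadrightarrow X_{r,\cM_{\bullet}}^{w'}$ (forgetting the framing), $X_{r,\cM_{\bullet}}^{\square,w'}\to X_{D,\cM_{\bullet}}^{\square,w'}$ (forgetting the lift and using the equivalence $X_V\simeq X_D$), and $X_{D,\cM_{\bullet}}^{\square,w'}\to\widehat{X}_{P,w',x_{\pdR}}$ (Corollary~\ref{corollaryrepXPxw}). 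At each stage, the injectivity of the corresponding map on structure rings, coming from faithful flatness, lets the factorization descend, so that finally $\widehat{X}_{P,w',x_{\pdR}}\to\widehat{T}_{P,(0,0)}$ factors through $\widehat{T}_{P,w,(0,0)}$. Lemma~\ref{lemmafactorthroughXPw} then forces $w'=w$, so $S=\{w\}$ and $\widehat{\trivar}_x=X_{r,\cM_{\bullet}}^{w}$, as desired.

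The main subtle point is ensuring that reducedness and equidimensionality transfer from the rigid space $\trivar$ to its formal completion $\widehat{\trivar}_x$, which relies on the excellence of local rings of rigid analytic spaces at classical points. Beyond this, the argument is largely formal and reduces to a careful bookkeeping of the formally smooth morphisms between the deformation groupoids constructed earlier in the section, combined with the combinatorial input of Lemma~\ref{lemmafactorthroughXPw}.
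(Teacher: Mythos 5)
Your proof is correct and follows essentially the same route as the paper's: use reducedness and equidimensionality of both $\widehat{\trivar}_x$ and $\Spec(R_{r,\cM_{\bullet}})$ to see that $\widehat{\trivar}_x$ is a union of irreducible components $X_{r,\cM_{\bullet}}^{w'}$, then apply Proposition~\ref{propositionfactorsthroughTPw} together with Lemma~\ref{lemmafactorthroughXPw} to force $w'=w$. The only difference is that you usefully spell out the descent of the weight-map factorization through the formally smooth chain $X_{r,\cM_{\bullet}}^{\square,w'}\to X_{D,\cM_{\bullet}}^{\square,w'}\to\widehat{X}_{P,w',x_{\pdR}}$ via injectivity on structure rings, a step the paper compresses into a citation of the analogous argument in Breuil--Hellmann--Schraen.
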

\begin{proof}
	The proof is the same with that of \cite[Cor. 3.7.8]{breuil2019local} using Proposition \ref{propositionfactorsthroughTPw}. By \cite[\S2.2]{breuil2017interpretation} and discussions above, both $\Spec(\widehat{\cO}_{\trivar,x})$ and $\Spec(R_{r,\cM_{\bullet}})$ are reduced equidimensional of dimension $n^{2}+[K:\Q_p]\frac{n(n+1)}{2}$, thus the image of $\Spec(\widehat{\cO}_{\trivar,x})\hookrightarrow\Spec(R_{r,\cM_{\bullet}})$ is a union of irreducible components of $\Spec(R_{r,\cM_{\bullet}})$ of the form $\Spec(R_{r,\cM_{\bullet}}^{w'})$ for some $w'\in W/W_P$. For any such $w'$, the morphism $\Spec( R_{r,\cM_{\bullet}}^{w'})\hookrightarrow \Spec(\widehat{\cO}_{\trivar,x})$ induces a closed immersion $X_{r,\cM_{\bullet}}^{w'}\hookrightarrow \widehat{\trivar}_x$ of formal schemes. The morphism $\widehat{\trivar}_{x}\hookrightarrow X_{r,\cM_{\bullet}}\rightarrow \widehat{T}_{P,(0,0)}$ factors through $\widehat{T}_{P,w,(0,0)}\hookrightarrow \widehat{T}_{P,(0,0)}$ by Proposition \ref{propositionfactorsthroughTPw}. Hence $X_{r,\cM_{\bullet}}^{w'}\rightarrow \widehat{T}_{P,(0,0)}$ factors through $\widehat{T}_{P,w,(0,0)}\hookrightarrow \widehat{T}_{P,(0,0)}$. We get that $w'=w$ in $W/W_P$ by Lemma \ref{lemmafactorthroughXPw} (cf. \cite[Thm. 3.6.2 (iii)]{breuil2019local}).
\end{proof}
Define $X_{V,\cM_{\bullet}}^{\square,w}:=X_{V,\cM_{\bullet}}^w\times_{X_{D,
\cM_{\bullet}}^w}X_{D,
\cM_{\bullet}}^{\square,w},X_{r,\cM_{\bullet}}^{\square,w}:=X_{r,\cM_{\bullet}}^w\times_{X_{V,
\cM_{\bullet}}^w}X_{V,
\cM_{\bullet}}^{\square,w}$. There are morphisms 
\begin{equation}\label{equationlocalmodel}
	\widehat{\trivar}_x\simrightarrow X_{r,\cM_{\bullet}}^w\leftarrow X_{r,
	\cM_{\bullet}}^{\square,w}\rightarrow X_{V,
	\cM_{\bullet}}^{\square,w}\simeq  X_{D,
	\cM_{\bullet}}^{\square,w}\rightarrow X_{W^+,\cF_{\bullet}}^{\square,w}\simeq \widehat{X}_{P,w,x_\mathrm{pdR}}	
\end{equation} 
where all morphisms are either isomorphisms or formally smooth and all groupoids are pro-representable. Let $w_x\in W/W_P$ be the unique element such that $x_{\mathrm{pdR}}\in V_{P,w_x}$ (see (\ref{equationdefinitionVPw})). 
\begin{theorem}\label{theoremirreducibletriangullinevariety}
	Let $x=(r,\underline{\delta})\in\trivar(L)$ be a point such that $\underline{\delta}$ is locally algebraic and both $\delta_i/\delta_j$ and $\epsilon\delta_i/\delta_j$ are not algebraic for any $i\neq j$. Then the trianguline variety $\trivar$ is irreducible at $x$ and we have formally smooth morphisms $\widehat{\trivar}_x\leftarrow |X_{r,
	\cM_{\bullet}}^{\square,w}| \rightarrow \widehat{X}_{P,w,x_\mathrm{pdR}}$ of formal schemes. Moreover, $w_x\leq w$ in $W/W_P$.
\end{theorem}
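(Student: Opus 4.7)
Most of the ingredients are already in place; the task is to assemble them. The plan is to deduce the three assertions in the following order: first the formally smooth morphisms (which repackage the chain (\ref{equationlocalmodel})), then the irreducibility, and finally the inequality $w_x\leq w$.

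For the formally smooth morphisms, I would simply appeal to the chain (\ref{equationlocalmodel}) together with Corollary \ref{corollarylocalmodelisomorphism} to identify $\widehat{\trivar}_x$ with $X_{r,\cM_{\bullet}}^{w}$. The right arrow $|X_{r,\cM_{\bullet}}^{\square,w}|\to \widehat{X}_{P,w,x_{\mathrm{pdR}}}$ is formally smooth by Corollary \ref{corollaryrepXPxw} combined with the formal smoothness of adding a framing on $D_{\mathrm{pdR}}(W_A)$ and the equivalence $X_{V,\cM_{\bullet}}^{\square,w}\simeq X_{D,\cM_{\bullet}}^{\square,w}$ coming from Berger's equivalence; the left arrow $|X_{r,\cM_{\bullet}}^{\square,w}|\to \widehat{\trivar}_x$ is the composition of the formally smooth framing map $X_{r,\cM_{\bullet}}^{\square,w}\to X_{r,\cM_{\bullet}}^{w}$ with the isomorphism of Corollary \ref{corollarylocalmodelisomorphism}.

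For the irreducibility of $\widehat{\trivar}_x$, the key input is Theorem \ref{theoremunibranchYP}. Observe that the $\fg$-coordinate of $x_{\mathrm{pdR}}\in X_P(L)$ is $N_W$, which is nilpotent since $W\in\mathrm{Rep}_{\mathrm{pdR},L}(\cG_K)$. Hence $\widehat{X}_{P,w,x_{\mathrm{pdR}}}=\Spf(\widehat{\cO}_{X_{P,w},x_{\mathrm{pdR}}})$ is irreducible by Theorem \ref{theoremunibranchYP}. Because $|X_{r,\cM_{\bullet}}^{\square,w}|$ is formally smooth of finite relative dimension over $\widehat{X}_{P,w,x_{\mathrm{pdR}}}$ (Corollary \ref{corollaryrepXPxw}), its pro-representing ring is a power series ring over $\widehat{\cO}_{X_{P,w},x_{\mathrm{pdR}}}$, and in particular is irreducible; then the formally smooth surjective map $|X_{r,\cM_{\bullet}}^{\square,w}|\to X_{r,\cM_{\bullet}}^{w}\simeq \widehat{\trivar}_x$ is faithfully flat, so minimal primes of the target lift to the source, forcing $\widehat{\trivar}_x$ to have a unique minimal prime as well.

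Finally, for $w_x\leq w$ in $W/W_P$, I would use Lemma \ref{lemmaXpwVpw}. By definition $x_{\mathrm{pdR}}\in V_{P,w_x}$. On the other hand, the existence of the (nonempty) formally smooth map $|X_{r,\cM_{\bullet}}^{\square,w}|\to \widehat{X}_{P,w,x_{\mathrm{pdR}}}$ witnesses that $x_{\mathrm{pdR}}\in X_{P,w}(L)$. Hence $X_{P,w}\cap V_{P,w_x}\neq \emptyset$, and Lemma \ref{lemmaXpwVpw} gives $w\geq w_x$ in $W/W_P$. The main subtlety, if any, is ensuring that the completion-level formally smooth morphisms genuinely preserve and reflect irreducibility as described, but this is immediate from the standard structure of formally smooth maps between complete Noetherian local rings of finite relative dimension.
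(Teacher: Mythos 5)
Your proposal is correct and follows essentially the same path as the paper. The paper dispatches the first two assertions (the formally smooth morphisms and the irreducibility) in one line by citing Corollary \ref{corollarylocalmodelisomorphism} together with the chain (\ref{equationlocalmodel}), which already records that $|X^{\square,w}_{D,\cM_\bullet}|$ is pro-represented by a domain (Corollary \ref{corollaryrepXPxw}, itself resting on Theorem \ref{theoremunibranchYP}); you unwind that reference and spell out the nilpotence of $N_W$ and the descent of irreducibility through formally smooth maps, which is additional but not substantively different. Your argument for $w_x\le w$ via Lemma \ref{lemmaXpwVpw} — exhibiting $x_{\mathrm{pdR}}\in X_{P,w}(L)$ from the non-emptiness of the local model and $x_{\mathrm{pdR}}\in V_{P,w_x}$ by definition — is precisely what the paper does. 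One small imprecision: you attribute the formal smoothness of $|X^{\square,w}_{r,\cM_\bullet}|\to\widehat X_{P,w,x_{\mathrm{pdR}}}$ to Corollary \ref{corollaryrepXPxw}, which actually concerns $X^{\square,w}_{D,\cM_\bullet}$; you also need the formally smooth passage from $r$-framings to $D$-deformations (the framing torsor and Berger's equivalence), which is the role of the intermediate arrows in (\ref{equationlocalmodel}). This does not affect the conclusion.
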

\begin{proof}
	The first assertions follow from Corollary \ref{corollarylocalmodelisomorphism} and (\ref{equationlocalmodel}). We remain to prove $w_x\leq w$ in $W/W_P$. Since $X_{r,\cM_{\bullet}}^w\simeq\widehat{\trivar}_x\neq \emptyset$, we get $\widehat{X}_{P,w,x_\mathrm{pdR}}\neq \emptyset$. Hence $x_{\pdR}\in X_{P,w}(L)$. Since $X_{P,w}\cap V_{P,w'}\neq \emptyset$ only if $ w\geq w'$ by Lemma \ref{lemmaXpwVpw}, we get $w\geq w_x$ in $W/W_P$. 
\end{proof}
\subsection{Partially de Rham cycles}\label{sectioncycles}
We transport the results of cycles on the local models obtained in \S\ref{sectionsteinbergvarieties} to the trianguline variety. We continue to assume that $x=(r,\underline{\delta})\in X_{\mathrm{tri}}(\overline{r})(L)$ is the point fixed in \S\ref{sectiontriangulinevariety}. Then we have a commutative diagram as in \cite[\S4.3]{breuil2019local}:
\begin{center}
    \begin{tikzpicture}[scale=1.3]
        \node (A1) at (0,2) {$X_{r,\cM_{\bullet}}^w$};
        \node (A2) at (2,2) {$X_{r,\cM_{\bullet}}^{\square,w}$};
        \node (A3) at (4,2) {$\widehat{X}_{P,w,x_\mathrm{pdR}}$};
        \node (B1) at (0,1) {$X_{r,\cM_{\bullet}}$};
        \node (B2) at (2,1) {$X_{r,\cM_{\bullet}}^{\square}$};
		\node (B3) at (4,1) {$\widehat{X}_{P,x_\mathrm{pdR}}$};
		\node (C1) at (0,0) {$X_r$};
        \node (C2) at (2,0) {$\widehat{\cT^n_{\underline{\delta}}}$};
		\node (C3) at (4,0) {$\widehat{\ft}$};
        \path[->,font=\scriptsize,>=angle 90]
        (A2) edge node[above]{} (A1)
        (A2) edge node[above]{} (A3)
        (B2) edge node[above]{} (B1)
		(B2) edge node[above]{} (B3)
		(B1) edge node[above]{$\omega_{\underline{\delta}}$} (C2)
		(C2) edge node[above]{$\wt-\wt(\underline{\delta})$} (C3)
		(B3) edge node[right]{$\kappa_1$} (C3)
		;
		\path[right hook->,font=\scriptsize,>=angle 90]
        (A1) edge node[above]{} (B1)
        (A2) edge node[above]{} (B2)
		(A3) edge node[above]{} (B3)
		(B1) edge node[above]{} (C1)
        ;
        \end{tikzpicture}
 \end{center}
where $\omega_{\underline{\delta}}$ is the composite of the map $\omega_{\underline{\delta}}$ defined in \S\ref{subsectiontheweightmapthelocalmodel} for $X_{\cM,\cM_{\bullet}}$ with $X_{r,\cM}\rightarrow X_{\cM,\cM}$ and all horizontal arrows are formally smooth. It follows from the proof of \cite[Prop. 3.7.2]{breuil2019local} that the composite $\widehat{X_{\mathrm{tri}}(\overline{r})}_x\rightarrow X_{r,\cM_{\bullet}}\stackrel{w_{\underline{\delta}}}{\rightarrow} \widehat{\cT^n_{\underline{\delta}}}$ is the completion of the map $\omega':X_{\mathrm{tri}}(\overline{r})\hookrightarrow \mathfrak{X}_{\overline{r}}\times \cT^n_L\rightarrow \cT_{L}^{n}$ at the points $x$ and $\underline{\delta}$ (cf. \cite[(3.30)]{breuil2019local}). Let $X_{\mathrm{tri}}(\overline{r})_{\wt(\underline{\delta})}$ denote the fiber of the map $\wt\circ\omega':X_{\mathrm{tri}}(\overline{r})\rightarrow \widehat{\ft}$ over $\wt(\underline{\delta})$. Let $\cR_{r,\cM_{\bullet}},\cR_{r,\cM_{\bullet}}^{w}$ be the complete local rings that pro-represent the groupoids $X_{r,\cM_{\bullet}}, X_{r,\cM_{\bullet}}^{w}$ and let $\cR_{r,\cM_{\bullet}}^{\square},\cR_{r,\cM_{\bullet}}^{\square,w}$ be the square versions. The above diagram of pro-representable groupoids corresponds to a diagram of spectra of complete local rings. Now take the fibers over $0\in\Spec(\widehat{\cO}_{\ft,0})$, we get the corresponding morphisms
\begin{center}
    \begin{tikzpicture}[scale=1.3]
        \node (A1) at (0,2) {$\Spec(\overline{R}_{r,\cM_{\bullet}}^w )$};
        \node (A2) at (3,2) {$\Spec(\overline{R}_{r,\cM_{\bullet}}^{\square,w})$};
        \node (A3) at (6,2) {$\Spec(\widehat{\cO}_{\overline{X}_{P,w},x_\mathrm{pdR}})$};
        \node (B1) at (0,1) {$\Spec(\overline{R}_{r,\cM_{\bullet}})$};
        \node (B2) at (3,1) {$\Spec(\overline{R}_{r,\cM_{\bullet}}^{\square})$};
		\node (B3) at (6,1) {$\Spec(\widehat{\cO}_{\overline{X}_{P},x_\mathrm{pdR}})$};
		\node (C1) at (0,0) {$\Spec(\widehat{\cO}_{\fX_{\overline{r}},r})$};
        \path[->,font=\scriptsize,>=angle 90]
        (A2) edge node[above]{} (A1)
        (A2) edge node[above]{} (A3)
        (B2) edge node[above]{} (B1)
		(B2) edge node[above]{} (B3)
		;
		\path[right hook->,font=\scriptsize,>=angle 90]
        (A1) edge node[above]{} (B1)
        (A2) edge node[above]{} (B2)
		(A3) edge node[above]{} (B3)
		(B1) edge node[above]{} (C1)
        ;
        \end{tikzpicture}
\end{center}
where $\overline{X}_P$ and $\overline{X}_{P,w}$ are defined in the end of \S\ref{sectionsteinbergvarieties}. We know from \S\ref{sectionsteinbergvarieties} that the set of irreducible components of $\Spec(\widehat{\cO}_{\overline{X}_{P},x_\mathrm{pdR}})$ is the disjoint union of the sets of irreducible components of $\Spec(\widehat{\cO}_{Z_{P,w'},x_\mathrm{pdR}})$ for $w'\in W/W_P$ such that $x_{\mathrm{pdR}}\in Z_{P,w'}$ and that $\Spec(\widehat{\cO}_{\overline{X}_{P},x_\mathrm{pdR}})$ is equidimensional. By pullback and descent through formally smooth morphisms (cf. \cite[\href{https://stacks.math.columbia.edu/tag/06HL}{Tag 06HL}]{stacks-project}), we get that $\Spec(\overline{R}_{r,\cM_{\bullet}})$ is equidimensional and there is a bijection between the irreducible components of $\Spec(\overline{R}_{r,\cM_{\bullet}})$ and the irreducible components of $\Spec(\widehat{\cO}_{\overline{X}_{P},x_\mathrm{pdR}})$. For each $w'\in W/W_P$, we let $\mathfrak{Z}_{w'}$ denote the union of irreducible components of $\Spec(\overline{R}_{r,\cM_{\bullet}})$ that correspond to irreducible components of $\Spec(\widehat{\cO}_{Z_{P,w'},x_\mathrm{pdR}})$. By base change, we get that $\Spec(\overline{R}_{r,\cM_{\bullet}}^w )$ is a union of irreducible components of $\Spec(\overline{R}_{r,\cM_{\bullet}})$ according to the formula (\ref{formulairreduciblecomponents}).
\begin{remark}
	In \cite[\S4.3]{breuil2019local}, $\mathfrak{Z}_{w'}$ is defined to be a formal sum of irreducible components (cycles). Here we only consider $\mathfrak{Z}_{w'}$ as a set-theoretic union of irreducible components or the underlying reduced subscheme which suffices for our applications. We also do not consider the Kazhdan-Lusztig cycles on $\Spec(\overline{R}_{r,\cM_{\bullet}})$ defined as \cite[(4.7)]{breuil2019local}. See \S\ref{sectioncharacteristiccycles}, especially Remark \ref{remarkcyclesKtheory}.
\end{remark}
Take a standard parabolic subgroup $Q=\prod_{\tau\in\Sigma}Q_{\tau}$ of $G=\prod_{\tau\in\Sigma}\GL_{n/L}$. Suppose that for each $\tau\in \Sigma$, the standard Levi subgroup $M_{Q_{\tau}}$ consists of diagonal block matrices of the form $\GL_{q_{\tau,1}/L}\times\cdots\times \GL_{q_{\tau,t_{\tau}}/L}$ where $q_{\tau,1}+\cdots+q_{\tau,t_{\tau}}=n$. For $i=1,\cdots,t_{\tau}$, let $\widetilde{q}_{i}=q_{\tau,1}+\cdots+q_{\tau,i}$. Let $\widetilde{q}_{\tau,0}=0$.
\begin{definition}\label{definitionQderham}
	A pair $(r_A, \cM_{A,\bullet})$ where $r_A$ is a continuous representation of $\cG_K$ of rank $n$ over a finite-dimensional local $L$-algebra $A$ and $\cM_{A,\bullet}$ is a triangulation of $D_{\mathrm{rig}}(r_A)[\frac{1}{t}]$ such that $W_{\mathrm{dR}}\left(D_{\mathrm{rig}}(r_A)[\frac{1}{t}]\right)$ is almost de Rham is said to be $Q_{\tau}$-de Rham if the nilpotent operator $\nu_A$ on $D_{\mathrm{pdR}, \tau}\left(W_{\mathrm{dR}}\left(D_{\mathrm{rig}}(r_A)[\frac{1}{t}]\right)\right)$ vanishes when restricted to the graded pieces
	\[D_{\mathrm{pdR},\tau}\left(W_{\mathrm{dR}}(\cM_{A,\widetilde{q}_{\tau,i}})\right)/D_{\mathrm{pdR},\tau}\left(W_{\mathrm{dR}}(\cM_{A,\widetilde{q}_{\tau,i-1}})\right),i=1,\cdots,t_{\tau}\] 
	of the sub-filtration of the full filtration $D_{\mathrm{pdR},\tau}\left(W_{\mathrm{dR}}(\cM_{A,\bullet})\right)$. A such pair $(r_A, \cM_{A,\bullet})$ is said to be $Q$-de Rham if it is $Q_{\tau}$-de Rham for all $\tau\in\Sigma$.
\end{definition}
We have defined a closed subscheme $Z_{Q,P}$ of $\overline{X}_{P}$ in \S\ref{sectionsteinbergvarieties}. Moreover, there is a closed immersion $\Spec(\widehat{\cO}_{Z_{Q,P},x_\mathrm{pdR}})\hookrightarrow\Spec(\widehat{\cO}_{\overline{X}_{P},x_\mathrm{pdR}})$ (we do not assume that $\Spec(\widehat{\cO}_{Z_{Q,P},x_\mathrm{pdR}})$ is not empty). We define $R_{r,\cM_{\bullet}}^{\square,Q}:=\overline{R}_{r,\cM_{\bullet}}^{\square}\otimes_{\widehat{\cO}_{\overline{X}_{P},x_\mathrm{pdR}}}\widehat{\cO}_{Z_{Q,P},x_\mathrm{pdR}}$. Let $|X_{r,\cM_{\bullet}}^{\square,Q}|:=\Spf(R_{r,\cM_{\bullet}}^{\square,Q})$ and $X_{r,\cM_{\bullet}}^{\square,Q}:=X_{r,\cM_{\bullet}}^{\square}\times_{|X_{r,\cM_{\bullet}}^{\square}|}|X_{r,\cM_{\bullet}}^{\square,Q}|$. Let $X_{r,\cM_{\bullet}}^Q$ be the image of the subgroupoid $X_{r,\cM_{\bullet}}^{\square,Q}\subset X_{r,\cM_{\bullet}}^{\square}$ in $X_{r,\cM_{\bullet}}$.
\begin{lemma}\label{lemmagroupoidspartiallydeRham}
	\begin{enumerate}
	\item The full subgroupoid $X_{r,\cM_{\bullet}}^{\square,Q}$ of $X_{r,\cM_{\bullet}}^{\square}$ consists of objects 
	\[(A,r_A, \cM_{A,\bullet},j_A,\alpha_A)\] 
	where $A\in\cC_L$, $r_A$ lifts $r$ and $(D_{\mathrm{rig}}(r_A)[\frac{1}{t}],\cM_{A,\bullet},j_A,\alpha_A )\in X_{\cM,\cM_{\bullet}}^{\square}(A)$ such that $j_A$ is the natural one and the pair $(r_A,\cM_{A,\bullet})$ is $Q$-de Rham.
	\item The full subgroupoid $X_{r,\cM_{\bullet}}^{Q}$ of $X_{r,\cM}$ consists of objects $(A,r_A, \cM_{A,\bullet},j_A)\in X_{r,\cM_{\bullet}}$ such that the pair $(r_A,\cM_{A,\bullet})$ is $Q$-de Rham. The inclusion $X_{r,\cM_{\bullet}}^Q\hookrightarrow X_{r,\cM_{\bullet}}$ is relatively representable and is a closed immersion. Moreover, $X_{r,\cM_{\bullet}}^{\square,Q}=X_{r,\cM_{\bullet}}^Q\times_{X_{r,\cM_{\bullet}}}X_{r,\cM_{\bullet}}^{\square}$.
\end{enumerate}
\end{lemma}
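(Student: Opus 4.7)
The plan is to reduce both parts to a single dictionary: lying in $Z_{Q,P}$ inside the local model $X_P$ matches the $Q$-de Rham condition of Definition \ref{definitionQderham}. My starting point is the chain (\ref{equationlocalmodel}) together with Proposition \ref{propositionrepXPx}, which associate to an object $(A, r_A, \cM_{A,\bullet}, j_A, \alpha_A) \in X_{r,\cM_{\bullet}}^{\square}(A)$ the $A$-point of $\widehat{X}_{P,x_{\mathrm{pdR}}}$ given by
\[
\bigl(\alpha_A^{-1}(\mathrm{Fil}_{W_A^+}^{\bullet}(D_{\mathrm{pdR}}(W_A))),\ \alpha_A^{-1}(D_{\mathrm{pdR}}(\cF_{A,\bullet})),\ N_{W_A}\bigr),
\]
with $N_{W_A} = \alpha_A^{-1}\circ\nu_{W_A}\circ\alpha_A$ the nilpotent $\mathbb{G}_a$-infinitesimal action, exactly as used in the proof of Proposition \ref{propositionfactorsthroughTPw}. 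For part (1), the first step is to observe that by the definition of $R_{r,\cM_{\bullet}}^{\square,Q}$ as a base change, the groupoid $X_{r,\cM_{\bullet}}^{\square,Q}$ consists precisely of those objects for which this $A$-point factors through $\Spec(\widehat{\cO}_{Z_{Q,P},x_{\mathrm{pdR}}})$, i.e.\ satisfies $\Ad(g_1^{-1})N_{W_A}\in\fn_Q$ where $g_1B$ is the flag determined by $D_{\mathrm{pdR}}(\cF_{A,\bullet})$.

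The second step is to translate this membership into the language of Definition \ref{definitionQderham}. Working $\tau$-by-$\tau$ under the decomposition $G = \prod_{\tau\in\Sigma}\GL_{n/L}$, in the basis of $D_{\mathrm{pdR},\tau}(W_A)$ adapted to the full flag $D_{\mathrm{pdR},\tau}(\cF_{A,\bullet})$ the subspace $\fn_{Q_\tau}$ consists precisely of matrices with zero blocks on and below the $Q_\tau$-diagonal; hence $\Ad(g_1^{-1})N_{W_A,\tau}\in\fn_{Q_\tau}$ is equivalent to saying that $\nu_{W_A,\tau}$ preserves the $Q_\tau$-coarsening of the full flag and induces the zero endomorphism on each graded piece. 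By functoriality of $W_{\mathrm{dR}}$ and $D_{\mathrm{pdR}}$ this coarsening is the sub-filtration $\bigl(D_{\mathrm{pdR},\tau}(W_{\mathrm{dR}}(\cM_{A,\widetilde{q}_{\tau,i}}))\bigr)_i$, matching Definition \ref{definitionQderham} on the nose. Two consistency checks then fit in: $\fn_Q$ has vanishing diagonal, so by the computation (\ref{equationfactorskappa1}) this condition forces $\wt(\delta_{A,i})=\wt(\delta_i)$, which is exactly what is needed for the $A$-point to factor through $\Spf(\overline{R}_{r,\cM_{\bullet}}^{\square})$; and the almost de Rhamness of $W_A$ needed in Definition \ref{definitionQderham} is automatic for objects of $X_{r,\cM_{\bullet}}$ since the reduction $\underline{\delta}$ is locally algebraic, by \cite[Lem. 3.3.6]{breuil2019local}.

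For part (2), I would define $X_{r,\cM_{\bullet}}^Q$ a priori as the full subgroupoid of $X_{r,\cM_{\bullet}}$ whose objects are $Q$-de Rham; by part (1) this coincides with the image of $X_{r,\cM_{\bullet}}^{\square,Q}$ under the forgetful map. Because the $Q$-de Rham condition makes no reference to $\alpha_A$, the Cartesian identity $X_{r,\cM_{\bullet}}^{\square,Q} = X_{r,\cM_{\bullet}}^Q \times_{X_{r,\cM_{\bullet}}} X_{r,\cM_{\bullet}}^{\square}$ is formal. Relative representability and the closed immersion property of $X_{r,\cM_{\bullet}}^Q\hookrightarrow X_{r,\cM_{\bullet}}$ then follow by descent through the formally smooth, faithfully flat, pro-representable cover $X_{r,\cM_{\bullet}}^{\square}\to X_{r,\cM_{\bullet}}$: the closed subfunctor $X_{r,\cM_{\bullet}}^{\square,Q}\hookrightarrow X_{r,\cM_{\bullet}}^{\square}$, being the base change of the closed immersion $\Spec(\widehat{\cO}_{Z_{Q,P},x_{\mathrm{pdR}}}) \hookrightarrow \Spec(\widehat{\cO}_{\overline{X}_P, x_{\mathrm{pdR}}})$, is invariant under the change-of-framing action and so descends to a closed subfunctor of $X_{r,\cM_{\bullet}}$.

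The main (and essentially only) obstacle will be the bookkeeping in the middle paragraph: carefully matching $\fn_Q$ with the block-vanishing condition on the correct sub-flag, while keeping track of the $\tau$-decomposition of $G$ and of how $\alpha_A$ intertwines $\cF_{A,\bullet}$ with the flag parameterized by $g_1B$. Everything beyond this is a formal consequence of the local model statements already established in \S\ref{sectiontriangulinevariety}.
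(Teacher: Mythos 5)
Your proposal is correct and follows essentially the same path as the paper: in part (1), identify the $Q$-de Rham condition with the condition $\Ad(g_1^{-1})\nu\in\fn_Q$ under the isomorphism $|X_{W^+,\cF_{\bullet}}^\square|\simeq\widehat{X}_{P,x_{\pdR}}$ of Proposition \ref{propositionrepXPx}, which is exactly membership in $\widehat{Z}_{Q,P,x_{\pdR}}$; in part (2), observe independence from the framing $\alpha_A$ and descend through the formally smooth $\square$-cover. The paper's proof is terser (it asserts the equivalence in (1) "from the definition" and calls (2) "obvious"), but it does not differ from yours in any essential step.
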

\begin{proof}
	(1) Recall $\alpha_A:(A\otimes_{\Q_p}K)^{n}\simrightarrow D_{\mathrm{pdR}}\left(W_{\mathrm{dR}}\left(D_{\mathrm{rig}}(r_A)[\frac{1}{t}]\right)\right)$. Let $(\nu, g_1B)\in \widehat{\widetilde{\fg}}(A)$ be the $A$-point corresponding to $\left(A,W_{\mathrm{dR}}\left(D_{\mathrm{rig}}(r_A)[\frac{1}{t}]\right),W_{\mathrm{dR}}(\cM_{A,\bullet}),\iota_A,\alpha_A\right)\in X_{W,\cF_{\bullet}}^{\square}$ via $|X_{W,\cF_{\bullet}}^{\square}|\simeq \widehat{\tildefg}$ in \cite[Cor. 3.1.9]{breuil2019local} induced by $D_{\mathrm{pdR}}$. It follows from the definition that $(r_A,\cM_{A,\bullet})$ is $Q$-de Rham if and only if $\Ad(g_1^{-1})\nu\in \fn_{Q}(A)$. Hence $\left(W_{\mathrm{dR}}^+\left(D_{\mathrm{rig}}(r_A)\right),W_{\mathrm{dR}}(\cM_{A,\bullet}),\iota_A,\alpha_A\right)\in |X_{W^+,\cF_{\bullet}}^{\square}|(A)$ satisfies that $(r_A,\cM_{A,\bullet})$ is $Q$-de Rham if and only if the corresponding object $(\nu,g_1B,g_2P)\in \widehat{X}_{P,x_{\mathrm{pdR}}}(A)$ under the isomorphism $|X_{W^+,\cF_{\bullet}}^{\square}|\simeq \widehat{X}_{P,x_{\mathrm{pdR}}}$ in (1) of Proposition \ref{propositionrepXPx} satisfies $\Ad(g_1^{-1})\nu\in\fn_{Q}(A)$, in other words, if and only if $(\nu,g_1B,g_2P)\in \widehat{Z}_{Q,P,x_{\mathrm{pdR}}}(A)$. Hence by definition, $|X_{r,\cM_{\bullet}}^{\square,Q}|$ is the subfunctor of $|X_{r,\cM_{\bullet}}^{\square}|$ sending $A\in\cC_L$ to the isomorphism classes of $(r_A, \cM_{A,\bullet},j_A,\alpha_A)$ such that $(r_A,\cM_{A,\bullet})$ is $Q$-de Rham. The description for $X_{r,\cM_{\bullet}}^{\square,Q}=X_{r,\cM_{\bullet}}^{\square}\times_{|X_{r,\cM_{\bullet}}^{\square}|}|X_{r,\cM_{\bullet}}^{\square,Q}|$ follows from the definition of the fiber product (see \cite[(A.4)]{kisin2009moduli}).\par
	(2) The condition for $(A,r_A, \cM_{A,\bullet},j_A,\alpha_A)\in X_{r,\cM_{\bullet}}^{\square}$ that $(r_A,\cM_{A,\bullet})$ is $Q$-de Rham in (1) is independent of the framing $\alpha_A$. Hence $X_{r,\cM_{\bullet}}^{\square,Q}=X_{r,\cM_{\bullet}}^Q\times_{X_{r,\cM_{\bullet}}}X_{r,\cM_{\bullet}}^{\square}$. The other statements for $X_{r,\cM_{\bullet}}^{Q}$ are now obvious.
\end{proof}
Hence $|X_{r,\cM_{\bullet}}^Q|$ is pro-represented by a formal scheme $\Spf(R_{r,\cM_{\bullet}}^Q)$ with a formally smooth morphism $\Spf(R_{r,\cM_{\bullet}}^{\square,Q})\rightarrow \Spf(R_{r,\cM_{\bullet}}^Q)$.
There is a closed immersion $\Spec(R_{r,\cM_{\bullet}}^Q)\hookrightarrow \Spec(\overline{R}_{r,\cM_{\bullet}})$. By Theorem \ref{theoremvarietyweight} and above discussions, $\mathfrak{Z}_{w'}$ is contained in $\Spec(R_{r,\cM_{\bullet}}^Q)$ if and only if $w'(\mathbf{h})$ is strictly $Q$-dominant where $\mathbf{h}=\left(h_{\tau,1},\cdots,h_{\tau,n}\right)_{\tau\in\Sigma}$ is the Hodge-Tate-Sen weights of $r$ in \S\ref{sectiontriangulinevariety}.
\section{Applications on companion points}\label{sectionapplocations}
In this section, we prove our main theorem (Theorem \ref{theoremmaincrystalline}) on the existence of certain companion points on the eigenvariety as well as the appearance of related companion constituents in the space of $p$-adic automorphic forms (Proposition \ref{propositionsocleappear}).
\subsection{Local companion points} \label{sectionlocalcompanionpoints}
We firstly generalize the result in \cite[\S4.2]{breuil2019local} on the existence of all local companion points on the trianguline variety for generic crystalline representations. We continue to assume that $K$ is a finite extension of $\Q_p$ and $L/\Q_p$ is a sufficiently large coefficient field as in \S\ref{sectionlocalmodeltriangulinevariety}.\par
Let $r:\cG_K\rightarrow \GL_n(L)$ be a crystalline representation of $\cG_K$ which is a deformation of $\overline{r}:\cG_K\rightarrow \GL_n(k_L)$ corresponding to an $L$-point on $\fX_{\overline{r}}$.  Let $D_{\mathrm{cris}}(r)$ be the associated $\varphi$-module of rank $n$ over $L\otimes_{\Q_p} K_0$ equipped with a filtration $\Fil^{\bullet}D_{\mathrm{dR}}(r)$ on $D_{\mathrm{dR}}(r)=(L\otimes_{\Q_p}K)\otimes_{L\otimes_{\Q_p}K_0}D_{\mathrm{cris}}(r)$. We assume that the Hodge-Tate weights of $r$ are $\mathbf{h}=(h_{\tau,1},\cdots,h_{\tau,n})_{\tau\in\Sigma}$ where each $k_{\tau,s_{\tau}}>\cdots>k_{\tau,1}$ appears in the sequence $h_{\tau,n}\geq\cdots\geq h_{\tau,1}$ with multiplicities $m_{\tau,s_{\tau}},\cdots,m_{\tau,1}$ and $m_{\tau,s_{\tau}},+\cdots+m_{\tau,1}=n$ for $\tau\in \Sigma$. Let $W_P$ be the stabilizer subgroup of $\mathbf{h}$ under the action of $W\simeq (\mathcal{S}_n)^{\Sigma}$ as in \S\ref{subsectionrepresentability} or \S\ref{sectiontriangulinevariety}. 
We fix an arbitrary embedding $\tau_0:K_0\hookrightarrow L$. After possibly enlarging $L$, we assume that the eigenvalues $\varphi_1,\cdots,\varphi_n$ of $\varphi^{[K_0:\Q_p]}$ on $L\otimes_{1\otimes \tau_0, L\otimes_{\Q_p}K_0}D_{\mathrm{cris}}(r)$ are all in $L^{\times}$. We say $r$ is \emph{generic} if the eigenvalues satisfy that $\varphi_i\varphi_j^{-1}\notin \{1,p^{[K_0:\Q_p]}\}$ for all $i\neq j$. This generic assumption is independent of the choice of $\tau_0$.\par
We assume that $r$ is generic and fix an ordering $\underline{\varphi}:=(\varphi_1,\cdots,\varphi_n)$ of the eigenvalues, which is called a \emph{refinement} of $r$, denoted by $\cR$. For any $w\in W/W_P$, denote by $z^{w(\mathbf{h})}\mathrm{unr}(\underline{\varphi})$ the character 
\[\left(z^{w(\mathbf{h})_1}\mathrm{unr}(\varphi_1),\cdots,z^{w(\mathbf{h})_n}\mathrm{unr}(\varphi_n)\right)\in \cT_L^n\]
of $(K^{\times})^n$ which lies in $\cT_0^n$ for any $w\in W/W_{P}$ by our generic assumption on $\underline{\varphi}$ (recall that $\cT_0^n$ is defined in the beginning of \S\ref{subsectiongroupoids}). The ordering $\underline{\varphi}$ defines a filtration $\Fil_{\bullet}D_{\mathrm{cris}}(r)$ on $D_{\mathrm{cris}}(r)$, which, under Berger's dictionary (\cite{berger2008equations}), corresponds to a triangulation $D_{\mathrm{rig}}(r)_{\bullet}:D_{\mathrm{rig}}(r)_{1}\subset\cdots\subset D_{\mathrm{rig}}(r)_{n}$ of $D_{\mathrm{rig}}(r)\in\Phi\Gamma_{L,K}^{+}$ (cf. \cite[\S 2.2]{breuil2017smoothness}) such that 
\[D_{\mathrm{rig}}(r)_{i}/D_{\mathrm{rig}}(r)_{i-1}\simeq \cR_{L,K}(z^{w_{\cR}(\mathbf{h})_i}\mathrm{unr}(\varphi_i)),\] 
for all $i=1,\cdots,n$ and certain $w_{\cR}\in W/W_P$ determined by $\cR$. Moreover, the relative position of the filtration $\Fil_{\bullet}D_{\mathrm{cris}}(r)\otimes_{K_0}K$ and the Hodge filtration $\Fil^{\bullet}D_{\mathrm{dR}}(r)$ is parameterized by $w_{\cR}\in W/W_P$ (that is, if we choose a basis of $D_{\mathrm{dR}}(r)$, the $L$-point in $G/B\times G/P$ associated with the two filtrations lies in the Schubert cell $U_{P,w_{\cR}}$ defined in \S\ref{sectionthevariety}).\par
For $w\in W/W_P$, let $x_w\in\mathfrak{X}_{\overline{r}}\times \cT_L^n$ be the point corresponding to the pair $(r, z^{w(\mathbf{h})}\mathrm{unr}(\underline{\varphi}))$. We let $x=x_{w_0}$ be the dominant point and let $w_x=w_{\cR}$ (to make the notation agree with \cite[\S4.2]{breuil2019local}, the element $w_x$ will coincide with the one in Theorem \ref{theoremirreducibletriangullinevariety}). Then $x_{w_x}\in U_{\mathrm{tri}}(\overline{r})\subset \trivar$ by the definition of $U_{\mathrm{tri}}(\overline{r})$. If $x_w\in \trivar$, then $x_w$ satisfies the assumption in Theorem \ref{theoremirreducibletriangullinevariety}. Hence by Theorem \ref{theoremirreducibletriangullinevariety}, $w\geq w_x$ in $W/W_P$. The converse is also true and the following theorem is a plain generalization of \cite[Thm. 4.2.3]{breuil2019local} which asserts that all expected local companion points of $x_{w_0}$ exist on the trianguline variety (cf. \cite[Def. 4.2.1]{breuil2019local}). We repeat the proof here to introduce the notation that will be needed in the proof of Theorem \ref{theoremmaincrystalline}. 
\begin{theorem}\label{theoremlocalcompanionpoint}
	The point $x_w\in \mathfrak{X}_{\overline{r}}\times \cT_L^n$ is in $\trivar$ if and only if $w\geq w_x$ in $W/W_P$.\par
	Moreover, the set of points $x'=(r,\underline{\delta}')\in \mathfrak{X}_{\overline{r}}\times \cT_L^n$ such that $x'$ is in $\trivar$ is equal to
	\[\bigcup_{\cR=\underline{\varphi}}\{ (r, z^{w(\mathbf{h})}\mathrm{unr}(\underline{\varphi})),w\geq w_{\cR}\}\]
	where $\cR$ runs over all possible refinements of $r$.
\end{theorem}
\begin{proof}
We need a variant of Kisin's crystalline deformation space for irregular Hodge-Tate weights that is embedded into the trianguline variety as in \cite[\S2.2]{breuil2017smoothness}. \par
Let $R_{\overline{r}}^{\mathbf{h}-\mathrm{cr}}$ be the \emph{framed} crystalline deformation ring of $p$-adic Hodge type determined by the Hodge-Tate weights $\mathbf{h}$ in the sense of \cite{kisin2008potentially} over $\cO_L$ (reduced and $\Z_p$-flat) and let $\mathfrak{X}_{\overline{r}}^{\mathbf{h}-\mathrm{cr}}$ be the rigid analytic generic fiber of $\mathrm{Spf}(R_{\overline{r}}^{\mathbf{h}-\mathrm{cr}})$. By \cite[Thm. 3.3.8]{kisin2008potentially}, $\mathfrak{X}_{\overline{r}}^{\mathbf{h}-\mathrm{cr}}$ is smooth, equidimensional of dimension $n^{2}+\sum_{\tau\in\Sigma}\sum_{1\leq i< j\leq s_{\tau}}m_{\tau,i}m_{\tau,j}$ over $L$. The beginning part of \cite[\S2.2]{breuil2017smoothness} produces \textit{mutatis mutandis} a rigid analytic space $\widetilde{\mathfrak{X}}_{\overline{r}}^{\mathbf{h}-\mathrm{cr}}=\mathfrak{X}_{\overline{r}}^{\mathbf{h}-\mathrm{cr}}\times_{T_L^{\mathrm{rig}}/\mathcal{S}_n}T_L^{\mathrm{rig}}$ where $T_L^{\mathrm{rig}}$ is the rigid split torus over $L$ of rank $n$, $\mathcal{S}_n$ is the symmetric group so that the quotient ${T_L^{\mathrm{rig}}/\mathcal{S}_n}$ parameterizes characteristic polynomials of the Frobenius on the Weil-Deligne representations associated with the crystalline deformations and $\widetilde{\mathfrak{X}}_{\overline{r}}^{\mathbf{h}-\mathrm{cr}}$ parameterizes pairs $\left(r,(\varphi_1,\cdots,\varphi_n)\right)$ where $r\in \mathfrak{X}_{\overline{r}}^{\mathbf{h}-\mathrm{cr}}$ and $(\varphi_1,\cdots,\varphi_n)$ is an ordering of the eigenvalues of the Frobenius on the Weil-Deligne representation associated with $r$. The same proof of \cite[Lem. 2.2]{breuil2017smoothness} shows that $\widetilde{\mathfrak{X}}_{\overline{r}}^{\mathbf{h}-\mathrm{cr}}$ is reduced. \par
Let $\mathfrak{Y}_{\overline{r}}^{\mathbf{h}-\mathrm{cr}}\rightarrow\mathfrak{X}_{\overline{r}}^{\mathbf{h}-\mathrm{cr}}$ be the ${\mathrm{Res}_{K_0/\Q_p}(\GL_{n/K_0})\times_{\Q_p}L}$-torsor of the trivialization of the underlying coherent sheaf of the universal filtered $\varphi$-modules over $K_0\times_{\Q_p}\cO_{\mathfrak{X}_{\overline{r}}^{\mathbf{h}-\mathrm{cr}}}$ on $\mathfrak{X}_{\overline{r}}^{\mathbf{h}-\mathrm{cr}}$. Then sending a crystalline representation with a trivialization of $D_{\mathrm{cris}}$ to its crystalline Frobenius and Hodge filtration defines a morphism $f:\mathfrak{Y}_{\overline{r}}^{\mathbf{h}-\mathrm{cr}}\rightarrow \left((\mathrm{Res}_{K_0/\Q_p}(\GL_{n/K_0})\times_{\Q_p}L)\times_L G/P\right)^{\mathrm{rig}}$ which is smooth. In fact, by \cite[Prop. 8.17]{hartl2020universal}, $\fX_{\overline{r}}^{\mathbf{h}-\mathrm{cr}}$ is isomorphic to an open subspace $\widetilde{\mathscr{D}}_{\varphi,\mu}^{\mathrm{ad},\mathrm{adm},+}(\overline{r})$ of $\widetilde{\mathscr{D}}_{\varphi,\mu}^{\mathrm{ad},\mathrm{adm}}$ where $\mathscr{D}_{\varphi,\mu}^{\mathrm{ad}}$ is the quotient stack of the adic space associated with $(\mathrm{Res}_{K_0/\Q_p}(\GL_{n/K_0})\times_{\Q_p}L)\times_L G/P$ by the action of ${\mathrm{Res}_{K_0/\Q_p}(\GL_{n/K_0})\times_{\Q_p}L}$, $\mathscr{D}_{\varphi,\mu}^{\mathrm{ad},\mathrm{adm}}$ is an open subspace of $\mathscr{D}_{\varphi,\mu}^{\mathrm{ad}}$ where there is a universal representation of $\cG_K$ on a vector bundle $\mathcal{V}$ on $\mathscr{D}_{\varphi,\mu}^{\mathrm{ad},\mathrm{adm}}$ and $\widetilde{\mathscr{D}}_{\varphi,\mu}^{\mathrm{ad},\mathrm{adm}}$ is the stack over $\mathscr{D}_{\varphi,\mu}^{\mathrm{ad},\mathrm{adm}}$ trivializing $\mathcal{V}$. Then the morphism $f$ induces a smooth morphism: 
\[\widetilde{f}:\widetilde{\mathfrak{X}}_{\overline{r}}^{\mathbf{h}-\mathrm{cr}}\times_{\mathfrak{X}_{\overline{r}}^{\mathbf{h}-\mathrm{cr}}}\mathfrak{Y}_{\overline{r}}^{\mathbf{h}-\mathrm{cr}} \rightarrow \left((\mathrm{Res}_{K_0/\Q_p}(\GL_{n/K_0})\times_{\Q_p}L)^{\mathrm{rig}}\times_{T_{L}^{\mathrm{rig}}/\mathcal{S}_n}T_{L}^{\mathrm{rig}}\right)\times_{L}(G/P)^{\mathrm{rig}},\]
where the map $(\mathrm{Res}_{K_0/\Q_p}(\GL_{n/K_0})\times_{\Q_p}L)^{\mathrm{rig}}\rightarrow T_{L}^{\mathrm{rig}}/\mathcal{S}_n$ is defined by \cite[(9-1)]{hartl2020universal}. 
The condition $\varphi_i\varphi_j^{-1}\notin \{ 1, p^{[K_0,\Q_p]}\}$ for $i\neq j$ cuts out a Zariski open subspace 
\[\mathscr{D}^{\mathrm{gen}}:=\left((\mathrm{Res}_{K_0/\Q_p}(\GL_{n,K_0})\times_{\Q_p}L)^{\mathrm{rig}}\times_{T_{L}^{\mathrm{rig}}/\mathcal{S}_n}T_{L}^{\mathrm{rig}}\right)^{\mathrm{gen}}\times_{L}(G/P)^{\mathrm{rig}}\]
in the target of $\widetilde{f}$ and thus the inverse image of $\mathscr{D}^{\mathrm{gen}}$ under $\widetilde{f}$, denoted by $\widetilde{Z}_{\overline{r}}^{\mathbf{h}-\mathrm{cr}}$, is Zariski open dense in $\widetilde{\mathfrak{X}}_{\overline{r}}^{\mathbf{h}-\mathrm{cr}}\times_{\mathfrak{X}_{\overline{r}}^{\mathbf{h}-\mathrm{cr}}}\mathfrak{Y}_{\overline{r}}^{\mathbf{h}-\mathrm{cr}}$ (smooth morphisms are open). The subspace $\widetilde{Z}_{\overline{r}}^{\mathbf{h}-\mathrm{cr}}$ is invariant under the action of ${\mathrm{Res}_{K_0/\Q_p}(\GL_{n/K_0})\times_{\Q_p}L}$ (change of bases) and thus descends along $\widetilde{\mathfrak{X}}_{\overline{r}}^{\mathbf{h}-\mathrm{cr}}\times_{\mathfrak{X}_{\overline{r}}^{\mathbf{h}-\mathrm{cr}}}\mathfrak{Y}_{\overline{r}}^{\mathbf{h}-\mathrm{cr}}\rightarrow \widetilde{\mathfrak{X}}_{\overline{r}}^{\mathbf{h}-\mathrm{cr}}$ to a Zariski open dense subspace $\widetilde{W}_{\overline{r}}^{\mathbf{h}-\mathrm{cr}}$ of $\widetilde{\fX}_{\overline{r}}^{\mathbf{h}-\mathrm{cr}}$.\par
For any $\varphi$-module $D$ of rank $n$ over $A\otimes_{\Q_p}K_0$ where $A$ is an $L$-algebra, let \[D_{\tau}:=D\otimes_{L\otimes_{\Q_p}K_0, \mathrm{id}\otimes \tau}L\] 
for $\tau\in \Hom(K_0,L)$. Let $\sigma$ be the Frobenius automorphism of $K_0$. Then $\varphi:D_{\tau_0\circ \sigma^i}\rightarrow D_{\tau_0\circ\sigma^{i+1}}$ where $\tau_0\circ\sigma^f=\tau_0\circ \sigma^0=\tau_0$. Given a basis $e_1,\cdots,e_n$ of the $A\otimes_{\Q_p}K_0$-module, equivalent bases $(e_i\otimes_{L\otimes_{\Q_p}K_0, \mathrm{id}\otimes \tau}L)_{i=1,\cdots, n}$ of $D_{\tau}$ for each $\tau\in \Hom(K_0,L)$, the matrix of $\varphi$ is given by $M=(M_{\tau})_{\tau\in \Hom(K_0,L)}\in (\mathrm{Res}_{K_0/\Q_p}(\GL_{n,K_0})\times_{\Q_p}L)(A)$ where $M_{\tau}$ is the matrix of the morphism $\varphi:D_{\tau}\rightarrow D_{\tau\circ\sigma}$ under the given bases. On $\mathscr{D}^{\mathrm{gen}}$, the condition that $\varphi_i\neq \varphi_j$ for $i\neq j$ allows to define a Zariski closed subspace $\mathscr{T}\subset \mathscr{D}^{\mathrm{gen}}$ cut out by the condition that $M_{\tau}=\mathrm{diag}(a_{\tau,1},\cdots,a_{\tau,n})$ are diagonal matrices where $\prod_{\tau\in\Hom(K_0,L)}a_{\tau,i}=\varphi_i$ for all $i=1,\cdots, n$ (this corresponds to the choices of bases $e_1,\cdots,e_n$ of $D$ such that $\varphi^{[K_0:\Q_p]}e_i= \varphi_ie_{i}$). It is easy to see that $\mathscr{T}$ is smooth over $(G/P)^{\mathrm{rig}}$. Let $\widetilde{\mathfrak{T}}_{\overline{r}}^{\mathbf{h}-\mathrm{cr}}:=\widetilde{f}^{-1}(\mathscr{T})$ be the inverse image of $\mathscr{T}$ in $ \widetilde{\mathfrak{X}}_{\overline{r}}^{\mathbf{h}-\mathrm{cr}}\times_{\mathfrak{X}_{\overline{r}}^{\mathbf{h}-\mathrm{cr}}}\mathfrak{Y}_{\overline{r}}^{\mathbf{h}-\mathrm{cr}}$. Then $\widetilde{\mathfrak{T}}_{\overline{r}}^{\mathbf{h}-\mathrm{cr}}\rightarrow \widetilde{W}_{\overline{r}}^{\mathbf{h}-\mathrm{cr}}$ is a $(\mathrm{Res}_{K_0/\Q_p}(\mathbb{G}_{m})\otimes_{\Q_p}L)^n$-torsor corresponding to the trivialization of the $\varphi$-modules with the bases given by eigenvectors of $\varphi^{[K_0,\Q_p]}$ as above (such bases exist locally because the morphism $\varphi^{[K_0,\Q_p]}-\varphi_i$ between projective modules has cokernel and kernel of constant ranks over a reduced base). The map $\widetilde{\mathfrak{T}}_{\overline{r}}^{\mathbf{h}-\mathrm{cr}}\rightarrow \mathscr{T} \rightarrow (G/P)^{\mathrm{rig}}$ is also smooth. For any $w\in W/W_P$, descending along the map $\widetilde{\mathfrak{T}}_{\overline{r}}^{\mathbf{h}-\mathrm{cr}}\rightarrow \widetilde{W}_{\overline{r}}^{\mathbf{h}-\mathrm{cr}}$ for the inverse image in $\widetilde{\mathfrak{T}}_{\overline{r}}^{\mathbf{h}-\mathrm{cr}}$ of the Bruhat cell $(BwP/P)^{\mathrm{rig}}$, where the inverse image is invariant under the action of $(\mathrm{Res}_{K_0/\Q_p}(\mathbb{G}_{m})\otimes_{\Q_p}L)^n$, is a Zariski locally closed subset in $\widetilde{W}_{\overline{r}}^{\mathbf{h}-\mathrm{cr}}$, denoted by $\widetilde{W}_{\overline{r},w}^{\mathbf{h}-\mathrm{cr}}$. Let $\overline{\widetilde{W}_{\overline{r},w}^{\mathbf{h}-\mathrm{cr}}}$ be the Zariski closure of $\widetilde{W}_{\overline{r},w}^{\mathbf{h}-\mathrm{cr}}$ in $\widetilde{W}_{\overline{r}}^{\mathbf{h}-\mathrm{cr}}$. Then we have the usual closure relations $\overline{\widetilde{W}_{\overline{r},w}^{\mathbf{h}-\mathrm{cr}}}=\coprod_{w'\leq w}\widetilde{W}_{\overline{r},w'}^{\mathbf{h}-\mathrm{cr}}$ by a similar argument as in the proof of \cite[Thm. 4.2.3]{breuil2019local} (using that smooth morphisms are open) and descent.\par 
Hence a point $x=(r,\underline{\varphi})$ with the refinement denoted by $\cR$ in $\widetilde{W}_{\overline{r}}^{\mathbf{h}-\mathrm{cr}}$ lies in $\widetilde{W}_{\overline{r},w}^{\mathbf{h}-\mathrm{cr}}$ (resp. $\overline{\widetilde{W}_{\overline{r},w}^{\mathbf{h}-\mathrm{cr}}}$) if and only $w_{\cR}=w$ (resp. $w_{\cR}\leq w$). For any $w\in W/W_P$, there is a morphism $\iota_{\mathbf{h},w}:\mathfrak{X}_{\overline{r}}^{\mathbf{h}-\mathrm{cr}}\times_L T_L^{\mathrm{rig}}\rightarrow \mathfrak{X}_{\overline{r}}\times_L\mathcal{T}_L^n$ that sends $(r,\underline{\varphi})$ to $\left(r,z^{w(\mathbf{h})}\mathrm{unr}(\underline{\varphi})\right)$. We have $\widetilde{W}_{\overline{r},w}^{\mathbf{h}-\mathrm{cr}}\subset \iota_{\mathbf{h},w}^{-1}(U_{\mathrm{tri}}(\overline{r}))$ by \cite{berger2008equations}. Hence $\overline{\widetilde{W}_{\overline{r},w}^{\mathbf{h}-\mathrm{cr}}}\subset \iota_{\mathbf{h},w}^{-1}(\trivar)$ for any $w\in W/W_P$. Thus $\iota_{\mathbf{h},w}(\widetilde{W}_{\overline{r},w_{\cR}}^{\mathbf{h}-\mathrm{cr}})\subset \trivar$ for any $w\geq w_{\cR}$ which finishes the proof of the first statement of the theorem.\par
To prove the last statement of the theorem, by the first statement, we only need to prove that if $x'=(r,\underline{\delta}')\in \trivar$, then $x'=x_w$ for some refinement $\cR$ of $r$ and $w\in W/W_P$. This follows from the bijection between the triangulation of $D_{\mathrm{rig}}(r)$ and the refinements of $r$, \cite[Thm. 6.3.13]{kedlaya2014cohomology} and \cite[Prop. 2.9]{breuil2017interpretation}.
\end{proof}
\subsection{$p$-adic automorphic forms and eigenvarieties}\label{sectionglobalsettings}
We now turn to the global settings in \cite[\S5]{breuil2019local} (see also \cite[\S2.4 \& \S3]{breuil2017interpretation} or \cite[\S3]{breuil2017smoothness}). We recall the basic notation and constructions. The reader should refer to \textit{loc. cit.} for details. We assume from now on $p>2$. \par

Let $F^+$ be a totally real field and $F/F^+$ be a CM quadratic extension such that any prime in $S_p$, the set of places of $F^+$ above $p$, splits in $F$. 

Let $G$ be a unitary group in $n\geq 2$ variables over $F^+$ which is definite at all real places, split over $F$ and quasi-split at all finite places. 

We fix a tame level $U^p=\prod_{v\notin S_p}U_v$ where for any finite place $v\notin S_p$, $U_v$ is an open compact subgroup of $G(F^+_v)$. We assume that $U^p$ is small enough in the sense of \cite[(3.9)]{breuil2017smoothness}

Let $S\supset S_p$ be a finite set of finite places of $F^+$ that split in $F$ and we require that every place of $F^+$ that splits in $F$ such that $U_v$ is not maximal is contained in $S$. For each $v\in S$, we fix a place $\widetilde{v}$ of $F$ above $F^+$.

We fix an isomorphism $G\times_{F^+} F\simeq \GL_{n,F}$ which induces $i_{\tilde{v}}:G(F^+_v)\simrightarrow \GL_n(F_{\tilde{v}})$ for any $v\in S_p$. 

Recall that $L$ denotes the coefficient field, a finite extension of $\Q_p$. We assume that $L$ is large enough such that $|\Hom(F_{\widetilde{v}},L)|=[F_{\widetilde{v}}:\Q_p]$ for all $v\in S_p$.\par

We define the set $\Sigma_v:=\Hom(F_{\widetilde{v}},L)$ for any $v\in S_p$ and let $\Sigma_p=\cup_{v\in S_p}\Sigma_v$. 

Denote by $B_{v}$ (resp. $\overline{B}_v$, resp. $T_v$) the subgroup of $G_v:=G(F^+_v)$ which is the preimage of the group of upper-triangular matrices (resp. lower-triangular matrices, resp. diagonal matrices) of $\GL_n(F_{\tilde{v}})$ under $i_{\tilde{v}}$ and let $B_p=\prod_{v\in S_p} B_v$, $\overline{B}_p=\prod_{v\in S_p} \overline{B}_v$ and $T_p=\prod_{v\in S_p} T_v$. Set $G_p=\prod_{v\in S_p} G_v$. \par

For $v\in S_p$, let $\fg_v$ (resp. $\fb_v$, resp. $\overline{\fb}_v$, resp. $\ft_v$) be the base change to $L$ of the $\Q_p$-Lie algebra of $G_v$ (resp. $B_v$, resp. $\overline{B}_v$, resp. $T_v$). We define $\fg=\prod_{v\in S_p}\fg_v, \ft=\prod_{v\in S_p}\ft_v$, etc. and for $v\in S_p, \tau\in \Sigma_v$, set $\fg_{\tau}=\fg_v\otimes_{F_{\widetilde{v}}\otimes_{\Q_p}L,\tau\otimes\mathrm{id}}L$, $\ft_{\tau}=\ft_v\otimes_{F_{\widetilde{v}}\otimes_{\Q_p}L,\tau\otimes\mathrm{id}}L$, etc..\par

Let $\widehat{S}(U^p,L)$ (resp. $\widehat{S}(U^p,\cO_L)$) be the space of continuous functions $G(F^+)\backslash G(\mathbf{A}_{F^+}^{\infty})/U^p\rightarrow L$ (resp. $G(F^+)\backslash G(\mathbf{A}_{F^+}^{\infty})/U^p\rightarrow \cO_L$) on which $G_p$ acts via right translations. Then there is a Hecke algebra $\mathbb{T}^S$ (a commutative $\cO_L$-algebra, see \cite[\S2.4]{breuil2017interpretation} for details) that acts on $\widehat{S}(U^p,\cO_L)$. We fix a maximal ideal $\mathfrak{m}^S$ of $\mathbb{T}^S$ with residue field $k_L$ (otherwise enlarging $L$) such that 
\[\widehat{S}(U^p,L)_{\mathfrak{m}^S}\neq 0\]
and that the associated Galois representation $\overline{\rho}:\cG_F\rightarrow\GL_n(k_L)$ is absolutely irreducible (i.e. $\mathfrak{m}^S$ is non-Eisenstein) where $\cG_F:=\Gal(\overline{F}/F)$. \par

We assume furthermore the ``standard Taylor-Wiles hypothesis'', that is we require that $F$ is unramified over $F^+$, $F$ contains no non-trivial $p$-th root of unity, $U_v$ is hyperspecial if the place $v$ of $F^+$ is inert in $F$, and $\overline{\rho}\left(\Gal(\overline{F}/F(\sqrt[p]{1}))\right)$ is adequate (cf. \cite[Rem. 1.1]{breuil2019local}). 

The Galois deformation ring $R_{\overline{\rho},S}$, which parameterizes polarized deformations of $\overline{\rho}$ unramified outside $S$, acts on $\widehat{S}(U^p,L)_{\mathfrak{m}^S}$. The subspace of locally $\Q_p$-analytic vectors of $\widehat{S}(U^p,L)_{\mathfrak{m}^S}$ for the action of $G_p$, denoted by $\widehat{S}(U^p,L)_{\mathfrak{m}^S}^{\mathrm{an}}$, is a very strongly admissible locally $\Q_p$-analytic representation of $G_p$ (\cite[Def. 0.12]{emerton2007jacquetII}). The eigenvariety $Y(U^p,\overline{\rho})$ is defined to be the support of the coherent sheaf $\left(J_{B_p}(\widehat{S}(U^p,L)_{\mathfrak{m}^S}^{\mathrm{an}})\right)'$, applying Emerton's Jacquet functor (with respect to the parabolic subgroup $B_p$ of $G_p$) on $\widehat{S}(U^p,L)_{\mathfrak{m}^S}^{\mathrm{an}}$ and then taking the continuous dual, on $\mathrm{Spf}(R_{\overline{\rho},S})^{\mathrm{rig}}\times \widehat{T}_{p,L}$, where $\widehat{T}_{p,L}=\prod_{v\in S_p}\widehat{T}_{v,L}$ denotes the base change to $L$ of the $\Q_p$-rigid space parameterizing continuous characters of $T_p=\prod_{v\in S_p}T_v$. \par

Let $R_{\infty}=\widehat{\bigotimes}_{v\in S}R_{\overline{\rho}_{\widetilde{v}}}'[[x_1,\cdots,x_g]]$ where for a place $\tilde{v}$ of $F$, $R_{\overline{\rho}_{\widetilde{v}}}'$ is the maximal reduced $\Z_p$-flat quotient of the local framed deformation ring of $\overline{\rho}_{\widetilde{v}}:=\overline{\rho}|_{\cG_{F_{\widetilde{v}}}}$ over $\cO_L$ and $g$ is certain determined integer. Then there is an $\cO_L$-module $M_{\infty}$ constructed in \cite{caraiani2016patching} (see \cite[Thm. 3.5]{breuil2017interpretation} and \cite[\S6]{breuil2017smoothness}) equipped with actions of $R_{\infty}$ and $G_p$ so that $\Pi_{\infty}:=M_{\infty}'[\frac{1}{p}]$ is a $R_{\infty}$-admissible Banach representation of $G_p$ (\cite[Def. 3.1]{breuil2017interpretation}). The \emph{patched eigenvariety} $X_p(\overline{\rho})$ is defined to be the support of the coherent sheaf $\fM_{\infty}:=\left(J_{B_p}(\Pi_{\infty}^{R_{\infty}-\mathrm{an}})\right)'$, applying Emerton's Jacquet functor on the subspace of locally $R_{\infty}$-analytic vectors of $\Pi_{\infty}$ (\cite[Def. 3.2]{breuil2017interpretation}) and then taking the continuous dual, on $\mathrm{Spf}(R_{\infty})^{\mathrm{rig}}\times \widehat{T}_{p,L}$. Then a point $x=(r_x,\underline{\delta})\in \mathrm{Spf}(R_{\infty})^{\mathrm{rig}}\times \widehat{T}_{p,L}$ lies in $X_p(\overline{\rho})$ if and only if $\mathrm{Hom}_{T_p}\left(\underline{\delta}, J_{B_p}\left(\Pi_{\infty}^{R_{\infty}-\mathrm{an}}[\mathfrak{m}_{r_x}]\otimes_{k(r_x)}k(x)\right)\right)\neq 0$ where $\fm_{r_x}$ denotes the maximal ideal of $R_{\infty}[\frac{1}{p}]$ corresponding to the point $r_x\in \mathrm{Spf}(R_{\infty})^{\mathrm{rig}}$ (cf. \cite[Prop. 3.7]{breuil2017interpretation}). Recall that $X_p(\overline{\rho})$ is reduced (\cite[Cor. 3.20]{breuil2017interpretation}). The eigenvariety $Y(U^p,\overline{\rho})$ is identified with a Zariski closed subspace of the patched one $X_p(\overline{\rho})$.
\par
We denote by 
\[\mathfrak{X}_{\overline{\rho}_{p}}:=\mathrm{Spf}\left(\widehat{\bigotimes}_{v\in S_p}R'_{\overline{\rho}_{\widetilde{v}}}\right)^{\mathrm{rig}}, \mathfrak{X}_{\overline{\rho}^{p}}:=\mathrm{Spf}\left(\widehat{\bigotimes}_{v\in S\setminus S_p}R'_{\overline{\rho}_{\widetilde{v}}}\right)^{\mathrm{rig}}, \mathbb{U}^g=\mathrm{Spf}(\cO_L[[x_1,\cdots,x_g]])^{\mathrm{rig}}\] and 
$\fX_{\infty}:=\mathrm{Spf}(R_{\infty})^{\mathrm{rig}}\simeq\fX_{\overline{\rho}_{p}}\times \fX_{\overline{\rho}^{p}}\times \mathbb{U}^g$. \par

Let $X_{\mathrm{tri}}(\overline{\rho}_p):=\prod_{v\in S_p} X_{\mathrm{tri}}(\overline{\rho}_{\widetilde{v}})$ which by definition is a Zariski closed subspace of $\fX_{\overline{\rho}_p}\times \widehat{T}_{p,L}$. Denote by $\delta_{B_v}:=|\cdot|_{F_{\widetilde{v}}}^{n-1}\otimes\cdots\otimes|\cdot|_{F_{\widetilde{v}}}^{n-2i+1}\otimes\cdots \otimes |\cdot|^{1-n}_{F_{\widetilde{v}}}$ the smooth modulus character of $B_v$ and $\delta_{B_p}:=\otimes_{v\in S_p}\delta_{B_v}$. Let $\iota_v$ be the automorphism
\[(\delta_{v,1},\cdots,\delta_{v,n})\mapsto \delta_{B_v}\cdot(\delta_{v,1},\cdots,\delta_{v,i}\epsilon^{i-1}, \cdots\delta_{v,n}\epsilon^{n-1})\] 
of $\widehat{T}_{v,L}$ and let $\iota=\prod_{v\in S_p}\iota_v: \widehat{T}_{p,L}\simrightarrow \widehat{T}_{p,L}$. We also use $\iota$ to denote the automorphism of $\fX_{\overline{\rho}_p}\times \widehat{T}_{p,L}:(x,\underline{\delta})\mapsto \left(x,\iota(\underline{\delta})\right)$.   
Then the reduced closed subvariety $X_p(\overline{\rho})$ of $\fX_{\overline{\rho}_{p}}\times \fX_{\overline{\rho}^{p}}\times \mathbb{U}^g\times \widehat{T}_{p,L}$ lies in the Zariski closed subspace $\iota\left(X_{\mathrm{tri}}(\overline{\rho}_p)\right)\times\fX_{\overline{\rho}^{p}}\times \mathbb{U}^g$ and is identified with a union of irreducible components of $\iota\left(X_{\mathrm{tri}}(\overline{\rho}_p)\right)\times\fX_{\overline{\rho}^{p}}\times \mathbb{U}^g$ (\cite[Thm. 3.21]{breuil2017interpretation}) any of which is of the form $\iota(X)\times \mathfrak{X}^p\times \mathbb{U}^g$ where $X$ (resp. $\mathfrak{X}^p$) is an irreducible component of $X_{\mathrm{tri}}(\overline{\rho}_p)$ (resp. $\mathfrak{X}_{\overline{\rho}^p}$). An irreducible component $X$ of $X_{\mathrm{tri}}(\overline{\rho}_p)$ is said to be $\fX^p$-automorphic for an irreducible component $\fX^p$ of $\mathfrak{X}_{\overline{\rho}^p}$ if $\iota(X)\times \mathfrak{X}^p\times \mathbb{U}^g$ is contained in $X_p(\overline{\rho})$. \par
\begin{definition}\label{definitiongenericglobal}
	\begin{enumerate}
		\item A character $\underline{\delta}=(\underline{\delta}_v)_{v\in S_p}\in \widehat{T}_{p,L}$ is called \emph{generic} if for each $v\in S_p$, $\iota_v^{-1}\left(\underline{\delta}_v\right)\in \cT_{v,0}^{n}$, where $\cT_{v,0}^{n}$ denotes the subset $\cT_{0}^{n}$ in \S\ref{subsectiongroupoids} for characters of $(F_{\widetilde{v}}^{\times})^n$. Explicitly, we say $\underline{\delta}$ is generic if ${\delta_{v,i}\delta_{v,j}^{-1}|\cdot|_{F_{\widetilde{v}}}^{2i-2j}\epsilon^{j-i}}\neq z^{\mathbf{k}},\epsilon z^{\mathbf{k}}$ for any $v\in S_p,i\neq j,\mathbf{k}\in \Z^{\Sigma_v}$.
		\item A point $x=(r_x,\underline{\delta})\in \fX_{\infty}\times \widehat{T}_{p,L}$ (or $y=(\rho,\underline{\delta})\in \Spf(R_{\overline{\rho},S})^{\mathrm{rig}}\times \widehat{T}_{p,L}$) is said to be generic if $\underline{\delta}$ is generic.
	\end{enumerate}	
\end{definition}
\subsection{Orlik-Strauch theory}\label{sectionOrlikStrauch}
We recall the theory of Orlik-Strauch on Jordan-Hölder factors of locally analytic principal series which will be the companion constituents in the locally analytic socles.\par
Let $\cO$ (resp. $\overline{\cO}$) be the BGG category of $U(\fg)$-modules attached to the Borel subalgebra $\fb$ (resp. $\overline{\fb}$) (\cite[\S1.1]{humphreys2008representations}). If $M$ is in $\overline{\cO}_{\mathrm{alg}}$ (\cite[\S2]{breuil2016versI}) and $V$ is a smooth representation of $T_p$ over $L$, then Orlik-Strauch constructs a locally $\Q_p$-analytic representation $\cF_{\overline{B}_p}^{G_p}(M,V)$ of $G_p$ (\cite{orlik2015jordan}, see \cite[\S2]{breuil2016versI}, \cite[\S2]{breuil2015versII} and \cite[Rem. 5.1.2]{breuil2019local}). The functor $\cF_{\overline{B}_p}^{G_p}(-,-)$ is exact and contravariant (resp. covariant) in the first (resp. second) arguments (cf. \cite[Thm. 2.2]{breuil2015versII}). If $\lambda=(\lambda_v)_{v\in S_p}=\left(\lambda_{\tau,1},\cdots,\lambda_{\tau,n}\right)_{\tau\in \Sigma_v,v\in S_p}\in \prod_{v\in S_p}(\Z^n)^{\Sigma_v}$, we let $z^{\lambda}:=\prod_{v\in S_p}z^{\lambda_v}$ be the algebraic character of $T_p=\prod_{v\in S_p}T_v$ which satisfies that $\mathrm{wt}_{\tau}\left((z^{\lambda_{v}})_i\right)=\lambda_{\tau,i}$ for every $\tau\in \Sigma_v$. Thus we may view $\lambda$ as a weight of $\ft$. Assume that $\underline{\delta}=(\underline{\delta}_v)_{v\in S_p}\in \widehat{T}_{p,L}(L)$ is locally algebraic of weight $\lambda$ (i.e. $\wt(\underline{\delta}):=(\wt_{\tau}(\delta_{v,i}))_{i=1,\cdots,n, \tau\in \Sigma_v,v\in S_p}\in \prod_{v\in S_p}(\Z^n)^{\Sigma_v}$ and $\lambda=\wt(\underline{\delta})$) and we write $\underline{\delta}=z^{\lambda}\underline{\delta}_{\mathrm{sm}}$ so that $\underline{\delta}_{\mathrm{sm}}$ is a smooth character of $T_p$. We define
\[\cF_{\overline{B}_p}^{G_p}(\underline{\delta}):=\cF_{\overline{B}_p}^{G_p}\left(\left(U(\fg)\otimes_{U(\overline{\fb})}(-\lambda)\right)^{\vee},\underline{\delta}_{\mathrm{sm}}\delta_{B_p}^{-1}\right)\]
where $-\lambda$ is viewed as a weight of $\overline{\fb}$ and $(-)^{\vee}$ is the dual in $\overline{\cO}$ (cf. \cite[\S3.2]{humphreys2008representations}). By \cite[Thm. 4.3, Rem. 4.4]{breuil2015versII}, 
\begin{equation}\label{equationorlikstrauchadjunction}
	\Hom_{G_p}(\cF_{\overline{B}_p}^{G_p}(\underline{\delta}),\Pi^{\mathrm{an}})\simeq \Hom_{T_p}(\underline{\delta},J_{B_p}(\Pi^{\mathrm{an}}))
\end{equation}
for any very strongly admissible locally analytic representation $\Pi^{\mathrm{an}}$ of $G_p$ over $L$. \par
For such $\lambda$, let $\overline{L}(\lambda)$ (resp. $L(\lambda)$) be the irreducible $U(\fg)$-module of the highest weight $\lambda$ in $\overline{\cO}$ (resp. in $\cO$). Let $W_{G_p}:=\prod_{v\in S_p} (\mathcal{S}_n)^{\Sigma_v}$ be the Weyl group of $\fg=\prod_{v}\fg_v=\prod_{\tau\in\Sigma_p}\fg_{\tau}$ acting naturally on $\prod_{v\in S_p}(\Z^n)^{\Sigma_v}$ and we identify this action with the usual action of the Weyl group on weights of $\ft$. Let 
\[\rho=(\frac{n-1}{2},\cdots,\frac{n-2i+1}{2},\cdots,\frac{1-n}{2})_{\tau\in\Sigma_v,v\in S_p}\]
be the half sum of positive roots (with respect to $\fb$) (the notation $\rho$ will also be used to denote Galois representations when it will not confuse the reader according to the context). The dot action is given by $w\cdot\mu=w(\mu+\rho)-\rho$ for any $w\in W_{G_p}$ and $\mu\in \prod_{v\in S_p} (\Z^n)^{\Sigma_v}$. \par
We say $\lambda\in \prod_{v\in S_p} (\Z^n)^{\Sigma_v}$ is \emph{dominant} (resp. \emph{anti-dominant}) (with respect to $\fb$) if $\lambda_{\tau,i}\geq \lambda_{\tau,i+1},\forall \tau\in\Sigma_p,i=1,\cdots,n-1$ (resp. $-\lambda$ is dominant).
Now assume that $\lambda\in \prod_{v\in S_p} (\Z^n)^{\Sigma_v}$ such that $\lambda+\rho$ is dominant (so that $\lambda$ is dominant in the sense of \cite[\S3.5]{humphreys2008representations} with respect to $\fb$). Let $w_0=(w_{v,0})_{v\in S_p}=\left((w_{\tau,0}\right)_{\tau\in \Sigma_v})_{v\in S_p}$ be the longest element in $W_{G_p}$ and let $W_{P_p}=\prod_{v\in S_p} W_{P_v}$ be the parabolic subgroup of $W_{G_p}$ consisting of elements that fix $w_0\cdot\lambda$ under the dot action where $P_p=\prod_{v\in S_v} P_v$ denotes the parabolic subgroup of $\prod_{v\in S_p}(\mathrm{Res}_{F_{\widetilde{v}}/\Q_p}\GL_{n/F_{\widetilde{v}}})\times_{\Q_p}L$ containing the Borel subgroup of upper-triangular matrices associated with $W_{P_p}$. Now $-\lambda$ is dominant with respect to $\overline{\fb}$ in the sense of \cite[\S3.5]{humphreys2008representations} and an irreducible module $\overline{L}(-\mu)$ is a subquotient of $U(\fg)\otimes_{U(\overline{\fb})}(-ww_0\cdot\lambda)$ if and only if $\mu\uparrow ww_0\cdot\lambda$ (cf. \cite[\S5.1]{humphreys2008representations}, the linkage relation $\uparrow$ here is defined with respect to $\fb$). One can prove that $\mu\uparrow ww_0\cdot\lambda$ if and only if $-\mu=-w'w_0\cdot\lambda$ for some $w'\in W_{G_p}/W_{P_p}$ such that $w'\leq w$ in $W_{G_p}/W_{P_p}$. Hence we conclude that the Jordan-Hölder factors of $U(\fg)\otimes_{U(\overline{\fb})}(-ww_0\cdot\lambda)$ are those $\overline{L}(-w'w_0\cdot\lambda)$ for $w'\leq w$ in $W_{G_p}/W_{P_p}$ (one can also use the fact that the translation functor $T^{w_0\cdot \lambda}_{w_0\cdot 0}$ is exact (\cite[\S7.1]{humphreys2008representations}) and $T^{w_0\cdot \lambda}_{w_0\cdot 0}M(ww_0\cdot 0)=M(ww_0\cdot \lambda)$ for all $w\in W_{G_p}$ where $M(-)$ denotes the Verma modules with respect to $\fb$, $ T^{w_0\cdot \lambda}_{w_0\cdot 0}L(ww_0\cdot 0)=L(ww_0\cdot \lambda)$ if $w\in (W_{G_p})^{P_p}$ and $T^{w_0\cdot \lambda}_{w_0\cdot 0}L(ww_0\cdot 0)=0$ if $w\notin (W_{G_p})^{P_p}$ to reduce to regular cases, cf. \cite[Thm. 7.6, Thm. 7.9]{humphreys2008representations} or \cite[Prop. 2.1.1]{irving1990singular}).\par
For a locally algebraic character $\underline{\delta}\in \widehat{T}_{p,L}$ of weight $\lambda$, we define characters $\underline{\delta}_{w}:=z^{ww_0\cdot \lambda}\underline{\delta}_{\mathrm{sm}}$ for $w\in W_{G_p}/W_{P_p}$. By the Orlik-Strauch theory (\cite[Thm. 2.3 \& (2.6)]{breuil2016versI}), if the smooth representation $\mathrm{Ind}_{\overline{B}_p}^{G_p}\underline{\delta}_{\mathrm{sm}}\delta_{B_p}^{-1}$ is irreducible (which will be the case in our later discussions), the locally $\Q_p$-analytic representation $\mathcal{F}_{\overline{B}_p}^{G_p}\left(\overline{L}(-ww_0\cdot\lambda),\underline{\delta}_{\mathrm{sm}}\delta_{B_p}^{-1}\right)$ is irreducible. The Jordan-Hölder factors of $\cF_{\overline{B}_p}^{G_p}(\underline{\delta}_{w})$ are those $\mathcal{F}_{\overline{B}_p}^{G_p}\left(\overline{L}(-w'w_0\cdot\lambda),\underline{\delta}_{\mathrm{sm}}\delta_{B_p}^{-1}\right)$ where $w'\leq w$ in $W_{G_p}/W_{P_p}$ and $\mathcal{F}_{\overline{B}_p}^{G_p}\left(\overline{L}(-ww_0\cdot\lambda),\underline{\delta}_{\mathrm{sm}}\delta_{B_p}^{-1}\right)$ is the unique irreducible quotient of $\cF_{\overline{B}_p}^{G_p}(\underline{\delta}_{w})$. 
\subsection{The locally analytic socle conjecture}\label{sectionthesocleconjection}
We prove our main results concerning the appearance of companion constituents in the completed cohomology and the existence of companion points on the eigenvariety in the situation of non-regular Hodge-Tate weights. The proofs of Proposition \ref{propositionsocleappear} and Theorem \ref{theoremmaincrystalline} follow essentially part of the proof of \cite[Thm. 5.3.3]{breuil2019local} with weakened assumptions. The new ingredients are in Theorem \ref{theoremcyclepartialderham} and Proposition \ref{propositionmaincycle}.\par
Let $\lambda=\left(\lambda_{\tau,1},\cdots,\lambda_{\tau,n}\right)_{\tau\in \Sigma_v,v\in S_p}\in\prod_{v\in S_p}(\Z^n)^{\Sigma_v}$ such that $\lambda+\rho$ is dominant. Then we have the parabolic subgroup $P_p$ of $\prod_{v\in S_p}(\mathrm{Res}_{F_{\widetilde{v}}/\Q_p}\GL_{n/F_{\widetilde{v}}})\times_{\Q_p}L$ associated with $\lambda$ as in \S\ref{sectionOrlikStrauch}.\par 
We firstly recall some representation theoretic results in \cite[\S5.2]{breuil2019local} and the construction of certain ``family of companion constituents'' in the proof of \cite[Thm. 5.3.3]{breuil2019local}. \par
We write $\Pi_{\infty}^{\mathrm{an}}$ for $\Pi_{\infty}^{R_{\infty}-\mathrm{an}}$. Let $U_p$ be the unipotent radical of $B_p$ and $U_0$ be a compact open subgroup of $U_p$. Suppose that $\Pi^{\mathrm{an}}$ is a very strongly admissible locally $\Q_p$-analytic representation of $G_p$ over $L$. If $M\in\cO_{\mathrm{alg}}$, then $M$ is equipped with an action of $B_p$ and $\Hom_{U(\fg)}\left(M, \Pi^{\mathrm{an}}\right)$ is equipped with a smooth action of $B_p$. Its space of $U_{0}$-invariants $\Hom_{U(\fg)}\left(M, \Pi^{\mathrm{an}}\right)^{U_0}$ is then equipped with a Hecke action of $T_p^{+}:=\{t\in T_p\mid tU_0t^{-1}\subset U_0\} $ given by (\cite[(5.9)]{breuil2019local})
\[f\mapsto t\cdot f:=\delta_{B_p}(t)\sum_{u_0\in U_{0}/tU_{0}t^{-1}}u_0tf.\] 
The starting point is the following adjunction formula. For any finite-dimensional smooth representation $V$ of $T_p$ over $L$, by \cite[Lem. 5.2.1]{breuil2019local} (which itself follows from \cite[Prop. 4.2]{breuil2015versII}), we have
\begin{align*}
	\Hom_{G_p}\left(\mathcal{F}_{\overline{B}_p}^{G_p}\left( \Hom(M,L)^{\overline{\mathfrak{u}}^{\infty}},V(\delta_{B_p}^{-1})\right),\Pi^{\mathrm{an}}\right)&=\Hom_{T_p^{+}}\left(V,\Hom_{U(\fg)}\left(M, \Pi^{\mathrm{an}}\right)^{U_0}\right)\\
	&=\Hom_{T_p}\left(V,\left(\Hom_{U(\fg)}\left(M, \Pi^{\mathrm{an}}\right)^{U_0}\right)_{\mathrm{fs}}\right).
\end{align*}
where $(-)_{\mathrm{fs}}$ denotes Emerton's finite slope part functor (\cite[Def. 3.2.1]{emerton2006jacquet}) and $\Hom(M,L)^{\overline{\mathfrak{u}}^{\infty}}$ is the subspace of $\Hom(M,L)$ consisting of elements annihilated by a power of $\overline{\fu}$, base change to $L$ of the $\Q_p$-Lie algebra of the unipotent radical of $\overline{B}_p$ (see \cite[\S3]{breuil2015versII}).\par 
In particular, for any point $y=(r_y, \underline{\delta})\in\fX_{\infty}\times\widehat{T}_{p,L}$ with the corresponding maximal ideal $\fm_{r_y}$ for $r_y$ such that $
\underline{\delta}$ is locally algebraic of weight $ww_0\cdot \lambda$ and has smooth part $\underline{\delta}_{\mathrm{sm}}$, we have
\begin{align}\label{equationadjointliealgebra}
	&\Hom_{G_p}\left(\mathcal{F}_{\overline{B}_p}^{G_p}\left( \overline{L}(-ww_0\cdot\lambda),\underline{\delta}_{\mathrm{sm}}\delta_{B_p}^{-1}\right),\Pi_{\infty}^{\mathrm{an}}[\fm_{r_y}]\otimes_{k(r_y)}k(y)\right)\\
	&=\Hom_{T_p}\left(\underline{\delta}_{\mathrm{sm}},\left(\Hom_{U(\fg)}\left(L(ww_0\cdot\lambda), \Pi_{\infty}^{\mathrm{an}}[\fm_{r_y}]\otimes_{k(r_y)}k(y)\right)^{U_0}\right)_{\mathrm{fs}}\right).\nonumber
\end{align}
Recall by (\ref{equationorlikstrauchadjunction}), $y\in X_p(\overline{\rho})$ if and only if $\Hom_{G_p}\left(\mathcal{F}_{\overline{B}_p}^{G_p}(\underline{\delta}_{w}),\Pi_{\infty}^{\mathrm{an}}[\fm_{r_y}]\otimes_{k(r_y)}k(y)\right)\neq 0$, and there is an injection
\begin{align}\label{equationsocleinjection}
	\Hom_{G_p}\left(\mathcal{F}_{\overline{B}_p}^{G_p}\left(\overline{L}(-ww_0\cdot\lambda),\underline{\delta}_{\mathrm{sm}}\delta_{B_p}^{-1}\right),\Pi_{\infty}^{\mathrm{an}}[\fm_{r_y}]\otimes_{k(r_y)}k(y)\right)\nonumber \\\hookrightarrow \Hom_{G_p}\left(\mathcal{F}_{\overline{B}_p}^{G_p}(\underline{\delta}_{w}),\Pi_{\infty}^{\mathrm{an}}[\fm_{r_y}]\otimes_{k(r_y)}k(y)\right)
\end{align} 
induced by the quotient $U(\fg)\otimes_{U(\overline{\fb})}(-ww_0\cdot \lambda)\twoheadrightarrow \overline{L}(-ww_0\cdot\lambda)$ for any $w\in W_{G_p}$.\par
For any $w\in W_{G_p}$, let $X_p(\overline{\rho})_{ww_0\cdot \lambda}$ be the fiber of the composite map $X_p(\overline{\rho})\rightarrow \widehat{T}_{p,L}\stackrel{\mathrm{wt}}{\rightarrow}\ft^{*}$ over $ww_0\cdot\lambda\in \ft^{*}$. Here $\ft^{*}$ denotes the rigid space associated with $\Hom_L(\ft,L)$ and the map $\mathrm{wt}$ sends a character of $T_p$ to its weight. Since $J_{B_p}(\Pi_{\infty}^{\mathrm{an}})=((\Pi^{\mathrm{an}}_{\infty})^{U_0})_{\mathrm{fs}}$, the quotient $U(\fg)\otimes_{U(\fb)}ww_0\cdot\lambda\twoheadrightarrow L(ww_0\cdot\lambda)$ induces a closed immersion as (\ref{equationsocleinjection})
\begin{align}\label{equationinjectionglobalsectionfamillyconstituents}
	\left(\Hom_{U(\fg)}\left(L(ww_0\cdot\lambda),\Pi_{\infty}^{\mathrm{an}}\right)^{U_0}\right)_{\mathrm{fs}}&\hookrightarrow \left(\Hom_{U(\ft)}\left(ww_0\cdot\lambda, \Pi_{\infty}^{\mathrm{an}}[\fu]\right)^{U_0}\right)_{\mathrm{fs}} \nonumber\\ 
	&=\Hom_{U(\ft)}\left(ww_0\cdot\lambda, J_{B_p}(\Pi_{\infty}^{\mathrm{an}})\right)
\end{align}
which is compatible with actions of $R_{\infty}[\frac{1}{t}]$ and $T_p$ (see \textbf{Step 8} of the proof of \cite[Thm. 5.3.3]{breuil2019local} for more details on the topology). The continuous dual $\Hom_{U(\ft)}\left(ww_0\cdot\lambda, J_{B_p}(\Pi_{\infty}^{\mathrm{an}})\right)'$ of the target is the global section of the coherent sheaf $\cM_{\infty}\otimes_{\cO_{X_p(\overline{\rho})}}\cO_{X_p(\overline{\rho})_{ww_0\cdot\lambda}}$ over the quasi-Stein space $X_p(\overline{\rho})_{ww_0\cdot\lambda}$ (as a closed subspace of the quasi-Stein space $\Spf(R_{\infty})^{\mathrm{rig}}\times \widehat{T}_{p,L}$, cf. \cite[Def. 2.1.17]{emerton2017locally} and \cite[\S3]{schneider2003algebras}). Then the continuous dual $\left(\Hom_{U(\fg)}\left(L(ww_0\cdot\lambda),\Pi_{\infty}^{\mathrm{an}}\right)^{U_0}\right)_{\mathrm{fs}}'$ of the closed subspace corresponds to a coherent sheaf, denoted by $\mathcal{L}_{ww_0\cdot\lambda}$, on $X_p(\overline{\rho})_{ww_0\cdot\lambda}$ and the continuous dual of (\ref{equationinjectionglobalsectionfamillyconstituents}) gives a surjection of coherent sheaves 
\[\cM_{\infty}\otimes_{\cO_{X_p(\overline{\rho})}}\cO_{X_p(\overline{\rho})_{ww_0\cdot\lambda}}\twoheadrightarrow \mathcal{L}_{ww_0\cdot\lambda}\]
on $X_p(\overline{\rho})_{ww_0\cdot\lambda}$. Let $Y_p(\overline{\rho})_{ww_0\cdot\lambda}$ be the schematic support of $\mathcal{L}_{ww_0\cdot \lambda}$ which is a Zariski-closed subspace of $X_p(\overline{\rho})_{ww_0\cdot\lambda}$. Let $Y_p(\overline{\rho})_{ww_0\cdot\lambda}^{\mathrm{red}}$ be the underlying reduced analytic subvariety of $Y_p(\overline{\rho})_{ww_0\cdot\lambda}$ (cf. \cite[\S9.5.3]{bosch1984non}).
\begin{remark}
	We warn the reader that the subspace $Y_p(\overline{\rho})_{ww_0\cdot\lambda}$ of $X_p(\overline{\rho})$ should not be confused with any subspace of the non-patched eigenvariety $Y(\overline{\rho},U^p)$. Note that when $L(ww_0\cdot\lambda)$ is a finite-dimensional representation of $\fg$, $Y_p(\overline{\rho})_{ww_0\cdot\lambda}$ is similar to a ``partial eigenvariety'' that will be defined in \S\ref{sectionpartialeigenvarietygeometry} (by taking $Q_p=G_p, J=\Sigma_p$), but with more restriction on the weights of characters. And in this case one can prove that $Y_p(\overline{\rho})_{ww_0\cdot\lambda}$ is equidimensional by the usual arguments for the eigenvarieties, cf. Lemma \ref{lemmaorthonormalmodules}. We don't know whether one should expect that $Y_p(\overline{\rho})_{ww_0\cdot\lambda}$ is equidimensional when $ww_0\cdot\lambda$ is no longer dominant.
\end{remark}
Let $\fm_{\underline{\delta}_{\mathrm{sm}}}$ be the kernel of the morphism $L[T_p]\rightarrow L$ of $L$-algebras given by a smooth character $\underline{\delta}_{\mathrm{sm}}$. Then for any $L$-point $y=\left(r_y, z^{ww_0\cdot\lambda}\underline{\delta}_{\mathrm{sm}}\right)\in X_p(\overline{\rho})_{ww_0\cdot\lambda}\subset \fX_{\infty}\times\widehat{T}_{p,L}$, 
\begin{align}\label{equationsoclesheafstalklimit}
	\mathcal{L}_{ww_0\cdot\lambda}\otimes_{\cO_{X_p(\overline{\rho})_{ww_0\cdot\lambda}}}\widehat{\cO}_{X_p(\overline{\rho})_{ww_0\cdot\lambda},y}&=\varprojlim_{s,t\in \N}\left(\Hom_{U(\fg)}\left(L(ww_0\cdot\lambda),\Pi_{\infty}^{\mathrm{an}}\right)^{U_0}\right)_{\mathrm{fs}}'/\left(\fm_{r_y}^s,\fm_{\underline{\delta}_{\mathrm{sm}}}^t\right)\\
	&=\varprojlim_{s,t\in \N}\left(\Hom_{U(\fg)}\left(L(ww_0\cdot\lambda),\Pi_{\infty}^{\mathrm{an}}\right)^{U_0}[\fm_{r_y}^s][\fm_{\underline{\delta}_{\mathrm{sm}}}^t]\right)'\nonumber\\
	&=\left(\varinjlim_{s,t\in \N}\Hom_{U(\fg)}\left(L(ww_0\cdot\lambda),\Pi_{\infty}^{\mathrm{an}}\right)^{U_0}[\fm_{r_y}^s][\fm_{\underline{\delta}_{\mathrm{sm}}}^t]\right)'\nonumber
\end{align}
where each 
\[\Hom_{U(\fg)}\left(L(ww_0\cdot\lambda),\Pi_{\infty}^{\mathrm{an}}\right)^{U_0}[\fm_{r_y}^s][\fm_{\underline{\delta}_{\mathrm{sm}}}^t]=\Hom_{U(\fg)}(L(ww_0\cdot\lambda),\Pi_{\infty}[\fm_{r_y}^s]^{\mathrm{an}})^{U_0}[\fm_{\underline{\delta}_{\mathrm{sm}}}^t]\] 
is finite-dimensional (using that $\Pi^{\mathrm{an}}_{\infty}$ is $R_{\infty}$-admissible, the finiteness comes from the related property of Emerton's Jacquet module, see \cite[Lem. 5.2.4]{breuil2019local}). \par
We denote by $\Hom_{U(\fg)}\left(L(ww_0\cdot\lambda),\Pi_{\infty}^{\mathrm{an}}\right)^{U_0}[\fm_{r_y}^{\infty}][\fm_{\underline{\delta}_{\mathrm{sm}}}^{\infty}]$ for the last term in the bracket in (\ref{equationsoclesheafstalklimit}). Thus 
\[\Hom_{G_p}\left(\mathcal{F}_{\overline{B}_p}^{G_p}\left(\overline{L}(-ww_0\cdot\lambda),\underline{\delta}_{\cR,\mathrm{sm}}\delta_{B_p}^{-1}\right),\Pi_{\infty}^{\mathrm{an}}[\fm_{r_y}]\right)\neq 0\]
if and only if (by (\ref{equationadjointliealgebra}))
\[\Hom_{U(\fg)}\left(L(ww_0\cdot\lambda),\Pi_{\infty}^{\mathrm{an}}\right)^{U_0}[\fm_{r_y}][\fm_{\underline{\delta}_{\mathrm{sm}}}]\neq 0\] 
if and only if $y\in Y_p(\overline{\rho})_{ww_0\cdot\lambda}$ which is equivalent to that
\[\Hom_{U(\fg)}\left(L(ww_0\cdot\lambda),\Pi_{\infty}^{\mathrm{an}}\right)^{U_0}[\fm_{r_y}^{\infty}][\fm_{\underline{\delta}_{\mathrm{sm}}}^{\infty}]\neq 0.\]
\indent 
Next, we take $y=\left((\rho_p,\underline{\delta}),z\right) \in Y_p(\overline{\rho})_{ww_0\cdot\lambda}\subset X_p(\overline{\rho})\subset\iota\left(X_{\mathrm{tri}}(\overline{\rho}_p)\right)\times (\mathfrak{X}_{\overline{\rho}^p}\times\mathbb{U}^g)$ a generic $L$-point (Definition \ref{definitiongenericglobal}) and $w=(w_v)_{v\in S_p}=(w_{\tau})_{\tau\in \Sigma_v,v\in S_p}\in W_{G_p}/W_{P_p}=\prod_{v\in S_p}(\mathcal{S}_n)^{\Sigma_v}/W_{P_v}$ such that $\wt(\underline{\delta})=ww_0\cdot \lambda$. Write $\rho_p=(\rho_{\widetilde{v}})_{v\in S_p}\in\prod_{v\in S_p}\Spf(R_{\overline{\rho}_{\widetilde{v}}})^{\mathrm{rig}}$. Let $\mathbf{h}=(\mathbf{h}_{\widetilde{v}})_{v\in S_p}=(\mathbf{h}_{\tau})_{\tau\in\Sigma_p}=(h_{\tau,1},\cdots,h_{\tau,n})_{\tau\in\Sigma_p}$ be the anti-dominant Hodge-Tate-Sen weights of $\rho_p$ where we view $\mathbf{h}$ as a coweight of $\ft$ and ``anti-dominant'' means that $h_{\tau,1}\leq \cdots\leq h_{\tau,n},\forall \tau\in \Sigma_p$. Then $w_0\cdot\lambda=(h_{\tau,1},\cdots,h_{\tau,i}+i-1,\cdots,h_{\tau,n}+n-1)_{\tau\in \Sigma_p} \in \prod_{v\in S_p}(\Z^n)^{\Sigma_v}$. The stabilizer subgroup of $\mathbf{h}$ under the usual action of $W_{G_p}=(\mathcal{S}_n)^{\Sigma_p}$ is $W_{P_p}$.\par
By Theorem \ref{theoremirreducibletriangullinevariety} and the generic assumption on $\underline{\delta}$, $(\rho_p,\iota^{-1}\left(\underline{\delta}\right))$ lies in a unique irreducible component $X$ of $X_{\mathrm{tri}}(\overline{\rho}_p)$. The union of irreducible components of $X_p(\overline{\rho})$ that pass through $y$ are of the form $\iota(X)\times \left( \cup_{i\in I} \fX_i^p\right)\times \mathbb{U}^g$ where $\fX_i^p\times \mathbb{U}^g, i\in I$ are some irreducible components of $\mathfrak{X}_{\overline{\rho}^p}\times\mathbb{U}^g$. Then $\iota^{-1}$ induces a closed immersion $Y_p(\overline{\rho})_{ww_0\cdot\lambda}^{\mathrm{red}}\hookrightarrow X_{\mathrm{tri}}(\overline{\rho}_p)_{w(\mathbf{h})}\times \mathfrak{X}_{\overline{\rho}^p}\times\mathbb{U}^g$ where $X_{\mathrm{tri}}(\overline{\rho}_p)_{w(\mathbf{h})}$ denotes the fiber of the composite $X_{\mathrm{tri}}(\overline{\rho}_p)\rightarrow \widehat{T}_{p,L}\stackrel{\mathrm{wt}}{\rightarrow} \ft^{*}$ over $w(\mathbf{h})$. Hence $\iota$ induces a surjection 
\[\widehat{\cO}_{X_{\mathrm{tri}}(\overline{\rho}_p)_{w(\mathbf{h})},\iota^{-1}(\rho_p,\underline{\delta})}\widehat{\otimes}_L \widehat{\cO}_{\mathfrak{X}_{\overline{\rho}^p}\times\mathbb{U}^g,z}\twoheadrightarrow \widehat{\cO}_{Y_p(\overline{\rho})_{ww_0\cdot\lambda}^{\mathrm{red}},y}.\]
Recall in \S\ref{sectiontriangulinevariety}, for each $v\in S_p$, $D_{\mathrm{rig}}(\rho_{\widetilde{v}})[\frac{1}{t}]$ is equipped with a unique triangulation $\fM_{\widetilde{v},\bullet}$ associated with the point $\left(\rho_{\widetilde{v}},\iota^{-1}_v(\underline{\delta}_v)\right)\in X_{\mathrm{tri}}(\overline{\rho}_{\widetilde{v}})$ and we have an isomorphism $\widehat{X_{\mathrm{tri}}(\overline{\rho}_{\widetilde{v}})}_{\left(\rho_{\widetilde{v}},\iota^{-1}_v(\underline{\delta}_v)\right)}\simeq X_{\rho_{\widetilde{v}},\cM_{\widetilde{v},\bullet}}^{w_v}$ by Corollary \ref{corollarylocalmodelisomorphism}. In \S\ref{sectioncycles}, we have $\widehat{\cO}_{X_{\mathrm{tri}}(\overline{\rho}_{\widetilde{v}})_{w_v(\mathbf{h}_{\widetilde{v}})},\iota^{-1}(\rho_{\widetilde{v}},\underline{\delta}_v)}\simeq\overline{R}_{\rho_{\widetilde{v}},\cM_{\widetilde{v},\bullet}}^{w_v}$. Hence $\widehat{\cO}_{X_{\mathrm{tri}}(\overline{\rho}_p)_{w(\mathbf{h})},\iota^{-1}(\rho_p,\underline{\delta})}\simeq \overline{R}_{\rho_p,\cM_{\bullet}}^w:=\widehat{\otimes}_{v\in S_p}\overline{R}_{\rho_{\widetilde{v}},\cM_{\widetilde{v},\bullet}}^{w_v}$. Define $R_{\rho_p}:=\widehat{\otimes}_{v\in S_p}R_{\rho_{\widetilde{v}}}$. We get a composite map 
\begin{equation}\label{equationcompositepartialderhamcycle}
	R_{\rho_p}\rightarrow \overline{R}_{\rho_p,\cM_{\bullet}}^{w}\simeq \widehat{\cO}_{X_{\mathrm{tri}}(\overline{\rho}_p)_{w(\mathbf{h})},\iota^{-1}(\rho_p,\underline{\delta})}\rightarrow \widehat{\cO}_{Y_p(\overline{\rho})_{ww_0\cdot\lambda}^{\mathrm{red}},y}.
\end{equation}
Let $Q_p=\prod_{v\in S_p}\prod_{\tau\in \Sigma_v}Q_{\tau}$ be a standard parabolic subgroup of $\prod_{v\in S_p}(\mathrm{Res}_{F_{\widetilde{v}}/\Q_p}\GL_{n/F_{\widetilde{v}}})\times_{\Q_p}L=\prod_{\tau\in\Sigma_p}\GL_{n/L}$ (with the Borel subgroup of upper-triangular matrices and the maximal torus the diagonal matrices). For $v\in S_p$, the statement  ``$w_v(\mathbf{h}_{\widetilde{v}})$ is strictly $Q_v$-dominant'' (Definition \ref{definitionstrictlyQdominant}) is equivalent to ``$w_vw_{v,0}\cdot \lambda_v$ is a dominant weight for the standard Levi subgroup of $Q_v$''. Let $R_{\rho_p,\cM_{\bullet}}^{Q_p}:=\widehat{\otimes}_{v\in S_p} R_{\rho_{\widetilde{v}},\cM_{\widetilde{v},\bullet}}^{Q_v}$ where the latter is defined in the end of \S\ref{sectioncycles}, rougly speaking, parametrizing trianguline deformations of $\rho_{\widetilde{v}}$ that are $Q_{v}$-de Rham. There are closed immersions 
\[\Spec( R_{\rho_p,\cM_{\bullet}}^{Q_p})\hookrightarrow \Spec( \overline{R}_{\rho_p,\cM_{\bullet}})\hookrightarrow\Spec( R_{\rho_p}).\]
\begin{theorem}\label{theoremcyclepartialderham}
	If $ww_{0}\cdot \lambda$ is a dominant weight for the standard Levi subgroup of $Q_p$, then the morphism (\ref{equationcompositepartialderhamcycle}): $R_{\rho_p}\rightarrow \widehat{\cO}_{Y_p(\overline{\rho})_{ww_0\cdot\lambda}^{\mathrm{red}},y}$ factors through $R_{\rho_p,\cM_{\bullet}}^{Q_p}$.
\end{theorem}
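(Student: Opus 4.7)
The plan is to realize the factorization as the infinitesimal shadow of a partial classicality $\Rightarrow$ partial de Rhamness assertion, applied to the patched setting. By Lemma \ref{lemmagroupoidspartiallydeRham}, the quotient $R_{\rho_p,\cM_\bullet}^{Q_p}$ pro-represents the subgroupoid of $X_{\rho_p,\cM_\bullet}$ consisting of deformations $(\rho_{p,A},\cM_{A,\bullet})$ which are $Q_p$-de Rham; since $\widehat{X}_{\mathrm{tri}}(\overline{\rho}_p)_{\iota^{-1}(\rho_p,\underline{\delta})}\simeq X_{\rho_p,\cM_\bullet}^{w}$ and $Y_p(\overline{\rho})_{ww_0\cdot\lambda}^{\mathrm{red}}$ is contained in the trianguline part, the factorization to prove amounts to: for every $A\in\cC_L$ and every $A$-point of $\Spf(\widehat{\cO}_{Y_p(\overline{\rho})_{ww_0\cdot\lambda}^{\mathrm{red}},y})$, the associated trianguline deformation over $A$ is $Q_p$-de Rham in the sense of Definition \ref{definitionQderham}.

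First, I would use the hypothesis that $ww_0\cdot\lambda$ is dominant for the Levi $\fm_{Q_p}$ of $Q_p$ to rewrite the support in partial-classicality terms. Under this hypothesis, $L(ww_0\cdot\lambda)$ belongs to the parabolic category $\cO^{\fq_p}$, so it is a quotient of the generalized Verma module $M_{Q_p}(ww_0\cdot\lambda)=U(\fg)\otimes_{U(\fq_p)} L_{\fm_{Q_p}}(ww_0\cdot\lambda)$. Dualizing the surjection $M_{Q_p}(ww_0\cdot\lambda)\twoheadrightarrow L(ww_0\cdot\lambda)$ and going through the Orlik-Strauch / Emerton adjunctions with $\overline{B}_p$ replaced by $\overline{Q}_p$ (the parabolic version of \S\ref{sectionOrlikStrauch} and the formula in (\ref{equationadjointliealgebra})), the sheaf $\cL_{ww_0\cdot\lambda}$ maps into the coherent sheaf associated with the $L_{\fm_{Q_p}}(ww_0\cdot\lambda)$-isotypic component of $J_{Q_p}(\Pi_\infty^{\mathrm{an}})$. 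In particular $Y_p(\overline{\rho})_{ww_0\cdot\lambda}^{\mathrm{red}}$ sits inside the (patched analogue of) Ding's partial eigenvariety $Y_p(\overline{\rho})(\lambda_{Q_p})$ attached to $Q_p$ and the Levi weight $ww_0\cdot\lambda|_{\fm_{Q_p}}$.

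Next, I would invoke the patched version of Theorem \ref{theoremQderham} on the partial eigenvariety. The standard eigenvariety machinery on $Y_p(\overline{\rho})(\lambda_{Q_p})$ gives a Zariski dense set of classical points, and classical local-global compatibility (with $\ell=p$) forces these to correspond to de Rham Galois representations, which are a fortiori $Q_p$-de Rham. Applying the almost de Rham family result of Berger-Colmez type (Proposition \ref{propositionfamilydpdr} in Appendix \ref{sectionfamiliesofalmostdeRhamrepresetnations}), the locus where each graded piece of the paraboline sub-filtration attached to $Q_p$ is $\{\tau\}$-partially de Rham is Zariski closed on the trianguline $(\varphi,\Gamma)$-module side. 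Combining Zariski density of classical points with this closed condition shows the $Q_p$-de Rham condition holds on all of $Y_p(\overline{\rho})(\lambda_{Q_p})$, and hence on $Y_p(\overline{\rho})_{ww_0\cdot\lambda}^{\mathrm{red}}$. Completing at $y$ and using the local model isomorphism $\widehat{X_{\mathrm{tri}}(\overline{\rho}_{\widetilde{v}})}_{\left(\rho_{\widetilde{v}},\iota^{-1}_v(\underline{\delta}_v)\right)}\simeq X_{\rho_{\widetilde{v}},\cM_{\widetilde{v},\bullet}}^{w_v}$ together with \S\ref{sectioncycles}, the $Q_p$-de Rham condition on families translates to: the composite (\ref{equationcompositepartialderhamcycle}) kills the defining ideal of the closed formal subscheme $\Spf(R_{\rho_p,\cM_\bullet}^{Q_p})\hookrightarrow\Spf(\overline{R}_{\rho_p,\cM_\bullet}^{w})$, which is exactly the desired factorization.

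The main obstacle will be the second and third steps: carrying the rigid-analytic / Zariski-density argument on the patched partial eigenvariety rigorously through the passage to the formal neighbourhood of the fixed non-classical point $y$, and ensuring the almost de Rham families result of the appendix applies uniformly enough (over affinoid neighbourhoods, not merely at classical points) that the Zariski-closed $Q_p$-de Rham locus really contains the entire formal neighborhood $\Spf(\widehat{\cO}_{Y_p(\overline{\rho})_{ww_0\cdot\lambda}^{\mathrm{red}},y})$. The construction of the patched partial eigenvariety mirroring Ding's non-patched one, postponed to \S\ref{sectionthepartialeigenvariety}, will carry most of this technical burden.
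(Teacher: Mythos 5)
Your outline aligns with the paper's strategy: both rest on Theorem \ref{theoremQderham} (partial classicality $\Rightarrow$ partial de Rhamness, proved via Ding-style partial eigenvarieties and the Berger-Colmez-type family result Proposition \ref{propositionfamilydpdr}) combined with the pro-representability dictionary of Lemma \ref{lemmagroupoidspartiallydeRham}. However, you explicitly flag but do not close the step that carries the rigid-analytic $Q_p$-de Rham locus into a statement about the complete local ring at $y$, and that is precisely where the paper does its real work.

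Three things are missing. First, the global triangulation over the eigenvariety only exists after a birational proper modification: the paper invokes \cite[Cor. 6.3.10]{kedlaya2014cohomology} to get $f:X'\to X_p(\overline{\rho})$ with a filtration $D_{X',\bullet}$ of sub-$(\varphi,\Gamma)$-modules, and then works on $Y'$, the reduced preimage of $Y_p(\overline{\rho})_{ww_0\cdot\lambda}^{\mathrm{red}}$ in $X'$; you cannot pull back a triangulation directly to Artinian thickenings of $y$ inside $Y_p(\overline{\rho})_{ww_0\cdot\lambda}^{\mathrm{red}}$ because the triangulation is only a family of $(\varphi,\Gamma)$-modules over $X'$, not over $X_p(\overline{\rho})$. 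Second, Proposition \ref{propositionfamilydpdr} and the reducedness of $Y'$ give the $Q_v$-de Rham property after base change to any Artinian $k(y')$-algebra $A$ with $\Sp(A)\to Y'$; but $R_{\rho_{\widetilde{v}},\cM_{\widetilde{v},\bullet}}^{Q_v}$ pro-represents a functor on $\cC_L$ (residue field $L$), so the paper forms $\widetilde{A}:=A\times_{k(y')}L$, extracts models of the representation, parameter and triangulation over $\widetilde{A}$, and checks that $Q_v$-de Rhamness descends along the injection $\widetilde{A}\hookrightarrow A$ via the functoriality and exactness of $D_{\mathrm{pdR}}(W_{\mathrm{dR}}(-))$. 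Third, once the factorization holds at every $y'\in f^{-1}(y)$ one must transfer it to $\widehat{\cO}_{Y_p(\overline{\rho})_{ww_0\cdot\lambda}^{\mathrm{red}},y}$; the paper reuses the Stein-factorization / theorem-on-formal-functions argument from the proof of Proposition \ref{propositiontriangulinevarietyclosedimmersion}, which needs properness and surjectivity of $f|_{Y'}$ and reducedness of the target. Your proposal's final paragraph correctly identifies these as the technical burdens, but without the modification $X'$, the $\widetilde{A}$-descent, and the formal-functions descent the argument does not go through as stated.
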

\begin{proof}
	The proof is based on Theorem \ref{theoremQderham}.\par
	We have a closed immersion $Y_p(\overline{\rho})_{ww_0\cdot\lambda}^{\mathrm{red}}\hookrightarrow X_{p}(\overline{\rho})$. We argue for a fixed $v\in S_p$. Let $D:=D_{\mathrm{rig}}(\rho_{\widetilde{v}}^{\mathrm{univ}})$ be the $(\varphi,\Gamma_{F_{\widetilde{v}}})$-module over $\cR_{X_p(\overline{\rho}),F_{\overline{v}}}$ associated with the universal Galois representation $\rho_{\widetilde{v}}^{\mathrm{univ}}$ of $\cG_{F_{\widetilde{v}}}$ pulled back from $\fX_{\overline{\rho}_{\widetilde{v}}}$ (cf. \cite[Def. 2.12]{liu2015triangulation}). By \cite[Thm. 3.19]{breuil2017interpretation} and \cite[Cor. 6.3.10]{kedlaya2014cohomology}, there is a birational proper surjective morphism $f:X'\rightarrow X_p(\overline{\rho})$, a filtration of sub-$(\varphi,\Gamma_{F_{\widetilde{v}}})$-modules $D_{X',\bullet}$ over $\cR_{X',F_{\widetilde{v}}}$ of $D_{X'}:=f^*D$ such that for any point $y'\in X'$, the base change $D_{y',\bullet}[\frac{1}{t}]$ is a triangulation of $D_{y'}[\frac{1}{t}]=D_{\mathrm{rig}}(\rho_{y',\widetilde{v}})[\frac{1}{t}]$ of parameter $\delta_{y',v,1},\cdots,\delta_{y',v,n}$ where we use the same notations with different subscripts to denote the pullback of the representation $\rho_{\widetilde{v}}^{\mathrm{univ}}$, characters from $\widehat{T}_v$, etc.. Let $Y'$ be the underlying reduced analytic subspace of $f^{-1}\left(Y_p(\overline{\rho})_{ww_0\cdot\lambda}^{\mathrm{red}}\right)$. We pick an arbitrary affinoid neighbourhood $V$ of an arbitrary point $y'\in f^{-1}(y)$ in $Y'$. By Theorem \ref{theoremQderham} below, the definition of $Y_p(\overline{\rho})_{ww_0\cdot\lambda}$, (\ref{equationadjointliealgebra}), and the assumption that $ww_{0}\cdot\lambda$ is dominant with respect to the standard Levi of $Q_p$, we have that for any point $y''\in V$, $D_{y''}[\frac{1}{t}]$ with the triangulation $D_{y'',\bullet}[\frac{1}{t}]$ is $Q_v$-de Rham (Definition \ref{definitionQderham}). \par
	We firstly prove that the map $R_{\rho_{\widetilde{v}}}\rightarrow \widehat{\cO}_{Y',y'}$ factors through the quotient $R_{\rho_{\widetilde{v}},\cM_{\widetilde{v},\bullet}}^{Q_v}$. The proof is similar to that of Proposition \ref{propositiontriangulinevarietyclosedimmersion}. Take $A$ a local Artin $L$-algebra with residue field $k(y')$ and a composite $y'=\Sp(k(y'))\rightarrow \Sp(A)\rightarrow V$. The pullback $D_{A,\bullet}$ along the map $\Sp(A)\rightarrow V$ of the global triangulation $D_{V,\bullet}={D_{X',\bullet}\times_{X'}V}$ gives a triangulation $\cM_{A,\bullet}:=D_{A,\bullet}[\frac{1}{t}]$ of $D_{\mathrm{rig}}\left(\rho_{A,\widetilde{v}}\right)[\frac{1}{t}]$ of parameter $\delta_{A,v,1},\cdots,\delta_{A,v,n}$. Since the filtration $D_{X',\bullet}$ is a strictly trianguline filtration on a Zariski open dense subset of $X'$, and the Sen polynomials vary analyticly, the Sen weights of $D_{V,i}$ are fixed integers (the weights of $\delta_{V,v,1},\cdots, \delta_{V,v,i}$). Hence we can apply Theorem \ref{propositionfamilydpdr} below for each $D_{V,i}$, and we get finite projective $\cO_V$-modules written by $D_{\mathrm{pdR}}(D_{V,i})$ (in short for $D_{\mathrm{pdR}}(W_{\mathrm{dR}}(D_{V,i}))=\prod_{\tau\in\Sigma_v}D_{\mathrm{pdR},\tau}(W_{\mathrm{dR}}(D_{V,i}))$) with $\cO_V$-linear operators $\nu_V$. Check the proof of Theorem \ref{propositionfamilydpdr}, the filtration $D_{V,\bullet}$ induces natural maps $D_{\mathrm{pdR}}(D_{V,i})\rightarrow D_{\mathrm{pdR}}(D_{V,i+1})$. By Theorem \ref{propositionfamilydpdr} again, $D_{\mathrm{pdR}}(D_{V,\bullet})$ are specialized to the $k(y'')$-filtration $D_{\mathrm{pdR}}(W_{\mathrm{dR}}(D_{y'',\bullet}))$ for any $y''\in V$. Since $V$ is reduced, the sheaves $D_{\mathrm{pdR}}(D_{V,i})$ form a saturated filtration of $D_{\mathrm{pdR}}(D_{V})$, i.e., $D_{\mathrm{pdR}}(D_{V,i})$ is mapped injectively into $D_{\mathrm{pdR}}(D_{V,i+1})$ and the graded pieces $D_{\mathrm{pdR}}(D_{V,i+1})/D_{\mathrm{pdR}}(D_{V,i})$ are locally free of rank one. Moreover, for any $\tau \in \Sigma_v,1\leq i<i'\leq n$ and $y''\in V$, 
	\[(D_{\mathrm{pdR},\tau}(D_{V,i'})/D_{\mathrm{pdR},\tau}(D_{V,i}))\otimes_{\cO_V}k(y'')\simeq D_{\mathrm{pdR},\tau}(W_{\mathrm{dR}}(\cM_{y'',i'}/\cM_{y'',i})).\] 
	Let $0=s_{\tau,0}<\cdots<s_{\tau,i}<\cdots<s_{\tau,t_{\tau}}=n$ be integers such that the Levi subgroup of $Q_{\tau}$ is $\GL_{s_{\tau,1}-s_{\tau,0}}\times\cdots\GL_{s_{\tau,i}-s_{\tau,i-1}}\times\cdots\times\GL_{s_{\tau,t_{\tau}}-s_{\tau,t_{\tau}-1}}$
	Since $D_{y''}[\frac{1}{t}]$ is $Q_v$-de Rham for any $y''\in V$, $\nu_{y''}={\nu_V\otimes_{\cO_V}k(y'')}$ acts as zero on $D_{\mathrm{pdR},\tau}(W_{\mathrm{dR}}(\cM_{y'',s_{\tau,i}}/\cM_{y'',s_{\tau, i-1}}))$ for $1\leq i\leq t_{\tau}$ and $\tau\in \Sigma_v$. As $V$ is reduced, it follows that $\nu_V$ itself is zero on the graded pieces $D_{\mathrm{pdR},\tau}(D_{V,s_{\tau,i}})/D_{\mathrm{pdR},\tau}(D_{V,s_{\tau, i-1}})$. By Theorem \ref{propositionfamilydpdr}, the same assertion holds for the base change $\nu_A$ of $\nu_V$ on $D_{\mathrm{pdR},\tau}(W_{\mathrm{dR}}(\cM_{A,s_{\tau,i}}/\cM_{A,s_{\tau,i-1}}))$. By Definition \ref{definitionQderham}, $D_{\mathrm{rig}}\left(\rho_{A,\widetilde{v}}\right)[\frac{1}{t}]$ with the triangulation is $Q_v$-de Rham (Definition \ref{definitionQderham}). \par
	We prove that the composite $R_{\rho_{\widetilde{v}}}\rightarrow \widehat{\cO}_{Y',y'}\rightarrow A$ factors through a map $R_{\rho_{\widetilde{v}}}\rightarrow R_{\rho_{\widetilde{v}},\cM_{\widetilde{v},\bullet}}^{Q_v}\rightarrow A$. Let $\widetilde{A}:=A\times_{k(y')}L$ be the subring of $A$ consisting of elements whose reduction modulo the maximal ideal $\fm_A$ of $A$ lie in $L$. Then the map $R_{\rho_{\widetilde{v}}}\rightarrow A$ factors through $R_{\rho_{\widetilde{v}}}\rightarrow \widetilde{A}\subset A$ automatically. By similar arguments as in the proof of Proposition \ref{propositiontriangulinevarietyclosedimmersion}, there exists a model $\rho_{\widetilde{A},\widetilde{v}}$ (resp. $\delta_{A,v,i}$, resp. $\cM_{\widetilde{A},\bullet}$) of $\rho_{A,\widetilde{v}}$ (resp. $\delta_{\widetilde{A},v,i}$, resp. $\cM_{A,\bullet}$) over $\widetilde{A}$ whose reduction modulo $\fm_{\widetilde{A}}$ is $\rho_{y,\widetilde{v}}$ (resp. $\delta_{y,v,i}$, resp. $\cM_{y,\widetilde{v},\bullet}$). This implies that the map $R_{\rho_{\widetilde{v}}}\rightarrow \widehat{\cO}_{Y',y'}\rightarrow A$ factors through $R_{\rho_{\widetilde{v}}}\rightarrow R_{\rho_{\widetilde{v}},\cM_{\widetilde{v},\bullet}} \rightarrow \widetilde{A}\subset A$. Moreover, $D_{\mathrm{rig}}\left(\rho_{A,\widetilde{v}}\right)[\frac{1}{t}]=D_{\mathrm{rig}}(\rho_{\widetilde{A},\widetilde{v}})[\frac{1}{t}]\otimes_{\widetilde{A}}A$ and $\cM_{\widetilde{A},\bullet}\otimes_{\widetilde{A}}A=\cM_{A,\bullet}$. It follows from the exactness and the functoriality of $D_{\mathrm{pdR}}(W_{\mathrm{dR}}(-))$ that there exist isomorphisms $D_{\mathrm{pdR}}(W_{\mathrm{dR}}(\cM_{\widetilde{A},\bullet}))\otimes_{\widetilde{A}}A\simeq D_{\mathrm{pdR}}\left(W_{\mathrm{dR}}\left(\cM_{A,\bullet}\right)\right)$ of finite free $A$-modules with $A$-linear nilpotent operators (cf. the proof of \cite[Lem. 3.1.4]{breuil2019local}, writing $A$ as the cokernel of finite free $\widetilde{A}$-modules). Since $\widetilde{A}\rightarrow A$ is injective, the vanishing of $\nu_{A}=\nu_{\widetilde{A}}\otimes_{\widetilde{A}}A$ on the finite free $A$-modules $D_{\mathrm{pdR},\tau}(W_{\mathrm{dR}}(\cM_{A,s_{\tau,i}}/\cM_{A,s_{\tau,i-1}}))=D_{\mathrm{pdR},\tau}(W_{\mathrm{dR}}(\cM_{\widetilde{A},s_{\tau,i}}/\cM_{\widetilde{A},s_{\tau,i-1}}))\otimes_{\widetilde{A}}A$ implies the vanishing of $\nu_{\widetilde{A}}$ on $D_{\mathrm{pdR},\tau}(W_{\mathrm{dR}}(\cM_{\widetilde{A},s_{\tau,i}}/\cM_{\widetilde{A},s_{\tau,i-1}}))$. This means that $\rho_{\widetilde{A},\widetilde{v}}$ with the filtration $\cM_{\widetilde{A},\bullet}$ is also $Q_v$-de Rham. The definition of $R_{\rho_{\widetilde{v}},\cM_{\widetilde{v},\bullet}}^{Q_v}$ and Lemma \ref{lemmagroupoidspartiallydeRham} implies that the map $R_{\rho_{\widetilde{v}}}\rightarrow \widetilde{A}$ factors through $R_{\rho_{\widetilde{v}},\cM_{\widetilde{v},\bullet}}^{Q_v}\rightarrow \widetilde{A}$. \par
	By taking $A=\cO_{Y',y'}/\fm_{\cO_{Y',y'}}^j$ for all $j\in\N$, we conclude that the map $R_{\rho_{\widetilde{v}}}\rightarrow \widehat{\cO}_{Y',y'}$ factors through the quotient $R_{\rho_{\widetilde{v}},\cM_{\widetilde{v},\bullet}}^{Q_v}$.\par
	Now the argument in the last part of the proof of Proposition \ref{propositiontriangulinevarietyclosedimmersion}, using the surjectivity and properness of $f:Y'\rightarrow Y_p(\overline{\rho})_{ww_0\cdot\lambda}^{\mathrm{red}}$ and the reducedness of $Y_p(\overline{\rho})_{ww_0\cdot\lambda}^{\mathrm{red}}$, shows that the map $R_{\rho_{\widetilde{v}}}\rightarrow \widehat{\cO}_{Y_p(\overline{\rho})_{ww_0\cdot\lambda}^{\mathrm{red}},y}$ also factors though $R_{\rho_{\widetilde{v}},\cM_{\widetilde{v},\bullet}}^{Q_v}$. 
\end{proof}
\begin{remark}
	We expect that the morphism $R_{\rho_p}\rightarrow \widehat{\cO}_{Y_p(\overline{\rho})_{ww_0\cdot\lambda},y}$ as in Theorem \ref{theoremcyclepartialderham} factors through $R_{\rho_p,\cM_{\bullet}}^{Q_p}$. However, we don't know how to prove this stronger result.
\end{remark}
From now on we focus on generic crystalline points. A point $\rho_p=(\rho_{\widetilde{v}})_{v\in S_p}\in \fX_{\overline{\rho}_p}(L)$ is said to be \emph{generic crystalline} if for each $v\in S_p$, $\rho_{\widetilde{v}}:\cG_{F_{\widetilde{v}}}\rightarrow \GL_n(L)$ is generic crystalline (in the sense in \S\ref{sectionlocalcompanionpoints}). If $\rho_p$ is generic, a refinement $\cR=(\cR_{\widetilde{v}})_{v\in S_p}$ of $\rho_p$ is a choice of a refinement $\cR_{\widetilde{v}}$ of $\rho_{\widetilde{v}}$ for each $v\in S_p$. \par
Suppose that $\rho_p\in \fX_{\overline{\rho}_p}(L)$ is a generic crystalline point with refinements $\cR_{\widetilde{v}}$ given by orderings $\underline{\varphi}_{\widetilde{v}}\in (L^{\times})^n$ and that $\mathbf{h}_{\widetilde{v}}$ is the anti-dominant Hodge-Tate weights of $\rho_{\widetilde{v}}$ for $v\in S_p$. Recall for each $v\in S_p$, there exists $w_{\cR_{\widetilde{v}}}\in (\mathcal{S}_n)^{\Sigma_v}/W_{P_v}$ such that $z^{w_{\cR_{\widetilde{v}}}(\mathbf{h}_{\widetilde{v}})}\mathrm{unr}(\underline{\varphi}_{\widetilde{v}})$ is a parameter of $\rho_{\widetilde{v}}$ and then the point $\left(\rho_{\widetilde{v}},z^{w_{\cR_{\widetilde{v}}}(\mathbf{h}_{\widetilde{v}})}\mathrm{unr}(\underline{\varphi}_{\widetilde{v}})\right)$ is in $U_{\mathrm{tri}}(\overline{\rho}_{\widetilde{v}})\subset X_{\mathrm{tri}}(\overline{\rho}_{\widetilde{v}})$ (see \S\ref{sectionlocalcompanionpoints}). We continue to assume $\lambda_{\tau,i}=h_{\tau,n+1-i}+i-1,\forall \tau\in \Sigma_p, i=1,\cdots,n$.
\begin{definition}\label{definitioncharacterrefinementglobal}
	For $w=(w_v)_{v\in S_p}\in W_{G_p}/W_{P_p}$, let $\underline{\delta}_{\cR,w}$ be the character $\iota\left(z^{w(\mathbf{h})}\mathrm{unr}(\underline{\varphi})\right)$ of $T_p$ where $z^{w(\mathbf{h})}\mathrm{unr}(\underline{\varphi})$ is the abbreviation for the character $\prod_{v\in S_p}z^{w_v(\mathbf{h}_{\widetilde{v}})}\mathrm{unr}(\underline{\varphi}_{\widetilde{v}})$ of $T_p=\prod_{v\in S_p}T_v$. 
\end{definition}
Then for each $w\in W_{G_p}/W_{P_p}$, $\underline{\delta}_{\mathcal{R},w}$ is generic and $ww_0\cdot \lambda$ is the weight of $\underline{\delta}_{\mathcal{R},w}$. The smooth part $\underline{\delta}_{\mathcal{R},\mathrm{sm}}= z^{-ww_0\cdot \lambda}\underline{\delta}_{\mathcal{R},w}$ is independent of $w$. We write $w_{\cR}:=(w_{\cR_{\widetilde{v}}})_{v\in S_p}\in W_{G_p}/W_{P_p}$. \par
We already know that the point $y_{w_{\cR}}:=\left(\rho_p,\iota^{-1}(\underline{\delta}_{\cR,w_{\cR}})\right)$ is in $U_{\mathrm{tri}}(\overline{\rho}_p)$. If we assume Conjecture 3.23 of \cite{breuil2017interpretation} (which follows from a general automorphy lifting conjecture by \cite[Prop. 3.27]{breuil2017interpretation}), we could get that the irreducible component of $X_{\mathrm{tri}}(\overline{\rho}_p)$ passing through $y_{w_{\cR}}$ is $\fX^p$-automorphic for any irreducible component $\fX^p$ of $\fX_{\overline{\rho}^p}$. Hence there should exist a point $\left((\rho_p,\underline{\delta}_{\cR,w_{\cR}}),z\right)\in X_p(\overline{\rho})\subset\iota(X_{\mathrm{tri}}(\overline{\rho}_p))\times (\fX_{\overline{\rho}^p}\times \mathbb{U}^g)$. Then by \cite[Thm. 5.5]{breuil2017smoothness}, together with the discussion on local companion points in \S\ref{sectionlocalcompanionpoints}, we could expect $x_w:=\left((\rho_p, \underline{\delta}_{\cR,w}),z\right)\in X_p(\overline{\rho})$ if and only if $w\geq w_{\cR}$ in $W_{G_p}/W_{P_p}$.\par 
We will not consider the automorphy lifting anymore in this paper. Rather, our aim (Theorem \ref{theoremmaincrystalline}) is to prove that all companion points $x_{w},w\geq w_{\cR}$ are in $X_p(\overline{\rho})$ under the assumption that there exists $w'\in W_{G_p}/W_{P_p}$ such that $x_{w'}$ is in $X_p(\overline{\rho})$. The assumption will always imply $x_{w_0}\in X_p(\overline{\rho})$ by \cite[Thm. 5.5]{breuil2017smoothness}. \par
The following proposition is the key new step to achieve the existence of companion points, where Theorem \ref{theoremcyclepartialderham} is used. In the proof of Theorem \ref{theoremmaincrystalline}, we will use some induction and deformation arguments to reduce the existence of more general companion points on $X_p(\overline{\rho})$ to the special situation considered in Proposition \ref{propositionmaincycle} below.
\begin{proposition}\label{propositionmaincycle}
	Assume that the points $\left((\rho_p,\underline{\delta}_{\cR,w}),z\right)\in \iota\left(X_{\mathrm{tri}}(\overline{\rho}_p)\right)\times (\mathfrak{X}_{\overline{\rho}^p}\times\mathbb{U}^g)$ are in $X_p(\overline{\rho})(L)$ for any $w>w_{\cR}$ in $W_{G_p}/W_{P_p}$ where $\rho_p$ is generic crystalline and $\cR$ is a refinement of $\rho_p$ as above. If $w_{\cR}W_{P_p}\neq w_0W_{P_p}$, then $\left((\rho_p,\underline{\delta}_{\cR,w_{\cR}}),z\right)\in X_p(\overline{\rho})(L)$.	
\end{proposition}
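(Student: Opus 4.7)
The proof runs the inductive cycle-theoretic argument of Example \ref{exampleintroduction}, combining the combinatorial Lemma \ref{lemmakeyinductionweylgroup1} with the global partial de Rhamness of Theorem \ref{theoremcyclepartialderham}. Set $x := ((\rho_p, \underline{\delta}_{\cR, w_\cR}), z)$ with image $r_x\in\fX_\infty$, let $x_\mathrm{pdR}$ denote the corresponding point on the local model of $X_{\mathrm{tri}}(\overline{\rho}_p)$ at $y_{w_\cR} = \iota^{-1}(\rho_p, \underline{\delta}_{\cR, w_\cR})$ under the identifications of \S\ref{sectionlocalmodeltriangulinevariety}, and apply Lemma \ref{lemmakeyinductionweylgroup1} to the product algebraic group underlying $G_p$, its parabolic $P_p$, and the antidominant $P_p$-regular coweight $\mathbf{h}\in X_*(T_p)^{W_{P_p}}$ furnished by the Hodge-Tate-Sen weights of $\rho_p$. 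Since $\rho_p$ is crystalline, the image of $x_\mathrm{pdR}$ in the relevant $\fg_p$ is $0$, so the decomposition formula (\ref{formulairreduciblecomponents1}) applies. The lemma produces a simple root $\alpha\in\Delta$ and a standard parabolic $Q_p$ such that $s_\alpha w_\cR > w_\cR$ in $W_{G_p}/W_{P_p}$ and $s_\alpha w_\cR w_0\cdot\lambda$ is dominant for the Levi $M_{Q_p}$ (equivalently $s_\alpha w_\cR(\mathbf{h})$ is strictly $Q_p$-dominant). The Bruhat interval $\{w'\in W_{G_p}/W_{P_p} : w_\cR\leq w'\leq s_\alpha w_\cR\}$ then reduces to $\{w_\cR, s_\alpha w_\cR\}$, and (\ref{formulairreduciblecomponents1}) yields
$$\Spec(\widehat{\cO}_{\overline{X}_{P_p,s_\alpha w_\cR},x_\mathrm{pdR}}) = \mathfrak{Z}_{w_\cR}\cup\mathfrak{Z}_{s_\alpha w_\cR}$$
with both cycles non-empty, $\mathfrak{Z}_{s_\alpha w_\cR}\subset \Spec(R_{\rho_p,\cM_\bullet}^{Q_p})$, and $\mathfrak{Z}_{w_\cR}\not\subset \Spec(R_{\rho_p,\cM_\bullet}^{Q_p})$.

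\textbf{Global cycle argument.} Adapting the cycle analysis of \cite[\S5]{breuil2019local} to the non-regular local model built in \S\ref{sectionlocalmodeltriangulinevariety}--\S\ref{sectioncycles}, transport the set-theoretic supports $[\mathcal{L}(ww_0\cdot\lambda)]$ of the sheaves $\mathcal{L}_{ww_0\cdot\lambda}$ near $x$ through the formally smooth factors (coming from framing, $\fX_{\overline{\rho}^p}\times\mathbb{U}^g$, and the completion at $z$) and identify them with unions of the cycles $\mathfrak{Z}_{w'}$. Restricting to the Bruhat interval $\{w_\cR, s_\alpha w_\cR\}$ yields the set-theoretic equality
$$[\mathcal{L}(w_\cR w_0\cdot\lambda)]\cup [\mathcal{L}(s_\alpha w_\cR w_0\cdot\lambda)] = \mathfrak{Z}_{w_\cR}\cup \mathfrak{Z}_{s_\alpha w_\cR}$$
near $x$. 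The hypothesis $x_{s_\alpha w_\cR}\in X_p(\overline{\rho})$ guarantees the right-hand side is non-empty, while Theorem \ref{theoremcyclepartialderham} applied with weight $s_\alpha w_\cR w_0\cdot\lambda$ and parabolic $Q_p$ (whose dominance hypothesis is exactly the output of the previous paragraph) forces $[\mathcal{L}(s_\alpha w_\cR w_0\cdot\lambda)]\subset \Spec(R_{\rho_p,\cM_\bullet}^{Q_p})$. Hence $[\mathcal{L}(s_\alpha w_\cR w_0\cdot\lambda)]$ cannot contain $\mathfrak{Z}_{w_\cR}$; the non-empty cycle $\mathfrak{Z}_{w_\cR}$ must therefore appear inside $[\mathcal{L}(w_\cR w_0\cdot\lambda)]$, which is consequently non-empty. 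By the adjunction formula (\ref{equationadjointliealgebra}) we conclude $x\in X_p(\overline{\rho})(L)$.

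\textbf{Main obstacle.} The delicate point is the cycle equality used in the global argument. In the regular case, \cite[\S5]{breuil2019local} establishes it by matching $[\mathcal{L}(ww_0\cdot\lambda)]$ with the Kazhdan-Lusztig characteristic cycle of $L(ww_0\cdot\lambda)$, crucially invoking the normality and Cohen-Macaulayness of $X_{B,w}$ from \cite{bezrukavnikov2012affine}. In our non-regular setting only the unibranchness of Theorem \ref{theoremunibranchYP} is available for $X_{P,w}$, so a full cycle-theoretic identity seems out of reach; what one aims for instead is the weaker set-theoretic equality of supports, which is all that is required to decide non-emptiness. This should follow from the reducedness and equidimensionality of $\Spec(\overline{R}_{\rho_p,\cM_\bullet})$, the decomposition (\ref{formulairreduciblecomponents1}), and a bookkeeping argument on which irreducible components must contain the supports $[\mathcal{L}(ww_0\cdot\lambda)]$, given that each $x_w$ with $w>w_\cR$ lies in $X_p(\overline{\rho})$.
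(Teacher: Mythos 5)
Your plan is correct in its large-scale structure and reaches the right conclusion, but the middle step — the asserted set-theoretic equality $[\mathcal{L}(w_\cR w_0\cdot\lambda)]\cup[\mathcal{L}(s_\alpha w_\cR w_0\cdot\lambda)]=\mathfrak{Z}_{w_\cR}\cup\mathfrak{Z}_{s_\alpha w_\cR}$ — is precisely the point you flag as hand-wavy, and the paper's proof is organised so as to avoid having to establish it. Instead of identifying each $[\mathcal{L}(\cdot)]$ with cycles on the local model, the paper picks the short exact sequence in category $\cO$ coming from $U(\fg)\otimes_{U(\fb)}w'w_0\cdot\lambda\twoheadrightarrow L(w'w_0\cdot\lambda)$ with kernel $L(w_\cR w_0\cdot\lambda)$ (here $w'=s_\alpha w_\cR$, and the Bruhat interval $[w_\cR,w']/W_{P_p}$ has only these two elements), applies the exact functor $\Hom_{U(\fg)}(-,\Pi_\infty^{\mathrm{an}})^{U_0}[\fm_{r_x}^\infty][\fm^\infty]$, and reduces the claim to showing the surjection
\begin{equation*}
\cM_\infty\otimes_{\cO_{X_p(\overline{\rho})}}\widehat{\cO}_{X_p(\overline{\rho})_{w'w_0\cdot\lambda},x_{w'}}\twoheadrightarrow \mathcal{L}_{w'w_0\cdot\lambda}\otimes\widehat{\cO}_{X_p(\overline{\rho})_{w'w_0\cdot\lambda},x_{w'}}
\end{equation*}
is \emph{not} an isomorphism. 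This is then done by contradiction: if it were, the supports of both sides would agree as $\widehat{\cO}$-modules; the right side, being supported on $Y_p(\overline{\rho})_{w'w_0\cdot\lambda}^{\mathrm{red}}$, lands in $\Spec(R_{\rho_p,\cM_\bullet}^{Q_p})$ by Theorem \ref{theoremcyclepartialderham}; the left side, by faithfulness of $\cM_\infty$ over $\cO_{X_p(\overline{\rho})}$ and then the local model, has reduced support $\overline{R}^{w'}_{\rho_p,\cM_\bullet}$-tensor-tame-factors, which by \eqref{formulairreduciblecomponents1} contains $\mathfrak{Z}_{w_\cR}$; and $\mathfrak{Z}_{w_\cR}\not\subset\Spec(R^{Q_p}_{\rho_p,\cM_\bullet})$ by Lemma \ref{lemmakeyinductionweylgroup1}, giving the contradiction. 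The advantage of this arrangement is that it never needs the inclusion $\mathfrak{Z}_w\subset[\mathcal{L}(ww_0\cdot\lambda)]$ which, in the regular case, was supplied via characteristic-cycle identifications relying on normality and Cohen-Macaulayness of $X_{B,w}$, and which is exactly what is unavailable in the non-regular setting. Your final paragraph correctly locates the missing link; the fix is not to prove the equality but to restructure the argument as above so that the equality is never needed. One further minor point: the paper cites Lemma \ref{lemmakeyinductionweylgroup}, not its reformulation \ref{lemmakeyinductionweylgroup1}, taking care to choose the place $v$ where $w_{\cR_{\widetilde{v}}}\notin w_{v,0}W_{P_v}$ before applying it.
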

\begin{proof}
	We will prove 
	\[\Hom_{U(\fg)}\left(L(w_{\cR}w_0\cdot\lambda),\Pi_{\infty}^{\mathrm{an}}\right)^{U_0}[\fm_{r_x}^{\infty}][\fm_{\underline{\delta}_{\cR,\mathrm{sm}}}^{\infty}]\neq 0\]
	where $r_x:=(\rho_p,z)\in\fX_{\infty}$. This will imply (by (\ref{equationadjointliealgebra}))
	\[\Hom_{G_p}\left(\mathcal{F}_{\overline{B}_p}^{G_p}\left(\overline{L}(-w_{\cR}w_0\cdot\lambda),\underline{\delta}_{\cR,\mathrm{sm}}\delta_{B_p}^{-1}\right),\Pi_{\infty}^{\mathrm{an}}[\fm_{r_x}]\right)\neq 0\] 
	and by (\ref{equationorlikstrauchadjunction}) and (\ref{equationsocleinjection}), imply furthermore that $\left((\rho_p,\underline{\delta}_{\cR,w_{\cR}}),z\right)\in X_p(\overline{\rho})$.\par
	For each $v\in S_p$, we still write $w_{\cR_{\widetilde{v}}}$ for the shortest representative of $w_{\cR_{\widetilde{v}}}$ in $(\cS_n)^{\Sigma_v}$ and write $w_{\cR}=(w_{\cR_{\widetilde{v}}})_{v\in S_p}\in W_{G_p}$. We may assume that there exists and fix a place $v$ such that $w_{\cR_{\widetilde{v}}}$ is not in the coset $w_{v,0}W_{P_v}$ since $w_{\cR}W_{P_p}\neq w_0W_{P_p}$. Then by Lemma \ref{lemmakeyinductionweylgroup}, there exists a simple root $\alpha$ of $(\mathrm{Res}_{F_{\widetilde{v}}/\Q_p}\GL_{n/F_{\widetilde{v}}})\times_{\Q_p}L$ and a standard parabolic subgroup $Q=\prod_{v\in S_p}Q_v=\prod_{\tau\in \Sigma_p}Q_{\tau}$ of $\prod_{v\in S_p}(\mathrm{Res}_{F_{\widetilde{v}}/\Q_p}\GL_{n/F_{\widetilde{v}}})\times_{\Q_p}L$ such that the element $w':=s_{\alpha}w_{\cR}=(w_{v'}')_{v'\in S_p}\in W_{G_p}$ where $w_{v'}'=w_{\cR_{\widetilde{v}'}}$ if $v'\neq v$ and $w_v'=s_{\alpha}w_{\cR_{\widetilde{v}}}$ satisfies that $\lg_{P_p}(w')=\lg_{P_p}(w_{\cR})+1, w'_v(\mathbf{h}_{\widetilde{v}})$ is strictly $Q_v$-dominant and $w_{\cR_{\widetilde{v}}}(\mathbf{h}_{\widetilde{v}})$ is \emph{not} strictly $Q_v$-dominant (Definition \ref{definitionstrictlyQdominant}). By our assumption, the point $x_{w'}:=\left((\rho_p,\underline{\delta}_{\cR,w'}),z\right)$ is in $X_p(\overline{\rho})$. \par
	In particular, ${\cM_{\infty}\otimes_{\cO_{X_p(\overline{\rho})}}\widehat{\cO}_{X_p(\overline{\rho})_{w'w_0\cdot\lambda},x_{w'}}\neq 0}$. Equivalently, since $\cM_{\infty}$ is defined using $J_{B_p}(\Pi^{\mathrm{an}}_{\infty})=((\Pi^{\mathrm{an}}_{\infty})^{U_0})_{\mathrm{fs}}$, by taking dual and arguing as in (\ref{equationsoclesheafstalklimit}), we get
	\[\Hom_{U(\fg)}\left(U(\fg)\otimes_{U(\fb)}w'w_0\cdot\lambda,\Pi_{\infty}^{\mathrm{an}}\right)^{U_0}[\fm_{r_x}^{\infty}][\fm_{\underline{\delta}_{\cR,\mathrm{sm}}}^{\infty}]\neq 0,\]
	(see also \cite[(5.16), (5.18)]{breuil2019local}).\par 
	If \[\Hom_{G_p}\left(\mathcal{F}_{\overline{B}_p}^{G_p}\left(\overline{L}(-ww_0\cdot\lambda),\underline{\delta}_{\cR,\mathrm{sm}}\delta_{B_p}^{-1}\right),\Pi_{\infty}^{\mathrm{an}}[\fm_{r_x}]\right)\neq 0,\] 
	then by (\ref{equationorlikstrauchadjunction}) and (\ref{equationsocleinjection}), the point $\left((\rho_p,\underline{\delta}_{\cR,w}),z\right)$ will appear in $X_p(\overline{\rho})$. Then $(\rho_p,\iota^{-1}(\underline{\delta}_{\cR,w}))\in X_{\mathrm{tri}}(\overline{\rho}_p)$. This is possible only if $w\geq w_{\cR}$ by Theorem \ref{sectionlocalcompanionpoints}.
	Hence the irreducible constituents of $\mathcal{F}_{\overline{B}_p}^{G_p}(\underline{\delta}_{\cR,w'})$ that may appear as subrepresentations in $\Pi_{\infty}^{\mathrm{an}}[\fm_{r_x}]$ are 
	\[\mathcal{F}_{\overline{B}_p}^{G_p}\left(\overline{L}(-ww_0\cdot\lambda),\underline{\delta}_{\cR,\mathrm{sm}}\delta_{B_p}^{-1}\right)
	\]
	where $w'\geq w\geq w_{\cR}$ in $W_{G_p}/W_{P_p}$, i.e. $w=w'$ or $w=w_{\cR}$.
	And for any $w\leq w',w\notin \{ w',w_{\cR}\}$, 
	\[\Hom_{U(\fg)}\left(L(ww_0\cdot\lambda),\Pi_{\infty}^{\mathrm{an}}\right)^{U_0}[\fm_{r_x}^{\infty}][\fm_{\underline{\delta}_{\cR,\mathrm{sm}}}^{\infty}]=0.\] 
	The functor $M\mapsto \Hom_{U(\fg)}\left(M,\Pi_{\infty}^{\mathrm{an}}\right)^{U_0}[\fm_{r_x}^{\infty}][\fm_{\underline{\delta}_{\cR,\mathrm{sm}}}^{\infty}]$ is exact on the category $\cO$. This fact comes essentially from that the dual of $\Pi_{\infty}$ is a finite projective $S_{\infty}[[K_p]][\frac{1}{p}]$-module and that the functors of taking generalized eigenspace of compact operators are exact. See discussions before \cite[(5.21)]{breuil2019local} (and the arguments in \cite[Thm. 5.5]{breuil2017smoothness} of proving firstly similar results for ideals of $S_{\infty}[\frac{1}{p}]$). \par
	Hence we get an exact sequence
	\begin{align*}
		0\rightarrow \Hom_{U(\fg)}\left(L(w'w_0\cdot\lambda),\Pi_{\infty}^{\mathrm{an}}\right)^{U_0}[\fm_{r_x}^{\infty}][\fm_{\underline{\delta}_{\cR,\mathrm{sm}}}^{\infty}] \\\rightarrow
		\Hom_{U(\fg)}\left(U(\fg)\otimes_{U(\fb)}w'w_0\cdot\lambda,\Pi_{\infty}^{\mathrm{an}}\right)^{U_0}&[\fm_{r_x}^{\infty}][\fm_{\underline{\delta}_{\cR,\mathrm{sm}}}^{\infty}]\\
		\rightarrow \Hom_{U(\fg)}\left(L(w_{\cR}w_0\cdot\lambda),\Pi_{\infty}^{\mathrm{an}}\right)^{U_0}[\fm_{r_x}^{\infty}][\fm_{\underline{\delta}_{\cR,\mathrm{sm}}}^{\infty}]&\rightarrow 0.
	\end{align*}
	To prove $\Hom_{U(\fg)}\left(L(w_{\cR}w_0\cdot\lambda),\Pi_{\infty}^{\mathrm{an}}\right)^{U_0}[\fm_{r_x}^{\infty}][\fm_{\underline{\delta}_{\cR,\mathrm{sm}}}^{\infty}]\neq 0$, by the above exact sequence, we only need to show that \[\Hom_{U(\fg)}\left(L(w'w_0\cdot\lambda),\Pi_{\infty}^{\mathrm{an}}\right)^{U_0}[\fm_{r_x}^{\infty}][\fm_{\underline{\delta}_{\cR,\mathrm{sm}}}^{\infty}]\neq \Hom_{U(\fg)}\left(U(\fg)\otimes_{U(\fb)}w'w_0\cdot\lambda,\Pi_{\infty}^{\mathrm{an}}\right)^{U_0}[\fm_{r_x}^{\infty}][\fm_{\underline{\delta}_{\cR,\mathrm{sm}}}^{\infty}]\]
	or equivalently by taking dual as in (\ref{equationsoclesheafstalklimit}) to show that the map  
	\begin{equation}\label{equationsurjectionsheavescontradiction}
		\cM_{\infty}\otimes_{\cO_{X_p(\overline{\rho})}}\widehat{\cO}_{X_p(\overline{\rho})_{w'w_0\cdot\lambda},x_{w'}}\twoheadrightarrow \mathcal{L}_{w'w_0\cdot\lambda}\otimes_{\cO_{X_p(\overline{\rho})_{w'w_0\cdot\lambda}}}\widehat{\cO}_{X_p(\overline{\rho})_{w'w_0\cdot\lambda},x_{w'}}
	\end{equation}
	is not an isomorphism. \par
	We prove it by contradiction. Assume that (\ref{equationsurjectionsheavescontradiction}) is an isomorphism. The action of $\widehat{\cO}_{X_p(\overline{\rho})_{w'w_0\cdot\lambda},x_{w'}}$ on the right-hand side factors through 
	\[\widehat{\cO}_{Y_p(\overline{\rho})_{w’w_0\cdot\lambda},x_{w'}}={\cO_{Y_p(\overline{\rho})_{w’w_0\cdot\lambda},x_{w'}}\otimes_{\cO_{X_p(\overline{\rho})_{w'w_0\cdot\lambda},x_{w'}}}\widehat{\cO}_{X_p(\overline{\rho})_{w'w_0\cdot\lambda},x_{w'}}}.\] 
	Thus by Lemma \ref{lemmasupportcompletion} below the $\widehat{\cO}_{X_p(\overline{\rho})_{w'w_0\cdot\lambda},x_{w'}}$-module in the right hand side of (\ref{equationsurjectionsheavescontradiction}) has support whose underlying reduced subspace is $\Spec(\widehat{\cO}_{Y_p(\overline{\rho})_{w’w_0\cdot\lambda}^{\mathrm{red}},x_{w'}})$. Since $\cO_{X_p(\overline{\rho})}$ acts faithfully on $\cM_{\infty}$, the support of $\cM_{\infty}\otimes_{\cO_{X_p(\overline{\rho})}}\cO_{X_p(\overline{\rho})_{w'w_0\cdot\lambda}}$ is set-theoretically equal to $X_p(\overline{\rho})_{w'w_0\cdot\lambda}$ (\cite[\href{https://stacks.math.columbia.edu/tag/00L3}{Tag 00L3}]{stacks-project}) with the underlying reduced subspace $X_p(\overline{\rho})_{w'w_0\cdot\lambda}^{\mathrm{red}}$. By Lemma \ref{lemmasupportcompletion} below, the underlying reduced subscheme of the support of the $\widehat{\cO}_{X_p(\overline{\rho})_{w'w_0\cdot\lambda},x_{w'}}$-module in the left hand side of (\ref{equationsurjectionsheavescontradiction}) is $\Spec(\widehat{\cO}_{X_p(\overline{\rho})_{w'w_0\cdot\lambda}^{\mathrm{red}},x_{w'}})$. Thus we have
	\[\Spec(\widehat{\cO}_{X_p(\overline{\rho})_{w'w_0\cdot\lambda}^{\mathrm{red}},x_{w'}})=\Spec(\widehat{\cO}_{Y_p(\overline{\rho})_{w’w_0\cdot\lambda}^{\mathrm{red}},x_{w'}}).\]
	By Theorem \ref{theoremcyclepartialderham} and the above equality, the map 
	\[R_{\rho_p}\rightarrow \widehat{\cO}_{X_{\mathrm{tri}}(\overline{\rho}_p)_{w'(\mathbf{h})},\iota^{-1}(\rho_p,\underline{\delta}_{\cR,w'})}\widehat{\otimes}_L \widehat{\cO}_{\mathfrak{X}_{\overline{\rho}^p}\times\mathbb{U}^g,z}\rightarrow \widehat{\cO}_{X_p(\overline{\rho})_{w'w_0\cdot\lambda}^{\mathrm{red}},x_{w'}}\] 
	factors through $R_{\rho_p,\cM_{\bullet}}^{Q_p}$. We now prove this is not possible. Assume that $\iota(X)\times \fX^p\times \mathbb{U}^g$ is an irreducible component of $X_p(\overline{\rho})$ passing through $x_{w'}$ where $X$ is the unique irreducible component of $X_{\mathrm{tri}}(\overline{\rho}_p)$ passing through $\iota^{-1}(\rho_p,\underline{\delta}_{\cR,w'})$ and $\fX^{p}$ is an irreducible (reduced) component of $\fX_{\overline{\rho}^p}$. Let $X_{w'(\mathbf{h})}$ be the fiber of $X$ over the weight $w'(\mathbf{h})$ and let $X_{w'(\mathbf{h})}^{\mathrm{red}}$ be the underlying reduced subspace. Then $\iota(X_{w'(\mathbf{h})}^{\mathrm{red}})\times \fX^p\times \mathbb{U}^g$ is a reduced subspace of $X_p(\overline{\rho})_{w'w_0\cdot\lambda}^{\mathrm{red}}$ and the map 
	\[R_{\rho_p}\rightarrow \widehat{\cO}_{X^{\mathrm{red}}_{w'(\mathbf{h})},\iota^{-1}(\rho_p,\underline{\delta}_{\cR,w'})}\widehat{\otimes}_L \widehat{\cO}_{\fX^p\times\mathbb{U}^g,z}\] factors through $R_{\rho_p,\cM_{\bullet}}^{Q_p}$. Hence the map $R_{\rho_p}\rightarrow \widehat{\cO}_{X^{\mathrm{red}}_{w'(\mathbf{h})},\iota^{-1}(\rho_p,\underline{\delta}_{\cR,w'})}$ factors through $R_{\rho_p,\cM_{\bullet}}^{Q_p}$. Denote by $y_{w'_v}:=\left(\rho_{\widetilde{v}},z^{w'_v(\mathbf{h}_{\widetilde{v}})}\mathrm{unr}(\underline{\varphi}_{\widetilde{v}})\right)$ the point on $X_{\mathrm{tri}}(\overline{\rho}_{\widetilde{v}})_{w'_v(\mathbf{h}_{\widetilde{v}})}$ 
	and $X_v$ the irreducible component passing the point with $X_{w'_v(\mathbf{h}_{\widetilde{v}})}$ the fiber of $X_v$ over the weight $w'_v(\mathbf{h}_{\widetilde{v}})$. We get that the morphism $R_{\rho_{\widetilde{v}}}\twoheadrightarrow R_{\rho_{\widetilde{v}},\cM_{\widetilde{v},\bullet}}^{w_v'}\twoheadrightarrow \widehat{\cO}_{X_{w'_v(\mathbf{h}_{\widetilde{v}})}^{\mathrm{red}},y_{w'_v}}$ factors through $R_{\rho_{\widetilde{v}},\cM_{\widetilde{v},\bullet}}^{Q_v}$, i.e.
	\begin{equation}\label{equationinclusioncycle}
		\Spec(\widehat{\cO}_{X_{w'_v(\mathbf{h}_{\widetilde{v}})}^{\mathrm{red}},y_{w'_v}})\subset \Spec(R_{\rho_{\widetilde{v}},\cM_{\widetilde{v},\bullet}}^{Q_v}). 
	\end{equation} 
	By discussions in \S\ref{sectioncycles} and (\ref{formulairreduciblecomponents1}), the underlying topological space $\Spec(\widehat{\cO}_{X_{w'_v(\mathbf{h}_{\widetilde{v}})}^{\mathrm{red}},y_{w'_v}})$, which as a topological space is equal to $\Spec(\overline{R}_{\rho_{\widetilde{v}},\cM_{\widetilde{v},\bullet}}^{w_v'})$, is a union of non-empty cycles denoted by $\mathfrak{Z}_{w_{\cR_{\widetilde{v}}}}$ and $\mathfrak{Z}_{w_v'}$ in \S\ref{sectioncycles}. By our choice of $Q_v$, $w_{\cR_{\widetilde{v}}}(\mathbf{h}_{\widetilde{v}})$ is not strictly $Q_v$-dominant. Then by Lemma \ref{lemmakeyinductionweylgroup1} or the discussion in the end of \S\ref{sectioncycles}, $\mathfrak{Z}_{w_{\cR_{\widetilde{v}}}}$ is not contained in $\Spec(R_{\rho_{\widetilde{v}},\cM_{\widetilde{v},\bullet}}^{Q_v})$, this contradicts (\ref{equationinclusioncycle})!
\end{proof}
\begin{lemma}\label{lemmasupportcompletion}
	Let $A$ be an excellent Noetherian ring, $\fm$ be a maximal ideal of $A$, $J$ be the nilradical of $A$, and $M$ be a finite $A$-module with a faithful action of $A$. Let $\widehat{A}$ be the $\fm$-adic completion of $A$.
	\begin{enumerate}
		\item $\widehat{A/J}=\widehat{A}/\widehat{J}$ is the nilreduction of $\widehat{A}$.
		\item $\widehat{A}$ acts on $\widehat{M}:=\widehat{A}\otimes_AM$ faithfully and the underlying reduced scheme of the support of $\widehat{M}$ is $\Spec(\widehat{A/J})$.
	\end{enumerate}
\end{lemma}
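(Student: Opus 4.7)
My plan is to treat the two parts in order, with part (1) being the only place where the excellence hypothesis is essential; part (2) then follows formally from (1) together with flatness of completion.

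For part (1), I would begin by applying $\fm$-adic completion to the exact sequence $0 \to J \to A \to A/J \to 0$. Since $A$ is Noetherian and $J$ is finitely generated, completion is exact, and we obtain $\widehat{A/J} \simeq \widehat{A}/\widehat{J}$ with $\widehat{J}=J\widehat{A}$ (equivalently, $\widehat{A}$ is flat over $A$ and $\widehat{J}=J\otimes_A\widehat{A}$). It remains to identify $J\widehat{A}$ with the nilradical $\sqrt{0}$ of $\widehat{A}$. One inclusion is automatic: $J$ is nilpotent (because $A$ is Noetherian), so $(J\widehat{A})^n = J^n\widehat{A} = 0$ for $n\gg 0$, giving $J\widehat{A}\subseteq\sqrt{0}$. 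For the reverse inclusion, I would invoke excellence. Since $\fm$ is maximal, the $\fm$-adic completion of $A$ agrees with the completion of the local ring $A_\fm$, and the map $A_\fm\to\widehat{A_\fm}$ is a regular morphism (by the G-ring property of the excellent ring $A$). Consequently its fibres are geometrically reduced, and since $A/J$ is reduced, so is $(A/J)_\fm$; regular base change then forces $\widehat{A}/J\widehat{A}\simeq\widehat{(A/J)_\fm}$ to be reduced. Thus $J\widehat{A}\supseteq\sqrt{0}$ as well, which proves (1).

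For part (2), faithfulness of $\widehat{A}$ on $\widehat{M}$ follows from flat base change for annihilators. Because $M$ is finite over the Noetherian ring $A$ it is finitely presented, and flatness of $A\to\widehat{A}$ then gives $\mathrm{Ann}_{\widehat{A}}(\widehat{M})=\mathrm{Ann}_A(M)\cdot\widehat{A}=0$ (concretely, apply $-\otimes_A\widehat{A}$ to the exact sequence $0\to \mathrm{Ann}_A(M)\to A\to\mathrm{End}_A(M)$ and use that $\mathrm{End}_A(M)\otimes_A\widehat{A}\simeq\mathrm{End}_{\widehat{A}}(\widehat{M})$ for finitely presented $M$). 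Since $\widehat{M}$ is a finitely generated faithful module over $\widehat{A}$, its support is $V(\mathrm{Ann}_{\widehat{A}}(\widehat{M}))=\Spec(\widehat{A})$, and the underlying reduced scheme is $\Spec(\widehat{A}/\sqrt{0})=\Spec(\widehat{A}/J\widehat{A})=\Spec(\widehat{A/J})$ by part (1).

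The only non-formal input is the excellence of $A$, used exactly once to guarantee that the completion of the reduced ring $A/J$ remains reduced; without this assumption the nilradical of $\widehat{A}$ could strictly contain $J\widehat{A}$, so this step is the single genuine obstacle and the reason excellence appears in the statement.
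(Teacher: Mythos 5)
Your proposal is correct and follows essentially the same route as the paper's own proof: exactness of completion to identify $\widehat{A}/\widehat{J}\simeq\widehat{A/J}$, nilpotence of $J$ in the Noetherian ring to get one inclusion, excellence (via the cited reducedness of completion of a reduced excellent local ring, which is exactly the G-ring argument you unpack) for the reverse inclusion, and flatness to push the injection $A\hookrightarrow\mathrm{End}_A(M)$ through to $\widehat{A}\hookrightarrow\mathrm{End}_{\widehat{A}}(\widehat{M})$. Phrasing step (2) via annihilators instead of the embedding into the endomorphism ring is merely cosmetic.
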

\begin{proof}
	(1) The sequence $0\rightarrow \widehat{J}\rightarrow\widehat{A}\rightarrow \widehat{A/J}\rightarrow 0$ is exact (\cite[Prop. 10.12]{atiyah1969introduction}). Moreover, $\widehat{A/J}$ is reduced (\cite[\href{https://stacks.math.columbia.edu/tag/07NZ}{Tag 07NZ}]{stacks-project}) and $\widehat{J}$ is nilpotent. Hence $\widehat{J}$ is the nilradical of $\widehat{A}$.\par
	(2) We have a natural injection $A\hookrightarrow \Hom_A(M,M)$ of finite $A$-modules. Tensoring with $\widehat{A}$, we get an injection $\widehat{A}\hookrightarrow \Hom_A(M,M)\otimes_A\widehat{A}=\Hom_{\widehat{A}}(\widehat{M},\widehat{M})$ (\cite[Cor. 0.7.3.4]{grothendieck1960EGAI}) of $\widehat{A}$-modules by the flatness of $\widehat{A}$ over $A$ (\cite[Prop. 10.14]{atiyah1969introduction}). The injection means that $\widehat{A}$ acts faithfully on $\widehat{M}$. Hence the support of $\widehat{M}$ as a $\widehat{A}$-module is $\Spec(\widehat{A})$ and the underlying reduced subscheme is $\Spec(\widehat{A/J})$ by (1).
\end{proof}
The remaining steps mainly rely on the local irreducibility of the trianguline variety at generic points and the crystalline deformation spaces introduced in the proof of Theorem \ref{theoremlocalcompanionpoint}. The following proposition reduces the existence of companion constituents in the generic crystalline cases to the existence of companion points.
\begin{proposition}\label{propositionsocleappear}
	Let $x=\left((\rho_p,\underline{\delta}_{\cR,w}),z\right)\in X_p(\overline{\rho})(L)\subset\iota\left(X_{\mathrm{tri}}(\overline{\rho}_p)\right)\times (\mathfrak{X}_{\overline{\rho}^p}\times\mathbb{U}^g)$ be a point such that $\rho_p$ is generic crystalline with a refinement $\cR$, $w\in W_{G_p}/W_{P_p}$ and the weight of $\underline{\delta}_{\cR,w}$ (Definition \ref{definitioncharacterrefinementglobal}) is $ww_0\cdot \lambda$. Let $r_x$ be the image of $x$ in $\mathfrak{X}_{\infty}$ and $\fm_{r_x}$ be the corresponding maximal ideal of $R_{\infty}[\frac{1}{p}]$. Then 
	\[\Hom_{G_p}\left(\mathcal{F}_{\overline{B}_p}^{G_p}\left(\overline{L}(-ww_0\cdot\lambda),\underline{\delta}_{\cR,\mathrm{sm}}\delta_{B_p}^{-1}\right),\Pi_{\infty}^{\mathrm{an}}[\fm_{r_x}]\right)\neq 0.\]
\end{proposition}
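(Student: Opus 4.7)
The proof strategy adapts the scheme of \cite[Thm. 5.3.3]{breuil2019local} to the non-regular Hodge-Tate setting. By the Orlik-Strauch adjunction (\ref{equationadjointliealgebra}) and the description of the stalks of $\mathcal{L}_{ww_0\cdot\lambda}$ via (\ref{equationsoclesheafstalklimit}), the statement is equivalent to $x \in Y_p(\overline{\rho})_{ww_0\cdot\lambda}^{\mathrm{red}}$, i.e., to the non-vanishing at $x$ of the coherent sheaf $\mathcal{L}_{ww_0\cdot\lambda}$ on $X_p(\overline{\rho})_{ww_0\cdot\lambda}$. Since $x\in X_p(\overline{\rho})_{ww_0\cdot\lambda}$, the source of the natural surjection $\mathcal{M}_\infty\otimes_{\cO_{X_p(\overline{\rho})}}\cO_{X_p(\overline{\rho})_{ww_0\cdot\lambda}}\twoheadrightarrow \mathcal{L}_{ww_0\cdot\lambda}$ has nonzero stalk at $x$; the task is to show this surjection does not vanish on the fiber at $x$.

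First, by Theorem \ref{theoremirreducibletriangullinevariety} together with \cite[Thm. 3.21]{breuil2017interpretation}, the point $x$ lies on a single irreducible component of $X_p(\overline{\rho})$ of the form $Y := \iota(X)\times\mathfrak{X}^p\times\mathbb{U}^g$, where $X$ is the unique irreducible component of $X_{\mathrm{tri}}(\overline{\rho}_p)$ through $\iota^{-1}(x)$. Using the product over $v\in S_p$ of the Kisin-type crystalline deformation spaces $\widetilde{W}_{\overline{\rho}_{\widetilde{v}},w_v}^{\mathbf{h}'_v-\mathrm{cr}}$ built in the proof of Theorem \ref{theoremlocalcompanionpoint}, varied over the Hodge-Tate weights $\mathbf{h}'$ while keeping the refinement data and the relative-position element $w$ fixed, one constructs a Zariski dense family of generic crystalline companion points $y=((\rho_{p,y},\underline{\delta}_{\cR_y,w}),z_y)\in Y$. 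For each such $y$, \cite[Thm. 5.5]{breuil2017smoothness} yields a ``top'' companion point $y_{w_0}\in Y$ of dominant weight $\lambda_y$; at $y_{w_0}$, the companion constituent $\cF_{\overline{B}_p}^{G_p}(\overline{L}(-\lambda_y),\underline{\delta}_{\cR_y,\mathrm{sm}}\delta_{B_p}^{-1})$ is locally $\Q_p$-algebraic, and its appearance in $\Pi_\infty^{\mathrm{an}}[\fm_{r_{y_{w_0}}}]$ follows from classical local-global compatibility for the cohomological automorphic representations of $G$ associated to $\rho_{p,y}$.

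To transfer this non-vanishing to $x$ for our general $w$, one interpolates along the above family over $Y$: working uniformly with the universal Verma module $U(\fg)\otimes_{U(\fb)}(ww_0\cdot(-))$ along the varying weight character, the exactness of Emerton's Jacquet functor $J_{B_p}$ and of the finite-slope functor $(-)_{\mathrm{fs}}$ (cf. \cite[Lem. 5.2.5-5.2.6]{breuil2019local}) implies that the locus of $y'\in Y$ where the companion of the appropriate weight appears is Zariski closed. Since this closed locus contains the Zariski dense set of dominant-weight classical companion points constructed above and $Y$ is irreducible, it must equal $Y$ and therefore contains $x$. The principal new obstacle beyond the regular-weight setting of \cite{breuil2019local} is carrying out this interpolation with $ww_0\cdot\lambda'$ possibly non-dominant, so that $L(ww_0\cdot\lambda')$ is infinite-dimensional and the companion constituent is not locally algebraic; the irreducibility and unibranchness of the local model provided by Theorem \ref{theoremirreducibletriangullinevariety} and Theorem \ref{theoremunibranchYP} — valid without any regularity hypothesis on Hodge-Tate weights — are precisely the ingredients needed to make the Zariski density argument work in the non-regular case.
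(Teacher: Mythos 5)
Your outer shell — reducing to membership in $Y_p(\overline{\rho})_{ww_0\cdot\lambda}$, Zariski density along the crystalline strata $\widetilde{W}_{\overline{\rho}_p,w}^{\mathbf{h}-\mathrm{cr}}$, and Zariski closedness of the target locus — matches the paper's second step. But the inner engine you propose for the dense seed points is wrong, and this is exactly where the non-regularity bites.

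You claim that at the ``top'' companion point $y_{w_0}$ the constituent $\cF_{\overline{B}_p}^{G_p}(\overline{L}(-\lambda_y),\underline{\delta}_{\cR_y,\mathrm{sm}}\delta_{B_p}^{-1})$ is locally $\Q_p$-algebraic and that its appearance follows from classical local-global compatibility. This is only true when $\lambda_y$ is dominant, i.e.\ when $\overline{L}(-\lambda_y)$ is finite-dimensional, which happens exactly when the Hodge-Tate weights $\mathbf{h}_y$ are regular. But the Kisin spaces $\widetilde{W}_{\overline{\rho}_p,w}^{\mathbf{h}-\mathrm{cr}}$ have \emph{fixed} Hodge type $\mathbf{h}$, the same as at $x$; so if $x$ has non-regular weights, every point $y$ in your dense family does too, $\lambda_y+\rho$ is only weakly dominant ($\lambda_{y,\tau,i}-\lambda_{y,\tau,i+1}\geq -1$), $\overline{L}(-\lambda_y)$ is infinite-dimensional, and there are \emph{no} non-zero locally algebraic vectors in $\Pi_\infty^{\mathrm{an}}[\fm_{r_{y_{w_0}}}]$ (as the paper's introduction notes, via the infinitesimal character results of \cite{dospinescu2020infinitesimal}). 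Local-global compatibility gives you nothing to interpolate from.

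The paper's actual seed is the special case $w=w_{\cR}$, established by a purely structural contradiction argument with no recourse to locally algebraic vectors: if $\cF_{\overline{B}_p}^{G_p}(\overline{L}(-w_{\cR}w_0\cdot\lambda),\underline{\delta}_{\cR,\mathrm{sm}}\delta_{B_p}^{-1})$ did not appear in $\Pi_\infty^{\mathrm{an}}[\fm_{r_x}]$, then by the Jordan--H\"older structure of $\cF_{\overline{B}_p}^{G_p}(\underline{\delta}_{\cR,w_{\cR}})$ some $\cF_{\overline{B}_p}^{G_p}(\overline{L}(-w'w_0\cdot\lambda),\underline{\delta}_{\cR,\mathrm{sm}}\delta_{B_p}^{-1})$ with $w'<w_{\cR}$ would, which by the socle $\hookrightarrow$ Verma injection forces $(\rho_p,\underline{\delta}_{\cR,w'})\in X_p(\overline{\rho})$; but then $(\rho_p,z^{w'(\mathbf{h})}\mathrm{unr}(\underline{\varphi}))\in\trivar(\overline{\rho}_p)$, contradicting $w'<w_{\cR}=w_x$ via Theorem~\ref{theoremirreducibletriangullinevariety}. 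For general $w\geq w_{\cR}$, the dense crystalline points $y$ in $\widetilde{W}_{\overline{\rho}_p,w}^{\mathbf{h}-\mathrm{cr}}$ have $w_{\cR_y}=w$ by construction, and one applies this $w=w_{\cR}$ case to \emph{those} points; the Zariski density argument then needs nothing more. Your appeal to local-global compatibility replaces exactly this step, and it does not work here.
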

\begin{proof}
	The method is similar to \textbf{Step 8} and \textbf{Step 9} in the proof of \cite[Thm. 5.3.3]{breuil2019local}. We will firstly prove the result for the case when $w=w_{\cR}$ which will be a consequence of (\ref{equationorlikstrauchadjunction}) and Theorem \ref{theoremlocalcompanionpoint}. For general points, notice that the appearance of the companion constituents is equivalent to that $x$ is in the Zariski closed subspace $Y_p(\overline{\rho})_{ww_0\cdot\lambda}$ of $X_{p}(\overline{\rho})$ constructed in the beginning of \S\ref{sectionthesocleconjection}. Thus, it suffices to prove that $x$ lies in the closure of generic crystalline points in $Y_p(\overline{\rho})_{ww_0\cdot\lambda}$ satisfying $w=w_{\cR}$. This can be achieved using the variants of the crystalline deformation space defined in the proof of Theorem \ref{theoremlocalcompanionpoint}.\par
	First, assume $wW_{P_p}=w_{\cR}W_{P_p}$ and $\Hom_{G_p}\left(\mathcal{F}_{\overline{B}_p}^{G_p}\left(\overline{L}(-ww_0\cdot\lambda),\underline{\delta}_{\cR,\mathrm{sm}}\delta_{B_p}^{-1}\right),\Pi_{\infty}^{\mathrm{an}}[\fm_{r_x}]\right)=0$, then there exists $w'\in W_{G_p}/W_{P_p}$ and $w'<w$ in $W_{G_p}/W_{P_p}$ such that 
	\[\Hom_{G_p}\left(\mathcal{F}_{\overline{B}_p}^{G_p}(\overline{L}(-w'w_0\cdot\lambda),\underline{\delta}_{\cR,\mathrm{sm}}\delta_{B_p}^{-1}),\Pi_{\infty}^{\mathrm{an}}[\fm_{r_x}]\right)\neq 0\] 
	since $\Hom_{G_p}\left(\mathcal{F}_{\overline{B}_p}^{G_p}(\underline{\delta}_{\cR,w}),\Pi_{\infty}^{\mathrm{an}}[\fm_{r_x}]\right)\neq 0$. Therefore $\Hom_{G_p}\left(\mathcal{F}_{\overline{B}_p}^{G_p}(\underline{\delta}_{\cR,w'}),\Pi_{\infty}^{\mathrm{an}}[\fm_{r_x}]\right)\neq 0$. By (\ref{equationorlikstrauchadjunction}), the point 
	$\left((\rho_p,\underline{\delta}_{\cR,w'}),z\right)\in (\mathfrak{X}_{\overline{\rho}_p}\times \widehat{T}_{p,L})\times(\mathfrak{X}_{\overline{\rho}^p}\times\mathbb{U}^g)$ is in $X_p(\overline{\rho})$. Then the point $\left(\rho_p, z^{w'(\mathbf{h})}\mathrm{unr}(\underline{\varphi})\right)$ is in $X_{\mathrm{tri}}(\overline{\rho}_p)$. This is not possible by Theorem \ref{theoremirreducibletriangullinevariety}. Hence the conclusion holds in the case when $w=w_{\cR}$.\par
	In general, by Theorem \ref{theoremirreducibletriangullinevariety} and the generic assumption on $\rho_p$, $\left(\rho_p, z^{w(\mathbf{h})}\mathrm{unr}(\underline{\varphi})\right)$ lies in a unique irreducible component $X$ of $X_{\mathrm{tri}}(\overline{\rho}_p)$. We assume that $x$ lies in an irreducible component of $X_p(\overline{\rho})$ of the form $\iota(X)\times \fX^p\times \mathbb{U}^g$ for an irreducible component $\fX^p$ of $\fX_{\overline{\rho}^p}$. \par
	In the proof of Theorem \ref{theoremlocalcompanionpoint}, we have constructed crystalline deformation spaces $\widetilde{W}_{\overline{\rho}_{\widetilde{v}},w_v}^{\mathbf{h}_{\widetilde{v}}-\mathrm{cr}}$ (resp. the closure $\overline{\widetilde{W}_{\overline{\rho}_{\widetilde{v}},w_v}^{\mathbf{h}_{\widetilde{v}}-\mathrm{cr}}}$), which roughly parameterizing the pairs $(\rho_v,\cR_v)$ of generic crystalline deformations with refinements satisfying that $w_{\cR_v}=w_v$ (resp. $w_{\cR_v}\leq w_v$). We have also defined morphisms $\iota_{\mathbf{h}_{\widetilde{v}},w_v}:\overline{\widetilde{W}_{\overline{\rho}_{\widetilde{v}},w_v}^{\mathbf{h}_{\widetilde{v}}-\mathrm{cr}}}\rightarrow X_{\mathrm{tri}}(\overline{\rho}_{\widetilde{v}})$ in the end of the proof of Theorem \ref{theoremlocalcompanionpoint} sending $(\rho_v,\cR_v=\underline{\varphi}_v)$ to $(\rho_v,z^{w_v(\mathbf{h}_{\widetilde{v}})}\mathrm{unr}(\underline{\varphi}_v))$. Let $\iota_{\mathbf{h},w}:=\prod_{v\in S_p}\iota_{\mathbf{h}_{\widetilde{v}},w_v}:\prod_{v\in S_p}\overline{\widetilde{W}_{\overline{\rho}_{\widetilde{v}},w_v}^{\mathbf{h}_{\widetilde{v}}-\mathrm{cr}}}\rightarrow \prod_{v\in S_p}X_{\mathrm{tri}}(\overline{\rho}_{\widetilde{v}})=X_{\mathrm{tri}}(\overline{\rho}_p)$. Then the point $\left(\rho_p, z^{w(\mathbf{h})}\mathrm{unr}(\underline{\varphi})\right)$ is in the image of $\iota_{\mathbf{h},w}$ since $w\geq w_{\cR}$. Denote by $\overline{\widetilde{W}_{\overline{\rho}_p,w}^{\mathbf{h}-\mathrm{cr}}}=\prod_{v\in S_p}\overline{\widetilde{W}_{\overline{\rho}_{\widetilde{v}},w_v}^{\mathbf{h}_{\widetilde{v}}-\mathrm{cr}}}$ and $\widetilde{W}_{\overline{\rho}_p,w}^{\mathbf{h}-\mathrm{cr}}=\prod_{v\in S_p}\widetilde{W}_{\overline{\rho}_{\widetilde{v}},w_v}^{\mathbf{h}_{\widetilde{v}}-\mathrm{cr}}$. \par
	We take an affinoid neighbourhood $U$ of $\left(\rho_p, z^{w(\mathbf{h})}\mathrm{unr}(\underline{\varphi})\right)$ in $X$ and pick a small open affinoid $V\subset \iota_{\mathbf{h},w}^{-1}(U)$ such that $\left(\rho_p,z^{w(\mathbf{h})}\mathrm{unr}(\underline{\varphi})\right)\in \iota_{\mathbf{h},w}(V)$. Then $\widetilde{W}_{\overline{\rho}_p,w}^{\mathbf{h}-\mathrm{cr}}\cap V$ is Zariski open dense in $V$. Points in $\iota\circ\iota_{\mathbf{h},w}(\widetilde{W}_{\overline{\rho}_p,w}^{\mathbf{h}-\mathrm{cr}}\cap V)\times \fX^p\times \mathbb{U}^g\subset X_p(\overline{\rho})_{ww_0\cdot\lambda}\cap (\iota(X)\times \fX^p\times\mathbb{U}^g)$ are generic crystalline. Hence by the discussion above for the case $w=w_{\cR}$, we have $\iota\circ\iota_{\mathbf{h},w}(\widetilde{W}_{\overline{\rho}_p,w}^{\mathbf{h}-\mathrm{cr}}\cap V)\times \fX^p\times \mathbb{U}^g\subset Y_p(\overline{\rho})_{ww_0\cdot\lambda}$, i.e. 
	\[(\widetilde{W}_{\overline{\rho}_p,w}^{\mathbf{h}-\mathrm{cr}}\cap V)\times \fX^p\times \mathbb{U}^g\subset \left(\left(\iota\circ\iota_{\mathbf{h},w}\right)\times \mathrm{id}\times \mathrm{id}\right)^{-1}(Y_p(\overline{\rho})_{ww_0\cdot\lambda}).\] 
	Since $Y_p(\overline{\rho})_{ww_0\cdot\lambda}$ is Zariski closed in $X_p(\overline{\rho})$, $\left(\left(\iota\circ\iota_{\mathbf{h},w}\right)\times \mathrm{id}\times \mathrm{id}\right)^{-1}(Y_p(\overline{\rho})_{ww_0\cdot\lambda})\cap  (V\times \fX^p\times \mathbb{U}^g)$ is Zariski closed in $V\times \fX^p\times \mathbb{U}^g$. Hence $V\times \fX^p\times \mathbb{U}^g\subset ((\iota\circ\iota_{\mathbf{h},w})\times \mathrm{id}\times \mathrm{id})^{-1}(Y_p(\overline{\rho})_{ww_0\cdot\lambda})$. This implies $\left((\rho_p,\underline{\delta}_{\cR,w}),z\right)\in Y_p(\overline{\rho})_{ww_0\cdot\lambda}$. 
\end{proof}
Now we can prove our main theorem.
\begin{theorem}\label{theoremmaincrystalline}
	Assume that there exists $w'\in W_{G_p}/W_{P_p}$ such that the point 
	\[\left((\rho_p,\underline{\delta}_{\cR,w'}),z\right)\in\iota\left(X_{\mathrm{tri}}(\overline{\rho}_p)\right)\times (\mathfrak{X}_{\overline{\rho}^p}\times\mathbb{U}^g) \]
	is in $X_p(\overline{\rho})(L)$ where $\rho_p$ is generic crystalline and $\cR$ is a refinement of $\rho_p$. Then $\left((\rho_p,\underline{\delta}_{\cR,w}),z\right)\in X_p(\overline{\rho})(L)$ if and only if $w\geq w_{\cR}$ in $W_{G_p}/W_{P_p}$.
\end{theorem}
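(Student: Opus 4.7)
The "only if" direction reduces to a local statement on the trianguline variety. If $((\rho_p, \underline\delta_{\cR,w}), z)\in X_p(\overline\rho)$, then $\iota^{-1}(\rho_p, \underline\delta_{\cR,w})\in X_{\mathrm{tri}}(\overline\rho_p)$; the character $\underline\delta_{\cR,w}$ is generic and locally algebraic, so Theorem \ref{theoremirreducibletriangullinevariety} applies, forcing the local relative position $w_x$ at this point to satisfy $w_x\leq w$ in $W_{G_p}/W_{P_p}$. Unwinding the crystalline refinement data (as in the proof of Theorem \ref{theoremlocalcompanionpoint}) identifies $w_x$ with $w_{\cR}$, giving $w\geq w_{\cR}$.

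For the "if" direction I proceed by downward induction on $\lg_{P_p}(w_0)-\lg_{P_p}(w)$, with base case $w=w_0$ being the hypothesis. In the induction step I suppose $w<w_0$, $w\geq w_{\cR}$, and that $(\rho_p, \underline\delta_{\cR,w'})\in X_p(\overline\rho)$ for every $w'>w$ with $w'\geq w_{\cR}$. Applying Lemma \ref{lemmakeyinductionweylgroup} to $w\neq w_0$ produces a simple reflection $s_\alpha$ with $s_\alpha w>w$ in $W_{G_p}/W_{P_p}$ and a standard parabolic $Q$ such that $s_\alpha w(\mathbf h)$ is strictly $Q$-dominant while $w(\mathbf h)$ is not. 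By induction $(\rho_p, \underline\delta_{\cR,s_\alpha w})\in X_p(\overline\rho)$, so Proposition \ref{propositionsocleappear} places the companion constituent $\cF_{\overline{B}_p}^{G_p}(\overline{L}(-s_\alpha w\cdot w_0\cdot\lambda),\underline\delta_{\cR,\mathrm{sm}}\delta_{B_p}^{-1})$ inside $\Pi_\infty^{\mathrm{an}}[\fm_{r_x}]$. Adapting the cycle argument of Proposition \ref{propositionmaincycle}, the surjection
\[\cM_\infty\otimes\widehat{\cO}_{X_p(\overline\rho)_{s_\alpha w\cdot w_0\cdot\lambda},x_{s_\alpha w}}\twoheadrightarrow \cL_{s_\alpha w\cdot w_0\cdot\lambda}\otimes\widehat{\cO}_{X_p(\overline\rho)_{s_\alpha w\cdot w_0\cdot\lambda},x_{s_\alpha w}}\]
induced by $U(\fg)\otimes_{U(\fb)}s_\alpha w\cdot w_0\cdot\lambda\twoheadrightarrow L(s_\alpha w\cdot w_0\cdot\lambda)$ cannot be an isomorphism: Theorem \ref{theoremcyclepartialderham} confines the target to the $Q$-partially de Rham locus $\Spec(R_{\rho_p,\cM_\bullet}^Q)$, whereas Theorem \ref{theoremirreducibletriangullinevariety} together with (\ref{formulairreduciblecomponents1}) shows that the reduced support of the source contains the cycle $\mathfrak{Z}_w$, which by Theorem \ref{theoremvarietyweight} is not $Q$-partially de Rham since $w(\mathbf h)$ is not strictly $Q$-dominant.

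The resulting non-trivial kernel, combined with the exactness of $\Hom_{U(\fg)}(-,\Pi_\infty^{\mathrm{an}})^{U_0}[\fm_{r_x}^\infty][\fm_{\underline\delta_{\cR,\mathrm{sm}}}^\infty]$ on $\cO$, forces the appearance of some companion constituent $\cF_{\overline{B}_p}^{G_p}(\overline{L}(-w''\cdot w_0\cdot\lambda),\underline\delta_{\cR,\mathrm{sm}}\delta_{B_p}^{-1})$ for some $w''$ with $w_{\cR}\leq w''\leq w$, and hence a companion point $x_{w''}\in X_p(\overline\rho)$. The main obstacle, and the technical heart of the argument, is to propagate the existence from this possibly undetermined $w''\leq w$ back to $w$ itself, since the Jordan--H\"older filtration of the kernel Verma contains multiple irreducible factors indexed by the interval $[w_{\cR},s_\alpha w)$ in $W_{G_p}/W_{P_p}$. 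My plan to overcome this is to combine the local irreducibility of Theorem \ref{theoremirreducibletriangullinevariety} at both $\iota^{-1}(\rho_p, \underline\delta_{\cR,w''})$ and $\iota^{-1}(\rho_p, \underline\delta_{\cR,w})$ with the density argument of Theorem \ref{theoremlocalcompanionpoint}: the irreducibility of $\overline{\widetilde W^{\mathbf h-\mathrm{cr}}_{\overline\rho_p,w_{\cR}}}$ and the fact that each embedding $\iota_{\mathbf h,w'}$ for $w'\geq w_{\cR}$ lands in $X_{\mathrm{tri}}(\overline\rho_p)$ place all the points $\iota^{-1}(\rho_p, \underline\delta_{\cR,w'})$ on a single irreducible component $X$. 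Since $x_{w''}\in X_p(\overline\rho)$ forces $X$ to be $\fX^p$-automorphic for the relevant irreducible component $\fX^p$ of $\fX_{\overline\rho^p}$, it follows that $\iota(X)\times\fX^p\times\mathbb U^g\subset X_p(\overline\rho)$, whence $x_w\in X_p(\overline\rho)$, closing the induction.
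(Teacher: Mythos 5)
Your ``only if'' direction is fine and matches the paper's reduction to Theorem~\ref{theoremirreducibletriangullinevariety} via Theorem~\ref{theoremlocalcompanionpoint}. The ``if'' direction has two concrete gaps in the induction step. First, the Jordan--H\"older factors of $\ker\left(U(\fg)\otimes_{U(\fb)}s_\alpha w w_0\cdot\lambda\twoheadrightarrow L(s_\alpha w w_0\cdot\lambda)\right)$ that can contribute to $\Pi_\infty^{\mathrm{an}}[\fm_{r_x}]$ are indexed by $w''$ with $w_{\cR}\leq w''<s_\alpha w$ in the Bruhat order on $W_{G_p}/W_{P_p}$; such $w''$ need not satisfy $w''\leq w$, and there is no reason the nontrivial kernel should land on the factor at $w$ rather than some incomparable $w''$. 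Second, your proposed rescue---that all the companion points $\iota^{-1}(\rho_p,\underline{\delta}_{\cR,w'})$ with $w'\geq w_{\cR}$ lie on a single irreducible component $X$ of $X_{\mathrm{tri}}(\overline{\rho}_p)$---is not established by anything you cite; these are distinct points with distinct characters, and Theorem~\ref{theoremirreducibletriangullinevariety} only gives local irreducibility \emph{at each one}, not coincidence of their global irreducible components. So from $x_{w''}\in X_p(\overline{\rho})$ you cannot conclude $\iota(X)\times\fX^p\times\mathbb{U}^g\ni x_w$.

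The paper's induction avoids both problems by quantifying the hypothesis $\mathcal{H}_\ell$ over \emph{all} generic crystalline points $y_{w_0}$ (so $\rho_p$ is allowed to vary), and by invoking Proposition~\ref{propositionmaincycle} only to produce the minimal companion $x_{w_{\cR}}$, where $\lg_{P_p}(s_\alpha w_{\cR})=\lg_{P_p}(w_{\cR})+1$ forces the relevant Bruhat interval to collapse to $\{w_{\cR},s_\alpha w_{\cR}\}$ and eliminates the choice ambiguity. To reach an arbitrary $w\geq w_{\cR}$, the paper passes to nearby generic crystalline points in $\overline{\widetilde{W}_{\overline{\rho}_p,w}^{\mathbf{h}-\mathrm{cr}}}$ whose refinement has relative position \emph{exactly} $w$, applies the minimal-companion step there, and then deduces $y_w\in X_p(\overline{\rho})$ from the Zariski closedness of the preimage of $X_p(\overline{\rho})$ under $(\iota\circ\iota_{\mathbf{h},w})\times\mathrm{id}\times\mathrm{id}$. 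Your fixed-$\rho_p$ induction would need to incorporate this variation of $\rho_p$ to close.
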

\begin{proof}
	The ``only if'' part follows from Theorem \ref{theoremlocalcompanionpoint}. \par
	By \cite[Thm. 5.5]{breuil2017smoothness} and the assumption $\left((\rho_p,\underline{\delta}_{\cR,w'}),z\right)\in X_p(\overline{\rho})$, we can assume 
	\[\left((\rho_p,\underline{\delta}_{\cR,w_0}),z\right)\in X_p(\overline{\rho}).\]
	We prove the ``if'' part by descending induction on the integers $\ell\leq \lg_{P_p}(w_0)$ for the following induction hypothesis: \\
	$\mathcal{H}_{\ell}:$ if $y_{w_0}= \left((\rho_p,\underline{\delta}_{\cR,w_0}),z\right)\in X_p(\overline{\rho})$ is a generic crystalline point of Hodge-Tate weights $\mathbf{h}$ with a refinement $\cR$, then for any $w$ such that $w \geq w_{\cR}$ and $\lg_{P_p}(w)\geq \ell$, the point $y_{w}:=\left((\rho_p,\underline{\delta}_{\cR,w}),z\right)\in \iota\left(X_{\mathrm{tri}}(\overline{\rho}_p)\right)\times \mathfrak{X}_{\overline{\rho}^p}\times\mathbb{U}^g$ is in $X_p(\overline{\rho})$. \par
	For $\ell=\lg_{P_p}(w_0)$, there is nothing to prove. We now assume $\mathcal{H}_{\ell}$ and prove $\mathcal{H}_{\ell-1}$.\par
	Firstly, the assertion of $\mathcal{H}_{\ell-1}$ holds under $\mathcal{H}_{\ell}$ automatically for any generic crystalline $y_{w_0}$ and $w$ if $\lg_{P_p}(w_{\cR})\geq \ell$. By Proposition \ref{propositionmaincycle}, $\mathcal{H}_{\ell}$ implies that if $y_{w_0}$ as above is in $X_p(\overline{\rho})$ and $\lg_{P_p}(w_{\cR})= \ell-1$, then $y_{w_{\cR}}\in X_p(\overline{\rho})$. By Proposition \ref{propositionsocleappear}, $y_{w_{\cR}}\in Y_p(\overline{\rho})_{w_{\cR}w_0\cdot\lambda}$ (Proposition \ref{propositionmaincycle} and \ref{propositionsocleappear} are proved for $L$-points, but the equivalent statements for companion constituents can be proved in the same way after enlarging the coefficient field $L$).\par
	Thus, the assertion of $\mathcal{H}_{\ell-1}$ holds at least for generic crystalline points $y_{w_0}$ such that $\lg_{P_p}(w_{\cR})= \ell-1$. For more general crystalline generic points $y_{w_0},w\geq w_{\cR}$ and $\mathrm{lg}_{P_p}(w)\geq \ell-1$, we will show that $y_w$ lies in the closure of points $y'_w=\left((\rho_p',\underline{\delta}_{\cR',w}),z\right)$, the companion points of generic crystalline points of the form $y'_{w_0}=\left((\rho_p',\underline{\delta}_{\cR',w}),z\right)\in X_p(\overline{\rho})$ such that $\mathrm{lg}_{P_p}(w_{\cR'})=\ell-1$. Since $y'_w$ are in $X_p(\overline{\rho})$ by the previous discussions, $y_w$ will be also in $X_p(\overline{\rho})$.\par
	We take an arbitrary $y_{w_0}:=\left((\rho_p,\underline{\delta}_{\cR,w_0}),z\right)\in X_p(\overline{\rho})$ as in the hypothesis $\mathcal{H}_{\ell-1}$, we need to prove that for any $w$ such that $w\geq w_{\cR}$, $\lg_{P_p}(w)=\ell-1$, we have $y_{w}\in X_p(\overline{\rho})$. We may assume $\lg_{P_p}(w_{\cR})\leq \ell-1$ and by proving the equivalent statement on companion constituents, we may assume $y_{w_0}$ is an $L$-point. Recall as in the proof of Theorem \ref{theoremlocalcompanionpoint} or Proposition \ref{propositionsocleappear}, for each $v\in S_p$, there is a variant of crystalline deformation space $\iota_{\mathbf{h}_{\widetilde{v}},w_{v,0}}:\widetilde{W}_{\overline{\rho}_{\widetilde{v}}}^{\mathbf{h}_{\widetilde{v}}-\mathrm{cr}}\hookrightarrow X_{\mathrm{tri}}(\overline{\rho}_{\widetilde{v}})$ with the image consisting of generic crystalline points with weights $w_{v,0}(\mathbf{h}_{\widetilde{v}})$ and for each $w\in (\mathcal{S}_n)^{\Sigma_v}/W_{P_v}$, there exists a Zariski locally closed subset $\widetilde{W}_{\overline{\rho}_{\widetilde{v}},w_v}^{\mathbf{h}_{\widetilde{v}}-\mathrm{cr}}$ and its closure $\overline{\widetilde{W}_{\overline{\rho}_{\widetilde{v}},w_v}^{\mathbf{h}_{\widetilde{v}}-\mathrm{cr}}}=\cup_{w'_v\leq w_v} \widetilde{W}_{\overline{\rho}_{\widetilde{v}},w'_v}^{\mathbf{h}_{\widetilde{v}}-\mathrm{cr}}$ with an injection $\iota_{\mathbf{h}_{\widetilde{v}},w_v}: \overline{\widetilde{W}_{\overline{\rho}_{\widetilde{v}},w_v}^{\mathbf{h}_{\widetilde{v}}-\mathrm{cr}}}\hookrightarrow X_{\mathrm{tri}}(\overline{\rho}_{\widetilde{v}})$ sending $(\rho_{\widetilde{v}},\underline{\varphi}_{\widetilde{v}})$ to $\left(\rho_{\widetilde{v}}, z^{w_v(\mathbf{h}_{\widetilde{v}})}\mathrm{unr}(\underline{\varphi}_{\widetilde{v}})\right)$. The image of $\iota_{\mathbf{h}_{\widetilde{v}},w_v}$ consists of generic crystalline points of the weight $w_v(\mathbf{h}_{\widetilde{v}})$ such that the relative positions of the trianguline filtrations and the Hodge filtrations are parameterized by some $w_v'\leq w_v$ in $(\mathcal{S}_n)^{\Sigma_v}/W_{P_v}$. We let $\iota_{\mathbf{h},w_0}:=\prod_{v\in S_p}\iota_{\mathbf{h}_{\widetilde{v}},w_{v,0}}$ (resp. $\iota_{\mathbf{h},w}:=\prod_{v\in S_p}\iota_{\mathbf{h}_{\widetilde{v}},w_{v}}$) be the embedding of $\widetilde{W}_{\overline{\rho}_p}^{\mathbf{h}-\mathrm{cr}}:=\prod_{v\in S_p}\widetilde{W}_{\overline{\rho}_{\widetilde{v}}}^{\mathbf{h}_{\widetilde{v}}-\mathrm{cr}}$ (resp. $\overline{\widetilde{W}_{\overline{\rho}_p,w}^{\mathbf{h}-\mathrm{cr}}}:=\prod_{v\in S_p}\overline{\widetilde{W}_{\overline{\rho}_{\widetilde{v}},w_v}^{\mathbf{h}_{\widetilde{v}}-\mathrm{cr}}}$) into $X_{\mathrm{tri}}(\overline{\rho}_p)$. \par
	Take $w\in W_{G_p}/W_{P_p}$ such that $w\geq w_{\cR}$ and $\lg_{P_p}(w)=\ell-1$. Let $\iota(X)\times \fX^p\times \mathbb{U}^g$ be an irreducible component of $X_p(\overline{\rho})$ passing through $y_{w_0}$ where $X$ is the irreducible component of $X_{\mathrm{tri}}(\overline{\rho}_p)$ passing through $\iota^{-1}(\rho_p,\underline{\delta}_{\cR,w_0})$ and $\fX^{p}$ is an irreducible component of $\fX_{\overline{\rho}^p}$. We take an affinoid open neighbourhood $U$ of the point $\iota^{-1}(\rho_p,\underline{\delta}_{\cR,w_0})$ in $X$. Since $w_{\cR}\leq w$, the point $\iota^{-1}(\rho_p,\underline{\delta}_{\cR,w_0})=\left(\rho_p,z^{w_0(\mathbf{h})}\mathrm{unr}(\underline{\varphi})\right)$ is in $\iota_{\mathbf{h},w_0}(\overline{\widetilde{W}_{\overline{\rho}_p,w}^{\mathbf{h}-\mathrm{cr}}})$. Hence the intersection $V:=\widetilde{W}_{\overline{\rho}_p,w}^{\mathbf{h}-\mathrm{cr}}\cap \iota_{\mathbf{h},w_0}^{-1}(U)$ is Zariski open dense in the affinoid $\overline{V}:=\overline{\widetilde{W}_{\overline{\rho}_p,w}^{\mathbf{h}-\mathrm{cr}}}\cap \iota_{\mathbf{h},w_0}^{-1}(U)$ (we take $U$ small enough so that $\iota_{\mathbf{h},w_0}^{-1}(U)\cap\widetilde{W}_{\overline{\rho}_p}^{\mathbf{h}-\mathrm{cr}}=\iota_{\mathbf{h},w_0}^{-1}(U)\cap \widetilde{\mathfrak{X}}_{\overline{\rho}_p}^{\mathbf{h}-\mathrm{cr}}$ where $\widetilde{\mathfrak{X}}_{\overline{\rho}_p}^{\mathbf{h}-\mathrm{cr}}:=\prod_{v\in S_p}\widetilde{\mathfrak{X}}_{\overline{\rho}_{\widetilde{v}}}^{\mathbf{h}_{\widetilde{v}}-\mathrm{cr}}$) and $(\rho_p,\underline{\varphi})\in \overline{V}$. Then $\left(\iota\circ\iota_{\mathbf{h},w_0}(V)\right)\times \fX^p\times\mathbb{U}^g$ is a subset in $X_{p}(\overline{\rho})$. Any point in the subset satisfies the condition in $\mathcal{H}_{\ell-1}$ and for these points, $w_{\cR}=w,\lg_{P_p}(w_{\cR})=\ell-1$. Hence their companion points $\left(\iota\circ\iota_{\mathbf{h},w}(V)\right)\times \fX^p\times\mathbb{U}^g$ is contained in $X_p(\overline{\rho})$ by the discussion in the beginning of the proof where we have used Proposition \ref{propositionmaincycle} and $\mathcal{H}_{\ell}$. We get that the preimage of $X_p(\overline{\rho})$ under the map $(\iota\circ\iota_{\mathbf{h},w})\times \mathrm{id}\times\mathrm{id}: \overline{\widetilde{W}_{\overline{\rho}_p,w}^{\mathbf{h}-\mathrm{cr}}}\times \fX^p\times\mathbb{U}^g\rightarrow \iota\left(X_{\mathrm{tri}}(\overline{\rho_p})\right)\times \fX^p\times\mathbb{U}^g$ contains the Zariski closure of $V\times \fX^p\times\mathbb{U}^g$ inside $\overline{V}\times \fX^p\times\mathbb{U}^g$ which is equal to $\overline{V}\times \fX^p\times\mathbb{U}^g$. This means that the point $y_w=\left(\iota\left(\rho_p,z^{w(\mathbf{h})}\mathrm{unr}(\underline{\varphi})\right),z \right)$ is in $X_p(\overline{\rho})$. Hence $\mathcal{H}_{\ell-1}$ holds.
\end{proof}
\begin{remark}
	In the proof of \cite[Thm. 5.3.3]{breuil2019local}, results in Theorem \ref{theoremmaincrystalline} were obtained under the assumption, in addition to regular weights, that $z$ is in the smooth locus of $\fX_{\overline{\rho}^p}\times \mathbb{U}^g$ which is certainly expected to be not necessary. Our proof realizes this expectation!
\end{remark}
Finally, we state the theorem for $p$-adic automorphic forms. Recall that there is a closed embedding $Y(U^p,\overline{\rho}){}\hookrightarrow X_p(\overline{\rho})$ from the eigenvariety to the patched one and remark that we always assume all the hypotheses in \S\ref{sectionglobalsettings}. Proposition \ref{propositionsocleappear} and Theorem \ref{theoremmaincrystalline} immediately imply the following theorem.
\begin{theorem}\label{theoremmainautomorphicforms}
	Let $\rho:\Gal(\overline{F}/F)\rightarrow \GL_n(L)$ be a continuous representation such that $\rho_p=(\rho|_{\cG_{F_{\widetilde{v}}}})_{v\in S_p}$ is generic crystalline. Assume that $\rho$ corresponds to a point $(\rho,\underline{\delta}_{\cR,w'})\in Y(U^p,\overline{\rho})(L)\subset \Spf(R_{\overline{\rho},S})^{\mathrm{rig}}\times_{L}\widehat{T}_{p,L}$ where $\cR$ is a refinement of $\rho_p$ and $w'\in W_{G_p}/W_{P_p}$. Let $\fm_{\rho}$ be the maximal ideal of $R_{\overline{\rho},S}[\frac{1}{p}]$ corresponding to $\rho$. Let $\lambda$ be the weight of $\underline{\delta}_{\cR,w_0}$. Then $(\rho,\underline{\delta}_{\cR,w})\in Y(U^p,\overline{\rho})$ if and only if $w\geq w_{\cR}$ in $W_{G_p}/W_{P_p}$, and for all $w\geq w_{\cR}$,
	\[\Hom_{G_p}\left(\mathcal{F}_{\overline{B}_p}^{G_p}\left(\overline{L}(-ww_0\cdot\lambda),\underline{\delta}_{\cR,\mathrm{sm}}\delta_{B_p}^{-1}\right),\widehat{S}(U^p,L)_{\mathfrak{m}^S}^{\mathrm{an}}[\fm_{\rho}]\right)\neq 0.\]
\end{theorem}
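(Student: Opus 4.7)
The plan is to deduce the theorem from Theorem \ref{theoremmaincrystalline} and Proposition \ref{propositionsocleappear} by transporting the patched statements back to the non-patched setting via the closed embedding $Y(U^p,\overline{\rho})\hookrightarrow X_p(\overline{\rho})$ recalled in \S\ref{sectionglobalsettings}. First, I would realize the hypothesized point $(\rho,\underline{\delta}_{\cR,w_0})\in Y(U^p,\overline{\rho})(L)$ as a point of the patched eigenvariety: the closed embedding sends it to some $x_{w_0}=\bigl((\rho_p,\underline{\delta}_{\cR,w_0}),z\bigr)\in X_p(\overline{\rho})$ for an appropriate $z\in \fX_{\overline{\rho}^p}\times\mathbb{U}^g$. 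Applying Theorem \ref{theoremmaincrystalline} to this point then produces companion points $x_w:=\bigl((\rho_p,\underline{\delta}_{\cR,w}),z\bigr)\in X_p(\overline{\rho})$ for every $w\geq w_\cR$ in $W_{G_p}/W_{P_p}$ (and only for such $w$), all sharing the same tame/formal coordinate $z$.

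Proposition \ref{propositionsocleappear} applied to each such $x_w$ yields, with $r_{x_w}=(\rho_p,z)\in\fX_\infty$ and corresponding maximal ideal $\fm_{r_{x_w}}$ of $R_\infty[\tfrac{1}{p}]$,
\[
\Hom_{G_p}\Bigl(\cF_{\overline{B}_p}^{G_p}\bigl(\overline{L}(-ww_0\cdot\lambda),\underline{\delta}_{\cR,\mathrm{sm}}\delta_{B_p}^{-1}\bigr),\Pi_\infty^{\mathrm{an}}[\fm_{r_{x_w}}]\Bigr)\neq 0.
\]
I would then descend this non-vanishing from $\Pi_\infty^{\mathrm{an}}$ to $\widehat{S}(U^p,L)_{\mathfrak{m}^S}^{\mathrm{an}}$ using the isomorphism $\Pi_\infty^{\mathrm{an}}[\mathfrak{a}_\infty]\simeq \widehat{S}(U^p,L)_{\mathfrak{m}^S}^{\mathrm{an}}$ of admissible locally analytic $G_p$-representations coming from the patching construction of $M_\infty$ (cf. \cite{caraiani2016patching} and \cite[\S6]{breuil2017smoothness}), where $\mathfrak{a}_\infty$ is the ideal in the patching algebra whose vanishing locus in $\fX_\infty$ recovers $\Spf(R_{\overline{\rho},S})^{\mathrm{rig}}$. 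Since $\mathfrak{a}_\infty$ depends only on the $\fX_{\overline{\rho}^p}\times\mathbb{U}^g$-coordinate $z$, and $x_{w_0}\in Y(U^p,\overline{\rho})$ forces $z$ to lie in the vanishing locus of $\mathfrak{a}_\infty$, all the companion points $x_w$ automatically lie in $Y(U^p,\overline{\rho})$; in particular $(\rho,\underline{\delta}_{\cR,w})\in Y(U^p,\overline{\rho})$ for each $w\geq w_\cR$. Taking the $[\fm_\rho]$-part of the above non-vanishing and using that the patching map identifies $\fm_\rho$ with the image of $\fm_{r_{x_w}}$ under $R_\infty/\mathfrak{a}_\infty\twoheadrightarrow R_{\overline{\rho},S}[\tfrac{1}{p}]_{\fm^S}$ gives
\[
\Hom_{G_p}\Bigl(\cF_{\overline{B}_p}^{G_p}\bigl(\overline{L}(-ww_0\cdot\lambda),\underline{\delta}_{\cR,\mathrm{sm}}\delta_{B_p}^{-1}\bigr),\widehat{S}(U^p,L)_{\mathfrak{m}^S}^{\mathrm{an}}[\fm_\rho]\Bigr)\neq 0.
\]
Conversely, any point $(\rho,\underline{\delta}_{\cR,w})\in Y(U^p,\overline{\rho})$ lifts to a point of $X_p(\overline{\rho})$ sharing the same $z$, and the ``only if'' direction of Theorem \ref{theoremmaincrystalline} then forces $w\geq w_\cR$.

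I expect the main obstacle to be the careful bookkeeping in the descent step: one must verify that the ideal $\mathfrak{a}_\infty$ cutting out $Y(U^p,\overline{\rho})\hookrightarrow X_p(\overline{\rho})$ genuinely depends only on the tame/formal coordinate $z$ and not on the triangulation parameter $\underline{\delta}$ (so that all companion points $x_w$ with the same $z$ simultaneously lie in or outside $Y(U^p,\overline{\rho})$), and that the quotient map $R_\infty\twoheadrightarrow R_{\overline{\rho},S}$ coming from patching identifies $\fm_{r_{x_w}}$ with $\fm_\rho$ in a way compatible with the $G_p$-action. Both points are the content of Step 2 of the proof of \cite[Thm. 5.3.3]{breuil2019local} and follow from the construction of $\Pi_\infty$ as $M_\infty'[\tfrac{1}{p}]$ for the patched $S_\infty$-module $M_\infty$ equipped with its commuting $R_\infty$- and $G_p$-actions recalled in \S\ref{sectionglobalsettings}.
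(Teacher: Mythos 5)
Your proposal is correct and follows essentially the same route as the paper: realize the hypothesized point as $x_{w_0}\in X_p(\overline{\rho})$, apply Theorem \ref{theoremmaincrystalline} and Proposition \ref{propositionsocleappear}, and then descend along the closed embedding $Y(U^p,\overline{\rho})\hookrightarrow X_p(\overline{\rho})$ via $\widehat{S}(U^p,L)_{\mathfrak{m}^S}^{\mathrm{an}}\simeq \Pi_\infty^{\mathrm{an}}[\mathfrak{a}_\infty]$, exactly as in \textbf{Step~2} of the proof of \cite[Thm.~5.3.3]{breuil2019local}. The only imprecision is the phrasing that $\mathfrak{a}_\infty$ ``depends only on $z$''; what is actually used is that the condition of lying in $Y(U^p,\overline{\rho})$ is a condition on the $\fX_\infty$-coordinate $r_x=(\rho_p,z)$ (not on the $\widehat{T}_{p,L}$-coordinate), which is shared by all companion points $x_w$ — and your argument already relies on exactly that.
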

\begin{proof}
	Recall the action of $R_{\overline{\rho},S}$ on $\widehat{S}(U^p,L)_{\mathfrak{m}^S}$ factors through a quotient $R_{\overline{\rho},\mathcal{S}}$. And there is an ideal $\mathfrak{a}$ of $R_{\infty}$, a surjection $R_{\infty}/\mathfrak{a}R_{\infty}\twoheadrightarrow R_{\overline{\rho},\mathcal{S}}$ and an isomorphism  
	\[\widehat{S}(U^p,L)_{\mathfrak{m}^S}\simeq \Pi_{\infty}[\mathfrak{a}]\]
	that is compatible with the action of $R_{\infty}$ and $R_{\overline{\rho},S}$ on the two sides.\par 
	Suppose that under the closed embedding $Y(U^p,\overline{\rho})\hookrightarrow X_p(\overline{\rho})$, $(\rho,\underline{\delta}_{\cR,w'})$ is sent to the point $x=((\rho_p,\underline{\delta}_{\cR,w'}),z)\in X_p(\overline{\rho})$. Let $r_x=(\rho_p,z)$ and let $\fm_{r_x}$ be the maximal ideal of $R_{\infty}[\frac{1}{p}]$ corresponding to $r_x$. Then $\fm_{r_x}$ contains $\mathfrak{a}$. Hence there is an isomorphism of $G_p$-representations
	\[\widehat{S}(U^p,L)_{\mathfrak{m}^S}^{\mathrm{an}}[\fm_{\rho}]\simeq \Pi^{\mathrm{an}}_{\infty}[\fm_{r_x}].\]
	Note that $(\rho,\underline{\delta}_{\cR,w})\in Y(U^p,\overline{\rho})$ is equivalent to $\Hom_{T_p}(\underline{\delta}_{\cR,w}, J_{B_p}(\widehat{S}(U^p,L)_{\mathfrak{m}^S}^{\mathrm{an}}[\fm_{\rho}]))\neq 0$. Hence the assertions of the theorem follows from similar statements replacing $\widehat{S}(U^p,L)_{\mathfrak{m}^S}^{\mathrm{an}}[\fm_{\rho}]$ by $\Pi^{\mathrm{an}}_{\infty}[\fm_{r_x}]$, which are true by Proposition \ref{propositionsocleappear} and Theorem \ref{theoremmaincrystalline}.
\end{proof}

\section{The partial eigenvariety}\label{sectionthepartialeigenvariety}
In this section, we use Ding's partial eigenvariety (\cite{ding2019some}) to prove some general result on the relationship between partially classical finite slope locally analytic representations and partially de Rham properties of trianguline $(\varphi,\Gamma)$-modules (Theorem \ref{theoremQderham}) which has been used for Theorem \ref{theoremcyclepartialderham}. Most results around the partial eigenvariety in this section except for those in \S\ref{sectionpartiallyderhamtrianguline} and \S\ref{sectionconjectures} are essentially due to Ding, and we adapt his results for the patching module.
\subsection{Notation}\label{sectionthepartialeigenvarietynotation}
We keep the notation and assumptions in \S\ref{sectionglobalsettings} and \S\ref{sectionOrlikStrauch}.\par
For each $v\in S_p$, let $Q_v$ be a standard parabolic subgroup of $\GL_{n/F_{\widetilde{v}}}$ (not $\mathrm{Res}_{F_{\widetilde{v}}/\Q_p}(\GL_{n/F_{\widetilde{v}}})\times_{\Q_p}L$!) containing the Borel subgroup of upper-triangular matrices. Write $Q_v=M_{Q_v}N_{Q_v}$ for the standard Levi decomposition where $M_{Q_v}$ is the standard Levi subgroup containing the diagonal torus and $N_{Q_v}$ is the unipotent radical. For an algebraic reductive group $H$, we use the notation $H'$ to denote the derived subgroup of $H$. We also use the same notation $Q_v, M_{Q_v}, N_{Q_v}, M_{Q_v}'$ to denote the groups of $F_{\widetilde{v}}$-points which are identified with subgroups of the $p$-adic Lie group $G_v$ via $i_{\widetilde{v}}:G(F_v^+)\simeq \GL_n(F_{\widetilde{v}})$. We have $p$-adic Lie groups $Q_p:=\prod_{v\in S_p}Q_v,M_{Q_p}:=\prod_{v\in S_p}M_{Q_v},M_{Q_p}':=\prod_{v\in S_p}M_{Q_v}',$ etc.. Assume for each $v\in S_p$, the standard Levi $M_{Q_v}$ is the group of diagonal block matrices of $\GL_{n/F_{\widetilde{v}}}$ of the form $\GL_{q_{v,1}/F_{\widetilde{v}}}\times \cdots \times \GL_{q_{v,t_{v}}/F_{\widetilde{v}}}$ where $n=q_{v,1}+\cdots+q_{v,t_v}$. We let $\widetilde{q}_{v,i}=\sum_{j=1}^iq_{v,j}$ for any $v,1\leq i\leq t_v$ and let $\widetilde{q}_{v,0}=0$. Let $B_{Q_p}=B_p\cap M_{Q_p},\overline{B}_{Q_p}=\overline{B}_p\cap M_{Q_p}$ and write $B_{Q_p}=T_pU_{Q_p}$ (resp. $\overline{B}_{Q_p}=T_p\overline{U}_{Q_p}$) for the Levi decomposition of $B_{Q_p}$ (resp. $\overline{B}_{Q_p}$).\par 
We have $p$-adic Lie subgroups $T'_{Q_p}:=T_p\cap M_{Q_p}', B_{Q_p}':=T'_{Q_p}U_{Q_p}$ and $Z_{M_{Q_p}}$ the center of $M_{Q_p}$. \par
Recall $\fg=\prod_{v\in S_p}\prod_{\tau\in \Sigma_v}\fg_{\tau}$ and similarly we let $\fm_{Q_p}=\prod_{v\in S_p}\prod_{\tau\in \Sigma_v}\fm_{Q_v,\tau}$ (resp. $\fm_{Q_p}'=\prod_{v\in S_p}\prod_{\tau\in \Sigma_v}\fm_{Q_v,\tau}'$, resp. $\ft'_{Q_p}=\prod_{v\in S_p}\prod_{\tau\in \Sigma_v}\ft_{Q_v,\tau}'$, resp. $\fb_{Q_p}'=\prod_{v\in S_p}\prod_{\tau\in \Sigma_v}\fb_{Q_v,\tau}'$, resp. $\mathfrak{z}_{M_{Q_p}}=\prod_{v\in S_p}\prod_{\tau\in \Sigma_v}\mathfrak{z}_{M_{Q_v},\tau}$, etc.) be the base change to $L$ of the $\Q_p$-Lie algebra of the $p$-adic Lie group $M_{Q_p}$ (resp. $M_{Q_p}'$, resp. $T'_{Q_p}$, resp. $B_{Q_p}'$, resp. $Z_{M_{Q_p}}$, etc.). We have $\ft=\ft'_{Q_p}\times \mathfrak{z}_{M_{Q_p}}, \fm_{Q_p}=\fm_{Q_p}'\times \mathfrak{z}_{M_{Q_p}}$ and the morphism $Z_{M_{Q_p}}\times M_{Q_p}'\rightarrow M_{Q_p}$ is locally an isomorphism. \par
We pick an arbitrary nonempty subset $J$ of $\Sigma_p$ and set $J_v=J\cap \Sigma_v$ for $v\in S_p$. We let $\fm_{Q_p,J}':=\prod_{v\in S_p}\prod_{\tau\in J_v}\fm_{Q_v,\tau}'$, $\ft_{Q_p,J}':=\prod_{v\in S_p}\prod_{\tau\in J}\ft_{Q_v,\tau}'$, etc.. We will only need the case when $|J|=1$ but adding this extra assumption will not simplify the notation.\par
We fix a uniform pro-$p$ normal subgroup $H_p=\prod_{v\in S_p}H_v$ of the maximal compact subgroup $K_p=\prod_{v\in S_p} K_v=\prod_{v\in S_p}i_{\widetilde{v}}^{-1}(\GL_n(\cO_{F_{\widetilde{v}}}))$ of $G_p$ where each $H_v$ is good $F_{\widetilde{v}}$-analytic with an Iwahori decomposition as in \cite[Def. 4.1.3]{emerton2006jacquet}. Let $U_{Q_p,0}=U_{Q_p}\cap H_p,M_{Q_p,0}=M_{Q_p}\cap H_p, N_{Q_p,0}=N_{Q_p}\cap H_p$, $T_{Q_p,0}'=T_{Q_p}'\cap H_p$, etc. and define $U_{Q_v,0}$, etc similarly. Let $Z_{M_{Q_p}}^{+}=\prod_{v\in S_p}Z_{M_{Q_v}}^{+}$ (resp. $T_{p}^{+}=\prod_{v\in S_p}T_{v}^{+}$, resp. $T_{M_{Q_p}}^+=\prod_{v\in S_p}T_{M_{Q_v}}^{+}$) be the submonoid of $Z_{M_{Q_p}}$ (resp. $T_p$, resp. $T_p$) consisting of elements $t$ such that $tN_{Q_p,0}t^{-1}\subset N_{Q_p,0}$ (resp. $tN_{B_p,0}t^{-1}\subset N_{B_p,0}$, resp. $tU_{Q_p,0}t^{-1}\subset tU_{Q_p,0}$). We use the notation $(-)_{\mathrm{fs}}$ to denote Emerton's finite slope part functor \cite[Def. 3.2.1]{emerton2006jacquet} with respect to one of the submonoids $Z_{M_{Q_p}}^+$, $T_p^{+}$ or $T_{M_{Q_p}}^+$ of $T_p$ where the exact meaning will be clear from the context.\par
Recall that $\Pi_{\infty}$ is the patched representation of $G_p$ and $\Pi_{\infty}^{\mathrm{an}}$ denotes the subspace of locally $R_{\infty}$-analytic vectors of $\Pi_{\infty}$. We have an integer $q$, a ring $S_{\infty}$ in \cite[\S3.2]{breuil2017interpretation} and we fix an isomorphism $S_{\infty}\simeq \cO_L[[\Z_p^q]]$. If $H$ is a group, we denote by $\widetilde{H}:=H\times \Z_p^q$. Then $\Pi_{\infty}$ is equipped with an action of $\widetilde{G}_p$ from the action of $S_{\infty}\rightarrow R_{\infty}$ (\cite[\S3.1]{breuil2017interpretation}). Since the patching module $M_{\infty}$ is finite projective over $S_{\infty}[[K_p]]$ (\cite[Thm. 3.5]{breuil2017interpretation}), it is finite free over $S_{\infty}[[H_p]]$ as the ring $S_{\infty}[[H_p]]$ is local. Hence $\Pi_{\infty}|_{\widetilde{H}_p}\simeq \cC(\widetilde{H}_p,L)^m$ for some integer $m$ where $\cC(\widetilde{H}_p,L)$ denotes the space of continuous functions over $\widetilde{H}_p$ with coefficients in $L$. By \cite[Prop. 3.4]{breuil2017interpretation}, $J_{Q_p}\left(\Pi_{\infty}^{\mathrm{an}}\right)$ is an essentially admissible locally $\Q_p$-analytic representation (\cite[Def. 6.4.9]{emerton2017locally}) of $\Z_p^s\times M_{Q_p}$ for some surjection $\cO_L[[\Z_p^s]]\twoheadrightarrow R_{\infty}$ where $J_{Q_p}$ is the Emerton's Jacquet module functor with respect to the parabolic subgroup $Q_p$. By definition (\cite[Def. 3.4.5]{emerton2006jacquet}), $J_{Q_p}(\Pi^{\mathrm{an}}_{\infty})$ is the finite slope part of $\Pi_{\infty}^{\mathrm{an},N_{Q_p,0}}$ with respect to the action of the submonoid $Z_{M_{Q_p}}^+$ of $Z_{M_{Q_p}}$.
\subsection{The partial Emerton-Jacquet module functor}\label{sectionthepartialemertonjacquetmodule}
We recall the notion of locally $\Sigma_p\setminus J$-analytic representations introduced in \cite[\S2]{schraen2010representations} (also see \cite[\S6.1]{ding2017formes} or \cite[Appendix B]{ding2019companion}) and the partial Emerton-Jacquet module defined in \cite[\S2.2.2]{ding2019some}.\par
Suppose that $V$ is a locally $\Q_p$-analytic representation of $M_{Q_p}=\prod_{v\in S_p}M_{Q_v}$ over $L$. A vector $v\in V$ is called locally $\Sigma_p\setminus J$-analytic with respect to the derived subgroup $M'_{Q_p}$ if the differential of the locally analytic function on $M'_{Q_p}: g\mapsto gv$ at the identity $e\in M_{Q_p}'$, which \textit{a priori} lies in $\Hom_{\Q_p}(T_eM_{Q_p}',V)=\Hom_L(\fm_{Q_p}', V)$, vanishes on $\fm_{Q_p,J}'=\prod_{v\in S_p}\prod_{\tau\in J_v}\fm_{Q_v,\tau}'$. Remark that since the adjoint action of $M_{Q_v}'$ on $\fm_{Q_v}'$ is $F_{\widetilde{v}}$-linear, $\Ad(M_{Q_p}')\fm_{Q_p,J}'=\fm_{Q_p,J}'$. \par
We fix a tuple $\widetilde{\lambda}_{J}=(\widetilde{\lambda}_{\tau})_{\tau\in J}=(\widetilde{\lambda}_{\tau,1},\cdots,\widetilde{\lambda}_{\tau,n})_{\tau\in J}\in (\Z^n)^{J}$ such that $\widetilde{\lambda}_{\tau,i}\geq \widetilde{\lambda}_{\tau,j}$ for all $i\geq j,\tau\in J$. We identify $\widetilde{\lambda}_J$ with an element in $\ft^*:=\Hom_L(\ft, L)$ that vanishes on $\prod_{\tau \in \Sigma_p\setminus J}\ft_{\tau}$. Denote by $\lambda_J'$ the image of $\widetilde{\lambda}_J$ in $(\ft'_{Q_p})^*:=\Hom_L(\ft'_{Q_p}, L)$. Then there exists a unique algebraic representation of $\prod_{v\in S_p}\mathrm{Res}_{F_{\widetilde{v}}/\Q_p}\left(M_{Q_v}'\right)\otimes_{\Q_p}L$ over $L$ with the highest weight $\lambda_J'$. Let $L_{M_{Q_p}'}(\lambda_J')$ be the associated $\Q_p$-algebraic representation of the $p$-adic Lie group $M_{Q_p}'$ over $L$ via the embedding $M_{Q_p}'=\prod_{v\in S_p}M_{Q_v}'\left(F_{\widetilde{v}}\right)\hookrightarrow \prod_{v\in S_p}\mathrm{Res}_{F_{\widetilde{v}}/\Q_p}\left(M_{Q_v}'\right)_L(L)$. The $U(\fm_{Q_p}')$-module $L_{M_{Q_p}'}(\lambda_J')$ is the unique irreducible quotient of $U(\fm_{Q_p}')\otimes_{U(\fb_{Q_p}')}\lambda_J'$ or $U(\fm_{Q_p,J}')\otimes_{U(\fb_{Q_p,J}')}\lambda_J'$ (elements in $\fm_{Q_{v},\tau}'$ act as zero on the module if $\tau\notin J_v$). We equip $L_{M_{Q_p}'}(\lambda_J')$ with an action of $\fm_{Q_p}=\fm_{Q_p}'\oplus \mathfrak{z}_{M_{Q_p}}$ where the action of $\mathfrak{z}_{M_{Q_p}}$ is given by $\widetilde{\lambda}_J$ and denote by $L_{M_{Q_p}}(\widetilde{\lambda}_J)$ for the $\Q_p$-algebraic $M_{Q_p}$-representation on $L_{M_{Q_p}'}(\lambda'_J)$. Let $L_{M_{Q_p}}(\widetilde{\lambda}_J)':=\Hom_{L}\left(L_{M_{Q_p}}(\widetilde{\lambda}_J),L\right)$ be the usual dual representation of $M_{Q_p}$. \par
If $V$ is a locally $\Q_p$-analytic representation of $M_{Q_p}$ over $L$, let $\left(V\otimes_L L_{M_{Q_p}}(\widetilde{\lambda}_J)'\right)^{\Sigma_p\setminus J-\mathrm{an}}$ be the closed $L$-subspace of $V\otimes_L L_{M_{Q_p}}(\widetilde{\lambda}_J)'$ generated by locally $\Sigma_p\setminus J$-analytic vectors with respect to the diagonal action of $M_{Q_p}'$. Then $\left(V\otimes_L L_{M_{Q_p}}(\widetilde{\lambda}_J)'\right)^{\Sigma_p\setminus J-\mathrm{an}}$ is a locally $\Sigma_p\setminus J$-analytic representation of $M_{Q_p}'$ in the sense of \cite[Def. 2.4]{schraen2010representations} and is stable under the action of $M_{Q_p}$. We have 
\[\Hom_{U(\fm_{Q_p,J}')}\left(L_{M_{Q_p}}(\widetilde{\lambda}_J), V\right)\simeq \left(L_{M_{Q_p}}(\widetilde{\lambda}_J)'\otimes_L V\right)^{\Sigma_p\setminus J-\mathrm{an}}\]
as topological representations of $M_{Q_p}$ (cf. \cite[Rem. 6.1.5]{ding2017formes}) where the action on the left hand side is the natural one as in \cite[\S5.2]{breuil2019local}.\par
Now assume that $V$ is an essentially admissible locally $\Q_p$-analytic representation of $\Z_p^s\times G_p$. We will take $V=\Pi_{\infty}^{\mathrm{an}}=\Pi_{\infty}^{R_{\infty}-\mathrm{an}}$ and the action of $\Z_p^s$ is given by $\cO_L[[\Z_p^s]]\twoheadrightarrow R_{\infty}$ or $s=q$ and $\cO_L[[\Z_p^q]]\simrightarrow S_{\infty}\rightarrow R_{\infty}$. We let $\Z_p^s$ act trivially on $L_{M_{Q_p}}(\widetilde{\lambda}_J)$. Then $J_{Q_p}(V)$ is an essentially admissible representation of $\Z_p^s\times M_{Q_p}$ by \cite[Thm. 4.2.32]{emerton2006jacquet}. We define 
\[J_{Q_p}(V)_{\lambda_J'}:=\Hom_{U(\fm_{Q_p,J}')}\left(L_{M_{Q_p}}(\widetilde{\lambda}_J), J_{Q_p}(V)\right)\otimes_L L_{M_{Q_p}}(\widetilde{\lambda}_J)\]
equipped with the diagonal action of $\Z_p^s\times M_{Q_p}$. There is a natural $\Z_p^s\times M_{Q_p}$-equivariant morphism:
\begin{align}\label{morphismclosedembedding}
    \Hom_{U(\fm_{Q_p,J}')}\left(L_{M_{Q_p}}(\widetilde{\lambda}_J), J_{Q_p}(V)\right) \otimes_L L_{M_{Q_p}}(\widetilde{\lambda}_J) \to J_{Q_p}(V):
    f\otimes v\mapsto f(v).
\end{align}
\begin{lemma}\label{lemmamorphismclosedembedding}
    The representation $J_{Q_p}(V)_{\lambda_J'}$ of $\Z_p^s\times M_{Q_p}$ depends only on $\lambda_J'$ (in particular on $J$, but not on the lift $\widetilde{\lambda}_J$). The morphism (\ref{morphismclosedembedding}) is a closed embedding and identifies $J_{Q_p}(V)_{\lambda_J'}$ with a closed $\Z_p^s\times M_{Q_p}$-sub-representation of $J_{Q_p}(V)$.
\end{lemma}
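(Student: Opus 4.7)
The plan is to establish the two assertions in the lemma separately: independence of the lift via a twist-cancellation argument, and the closed embedding by combining absolute irreducibility of $L_{M_{Q_p}}(\widetilde{\lambda}_J)$ as a $U(\fm_{Q_p,J}')$-module with essential admissibility of $J_{Q_p}(V)$.

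First I would address the independence. Any two lifts of $\lambda_J'$ differ by an integral character $\eta$ supported on the $J$-part of $\mathfrak{z}_{M_{Q_p}}$ (extended by zero to $\ft$), which exponentiates to an algebraic character $\chi_\eta:M_{Q_p}\to L^{\times}$ trivial on $M_{Q_p}'$. One has $L_{M_{Q_p}}(\widetilde{\lambda}_J+\eta)\simeq L_{M_{Q_p}}(\widetilde{\lambda}_J)\otimes_L\chi_\eta$ as algebraic $M_{Q_p}$-representations. Since $\fm_{Q_p,J}'\subset\fm_{Q_p}'$ acts trivially on $\chi_\eta$, this gives
\[
\Hom_{U(\fm_{Q_p,J}')}\bigl(L_{M_{Q_p}}(\widetilde{\lambda}_J+\eta),J_{Q_p}(V)\bigr)\simeq\Hom_{U(\fm_{Q_p,J}')}\bigl(L_{M_{Q_p}}(\widetilde{\lambda}_J),J_{Q_p}(V)\bigr)\otimes_L\chi_\eta^{-1}
\]
as $M_{Q_p}$-representations, and the $\chi_\eta^{-1}$-twist cancels upon tensoring back with $L_{M_{Q_p}}(\widetilde{\lambda}_J+\eta)$.

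Next I would prove injectivity of (\ref{morphismclosedembedding}). The decomposition $\fm_{Q_p}'=\fm_{Q_p,J}'\oplus\fm_{Q_p,\Sigma_p\setminus J}'$ together with the vanishing of $\lambda_J'$ on $\ft_{Q_p,\Sigma_p\setminus J}'$ implies that $E:=L_{M_{Q_p}}(\widetilde{\lambda}_J)$ is absolutely irreducible as a $U(\fm_{Q_p,J}')$-module (the second summand acts trivially, and the first via the standard irreducible highest weight module). Fix a basis $v_1,\ldots,v_n$ of $E$ to identify the source with $\Hom_{U(\fm_{Q_p,J}')}(E,J_{Q_p}(V))^n$. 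Given $(f_1,\ldots,f_n)$ with $\sum_i f_i(v_i)=0$, the induced $U(\fm_{Q_p,J}')$-linear map $E^n\to J_{Q_p}(V)$, $(w_1,\ldots,w_n)\mapsto\sum_i f_i(w_i)$, vanishes at $(v_1,\ldots,v_n)$, hence on the cyclic submodule that this element generates. By Jacobson density $U(\fm_{Q_p,J}')$ surjects onto $\End_L(E)$, and since $\{v_i\}$ is a basis of $E$ the map $\End_L(E)\to E^n$, $A\mapsto(Av_1,\ldots,Av_n)$, is an $L$-linear isomorphism; consequently the cyclic submodule equals all of $E^n$, forcing each $f_i=0$.

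Finally I would complete the closed embedding. Evaluation at $\{v_i\}$ identifies the source as a closed subspace of $J_{Q_p}(V)^n$, cut out by the continuous $\fm_{Q_p,J}'$-linearity conditions, so the map to $J_{Q_p}(V)$ is continuous and automatically $\Z_p^s\times M_{Q_p}$-equivariant. Its image is the $E$-isotypic subspace of $J_{Q_p}(V)$ for the $U(\fm_{Q_p,J}')$-action, which I claim is closed: the essential admissibility of $J_{Q_p}(V)$ (\cite[Thm. 4.2.32]{emerton2006jacquet}) decomposes it into closed generalized eigenspaces for the action of the center $Z(U(\fm_{Q_p,J}'))$, and the $E$-isotypic part can be isolated from the generalized eigenspace for the infinitesimal character of $E$. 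The open mapping theorem, applicable via the coherent-sheaf picture of essentially admissible representations, then upgrades the continuous bijection onto this closed image to a topological embedding. The hardest point will be isolating the exact $E$-isotypic subspace rather than merely the generalized infinitesimal-character eigenspace; I expect this to follow from absolute simplicity of $E$ together with refined finiteness properties of essentially admissible representations.
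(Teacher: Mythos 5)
Your arguments for independence of the lift and for injectivity are sound. The twist-cancellation argument is exactly the right observation, and the Jacobson-density argument for injectivity is a correct self-contained alternative to the paper's citation of \cite[Prop. 6.1.3]{ding2017formes} (the paper simply invokes Ding's result applied to $M_{Q_p}'$).

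The gap is in the closed-embedding step, precisely where you flag uncertainty yourself. Your plan is to identify the image of (\ref{morphismclosedembedding}) as the $E$-isotypic subspace of $J_{Q_p}(V)$ for the $U(\fm_{Q_p,J}')$-action, show this subspace is closed, and then appeal to an open-mapping argument. The problem is the middle step: essential admissibility gives you closed generalized eigenspaces for the center of $U(\fm_{Q_p,J}')$, but extracting the honest $E$-isotypic component (rather than the block containing it) is a genuinely nontrivial problem, and it is not what the paper does or needs. The paper sidesteps the issue entirely by never describing the image. Instead it observes that $\Hom_{U(\fm_{Q_p,J}')}(L_{M_{Q_p}}(\widetilde{\lambda}_J),J_{Q_p}(V))$ is the kernel of finitely many continuous operators inside $L_{M_{Q_p}}(\widetilde{\lambda}_J)'\otimes_L J_{Q_p}(V)$, so $J_{Q_p}(V)_{\lambda_J'}$ sits as a \emph{closed subrepresentation} of the ambient $J_{Q_p}(V)\otimes_L L_{M_{Q_p}}(\widetilde{\lambda}_J)'\otimes_L L_{M_{Q_p}}(\widetilde{\lambda}_J)$. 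The real technical input is then Lemma \ref{lemmaessentiallyadmissible}, which shows that this tensor product with a finite-dimensional representation (on which the center acts trivially) remains essentially admissible; you never invoke or engage with this and it is the nonobvious heart of the argument. Once the source is known to be essentially admissible, \cite[Prop. 6.4.11]{emerton2017locally} says that any continuous injection between essentially admissible representations is automatically a closed embedding, finishing the proof without any identification of the image. In short: replace the attempt to characterize the image with an essential-admissibility argument on the \emph{source}, and the difficulty you correctly anticipated disappears.
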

\begin{proof}
    For the injection, we can apply \cite[Prop. 6.1.3]{ding2017formes} with respect to $M_{Q_p}'$. The last assertion follows from the same arguments in \cite[Cor. B.2]{ding2017partiallyderham} and we prove it now.  Since $J_{Q_p}(V)$ is an essentially admissible representation of $\Z_p^s\times M_{Q_p}$, $J_{Q_p}(V)\otimes_L L_{M_{Q_p}}(\widetilde{\lambda}_J)'\otimes_L L_{M_{Q_p}}(\widetilde{\lambda}_J)$ is also an essentially admissible representation of $\Z_p^s\times M_{Q_p}$ by Lemma \ref{lemmaessentiallyadmissible} below. By \cite[Prop. 6.4.11]{emerton2017locally}, the closed sub-representation $J_{Q_p}(V)_{\lambda_J'}$ is an essentially admissible representation of $\Z_p^s\times M_{Q_p}$. Thus the injection $J_{Q_p}(V)_{\lambda_J'}\hookrightarrow V$ of essentially admissible representations of $\Z_p^s\times M_{Q_p}$ is a closed embedding by \textit{loc. cit.}.
\end{proof}
\begin{lemma}\label{lemmaessentiallyadmissible}
    Assume that $V$ is an essentially admissible locally analytic representation over $L$ of a locally analytic group $G$ where the center $Z$ of $G$ is topologically finitely generated. If $W$ is a finite-dimensional locally analytic representation over $L$ of $G$ on which the action of $Z$ is trivial, then $V\otimes_LW$ is also essentially admissible.
\end{lemma}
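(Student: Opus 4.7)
The plan is to verify the defining property of essentially admissible representations directly via the strong dual, using that tensoring a coadmissible module with a finite-dimensional vector space preserves coadmissibility. Recall that by \cite[Def. 6.4.9]{emerton2017locally}, $V$ being essentially admissible means that, for some (equivalently, every sufficiently small) compact open subgroup $H$ of $G$, the continuous strong dual $V'_b$ is a coadmissible module over the Fr\'echet-Stein algebra $C^{\mathrm{an}}(\widehat{Z},L)\widehat{\otimes}_L D(H,L)$, where $\widehat{Z}$ is the rigid space parameterizing continuous characters of $Z$ and $D(H,L)$ is Schneider-Teitelbaum's distribution algebra.

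First I would shrink $H$ so that $W$ is a locally $H$-analytic representation of $H$ (possible because $W$ is finite-dimensional locally analytic and $H$ is compact). Next, since $W$ is finite-dimensional, the natural map $(V\otimes_LW)'_b\simrightarrow V'_b\otimes_LW^*$ is a topological isomorphism, where $W^*:=\Hom_L(W,L)$ carries the contragredient $G$-action. Under the diagonal actions, this identifies the strong dual of $V\otimes_LW$ with a $D(H,L)$-module via the coproduct on $D(H,L)$, and with a $C^{\mathrm{an}}(\widehat{Z},L)$-module via multiplication on the $V'_b$-factor only (this is the crucial place where we use that $Z$ acts trivially on $W$, hence also on $W^*$).

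The main content of the proof is then to check that $V'_b\otimes_LW^*$ remains coadmissible over $C^{\mathrm{an}}(\widehat{Z},L)\widehat{\otimes}_LD(H,L)$. I would write a Fr\'echet-Stein presentation $C^{\mathrm{an}}(\widehat{Z},L)\widehat{\otimes}_LD(H,L)=\varprojlim_n A_n$ and an inverse system $V'_b=\varprojlim_n M_n$ where each $M_n$ is a finitely generated $A_n$-module with the usual compatibility under base change (this is the coadmissibility of $V'_b$). Then $V'_b\otimes_LW^*=\varprojlim_n(M_n\otimes_LW^*)$; each $M_n\otimes_LW^*$ is finitely generated over $A_n$ because $W^*$ is a finite-dimensional $L$-vector space, and the compatibility $A_{n+1}\widehat{\otimes}_{A_n}(M_n\otimes_LW^*)\simrightarrow M_{n+1}\otimes_LW^*$ follows from the analogous statement for $M_n$ by flatness of $W^*$. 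The diagonal $D(H,L)$-module structure, which factors through $A_n$ after the coproduct, is seen to preserve this finitely generated structure because the coproduct $D(H,L)\to D(H,L)\widehat{\otimes}_LD(H,L)$ is continuous and the second factor acts on the finite-dimensional $W^*$.

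The main (mild) obstacle is bookkeeping: one must verify that the diagonal $D(H,L)$-action and the $C^{\mathrm{an}}(\widehat{Z},L)$-action on $V'_b\otimes_LW^*$ assemble into a genuine $C^{\mathrm{an}}(\widehat{Z},L)\widehat{\otimes}_LD(H,L)$-module structure compatible with the Fr\'echet-Stein presentation, for which the key input is precisely the triviality of the $Z$-action on $W$. Once this is in place, the conclusion that $V\otimes_LW$ is essentially admissible follows from \cite[Def. 6.4.9]{emerton2017locally}. Alternatively, one can reduce the statement to the corresponding (and well-known) fact that tensoring an admissible locally analytic $H$-representation with a finite-dimensional one yields an admissible locally analytic $H$-representation, combined with \cite[Prop. 6.4.11]{emerton2017locally}.
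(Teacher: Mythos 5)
Your overall strategy---pass to strong duals, use $(V\otimes_LW)'\simeq V'\otimes_LW^*$, and verify coadmissibility over $A:=\cC^{\mathrm{an}}(\widehat{Z},L)\widehat{\otimes}_LD(H,L)$ via a Fr\'echet--Stein presentation $A=\varprojlim_n A_n$---is the same as the paper's, and you correctly identify that the triviality of the $Z$-action on $W$ is what lets $\cC^{\mathrm{an}}(\widehat{Z},L)$ act through the first factor. However, the step you describe as ``bookkeeping'' is in fact the mathematical heart of the argument, and your sketch does not close it.

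The gap is the clause ``each $M_n\otimes_LW^*$ is finitely generated over $A_n$.'' This is obvious for the $A_n$-action on the first factor, but that is not the module structure carried by $(V\otimes_LW)'$: there one has the \emph{diagonal} action, given on Dirac distributions by $\beta: L[H]\to L[H]\otimes_L\mathrm{End}_L(W^*)$, $h\mapsto h\otimes\rho(h)$. To make $M_n\otimes_LW^*$ an $A_n$-module for the relevant structure, one must show that $\beta$ extends continuously to the Banach completions $D(\mathbb{H}_n^\circ,H)$ (equivalently, that the twisting $h\otimes m\mapsto h\otimes\rho(h)m$ extends), and then that $A_n\otimes_L\mathrm{End}_L(W^*)$ is finitely generated over $A_n$ for this twisted structure. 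These are precisely the contents of \cite[Prop.~4.4, Cor.~4.5]{hill2011emerton}, which the paper's proof invokes and you do not. Your appeal to the coproduct $\Delta:D(H,L)\to D(H,L)\widehat{\otimes}_LD(H,L)$ points in the right direction, but the finite-generation statement at the level of the completions $D(\mathbb{H}_n^\circ,H)$ requires genuine norm estimates, not just the continuity of $\Delta$. A second, smaller omission: having built the twisted coadmissible inverse system, one must still check it induces the actual topological $A$-module structure on $(V\otimes_LW)'$; the paper does this via density of $L[H]$ in $D(H,L)$ and uniqueness of coadmissible extensions \cite[Prop.~6.4.7(ii)]{emerton2017locally}, and your sketch does not address it.

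Your alternative route---``reduce to admissibility over $H$ and invoke \cite[Prop.~6.4.11]{emerton2017locally}''---does not work as stated. Essential admissibility of $V$ over $G$ means $V'$ is coadmissible over $\cC^{\mathrm{an}}(\widehat{Z},L)\widehat{\otimes}_LD(H,L)$, which is strictly weaker than coadmissibility over $D(H,L)$; in general $V$ is \emph{not} admissible as a representation of $H$, so there is no admissible-over-$H$ representation to which the well-known tensoring fact applies.
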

\begin{proof}
    We have $(V\otimes_L W)'\simeq V'\otimes_L W'$ as topological vector spaces. Since $V$ is essentially admissible, by definition, there exists a covering of $\widehat{Z}$ by open affinoids $\widehat{Z}_{1}\subset \widehat{Z}_{2}\subset\cdots $ and a sequence $\mathbb{H}_0\supset\mathbb{H}_1\supset \mathbb{H}_2\cdots$ of rigid analytic open subgroups with respect to a compact open subgroup $H=\mathbb{H}_0(\Q_p)$ of $G$ as in \cite[\S5.2]{emerton2017locally} such that the strong dual $V'$, as a coadmissible module over the Fr\'echet-Stein algebra $\cC^{\mathrm{an}}(\widehat{Z},L)\widehat{\otimes}_LD(H,L)$, is isomorphic to $\varprojlim_n \cC^{\mathrm{an}}(\widehat{Z}_{n},L)\widehat{\otimes}_LD(\mathbb{H}^{\circ}_n,H)\widehat{\otimes}_{\cC^{\mathrm{an}}(\widehat{Z},L)\widehat{\otimes}_LD(H,L)}V'$ (\cite[Thm. 1.2.11, Def. 6.4.9]{emerton2017locally}). Here $L$ denotes the coefficient field, $\cC^{\mathrm{an}}(\widehat{Z},L)$ (resp. $\cC^{\mathrm{an}}(\widehat{Z}_n,L)$) is the algebra of rigid analytic functions on $\widehat{Z}$ (resp. $\widehat{Z}_n$) (\cite[Def. 2.1.18]{emerton2017locally}), $D(H,L)$ is the algebra of locally $\Q_p$-analytic distributions on $H$ and $D(\mathbb{H}^{\circ}_n,H)$ is the strong dual of $\mathbb{H}_n^{\circ}$-analytic functions on $H$ as \cite[(4.1.2)]{emerton2006jacquet}. The isomorphism $\cC^{\mathrm{an}}(\widehat{Z},L)\widehat{\otimes}_LD(H,L)\simeq\varprojlim_n\cC^{\mathrm{an}}(\widehat{Z}_{n},L)\widehat{\otimes}_LD(\mathbb{H}^{\circ}_n,H)$ defines a weak Fr\'echet-Stein structure on $\cC^{\mathrm{an}}(\widehat{Z},L)\widehat{\otimes}_LD(H,L)$ (\cite[Def. 1.2.6, Lem. 1.1.29]{emerton2017locally}). We write $A=\cC^{\mathrm{an}}(\widehat{Z},L)\widehat{\otimes}_LD(H,L)$ and $A_n= \cC^{\mathrm{an}}(\widehat{Z}_{n},L)\widehat{\otimes}_LD(\mathbb{H}^{\circ}_n,H)$.\par
    The Dirac distribution $L[H](\subset D(H,K))$ (\cite[\S2, \S3]{schneider2002locally}), as well as $L[Z](\subset D(Z,L) \subset \cC^{\mathrm{an}}(\widehat{Z},L))$ (\cite[Prop. 6.4.6]{emerton2017locally}), acts on $(V\otimes_L W)'=V'\otimes_L W'$ diagonally where $L[Z]$ acts trivially on the second factor $W'$ since $Z$ acts trivially on $W'$. \par    
    Denote by $\rho$ the action of $H$ on $W'$. There are two ring homomorphisms $\alpha: L[H]\rightarrow L[H]\otimes_L \mathrm{End}_L(W'), h\mapsto h\otimes 1$, $\beta: L[H]\rightarrow L[H]\otimes_L \mathrm{End}_L(W'), h\mapsto h\otimes \rho(h)$ and a map $\gamma: L[H]\otimes_L \mathrm{End}_L(W')\rightarrow L[H]\otimes_L \mathrm{End}_L(W'): h\otimes m\mapsto h\otimes \rho(h)m$ such that $\beta=\gamma\circ \alpha$. By \cite[Prop. 4.4]{hill2011emerton}, for each (large enough) $n$, $\alpha,\beta, \gamma$ can be extended uniquely to continuous maps $\alpha_n,\beta_n:D(\mathbb{H}^{\circ}_n,H)\rightarrow D(\mathbb{H}^{\circ}_n,H)\otimes_L \mathrm{End}_L(W')$ and $\gamma_n:D(\mathbb{H}^{\circ}_n,H)\otimes_L \mathrm{End}_L(W')\rightarrow D(\mathbb{H}^{\circ}_n,H)\otimes_L \mathrm{End}_L(W')$ such that $\beta_n=\gamma_n\circ \alpha_n$. Taking the complete tensor product with $\cC^{\mathrm{an}}(\widehat{Z}_{n},L)$, we get similar maps $\alpha_n',\beta_n':A_n\rightarrow A_n\otimes_L \mathrm{End}_L(W')$ and $\gamma_n': A_n\otimes_L \mathrm{End}_L(W')\rightarrow A_n\otimes_L \mathrm{End}_L(W')$ extending the maps $\alpha'=\mathrm{id}\otimes_L \alpha,\beta'=\mathrm{id}\otimes_L \beta:L[Z]\otimes_L L[H]\rightarrow L[Z]\otimes_LL[H] \otimes_L \mathrm{End}_L(W')$ and $\gamma'=\mathrm{id}\otimes_L\gamma$.\par
    The tensor product $U_n:=\left(A_n\widehat{\otimes}_{A}V'\right)\otimes_L W'$ is naturally an $A_n\otimes_L\mathrm{End}_L(W')$-module where $A_n$, as well as $L[Z]\otimes_LL[H]$, acts on the second factor $W'$ trivially. Then as in \cite[Prop. 4.6]{hill2011emerton}, the map $\beta_n': A_n\rightarrow A_n\otimes_L\mathrm{End}_L(W')$ equips $U_n$ a twisted action of $A_n$ extending the diagonal action of $L[Z]\otimes_LL[H]$ on $U_n$. Since $U_n$ is a finitely generated $A_n\otimes_L\mathrm{End}_L(W)$-module and $A_n\otimes_L\mathrm{End}_L(W)$ is a finitely generated $A_n$-module with respect to the action of $A_n$ via both $\alpha_n'$ and $\beta_n'$ by \cite[Cor. 4.5]{hill2011emerton}, we get that $U_n$ is a finitely generated $A_n$-module and we have an isomorphism $A_n\widehat{\otimes}_{A_{n+1}}U_{n+1}\simeq U_n$ with the twisted actions. Hence $\varprojlim_n U_n$ is a coadmissible module over the Fr\'echet-Stein algebra $A$ (\cite[Def. 1.2.8]{emerton2017locally}). The action of $A$ extends the action of $L[Z]\otimes_LL[H]$ on $(V\otimes W)'$ via the isomorphism $V'\otimes_L W'\simeq\varprojlim_n U_n$ of topological vector spaces. Such extension of the action of $L[Z]\otimes_LL[H]$ to $A$ on $(V\otimes_L W)'$ is unique by \cite[Prop. 6.4.7(ii)]{emerton2017locally} and by that $L[H]$ is dense in $D(H,L)$ which acts continuously on $(V\otimes W)'$ by \cite[Lem. 3.1, Cor. 3.4]{schneider2002locally}, thus coincides with the usual action of $A$ on $(V\otimes_LW)'$. We get that $V\otimes_LW$ is an essentially admissible representation of $G$.
\end{proof}
\begin{lemma}\label{lemmatwistedJacquetmodule}
    There is an isomorphism 
    \[J_{B_{Q_p}}\left(J_{Q_p}(V)_{\lambda_J'}\right)\simeq J_{B_{Q_p}}\left(\Hom_{U(\fm_{Q_p,J}')}\left(L_{M_{Q_p}}(\widetilde{\lambda}_J), J_{Q_p}(V)\right)\right)\otimes_L \widetilde{\lambda}_J\]
    of essentially admissible locally $\Q_p$-analytic representations of $\Z_p^s\times T_p$.
\end{lemma}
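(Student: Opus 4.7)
The lemma reduces to proving the general formula
$$J_{B_{Q_p}}(U \otimes_L W) \simeq J_{B_{Q_p}}(U) \otimes_L \widetilde{\lambda}_J,$$
where I abbreviate $U := \Hom_{U(\fm_{Q_p,J}')}(W, J_{Q_p}(V))$ and $W := L_{M_{Q_p}}(\widetilde{\lambda}_J)$; both are essentially admissible locally $\Q_p$-analytic representations of $\Z_p^s \times M_{Q_p}$ by Lemma \ref{lemmamorphismclosedembedding} and Lemma \ref{lemmaessentiallyadmissible}. The asymmetric shape of the formula, with only the highest weight character $\widetilde{\lambda}_J$ rather than the full $W$ appearing on the right, will arise because the unipotent radical of $B_{Q_p}$ couples the two tensor factors and picks out only the highest weight line of $W$.

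The key structural input is a complementary vanishing of Lie algebra actions along the decomposition
$$\fu_{Q_p} = \fu_{Q_p,J}' \oplus \fu_{Q_p,\Sigma_p\setminus J}',\qquad \fu_{Q_p,J}' := \prod_{v\in S_p}\prod_{\tau\in J_v}(\fu_\tau \cap \fm_{Q_v,\tau}'),$$
with the complementary summand defined analogously. On one hand, $\fu_{Q_p,\Sigma_p\setminus J}' \subset \fm_{Q_p,\Sigma_p\setminus J}'$ acts trivially on $W$ since, by construction, elements of $\fm_{Q_v,\tau}'$ with $\tau\notin J_v$ act as zero on $L_{M_{Q_p}'}(\lambda_J')$. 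On the other hand, $\fu_{Q_p,J}'\subset \fm_{Q_p,J}'$ acts trivially on $U$: for any $f\in U$ and any $n\in\fm_{Q_p,J}'$, the $U(\fm_{Q_p,J}')$-equivariance of $f$ yields
$$(n\cdot f)(w) = n\,f(w) - f(n\cdot w) = n\,f(w) - n\,f(w) = 0.$$

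These two vanishings let me compute the diagonal $\fu_{Q_p}$-annihilators of $U \otimes_L W$: since $\fu_{Q_p,J}'$ acts only on the $W$-factor and $W^{\fu_{Q_p,J}'} = W_{\widetilde{\lambda}_J}$ is the one-dimensional highest weight line, $(U\otimes_L W)^{\fu_{Q_p,J}'} = U \otimes_L W_{\widetilde{\lambda}_J}$; and since $\fu_{Q_p,\Sigma_p\setminus J}'$ acts only on the $U$-factor, $(U\otimes_L W)^{\fu_{Q_p,\Sigma_p\setminus J}'} = U^{\fu_{Q_p,\Sigma_p\setminus J}'}\otimes_L W$. Intersecting and using that $\fu_{Q_p,J}'$ acts trivially on $U$ so that $U^{\fu_{Q_p,\Sigma_p\setminus J}'} = U^{\fu_{Q_p}}$, I obtain $(U\otimes_L W)^{\fu_{Q_p}} = U^{\fu_{Q_p}}\otimes_L W_{\widetilde{\lambda}_J}$, where $T_p$ acts on $W_{\widetilde{\lambda}_J}$ by the character $\widetilde{\lambda}_J$. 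Translating these Lie algebra annihilators to $N_{B_{Q_p},0}$-invariants (standard for locally $\Q_p$-analytic representations with the uniform pro-$p$ subgroup $H_p$ fixed as in \S\ref{sectionthepartialeigenvarietynotation}) and then applying the finite slope part functor with respect to $T_p^+$, which commutes with twisting by the one-dimensional character $\widetilde{\lambda}_J$, yields the desired isomorphism of essentially admissible locally $\Q_p$-analytic representations of $\Z_p^s\times T_p$.

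The main technical obstacle will be tracking the essentially admissible structure through the passage from Lie algebra annihilators to $N_{B_{Q_p},0}$-invariants and through the finite slope functor; this should be handled using the same weak Fr\'{e}chet-Stein framework that underlies the construction of $J_{Q_p}(V)$ as an essentially admissible representation in \cite[Thm.~4.2.32]{emerton2006jacquet}, combined with Lemma \ref{lemmaessentiallyadmissible} and the closed embedding of Lemma \ref{lemmamorphismclosedembedding} which already shows that $U$ lies in the correct essentially admissible category.
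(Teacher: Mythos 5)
Your Lie-algebra analysis is correct and matches the observation the paper records: the decomposition $\fu_{Q_p}=\fu_{Q_p,J}'\oplus\fu_{Q_p,\Sigma_p\setminus J}'$, the vanishing of $\fu_{Q_p,\Sigma_p\setminus J}'$ on $W=L_{M_{Q_p}}(\widetilde{\lambda}_J)$ (by $J$-algebraicity of $W$), the vanishing of $\fu_{Q_p,J}'$ on $U$ via the $U(\fm'_{Q_p,J})$-equivariance of elements of $U$, and the resulting identity $(U\otimes_L W)^{\fu_{Q_p}}=U^{\fu_{Q_p}}\otimes_L W_{\widetilde{\lambda}_J}$. This is exactly the content hidden in the paper's remark that $L_{M_{Q_p}}(\widetilde{\lambda}_J)^{U_{Q_p,0}}=L_{M_{Q_p}}(\widetilde{\lambda}_J)^{\fu_{Q_p,J}}$.

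The gap is the sentence ``Translating these Lie algebra annihilators to $N_{B_{Q_p},0}$-invariants (standard for locally $\Q_p$-analytic representations \ldots) and then applying the finite slope part functor.'' This translation is \emph{not} standard and is false as stated. For a locally $\Q_p$-analytic representation of a compact group $U_{Q_p,0}$ one has only an inclusion $(\,\cdot\,)^{U_{Q_p,0}}\subset(\,\cdot\,)^{\fu_{Q_p}}$; equality fails whenever the $U_{Q_p,0}$-action is locally constant but nonconstant. For the finite-dimensional $J$-algebraic $W$ the two do agree, because the orbit map extends to a morphism of algebraic varieties with vanishing differential on the connected unipotent group, forcing constancy --- that is the content of $W^{U_{Q_p,0}}=W^{\fu_{Q_p,J}}$. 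But $U=\Hom_{U(\fm'_{Q_p,J})}(W,J_{Q_p}(V))$ is only required to be $U(\fm'_{Q_p,J})$-equivariant, \emph{not} $M_{Q_p,0}$-equivariant; the group $U_{Q_p,0}$ acts nontrivially on $U$ even though $\fu_{Q_p,J}'$ annihilates it. Consequently $(U\otimes_L W)^{U_{Q_p,0}}$ does not factor as the naive tensor product of invariants, and your computation of the $\fu_{Q_p}$-annihilator alone does not identify the $U_{Q_p,0}$-invariants that enter Emerton's Jacquet module.

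What is actually needed, and what the paper gets from the cited lemmas of Ding, is a twisting isomorphism at the \emph{group} level: because $W$ is finite-dimensional and $J$-algebraic one can untwist the diagonal $M_{Q_p,0}$-action on $U\otimes_L W$ (compare the twisting isomorphism (\ref{twistinglemma}) appearing in the proof of Lemma \ref{lemmaorthonormalmodules}, or \cite[Lem.~7.2.12]{ding2017formes}). Combined with the commutation of the finite-slope functor with tensoring by a finite-dimensional representation as in \cite[Prop.~3.2.9]{emerton2006jacquet}, this yields $\bigl((U\otimes_L W)^{U_{Q_p,0}}\bigr)_{\mathrm{fs}}\simeq\bigl(U^{U_{Q_p,0}}\bigr)_{\mathrm{fs}}\otimes_L W^{U_{Q_p,0}}$. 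Your argument skips exactly this step. The Lie-algebra picture correctly predicts the answer, but a proof requires either invoking the group-level twisting lemma or carrying the finite-slope functor into the comparison between Lie-algebra and group invariants (for instance via the independence of $(V^{N_0})_{\mathrm{fs}}$ of the choice of compact open $N_0$, cf.\ \cite[Prop.~3.2.4]{emerton2006jacquet}), rather than asserting the translation is ``standard.''
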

\begin{proof}
    This can be proved by arguments in \cite[Cor. 2.11]{ding2019some} together with \cite[Lem. 7.2.12]{ding2017formes}. Remark that $L_{M_{Q_p}}(\widetilde{\lambda}_J)$ is $J$-algebraic in the sense of \cite[\S6.1.1]{ding2017formes} and $L_{M_{Q_p}}(\widetilde{\lambda}_J)^{U_{Q_p,0}}=L_{M_{Q_p}}(\widetilde{\lambda}_J)^{\mathfrak{u}_{Q_p,J}}$. 
\end{proof}
\begin{lemma}\label{lemmafiniteslopeJacquet}
    We have
    \[\left(\Hom_{U(\fm'_{Q_p,J})}\left(L_{M_{Q_p}}(\widetilde{\lambda}_J),J_{Q_p}(V)\right)^{U_{Q_p,0}}\right)_{\mathrm{fs}}\simeq \left(\left(V^{N_{Q_p,0}}\otimes L_{M_{Q_p}}(\widetilde{\lambda}_J)'\right)^{\Sigma_p\setminus J-\mathrm{an},U_{Q_p,0}}\right)_{\mathrm{fs}},\]
    where the finite slope part is taken with respect to the Hecke action of $T_{M_{Q_p}}^+$ for the left hand side and that of $T_p^{+}$ for the right hand side. 
\end{lemma}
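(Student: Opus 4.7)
The plan is to reduce the stated equality to Emerton's transitivity of the Jacquet module functor for nested parabolics, combined with the adjunction identifying $\Hom_{U(\fm'_{Q_p,J})}(L_{M_{Q_p}}(\widetilde{\lambda}_J),-)$ with the $\Sigma_p\setminus J$-analytic vectors of the twist by $L_{M_{Q_p}}(\widetilde{\lambda}_J)'$, as already used in Lemmas \ref{lemmamorphismclosedembedding} and \ref{lemmatwistedJacquetmodule}.

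First I would apply the adjunction
\[
\Hom_{U(\fm'_{Q_p,J})}(L_{M_{Q_p}}(\widetilde{\lambda}_J), J_{Q_p}(V))\simeq \bigl(J_{Q_p}(V)\otimes_L L_{M_{Q_p}}(\widetilde{\lambda}_J)'\bigr)^{\Sigma_p\setminus J-\mathrm{an}}
\]
as topological $M_{Q_p}$-representations. Then the canonical closed embedding $J_{Q_p}(V)\hookrightarrow V^{N_{Q_p,0}}$ of \cite[Prop. 3.2.9]{emerton2006jacquet} (which is $M_{Q_p}$-equivariant and closed because $V=\Pi_\infty^{\mathrm{an}}$ is essentially admissible) yields, after tensoring with $L_{M_{Q_p}}(\widetilde{\lambda}_J)'$, passing to $\Sigma_p\setminus J$-analytic vectors, and taking $U_{Q_p,0}$-invariants, a $T_p$-equivariant closed embedding
\[
\bigl(J_{Q_p}(V)\otimes L_{M_{Q_p}}(\widetilde{\lambda}_J)'\bigr)^{\Sigma_p\setminus J-\mathrm{an},U_{Q_p,0}}\hookrightarrow \bigl(V^{N_{Q_p,0}}\otimes L_{M_{Q_p}}(\widetilde{\lambda}_J)'\bigr)^{\Sigma_p\setminus J-\mathrm{an},U_{Q_p,0}}.
\]
Functoriality of the finite slope part (\cite[Prop.~3.2.4]{emerton2006jacquet}) then produces a map from the LHS to the RHS once we check the Hecke actions correspond correctly.

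The heart of the matter is to identify, on the source, the Hecke action of $T_{M_{Q_p}}^+$ (which lives on $J_{Q_p}(V)^{U_{Q_p,0}}$ after the $U_{Q_p,0}$-invariants) with the Hecke action of $T_p^+$ on $V^{N_{Q_p,0},U_{Q_p,0}}=V^{N_{B_p,0}}$ on the target. This is the partially analytic, $L_{M_{Q_p}}(\widetilde{\lambda}_J)'$-twisted avatar of Emerton's transitivity formula $J_{B_p}(V)\simeq J_{B_{Q_p}}(J_{Q_p}(V))$ up to a modulus twist (\cite[Thm.~3.4.7]{emerton2006jacquet}, generalizing \cite[\S5]{emerton2006jacquet}). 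Concretely, for $t\in T_p^+\subset T_{M_{Q_p}}^+\cap Z_{M_{Q_p}}^+$, the double coset decomposition $N_{B_p,0}/tN_{B_p,0}t^{-1}\simeq U_{Q_p,0}/tU_{Q_p,0}t^{-1}\times N_{Q_p,0}/tN_{Q_p,0}t^{-1}$ splits the Hecke averaging into an inner sum along $N_{Q_p,0}$ (which, by definition of $J_{Q_p}$, factors through the canonical lift of $J_{Q_p}(V)$ into $V^{N_{Q_p,0}}$ on vectors that are $Z_{M_{Q_p}}^+$-finite slope) and an outer sum along $U_{Q_p,0}$ inside $M_{Q_p}$. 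Tensoring by the algebraic representation $L_{M_{Q_p}}(\widetilde{\lambda}_J)'$ only twists the Hecke action by the restriction of the character $-\widetilde{\lambda}_J$ of $T_p$, so preserves finite slopeness; extracting $\Sigma_p\setminus J$-analytic vectors commutes with the $M_{Q_p}$-action and in particular with the Hecke action of $T_{M_{Q_p}}^+$ on the $U_{Q_p,0}$-invariants.

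With this compatibility in hand, the injection above intertwines the $T_{M_{Q_p}}^+$-Hecke action on the source with the $T_p^+$-Hecke action on the target, so applying $(-)_{\mathrm{fs}}$ gives the desired map between LHS and RHS. To see it is an isomorphism, I would argue in both directions: any $T_{M_{Q_p}}^+$-finite slope vector on the LHS is automatically $T_p^+$-finite slope and maps into the RHS, while conversely any $T_p^+$-finite slope vector $v$ on the RHS is in particular $Z_{M_{Q_p}}^+$-finite slope (since $T_p^+\subset Z_{M_{Q_p}}^+$ up to the character twist coming from $L_{M_{Q_p}}(\widetilde{\lambda}_J)'$, which is automatically finite slope), hence by \cite[Prop.~3.2.9]{emerton2006jacquet} lies in the image of $J_{Q_p}(V)\otimes L_{M_{Q_p}}(\widetilde{\lambda}_J)'$; being also $T_p^+$-finite slope forces it to be $T_{M_{Q_p}}^+$-finite slope on the Jacquet side. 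The main obstacle is precisely this careful bookkeeping of the two distinct Hecke monoids and the twist by $L_{M_{Q_p}}(\widetilde{\lambda}_J)'$ on the partially analytic subspace; once the transitivity formula is set up in the partially analytic setting (following \cite[Lem.~2.10]{ding2019some} or \cite[Lem.~7.2.12]{ding2017formes} in the fully analytic case), the conclusion is formal.
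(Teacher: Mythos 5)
Your route is essentially the paper's: reduce via the adjunction $\Hom_{U(\fm'_{Q_p,J})}(L_{M_{Q_p}}(\widetilde{\lambda}_J),-)\simeq \left(L_{M_{Q_p}}(\widetilde{\lambda}_J)'\otimes -\right)^{\Sigma_p\setminus J-\mathrm{an}}$, commute the tensor with $L_{M_{Q_p}}(\widetilde{\lambda}_J)'$ through the finite-slope functor \`a la \cite[Prop.~3.2.9]{emerton2006jacquet}, and then invoke transitivity of Emerton's finite-slope Jacquet construction for nested parabolics (Hill--Loeffler/Ding). The paper also passes the Lie-algebra invariants through the finite-slope part with \cite[Prop.~3.2.11]{emerton2006jacquet}, which you gesture at but do not cite. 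So the conceptual overlap is very high.

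Two concrete issues before this could be called a proof. First, the monoid containment you write is false: one has $Z_{M_{Q_p}}^+\subset T_p^+\subset T_{M_{Q_p}}^+$, not $T_p^+\subset Z_{M_{Q_p}}^+$ ($T_p^+$ contains elements not in $Z_{M_{Q_p}}$). This undercuts your backward implication ``$T_p^+$-finite slope $\Rightarrow Z_{M_{Q_p}}^+$-finite slope,'' which is anyway not tautological; it is precisely what the transitivity theorems are for. Indeed, $Z_{M_{Q_p}}^+$ is a \emph{smaller} monoid than $T_p^+$, and finite slope with respect to a larger monoid does not formally give finite slope with respect to a smaller one — the passage requires the structure theory of the Hecke action encoded in \cite[Thm.~5.3]{hill2011emerton} or \cite[Lem.~2.18]{ding2019some}. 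Your double-coset splitting of the Hecke averaging is the first step of that argument, but stating it does not replace the theorem. Second, \cite[Prop.~3.2.9]{emerton2006jacquet} is about finite-slope parts commuting with tensoring by finite-dimensional modules, not about the canonical map $J_{Q_p}(V)\to V^{N_{Q_p,0}}$; the map you want is Emerton's canonical lifting, whose closedness involves the essential admissibility of $V$ and is a separate fact. With those two points repaired (cite the transitivity result as a black box rather than re-deriving it heuristically, and correct the monoid chain), you recover the paper's argument.
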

\begin{proof}
    By \cite[Prop. 3.2.9]{emerton2006jacquet}, $\left(V^{N_{Q_p,0}}\right)_{\mathrm{fs}}\otimes L_{M_{Q_p}}(\widetilde{\lambda}_J)'\simeq \left(V^{N_{Q_p,0}}\otimes L_{M_{Q_p}}(\widetilde{\lambda}_J)'\right)_{\mathrm{fs}}$. Since the action of $Z_{M_{Q_p}}^{+}$ commutes with $\fm_{Q_p,J}'$, by \cite[Prop. 3.2.11]{emerton2006jacquet}, we get 
    \begin{equation}\label{equationlemmafiniteslopeJacquet}
        \Hom_{U(\fm'_{Q_p,J})}\left(L_{M_{Q_p}}(\widetilde{\lambda}_J),J_{Q_p}\left(V\right)\right)=\left(\left(V^{N_{Q_p,0}}\otimes L_{M_{Q_p}}(\widetilde{\lambda}_J)'\right)^{\Sigma_p\setminus J-\mathrm{an}}\right)_{\mathrm{fs}}.
    \end{equation} 
    Now the arguments of \cite[Thm. 5.3(2)]{hill2011emerton} or \cite[Lem. 2.18]{ding2019some} using \cite[Prop. 3.2.4(ii)]{emerton2006jacquet} shows that 
    \[\left(\left(\left(V^{N_{Q_p,0}}\otimes L_{M_{Q_p}}(\widetilde{\lambda}_J)'\right)^{\Sigma_p\setminus J-\mathrm{an}}\right)^{U_{Q_p,0}}_{\mathrm{fs}}\right)_{\mathrm{fs}}\simeq \left(\left(V^{N_{Q_p,0}}\otimes L_{M_{Q_p}}(\widetilde{\lambda}_J)'\right)^{\Sigma_p\setminus J-\mathrm{an},U_{Q_p,0}}\right)_{\mathrm{fs}}.\]
    Combining the isomorphism above with (\ref{equationlemmafiniteslopeJacquet}), we get the desired isomorphism.
\end{proof}
\subsection{An adjunction formula}\label{sectionadjunctionformula}
We prove an adjunction formula for the partial Emerton-Jacquet module functor based on \cite[Lem. 5.2.1]{breuil2019local}. \par
Suppose that $\Pi^{\mathrm{an}}$ is a very strongly admissible locally $\Q_p$-analytic representation over $L$ of $G_p$ and $\underline{\delta}=z^{\lambda}\underline{\delta}_{\mathrm{sm}}$ where $z^{\lambda}$ is the $\Q_p$-algebraic character of $T_p$ of weight $\lambda=(\lambda_{\tau})_{\tau\in \Sigma_p}$ and $\underline{\delta}_{\mathrm{sm}}$ is a smooth character of $T_p$. We write $\lambda=\lambda_J+\lambda_{\Sigma_{p}\setminus J}$ according to the decomposition $\ft^{*}=\ft^{*}_{J}\oplus \ft_{\Sigma_p\setminus J}^*$ which means that $\lambda_J$ (resp. $\lambda_{\Sigma_p\setminus J}$) vanishes on $\ft_{\Sigma_p\setminus J}$ (resp. $\ft_{J}$). We assume that the image of $\lambda$ in $(\ft'_{Q_p,J})^{*}$ is equal to $\lambda_J'$ in \S\ref{sectionthepartialemertonjacquetmodule}.
Then (see \S\ref{sectionOrlikStrauch})
\begin{align*}
    \mathrm{Hom}_{T_p}\left(\underline{\delta}, J_{B_p}(\Pi^{\mathrm{an}})\right)=\Hom_{G_p}\left(\cF_{\overline{B}_p}^{G_p}\left(\left(U(\fg)\otimes_{U(\overline{\fb})}(-\lambda)\right)^{\vee},\underline{\delta}_{\mathrm{sm}}\delta_{B_p}^{-1}\right), \Pi^{\mathrm{an}}\right)
\end{align*}
The $U(\fq)$-module $U(\fq)\otimes_{U(\fb)}\lambda$ admits a quotient
\begin{align}\label{equationdefinitionLJ}
    L_J(\lambda):&=\left(\otimes_{v\in S_v,\tau \in J_v}L_{\fm_{Q_v,\tau}}(\lambda_{\tau})\right)\otimes \left(\otimes_{v\in S_v,\tau \notin J_v}U(\fm_{Q_v,\tau})\otimes_{U(\fb_{Q_v,\tau})}\lambda_{\tau}\right)\\
    &= L_{M_{Q_p}}(\lambda_J)\otimes_L M_{\fm_{Q_p}}(\lambda_{\Sigma_p\setminus J}).\nonumber
\end{align}
where $M_{\fm_{Q_p}}(\lambda_{\Sigma_p\setminus J}):=U(\fm_{Q_p,\Sigma_p\setminus J})\otimes_{U(\fb_{Q_p,\Sigma_p\setminus J})}\lambda_{\Sigma_p\setminus J}$ and $L_{M_{Q_p}}(\lambda_J)$ is defined as $L_{M_{Q_p}}(\widetilde{\lambda}_J)$ (only with a possibly different action of $\mathfrak{z}_{M_{Q_p},J}$). Thus there is an injection (see the beginning of \S\ref{sectionthesocleconjection} for $(-)^{\overline{\mathfrak{u}}^{\infty}}$)
\begin{align}\label{equationinjectionvermamodules}
    \Hom_{G_p}\left(\cF_{\overline{B}_p}^{G_p}\left(\Hom\left(U(\fg)\otimes_{U(\fq)}L_J(\lambda),L\right)^{\overline{\mathfrak{u}}^{\infty}},\underline{\delta}_{\mathrm{sm}}\delta_{B_p}^{-1}\right), \Pi^{\mathrm{an}}\right)\\
    \hookrightarrow \Hom_{G_p}\left(\cF_{\overline{B}_p}^{G_p}\left(\left(U(\fg)\otimes_{U(\overline{\fb})}(-\lambda)\right)^{\vee},\underline{\delta}_{\mathrm{sm}}\delta_{B_p}^{-1}\right), \Pi^{\mathrm{an}}\right).\nonumber
\end{align} 
Recall there is a closed immersion $J_{Q_p}(\Pi^{\mathrm{an}})_{\lambda_J'}\hookrightarrow J_{Q_p}(\Pi^{\mathrm{an}})$ in Lemma \ref{lemmamorphismclosedembedding}, which induces a closed embedding 
\begin{equation*}
    J_{B_{Q_p}}\left(J_{Q_p}(\Pi^{\mathrm{an}})_{\lambda_J'}\right)\hookrightarrow J_{B_p}(\Pi^{\mathrm{an}})
\end{equation*}
by \cite[Lem. 3.4.7(iii)]{emerton2006jacquet} and by that $J_{B_p}(\Pi^{\mathrm{an}})\simeq J_{B_{Q_p}}\left(J_{Q_p}(\Pi^{\mathrm{an}})\right)$ (\cite[Thm. 5.3]{hill2011emerton}).
\begin{proposition}\label{propositionadjunctionJacquetmodule}
    There exists an isomorphism
    \begin{align*}
        \Hom_{G_p}\left(\cF_{\overline{B}_p}^{G_p}\left(\Hom\left(U(\fg)\otimes_{U(\fq)}L_J(\lambda),L\right)^{\overline{\mathfrak{u}}^{\infty}},\underline{\delta}_{\mathrm{sm}}\delta_{B_p}^{-1}\right), \Pi^{\mathrm{an}}\right)\simrightarrow \Hom_{T_{p}}\left(\underline{\delta}, J_{B_{Q_p}}\left(J_{Q_p}(\Pi^{\mathrm{an}})_{\lambda_J'}\right)\right)
    \end{align*}
    such that the following diagram commutes
    \begin{center}
        \begin{tikzpicture}[scale=1.0, font=\small]
            \node (A1) at (0,2) {$\Hom_{G_p}\left(\cF_{\overline{B}_p}^{G_p}\left(\Hom\left(U(\fg)\otimes_{U(\fq)}L_J(\lambda),L\right)^{\overline{\mathfrak{u}}^{\infty}},\underline{\delta}_{\mathrm{sm}}\delta_{B_p}^{-1}\right), \Pi^{\mathrm{an}}\right)$};
            \node (B1) at (8,2) {$\Hom_{T_{p}}\left(\underline{\delta}, J_{B_{Q_p}}\left(J_{Q_p}(\Pi^{\mathrm{an}})_{\lambda_J'}\right)\right)$};
            \node (A2) at (0,0) {$\Hom_{G_p}\left(\cF_{\overline{B}_p}^{G_p}\left(\left(U(\fg)\otimes_{U(\overline{\fb})}(-\lambda)\right)^{\vee},\underline{\delta}_{\mathrm{sm}}\delta_{B_p}^{-1}\right), \Pi^{\mathrm{an}}\right)$};
            \node (B2) at (8,0) {$\Hom_{T_{p}}(\underline{\delta},J_{B_p}(\Pi^{\mathrm{an}}))$};
            \path[->,font=\scriptsize,>=angle 90]
            (A1) edge node[above]{$\sim$} (B1)
            (A2) edge node[above]{$\sim$} (B2)
            ;
            \path[right hook ->,font=\scriptsize,>=angle 90]
            (A1) edge node[right]{(\ref{equationinjectionvermamodules})} (A2)
            (B1) edge node[above]{} (B2)
            ;
            \end{tikzpicture}
     \end{center}
     where the right vertical arrow is induced by (\ref{morphismclosedembedding}).
\end{proposition}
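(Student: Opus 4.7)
The strategy is to transport both sides of the asserted isomorphism to the same finite-slope $T_p$-equivariant Hom space, using Breuil-Hellmann-Schraen's general adjunction formula [BHS19, Lem. 5.2.1] on the left and Lemmas \ref{lemmafiniteslopeJacquet} and \ref{lemmatwistedJacquetmodule} on the right. Starting from the left-hand side, I would apply [BHS19, Lem. 5.2.1] to $M := U(\fg)\otimes_{U(\fq)}L_J(\lambda)\in \overline{\cO}_{\mathrm{alg}}$ and $V := \underline{\delta}_{\mathrm{sm}}$, identifying the $\Hom_{G_p}$ with
\[
\Hom_{T_p}\left(\underline{\delta}_{\mathrm{sm}},\, \left(\Hom_{U(\fg)}(M, \Pi^{\mathrm{an}})^{U_0}\right)_{\mathrm{fs}}\right),
\]
which by Frobenius reciprocity equals $\Hom_{T_p}\!\left(\underline{\delta}_{\mathrm{sm}}, \left(\Hom_{U(\fq)}(L_J(\lambda), \Pi^{\mathrm{an}})^{U_0}\right)_{\mathrm{fs}}\right)$. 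The decomposition $L_J(\lambda) = L_{M_{Q_p}}(\lambda_J)\otimes_L M_{\fm_{Q_p}}(\lambda_{\Sigma_p\setminus J})$ from (\ref{equationdefinitionLJ}) together with the triviality of $\fn_{Q_p}$ on $L_J(\lambda)$ lets me rewrite this Hom: the Verma factor $M_{\fm_{Q_p}}(\lambda_{\Sigma_p\setminus J})$ contributes $\mathfrak{u}_{Q_p,\Sigma_p\setminus J}$-invariants with $\ft_{\Sigma_p\setminus J}$ acting by $\lambda_{\Sigma_p\setminus J}$, while the finite-dimensional factor $L_{M_{Q_p}}(\lambda_J)$ produces $\Hom_{U(\fm'_{Q_p,J})}(L_{M_{Q_p}}(\widetilde{\lambda}_J),-)$ after absorbing the central discrepancy between $\lambda_J$ and $\widetilde{\lambda}_J$ on $\mathfrak{z}_{M_{Q_p}}$ into the $T_p$-character on the left.

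Choosing $U_0 = U_{Q_p,0}\cdot N_{Q_p,0}$ and passing to the $T_p^+$-finite-slope part, the $N_{Q_p,0}$-invariants followed by the $Z_{M_{Q_p}}^+$-finite-slope part combine (after the standard normalization matching $\delta_{B_p}$ with $\delta_{B_{Q_p}}\delta_{Q_p}$) to produce Emerton's $J_{Q_p}(\Pi^{\mathrm{an}})$; Lemma \ref{lemmafiniteslopeJacquet} then identifies the resulting space with $J_{B_{Q_p}}\left(\Hom_{U(\fm'_{Q_p,J})}(L_{M_{Q_p}}(\widetilde{\lambda}_J), J_{Q_p}(\Pi^{\mathrm{an}}))\right)$, and Lemma \ref{lemmatwistedJacquetmodule} re-assembles this as $J_{B_{Q_p}}(J_{Q_p}(\Pi^{\mathrm{an}})_{\lambda_J'})$ up to the algebraic twist by $\widetilde{\lambda}_J$. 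All the accumulated algebraic twists combine with $\underline{\delta}_{\mathrm{sm}}$ to yield $\underline{\delta} = z^{\lambda}\underline{\delta}_{\mathrm{sm}}$ on the right. Commutativity of the displayed square then follows by naturality: both horizontal isomorphisms are instances of the same BHS adjunction; the left vertical map (\ref{equationinjectionvermamodules}) is induced by the canonical surjection $U(\fg)\otimes_{U(\overline{\fb})}(-\lambda)\twoheadrightarrow \left(\Hom(M,L)^{\overline{\mathfrak{u}}^\infty}\right)^{\vee}$ dual to $M\hookrightarrow U(\fg)\otimes_{U(\overline{\fb})}(-\lambda)^{\vee,\vee}$; and the right vertical map is $J_{B_{Q_p}}$ applied to the closed embedding (\ref{morphismclosedembedding}), so functoriality forces the square to commute.

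The main obstacle will be the careful bookkeeping of (i) the various modulus characters $\delta_{B_p}$, $\delta_{B_{Q_p}}$, $\delta_{Q_p}$ that distinguish the different normalizations of Emerton's Jacquet module functors involved in $J_{B_p}$, $J_{B_{Q_p}}$, and $J_{Q_p}$; (ii) the algebraic twist by $\widetilde{\lambda}_J$ appearing in Lemma \ref{lemmatwistedJacquetmodule} together with the compensating central shift between $\lambda_J$ and $\widetilde{\lambda}_J$ on $\mathfrak{z}_{M_{Q_p}}$; and (iii) the matching of finite-slope conditions with respect to the submonoids $T_p^+$, $T_{M_{Q_p}}^+$, and $Z_{M_{Q_p}}^+$, a compatibility that is precisely encoded in Lemma \ref{lemmafiniteslopeJacquet}. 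Once these identifications are aligned the desired isomorphism is an immediate consequence, and the compatibility with the natural maps into $\Hom_{T_p}(\underline{\delta}, J_{B_p}(\Pi^{\mathrm{an}}))$ falls out by functoriality.
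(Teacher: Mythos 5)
Your proposal follows essentially the same route as the paper: apply the BHS adjunction [BHS19, Lem.\ 5.2.1] plus Frobenius reciprocity (equivalently, \cite[Prop.\ 3.4.9]{emerton2006jacquet}) to pass to $\fm_{Q_p}$-Homs out of $L_J(\lambda) = L_{M_{Q_p}}(\lambda_J)\otimes_L M_{\fm_{Q_p}}(\lambda_{\Sigma_p\setminus J})$, then reassemble the two factors on the Emerton--Jacquet side via the analysis underlying Lemmas \ref{lemmafiniteslopeJacquet} and \ref{lemmatwistedJacquetmodule}, and deduce commutativity from naturality of the surjection of Verma-type modules against the closed embedding (\ref{morphismclosedembedding}). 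The one small discrepancy is that the paper does not cite Lemma \ref{lemmatwistedJacquetmodule} directly: it reruns that lemma's argument with $\lambda_J$ in place of $\widetilde{\lambda}_J$ (producing equations (\ref{equationadjunction1})--(\ref{equationadjunction3})), precisely to avoid having to account explicitly for the central twist $\widetilde{\lambda}_J-\lambda_J$ on $\mathfrak{z}_{M_{Q_p},J}$ that you flag as a bookkeeping obstacle --- in other words, the paper sidesteps exactly the point you identify as the tricky part.
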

\begin{proof}
    By \cite[Lem. 5.2.1]{breuil2019local}, \cite[Prop. 3.4.9]{emerton2006jacquet} and that $U(\fn_{Q_p})$ acts trivially on $L_J(\lambda)$, we have
    \begin{align*}
        &\Hom_{G_p}\left(\cF_{\overline{B}_p}^{G_p}\left(\Hom\left(U(\fg)\otimes_{U(\fq)}L_J(\lambda),L\right)^{\overline{\mathfrak{u}}^{\infty}},\underline{\delta}_{\mathrm{sm}}\delta_{B_p}^{-1}\right), \Pi^{\mathrm{an}}\right)\\
        =&\Hom_{T_{p}}\left(\underline{\delta}_{\mathrm{sm}}, \Hom_{U(\fg)}\left(U(\fg)\otimes_{U(\fq)}L_J(\lambda),\Pi^{\mathrm{an}}\right)^{N_{B_p,0}}\right)\\
        =&\Hom_{T_{p}}\left(\underline{\delta}_{\mathrm{sm}}, \Hom_{U(\fm_{Q_p})}\left(L_J(\lambda),\Pi^{\mathrm{an}}[\fn_{Q_p}]\right)^{N_{B_p,0}}\right)\\
        =&\Hom_{T_{p}}\left(\underline{\delta}_{\mathrm{sm}}, \Hom_{U(\fm_{Q_p})}\left(L_J(\lambda),\Pi^{\mathrm{an},N_{Q_p,0}}\right)^{U_{Q_p,0}}\right)\\
        =&\Hom_{T_{p}}\left(\underline{\delta}_{\mathrm{sm}}, \Hom_{U(\fm_{Q_p})}\left(L_J(\lambda),J_{Q_p}(\Pi^{\mathrm{an}})\right)^{U_{Q_p,0}}\right)
    \end{align*}
    where the last equations are given by similar arguments as in the proof of Lemma \ref{lemmafiniteslopeJacquet}. Similarly using $U(\fg)\otimes_{U(\fb)}\lambda=U(\fg)\otimes_{U(\fq)}U(\fq)\otimes_{U(\fb)}\lambda$ and that $U(\fn_{Q_p})$ acts trivially on $U(\fq)\otimes_{U(\fb)}\lambda=U(\fm_{Q_p})\otimes_{U(\fb_{Q_p})}\lambda$, we get 
    \begin{align*}
        &\Hom_{G_p}\left(\cF_{\overline{B}_p}^{G_p}\left(\left(U(\fg)\otimes_{U(\overline{\fb})}(-\lambda)\right)^{\vee},\underline{\delta}_{\mathrm{sm}}\delta_{B_p}^{-1}\right), \Pi^{\mathrm{an}}\right)\\
        =&\Hom_{T_{p}}\left(\underline{\delta}_{\mathrm{sm}}, \Hom_{U(\fm_{Q_p})}\left(U(\fm_{Q_p})\otimes_ {U(\fb_{Q_p})}\lambda,J_{Q_p}(\Pi^{\mathrm{an}})\right)^{U_{Q_p,0}}\right)
    \end{align*}
    Thus the injection (\ref{equationinjectionvermamodules}) corresponds to the injection
    \begin{align*}
        \Hom_{T_{p}}\left(\underline{\delta}_{\mathrm{sm}}, \Hom_{U(\fm_{Q_p})}\left(L_J(\lambda),J_{Q_p}(\Pi^{\mathrm{an}})\right)^{U_{Q_p,0}}\right)\\
        \hookrightarrow \Hom_{T_{p}}\left(\underline{\delta}_{\mathrm{sm}}, \Hom_{U(\fm_{Q_p})}\left(U(\fm_{Q_p})\otimes_ {U(\fb_{Q_p})}\lambda,J_{Q_p}(\Pi^{\mathrm{an}})\right)^{U_{Q_p,0}}\right).
    \end{align*}
    Firstly, we have
    \begin{align*}
        &\Hom_{U(\fm_{Q_p,J})}\left(L_{M_{Q_p}}(\lambda_J),J_{Q_p}(\Pi^{\mathrm{an}})\right)^{U_{Q_p,0}}\\
        =&\Hom_{U(\fm_{Q_p,J}')\otimes U(\mathfrak{z}_{M_{Q_p},J})}\left(L_{M_{Q_p}}(\lambda_J),J_{Q_p}(\Pi^{\mathrm{an}})\right)^{U_{Q_p,0}}\\
        =&\Hom_{U(\mathfrak{z}_{M_{Q_p},J})}\left(1,\Hom_{U(\fm_{Q_p,J}')}\left(L_{M_{Q_p}}(\lambda_J), J_{Q_p}(\Pi^{\mathrm{an}})\right)\right)^{U_{Q_p,0}}\\
        =&\Hom_{U(\mathfrak{z}_{M_{Q_p},J})}\left(1,\Hom_{U(\fm_{Q_p,J}')}\left(L_{M_{Q_p}}(\lambda_J), J_{Q_p}(\Pi^{\mathrm{an}})\right)^{U_{Q_p,0}}\right)\\
        =&\Hom_{U(\mathfrak{t}_{Q_p,J})}\left(1,\Hom_{U(\fm_{Q_p,J}')}\left(L_{M_{Q_p}}(\lambda_J), J_{Q_p}(\Pi^{\mathrm{an}})\right)^{U_{Q_p,0}}\right)
    \end{align*}
    where $1$ denotes the trivial module of the universal envelope algebras and the last equality comes from that the action of $\ft_{Q_p,J}'$ on $\Hom_{U(\fm_{Q_p,J}')}\left(L_{M_{Q_p}}(\lambda_J), J_{Q_p}(\Pi^{\mathrm{an}})\right)^{U_{Q_p,0}}$ is already trivial. Similar arguments as in Lemma \ref{lemmatwistedJacquetmodule} replacing $\widetilde{\lambda}_J$ there by $\lambda_J$ gives
    \begin{align}\label{equationadjunction3}
        \Hom_{U(\mathfrak{t}_{Q_p,J})}\left(1,\Hom_{U(\fm_{Q_p,J}')}\left(L_{M_{Q_p}}(\lambda_J), J_{Q_p}(\Pi^{\mathrm{an}})\right)^{U_{Q_p,0}}\right) \nonumber \\=\Hom_{U(\mathfrak{t}_{Q_p,J})}\left(\lambda_J, \left(J_{Q_p}(\Pi^{\mathrm{an}})_{\lambda_J'}\right)^{U_{Q_p,0}}\right).    
    \end{align}
    We get
    \begin{equation}\label{equationadjunction1}
        \Hom_{U(\fm_{Q_p,J})}\left(L_{M_{Q_p}}(\lambda_J),J_{Q_p}(\Pi^{\mathrm{an}})\right)^{U_{Q_p,0}}\simeq \Hom_{U(\mathfrak{t}_{Q_p,J})}\left(\lambda_J, \left(J_{Q_p}(\Pi^{\mathrm{an}})_{\lambda_J'}\right)^{U_{Q_p,0}}\right).
    \end{equation}
    Hence
    \begin{align}\label{equationadjunction2}
        &\Hom_{U(\fm_{Q_p})}\left(L_J(\lambda),J_{Q_p}(\Pi^{\mathrm{an}})\right)^{U_{Q_p,0}}\nonumber\\
        =&\Hom_{U(\fm_{Q_p})}\left(L_{M_{Q_p}}(\lambda_J)\otimes_L M_{\fm_{Q_p}}(\lambda_{\Sigma_p\setminus J}),J_{Q_p}(\Pi^{\mathrm{an}})\right)^{U_{Q_p,0}}\nonumber\\
        =&\Hom_{U(\fm_{Q_p})}\left(M_{\fm_{Q_p}}(\lambda_{\Sigma_p\setminus J}),L_{M_{Q_p}}(\lambda_J)'\otimes J_{Q_p}(\Pi^{\mathrm{an}})\right)^{U_{Q_p,0}}\nonumber\\
        =&\Hom_{U(\ft)}\left(\lambda_{\Sigma_p\setminus J},\Hom_{U(\fm_{Q_p,J})}\left(L_{M_{Q_p}}(\lambda_J),J_{Q_p}(\Pi^{\mathrm{an}})\right)[\mathfrak{u}_{Q_p}]\right)^{U_{Q_p,0}}\nonumber\\
        =&\Hom_{U(\ft)}\left(\lambda_{\Sigma_p\setminus J},\Hom_{U(\fm_{Q_p,J})}\left(L_{M_{Q_p}}(\lambda_J), J_{Q_p}(\Pi^{\mathrm{an}})\right)^{U_{Q_p,0}}\right)\nonumber\\
        \stackrel{(\ref{equationadjunction1})}{=}&\Hom_{U(\ft)}\left(\lambda_{\Sigma_p\setminus J},\Hom_{U(\mathfrak{t}_{Q_p,J})}\left(\lambda_J, \left(J_{Q_p}(\Pi^{\mathrm{an}})_{\lambda_J'}\right)^{U_{Q_p,0}}\right)\right)\nonumber\\
        =&\Hom_{U(\ft)}\left(\lambda, (J_{Q_p}(\Pi^{\mathrm{an}})_{\lambda_J'})^{U_{Q_p,0}}\right).
    \end{align}
    Finally, we get
    \begin{align*}
        &\Hom_{T_{p}}\left(\underline{\delta}_{\mathrm{sm}}, \Hom_{U(\fm_{Q_p})}\left(L_J(\lambda),J_{Q_p}(\Pi^{\mathrm{an}})\right)^{U_{Q_p,0}}\right)\\
        \stackrel{(\ref{equationadjunction2})}{=}&\Hom_{T_{p}}\left(\underline{\delta}_{\mathrm{sm}}, \Hom_{U(\ft)}\left(\lambda, \left(J_{Q_p}(\Pi^{\mathrm{an}})_{\lambda_J'}\right)^{U_{Q_p,0}}\right)\right)\\ 
        =&\Hom_{T_{p}}\left(\underline{\delta}, \left(J_{Q_p}(\Pi^{\mathrm{an}})_{\lambda_J'}\right)^{U_{Q_p,0}}\right)\\
        =&\Hom_{T_{p}}\left(\underline{\delta}, J_{B_{Q_p}}\left(J_{Q_p}(\Pi^{\mathrm{an}})_{\lambda_J'}\right)\right)
    \end{align*}
    and similarly
    \begin{align*}
        &\Hom_{T_{p}}\left(\underline{\delta}_{\mathrm{sm}}, \Hom_{U(\fm_{Q_p})}\left(U(\fm_{Q_p})\otimes_ {U(\fb_{Q_p})}\lambda,J_{Q_p}(\Pi^{\mathrm{an}})\right)^{U_{Q_p,0}}\right)\\
        =&\Hom_{T_{p}}\left(\underline{\delta}_{\mathrm{sm}}, \Hom_{U(\ft)}\left(\lambda,J_{Q_p}(\Pi^{\mathrm{an}})[\mathfrak{u}_{Q_p}]\right)^{U_{Q_p,0}}\right)\\
        =&\Hom_{T_{p}}\left(\underline{\delta}_{\mathrm{sm}}, \Hom_{U(\ft)}\left(\lambda,J_{Q_p}(\Pi^{\mathrm{an}})^{U_{Q_p,0}}\right)\right)\\
        =&\Hom_{T_{p}}\left(\underline{\delta},J_{Q_p}(\Pi^{\mathrm{an}})^{U_{Q_p,0}}\right)\\
        =&\Hom_{T_{p}}\left(\underline{\delta},J_{B_p}(\Pi^{\mathrm{an}})\right).
    \end{align*}
    The commutativity of the diagram in the statement of the proposition can be checked by the following commutative diagram and by comparing with (\ref{morphismclosedembedding}) and (\ref{equationadjunction3})
    \begin{center}
        \begin{tikzpicture}[scale=1.0,font=\small]
            \node (A1) at (0,2) {$\Hom\left(\lambda_J,L_{M_{Q_p}}(\lambda_J)'\otimes J_{Q_p}(\Pi^{\mathrm{an}})\otimes L_{M_{Q_p}}(\lambda_J)^{U_{Q_p,0}}\right)$};
            \node (A2) at (8,2) {$\Hom\left(\lambda_J,J_{Q_p}(\Pi^{\mathrm{an}})\right)$};
            \node (B1) at (0,0) {$\Hom_{U(\fm_{Q_p,J})}\left(L_{M_{Q_p}}(\lambda_J),J_{Q_p}(\Pi^{\mathrm{an}})\right)$};
            \node (B2) at (8,0) {$\Hom_{U(\fm_{Q_p,J})}\left(U(\fm_{Q_p,J})\otimes_ {U(\fb_{Q_p,J})}\lambda_{J},J_{Q_p}(\Pi^{\mathrm{an}})\right)$};
            \path[right hook ->,font=\scriptsize,>=angle 90]
            (B1) edge node[above]{} (A1)
            (B2) edge node[above]{} (A2)
            ;
            \path[->,font=\scriptsize,>=angle 90]
            (A1) edge node[right]{} (A2)
            (B1) edge node[above]{} (B2)
            ;
            \end{tikzpicture}
     \end{center}
     where we identify $L_{M_{Q_p}}(\lambda_J)^{U_{Q_p,0}}=\lambda_J$.
\end{proof}
\subsection{The partial eigenvariety}\label{sectionpartialeigenvarietygeometry}
We use the partial Emerton-Jacquet functor to define the partial eigenvariety and use the usual eigenvariety machinery to obtain its basic properties.\par
Since $J_{B_{Q_p}}\left(J_{Q_p}(\Pi_{\infty}^{\mathrm{an}})_{\lambda_J'}\right)$ is an essentially admissible locally analytic representation of $\Z_p^s\times T_p$, the continuous dual $J_{B_{Q_p}}\left(J_{Q_p}(\Pi_{\infty}^{\mathrm{an}})_{\lambda_J'}\right)'$ is the global section of a coherent sheaf over the quasi-Stein space $\Spf(R_{\infty})^{\mathrm{rig}}\times \widehat{T}_{p,L}$. We define the \emph{partial eigenvariety} $X_{p}(\overline{\rho})(\lambda'_J)$ to be the scheme-theoretic support of the coherent sheaf associated with $J_{B_{Q_p}}\left(J_{Q_p}(\Pi_{\infty}^{\mathrm{an}})_{\lambda_J'}\right)'$ on $\Spf(R_{\infty})^{\mathrm{rig}}\times \widehat{T}_{p,L}$. \par
The closed embedding in Lemma \ref{lemmamorphismclosedembedding} induces a closed embedding $J_{B_{Q_p}}\left(J_{Q_p}(\Pi_{\infty}^{\mathrm{an}})_{\lambda_J'}\right)\hookrightarrow J_{B_p}(\Pi_{\infty}^{\mathrm{an}})$ by \cite[Lem. 3.4.7(iii)]{emerton2006jacquet} and \cite[Thm. 5.3]{hill2011emerton}. Taking the continuous dual and then taking supports, we get a closed embedding $X_{p}(\overline{\rho})(\lambda_J')\hookrightarrow X_p(\overline{\rho})$ of rigid analytic spaces over $L$. Let $X_p(\overline{\rho})_{\lambda_J'}$ (resp. $(\widehat{T}_{p,L})_{\lambda_J'}$) be the fiber of $X_p(\overline{\rho})$ (resp. $\widehat{T}_{p,L}$) over $\lambda_J'$ via the map $X_p(\rho)\rightarrow \widehat{T}_{p,L}\stackrel{\mathrm{wt}}{\rightarrow}\ft^*\rightarrow (\ft'_{Q_p,J})^*$ where $\ft^*\rightarrow (\ft'_{Q_p,J})^*$ is the restriction map. Since the action of $\ft'_{Q_p,J}$ on $\Hom_{U(\fm_{Q_p,J}')}\left(L_{M_{Q_p}}(\widetilde{\lambda}_J), J_{Q_p}(\Pi_{\infty}^{\mathrm{an}})\right)^{U_{Q_p,0}}$ as well as on 
\[J_{B_{Q_p}}\left(\Hom_{U(\fm_{Q_p,J}')}\left(L_{M_{Q_p}}(\widetilde{\lambda}_J), J_{Q_p}(\Pi_{\infty}^{\mathrm{an}})\right)\right)\] 
is zero by \cite[Prop. 3.2.11]{emerton2006jacquet}, by Lemma \ref{lemmatwistedJacquetmodule}, we have a commutative diagram:
\begin{center}
    \begin{tikzpicture}[scale=1.3]
        \node (A1) at (0,1) {$X_{p}(\overline{\rho})(\lambda_J')$};
        \node (B1) at (2,1) {$X_p(\overline{\rho})_{\lambda_J'}$};
        \node (C1) at (4,1) {$ X_p(\rho)$};
        \node (B2) at (2,0) {$(\widehat{T}_{p,L})_{\lambda_J'}$};
        \node (C2) at (4,0) {$ \widehat{T}_{p,L}$};
        \path[right hook ->,font=\scriptsize,>=angle 90]
        (A1) edge node[above]{} (B1)
        (B1) edge node[above]{} (C1)
        (B2) edge node[above]{} (C2)
        ;
        \path[->,font=\scriptsize,>=angle 90]
        (B1) edge node[above]{} (B2)
        (C1) edge node[above]{} (C2)
        (A1) edge node[above]{} (B2)
        ;
        \end{tikzpicture}
 \end{center}
where horizontal arrows are closed immersions. \par
If $x=(y,\underline{\delta})\in \mathrm{Spf}(R_{\infty})^{\mathrm{rig}}\times \widehat{T}_{p,L}$ is a point in $X_p(\overline{\rho})_{\lambda_J'}$ with $\fm_y$ the maximal ideal of $R_{\infty}[\frac{1}{p}]$ corresponding to $y$, then equivalently $\mathrm{Hom}_{T_p}\left(\underline{\delta}, J_{B_p}(\Pi_{\infty}^{\mathrm{an}}[\mathfrak{m}_y]\otimes_{k(y)}k(x))\right)\neq 0$ and $\mathrm{wt}(\underline{\delta})_J'$, the image of the weight $\mathrm{wt}(\underline{\delta})$ of $\underline{\delta}$ in $(\ft_{Q_p,J}')^*$, is equal to $\lambda_J'$. Assume that $\lambda:=\mathrm{wt}(\underline{\delta})\in (\Z^{n})^{\Sigma_p}$ is integral and let $\underline{\delta}_{\mathrm{sm}}:=z^{-\lambda}\underline{\delta}$. Then by (\ref{equationinjectionvermamodules}), there is an injection
\begin{align*}
    \Hom_{G_p}\left(\cF_{\overline{B}_p}^{G_p}\left(\Hom\left(U(\fg)\otimes_{U(\fq)}L_J(\lambda),L\right)^{\overline{\mathfrak{u}}^{\infty}},\underline{\delta}_{\mathrm{sm}}\delta_{B_p}^{-1}\right),\Pi_{\infty}[\fm_y]^{\mathrm{an}}\otimes_{k(y)}k(x)\right)\\
    \hookrightarrow \Hom_{G_p}\left(\cF_{\overline{B}_p}^{G_p}\left(\left(U(\fg)\otimes_{U(\overline{\fb})}(-\lambda)\right)^{\vee},\underline{\delta}_{\mathrm{sm}}\delta_{B_p}^{-1}\right), \Pi_{\infty}[\fm_y]^{\mathrm{an}}\otimes_{k(y)}k(x)\right)\neq 0.
\end{align*}
\begin{proposition}\label{propositionpointspartialeigenvariety}
    Let $x=(y,\underline{\delta})\in X_p(\overline{\rho})_{\lambda_J'}\subset \mathrm{Spf}(R_{\infty})^{\mathrm{rig}}\times \widehat{T}_{p,L}$ be a point with $\lambda=\mathrm{wt}(\underline{\delta})\in (\Z^{n})^{\Sigma_p}$. Then $x$ is in $X_{p}(\overline{\rho})(\lambda_J')$ if and only if 
    \[\Hom_{G_p}\left(\cF_{\overline{B}_p}^{G_p}\left(\Hom\left(U(\fg)\otimes_{U(\fq)}L_J(\lambda),L\right)^{\overline{\mathfrak{u}}^{\infty}},\underline{\delta}_{\mathrm{sm}}\delta_{B_p}^{-1}\right),\Pi_{\infty}[\fm_y]^{\mathrm{an}}\otimes_{k(y)}k(x)\right)\neq 0.\]
\end{proposition}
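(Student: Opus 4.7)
The strategy is to combine the definition of $X_p(\overline{\rho})(\lambda_J')$ as the scheme-theoretic support of the coherent sheaf associated with $J_{B_{Q_p}}(J_{Q_p}(\Pi_\infty^{\mathrm{an}})_{\lambda_J'})'$ with the adjunction formula of Proposition \ref{propositionadjunctionJacquetmodule}. The proof is a routine unwinding once these two ingredients are in place.

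First, I would unpack the support criterion in the same way as \cite[Prop. 3.7]{breuil2017interpretation}. Since $J_{B_{Q_p}}(J_{Q_p}(\Pi_\infty^{\mathrm{an}})_{\lambda_J'})$ is an essentially admissible locally $\Q_p$-analytic representation of $\Z_p^s \times T_p$ (the composition of an essentially admissible representation with the closed embedding of Lemma \ref{lemmamorphismclosedembedding}), a point $x = (y, \underline{\delta})$ lies in its support if and only if
\[
\Hom_{T_p}\!\left(\underline{\delta},\, J_{B_{Q_p}}\!\left(J_{Q_p}(\Pi_\infty^{\mathrm{an}})_{\lambda_J'}\right)[\fm_y] \otimes_{k(y)} k(x)\right) \neq 0.
\]
Exactness of $J_{Q_p}$ and $J_{B_{Q_p}}$ on the $\fm_y$-torsion (together with the identification $\Pi_\infty^{\mathrm{an}}[\fm_y] = \Pi_\infty[\fm_y]^{\mathrm{an}}$, which holds because $R_\infty$ acts through a finite-dimensional quotient on $\Pi_\infty[\fm_y]$) lets me rewrite this as
\[
\Hom_{T_p}\!\left(\underline{\delta},\, J_{B_{Q_p}}\!\left(J_{Q_p}(\Pi^{\mathrm{an}})_{\lambda_J'}\right)\right) \neq 0,
\]
where $\Pi^{\mathrm{an}} := \Pi_\infty[\fm_y]^{\mathrm{an}} \otimes_{k(y)} k(x)$.

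Next, $\Pi^{\mathrm{an}}$ is a very strongly admissible locally $\Q_p$-analytic representation of $G_p$ over $k(x)$; this admissibility follows from the fact that $M_\infty$ is finite free over $S_\infty[[H_p]]$ and is the same input used in \cite[\S3]{breuil2017interpretation} to define $X_p(\overline{\rho})$. Hence the hypotheses of Proposition \ref{propositionadjunctionJacquetmodule} are satisfied with this $\Pi^{\mathrm{an}}$, the character $\underline{\delta} = z^\lambda \underline{\delta}_{\mathrm{sm}}$, and the chosen $\lambda$. The hypothesis that $x \in X_p(\overline{\rho})_{\lambda_J'}$ guarantees that the image of $\wt(\underline{\delta}) = \lambda$ in $(\ft'_{Q_p,J})^*$ equals $\lambda_J'$, so $\lambda$ is a valid lift of $\lambda_J'$; by Lemma \ref{lemmamorphismclosedembedding} the subspace $J_{Q_p}(\Pi^{\mathrm{an}})_{\lambda_J'}$ is independent of this choice of lift. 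Applying the adjunction yields
\[
\Hom_{T_p}\!\left(\underline{\delta},\, J_{B_{Q_p}}\!\left(J_{Q_p}(\Pi^{\mathrm{an}})_{\lambda_J'}\right)\right) \simeq \Hom_{G_p}\!\left(\cF_{\overline{B}_p}^{G_p}\!\left(\Hom(U(\fg)\otimes_{U(\fq)} L_J(\lambda), L)^{\overline{\mathfrak{u}}^\infty},\, \underline{\delta}_{\mathrm{sm}}\delta_{B_p}^{-1}\right),\, \Pi^{\mathrm{an}}\right),
\]
which is precisely the Hom space in the statement. Combining with the first step gives the desired equivalence.

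The only place where care is needed is verifying the support criterion, namely the commutation of the formation of $J_{Q_p}(-)_{\lambda_J'}$ and $J_{B_{Q_p}}(-)$ with passage to the $\fm_y$-torsion and the base change $-\otimes_{k(y)} k(x)$. This is standard for essentially admissible representations once one tracks the coherent sheaf structure on $\Spf(R_\infty)^{\mathrm{rig}} \times \widehat{T}_{p,L}$, and proceeds just as in the analogous argument for the full patched eigenvariety; no new difficulty arises beyond what is already handled by the adjunction formula of Proposition \ref{propositionadjunctionJacquetmodule}.
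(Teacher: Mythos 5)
Your proposal is correct and takes essentially the same route as the paper's proof: unwind the support criterion as in \cite[Prop.\ 3.7]{breuil2017interpretation}, commute the $[\fm_y]$-torsion through the (left exact) Jacquet-module and twisting functors, use the identification $\Pi_\infty^{\mathrm{an}}[\fm_y]=\Pi_\infty[\fm_y]^{\mathrm{an}}$ from \cite[Prop.\ 3.8]{breuil2017interpretation}, and then apply Proposition \ref{propositionadjunctionJacquetmodule}. The only nitpick is the phrase ``exactness of $J_{Q_p}$ and $J_{B_{Q_p}}$ on the $\fm_y$-torsion''---these functors are only \emph{left} exact (\cite[Lem.\ 3.4.7(iii)]{emerton2006jacquet}), which is what is needed since $(\cdot)[\fm_y]$ is a kernel; you should state this precisely.
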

\begin{proof}
    The point $x$ is in $X_{p}(\overline{\rho})(\lambda_J')$ if and only if $\Hom_{T_p}\left(\underline{\delta}, J_{B_{Q_p}}\left(J_{Q_p}(\Pi_{\infty}^{\mathrm{an}})_{\lambda_J'}\right)[\fm_{y}]\otimes_{k(y)} k(x)\right)\neq 0$. By the left exactness of Jacquet module functors (\cite[Lem. 3.4.7(iii)]{emerton2006jacquet}) and the definition of $J_{Q_p}(\Pi_{\infty}^{\mathrm{an}})_{\lambda_J'}$, we have $J_{B_{Q_p}}\left(J_{Q_p}(\Pi_{\infty}^{\mathrm{an}})_{\lambda_J'}\right)[\fm_{y}]\otimes_{k(y)} k(x)=J_{B_{Q_p}}\left(J_{Q_p}(\Pi_{\infty}^{\mathrm{an}}[\fm_{y}])_{\lambda_J'}\right)\otimes_{k(y)} k(x)$. Since $\Pi_{\infty}[\fm_{y}]$ is an admissible Banach representation of $G_p$ over $k(y)$, we get $\Pi_{\infty}[\fm_{y}]^{\mathrm{an}}=\Pi_{\infty}[\fm_{y}]^{R_{\infty}-\mathrm{an}}=\Pi_{\infty}^{\mathrm{an}}[\fm_{y}]$ by \cite[(3.3), Prop. 3.8]{breuil2017interpretation} (cf. \cite[Prop. 3.7]{breuil2017interpretation}). Hence $x\in X_{p}(\overline{\rho})(\lambda_J')$ if and only if \[\Hom_{T_p}\left(\underline{\delta}, J_{B_{Q_p}}\left(J_{Q_p}(\Pi_{\infty}[\fm_{y}]^{\mathrm{an}})_{\lambda_J'}\right)\otimes_{k(y)} k(x)\right)\neq 0.\]
    Now the result follows by applying Proposition \ref{propositionadjunctionJacquetmodule}.
\end{proof}
We now study the eigenvariety $X_p(\overline{\rho})(\lambda_J')$ in a standard way as in \cite[\S3.3]{breuil2017interpretation} or \cite[\S2.4]{ding2019some}.\par
For a uniform pro-$p$ Lie group $H$ which is a product of locally $F_{\widetilde{v}}$-analytic groups such as $H_p$, we let $\cC^{\mathrm{la}}(H,L)$ be the space of locally $\Q_p$-analytic functions on $H$ with coefficients in $L$. For every integer $h\geq 1$, let $r_h=\frac{1}{p^{h-1}(p-1)}$. Let $\cC^{(h)}(H,L)$ be the subspace of $\cC^{\mathrm{la}}(H,L)$ defined in \cite[Def. IV.1]{colmez2014completes}. Then $\cC^{(h)}(H,L)$ is a Banach space over $L$ and $\cC^{\mathrm{la}}(H,L)=\varinjlim_{h}\cC^{(h)}(H,L)$. Let $\cC^{\Sigma_p\setminus J-\mathrm{an}}(H,L):=\cC^{\mathrm{la}}(H,L)^{\Sigma_p\setminus J-\mathrm{an}}$ (resp. $\cC^{\Sigma_p\setminus J-\mathrm{an}, (h)}(H,L):=\cC^{(h)}(H,L)^{\Sigma_p\setminus J-\mathrm{an}}$) be the space of locally $\Sigma_p\setminus J$-analytic functions in $\cC^{\mathrm{la}}(H,L)$ (resp. $\cC^{(h)}(H,L)$) with respect to the action of $H$. Remark that the notion of locally $\Sigma_p\setminus J$-analytic functions can be defined for general products of locally $F_{\widetilde{v}}$-analytic manifolds without refering to group actions, see \cite[Def. 2.1]{schraen2010representations}. Then $\cC^{\Sigma_p\setminus J-\mathrm{an}}(H,L)=\varinjlim_{h}\cC^{\Sigma_p\setminus J-\mathrm{an}, (h)}(H,L)$ (\cite[Prop. 1.1.41]{emerton2017locally}). Let $D(H,L)=\cC^{\mathrm{an}}(H,L)'$ be the strong dual. Recall for any $h$, there is a closed immersion of Banach algebras $D_{p^{-r_h}}(H,L)\hookrightarrow D_{<p^{-r_h}}(H,L)$ defined in \cite[\S4]{schneider2003algebras} and note $D_{<p^{-r_h}}(H,L)=\cC^{(h)}(H,L)'$ is the strong dual. Set $D^{\Sigma_p\setminus J-\mathrm{an}}(H,L):=\cC^{\Sigma_p\setminus J-\mathrm{an}}(H,L)'=\varprojlim_{h}D_{<p^{-r_h}}^{\Sigma_p\setminus J-\mathrm{an}}(H,L)$ where 
\[D_{<p^{-r_h}}^{\Sigma_p\setminus J-\mathrm{an}}(H,L):=\left(\cC^{\Sigma_p\setminus J-\mathrm{an}, (h)}(H,L) \right)'.\]
Let $D_{p^{-r_h}}^{\Sigma_p\setminus J-\mathrm{an}}(H,L)$ be the completion of $D^{\Sigma_p\setminus J-\mathrm{an}}(H,L)$ with respect to the quotient norm from the norm on $D(H,L)$ induced via $D(H,L)\hookrightarrow D_{p^{-r_h}}(H,L)$ (cf. \cite[\S2.5]{schraen2010representations}). Hence we have a quotient $D_{p^{-r_{h}}}(H,L)\twoheadrightarrow D_{p^{-r_{h}}}^{\Sigma_p\setminus J-\mathrm{an}}(H,L)$ . As $\cC^{\Sigma_p\setminus J-\mathrm{an}, (h)}(H,L)$ is dense in $\cC^{\Sigma_p\setminus J-\mathrm{an}}(H,L)$, we get an injection $D^{\Sigma_p\setminus J-\mathrm{an}}(H,L)\hookrightarrow D_{<p^{-r_{h}}}^{\Sigma_p\setminus J-\mathrm{an}}(H,L)$. The norm on the target is the quotient norm from the norm on $D_{<p^{-r_h}}(H,L)$ where the surjectivity of $D_{<p^{-r_{h}}}(H,L)\rightarrow D_{<p^{-r_{h}}}^{\Sigma_p\setminus J-\mathrm{an}}(H,L)$ follows from the Hahn-Banach theorem (\cite[Prop. 9.2]{schneider2013nonarchimedean}) and the norm on $D_{<p^{-r_h}}(H,L)$ is the same as that on $D_{p^{-r_h}}(H,L)$ when restricted to $D(H,L)$ (\cite[P162]{schneider2003algebras}). Hence we get a closed embedding 
\[D_{p^{-r_{h}}}^{\Sigma_p\setminus J-\mathrm{an}}(H,L)\hookrightarrow D_{<p^{-r_{h}}}^{\Sigma_p\setminus J-\mathrm{an}}(H,L).\] 
In conclusion, we have the following diagram of morphisms of Banach algebras over $L$
\begin{center}
    \begin{tikzpicture}[scale=1.3]
        \node (A) at (0,1) {$D_{<p^{-r_{h+1}}}(H,L)$};
        \node (B) at (3,1) {$D_{p^{-r_h}}(H,L)$};
        \node (C) at (6,1) {$ D_{<p^{-r_{h}}}(H,L)$};
        \node (D) at (0,0) {$D_{<p^{-r_{h+1}}}^{\Sigma_p\setminus J-\mathrm{an}}(H,L)$};
        \node (E) at (3,0) {$D_{p^{-r_h}}^{\Sigma_p\setminus J-\mathrm{an}}(H,L)$};
        \node (F) at (6,0) {$ D_{<p^{-r_{h}}}^{\Sigma_p\setminus J-\mathrm{an}}(H,L)$};
        \path[right hook->,font=\scriptsize,>=angle 90]
        (A) edge node[above]{} (B)
        (B) edge node[above]{} (C)
        (D) edge node[above]{} (E)
        (E) edge node[above]{} (F)
        ;
        \path[->>,font=\scriptsize,>=angle 90]
        (A) edge node[above]{} (D)
        (B) edge node[above]{} (E)
        (C) edge node[above]{} (F)
        ;
        \end{tikzpicture}
 \end{center}
where each horizontal arrow is an injection and each vertical arrow is a surjection. \par
If $H$ is moreover abelian, we let $\widehat{H}_L$ be the rigid analytic space over $L$ parameterizing locally $\Q_p$-analytic characters of $H$. The rigid analytic space $\widehat{T}_{Q_p,0,\Sigma_p\setminus J,L}':=\varinjlim_h\mathrm{Sp}(D_{p^{-r_{h}}}^{\Sigma_p\setminus J-\mathrm{an}}(T_{Q_p,0}',L))$ over $L$ parameterizing locally $\Sigma_p\setminus J$-analytic characters of $T_{Q_p,0}'$ is strictly quasi-Stein, smooth and equidimensional (\cite[Prop. 6.1.13, Prop. 6.1.14]{ding2017formes}) and is a closed analytic subspace of $\widehat{T}_{Q_p,0,L}'$. By \cite[Prop. 2.18]{schraen2010representations} (cf. \cite[\S6.1.4]{ding2017formes}), the following commutative diagram is Cartesian
\begin{center}
    \begin{tikzpicture}[scale=1.3]
        \node (A1) at (0,1) {$\widehat{T}_{Q_p,0,\Sigma_p\setminus J,L}'$};
        \node (B1) at (3,1) {$\widehat{T}_{Q_p,0,L}'$};
        \node (A2) at (0,0) {$(\ft'_{Q_p,\Sigma_p\setminus J})^*$};
        \node (B2) at (3,0) {$(\ft'_{Q_p})^*$};
        \path[right hook ->,font=\scriptsize,>=angle 90]
        (A1) edge node[above]{} (B1)
        (A2) edge node[above]{} (B2)
        ;
        \path[->,font=\scriptsize,>=angle 90]
        (A1) edge node[above]{} (B1)
        (A2) edge node[above]{} (B2)
        ;
        \path[->,font=\scriptsize,>=angle 90]
        (A1) edge node[right]{$\mathrm{wt}$} (A2)
        (B1) edge node[right]{$\mathrm{wt}$} (B2)
        ;
        \end{tikzpicture}
 \end{center}
where $\ft'_{Q_p,\Sigma_p\setminus J}\simeq \ft'_{Q_p}/\ft_{Q_p,J}'$ (thus there is a closed embedding $(\ft'_{Q_p,\Sigma_p\setminus J})^*\hookrightarrow (\ft'_{Q_p})^*$). \par
We pick an element $z\in T_p^{+}$ such that $\cap_{i\geq 1}z^iN_{B_p,0}z^{-i}$ only consists of the identity element. Assume that $zH_pz^{-1}$ is normalized by $N_{B_p,0}$. Denote by $\mathcal{W}_0:= \widehat{\widetilde{Z}}_{M_{Q_p},0,L} \times \widehat{T}_{Q_p,0,\Sigma_p\setminus J,L}'=\Spf(S_{\infty})^{\mathrm{rig}}\times \widehat{Z}_{M_{Q_p},0,L}\times \widehat{T}_{Q_p,0,\Sigma_p\setminus J,L}'$.
\begin{lemma}\label{lemmaorthonormalmodules}
There exists an admissible covering of $\mathcal{W}_0$ by open affinoids $\mathrm{Sp}(B_1)\subset \mathrm{Sp}(B_2)\subset\cdots\subset\mathrm{Sp}(B_h)\subset\cdots$ and for any $h\geq 1$, there exist
\begin{itemize}
    \item[--] an orthonormalizable Banach $B_h$-module $V_h$,
    \item[--] a compact $B_h$-map $z_h:V_h\rightarrow V_h$, continuous $B_h$-maps $\alpha_h: V_h\rightarrow V_{h+1}\widehat{\otimes}_{B_{h+1}}B_h$ and $\beta_h:V_{h+1}\widehat{\otimes}_{B_{h+1}}B_h\rightarrow V_h$ such that $z_h=\beta_{h}\circ \alpha_h$ and $\alpha_h\circ\beta_h=z_{h+1}\otimes 1_{B_h}$, and
    \item[--] an isomorphism of topological $D(\widetilde{Z}_{M_{Q_p},0},L)\widehat{\otimes}_L D^{\Sigma_p\setminus J-\mathrm{an}}(T'_{Q_p,0},L)$-modules 
    \[\left(\left(\Pi_{\infty}^{\mathrm{an},N_{Q_p,0}}\otimes L_{M_{Q_p}}(\widetilde{\lambda}_J)'\right)^{\Sigma_p\setminus J-\mathrm{an},U_{Q_p,0}}\right)'\simeq\varprojlim_{h}V_h\]
\end{itemize}
such that the action of the operator $\pi_z$ induced by the Hecke action of $z$ on the left hand side coincides with the action of $(z_h)_{h\in\N}$ on the right hand side.\\
We can summarize the datum in the following commutative diagram:
\begin{center}
    \begin{tikzpicture}[scale=1.1]
        \node (A1) at (0,2) {$\left(\left(\Pi_{\infty}^{\mathrm{an},N_{Q_p,0}}\otimes L_{M_{Q_p}}(\widetilde{\lambda}_J)'\right)^{\Sigma_p\setminus J-\mathrm{an},U_{Q_p,0}}\right)'$};
        \node (B1) at (4,2) {$\cdots$};
        \node (C1) at (6,2) {$ V_{h+1}$};
        \node (D1) at (8,2) {$ V_{h+1}\widehat{\otimes}_{B_{h+1}}B_h$};
        \node (E1) at (10,2) {$ V_{h}$};
        \node (A2) at (0,0) {$\left(\left(\Pi_{\infty}^{\mathrm{an},N_{Q_p,0}}\otimes L_{M_{Q_p}}(\widetilde{\lambda}_J)'\right)^{\Sigma_p\setminus J-\mathrm{an},U_{Q_p,0}}\right)'$};
        \node (B2) at (4,0) {$\cdots$};
        \node (C2) at (6,0) {$ V_{h+1}$};
        \node (D2) at (8,0) {$ V_{h+1}\widehat{\otimes}_{B_{h+1}}B_h$};
        \node (E2) at (10,0) {$ V_{h}$};
        \path[->,font=\scriptsize,>=angle 90]
        (A1) edge node[above]{} (B1)
        (B1) edge node[above]{} (C1)
        (C1) edge node[above]{} (D1)
        (D1) edge node[above]{$\beta_h$} (E1)
        (A2) edge node[above]{} (B2)
        (B2) edge node[above]{} (C2)
        (C2) edge node[above]{} (D2)
        (D2) edge node[above]{$\beta_h$} (E2)
        (A1) edge node[left]{$\pi_z$} (A2)
        (C1) edge node[left]{$z_{h+1}$} (C2)
        (D1) edge node[left]{$z_{h+1}\otimes 1_{B_h}$} (D2)
        (E1) edge node[right]{$z_h$} (E2)
        (E1) edge node[below]{$\alpha_h$} (D2)
        ;
        \end{tikzpicture}
 \end{center}
\end{lemma}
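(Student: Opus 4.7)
The plan is to adapt the standard eigenvariety construction of \cite[\S3.3]{breuil2017interpretation} (carried out for the full Jacquet module $J_{B_p}(\Pi_{\infty}^{\mathrm{an}})$) and Ding's partial version in \cite[\S2.4]{ding2019some} to our simultaneously patched and partial setting. The strategy is to first identify the strong dual of the left hand side as a coadmissible module over a suitable Fréchet–Stein algebra on $\mathcal{W}_0$, then to exhibit this coadmissible structure as coming from an admissible covering by affinoids together with orthonormalizable Banach modules on which the Hecke action of $z$ acts compactly.

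First I would fix an admissible covering $\Sp(B_1)\subset\Sp(B_2)\subset\cdots$ of $\mathcal{W}_0$ obtained by taking the product of an admissible increasing covering of $\Spf(S_{\infty})^{\mathrm{rig}}$ by open affinoids with the natural covering of $\widehat{Z}_{M_{Q_p},0,L}$ and the strictly quasi-Stein covering $\varinjlim_h \Sp(D_{p^{-r_h}}^{\Sigma_p\setminus J\text{-an}}(T_{Q_p,0}',L))$ of $\widehat{T}'_{Q_p,0,\Sigma_p\setminus J,L}$ (\cite[Prop. 6.1.14]{ding2017formes}), adjusting $B_h$ appropriately to incorporate the $S_{\infty}$-action. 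Next, I would exploit that $M_{\infty}$ is finite free over $S_{\infty}[[H_p]]$ to write $\Pi_{\infty}|_{\widetilde{H}_p}\simeq \cC(\widetilde{H}_p,L)^m$, and use the Iwahori-type decomposition $H_p\simeq (\overline{N}_{Q_p}\cap H_p)\times M_{Q_p,0}\times N_{Q_p,0}$ with respect to $Q_p$ to identify $\Pi_{\infty}^{\mathrm{an},N_{Q_p,0}}$ as a closed subspace of a space of locally $\Q_p$-analytic functions on $(\overline{N}_{Q_p}\cap H_p)\times M_{Q_p,0}$ with values in a finite direct sum of locally analytic induced representations. After tensoring with $L_{M_{Q_p}}(\widetilde{\lambda}_J)'$, taking locally $\Sigma_p\setminus J$-analytic vectors with respect to $M_{Q_p}'$ and then $U_{Q_p,0}$-invariants, passing to the strong dual produces a coadmissible module over $D(\widetilde{Z}_{M_{Q_p},0},L)\widehat{\otimes}_L D^{\Sigma_p\setminus J\text{-an}}(T'_{Q_p,0},L)$, whose base change to each $B_h$ defines an orthonormalizable Banach $B_h$-module $V_h$ (orthonormalizability comes from the well-known orthonormalizability of strong duals of locally analytic function spaces on compact $p$-adic manifolds, compatible with base change through the Fréchet–Stein structure).

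The key point is compactness of $z_h$. Since $z\in T_p^{+}$ is chosen so that $\bigcap_{i\geq 1}z^iN_{B_p,0}z^{-i}=\{e\}$, the Hecke action $\pi_z$ on $\Pi_{\infty}^{\mathrm{an},N_{Q_p,0}}$ factors, after dualizing, through the natural restriction map from functions on $(\overline{N}_{Q_p}\cap H_p)$ to functions on the strictly smaller subgroup $z(\overline{N}_{Q_p}\cap H_p)z^{-1}$, which is compact at each finite level. I would then define $z_h$ as the resulting compact operator, and set $\alpha_h:V_h\to V_{h+1}\widehat{\otimes}_{B_{h+1}}B_h$ and $\beta_h:V_{h+1}\widehat{\otimes}_{B_{h+1}}B_h\to V_h$ using the natural inclusions of Banach algebras $D_{<p^{-r_{h+1}}}\hookrightarrow D_{p^{-r_h}}\hookrightarrow D_{<p^{-r_h}}$ (both in the full and $\Sigma_p\setminus J$-analytic versions) together with the Hecke action of $z$, so that by construction $z_h=\beta_h\circ\alpha_h$ and $\alpha_h\circ\beta_h=z_{h+1}\otimes 1_{B_h}$, exactly as in \cite[Lem. 3.10]{breuil2017interpretation} and \cite[Lem. 2.21]{ding2019some}. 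The isomorphism $((\Pi_{\infty}^{\mathrm{an},N_{Q_p,0}}\otimes L_{M_{Q_p}}(\widetilde{\lambda}_J)')^{\Sigma_p\setminus J\text{-an},U_{Q_p,0}})'\simeq \varprojlim_h V_h$ then follows from the description of coadmissible modules as projective limits (\cite[\S3]{schneider2003algebras}), and the compatibility of the Hecke action of $z$ with $(z_h)$ is built into the construction.

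The main technical obstacle will be verifying that the successive operations (taking $N_{Q_p,0}$-invariants, tensoring with the finite dimensional $L_{M_{Q_p}}(\widetilde{\lambda}_J)'$, passing to locally $\Sigma_p\setminus J$-analytic vectors for $M_{Q_p}'$, then to $U_{Q_p,0}$-invariants) all preserve the relevant coadmissibility and orthonormalizability properties and commute appropriately with base change along $B_{h+1}\to B_h$. The passage to $\Sigma_p\setminus J$-analytic vectors requires the Fréchet–Stein description of $D^{\Sigma_p\setminus J\text{-an}}(T'_{Q_p,0},L)$ given before the lemma, together with Lemma \ref{lemmaessentiallyadmissible} to ensure that $\Pi_{\infty}^{\mathrm{an}}\otimes L_{M_{Q_p}}(\widetilde{\lambda}_J)'$ remains essentially admissible. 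All other ingredients are available in \cite{ding2017formes,ding2019some,breuil2017interpretation}, so the proof amounts primarily to careful bookkeeping in combining them.
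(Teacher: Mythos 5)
Your overall strategy is the same as the paper's: take the admissible covering of $\mathcal{W}_0$ induced by the levels $h$, use $\Pi_{\infty}|_{\widetilde{H}_p}\simeq \mathcal{C}(\widetilde{H}_p,L)^m$ and the Iwahori decomposition of $\widetilde{H}_p$ with respect to $Q_p$, identify the relevant dual with a projective limit of orthonormalizable Banach modules $V_h$, and produce the $z_h,\alpha_h,\beta_h$ by tracking that the Hecke action of $z$ lowers the level of analyticity in the $\overline{N}_{Q_p,0}$- and $\overline{U}_{Q_p,0}$-directions while preserving it in the $\widetilde{Z}_{M_{Q_p},0}$- and $T'_{Q_p,0}$-directions, exactly as in \cite[Lem. 3.10]{breuil2017interpretation} and \cite[Lem. 2.21]{ding2019some}.

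However, you leave the central technical step unresolved. You flag ``tensoring with the finite-dimensional $L_{M_{Q_p}}(\widetilde{\lambda}_J)'$'' as the main obstacle and propose to handle it abstractly via Lemma \ref{lemmaessentiallyadmissible}; but that lemma is used elsewhere (for Lemma \ref{lemmamorphismclosedembedding}) and only gives essential admissibility of the tensor product, which does not by itself produce the required \emph{concrete} orthonormalizable $V_h$ nor make the Hecke operator's compactness visible level by level. What the paper actually uses is the twisting lemma (\cite[Lem. 2.19]{ding2019some}): the explicit $\widetilde{M}_{Q_p,0}$-equivariant isomorphism
\[
\cC^{\mathrm{la}}(\widetilde{M}_{Q_p,0},L)\otimes L_{M_{Q_p}}(\widetilde{\lambda}_J)'\simrightarrow \cC^{\mathrm{la}}(\widetilde{M}_{Q_p,0},L)^{m'},\quad f\otimes v\mapsto\bigl(g\mapsto f(g)gv\bigr),
\]
which absorbs the twist into a plain function space. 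Only then can one split off $\widetilde{Z}_{M_{Q_p},0}$ and $M'_{Q_p,0}$, take $\Sigma_p\setminus J$-analytic vectors at finite level $\cC^{(h)}$, and read off the $V_h$ and the compactness of $\beta_h$ directly from the inclusions of $\cC^{(h)}$-spaces. Moreover, checking that this twisting isomorphism is compatible with the $(h)$-levels requires a computation with overconvergent functions (this is the role of Lemma \ref{lemmahanalyticfunctions} in the paper), which you also do not anticipate. So the gap is real: without the twisting lemma and its overconvergence refinement, your claim that ``orthonormalizability comes from the well-known orthonormalizability of strong duals of locally analytic function spaces'' does not apply to the twisted tensor product and the explicit form $\varprojlim_h V_h$ with the factorization $z_h=\beta_h\circ\alpha_h$ is not obtained.
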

\begin{proof}
    We have $\Pi_{\infty}|_{\widetilde{H}_p}\simeq \cC(\widetilde{H}_p,L)^m$ and $\Pi_{\infty}^{\mathrm{an}}=\Pi_{\infty}^{S_{\infty}-\mathrm{an}}\simeq \cC^{\mathrm{la}}(\widetilde{H}_p,L)^m$ by \cite[Prop. 3.8]{breuil2017interpretation}. Since $\widetilde{H}_p=\overline{N}_{Q_p,0}\times \widetilde{M}_{Q_p,0}\times N_{Q_p,0}$, we have 
    \begin{equation}\label{equationCanPiinfty}
        \Pi_{\infty}^{\mathrm{an}}\simeq \left(\cC^{\mathrm{la}}(\overline{N}_{Q_p,0},L)\widehat{\otimes}_L\cC^{\mathrm{la}}(\widetilde{M}_{Q_p,0},L)\widehat{\otimes}_L \cC^{\mathrm{la}}(N_{Q_p,0},L)\right)^m.
    \end{equation}
    Thus 
    \[\Pi_{\infty}^{\mathrm{an},N_{Q_p,0}}\simeq \left(\cC^{\mathrm{la}}(\overline{N}_{Q_p,0},L)\widehat{\otimes}_L\cC^{\mathrm{la}}(\widetilde{M}_{Q_p,0},L)\right)^m.\]
    By the twisting lemma (\cite[Lem. 2.19]{ding2019some}), we have an isomorphism of $\widetilde{M}_{Q_p,0}$-representations
    \begin{align}\label{twistinglemma}
        \cC^{\mathrm{la}}(\widetilde{M}_{Q_p,0},L)\otimes L_{M_{Q_p}}(\widetilde{\lambda}_J)'|_{\widetilde{M}_{Q_p,0}}&\simrightarrow  \cC^{\mathrm{la}}(\widetilde{M}_{Q_p,0},L)^{m'} \\\nonumber
      f\otimes v&\mapsto (g\mapsto f(g)gv)  
    \end{align}
    for $m'=\dim_L L_{M_{Q_p}}(\widetilde{\lambda}_J)'$. Hence 
    \begin{align*}
        \Pi_{\infty}^{\mathrm{an},N_{Q_p,0}}\otimes L_{M_{Q_p}}(\widetilde{\lambda}_J)' &\simeq \left(\cC^{\mathrm{la}}(\overline{N}_{Q_p,0},L)\widehat{\otimes}_L\cC^{\mathrm{la}}(\widetilde{M}_{Q_p,0},L)\otimes_L L_{M_{Q_p}}(\widetilde{\lambda}_J)'\right)^m\\
        &\simeq \left(\cC^{\mathrm{la}}(\overline{N}_{Q_p,0},L)\widehat{\otimes}_L\left(\cC^{\mathrm{la}}(\widetilde{M}_{Q_p,0},L)\right)^{m'}\right)^m\\
        &\simeq \left(\cC^{\mathrm{la}}(\overline{N}_{Q_p,0},L)\widehat{\otimes}_L\cC^{\mathrm{la}}(\widetilde{Z}_{M_{Q_p},0},L)\widehat{\otimes}_L\cC^{\mathrm{la}}(M_{Q_p,0}',L) \right)^r,
    \end{align*}
    where $r=mm'$. 
    Then 
    \[ \Pi_{\infty}^{\mathrm{an},N_{Q_p,0}}\otimes L_{M_{Q_p}}(\widetilde{\lambda}_J)'\simeq \varinjlim_{h} \left(\cC^{(h)}(\overline{N}_{Q_p,0},L)\widehat{\otimes}_L\cC^{(h)}(\widetilde{Z}_{M_{Q_p},0},L)\widehat{\otimes}_L\cC^{(h)}(M_{Q_p,0}',L) \right)^r.\]
    Hence by \cite[Prop. 1.1.41]{emerton2017locally}, we get
    \begin{align*}
        &\left(\Pi_{\infty}^{\mathrm{an},N_{Q_p,0}}\otimes L_{M_{Q_p}}(\widetilde{\lambda}_J)'\right)^{\Sigma_p\setminus J-\mathrm{an}}\\
        =&\varinjlim_{h} \left(\cC^{(h)}(\overline{N}_{Q_p,0},L)\widehat{\otimes}_L\cC^{(h)}(\widetilde{Z}_{M_{Q_p},0},L)\widehat{\otimes}_L\cC^{\Sigma_p\setminus J-\mathrm{an}, (h)}(M_{Q_p,0}',L) \right)^r\\
        =&\varinjlim_{h} \cC^{(h)}(\overline{N}_{Q_p,0},L)^r\widehat{\otimes}_L\cC^{(h)}(\widetilde{Z}_{M_{Q_p},0},L)\widehat{\otimes}_L\cC^{\Sigma_p\setminus J-\mathrm{an}, (h)}(\overline{U}_{Q_p,0},L) \\
        &\qquad\widehat{\otimes}_L\cC^{\Sigma_p\setminus J-\mathrm{an}, (h)}(T_{Q_p,0}',L) \widehat{\otimes}_L\cC^{\Sigma_p\setminus J-\mathrm{an}, (h)}(U_{Q_p,0},L). 
    \end{align*}
    Thus 
    \begin{align*}
    &\left(\Pi_{\infty}^{\mathrm{an},N_{Q_p,0}}\otimes L_{M_{Q_p}}(\widetilde{\lambda}_J)'\right)^{\Sigma_p\setminus J-\mathrm{an},U_{Q_p,0}} \\
    &=\varinjlim_{h} \cC^{(h)}(\overline{N}_{Q_p,0},L)^r\widehat{\otimes}_L\cC^{(h)}(\widetilde{Z}_{M_{Q_p},0},L)\widehat{\otimes}_L\cC^{\Sigma_p\setminus J-\mathrm{an}, (h)}(\overline{U}_{Q_p,0},L) \widehat{\otimes}_L\cC^{\Sigma_p\setminus J-\mathrm{an}, (h)}(T_{Q_p,0}',L).
    \end{align*}
    We let $W_h=\left(\cC^{(h)}(\overline{N}_{Q_p,0},L)^r\widehat{\otimes}_L\cC^{\Sigma_p\setminus J-\mathrm{an}, (h)}(\overline{U}_{Q_p,0},L) \right)'$. By \cite[Prop. 1.1.22, Prop. 1.1.32]{emerton2017locally}, we get
    \begin{align}
        \left(\left(\Pi_{\infty}^{\mathrm{an},N_{Q_p,0}}\otimes L_{M_{Q_p}}(\widetilde{\lambda}_J)'\right)^{\Sigma_p\setminus J-\mathrm{an},U_{Q_p,0}}\right)'&=\varprojlim_{h} W_h\widehat{\otimes}_L\left(\cC^{(h)}(\widetilde{Z}_{M_{Q_p},0},L)\widehat{\otimes}_L\cC^{\Sigma_p\setminus J-\mathrm{an}, (h)}(T_{Q_p,0}',L)\right)'\nonumber\\
        &=\varprojlim_{h} W_h\widehat{\otimes}_LD_{<p^{-r_h}}(\widetilde{Z}_{M_{Q_p},0},L)\widehat{\otimes}_LD_{<p^{-r_h}}^{\Sigma_p\setminus J-\mathrm{an}}(T_{Q_p,0}',L).
    \end{align}
    Set $B_h=D_{p^{-r_h}}(\widetilde{Z}_{M_{Q_p},0},L)\widehat{\otimes}_L D_{p^{-r_h}}^{\Sigma_p\setminus J-\mathrm{an}}(T'_{Q_p,0},L)$ and $V_h:=W_h\widehat{\otimes}_LB_h$ which is an orthonormalizable Banach $B_h$-module by definition \cite[\S2]{buzzard2007eigenvarieties}. Then 
    \begin{align*}
        V_h=&\left(\left(\cC^{(h)}(\overline{N}_{Q_p,0},L)\widehat{\otimes}_L\cC^{\Sigma_p\setminus J-\mathrm{an}, (h)}(\overline{U}_{Q_p,0},L) \right)^r
        \widehat{\otimes}_L\cC^{(h+1)}(\widetilde{Z}_{M_{Q_p},0},L)\widehat{\otimes}_L\cC^{\Sigma_p\setminus J-\mathrm{an}, (h+1)}(T_{Q_p,0}',L)\right)'\\
        &\widehat{\otimes}_{D_{<p^{-r_{h+1}}}(\widetilde{Z}_{M_{Q_p},0},L)\widehat{\otimes}_LD_{<p^{-r_{h+1}}}^{\Sigma_p\setminus J-\mathrm{an}}(T_{Q_p,0}',L)}B_h.
    \end{align*}
    As the map 
    \[ W_{h+1}\widehat{\otimes}_LD_{<p^{-r_{h+1}}}(\widetilde{Z}_{M_{Q_p},0},L)\widehat{\otimes}_LD_{<p^{-r_{h+1}}}^{\Sigma_p\setminus J-\mathrm{an}}(T_{Q_p,0}',L)\rightarrow W_h\widehat{\otimes}_LD_{<p^{-r_h}}(\widetilde{Z}_{M_{Q_p},0},L)\widehat{\otimes}_LD_{<p^{-r_{h}}}^{\Sigma_p\setminus J-\mathrm{an}}(T_{Q_p,0}',L)\] 
    factors through $V_h$, we conclude that $\left(\left(\Pi_{\infty}^{\mathrm{an},N_{Q_p,0}}\otimes L_{M_{Q_p}}(\widetilde{\lambda}_J)'\right)^{\Sigma_p\setminus J-\mathrm{an},U_{Q_p,0}}\right)'=\varprojlim_{h}V_h$.\par
    We now track the action of the element $z$. The action of $z$ sends $(\Pi_{\infty})^{(h)}_{\widetilde{H}_p}$ to $(\Pi_{\infty})^{(h)}_{z\widetilde{H}_pz^{-1}}$ where $(\Pi_{\infty})^{(h)}_{\widetilde{H}_p}$ is the subspace of $\Pi_{\infty}^{\mathrm{an}}$ defined in \cite[IV.D]{colmez2014completes}. By \cite[Lem. 5.2 \& proof of Lem. 5.3]{breuil2017smoothness}, the action of $z$ on $\Pi_{\infty}^{\mathrm{an}}$ sends $(\Pi_{\infty})^{(h)}_{\widetilde{H}_p}$ to 
    \[\cC^{(h-1)}(\overline{N}_{Q_p,0},L)^m\widehat{\otimes}_L \cC^{(h-1)}(\overline{U}_{Q_p,0},L)\widehat{\otimes}_L\cC^{(h)}(\widetilde{Z}_{M_{Q_p},0},L) \widehat{\otimes}_L\cC^{(h)}(T_{Q_p,0}',L) \widehat{\otimes}_L\cC^{\mathrm{la}}(U_{Q_p,0},L)\otimes \cC^{\mathrm{la}}(N_{Q_p,0},L)\]
    in term of the isomorphism (\ref{equationCanPiinfty}).\par
    We assume that an homeomorphism $\Z_p^u\simeq M_{Q_p,0}$ is chosen so that the matrix coefficients of elements in $M_{Q_p,0}$ are overconvergent analytic functions on $\Z_p^u$ (for example, we can choose coordinates of $U_{Q_p,0}$ and $\overline{U}_{Q_p, 0}$ as a product of root groups where each is identified with some $\cO_{F_{\widetilde{v}}}$ and coordinates of $T_{p,0}$ as products of some $1+\varpi_{F_{\widetilde{v}}}^t\cO_{F_{\widetilde{v}}}$ with $t$ large enough. Then the matrix coefficients are in the ring generated by polynomials and functions of the form $\exp(\varpi_{F_{\widetilde{v}}}^tx),x\in \cO_{F_{\widetilde{v}}}$). Since the action of $g$ and $g^{-1}$ on $L_{M_{Q_p}}(\widetilde{\lambda}_J)'$, for $g\in M_{Q_p,0}$, are given by polynomial functions on the matrix coefficients of $g,g^{-1}$ compositing with embeddings in $\Sigma_p$, the matrix coefficients of the twisting isomorphism (\ref{twistinglemma}) in the basis given by one of that of $L_{M_{Q_p}}(\widetilde{\lambda}_J)'$ are overconvergent analytic functions on $\Z_p^u$. By a similar argument as in Lemma \ref{lemmahanalyticfunctions} below, for any $h$ large enough (which we may assume from now on), there is an isomorphism 
    \[\left((\Pi_{\infty})^{(h)}_{\widetilde{H}_p}\right)^{N_{Q_p,0}}\otimes L_{M_{Q_p}}(\widetilde{\lambda}_J)' \simeq \cC^{(h)}(\overline{N}_{Q_p,0},L)^r\widehat{\otimes}_L\cC^{(h)}(\widetilde{Z}_{M_{Q_p},0},L)\widehat{\otimes}_L\cC^{(h)}(M_{Q_p,0}',L)\]
    and similarly the isomorphism (\ref{twistinglemma}) sends 
    \[\cC^{(h-1)}(\overline{N}_{Q_p,0},L)^m\widehat{\otimes}_L \cC^{(h-1)}(\overline{U}_{Q_p,0},L)\widehat{\otimes}_L\cC^{(h)}(\widetilde{Z}_{M_{Q_p},0},L) \widehat{\otimes}_L\cC^{(h)}(T_{Q_p,0}',L)\widehat{\otimes}_L \cC^{\mathrm{la}}(U_{Q_p,0},L)\widehat{\otimes} L_{M_{Q_p}}(\widetilde{\lambda}_J)'\] 
    to 
    \[\cC^{(h-1)}(\overline{N}_{Q_p,0},L)^r\widehat{\otimes}_L \cC^{(h-1)}(\overline{U}_{Q_p,0},L)\widehat{\otimes}_L\cC^{(h)}(\widetilde{Z}_{M_{Q_p},0},L) \widehat{\otimes}_L\cC^{(h)}(T_{Q_p,0}',L)\widehat{\otimes }\cC^{\mathrm{la}}(U_{Q_p,0},L).\]
    Hence the action of $z$ on $\left((\Pi_{\infty})^{(h)}_{\widetilde{H}_p}\right)^{N_{Q_p,0}}\otimes L_{M_{Q_p}}(\widetilde{\lambda}_J)'$ sends, after (\ref{twistinglemma}),
    \[\cC^{(h)}(\overline{N}_{Q_p,0},L)^r\widehat{\otimes}_L\cC^{(h)}(\widetilde{Z}_{M_{Q_p},0},L)\widehat{\otimes}_L\cC^{(h)}(M_{Q_p,0}',L)\] 
    to 
    \[\cC^{(h-1)}(\overline{N}_{Q_p,0},L)^r\widehat{\otimes}_L \cC^{(h-1)}(\overline{U}_{Q_p,0},L)\widehat{\otimes}_L\cC^{(h)}(\widetilde{Z}_{M_{Q_p},0},L) \widehat{\otimes}_L\cC^{(h)}(T_{Q_p,0}',L)\widehat{\otimes }\cC^{\mathrm{la}}(U_{Q_p,0},L).\] 
    Finally, we conclude that the Hecke action of $z$, denoted by $\pi_z$, on $\left(\Pi_{\infty}^{\mathrm{an},N_{Q_p,0}}\otimes L_{M_{Q_p}}(\widetilde{\lambda}_J)'\right)^{\Sigma_p\setminus J-\mathrm{an},U_{Q_p,0}}$ induces a map sending 
    \[\left(\cC^{(h)}(\overline{N}_{Q_p,0},L)\widehat{\otimes}_L\cC^{\Sigma_p\setminus J-\mathrm{an}, (h)}(\overline{U}_{Q_p,0},L) \right)^r\widehat{\otimes}_L\cC^{(h)}(\widetilde{Z}_{M_{Q_p},0},L)\widehat{\otimes}_L\cC^{\Sigma_p\setminus J-\mathrm{an}, (h)}(T_{Q_p,0}',L)\]
    to
    \[\left(\cC^{(h-1)}(\overline{N}_{Q_p,0},L)\widehat{\otimes}_L\cC^{\Sigma_p\setminus J-\mathrm{an}, (h-1)}(\overline{U}_{Q_p,0},L)\right)^r\widehat{\otimes}_L\cC^{(h)}(\widetilde{Z}_{M_{Q_p},0},L) \widehat{\otimes}_L\cC^{\Sigma_p\setminus J-\mathrm{an}, (h)}(T_{Q_p,0}',L).\]
    Taking the dual of the above map we see that $\pi_z$ induces a morphism $\alpha_{h-1}: V_{h-1}\rightarrow V_{h}\widehat{\otimes}_{B_{h}}B_{h-1}$. We also have a morphism $\beta_h=\beta_h'\otimes 1_{B_h}:V_{h+1}\widehat{\otimes}_{B_{h+1}}B_h\rightarrow V_h$ where $\beta_h'$ is induced by the dual of the compact map
    \[\cC^{(h)}(\overline{N}_{Q_p,0},L)^r\widehat{\otimes}_L\cC^{\Sigma_p\setminus J-\mathrm{an}, (h)}(\overline{U}_{Q_p,0},L)\rightarrow \cC^{(h+1)}(\overline{N}_{Q_p,0},L)^r\widehat{\otimes}_L\cC^{\Sigma_p\setminus J-\mathrm{an}, (h+1)}(\overline{U}_{Q_p,0},L).\]
    Hence $\beta_h$ is a compact map of Banach $B_h$-modules. We put $z_h=\beta_h\circ \alpha_h$. Then $z_{h+1}\otimes 1_{B_h}=\alpha_h\circ \beta_h$. 
\end{proof}
\begin{lemma}\label{lemmahanalyticfunctions}
    Assume that $g$ is an overconvergent analytic function over $\Z_p$. Then there exists $C$ such that for any $h>C$ and $f\in\cC^{(h)}(\Z_p,L)$, we have $gf\in \cC^{(h)}(\Z_p,L)$. 
\end{lemma}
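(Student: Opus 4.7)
The plan is to reduce the claim to the fact that $\cC^{(h)}(\Z_p,L)$ is closed under pointwise multiplication, once $g$ itself is shown to lie in $\cC^{(h)}(\Z_p,L)$ with an appropriate uniform norm bound. By overconvergence, $g(x)=\sum_{n\ge 0}a_n x^n$ converges on some closed disk of radius $\rho>1$, so $|a_n|\rho^n$ is bounded, say by some constant $M$.

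First, I would expand $g$ on each closed ball $a+p^h\Z_p$ (for $a\in\{0,1,\ldots,p^h-1\}$) by the binomial formula
\[
g(a+p^h y)=\sum_{k\ge 0}\Bigl(\sum_{n\ge k}a_n\binom{n}{k}a^{n-k}\Bigr)p^{hk}y^k.
\]
Using $|a|\le 1$ and $|a_n|\le M\rho^{-n}$, the coefficient of $y^k$ has $p$-adic absolute value at most $Mp^{-hk}\rho^{-k}$, uniformly in $a\in\Z_p$. This displays $g|_{a+p^h\Z_p}$ as a rigid analytic function of $y\in\Z_p$ with coefficient decay compatible with the Banach norm on $\cC^{(h)}(\Z_p,L)$ of \cite[Def. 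IV.1]{colmez2014completes}, provided $h$ is larger than a constant $C$ depending only on $\rho$ (and the weighting factors $r_h$ in the definition of $\cC^{(h)}$).

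Next, given $f\in\cC^{(h)}(\Z_p,L)$ with local expansions $f(a+p^h y)=\sum_k c_k(a)y^k$, I would form the product expansion $(gf)(a+p^h y)=\sum_k\bigl(\sum_{i+j=k}b_i(a)c_j(a)\bigr)y^k$, where $b_i(a)$ are the coefficients of $g$ above, and apply the ultrametric Cauchy product estimate to control the coefficients of $gf$ by those of $f$ and $g$ uniformly in $a$. This yields $gf\in\cC^{(h)}(\Z_p,L)$.

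The only real obstacle is the bookkeeping matching the decay obtained for $g$ against the precise weights $r_h=1/(p^{h-1}(p-1))$ appearing in the Colmez norm on $\cC^{(h)}$; this is where the overconvergence of $g$ is used in an essential way and forces the lower bound $h>C$, with $C$ depending only on the overconvergence radius $\rho$.
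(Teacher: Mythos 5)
Your route is genuinely different from the paper's. The paper works entirely in the Mahler-coefficient picture that the cited Colmez definition actually uses: it computes directly that $v^{(h)}(x\cdot f)\geq v^{(h)}(f)-r_h$ by massaging $x\binom{x}{n} = (n+1)\binom{x}{n+1}+n\binom{x}{n}$, iterates to control $x^nf$, and then sums the overconvergent series $g=\sum b_nx^n$ against $f$, with $C$ falling out of the requirement $v_p(b_n)-r_hn\to+\infty$, which holds once $r_h$ is below the slope $\log_p\rho$ given by overconvergence. Your proposal instead decomposes the claim as ``$g\in\cC^{(h)}$ for $h$ large'' plus ``$\cC^{(h)}$ is a Banach algebra,'' and argues both steps in the local-analytic picture (expansion of $g$ and $f$ on each residue ball $a+p^h\Z_p$, Cauchy product of local Taylor series). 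That decomposition is in fact correct in principle: $\cC^{(h)}$ in the Mahler description is genuinely a Banach algebra (one can see this from $\binom{x}{n}\binom{x}{m}=\sum_{\max(n,m)\le k\le n+m}\lambda_{n,m,k}\binom{x}{k}$ with $\lambda_{n,m,k}\in\Z$ and the ultrametric inequality), and $g$ overconvergent does lie in $\cC^{(h)}$ for $h$ large.

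The gap is precisely the step you flag as ``bookkeeping,'' and it is not cosmetic. The Colmez space $\cC^{(h)}(\Z_p,L)$ is defined by a growth condition on Mahler coefficients, while your entire argument lives in the local-Taylor-coefficient picture; passing between the two is Amice's theorem, and the dictionary involves a shift in the radius scale that does not line up exactly with the index $h$. Your binomial-expansion estimate shows that $g$ is analytic on every ball $a+p^h\Z_p$ for \emph{every} $h\geq 0$ (one always gets $|b_k(a)|\le M(p^{-h}\rho^{-1})^k$), so in the local picture no constraint $h>C$ ever appears — the constraint can only emerge in the translation back to Mahler growth, which is exactly what you have not carried out. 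Likewise, you assert closure of $\cC^{(h)}$ under multiplication without proof; it is true, but it is a statement about Mahler coefficients, not about the local Taylor expansions you work with, and proving it requires the same translation. The paper's proof sidesteps all of this by never leaving the Mahler picture, which is why it is shorter and more robust. Your approach can be completed, but you would need to invoke Amice's theorem with the precise scaling and to prove (or cite) the Banach-algebra property of $\cC^{(h)}$ in the Colmez normalization; as written it is a plan with the central technical step still open.
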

\begin{proof}
    By definition (\cite[Def. IV.1]{colmez2014completes}), 
    \[\cC^{(h)}(\Z_p,L)=\left\{\sum_{n\in\N}a_n\binom{x}{n}\mid \lim_{n\to+\infty}\left(v_p(a_n)-r_hn\right)=+\infty\right\}\] 
    is a Banach space with valuation $v^{(h)}\left(\sum_{n\in\N}a_n\binom{x}{n}\right)=\inf_n\left(v_p(a_n)-r_hn\right)$. Let $f=\sum_{n\in\N}a_n\binom{x}{n}\in \cC^{(h)}(\Z_p,L)$. We compute $x\left(\sum_{n\in\N}a_n\binom{x}{n}\right)=\sum_{n\in\N}a_nx\binom{x}{n}=\sum_{n\in\N}a_n\left((n+1)\binom{x}{n+1}+n\binom{x}{n}\right)=\sum_{n\geq 1}n(a_n+a_{n-1})\binom{x}{n}$. We have 
    \[\lim_{n\to \infty}\left(v_p\left(n(a_n+a_{n-1})\right)-r_hn\right)\geq \lim_{n\to+\infty}\min\left\{\left(v_p(a_n)-r_hn\right), \left(v_p(a_{n-1})-r_h(n-1)-r_h\right)\right\}=+\infty\] 
    and 
    \begin{align*}
        v^{(h)}(x\sum_{n\in\N}a_n\binom{x}{n})=&\inf_n\left(v_p\left(n(a_n+a_{n-1}\right)\right)-r_hn)\\
        \geq &\inf_n\min\left\{(v_p(a_n)-r_hn), (v_p(a_{n-1})-r_h(n-1)-r_h)\right\}\\
        \geq &\inf_n (v_p(a_n)-r_hn)-r_h=v^{(h)}(f)-r_h.
    \end{align*}
    We get $x^{n}f$ lies in $\cC^{(h)}(\Z_p,L)$ and $v^{(h)}(x^{n}f)\geq v^{(h)}(f)-r_hn$ for any $n$. Since $g$ is analytic, we can assume $g=\sum_{n\in\N}b_nx^n$. Then $v^{(h)}(b_nx^{n}f)\geq v^{(h)}(f)+(v_p(b_n)-r_hn)$. Since $g$ is overconvergent and $r_h\to 0$ if $h\to \infty$, for $h$ large enough, we have $\lim_{n\to+\infty}  v^{(h)}(f)+(v_p(b_n)-r_hn)=+\infty$. Hence $gf=\sum_{n\in\N}b_nx^nf$ converges in the Banach space $\cC^{(h)}(\Z_p,L)$.
\end{proof}
We denote by $\mathcal{W}_{\lambda_J'}:=\Spf(S_{\infty})^{\mathrm{rig}}\times \widehat{Z}_{M_{Q_p},0,L}\times (\widehat{T}_{Q_p,0,L}')_{\lambda_J'}$ where $(\widehat{T}_{Q_p,0,L}')_{\lambda_J'}$ is the fiber of $\widehat{T}_{Q_p,0,L}'$ over $\lambda_J'$ via the map $\widehat{T}_{Q_p,0,L}'\stackrel{\mathrm{wt}}{\rightarrow}(\ft')^*\stackrel{\mathrm{res}}{\rightarrow} (\ft'_{Q_p,J})^*$. There is an isomorphism $\mathcal{W}_0\rightarrow \mathcal{W}_{\lambda_J'}:x\mapsto xz^{\widetilde{\lambda}_J}$ where $z^{\widetilde{\lambda}_J}$ is the character of $T_{p,0}$ on $L_{M_{Q_p}}(\widetilde{\lambda}_J)^{U_{Q_p,0}}$ in Lemma \ref{lemmatwistedJacquetmodule}. The restriction of characters from $T_p$ to $T_{p,0}$ induces a morphism $\omega_{X(\lambda_J')}:X_{p}(\overline{\rho})(\lambda_J')\rightarrow \mathcal{W}_{\lambda_J'}$. The eigenvariety machinery in \cite{buzzard2007eigenvarieties} and \cite{chenevier2004familles} leads to the following basic result on the partial eigenvariety.
\begin{proposition}\label{propositionequidimensional}
    The partial eigenvariety $X_{p}(\overline{\rho})(\lambda_J')$ is equidimensional and for any point in $X_{p}(\overline{\rho})(\lambda_J')$, there exists an open affinoid neighborhood $U$ such that there exists an affinoid open subset $W$ of $\mathcal{W}_{\lambda_J'}$ satisfying that the restriction of $\omega_{X(\lambda_J')}$ to any irreducible component of $U$ is finite and surjective onto $W$. Moreover, the image of an irreducible component of $X_{p}(\overline{\rho})(\lambda_J')$ is a Zariski open subset of $\mathcal{W}_{\lambda_J'}$.
\end{proposition}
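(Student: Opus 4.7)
The plan is to deduce Proposition~\ref{propositionequidimensional} from the standard eigenvariety machinery of Buzzard \cite{buzzard2007eigenvarieties} and Chenevier, applied to the Fredholm-module data produced by Lemma~\ref{lemmaorthonormalmodules}. The translation from $\mathcal{W}_0$ to $\mathcal{W}_{\lambda_J'}$ is harmless: the isomorphism $\mathcal{W}_0 \simrightarrow \mathcal{W}_{\lambda_J'}$, $x\mapsto xz^{\widetilde{\lambda}_J}$, corresponds exactly to the twist by $\widetilde{\lambda}_J$ appearing in Lemma~\ref{lemmatwistedJacquetmodule}, so after identifying these two spaces I can carry out everything over the admissible affinoid cover $\Sp(B_1)\subset \Sp(B_2)\subset\cdots$ of $\mathcal{W}_0$ supplied by Lemma~\ref{lemmaorthonormalmodules}.

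First I would assemble the input for the spectral variety. By Lemma~\ref{lemmaorthonormalmodules}, for each $h$ the operator $z_h$ on the orthonormalizable Banach $B_h$-module $V_h$ is compact, so the Fredholm determinant $F_h(T):=\det(1-Tz_h\mid V_h)\in B_h\{\{T\}\}$ is defined, and the compatibilities $z_h=\beta_h\circ\alpha_h$, $\alpha_h\circ\beta_h=z_{h+1}\otimes 1_{B_h}$ imply $F_{h+1}|_{B_h}=F_h$, so the $F_h$ glue to a global Fredholm series $F\in \mathcal{O}(\mathcal{W}_0)\{\{T\}\}$. Its zero locus $\mathcal{Z}\subset \mathcal{W}_0\times \mathbb{A}^{1,\mathrm{rig}}$ is the spectral variety: it is equidimensional of dimension $\dim\mathcal{W}_0$, and admits an admissible cover by affinoids $\mathcal{Z}'$ such that the projection $\mathcal{Z}'\to \mathcal{W}_0$ factors through some $\Sp(B_h)$ and is finite and flat onto an affinoid $\Sp(B_h')\subset \Sp(B_h)$ (see \cite[\S4]{buzzard2007eigenvarieties}). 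Over each such $\mathcal{Z}'$, Riesz theory on $(V_h\widehat\otimes_{B_h}B_h', z_h)$ produces a direct summand $N_{\mathcal{Z}'}$ which is finite projective over $B_h'$ and on which the characteristic polynomial of $z_h$ matches the local factor cut out by $\mathcal{Z}'$; gluing gives a coherent sheaf $\mathcal{N}$ on $\mathcal{Z}$ with commuting action of the Hecke algebra generated by $T_p$ acting on $\varprojlim V_h$ (via $D(\widetilde{Z}_{M_{Q_p},0},L)\widehat\otimes_L D^{\Sigma_p\setminus J\text{-}\mathrm{an}}(T'_{Q_p,0},L)$ and the other Hecke operators on $\Pi_\infty^{\mathrm{an},N_{Q_p,0}}$).

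Second I would identify $X_p(\overline{\rho})(\lambda_J')$ with the relative $\Spec$ of the Hecke image. Combining Lemmas~\ref{lemmatwistedJacquetmodule} and~\ref{lemmafiniteslopeJacquet} with the definition of $X_p(\overline{\rho})(\lambda_J')$ as the support of $J_{B_{Q_p}}(J_{Q_p}(\Pi_\infty^{\mathrm{an}})_{\lambda_J'})'$ shows that this support is exactly the support of the dual of the finite-slope part (with respect to the action of the element $z$) of
$$\bigl(\Pi_\infty^{\mathrm{an},N_{Q_p,0}}\otimes L_{M_{Q_p}}(\widetilde{\lambda}_J)'\bigr)^{\Sigma_p\setminus J\text{-}\mathrm{an},\, U_{Q_p,0}},$$
twisted by $z^{\widetilde{\lambda}_J}$. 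By the standard argument (\cite[\S7]{buzzard2007eigenvarieties}, cf.\ \cite[Prop.~3.11]{breuil2017interpretation}), the support of this finite-slope part equals the closed analytic subvariety of $\mathcal{Z}$ cut out by the kernel of the map from the Hecke algebra to $\End_{\mathcal{O}_{\mathcal{Z}}}(\mathcal{N})$; equivalently, it is the relative $\Spec$ over $\mathcal{Z}$ of this Hecke image, and hence finite over $\mathcal{Z}$. Call this variety $X$; by construction the composite $X\to\mathcal{Z}\to\mathcal{W}_0 \simeq \mathcal{W}_{\lambda_J'}$ is the restriction of $\omega_{X(\lambda_J')}$, and the finiteness of $X\to\mathcal{Z}$ together with the local finite flatness of $\mathcal{Z}\to\mathcal{W}_0$ shows that every point of $X$ has an affinoid neighborhood $U$ whose image in $\mathcal{W}_{\lambda_J'}$ is an affinoid $W$ with $U\to W$ finite.

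Third I would read off the geometric conclusions. Since $X$ is locally finite over the equidimensional (and smooth, strictly quasi-Stein) space $\mathcal{Z}$, $X$ is equidimensional of dimension $\dim\mathcal{W}_0 = \dim\mathcal{W}_{\lambda_J'}$, and the same holds for each irreducible component; the restriction of $\omega_{X(\lambda_J')}$ to any irreducible component of a sufficiently small affinoid $U$ is finite and, by dimension count (a finite map between reduced affinoids of the same dimension is surjective onto its image), surjective onto the corresponding affinoid $W\subset \mathcal{W}_{\lambda_J'}$. Finally, Zariski openness of the image of an irreducible component follows from the local finite flatness: each local map $U\to W$ is finite flat, hence open, and since $\mathcal{W}_{\lambda_J'}$ is irreducible one concludes using \cite[Lem.~6.4.2]{breuil2017interpretation} (or the argument of \cite[\S3]{chenevier2004familles}) that any irreducible component of $X_p(\overline{\rho})(\lambda_J')$ maps onto a Zariski open subset of $\mathcal{W}_{\lambda_J'}$. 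The main technical point to get right will be checking that the Hecke action and the compactness of $z$ on $\bigl(\Pi_\infty^{\mathrm{an},N_{Q_p,0}}\otimes L_{M_{Q_p}}(\widetilde{\lambda}_J)'\bigr)^{\Sigma_p\setminus J\text{-}\mathrm{an},\,U_{Q_p,0}}$ really package into the $(V_h,z_h,\alpha_h,\beta_h)$ data in the form required by the eigenvariety machine, which is exactly the content already supplied by Lemma~\ref{lemmaorthonormalmodules}.
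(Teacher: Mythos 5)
Your proof follows the same route as the paper: both deduce the proposition from the Buzzard--Chenevier eigenvariety machinery, using Lemma~\ref{lemmaorthonormalmodules} to supply the required Fredholm-module data and Lemma~\ref{lemmatwistedJacquetmodule} to twist from $\mathcal{W}_0$ to $\mathcal{W}_{\lambda_J'}$. The paper's proof is terse (it simply points to the analogous argument in \cite[\S3.3]{breuil2017interpretation}); you have written out the intermediate constructions (Fredholm series, spectral variety $\mathcal{Z}$, local Riesz factors $N_{\mathcal{Z}'}$, relative $\mathrm{Spec}$ of the Hecke image), and this is accurate and useful.

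One correction in your third paragraph: the spectral variety $\mathcal{Z}$ is a Fredholm hypersurface in $\mathcal{W}_0\times\mathbb{A}^{1,\mathrm{rig}}$ and is \emph{not} smooth in general, and in any case equidimensionality of $X$ does not follow merely from finiteness over an equidimensional space (a finite map can hit closed subsets of smaller dimension). The correct reason, which your second paragraph already supplies, is that $\mathcal{N}$ is locally finite \emph{projective} over the affinoids $\Sp(B_h')$ of $\mathcal{W}_0$; this forces every irreducible component of a sufficiently small affinoid in $X$ to map finitely and \emph{surjectively} onto an affinoid in $\mathcal{W}_0$ (this is the content of \cite[\S6]{chenevier2004familles} and of \cite[Lem.~3.10, Cor.~3.12]{breuil2017interpretation}), whence the uniform dimension statement and the Zariski openness of images of irreducible components. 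With that substitution the proof is complete and matches the paper's.
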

\begin{proof}
    The result can be proved by a slight modification of the proofs in \cite[\S3.3]{breuil2017interpretation} replacing $J_{B_p}(\Pi_{\infty}^{\mathrm{an}})$ (resp. $\mathcal{W}_{\infty}$) in \textit{loc. cit.} with the module $J_{B_{Q_p}}\left(\Hom_{U(\fm_{Q_p,J}')}\left(L_{M_{Q_p}}(\widetilde{\lambda}_J), J_{Q_p}(\Pi_{\infty}^{\mathrm{an}})\right)\right)$ (resp. $\mathcal{W}_{0}$) using Lemma \ref{lemmafiniteslopeJacquet} and Lemma \ref{lemmaorthonormalmodules} and then applying Lemma \ref{lemmatwistedJacquetmodule} to obtain the results for $X_{p}(\overline{\rho})(\lambda_J')$ and $\mathcal{W}_{\lambda_J'}$.
\end{proof}
\subsection{Density of classical points}\label{sectiondesityofclassicalpoints}
We prove the density of de Rham points on the partial eigenvariety which will be the input for the application of the partial eigenvariety in next subsection.\par
Suppose $x=(y,\underline{\delta})\in X_{p}(\overline{\rho})\subset \mathrm{Spf}(R_{\infty})^{\mathrm{rig}}\times \widehat{T}_{p,L}$ is a point such that $\lambda=\mathrm{wt}(\underline{\delta})$ is in $(\Z^n)^{\Sigma_p}$ and is dominant with respect to $\fb$. The $U(\fg)$-module $U(\fg)\otimes_{U(\fb)}\lambda $ (and its quotient $U(\fg)\otimes_{U(\fq)}L_J(\lambda)$ in \S\ref{sectionadjunctionformula}) admits a unique irreducible quotient $L(\lambda)$, and hence 
\[\cF_{\overline{B}_p}^{G_p}\left(\Hom\left(U(\fg)\otimes_{U(\fq)}L_J(\lambda),L\right)^{\overline{\mathfrak{u}}^{\infty}},\underline{\delta}_{\mathrm{sm}}\delta_{B_p}^{-1}\right),\] 
as well as $\cF_{\overline{B}_p}^{G_p}(\underline{\delta})$, admits a locally $\Q_p$-algebraic quotient $\cF_{\overline{B}_p}^{G_p}\left(L(\lambda)',\underline{\delta}_{\mathrm{sm}}\delta_{B_p}^{-1}\right)$ which is isomorphic to $\cF_{G_p}^{G_p}\left(L(\lambda)',(\mathrm{Ind}_{\overline{B}_p}^{G_p}\underline{\delta}_{\mathrm{sm}}\delta_{B_p}^{-1})^{\mathrm{sm}}\right)\simeq L(\lambda)\otimes (\mathrm{Ind}_{\overline{B}_p}^{G_p}\underline{\delta}_{\mathrm{sm}}\delta_{B_p}^{-1})^{\mathrm{sm}}$ (cf. \cite[\S3.5]{breuil2017interpretation}). We say that $x$ is a \emph{classical point} if 
\[\Hom_{G_p}\left(L(\lambda)\otimes (\mathrm{Ind}_{\overline{B}_p}^{G_p}\underline{\delta}_{\mathrm{sm}}\delta_{B_p}^{-1})^{\mathrm{sm}},\Pi_{\infty}[\fm_y]^{\mathrm{an}}\otimes_{k(y)}k(x)\right)\neq 0.\]
\begin{remark}
    Our definition of classical points differs from \cite[Def. 3.15]{breuil2017interpretation} because we will consider points that are crystabelline rather than only crystalline. 
\end{remark}
\begin{proposition}\label{propositiondenseclassicalpoints}
    The subset of classical points in $X_{p}(\overline{\rho})(\lambda_J')$ is Zariski dense. Moreover, for any point $x=(y,\underline{\delta})\in X_{p}(\overline{\rho})(\lambda_J')$ such that $\underline{\delta}$ is locally algebraic and any irreducible component $X$ of $X_{p}(\overline{\rho})(\lambda_J')$ such that $x\in X$, there is an affinoid open subset $U$ of $X$ containing $x$ such that the subset of classical points is Zariski-dense in $U$.  
\end{proposition}
\begin{proof}
    The proof follows that of \cite[Thm. 3.19]{breuil2017interpretation} and \cite[Thm. 3.12]{ding2019some}. For each $v\in S_p, 1\leq i\leq n$, we let $\gamma_{\widetilde{v},i}$ be the element $\mathrm{diag}(\underbrace{\varpi_{\widetilde{v}},\cdots,\varpi_{\widetilde{v}}}_{i},1,\cdots,1)\in\GL_{n}(F_{\widetilde{v}})$ where $\varpi_{\widetilde{v}}$ is a fixed uniformizer of $F_{\widetilde{v}}$.\par
    By Proposition \ref{propositionequidimensional}, we can pick a covering by open affinoids of any irreducible component of $X_p(\overline{\rho})(\lambda_J')$ such that $\omega_{X(\lambda_J')}$ sends such open affinoids surjectively and finitely onto open affinoids of $\mathcal{W}_{\lambda_J'}$. For any $x=(y,\underline{\delta})\in X_{p}(\overline{\rho})(\lambda_J')$ with $\underline{\delta}$ locally algebraic, we pick a such affinoids $U\subset X_p(\overline{\rho})(\lambda_J')$ and let $W=\omega_{X(\lambda_J')}(U)$. We prove that the subset of classical points is Zariski dense in $U$.\par
    Since $U$ is affinoid, the rigid analytic functions $(y,\underline{\delta})\mapsto \underline{\delta}_v\delta_{B_{v}}^{-1}(\gamma_{\widetilde{v},i})$ are bounded on $U$ and thus there exists a constant $C>0$ such that $C\leq |\underline{\delta}_v\delta_{B_{v}}^{-1}(\gamma_{\widetilde{v},i})|_p$ over $U$ for any $v\in S_p,i=1,\cdots,n$ where $|\cdot|_p$ is the $p$-adic absolute value such that $|p|_p=\frac{1}{p}$.\par
    If $\underline{\delta}\in (\widehat{T}_{Q_p,0,L})_{\lambda_J'}=\widehat{Z}_{M_{Q_p},0,L}\times (\widehat{T}_{Q_p,0,L}')_{\lambda_J'}$ and $\lambda=\mathrm{wt}(\underline{\delta})$, then for any $v\in S_p,\tau\in J_v$, we have $\lambda_{\tau,\widetilde{q}_{v,i}+j}-\lambda_{\tau, \widetilde{q}_{v,i}+j+1}=\widetilde{\lambda}_{\tau,\widetilde{q}_{v,i}+j}-\widetilde{\lambda}_{\tau,\widetilde{q}_{v,i}+j+1}\geq 0$ for $1\leq j \leq q_{v,i+1}-1$ and $0\leq i\leq t_v-1$. We pick a constant $C'>0$ such that $C |\tau(\varpi_{\widetilde{v}})|_p^{-1-C'}>1$ for any $v\in S_p, \tau\in \Sigma_v $. Then the set of points in $(\widehat{T}_{Q_p,0,L})_{\lambda_J'}$ with integral dominant weights (i.e. $\lambda\in (\Z^n)^{\Sigma_p}$ and $\lambda_{\tau,i}\geq \lambda_{\tau,j}, \forall \tau\in\Sigma_p,i\leq j$) satisfying the following conditions
    \begin{align}\label{equationconditionclassical1}
        &\lambda_{\tau,i}-\lambda_{\tau,i+1}> C', \forall i\in\left\{1,\cdots, n-1\right\} \quad \qquad\text{if }v\in S_p, \tau\notin J_v\\
        &\lambda_{\tau,i}-\lambda_{\tau,i+1}> C', \forall i\in \left\{\widetilde{q}_{v,1},\cdots,\widetilde{q}_{v,t_v-1}\right\} \quad \qquad\text{if }v\in S_p, \tau\in J_v\label{equationconditionclassical2}
    \end{align}
    accumulates (\cite[Def. 2.2]{breuil2017interpretation}) at locally algebraic characters in $(\widehat{T}_{Q_p,0,L})_{\lambda_J'}$. \par
    Let $Z$ be the subset of $W$ consisting of points such that the images in $(\widehat{T}_{Q_p,0,L})_{\lambda_J'}$ via the map $W\subset \mathcal{W}_{\lambda_J'}=\Spf(S_{\infty})^{\mathrm{rig}}\times (\widehat{T}_{Q_p,0,L})_{\lambda_J'}\rightarrow (\widehat{T}_{Q_p,0,L})_{\lambda_J'}$ are locally algebraic and satisfy (\ref{equationconditionclassical1}) and (\ref{equationconditionclassical2}). Note that the last map is smooth, hence open. Since $U$ contains a point with locally algebraic character, so is $W$. Hence $Z$ is Zariski dense in $W$.\par
    We claim that the set of dominant points in $U$ satisfying the conditions (\ref{equationconditionclassical1}) and (\ref{equationconditionclassical2}) (i.e. $\omega_{X(\lambda_J')}^{-1}(Z)$) is Zariski dense in $U$. Otherwise, by the irreducibility of $U$, the Zariski closure of $\omega_{X(\lambda_J')}^{-1}(Z)$ in $U$ has dimension strictly less than that of $U$. But the image of the closure of $\omega_{X(\lambda_J')}^{-1}(Z)$ in $W$ is a closed subset containing $Z$, which must equal to $W$ and hence shares the same dimension with $U$. This contradicts the assertion on the dimension of the Zariski closure of $\omega_{X(\lambda_J')}^{-1}(Z)$. Hence the claim holds (this is the argument in the proof of \cite[Thm. 3.19]{breuil2017interpretation}). \par
    We will show that any point in $\omega_{X(\lambda_J')}^{-1}(Z)$ is classical which allows us to conclude that the set of classical points is Zariski-dense in $U$. The Zariski density of the classical points in the whole $X_{p}(\overline{\rho})(\lambda_J')$ then follows from the fact that the set of locally algebraic characters is Zariski dense in $(\widehat{T}_{Q_p,0,L})_{\lambda_J'}$ and the last assertion in Proposition \ref{propositionequidimensional}.\par
    Now we assume that $x=(y,\underline{\delta})\in U\subset X_{p}(\overline{\rho})(\lambda_J')\subset \mathrm{Spf}(R_{\infty})^{\mathrm{rig}}\times \widehat{T}_{p,L}$ is a point such that the weight $\lambda$ of $\underline{\delta}$ is integral dominant and satisfies (\ref{equationconditionclassical1}) and (\ref{equationconditionclassical2}). We prove that $x$ is a classical point. \par
    Remark that the conditions (\ref{equationconditionclassical1}) and (\ref{equationconditionclassical2}) are some ``small slope'' conditions and the proof of the classicality will be essentially the same with the usual case (i.e., for points on $X_p(\overline{\rho})$ as in \cite[Thm. 3.19]{breuil2017interpretation}). However, we cannot directly cite the proof of \cite[Thm. 3.19]{breuil2017interpretation}. This is because that under the restriction of the weights on the partial eigenvariety $X_{p}(\overline{\rho})(\lambda_J')$, the points that satisfy the full ``small slope'' condition as in \cite[(3.11)]{breuil2017interpretation} can not be Zariski dense. The condition (\ref{equationconditionclassical1}) and (\ref{equationconditionclassical2}) here is only some \emph{weaker} ``small slope'' condition. To prove the classicality, one will need furthermore to use the fact that $x$ is ``partially classical'', i.e., $x$ lies in the partial eigenvariety $X_{p}(\overline{\rho})(\lambda_J')$. Ding has already proved such results in special cases in \cite{ding2017formes} and \cite{ding2019some}. Since a direct reference is not available for our situation, we would like to write down the details of the proof below.\par
    Without loss of generality, we assume that the residue field of $x$ is $L$. By Proposition \ref{propositionpointspartialeigenvariety}, there is a non-zero morphism 
    \[\cF_{\overline{B}_p}^{G_p}\left(\Hom\left(U(\fg)\otimes_{U(\fq)}L_J(\lambda),L\right)^{\overline{\mathfrak{u}}^{\infty}}, \underline{\delta}_{\mathrm{sm}}\delta_{B_p}^{-1}\right)\hookrightarrow\Pi_{\infty}[\fm_y]^{\mathrm{an}}.\]
    By definition (see \S\ref{sectionadjunctionformula}),
    \[U(\fg)\otimes_{U(\fq)}L_J(\lambda)=\left(\otimes_{v\in S_v, \tau\in \Sigma_v\setminus J_v} U(\fg_{\tau})\otimes_{U(\fb_{\tau})} \lambda_{\tau} \right)\otimes \left(\otimes_{v\in S_v, \tau\in J_v} U(\fg_{\tau})\otimes_{U(\fq_{\tau})} L_{\fm_{Q_v,\tau}}(\lambda_{\tau})\right).\] 
    Hence the irreducible subquotients of $U(\fg)\otimes_{U(\fq)}L_J(\lambda)$ are $L(ww_0\cdot \lambda)=\otimes_{\tau\in\Sigma_p}L(w_{\tau}w_{\tau,0}\cdot\lambda_{\tau})$ where $w=(w_{\tau})_{\tau\in\Sigma_p}, w_0=(w_{\tau,0})_{\tau\in \Sigma_p}$ such that if $\tau\in J_v$, then $L(w_{\tau}w_{\tau,0}\cdot\lambda_{\tau})$ is a subquotient of $U(\fg_{\tau})\otimes_{U(\fq_{\tau})} L_{\fm_{Q_v,\tau}}(\lambda_{\tau})$. In particular, the weight $w_{\tau}w_{\tau,0}\cdot\lambda_{\tau}$ is $\fm_{Q_v,\tau}$-dominant (with respect to $\fb_{Q_v,\tau}$) by \cite[Prop. 9.3(e)]{humphreys2008representations}. We fix one such $w$ as above and assume $\Hom_{G_p}\left(\cF_{\overline{B}_p}^{G_p}\left(\overline{L}(-ww_0\cdot\lambda), \underline{\delta}_{\mathrm{sm}}\delta_{B_p}^{-1}\right),\Pi_{\infty}[\fm_y]^{\mathrm{an}}\right)\neq 0$. We need prove $w=w_0$.\par 
    For each $v\in S_p$, let $P_{w,v}=M_{P_{w,v}}N_{P_{w,v}}$ be the standard parabolic subgroup of upper-triangular block matrices in $\GL_{n/F_{\widetilde{v}}}$ with the standard Levi decomposition such that $P_{w,v}$ is maximal for $w_vw_{v,0}\cdot\lambda$ (i.e the opposite $\overline{P}_{w,v}$ is the maximal parabolic subgroup such that $\overline{L}(-w_vw_{v,0}\cdot\lambda_v)\in\cO^{\overline{\fp}_{w,v}}$). We also use the same notation $P_{w,v}=M_{P_{w,v}}N_{P_{w,v}}$ for the associated $p$-adic Lie groups and let $P_{w,p}=\prod_{v\in S_p}P_{w,v}$, etc.. Any irreducible constituent of 
    \[\cF_{\overline{B}_p}^{G_p}\left(\overline{L}(-ww_0\cdot\lambda), \underline{\delta}_{\mathrm{sm}}\delta_{B_p}^{-1}\right)\simeq \cF_{\overline{P}_{w,p}}^{G_p}\left(\overline{L}(-ww_0\cdot\lambda), (\mathrm{Ind}_{\overline{B}_p\cap M_{w,p}}^{M_{w,p}}\underline{\delta}_{\mathrm{sm}}\delta_{B_p}^{-1})^{\mathrm{sm}}\right)\] 
    has the form $\cF_{\overline{P}_{w,p}}^{G_p}\left(\overline{L}(-ww_0\cdot\lambda), \pi_{M_{P_{w,p}}}\right)$ for some irreducible constituent $\pi_{M_{P_{w,p}}}$ of the smooth induction $(\mathrm{Ind}_{\overline{B}_p\cap M_{w,p}}^{M_{w,p}}\underline{\delta}_{\mathrm{sm}}\delta_{B_p}^{-1})^{\mathrm{sm}}$ (cf. \cite[Thm 2.3(ii)(iii), (2.6)]{breuil2016versI}). The central character of $\pi_{M_{P_{w,p}}}$ is $\underline{\delta}_{\mathrm{sm}}\delta_{B_p}^{-1}$. Since at least one of such irreducible constituents appears in $\Pi_{\infty}[\fm_y]$, by \cite[Cor. 3.5]{breuil2016versI}, we get that $z^{ww_0\cdot\lambda}\underline{\delta}_{\mathrm{sm}}\delta_{B_p}^{-1}(z)\in \cO_L$ for any $z$ lies $Z_{M_{P_{w,p}}}^+$ where $Z_{M_{P_{w,p}}}$ is the center of $M_{P_{w,p}}$.  Equivalently, for any $v\in S_p$, 
    \begin{equation}\label{equationinvariantnorm}
        z^{w_vw_{v,0}\cdot\lambda_v}\underline{\delta}_{v,\mathrm{sm}}\delta_{B_v}^{-1}(z)\in \cO_L
    \end{equation} 
    for any $z\in Z_{M_{P_{w,v}}}^+$.\par
    Firstly assume that there exist $v\in S_p,\tau\in \Sigma_v\setminus J_v$ such that $w_{\tau}\neq w_{\tau,0}$. Then $P_{w,v}\neq G_v$. By (the proof of) \cite[Prop. 5.4]{breuil2017interpretation}, there exists $i_v\in \left\{1,\cdots,n\right\}$ such that $\gamma_{\widetilde{v},i_v}\in Z_{M_{P_{w,v}}}^+$ and $|\gamma_{\widetilde{v},i_v}^{w_{\tau}w_{\tau,0}\cdot \lambda_{\tau}- \lambda_{\tau}}|_p\geq |\tau(\varpi_{\widetilde{v}})|_p^{-1-\mathrm{min}_{i}(\lambda_{\tau,i}-\lambda_{\tau,i+1})}$. Since for any $\tau'\in \Sigma_v,\lambda_{\tau'}+\rho_{\tau'}$ is strictly dominant and $w_{\tau'}w_{\tau',0}\cdot \lambda_{\tau'}- \lambda_{\tau'}=w_{\tau'}w_{\tau',0}(\lambda_{\tau'}+\rho_{\tau'})- (\lambda_{\tau'}+\rho_{\tau'})$, we have $|\gamma_{\widetilde{v},i_v}^{w_{\tau'}w_{\tau',0}\cdot \lambda_{\tau'}- \lambda_{\tau'}}|_p\geq 1$. Hence by (\ref{equationconditionclassical1}) we get
    \[|\gamma_{\widetilde{v},i_v}^{w_vw_{v,0}\cdot\lambda_v}\underline{\delta}_{v,\mathrm{sm}}\delta_{B_v}^{-1}(\gamma_{\widetilde{v},i_v})|_p=|\gamma_{\widetilde{v},i_v}^{w_vw_{v,0}\cdot\lambda_v-\lambda_v}\underline{\delta}_v\delta_{B_v}^{-1}(\gamma_{\widetilde{v},i_v})|_p\geq C |\tau(\varpi_{\widetilde{v}})|_p^{-1-\mathrm{min}_{i}(\lambda_{\tau,i}-\lambda_{\tau,i+1})}>1\]
    which contradicts (\ref{equationinvariantnorm}).\par
    Now we assume $w_{\tau}=w_{\tau,0}$ for every $\tau\notin J$. Then for any $v\in S_p$, $P_{w,v}\supset Q_v$. Assume $w_{\tau}\neq w_{\tau,0}$ for some $v\in S_p, \tau\in J_v$. By (the proof of) \cite[Lem. 3.18]{ding2019some}, there exists $i_v\in \left\{\widetilde{q}_{v,1},\cdots,\widetilde{q}_{v,t_v-1}\right\}$ such that $\gamma_{\widetilde{v},i_v}\in Z_{M_{P_{w,v}}}^+$ and $|\gamma_{\widetilde{v},i_v}^{w_{\tau}w_{\tau,0}\cdot \lambda_{\tau}- \lambda_{\tau}}|_p\geq |\tau(\varpi_{\widetilde{v}})|_p^{-1-\mathrm{min}_{i}(\lambda_{\tau,\widetilde{q}_{v,i}}-\lambda_{\tau,\widetilde{q}_{v,i}+1})}$. As in the previous step, by (\ref{equationconditionclassical2}), we have 
    \[|\gamma_{\widetilde{v},i_v}^{w_vw_{v,0}\cdot\lambda_v}\underline{\delta}_{v,\mathrm{sm}}\delta_{B_v}^{-1}(\gamma_{\widetilde{v},i_v})|_p=|\gamma_{\widetilde{v},i_v}^{w_vw_{v,0}\cdot\lambda_v-\lambda_v}\underline{\delta}_v\delta_{B_v}^{-1}(\gamma_{\widetilde{v},i_v})|_p\geq C |\tau(\varpi_{\widetilde{v}})|_p^{-1-\mathrm{min}_{i}(\lambda_{\tau,\widetilde{q}_{v,i}}-\lambda_{\tau,\widetilde{q}_{v,i}+1})}>1\]
    which also contradicts (\ref{equationinvariantnorm}).\par
    Therefore we conclude $w=w_0$ and the point is classical.
\end{proof}
\begin{proposition}\label{propositionclassicalpointsderham}
    If $(y,\underline{\delta})=\left((\rho_{\widetilde{v}})_{v\in S_p},z,\underline{\delta}\right)\in X_{p}(\overline{\rho})\subset \mathfrak{X}_{\overline{\rho}_p}\times (\mathfrak{X}_{\overline{\rho}^p}\times\mathbb{U}_g)\times \widehat{T}_{p,L}$ is a classical point, then $\rho_{\widetilde{v}}$ is de Rham for any $v\in S_p$.
\end{proposition}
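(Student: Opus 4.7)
The plan is to reduce the statement to classical local-global compatibility at $\ell = p$ via the patching construction of $\Pi_\infty$. First, I would unpack the classicality hypothesis. By definition and the identity $\Pi_\infty[\fm_y]^{\mathrm{an}} = \Pi_\infty^{\mathrm{an}}[\fm_y]$ already used in the proof of Proposition~\ref{propositionpointspartialeigenvariety}, there is a non-zero $G_p$-equivariant embedding
\[
L(\lambda)\otimes_L \bigl(\mathrm{Ind}_{\overline{B}_p}^{G_p}\underline{\delta}_{\mathrm{sm}}\delta_{B_p}^{-1}\bigr)^{\mathrm{sm}} \hookrightarrow \Pi_\infty[\fm_y]\otimes_{k(y)}k(x),
\]
so that the space of locally $\Q_p$-algebraic vectors $\Pi_\infty[\fm_y]^{\mathrm{lalg}}$ is non-zero and contains a sub-$G_p$-representation of algebraic weight $L(\lambda)$ and smooth weight a constituent of the principal series.

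Next I would invoke the description of $\Pi_\infty^{\mathrm{lalg}}$ from the patching construction in \cite{caraiani2016patching} recalled in \cite[\S3.3]{breuil2017interpretation}. For the algebraic representation $W_\lambda$ of $K_p$ of highest weight $\lambda$, the $R_\infty$-module $\Hom_{K_p}^{\mathrm{cont}}\bigl(W_\lambda,\Pi_\infty\bigr)$ is obtained by patching the classical automorphic form spaces $S(U^p, W_\lambda)$; in particular, the action of $R_\infty$ factors through a quotient cut out by classical $p$-adic Hodge-theoretic conditions (namely that each $\rho_{\widetilde{v}}$ be potentially semi-stable with the prescribed Hodge-Tate weights $\lambda_v$ shifted by $\rho$, compare with the crystalline deformation rings used in the proof of Theorem~\ref{theoremlocalcompanionpoint}). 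The non-vanishing of $\Pi_\infty[\fm_y]^{\mathrm{lalg}}$ with algebraic weight $L(\lambda)$ therefore forces the image of $y$ to lie on such a de Rham locus of $\fX_{\overline{\rho}_p}$, which amounts to saying that the tuple $(\rho_{\widetilde{v}})_{v\in S_p}$ is obtained by restriction from a genuine global Galois representation associated with a classical cuspidal algebraic automorphic representation of $\mathbb{G}$ of weight $\lambda$.

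Finally, classical local-global compatibility at $\ell = p$ for cuspidal algebraic automorphic representations on our definite unitary group $\mathbb{G}$ (Harris-Taylor, Caraiani, Barnet-Lamb-Gee-Geraghty-Taylor) implies that the local Galois representation at any place above $p$ of such an automorphic Galois representation is de Rham. Restricting to $\widetilde{v}$ yields that $\rho_{\widetilde{v}}$ is de Rham for every $v\in S_p$.

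The main (essentially bookkeeping rather than conceptual) obstacle is the careful identification in the second step: one must check that the patched locally algebraic vectors at the point $y$ truly come from classical algebraic automorphic forms in the unpatched situation, and that the tame part of $R_\infty$ acts through the expected quotient so that $\rho_{\widetilde{v}}$ is literally a local component of a global automorphic Galois representation. This is the content of \cite[Prop.~3.11, Thm.~3.19]{breuil2017interpretation} (and the corresponding statements in \cite{caraiani2016patching}), which apply directly here once one allows crystabelline rather than only crystalline data at $p$.
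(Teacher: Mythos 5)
Your overall shape — reduce to classical local--global compatibility at $\ell=p$ via the patched module, identify that the point sits on a potentially semi-stable locus of the local deformation space — matches the paper's strategy, and the first and last steps are fine. But the middle step has a genuine gap. You propose to detect the locally algebraic vectors by the patched module $\Hom_{K_p}^{\mathrm{cont}}(W_\lambda,\Pi_\infty)$, which is patched from spaces of automorphic forms of fixed algebraic weight and maximal hyperspecial level at $p$. That module only sees $G_p$-subrepresentations of the form $L(\lambda)\otimes\pi_{\mathrm{sm}}$ with $\pi_{\mathrm{sm}}^{K_p}\neq 0$, i.e.\ unramified (crystalline) data at $p$. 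The classical points on the partial eigenvariety $X_p(\overline{\rho})(\lambda_J')$ that arise in Proposition~\ref{propositiondenseclassicalpoints} are in general \emph{crystabelline}: the smooth part $\underline{\delta}_{\mathrm{sm}}$ can be ramified, in which case every constituent $\pi_{\mathrm{sm}}$ of $(\mathrm{Ind}_{\overline{B}_p}^{G_p}\underline{\delta}_{\mathrm{sm}}\delta_{B_p}^{-1})^{\mathrm{sm}}$ satisfies $\pi_{\mathrm{sm}}^{K_p}=0$, and $\Hom_{K_p}^{\mathrm{cont}}(W_\lambda,\Pi_\infty[\fm_y])$ vanishes even though $y$ is classical. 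This is exactly why the paper's definition of classical point differs from \cite[Def.~3.15]{breuil2017interpretation} (see the remark preceding the proposition), and your closing caveat ``once one allows crystabelline rather than only crystalline data'' does not repair the step — the repair requires replacing $K_p$ by a compact open subgroup adapted to the ramification.

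The paper's actual proof closes this gap with Bushnell--Kutzko type theory: let $\Omega$ be the Bernstein component of $\pi_{\mathrm{sm}}$, take a semisimple type $(J,\lambda_{\mathrm{sm}})$ for $\Omega$, fix a $J$-stable lattice $\lambda^\circ$ in $L(\lambda)\otimes\lambda_{\mathrm{sm}}$, and consider $M_\infty(\lambda^\circ)=\Hom^{\mathrm{cont}}_{\cO_L[[J]]}(M_\infty,(\lambda^\circ)')'$ instead of $\Hom_{K_p}^{\mathrm{cont}}(W_\lambda,\Pi_\infty)$. By Schikhof duality the $J$-equivariant map into $\Pi_\infty[\fm_y]$ shows $\fm_y$ lies in the support of $M_\infty(\lambda^\circ)$, and the methods of \cite[Lem.~4.17(1)]{caraiani2016patching} together with classical local--global compatibility show that $R_\infty$ acts on $M_\infty(\lambda^\circ)$ through the potentially semi-stable deformation ring of inertia type $\tau_{\widetilde{v}}$ attached to $\Omega$. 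The passage to types is the conceptual content you are missing; everything after it proceeds as you describe.
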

\begin{proof}
    This is the local-global compatibility result of \cite[Lemma. 4.31]{caraiani2016patching}. Since $x$ is classical, after possibly enlarging $L$, there is an injection $L(\lambda)\otimes \pi_{\mathrm{sm}}\hookrightarrow \Pi_{\infty}[\fm_y]$, where $\fm_y$ is the corresponding maximal ideal of $R_{\infty}[\frac{1}{p}]$ and $\pi_{\mathrm{sm}}$ is some smooth representation of $G_p$. Let $\Omega$ be the Bernstein component containing $\pi_{\mathrm{sm}}$, $(J,\lambda_{\mathrm{sm}})$ be a semisimple Bushnell-Kutzko type for $\Omega$, where $J$ is compact open subgroup of $G_p$ and $\lambda_{\mathrm{sm}}$ is an irreducible smooth representation of $J$, and $\tau=(\tau_{\widetilde{v}})_{v\in S_p}$ be the corresponding inertia type (\cite[\S3.3, \S3.4]{caraiani2016patching}). After possibly enlarging $L$, we assume that $\lambda_{\mathrm{sm}}$ is defined over $L$ and then by the type theory, there is a $J$-injection $L(\lambda)\otimes \lambda_{\mathrm{sm}}\hookrightarrow L(\lambda)\otimes \pi_{\mathrm{sm}}\hookrightarrow \Pi_{\infty}[\fm_y]$ (for the rationality problem, cf. \cite[\S3.13]{caraiani2016patching}, especially (3.15) in \textit{loc. cit.}). We fix a $J$-stable $\cO_L$-lattice $\lambda^{\circ}$ of $L(\lambda)\otimes \lambda_{\mathrm{sm}}$, then $\Hom_J(\lambda^{\circ},\Pi_{\infty}[\fm_y])\neq 0$. Thus $\Hom_J(\lambda^{\circ},\Pi_{\infty}^{\circ}[\fm_y])\neq 0$ here $\fm_y$ is viewed as an ideal of $R_{\infty}$ that doesn't contain $p$ and $\Pi_{\infty}^{\circ}$ is the unit ball of $\Pi_{\infty}$. By the Schikhof duality (\cite[\S1.8]{caraiani2016patching}), we get that $\fm_y$ is in the support of $M_{\infty}(\lambda^{\circ})$ where $M_{\infty}(\lambda^{\circ}):=\Hom_{\cO_L[[J]]}^{\mathrm{cont}}(M_{\infty}, (\lambda^{\circ})')'$ is finite free over $S_{\infty}$ (cf. \cite[Lem. 4.30]{caraiani2016patching}), thus finite over $R_{\infty}$. The methods in the proof of \cite[Lem. 4.17 (1)]{caraiani2016patching} together with the classical local-global compatibility when $\ell=p$ show that the action of $R_{\infty}$ on $M_{\infty}(\lambda^{\circ})$ factors through $R_{\infty}\otimes_{\widehat{\otimes}_{v\in S_p}R_{\overline{\rho}_{\widetilde{v}}}'}\widehat{\otimes}_{v\in S_p}R_{\overline{\rho}_{\widetilde{v}}}^{\mathbf{h}_{\widetilde{v}}, \tau_{\widetilde{v}}}$ where $R_{\overline{\rho}_{\widetilde{v}}}^{\mathbf{h}_{\widetilde{v}}, \tau_{\widetilde{v}}}$ is the framed potentially semi-stable deformation ring of Kisin (\cite[Thm. 2.7.6]{kisin2008potentially}) of inertia type $\tau_{\widetilde{v}}$ and Hodge-Tate weights $\mathbf{h}_{\widetilde{v}}$ associated with $\lambda_v$. This implies that $\rho_{\widetilde{v}}$ is potentially semi-stable for any $v\in S_p$.
\end{proof}
\subsection{Partially de Rham trianguline $(\varphi,\Gamma)$-modules}\label{sectionpartiallyderhamtrianguline}
In this subsection, we use the density of de Rham points on the partial eigenvariety and the global triangulation to prove that points on the partial eigenvariety are ``partially de Rham''.\par 
Let $x=\left((\rho_{x,p},\iota (\underline{\delta}_x),z_x)\right)=\left(\left(\rho_{x,\widetilde{v}},\iota_v(\underline{\delta}_{x,v})\right)_{v\in S_p},z_x\right)\in \iota\left(\prod_{v\in S_p}X_{\mathrm{tri}}(\overline{\rho}_{\widetilde{v}})\right)\times (\mathfrak{X}_{\overline{\rho}^p}\times\mathbb{U}_g)$ be a point in $X_{p}(\overline{\rho})$ (for notation see \S\ref{sectionglobalsettings}), and assume that $\underline{\delta}_x$ is locally algebraic and $\delta_{x,v}\in \cT_{v,0}^n,\forall v\in S_p$ (Definition \ref{definitiongenericglobal}, or $\iota(\underline{\delta}_x)$ is generic). Then $\rho_{x, \widetilde{v}}$ is an almost de Rham representation of $\cG_{F_{\widetilde{v}}}$ (cf. \S\ref{sectionalmostderham}). The global triangulation (on $X_{\mathrm{tri}}(\overline{\rho}_{v})$) implies that the $(\varphi,\Gamma_{F_{\widetilde{v}}})$-module $\cM_{x,\widetilde{v}}:=D_{\mathrm{rig}}(\rho_{x, \widetilde{v}})[\frac{1}{t}]$ over $\cR_{k(x), F_{\widetilde{v}}}[\frac{1}{t}]$ is equipped with a unique triangulation of parameter $\underline{\delta}_{x,v}$ (\cite[Prop. 3.7.1]{breuil2019local}), denoted by $\{0\}=\cM_{x,\widetilde{v},0}\subsetneq \cdots \subsetneq \cM_{x,\widetilde{v},n}=\cM_{x,\widetilde{v}}$. Then for $\tau\in \Sigma_v$, $D_{\mathrm{pdR},\tau}(W_{\mathrm{dR}}\left(\cM_{x,\widetilde{v}})\right)$ is equipped with a filtration $D_{\mathrm{pdR},\tau}\left(W_{\mathrm{dR}}(\cM_{x,\widetilde{v},\bullet})\right)$ of vector spaces over $k(x)$ together with a nilpotent linear operator $\nu_{x,\tau}$ which keeps the filtration. Recall that $M_{Q_{v}}$ is the group of diagonal block matrices of the form $\GL_{q_{v,1}/F_{\widetilde{v}}}\times \cdots \times \GL_{q_{v,t_{v}}/F_{\widetilde{v}}}$. For $i\in\left\{1,\cdots, t_{v}\right\}$, we let $\nu_{x,\tau,i}$ be the action of $\nu_{x,\tau}$ on 
\[D_{\mathrm{pdR},\tau}\left(W_{\mathrm{dR}}(\cM_{x,\widetilde{v},\widetilde{q}_{v,i}})\right)/D_{\mathrm{pdR},\tau}\left(W_{\mathrm{dR}}(\cM_{x,\widetilde{v},\widetilde{q}_{v,i-1}})\right).\] 
Recall $Q_v$ is a standard parabolic subgroup of $\GL_{n/F_{\widetilde{v}}}$. We denote by $Q_{\tau}:=Q_v\otimes_{F_{\widetilde{v}},\tau}L$. Then $\nu_{x,\tau,i}=0$ for all $i=\left\{1,\cdots, t_{v}\right\}$ if and only if $\rho_{x,\widetilde{v}}$ with the filtration $\cM_{x,\widetilde{v},\bullet}$ is $Q_{\tau}$-de Rham (Definition \ref{definitionQderham}).
\begin{proposition}\label{propositionpartialvarietypartialderham}
    Let $x$ be a point as above. If $x$ is in $X_{p}(\overline{\rho})(\lambda_J')\subset X_p(\overline{\rho})$, then $\rho_{x,\widetilde{v}}$ with the filtration $\cM_{x,\widetilde{v},\bullet}$ is $Q_{\tau}$-de Rham for any $\tau\in J_v,v\in S_p$.
\end{proposition}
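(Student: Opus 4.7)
The plan is to deduce the partially de Rham property at $x$ from a Zariski density statement for classical points on the partial eigenvariety together with a Berger--Colmez type spreading result. Fix $v\in S_p$ and $\tau\in J_v$. Working locally near $x$, I would choose an affinoid neighbourhood $U\subset X_p(\overline{\rho})(\lambda_J')$ of $x$ small enough so that $U$ embeds into an affinoid of $\fX_{\overline{\rho}_p}\times(\fX_{\overline{\rho}^p}\times\mathbb{U}^g)\times\widehat{T}_{p,L}$ on which the ``weight of $\underline{\delta}$ restricted to $\ft'_{Q_p,J}$'' is identically $\lambda'_J$. The point is that this last condition pins down, for each $\tau\in J_v$ and each $i=1,\dots,t_v$, the multiset of $\tau$-parts of weights occurring in the $\widetilde{q}_{v,i-1}{+}1,\dots,\widetilde{q}_{v,i}$-th slots of the triangulation: they all differ from those at $x$ by the same constant, so in particular their differences are integers independent of the point of $U$.

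Next I would produce the triangulation in families. By the global triangulation \cite[Cor.~6.3.10]{kedlaya2014cohomology} as in the proof of Theorem~\ref{theoremcyclepartialderham}, after a birational proper surjection $f\colon U'\to U$ (whose existence I would take from the fact that the triangulation exists at the generic non-critical point of each irreducible component passing through $x$), the pullback $f^*D_{\mathrm{rig}}(\rho^{\mathrm{univ}}_{\widetilde v})$ acquires a filtration $D_{U',\bullet}$ of sub-$(\varphi,\Gamma_{F_{\widetilde v}})$-modules whose graded pieces (up to injective maps with $t$-torsion cokernels) are of character type, with characters the pullbacks of the universal parameter from $X_{\mathrm{tri}}(\overline{\rho}_{\widetilde v})$, and specializing at any preimage of $x$ to the unique triangulation $\cM_{x,\widetilde v,\bullet}$. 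For every $i$, denote by $\mathrm{gr}^i D_{U'}[1/t]:= D_{U',\widetilde q_{v,i}}[1/t]/D_{U',\widetilde q_{v,i-1}}[1/t]$ the $i$-th paraboline graded piece.

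Now I invoke Proposition~\ref{propositionfamilydpdr} from the appendix, applied to each graded piece $\mathrm{gr}^i D_{U'}[1/t]$ twisted so that its $\tau$-Sen weights become constant equal to zero (this uses the constancy of the $\tau$-weight differences along $U$ noted above). The conclusion is that the locus $U'_{i,\tau}\subset U'$ on which $\mathrm{gr}^i D_{U'}[1/t]$ is $\{\tau\}$-partially de Rham is a Zariski closed analytic subset. On the other hand, by Proposition~\ref{propositiondenseclassicalpoints}, the classical points accumulate at $x$ inside $X_p(\overline{\rho})(\lambda_J')$, and by Proposition~\ref{propositionclassicalpointsderham} each classical point is de Rham at $v$, hence \textit{a fortiori} $\{\tau\}$-partially de Rham on every graded piece of its triangulation. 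Pulling this density back through the proper surjection $f$ (whose image contains $x$ and an accumulating sequence of classical points), we see that $U'_{i,\tau}$ contains a Zariski dense subset of every irreducible component of $U'$ meeting $f^{-1}(x)$; being Zariski closed it therefore equals that component. Hence $x\in f(U'_{i,\tau})$, which by the specialization statement for $\mathrm{gr}^i D_{U'}[1/t]$ gives the vanishing of $\nu_{x,\tau,i}$ for every $i$, i.e.\ $(\rho_{x,\widetilde v},\cM_{x,\widetilde v,\bullet})$ is $Q_\tau$-de Rham.

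The main obstacle in this plan is the passage between the partial de Rhamness of the graded pieces of the family $D_{U',\bullet}$ over $U'$ and that at the specialization $x$: one needs Proposition~\ref{propositionfamilydpdr} to be strong enough to detect $\{\tau\}$-partial de Rhamness in families of $(\varphi,\Gamma)$-modules (not just Galois representations), and to produce a Zariski closed partial de Rham locus whose formation commutes with specialization to $x$, even though $x$ is only almost de Rham rather than de Rham. Once that input from the appendix is in hand, the accumulation of de Rham classical points at $x$ and the reducedness of the eigenvariety make the density step completely formal.
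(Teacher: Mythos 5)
Your plan is correct and matches the paper's proof in all essential respects: both apply the birational proper modification from the global triangulation (KPX Cor.~6.3.10, packaged here via Lemma~\ref{lemmaglobaltrianguline}) to extract the $i$-th paraboline graded piece in a family, twist by a character so that the $\tau$-Sen weights become constant integers (using that the $\tau$-weight differences are locked on the partial eigenvariety), invoke Proposition~\ref{propositionfamilydpdr} to get a Zariski closed partially de Rham locus, push it forward along the proper map, and then use Propositions~\ref{propositiondenseclassicalpoints}--\ref{propositionclassicalpointsderham} (classical points are de Rham and accumulate at $x$) for the density step. The only cosmetic differences are that the paper works with the modification globally over $X_p(\overline{\rho})$ rather than on an affinoid neighbourhood, and twists by the last character of the block rather than normalizing the weights to zero.
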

\begin{proof}
    We fix $i\in \left\{1,\cdots,t_v\right\}, v\in S_p,\tau\in J_v$. Consider the closed immersion $X_{p}(\overline{\rho})(\lambda_J')\hookrightarrow X_{p}(\overline{\rho})$ in \S\ref{sectionpartialeigenvarietygeometry}. Let $D_{\mathrm{rig}}(\rho_{\widetilde{v}}^{\mathrm{univ}})$ be the $(\varphi,\Gamma_{F_{\widetilde{v}}})$-module over $\cR_{X_p(\overline{\rho}),F_{\overline{v}}}$ associated with the universal Galois representation $\rho_{\widetilde{v}}^{\mathrm{univ}}$ of $\cG_{F_{\widetilde{v}}}$ (\cite[Def. 2.12]{liu2015triangulation}). By Lemma \ref{lemmaglobaltrianguline} below and \cite[Thm. 3.19]{breuil2017interpretation}, there is a birational proper morphism $f:X'\rightarrow X_p(\overline{\rho})$, a $(\varphi,\Gamma_{F_{\widetilde{v}}})$-module $M'$ over $\cR_{X',F_{\widetilde{v}}}$ such that for any $x'\in X'$ with $\delta_{x',v}\in \cT_{v,0}^n$ (we use the same notation $\delta_v$, etc. with different subscripts to denote the pull back of the character $\delta_v$, etc. from $X_{\mathrm{tri}}(\overline{\rho}_p)$), an isomorphism of $(\varphi,\Gamma_{F_{\widetilde{v}}})$-module over $\cR_{k(x'),F_{\widetilde{v}}}[\frac{1}{t}]:M'_{x'}[\frac{1}{t}]\simeq \cM_{x',\widetilde{v},\widetilde{q}_{v,i}}/\cM_{x',\widetilde{v},\widetilde{q}_{v,i-1}}$ where $\cM_{x',\widetilde{v},\bullet}$ denotes the unique filtration of $D_{\mathrm{rig}}(\rho_{x',\widetilde{v}})[\frac{1}{t}]$ of parameter $\delta_{x',v,1},\cdots,\delta_{x',v,n}$. We know $X'$ is reduced. Since the Sen polynomial varies analytically and the set of strictly trianguline points is Zariski dense, we get that the $\tau$-Sen polynomial of $M'_{x'}$ is equal to $\prod_{j=\widetilde{q}_{v,i-1}+1}^{\widetilde{q}_{v,i}}(T-\mathrm{wt}_{\tau}(\delta_{x',v,j}))$ for any $x'\in X'$. Let $X''$ be the preimage of $X_p(\overline{\rho})(\lambda_J')$ under $f$. Then $f|_{X''}$ is still proper (\cite[\S9.6.2]{bosch1984non}). Let $M_{X''}'$ be the pullback of $M'$ to $X''$ and let $M_{X''}'(\delta_{X'',v,\widetilde{q}_v}^{-1}):=M_{X''}'\otimes_{\cR_{X'',F_{\widetilde{v}}}}\cR_{X'',F_{\widetilde{v}}}(\delta_{X'',v,\widetilde{q}_v}^{-1})$. Thus for any $x'\in X''$, the $\tau$-Sen weights of $M_{x'}'(\delta_{x',v,\widetilde{q}_v}^{-1})$: 
    \begin{align*}
        &\left(\wt_{\tau}(\delta_{x',v,\widetilde{q}_{v,i-1}+1})-\wt_{\tau}(\delta_{x',v,\widetilde{q}_{v}}),\cdots, \wt_{\tau}(\delta_{x',v,\widetilde{q}_{v,i-1}+q_{v,i}})-\wt_{\tau}(\delta_{x',v,\widetilde{q}_{v}})\right)\\
        =&\left((\widetilde{\lambda}_{\tau,\widetilde{q}_{v,i-1}+1}-\widetilde{q}_{v,i-1}) -(\widetilde{\lambda}_{\tau,\widetilde{q}_{v}}-\widetilde{q}_{v,i}+1),\cdots, 0\right)
    \end{align*}
    are all certain fixed integers (see \S\ref{sectionpartialeigenvarietygeometry}). Applying Proposition \ref{propositionfamilydpdr} in Appendix \ref{sectionfamiliesofalmostdeRhamrepresetnations}, we conclude that the subset of points $x'\in X''$ such that $M_{x'}'[\frac{1}{t}](\delta_{x',v,\widetilde{q}_v}^{-1})$ is $\tau$-de Rham, the points such that the nilpotent operator vanishes on $D_{\mathrm{pdR},\tau}\left(W_{\mathrm{dR}}(M_{x'}'(\delta_{x',v,\widetilde{q}_v}^{-1})[\frac{1}{t}])\right)$, is Zariski closed in $X''$. We denote this subset by $Y$. Then $f(Y)$ is an analytic closed subset of $X_{p}(\overline{\rho})(\lambda_J')$ (\cite[Prop. 9.6.3/3]{bosch1984non}).\par
    We pick an affinoid $U$ of $X_{p}(\overline{\rho})(\lambda_J')$ containing $x$ as in Proposition \ref{propositiondenseclassicalpoints} so that the classical points is Zariski dense in $U$. By shrinking $U$ and its image in $\mathcal{W}_{\lambda_J'}$ suitably, we can assume that for any point $x'\in U$, $\delta_{x',v}\in \mathcal{T}_{v,0}^n$ (this is possible since $\mathcal{T}_{v,0}^n$ is Zariski open in the space of characters of $(F_{\widetilde{v}}^{\times})^n$). Suppose that $x'$ is a point in $U$ such that $\delta_{x',v}$ is locally algebraic and $x''\in f^{-1}(x')$. We have $\cM_{x',\widetilde{v},\widetilde{q}_{v,i}}/\cM_{x',\widetilde{v},\widetilde{q}_{v,i-1}}\otimes_{k(x')}k(x'')\simeq \cM_{x'',\widetilde{v},\widetilde{q}_{v,i}}/\cM_{x'',\widetilde{v},\widetilde{q}_{v,i-1}}$. Since $W_{\mathrm{dR}}\left(\cR_{k(x'),F_{\widetilde{v}}}(\delta_{x',v,\widetilde{q}_v})[\frac{1}{t}]\right)$ is trivial (\cite[Lem. 3.3.7]{breuil2019local}) and the functor $W_{\mathrm{dR}}(-)$ is tensor functorial, we get that $\cM_{x',\widetilde{v},\widetilde{q}_{v,i}}/\cM_{x',\widetilde{v},\widetilde{q}_{v,i-1}}$ is $\tau$-de Rham if and only if $\left(\cM_{x',\widetilde{v},\widetilde{q}_{v,i}}/\cM_{x',\widetilde{v},\widetilde{q}_{v,i-1}}\right)(\delta_{x',v,\widetilde{q}_v}^{-1})$ is $\tau$-de Rham if and only if $x'\in f(Y)$. Then by Proposition \ref{propositionclassicalpointsderham}, $f(Y)\cap U$ contains a Zariski dense subset of $U$ (classical points in $U$). Furthermore, $f(Y)\cap U$ is Zariski closed in $U$ (\cite[Prop. 9.5.3/2]{bosch1984non}). Thus $U\subset f(Y)$. Hence $\cM_{x,\widetilde{v},\widetilde{q}_{v,i}}/\cM_{x,\widetilde{v},\widetilde{q}_{v,i-1}}$ is $\tau$-de Rham.
\end{proof}

\begin{lemma}\label{lemmaglobaltrianguline}
    Let $X$ be a reduced analytic rigid space over $L$ and $M$ is a $(\varphi,\Gamma_K)$-module over $\cR_{X,K}$ of rank $n$ where $K$ is a local field over $\Q_p$. We assume that there exists a Zariski dense subset $X_{\mathrm{alg}}$ of $X$ such that $M$ is densely pointwise strictly trianguline (\cite[Def. 6.3.2]{kedlaya2014cohomology}) with respect to a parameter $\delta_{X,1},\cdots,\delta_{X,n}: K^{\times}\rightarrow \Gamma(X,\cO_X)^{\times}$ and the subset $X_{\mathrm{alg}}$. We assume furthermore that if $x\in X_{\mathrm{alg}}$, then $\underline{\delta}_x \in \cT_{\mathrm{reg}}^n$ (see \S\ref{sectiontriangulinevariety}). Then for any $0\leq a< b\leq n$, there exists a birational proper map $f:X'\rightarrow X$ and a $(\varphi,\Gamma_K)$-module $M'$ over $\cR_{X',K}$ such that, let $\delta_{X',1},\cdots,\delta_{X',n}$ be the pull back of characters $\delta_{X,1},\cdots,\delta_{X,n}$, the following statements hold.\par
    (1) The set of points $x\in X'$ such that $M'_x\simeq \mathrm{fil}_{b}((f^*M)_x)/\mathrm{fil}_{a}((f^*M)_x)$, where $\mathrm{fil}_{\bullet}((f^*M)_x)$ is the unique strictly trianguline filtration on $(f^*M)_x$ of parameter $\delta_{x,1},\cdots,\delta_{x,n}$, contains $f^{-1}(X_{\mathrm{alg}})$ and is Zariski open dense in $X'$. \par
    (2) Suppose $x\in X'$ such that $\underline{\delta}_x\in \cT_{\mathrm{0}}^n$. Assume that $A$ is a finite-dimensional local $L$-algebra with residue field $k(x)$ and a map $\Sp(A)\rightarrow X'$ with image $x$, then the pull back $(f^*M)_A[\frac{1}{t}]$ is trianguline with a unique triangulation $\mathrm{fil}_{\bullet}((f^*M)_A[\frac{1}{t}])$ of parameter $\delta_{A,1},\cdots,\delta_{A,n}$ in the sense of \S\ref{sectionalmostderham}. Moreover there is an isomorphism $M'_A[\frac{1}{t}]\simeq \mathrm{fil}_{b}((f^*M[\frac{1}{t}])_A)/\mathrm{fil}_{a}((f^*M[\frac{1}{t}])_A)$ and $M'_A[\frac{1}{t}]$ has a parameter $\delta_{A,a+1},\cdots,\delta_{A,b}$.
\end{lemma}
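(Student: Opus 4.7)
The plan is to invoke Kedlaya-Pottharst-Xiao's global triangulation result (\cite[Cor. 6.3.10]{kedlaya2014cohomology}) directly on the pair $(X,M)$ with the given densely pointwise strictly trianguline structure. This produces a birational proper surjection $f\colon X'\to X$, a filtration $D_{X',\bullet}$ of sub-$(\varphi,\Gamma_K)$-modules of $f^*M$ over $\cR_{X',K}$ with $D_{X',0}=0$ and $D_{X',n}=f^*M$, and invertible $\cO_{X'}$-sheaves $\cL_1,\dots,\cL_n$ together with inclusions $D_{X',i}/D_{X',i-1}\hookrightarrow \cR_{X',K}(\delta_{X',i})\otimes_{\cO_{X'}}\cL_i$ (where $\delta_{X',i}:=f^*\delta_{X,i}$) whose cokernels are killed by $t$. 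I then set $M':=D_{X',b}/D_{X',a}$, which is automatically a $(\varphi,\Gamma_K)$-module over $\cR_{X',K}$ of rank $b-a$.

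For (1), let $Z\subset X'$ be the Zariski closed subset supporting at least one of the $t$-torsion cokernels. Over $X'\setminus Z$ each inclusion is already an isomorphism, so the fiber $D_{X',\bullet,x}$ is a genuine strict triangulation of $(f^*M)_x$ of parameter $\underline{\delta}_x$. For any $x\in f^{-1}(X_{\mathrm{alg}})$ one has $\underline{\delta}_x\in\cT_{\mathrm{reg}}^n$ and $M_{f(x)}$ is strictly trianguline by hypothesis; uniqueness of triangulations under a generic parameter (\cite[Prop. 3.7.1]{breuil2019local}, relying on \cite[Thm. 4.4.3]{kedlaya2014cohomology}) forces $D_{X',\bullet,x}$ to agree with the unique strict trianguline filtration of $(f^*M)_x$ and in particular makes each cokernel vanish at $x$. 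Thus $f^{-1}(X_{\mathrm{alg}})\subset X'\setminus Z$; combined with birationality of $f$ and Zariski density of $X_{\mathrm{alg}}$ in $X$, the open set $X'\setminus Z$ is Zariski dense, and on it $M'_x\simeq \mathrm{fil}_b((f^*M)_x)/\mathrm{fil}_a((f^*M)_x)$.

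For (2), I would follow the strategy in the second half of the proof of Proposition \ref{propositiontriangulinevarietyclosedimmersion}. Pulling $D_{X',\bullet}$ along $\mathrm{Sp}(A)\to X'$ gives a filtration $D_{A,\bullet}$ of $(f^*M)_A$ over $\cR_{A,K}$ with inclusions into $\cR_{A,K}(\delta_{A,i})\otimes_A(\cL_i)_A$ whose cokernels remain killed by $t$; since $A$ is local each $(\cL_i)_A$ is free of rank one, so after inverting $t$ these inclusions become isomorphisms and $D_{A,\bullet}[\tfrac{1}{t}]$ is a triangulation of $(f^*M)_A[\tfrac{1}{t}]$ of parameter $\delta_{A,1},\dots,\delta_{A,n}$. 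Uniqueness of this triangulation under $\underline{\delta}_x\in\cT_0^n$ comes from \cite[Prop. 3.7.1]{breuil2019local} applied at the residue field, combined with the basis-lifting argument reproduced in Proposition \ref{propositiontriangulinevarietyclosedimmersion} (choose a generator of the first step lifting the residual one, extend to a basis of $D_{\mathrm{rig}}(\rho_A)[\tfrac{1}{t}]$, and induct on rank over a subring $\widetilde{A}\subset A$). Taking the subquotient between indices $a$ and $b$ yields $M'_A[\tfrac{1}{t}]\simeq \mathrm{fil}_b((f^*M)_A[\tfrac{1}{t}])/\mathrm{fil}_a((f^*M)_A[\tfrac{1}{t}])$ with parameter $\delta_{A,a+1},\dots,\delta_{A,b}$.

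The only non-formal point is the uniqueness and infinitesimal lifting behind (2), but this is essentially the argument already carried out in Proposition \ref{propositiontriangulinevarietyclosedimmersion} and reduces to \cite[Prop. 3.7.1]{breuil2019local}; everything else follows directly from the construction of $(X',D_{X',\bullet})$ furnished by \cite[Cor. 6.3.10]{kedlaya2014cohomology} together with the support/cokernel analysis above.
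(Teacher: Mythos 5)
Your construction $M' := D_{X',b}/D_{X',a}$ is not justified: you assert it is ``automatically a $(\varphi,\Gamma_K)$-module over $\cR_{X',K}$ of rank $b-a$,'' but that is exactly the point that can fail. A quotient of two $(\varphi,\Gamma_K)$-modules over $\cR_{X',K}$ (i.e.\ finite projective modules with $(\varphi,\Gamma_K)$-structure) need not be finite projective, and nothing in \cite[Cor.~6.3.10]{kedlaya2014cohomology} grants projectivity of the subquotient $\mathrm{Fil}_b/\mathrm{Fil}_a$ for $a>0$: the conclusion there is only that each $\mathrm{Fil}_i$ is a $(\varphi,\Gamma_K)$-module and that $\mathrm{Fil}_i/\mathrm{Fil}_{i-1}$ admits an injection with $t$-torsion cokernel into a rank-one twist, which is weaker than projectivity in a family over an affinoid. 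The paper's proof is structured precisely to sidestep this. When $a=0$ one takes $M'=\mathrm{fil}_b M$, a step of the filtration, hence a $(\varphi,\Gamma_K)$-module by construction; and after replacing $M$ with $\mathrm{fil}_b M$ to reduce to $b=n$, the ``quotient'' $M/\mathrm{fil}_a M$ is reached via duality: one applies \cite[Cor.~6.3.10]{kedlaya2014cohomology} a second time to the dual $M^\vee$ (which is densely pointwise strictly trianguline with parameters $\delta_n^{-1},\dots,\delta_1^{-1}$ by \cite[Prop.~6.2.8]{kedlaya2014cohomology}) and sets $M'=(\mathrm{fil}_{n-a}M^\vee)^\vee$. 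Since $\mathrm{fil}_{n-a}M^\vee$ is a filtration step and dualization preserves $(\varphi,\Gamma_K)$-modules and is exact, this $M'$ is a genuine $(\varphi,\Gamma_K)$-module that plays the role of the quotient. Your subsequent fiber and Artinian-pullback analysis is essentially parallel to the paper's, but it presupposes the well-definedness of $M'$ that is exactly what is missing; you should insert the two-step reduction (first cut down to $b=n$ via a submodule, then dualize) before proceeding.
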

\begin{proof}
    By \cite[Cor. 6.3.10]{kedlaya2014cohomology}, after replacing $X$ (resp. $X_{\mathrm{alg}}$) by some $X'$ (resp. $f^{-1}(X_{\mathrm{alg}})$), we may assume that $M$ admits a filtration $\mathrm{fil}_{\bullet}M$ of $(\varphi,\Gamma_K)$-modules over $\cR_{X,K}$ satisfying the requirement (1) and (2) in \textit{loc. cit.}. The uniqueness of the triangulation of given parameter in (2) is by \cite[Lem. 3.4.3 \& Prop. 3.4.6]{breuil2019local}.\par
    If $a=0$, we can take the submodule $M':=\mathrm{fil}_{b}M$ of $M$. Then (1) of the Lemma is satisfied by the choice. The existence of a triangulation of $M_A[\frac{1}{t}]$ in (2) follows from (2) in \cite[Cor. 6.3.10]{kedlaya2014cohomology} which is of parameter $\delta_{A,1},\cdots,\delta_{A,n}$ and it is by our choice that $M'_A[\frac{1}{t}]\simeq \mathrm{fil}_{b}(M_A[\frac{1}{t}])$.\par
    By replacing $M$ with $\mathrm{fil}_{b}M$, we may assume $b=n$. We need roughly pick a ``quotient'' $(\varphi,\Gamma_K)$-modules of $M$ (on some $X'$) rather than a submodule as in the previous step. For a $(\varphi,\Gamma_K)$-module $M_Y$ over $\cR_{Y,K}$ for a rigid space $Y$, we let $M_Y^{\vee}:=\Hom_{\cR_{Y,K}}(M,\cR_{Y,K})$ be the dual $(\varphi,\Gamma_K)$-module of $M$. The dual functor $(\cdot)\mapsto (\cdot)^{\vee}$ of $(\varphi,\Gamma_K)$-modules commutes with base change, is exact on short exact sequences of $(\varphi,\Gamma_K)$-modules and sends $\cR_{Y,K}(\delta)$ to $\cR_{Y,K}(\delta^{-1})$ for any continuous character $\delta:K^{\times}\rightarrow \Gamma(Y,\cO_Y)^{\times}$ (cf. \cite[Con. 6.2.4]{kedlaya2014cohomology}). Then the $(\varphi,\Gamma_K)$-module $M^{\vee}$ over $\cR_{X,K}$ is densely pointwise strictly trianguline with respect to parameters $\delta_{X,n}^{-1},\cdots,\delta_{X,1}^{-1}$ by the assumption and \cite[Prop. 6.2.8]{kedlaya2014cohomology}. By \cite[Cor. 6.3.10]{kedlaya2014cohomology}, after replacing $X$ (resp. $X_{\mathrm{alg}}$) by some $X'$ (resp. $f^{-1}(X_{\mathrm{alg}})$), we may assume that $M^{\vee}$ admits a filtration $\mathrm{fil}_{\bullet}M^{\vee}$ of $(\varphi,\Gamma_K)$-modules over $\cR_{X,K}$ such that for any point $x$ in a Zariski open dense subset $Z$ containing $X_{\mathrm{alg}}$, $(\mathrm{fil}_{\bullet}M^{\vee})_x$ is a strictly triangulation of $M^{\vee}_x$ with parameters $\delta_{x,n}^{-1},\cdots,\delta_{x,1}^{-1}$ and each $(\mathrm{fil}_{i}M^{\vee}/\mathrm{fil}_{i-1}M^{\vee})[\frac{1}{t}]$ is isomorphic to $\cR_{X,K}(\delta_{n-i+1}^{-1})[\frac{1}{t}]$ up to a line bundle. We set $M'= (\mathrm{fil}_{n-a}M^{\vee})^{\vee}$. Then for $x\in Z$, $M'_x\simeq M_x/\mathrm{fil}_{a}M_x$ is trianguline of parameter $\delta_{x,a+1}\cdots,\delta_{x,n}$. For $x\in X$ and $A$ satisfying the condition in (2) of the lemma, we get by the construction that $(\mathrm{fil}_{n-a}M^{\vee})_A[\frac{1}{t}]$ is trianguline of parameter $\delta_{A,n}^{-1},\cdots,\delta_{A,a+1}^{-1}$ and $M^{\vee}_A[\frac{1}{t}]$ is trianguline of parameter $\delta_{A,n}^{-1},\cdots,\delta_{A,1}^{-1}$. Taking dual, we get that $M_A[\frac{1}{t}]$ is trianguline of parameters $\delta_{A,1},\cdots,\delta_{A,n}$ and $M'_A[\frac{1}{t}]$ is trianguline of parameters $\delta_{A,a+1},\cdots,\delta_{A,n}$. 
\end{proof}
Then we immediately get the main result of this section.
\begin{theorem}\label{theoremQderham}
    Let $x=\left(\left(\rho_p,\underline{\delta}\right),z\right)\in X_{p}(\overline{\rho})\subset \iota\left(X_{\mathrm{tri}}\left(\overline{\rho}_{p}\right)\right)\times (\mathfrak{X}_{\overline{\rho}^p}\times\mathbb{U}_g)$ be a point such that $\underline{\delta}$ is locally algebraic and generic. Let $\lambda=\wt(\underline{\delta})$ and $\underline{\delta}_{\mathrm{sm}}$ be the smooth part of $\underline{\delta}$. Let $y$ be the image of $x$ in $\fX_{\infty}$. If for some $v\in S_p,\tau\in\Sigma_v$, $\lambda_{\tau}$ is $\fm_{Q_v,\tau}$-dominant (with respect to $\fb_{Q_v,\tau}$) and we have
    \[\Hom_{G_p}\left(\mathcal{F}_{\overline{B}_p}^{G_p}\left(\overline{L}(-\lambda),\underline{\delta}_{\mathrm{sm}}\delta_{B_p}^{-1}\right),\Pi_{\infty}^{\mathrm{an}}[\fm_{y}]\otimes_{k(y)}k(x)\right)\neq 0,\]
    then $\rho_{\widetilde{v}}$ with the unique triangulation on $D_{\mathrm{rig}}(\rho_{\widetilde{v}})[\frac{1}{t}]$ of parameter $\delta_{v,1},\cdots,\delta_{v,n}$ is $Q_{\tau}$-de Rham.
\end{theorem}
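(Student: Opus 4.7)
\emph{Proof plan.} The idea is to deduce Theorem \ref{theoremQderham} from the partial eigenvariety machinery, combining Proposition \ref{propositionpointspartialeigenvariety} (which characterises membership in $X_p(\overline{\rho})(\lambda'_J)$ by a Hom-nonvanishing with the intermediate module appearing in \eqref{equationdefinitionLJ}) and Proposition \ref{propositionpartialvarietypartialderham} (which extracts partial de Rhamness from such a membership). The only new work is an elementary reduction, using Orlik-Strauch's functor contravariantly, which upgrades the irreducible-quotient statement of the theorem's hypothesis to the criterion Hom statement demanded by Proposition \ref{propositionpointspartialeigenvariety}.

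First I would choose $J := \{\tau\}$, so that $J_v = \{\tau\}$ and $J_{v'} = \emptyset$ for $v' \neq v$. The hypothesis that $\lambda_{\tau}$ is $\fm_{Q_v,\tau}$-dominant ensures that the finite-dimensional $\fm_{Q_p}$-module $L_{M_{Q_p}}(\lambda_J)$, and hence the $U(\fg)$-quotient $U(\fg)\otimes_{U(\fq)}L_J(\lambda)$ of $U(\fg)\otimes_{U(\fb)}\lambda$, are well defined. Since $L(\lambda)$ is the unique irreducible quotient of $U(\fg)\otimes_{U(\fb)}\lambda$ in $\cO$ and the highest-weight generator of the latter maps to a nonzero weight-$\lambda$ generator of $U(\fg)\otimes_{U(\fq)}L_J(\lambda)$, $L(\lambda)$ is also the unique irreducible quotient of $U(\fg)\otimes_{U(\fq)}L_J(\lambda)$. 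The mixed duality $\Hom(-,L)^{\overline{\fu}^\infty}$ from $\cO$ to $\overline{\cO}$ preserves simples and reverses arrows; sending $L(\lambda)$ to $\overline{L}(-\lambda)$ (compatibly with the pairings used in \S\ref{sectionOrlikStrauch} and in the adjunction of Proposition \ref{propositionadjunctionJacquetmodule}), it yields an embedding
\[
\overline{L}(-\lambda) \hookrightarrow \Hom\!\left(U(\fg)\otimes_{U(\fq)}L_J(\lambda),L\right)^{\overline{\fu}^\infty}
\]
in $\overline{\cO}$. Exactness and contravariance of $\cF_{\overline{B}_p}^{G_p}(-, \underline{\delta}_{\mathrm{sm}}\delta_{B_p}^{-1})$ in the first argument turn this embedding into a surjection of locally analytic $G_p$-representations; then the left-exactness of $\Hom_{G_p}\!\bigl(-,\Pi_\infty^{\mathrm{an}}[\fm_y]\otimes_{k(y)}k(x)\bigr)$ in the first argument yields an inclusion from the Hom-term supplied by the theorem's hypothesis into the Hom-term appearing in the criterion of Proposition \ref{propositionpointspartialeigenvariety}. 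The hypothesis therefore forces the latter to be nonzero.

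Since $\underline{\delta}$ is locally algebraic, $\lambda\in (\Z^n)^{\Sigma_p}$ and its restriction to $\ft'_{Q_p,J}$ is the element $\lambda'_J$ used to cut out $X_p(\overline{\rho})_{\lambda'_J}$; in particular $x$ lies in $X_p(\overline{\rho})_{\lambda'_J}$. Proposition \ref{propositionpointspartialeigenvariety} then gives $x \in X_p(\overline{\rho})(\lambda'_J)$, and the genericity of $\underline{\delta}$ places us in the setting of Proposition \ref{propositionpartialvarietypartialderham}. Applied to $x$ and to $\tau\in J_v$, that proposition concludes that the unique triangulation of $D_{\mathrm{rig}}(\rho_{\widetilde{v}})[1/t]$ of parameter $\underline{\delta}_v$ is $Q_\tau$-de Rham, which is the statement of Theorem \ref{theoremQderham}. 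The substantive obstacles have all been shifted onto the two preceding propositions—the adjunction formula of \S\ref{sectionadjunctionformula} feeding Proposition \ref{propositionpointspartialeigenvariety}, together with the density of classical de Rham points on the partial eigenvariety (Proposition \ref{propositiondenseclassicalpoints}, Proposition \ref{propositionclassicalpointsderham}) and the almost de Rham family result Proposition \ref{propositionfamilydpdr} feeding Proposition \ref{propositionpartialvarietypartialderham}—so the present theorem itself is a direct assembly once those are granted.
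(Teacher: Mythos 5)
Your proposal is correct and follows essentially the same route as the paper's own proof: choose $J=\{\tau\}$, observe that $L(\lambda)$ is the unique irreducible quotient of $U(\fg)\otimes_{U(\fq)}L_J(\lambda)$, pass through the mixed duality and the (contravariant, exact) Orlik-Strauch functor to reduce the hypothesis to the Hom-nonvanishing criterion of Proposition \ref{propositionpointspartialeigenvariety}, and then invoke Proposition \ref{propositionpartialvarietypartialderham}. The only difference is that you spell out the duality step in a little more detail than the paper does.
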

\begin{proof}
    We take $J=\{\tau\}\subset \Sigma_p$. The irreducible $U(\fg)$-module $L(\lambda)$ of the highest weight $\lambda$ is the unique quotient of $U(\fg)\otimes_{U(\fq)}L_J(\lambda)$ (for the notation, see \S\ref{sectionadjunctionformula}) by \cite[\S9.4]{humphreys2008representations}. Thus by the functoriality of $\mathcal{F}_{\overline{B}_p}^{G_p}(-,-)$, we get 
    \[\Hom_{G_p}\left(\cF_{\overline{B}_p}^{G_p}\left(\Hom\left(U(\fg)\otimes_{U(\fq)}L_J(\lambda),L\right)^{\overline{\mathfrak{u}}^{\infty}},\underline{\delta}_{\mathrm{sm}}\delta_{B_p}^{-1}\right),\Pi_{\infty}[\fm_y]^{\mathrm{an}}\otimes_{k(y)}k(x)\right)\neq 0.\]
    Hence by Proposition \ref{propositionpointspartialeigenvariety}, $x\in X_{p}(\overline{\rho})(\lambda_J')$. By Proposition \ref{propositionpartialvarietypartialderham}, $\rho_{\widetilde{v}}$ with the triangulation is $Q_{\tau}$-de Rham.
\end{proof}
\subsection{Conjectures on partial classicality and locally analytic socle}\label{sectionconjectures}
We state a conjecture on partial classicality of almost de Rham Galois representations and discuss its relationship with the locally analytic socle conjecture. We only state the conjecture for the patched eigenvariety where the local model is available since in this special case the conjecture is more accessible and the converse of the conjecture is known to some extent (Theorem \ref{theoremQderham}). We also give some partial results.\par
We use the notation before Theorem \ref{theoremcyclepartialderham}. Let $x=\left((\rho_p,\underline{\delta}),z\right)\in X_p(\overline{\rho})(L)\subset\iota\left(X_{\mathrm{tri}}(\overline{\rho}_p)\right)\times (\mathfrak{X}_{\overline{\rho}^p}\times\mathbb{U}^g)$ be a generic point with integral weights. Let $\mathbf{h}$ be the Hodge-Tate weights of $\rho_p$ and $\lambda$ be the weight of $\underline{\delta}$. We assume that $\lambda+\rho$ is dominant (with respect to $\fb$). There are the companion points $x_{w}=\left((\rho_p,\underline{\delta}_w),z\right)\in (\fX_{\overline{\rho}_p}\times\widehat{T}_{p,L})\times (\mathfrak{X}_{\overline{\rho}^p}\times\mathbb{U}^g)$ for $w\in W_{G_p}$ where $\underline{\delta}_w$ is defined in the end of \S\ref{sectionOrlikStrauch}. Let $r_x$ be the image of $x$ in $\mathfrak{X}_{\infty}$ and $\fm_{r_x}$ be the corresponding maximal ideal of $R_{\infty}[\frac{1}{p}]$. For $v\in S_p$, as $\iota_{v}^{-1}(\underline{\delta}_{v})\in \cT_{v,0}^n$, $\cM_{\widetilde{v}}:=D_{\mathrm{rig}}(\rho_{\widetilde{v}})[\frac{1}{t}]$ is trianguline with a unique triangulation $\cM_{\widetilde{v},\bullet}$ of parameter $\iota^{-1}_v(\underline{\delta}_{v})$. Let $Q_p=\prod Q_v$ be a standard parabolic subgroup of $G_p$ as in \S\ref{sectionthepartialeigenvarietynotation} and let $Q_{\tau}$ be the base change to $L$ of the standard parabolic subgroup of $\GL_{n/F_{\widetilde{v}}}$ via $\tau:F_{\widetilde{v}}\rightarrow L$ for all $\tau\in\Sigma_v,v\in S_p$. Recall that $J_{Q_p}(\Pi_{\infty}[\fm_{r_x}]^{\mathrm{an}})$ is a locally analytic representation of $M_{Q_p}$. Take a non-empty subset $J\subset \Sigma_p$ and let $J_v=J\cap \Sigma_v$ for all $v\in S_p$. Following \cite[\S6.1]{ding2017formes}, a vector $v\in J_{Q_p}(\Pi_{\infty}[\fm_{r_x}]^{\mathrm{an}})$ is called $J$-classical if there is a finite-dimensional algebraic $\fm_{Q_p,J}$-module $V$ and a $\fm_{Q_p,J}$-equivariant map $V\rightarrow J_{Q_p}(\Pi_{\infty}[\fm_{r_x}]^{\mathrm{an}})$ such that the image of the map contains $v$.\par
We say that the Hodge-Tate weights $\mathbf{h}$ are regular if $h_{\tau,i}\neq h_{\tau,j}$ for all $\tau\in \Sigma_p,i\neq j$. We now state the conjecture on partial classicality.
\begin{conjecture}\label{conjecturepartialclassicality}
	Assume that $\mathbf{h}$ is regular and the pair $(\rho_{\widetilde{v}}, \cM_{\widetilde{v},\bullet})$ as above is $Q_{\tau}$-de Rham for all $\tau\in J_v,v\in S_p$ (Definition \ref{definitionQderham}), then $J_{Q_p}(\Pi_{\infty}[\fm_{r_x}]^{\mathrm{an}})$ contains non-zero $J$-classical vectors.
\end{conjecture}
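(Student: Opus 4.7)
The regularity of $\mathbf{h}$ forces the parabolic $P$ attached to the Hodge-Tate weights in the local model (§\ref{subsectionrepresentability}) to equal $B$, so locally the trianguline variety is controlled by the scheme $X_B$ containing the Steinberg variety $Z_B$. Combining Theorem \ref{theoremirreducibletriangullinevariety} with Corollary \ref{corollarylocalmodelisomorphism}, one obtains, up to formally smooth factors, an identification of the completion of $X_{\mathrm{tri}}(\overline{\rho}_p)$ at $\iota^{-1}(\rho_p,\underline{\delta})$ with $\widehat{X}_{B,w_x,x_{\mathrm{pdR}}}$, where $w_x\in W_{G_p}$ encodes the relative position of the triangulation and Hodge filtrations. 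Under this identification, the $Q_\tau$-de Rhamness of $(\rho_{\widetilde{v}}, \mathcal{M}_{\widetilde{v},\bullet})$ for all $\tau\in J_v$, $v\in S_p$ (Definition \ref{definitionQderham}) translates, by the very translation between nilpotent operators on $D_{\mathrm{pdR}}$ and the nilpotent coordinate $\nu$ of $X_B$ used throughout §\ref{sectioncycles}, into the single condition $x_{\mathrm{pdR}}\in Z_{Q,B}$, where $Q:=\prod_{\tau\in\Sigma_p}Q_\tau$ with $Q_\tau:=\GL_{n/L}$ for $\tau\notin J$.

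Corollary \ref{corollarysteinbergvarietypartialclassicality} now supplies an irreducible component $Z_{B,w}$ of $Z_B$ that is contained in $Z_{Q,B}$ and that passes through $x_{\mathrm{pdR}}$; by Lemma \ref{lemmaXpwVpw} this $w$ automatically satisfies $w\geq w_x$. By Theorem \ref{theoremvarietyweight} the associated element $w$ has the property that $ww_0\cdot\lambda$ is $\fm_{Q_p,J}$-dominant, which is precisely the condition needed for $L(ww_0\cdot\lambda)$ to be a quotient of the parabolic module $U(\fg)\otimes_{U(\fq)}L_J(ww_0\cdot\lambda)$ of §\ref{sectionadjunctionformula}. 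If the companion constituent $\mathcal{F}_{\overline{B}_p}^{G_p}(\overline{L}(-ww_0\cdot\lambda),\underline{\delta}_{w,\mathrm{sm}}\delta_{B_p}^{-1})$ embeds into $\Pi_\infty[\fm_{r_x}]^{\mathrm{an}}$, then the adjunction formula of Proposition \ref{propositionadjunctionJacquetmodule} applied to $\underline{\delta}_w$, combined with the injection (\ref{equationinjectionvermamodules}), yields a non-zero map $L_{M_{Q_p}}(\widetilde{\lambda}_J')\to J_{Q_p}(\Pi_\infty[\fm_{r_x}]^{\mathrm{an}})$ for a suitable lift $\widetilde{\lambda}_J'$ of the $J$-part of $ww_0\cdot\lambda$, hence non-zero $J$-classical vectors as required.

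The principal obstacle is therefore producing this companion constituent at our specific point $x$. In the generic crystalline setting this is the content of Theorem \ref{theoremmaincrystalline}, whose proof critically exploits the crystalline deformation space $\widetilde{\mathfrak{X}}_{\overline{\rho}_p}^{\mathbf{h}-\mathrm{cr}}$ as a Zariski dense source of automorphic points, together with the Jordan-Hölder induction of Proposition \ref{propositionmaincycle}; in the generality of the conjecture no such ambient family is available. The expected route is infinitesimal: the cycle $Z_{B,w}$ should, via the local-model chain (\ref{equationlocalmodel}), lift to a genuine cycle on $X_p(\overline{\rho})$ through $x$, and granting the $\mathfrak{X}^p$-automorphicity of the corresponding irreducible component of $X_{\mathrm{tri}}(\overline{\rho}_p)$ (Conjecture 3.23 of \cite{breuil2017interpretation}), the same Jordan-Hölder extraction as in the proof of Proposition \ref{propositionmaincycle} forces the desired constituent to appear in $\Pi_\infty[\fm_{r_x}]^{\mathrm{an}}$. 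An alternative path would bypass companion constituents entirely and attempt to show directly that $x\in X_p(\overline{\rho})(\lambda_J')$: combining Proposition \ref{propositiondenseclassicalpoints} with the closedness of the partial eigenvariety in $X_p(\overline{\rho})$ reduces this to proving that $x$ lies in the Zariski closure of the de Rham locus of $X_p(\overline{\rho})$, a statement which however encodes an automorphicity input of essentially the same strength as the previous route.
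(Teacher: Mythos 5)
Your overall reduction matches the paper's own treatment: the statement is a genuine conjecture, and what the paper actually proves (Proposition~\ref{propositiontwoconjecture} and the corollary following it) is that Conjecture~\ref{conjecturesocles} implies Conjecture~\ref{conjecturepartialclassicality}, hence the latter holds in the crystalline case by Theorem~\ref{theoremmaincrystalline}. You correctly identify the chain -- translate the partial de Rham hypothesis into membership in a generalized Steinberg variety, invoke Corollary~\ref{corollarysteinbergvarietypartialclassicality} and Theorem~\ref{theoremvarietyweight} to extract a $w$ with $ww_0\cdot\lambda$ dominant for the relevant Levi, then use the adjunction of Proposition~\ref{propositionadjunctionJacquetmodule} once the companion constituent is supplied -- and you are right that everything hinges on the unproved appearance of that constituent, i.e.\ on Conjecture~\ref{conjecturesocles}.

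There is, however, a concrete error in the translation step. You set $Q=\prod_{\tau\in\Sigma_p}Q_\tau$ with $Q_\tau:=\GL_{n/L}$ for $\tau\notin J$. Since the unipotent radical of the full group is trivial, $\fn_{Q_\tau}=0$ for those $\tau$, and the condition $\Ad(g_1^{-1})\nu\in\fn_Q$ at $\tau\notin J$ becomes $\nu_\tau=0$, i.e.\ full de Rhamness at every $\tau\notin J$. That is not part of the hypothesis, so in general $x_{\mathrm{pdR}}\notin Z_{Q,B}$ and Corollary~\ref{corollarysteinbergvarietypartialclassicality} cannot be applied. The correct choice, as in the paper, is $\widetilde{Q}_p=\prod_{\tau\in J}Q_\tau\times\prod_{\tau\notin J}B_\tau$: with $\fn_{B_\tau}=\fu_\tau$, the condition at $\tau\notin J$ is $\Ad(g_1^{-1})\nu_\tau\in\fu_\tau$, which is automatic from $x_{\mathrm{pdR}}\in Z_B$, so only the genuinely hypothesized constraints at $\tau\in J$ cut down $Z_{\widetilde{Q}_p,B}$ from $Z_B$. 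With $\GL_{n/L}$ replaced by $B_\tau$ outside $J$ and the resulting dominance interpreted accordingly (dominance for the Levi of $\widetilde{Q}_p$, which is only a condition along $J$), your argument lines up with the paper's. Your closing speculations about attacking the conjecture beyond the crystalline case go beyond what the paper claims, and as you yourself note both routes still smuggle in an automorphicity input, so they are not proofs; they do flag the genuine open issue.
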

\begin{remark}
	The assumption that $\mathbf{h}$ is regular is necessary: if $\rho_p$ is de Rham with non-regular Hodge-Tate weights, $Q_p=G_p$ and $J=\Sigma_p$, then there exists no non-zero locally algebraic vector in $\Pi_{\infty}[\fm_{r_x}]^{\mathrm{an}}$ by the local-global compatibility or \cite{dospinescu2020infinitesimal}.
\end{remark}
Next, we formulate a weak version of the locally analytic socle conjecture for the point $x$. Recall that $\Spf(R_{\rho_p,\cM_{\bullet}}) $ is isomorphic to $\widehat{X}_{P_p, x_{\pdR}}$ up to formally smooth morphisms where $X_{P_p}$ is the variety defined in \S\ref{sectionunibranchness} with respect to the standard parabolic subgroup $P_p$ determined by $\mathbf{h}$ or $\lambda$ of the algebraic group $\prod_{v\in S_p}\mathrm{Res}_{F_{\widetilde{v}}/\Q_p}(\GL_{n/F_{\widetilde{v}}})\times_{\Q_p}L$ as in \S\ref{sectionOrlikStrauch} and $x_{\pdR}=(x_{\pdR,\widetilde{v}})_{v\in S_p}$ is a point on $X_{P_p}$ associated with $(\rho_{p},\cM_{\bullet})=(\rho_{\widetilde{v}},\cM_{\widetilde{v},\bullet})_{v\in S_p}$ as in \S\ref{sectiontriangulinevariety}. By \S\ref{sectioncycles}, $\Spec(\overline{R}_{\rho_p,\cM_{\bullet}})\subset \Spec(R_{\rho_p})$ is a union of cycles of the form $\mathfrak{Z}_{w}$ for $w\in W_{G_p}/W_{P_p}$. Moreover, $\mathfrak{Z}_{w}\neq \emptyset$ if and only if $x_{\mathrm{pdR}}\in Z_{P_p,w}$. Let $w_x\in W_{G_p}/W_{P_p}$ be the element as before parameterizing the relative position between the Hodge filtration and the trianguline filtration. Then by discussions in \S\ref{sectionsteinbergvarieties}, $\mathfrak{Z}_{w}\neq \emptyset$ only if $w\geq w_x$ in $W_{G_p}/W_{P_p}$. In general, there exists $w\geq w_x$ such that $\mathfrak{Z}_{w}=\emptyset$ (for example, if $\mathbf{h}$ is regular and $\rho_p$ is not de Rham, then $\mathfrak{Z}_{w_0}=\emptyset$).
\begin{conjecture}\label{conjecturesocles}
	If $\mathfrak{Z}_{w}\neq \emptyset$, then
	\[\Hom_{G_p}\left(\mathcal{F}_{\overline{B}_p}^{G_p}\left(\overline{L}(-ww_0\cdot\lambda),\underline{\delta}_{\mathrm{sm}}\delta_{B_p}^{-1}\right),\Pi_{\infty}[\fm_{r_x}]^{\mathrm{an}}\right)\neq 0.\]
\end{conjecture}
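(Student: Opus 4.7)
The plan is to mimic the descending induction of Theorem \ref{theoremmaincrystalline} on the Bruhat order on $W_{G_p}/W_{P_p}$, but with the local non-emptiness condition $\mathfrak{Z}_w \neq \emptyset$ replacing the global existence of companion points, using the cycle calculus of \S\ref{sectionsteinbergvarieties} and the partial de Rhamness input of Theorem \ref{theoremQderham} as the two driving ingredients. Set up the induction on $\ell = \lg_{P_p}(w)$ with hypothesis $\mathcal{H}_\ell$: for every $w$ with $\lg_{P_p}(w) \geq \ell$ and $\mathfrak{Z}_w \neq \emptyset$, the companion $\mathcal{F}_{\overline{B}_p}^{G_p}(\overline{L}(-w w_0 \cdot \lambda), \underline{\delta}_{\mathrm{sm}}\delta_{B_p}^{-1})$ appears in $\Pi_\infty[\fm_{r_x}]^{\mathrm{an}}$.

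The base case $w = w_0$ says: if $x_{\pdR}$ lies in the top cycle $Z_{P_p, w_0}$, then the locally algebraic constituent $\mathcal{F}_{\overline{B}_p}^{G_p}(\overline{L}(-\lambda), \underline{\delta}_{\mathrm{sm}}\delta_{B_p}^{-1})$ (the unique irreducible quotient of $\mathcal{F}_{\overline{B}_p}^{G_p}(\underline{\delta})$) appears. I would attempt this via density arguments on the partial eigenvariety $X_p(\overline{\rho})(\lambda_J')$ with $J = \Sigma_p$ in \S\ref{sectionpartialeigenvarietygeometry}: at the Zariski dense classical points of Proposition \ref{propositiondenseclassicalpoints} the socle constituent appears by classical local-global compatibility (Proposition \ref{propositionclassicalpointsderham}), and one would try to spread this appearance to $x$ by a closedness argument along the fibres of the weight map. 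For the inductive step, assume $\mathcal{H}_{\ell+1}$ and take $w$ with $\lg_{P_p}(w) = \ell < \lg_{P_p}(w_0)$ and $\mathfrak{Z}_w \neq \emptyset$. Apply Lemma \ref{lemmakeyinductionweylgroup1} to produce a simple reflection $s_\alpha$ with $s_\alpha w > w$ in $W_{G_p}/W_{P_p}$ together with a standard parabolic $Q$ of $\prod_{v \in S_p}(\mathrm{Res}_{F_{\widetilde{v}}/\Q_p}\GL_{n/F_{\widetilde{v}}})\times_{\Q_p}L$ such that $s_\alpha w w_0 \cdot \lambda$ is $\fm_Q$-dominant while $w w_0 \cdot \lambda$ is not, and $\Spec(\widehat{\cO}_{Z_{P_p, s_\alpha w}, x_{\pdR}}) \subset \Spec(\widehat{\cO}_{Z_{Q, P_p}, x_{\pdR}})$ whereas $\Spec(\widehat{\cO}_{Z_{P_p, w}, x_{\pdR}}) \not\subset \Spec(\widehat{\cO}_{Z_{Q, P_p}, x_{\pdR}})$. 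Granting $\mathfrak{Z}_{s_\alpha w} \neq \emptyset$, the inductive hypothesis produces the $(s_\alpha w)$-companion; by Theorem \ref{theoremQderham} applied at the parabolic $Q$, the local sheaf $\mathcal{L}_{s_\alpha w w_0 \cdot \lambda}$ at $x$ is then supported inside the preimage of $Z_{Q, P_p}$. Running the exact-sequence argument of Proposition \ref{propositionmaincycle} on $\Hom_{U(\fg)}(-, \Pi_\infty^{\mathrm{an}})^{U_0}[\fm_{r_x}^\infty][\fm_{\underline{\delta}_{\mathrm{sm}}}^\infty]$ applied to $0 \to L(w w_0 \cdot \lambda) \to U(\fg)\otimes_{U(\fb)}(s_\alpha w w_0 \cdot \lambda) \to L(s_\alpha w w_0 \cdot \lambda) \to 0$, the fact that $\mathfrak{Z}_w$ escapes the partially de Rham locus forces the $w$-companion to appear.

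The main obstacle is two-fold. First, the base case is essentially a partial classicality assertion of the flavour of Conjecture \ref{conjecturepartialclassicality}: in the absence of crystalline deformation spaces at the point $x$, propagating the socle appearance from dense classical points to the almost de Rham point $x$ requires controlling the geometry of $X_p(\overline{\rho})(\lambda_J')$ at non-de Rham points in a way not available from the present arguments. Second, the inductive step needs $\mathfrak{Z}_{s_\alpha w} \neq \emptyset$ as an independent input, since the cycles $\mathfrak{Z}_{w'}$ are distinct irreducible components of $\Spec(\overline{R}_{\rho_p, \cM_\bullet})$ and no obvious implication $\mathfrak{Z}_w \neq \emptyset \Rightarrow \mathfrak{Z}_{s_\alpha w} \neq \emptyset$ holds; one would have to strengthen the induction to a joint statement over the set $\{w' : x_{\pdR} \in Z_{P_p, w'}\}$ or establish its upward closure under simple reflections. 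Finally, the precise identification of the sheaves $\mathcal{L}_{w w_0 \cdot \lambda}$ with cycle-theoretic data on the local model in the style of \S\ref{sectioncycles} is only fully understood when $P_p = B_p$ (cf.\ Remark \ref{remarkcyclesKtheory}), so extending the cycle comparison to the non-regular setting would likely require the still-open Cohen--Macaulayness or normality of $X_{P_p, w}$ for $P_p \neq B_p$.
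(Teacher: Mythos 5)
The statement you were asked to prove is Conjecture~\ref{conjecturesocles}, and the paper itself leaves it as an open conjecture: \S\ref{sectionconjectures} states it, proves only that it \emph{implies} the partial classicality conjecture (Proposition~\ref{propositiontwoconjecture}), records in a corollary that it holds for crystalline $\rho_p$ (where it follows from Theorem~\ref{theoremmaincrystalline} and the BHS crystalline socle result), and closes with a remark sketching isolated evidence beyond the de Rham case. There is therefore no paper proof to compare against, and a complete blind proof is not to be expected.

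Your proposal is not a proof but a clear-eyed analysis of why the method of Theorem~\ref{theoremmaincrystalline} does not close, and on the whole it identifies the right obstructions, consistent with the paper's own discussion in Remark~\ref{remarkweakconjecture} and the subsequent remarks. In particular: (i) your second obstacle — that $\mathfrak{Z}_w \neq \emptyset$ need not propagate upward to $\mathfrak{Z}_{s_\alpha w} \neq \emptyset$ — is precisely the failure of upward Bruhat closure of $\{w' : x_{\pdR} \in Z_{P_p,w'}\}$ when $\nu \neq 0$; the containment $x_{\pdR} \in Z_{P_p,w'} \Leftrightarrow w' \geq w_{x}$ used in (\ref{formulairreduciblecomponents1}) is established only for $\nu=0$, and the paper says explicitly that $\mathfrak{Z}_{w'}' = \emptyset$ can occur for $w' > w_x$ in the non-de Rham case. (ii) Your first obstacle — that the base case amounts to a partial classicality statement and cannot be obtained by the partial-eigenvariety density argument — is also right: Proposition~\ref{propositionpartialvarietypartialderham} gives the implication \emph{from} membership in $X_p(\overline\rho)(\lambda_J')$ \emph{to} partial de Rhamness, not the converse, and at a non-de Rham point there is no analogue of the crystalline deformation space underlying Proposition~\ref{propositionsocleappear} to spread socle appearance along the weight fibre. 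Two further caveats you should fold in: first, the exact sequence you propose, $0 \to L(w w_0\cdot\lambda) \to U(\fg)\otimes_{U(\fb)}(s_\alpha w w_0\cdot\lambda) \to L(s_\alpha w w_0\cdot\lambda) \to 0$, is only the composition series truncated to two steps; in the proof of Proposition~\ref{propositionmaincycle} this works because genericity forces all constituents with $w'' < w_{\cR}$ to vanish and the length jump is exactly one, and an analogous control over the intermediate constituents $w \leq w'' < s_\alpha w$ with $\mathfrak{Z}_{w''} \neq \emptyset$ would have to be established case by case in the almost de Rham setting. Second, your description of the base case $w=w_0$ as ``locally algebraic'' is accurate only when $\mathbf{h}$ is regular; when $\mathbf{h}$ is non-regular $\lambda$ is not $\fb$-dominant, $L(\lambda)$ is infinite-dimensional, and the $w_0$-constituent is locally analytic but not locally algebraic, which is exactly why the density/classicality phrasing of the base case is delicate. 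In summary, your assessment that the argument does not close is correct; these are the reasons the statement is left as a conjecture.
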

\begin{remark}\label{remarkweakconjecture}
	This is only a weak form of the locally analytic socle conjecture as we explain below. \par
	For simplicity, we assume that $\mathbf{h}$ is regular and $z$ is in the smooth locus of $\fX_{\overline{\rho}^p}\times \mathbb{U}^g$. As in the proof of \cite[Thm. 5.3.3]{breuil2019local} or Proposition \ref{propositionmaincycle}, the dual of
	\[\Hom_{U(\fg)}\left(L(ww_0\cdot\lambda),\Pi_{\infty}^{\mathrm{an}}\right)^{U_0}[\fm_{r_x}^{\infty}][\fm_{\underline{\delta}_{\mathrm{sm}}}^{\infty}]\]
	is a module over $\widehat{\cO}_{\mathfrak{X}_{\infty},r_x}$ via the map $\Spec(\widehat{\cO}_{X_p(\overline{\rho})_{ww_0\cdot \lambda},x_w})\hookrightarrow\Spec(\widehat{\cO}_{\mathfrak{X}_{\infty},r_x})$ (remark that $\Spec(\widehat{\cO}_{X_p(\overline{\rho})_{ww_0\cdot \lambda},x_w})$ can be empty), denoted by $\mathcal{L}(ww_0\cdot \lambda)$ in \cite[(5.22)]{breuil2019local}. Abusing the notation, let $[\mathcal{L}(ww_0\cdot \lambda)]$ be the set-theoretic support of $\mathcal{L}(ww_0\cdot \lambda)$ in $\Spec(\widehat{\cO}_{\mathfrak{X}_{\infty},r_x})$. For $w'\in W_{G_p}$, we let $\mathfrak{Z}_{w'}'\subset \Spec(\widehat{\cO}_{\mathfrak{X}_{\infty},r_x})$ be the preimage of the closed subset $\mathfrak{Z}_{w'}$ in $\Spec(R_{\rho_{p}})$ via the formally smooth morphism $\Spec(\widehat{\cO}_{\mathfrak{X}_{\infty},r_x})\rightarrow \Spec(R_{\rho_p})$. Now let $\mathfrak{C}_{w'}$ be the union of $\mathfrak{Z}_{w''}',w''\in W_{G_p}$ such that $a_{w',w''}\geq 1$ where $a_{w',w''}$ is the coefficient appeared in \cite[(2.16)]{breuil2019local} (with respect to the algebraic group $\prod_{v\in S_p}\mathrm{Res}_{F_{\widetilde{v}}/\Q_p}(\GL_{n/F_{\widetilde{v}}})\times_{\Q_p}L$) so that $\mathfrak{C}_{w'}$ is the underlying set of the Kazhdan-Lusztig cycle appeared in \cite[(5.24)]{breuil2019local} with the same notation. Then $\mathfrak{Z}_{w'}'\subset \mathfrak{C}_{w'}$ and $\mathfrak{Z}_{w''}'\subset \mathfrak{C}_{w'}$ only if $w'\geq w''$ in $W_{G_p}$ (\cite[Thm. 2.4.7(iii)]{breuil2019local}). \par
	In the spirit of the analogue between the subsets (or even real cycles) $[\mathcal{L}(ww_0\cdot \lambda)]$ and $\mathfrak{C}_{w}$, Conjecture \ref{conjecturesocles} expects $\mathfrak{Z}_w'\subset [\mathcal{L}(ww_0\cdot \lambda)]$. In general, it is possible that there exists some $w'>w$ such that $\mathfrak{Z}_{w}'\subset \mathfrak{C}_{w'}$ (\cite[Rem. 2.4.5]{breuil2019local}). Hence we also expect that $\mathfrak{Z}_{w}'\subset [\mathcal{L}(w'w_0\cdot \lambda)]$ and thus if $\mathfrak{Z}_{w}'\neq \emptyset$, we expect that
	\[\Hom_{G_p}\left(\mathcal{F}_{\overline{B}_p}^{G_p}\left(\overline{L}(-w'w_0\cdot\lambda),\underline{\delta}_{\mathrm{sm}}\delta_{B_p}^{-1}\right),\Pi_{\infty}[\fm_{r_x}]^{\mathrm{an}}\right)\neq 0.\]
	But now it may happen that $\mathfrak{Z}_{w'}'=\emptyset$ (this will not happen if $\rho_p$ is de Rham, see (\ref{formulairreduciblecomponents1})). Hence Conjecture \ref{conjecturesocles} should not predict all the possible companion constituents in general.\par
	However, Theorem \ref{theoremcyclepartialderham} imposes some restriction for elements $w'\in W_{G_p}$ such that $\mathfrak{Z}_{w}'\subset [\mathcal{L}(w'w_0\cdot \lambda)]$ just as the restriction for the characteristic cycles in Proposition \ref{propositioncharacteristiccycle}.
\end{remark}
\begin{remark}
    The weak conjecture on locally analytic socles (Conjecture \ref{conjecturesocles}) still implies the existence of the companion points on the eigenvariety: since we have $\mathfrak{Z}_{w_x}\neq \emptyset$ by definition, we get $x_{w_x}\in X_p(\overline{\rho})$ if Conjecture \ref{conjecturesocles} is true, which implies that $x_{w}\in X_p(\overline{\rho})$ for all $w\geq w_x$ in $W_{G_p}/W_{P_p}$ by \cite[Thm. 5.5]{breuil2017smoothness}. 
\end{remark}
\begin{proposition}\label{propositiontwoconjecture}
	Conjecture \ref{conjecturesocles} implies Conjecture \ref{conjecturepartialclassicality}.
\end{proposition}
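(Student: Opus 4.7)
The plan is to combine the partial de Rhamness hypothesis with the Steinberg variety analysis of \S\ref{sectionsteinbergvarieties} so as to produce a Weyl group element $w$ to which Conjecture \ref{conjecturesocles} applies, and then transfer the resulting non-vanishing via the adjunction of Proposition \ref{propositionadjunctionJacquetmodule} into a non-vanishing statement for the partial Jacquet module.

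First, since $\mathbf{h}$ is regular, the Hodge-type parabolic $P_p$ in the local model for $X_{\mathrm{tri}}(\overline{\rho}_p)$ coincides with $B_p$, so the relevant local model sits inside $X_{B_p}$. I would take $Q = \prod_{\tau \in \Sigma_p}Q'_\tau$ to be the standard parabolic of $\prod_{\tau}\GL_{n/L}$ with $Q'_\tau = Q_v \otimes_{F_{\widetilde{v}},\tau}L$ for $\tau \in J_v$ and $Q'_\tau = B_\tau$ for $\tau \notin J$. Unwinding Definition \ref{definitionQderham} together with the local-model isomorphism of \S\ref{sectiontriangulinevariety}, the hypothesis that $(\rho_{\widetilde{v}},\cM_{\widetilde{v},\bullet})$ is $Q_\tau$-de Rham for every $\tau\in J_v, v\in S_p$ translates into $x_{\mathrm{pdR}} \in Z_{Q,B_p}$ in the notation of \S\ref{sectionsteinbergvarieties} (note that the condition $\Ad(g_{1,\tau}^{-1})\nu_\tau \in \fn_{B_\tau}=\fu_\tau$ at $\tau\notin J$ is automatic since $\nu$ is nilpotent and lies in $\fb$). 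Corollary \ref{corollarysteinbergvarietypartialclassicality} will then furnish $w \in W_{G_p}$ with $x_{\mathrm{pdR}} \in Z_{B_p,w} \subset Z_{Q,B_p}$. The first inclusion gives $\mathfrak{Z}_w \neq \emptyset$ via the dictionary of \S\ref{sectioncycles}; the second, combined with Theorem \ref{theoremvarietyweight}, asserts that $w(\mathbf{h})$ is strictly $Q$-dominant, which in turn (using the strict dominance of $\lambda+\rho$ coming from the regularity of $\mathbf{h}$) is equivalent to $(ww_0\cdot\lambda)_J$ being $\fm_{Q_p,J}$-dominant.

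Conjecture \ref{conjecturesocles} applied to this $w$ will then produce a non-zero element of $\Hom_{G_p}(\cF_{\overline{B}_p}^{G_p}(\overline{L}(-ww_0\cdot\lambda),\underline{\delta}_{\mathrm{sm}}\delta_{B_p}^{-1}),\Pi_\infty[\fm_{r_x}]^{\mathrm{an}})$. Because $(ww_0\cdot\lambda)_J$ is $\fm_{Q_p,J}$-dominant, the module $L_J(ww_0\cdot\lambda)$ of (\ref{equationdefinitionLJ}) is well-defined and $L(ww_0\cdot\lambda)$ is the unique simple quotient of $U(\fg)\otimes_{U(\fq)}L_J(ww_0\cdot\lambda)$. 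Contravariant exactness of $\cF_{\overline{B}_p}^{G_p}(-,-)$ in its first argument will then embed the above Hom space into
\[\Hom_{G_p}\bigl(\cF_{\overline{B}_p}^{G_p}(\Hom(U(\fg)\otimes_{U(\fq)}L_J(ww_0\cdot\lambda),L)^{\overline{\fu}^{\infty}},\underline{\delta}_{\mathrm{sm}}\delta_{B_p}^{-1}),\Pi_\infty[\fm_{r_x}]^{\mathrm{an}}\bigr),\]
exactly as in (\ref{equationinjectionvermamodules}). Applying the adjunction of Proposition \ref{propositionadjunctionJacquetmodule} (with $\lambda$ there replaced by $ww_0\cdot\lambda$ and $\underline{\delta}=\underline{\delta}_w$) will identify this latter space with $\Hom_{T_p}(\underline{\delta}_w,J_{B_{Q_p}}(J_{Q_p}(\Pi_\infty[\fm_{r_x}]^{\mathrm{an}})_{(ww_0\cdot\lambda)'_J}))$, and hence show that it is non-zero. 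In particular $J_{Q_p}(\Pi_\infty[\fm_{r_x}]^{\mathrm{an}})_{(ww_0\cdot\lambda)'_J}\neq 0$, and by the very construction of $(-)_{\lambda'_J}$ in \S\ref{sectionthepartialemertonjacquetmodule} this provides a non-zero $\fm_{Q_p,J}$-equivariant map from the finite-dimensional algebraic module $L_{M_{Q_p}}((ww_0\cdot\lambda)_J)$ into $J_{Q_p}(\Pi_\infty[\fm_{r_x}]^{\mathrm{an}})$; any vector in its image is then $J$-classical in the sense of \S\ref{sectionconjectures}, yielding Conjecture \ref{conjecturepartialclassicality}.

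The main technical point I expect to verify carefully is the equivalence, at the end of the second paragraph, between ``$w(\mathbf{h})$ strictly $Q$-dominant'' coming out of Theorem \ref{theoremvarietyweight} and ``$(ww_0\cdot\lambda)_J$ is $\fm_{Q_p,J}$-dominant'' needed for $L_J(ww_0\cdot\lambda)$ to make sense. This is a standard Weyl-group computation relating $\langle\alpha^\vee,ww_0(\lambda+\rho)\rangle$ to $\langle w^{-1}(\alpha),\mathbf{h}\rangle$ through $w_0(R^+)=R^-$, made clean by the strict dominance of $\lambda+\rho$. The harder conceptual issue is rather to make sure that the product parabolic $Q$ introduced in the local model (incorporating the trivial factors $Q'_\tau=B_\tau$ for $\tau\notin J$) correctly matches the partial Levi $\fm_{Q_p,J}$ appearing in the definition of $J$-classicality; once this dictionary is in place, the rest is essentially a careful bookkeeping exercise along the lines of the proof of Theorem \ref{theoremQderham}.
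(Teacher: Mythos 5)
Your argument is correct and follows essentially the same path as the paper: the $\widetilde{Q}_p$ you introduce is precisely the parabolic $\widetilde{Q}_p$ in the paper's proof, the appeal to Corollary \ref{corollarysteinbergvarietypartialclassicality} and Theorem \ref{theoremvarietyweight} to produce $w$ with $\mathfrak{Z}_w\neq\emptyset$ and $ww_0\cdot\lambda$ dominant for the Levi of $\widetilde{Q}_p$ is the same, and the transfer through (\ref{equationinjectionvermamodules}) and Proposition \ref{propositionadjunctionJacquetmodule} is identical. The only cosmetic difference is that you pass through the full $T_p$-equivariant conclusion of Proposition \ref{propositionadjunctionJacquetmodule} before extracting a $J$-classical vector, whereas the paper cites the first chain of identifications in that proposition's proof directly to land on $\Hom_{U(\fm_{Q_p})}(L_J(ww_0\cdot\lambda),J_{Q_p}(\Pi_\infty[\fm_{r_x}]^{\mathrm{an}}))\neq 0$; the Weyl-group equivalence you flag at the end is already recorded in the paper just before Theorem \ref{theoremcyclepartialderham} and after Lemma \ref{lemmaassociatedvariety}.
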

\begin{proof}
	Now $\mathbf{h}$ is regular. Let $B_p=\prod_{\tau\in\Sigma_p}B_{\tau}$ be the standard Borel subgroup of upper-triangular matrices in $\prod_{v\in S_p}\mathrm{Res}_{F_{\widetilde{v}}/\Q_p}(\GL_{n/F_{\widetilde{v}}})\times_{\Q_p}L$. Since $(\rho_{\widetilde{v}},\cM_{\widetilde{v},\bullet})$ is $Q_{\mathrm{\tau}}$-de Rham for $\tau\in J_v,v\in S_p$, by definition, we have $x_{\mathrm{pdR}}\in Z_{\widetilde{Q}_{p},P_p}$ where $\widetilde{Q}_{p}:=\prod_{\tau\in J}Q_{\tau}\prod_{\tau\notin J}B_{\tau}$    
    and $Z_{\widetilde{Q}_{p},P_p}$ is defined in \S\ref{sectionsteinbergvarieties}. By Corollary \ref{corollarysteinbergvarietypartialclassicality}, there exists $w\in W_{G_p}$ such that $x_{\mathrm{pdR}}\in Z_{B_p,w}$ and $Z_{B_p,w}\subset Z_{\widetilde{Q}_{p},B_p}$. Take one such $w$, then $\mathfrak{Z}_w\neq \emptyset$. By Theorem \ref{theoremvarietyweight}, $ww_0\cdot \lambda$ is a dominant weight for the Lie algebra of the standard Levi subgroup of $\widetilde{Q}_{p}$. In particular, $L(ww_0\cdot \lambda)$ is the irreducible quotient of $U(\fg)\otimes_{U(\fq)}L_{J}(ww_0\cdot \lambda)$ where $L_J(ww_0\cdot \lambda)$ is defined via (\ref{equationdefinitionLJ}). The statement of Conjecture \ref{conjecturesocles} and the exactness of the functor $\cF_{\overline{B}_p}^{G_p}$ imply that
	\[\Hom_{G_p}\left(\cF_{\overline{B}_p}^{G_p}\left(\Hom\left(U(\fg)\otimes_{U(\fq)}L_J(ww_0\cdot \lambda),L\right)^{\overline{\mathfrak{u}}^{\infty}},\underline{\delta}_{\mathrm{sm}}\delta_{B_p}^{-1}\right), \Pi_{\infty}[\fm_{r_x}]^{\mathrm{an}}\right)\neq 0.\]
	By the beginning part of the proof of Proposition \ref{propositionadjunctionJacquetmodule}, we get 
	\[\Hom_{U(\fm_{Q_p})}\left(L_J(ww_0\cdot \lambda),J_{Q_p}(\Pi_{\infty}[\fm_{r_x}]^{\mathrm{an}})\right)\neq 0.\]
	Then the non-zero image of any non-zero $U(\fm_{Q_p})$-equivariant map $L_J(ww_0\cdot \lambda)\rightarrow J_{Q_p}(\Pi_{\infty}[\fm_{r_x}]^{\mathrm{an}})$ gives rise to non-zero $J$-classical vectors since $L_{M_{Q_p}}(\lambda_J)$ in (\ref{equationdefinitionLJ}) is a finite-dimensional representation of $\fm_{Q_p,J}$.
\end{proof}
\begin{corollary}
	Conjecture \ref{conjecturepartialclassicality} is true if $\rho_{\widetilde{v}}$ is crystalline for every $v\in S_p$.
\end{corollary}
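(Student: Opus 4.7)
The plan is to deduce the corollary from Proposition~\ref{propositiontwoconjecture}, which reduces Conjecture~\ref{conjecturepartialclassicality} to Conjecture~\ref{conjecturesocles}, and then to verify Conjecture~\ref{conjecturesocles} for our crystalline~$\rho_p$ by combining the existence of all companion points in Theorem~\ref{theoremmaincrystalline} with the appearance of the associated companion constituents in Proposition~\ref{propositionsocleappear}. First I would observe that when $\rho_p$ is crystalline the hypothesis of Conjecture~\ref{conjecturepartialclassicality} is automatic: since $\rho_p$ is in particular de Rham, the nilpotent operator on $D_{\pdR}\bigl(W_{\mathrm{dR}}(D_{\mathrm{rig}}(\rho_{\widetilde{v}})[\tfrac{1}{t}])\bigr)$ vanishes identically, hence restricts to zero on every graded piece of the triangulation, yielding the $Q_\tau$-de Rham condition for all $\tau \in J_v$ and $v\in S_p$. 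The content of the corollary is therefore just the conclusion of Conjecture~\ref{conjecturepartialclassicality}, so by Proposition~\ref{propositiontwoconjecture} it is enough to verify Conjecture~\ref{conjecturesocles} at $x$.

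Next I would pin down which cycles $\mathfrak{Z}_w$ are non-empty and identify $x$ as a dominant companion point. Regularity of $\mathbf{h}$ forces $P_p = B_p$ and $W_{P_p} = 1$. The assumption that $\lambda + \rho$ is dominant together with the genericity of $\underline{\delta}$ forces $\underline{\delta} = \underline{\delta}_{\cR, w_0}$, where $\cR$ is the refinement determined by the canonical triangulation of each $D_{\mathrm{rig}}(\rho_{\widetilde{v}})[\tfrac{1}{t}]$; thus $x$ plays the role of the dominant companion point appearing as the hypothesis of Theorem~\ref{theoremmaincrystalline}. On the local model side, $\rho_p$ being de Rham forces $N_W = 0$, so the image of $x_{\mathrm{pdR}}$ in $\fg$ is zero, and formula~(\ref{formulairreduciblecomponents1}) applies to show that $\mathfrak{Z}_w \neq \emptyset$ if and only if $w \geq w_{\cR}$ in $W_{G_p}/W_{P_p} = W_{G_p}$.

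Finally, for every such $w$, Theorem~\ref{theoremmaincrystalline} produces a companion point $((\rho_p, \underline{\delta}_{\cR, w}), z) \in X_p(\overline{\rho})$, and Proposition~\ref{propositionsocleappear} then yields the non-vanishing
\[
\Hom_{G_p}\!\left(\mathcal{F}_{\overline{B}_p}^{G_p}\bigl(\overline{L}(-ww_0\cdot\lambda),\, \underline{\delta}_{\cR, \mathrm{sm}}\delta_{B_p}^{-1}\bigr),\, \Pi_{\infty}^{\mathrm{an}}[\fm_{r_x}]\right) \neq 0,
\]
which is precisely the statement of Conjecture~\ref{conjecturesocles} for~$w$. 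Proposition~\ref{propositiontwoconjecture} then delivers Conjecture~\ref{conjecturepartialclassicality} for~$x$. The whole argument is essentially a repackaging of results already proved in the paper, and no substantial new obstacle is anticipated; the one point requiring genuine care is the equivalence $\mathfrak{Z}_w \neq \emptyset \Leftrightarrow w \geq w_{\cR}$, which is precisely where the de~Rham hypothesis $N_W = 0$ enters via formula~(\ref{formulairreduciblecomponents1}).
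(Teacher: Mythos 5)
Your proof is correct and follows the same overall structure as the paper's: reduce Conjecture~\ref{conjecturepartialclassicality} to Conjecture~\ref{conjecturesocles} via Proposition~\ref{propositiontwoconjecture}, then verify Conjecture~\ref{conjecturesocles} in the crystalline case. The difference is in how the latter verification is sourced. The paper's proof is a one-line citation of \cite[Thm.\ 5.3.3]{breuil2019local}, whereas you spell out the argument internally: you identify the dominant companion point $\underline\delta = \underline\delta_{\cR,w_0}$ (using $P_p=B_p$ when $\mathbf{h}$ is regular), observe that $N_W=0$ forces $x_{\pdR}$ to lie over $0\in\fg$ so that formula~(\ref{formulairreduciblecomponents1}) gives $\mathfrak{Z}_w\neq\emptyset\iff w\geq w_{\cR}$, and then invoke Theorem~\ref{theoremmaincrystalline} and Proposition~\ref{propositionsocleappear} for all such $w$. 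Your version is arguably the more careful one: as the remark immediately after Theorem~\ref{theoremmaincrystalline} points out, \cite[Thm.\ 5.3.3]{breuil2019local} carries an additional hypothesis that the tame component $z$ lies in the smooth locus of $\fX_{\overline\rho^p}\times\mathbb{U}^g$, a hypothesis not present in the statement of Conjecture~\ref{conjecturepartialclassicality}, whereas Theorem~\ref{theoremmaincrystalline} removes it. Your proof is therefore free of that subtlety. One small remark: your opening observation that the $Q_\tau$-de Rham hypothesis is automatic for crystalline $\rho_p$ is true but not actually needed, since the corollary only claims the conjecture's implication and the conjecture already assumes its own hypotheses.
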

\begin{proof}
	By \cite[Thm. 5.3.3]{breuil2019local}, Conjecture \ref{conjecturesocles} is true for crystalline points. Hence Conjecture \ref{conjecturepartialclassicality} is true for such points by Proposition \ref{propositiontwoconjecture}.
\end{proof}
\begin{remark}
    The method in the proof of Proposition \ref{propositionmaincycle} based on Theorem \ref{theoremcyclepartialderham} can be used to give non-trivial evidence for Conjecture \ref{conjecturesocles} beyond de Rham cases. To convince the reader, we sketch an example here. Let's keep the notation and assumptions in Conjecture \ref{conjecturesocles} and Remark \ref{remarkweakconjecture} and assume $w_x=e$ is the identity in $W_{G_p}$. Hence $\mathfrak{Z}_{e}'\neq \emptyset$. By the (dual of) exact sequences as \cite[(5.21)]{breuil2019local} writing $M(w_0w_0\cdot\lambda)$ as a successive extension of subquotients $L(ww_0\cdot\lambda),w\in W_{G_p}$, we get an equality of topological spaces inside $\Spec(\widehat{\cO}_{\mathfrak{X}_{\infty},r_x})$:
    \[\Spec(\widehat{\cO}_{X_p(\overline{\rho})_{w_0w_0\cdot\lambda},x_{w_0}})=\mathrm{Supp}(\cM_{\infty}\otimes_{\cO_{X_p(\overline{\rho})}}\widehat{\cO}_{X_p(\overline{\rho})_{w_0w_0\cdot\lambda},x_{w_0}})=\bigcup_{w\in W_{G_p}}[\mathcal{L}(ww_0\cdot \lambda)].\]
    On the other hand, by the theory of the local model (\ref{formulairreduciblecomponents}), we have 
    \[\Spec(\widehat{\cO}_{X_p(\overline{\rho})_{w_0w_0\cdot\lambda},x_{w_0}})=\bigcup_{w\in W_{G_p}}\mathfrak{Z}_{w}'\]
    Hence we get
    \[\bigcup_{w\in W_{G_p}}[\mathcal{L}(ww_0\cdot \lambda)]=\bigcup_{w\in W_{G_p}}\mathfrak{Z}_{w}'.\]
    A priori, in the above equality, we know only $\mathfrak{Z}_{e}'\neq \emptyset$. As $\mathbf{h}$ is antidominant, we see that any irreducible component of $\mathfrak{Z}_{e}'$ is not contained in $\Spec(R_{\rho_{p},\cM_{\bullet}}^{\widetilde{Q}_{p}})$ for any standard parabolic subgroup $\widetilde{Q}_{p}$ of $\prod_{v\in S_p}\mathrm{Res}_{F_{\widetilde{v}}/\Q_p}(\GL_{n/F_{\widetilde{v}}})\times_{\Q_p}L$ unless $\widetilde{Q}_{p}$ is the Borel subgroup by Theorem \ref{theoremvarietyweight}. But for any $w\neq e$, the cycle $[\cL(ww_0\cdot \lambda)]$ is contained in $\Spec(R_{\rho_{p},\cM_{\bullet}}^{\widetilde{Q}_{p}})$ for some non-Borel $\widetilde{Q}_{p}$ by Theorem \ref{theoremcyclepartialderham}. Hence any component of $\mathfrak{Z}_{e}'$ is not contained in $[\mathcal{L}(ww_0\cdot \lambda)]$ for any $w\neq e$. Since $\mathfrak{Z}_{e}'$ is equidimensional with the same dimension as $\Spec(\widehat{\cO}_{X_p(\overline{\rho})_{w_0w_0\cdot\lambda},x_{w_0}})$, we must have $\mathfrak{Z}_{e}'\subset [\mathcal{L}(w_0\cdot \lambda)]$. Hence $[\mathcal{L}(w_0\cdot \lambda)]\neq \emptyset$ which implies that \[\Hom_{G_p}\left(\mathcal{F}_{\overline{B}_p}^{G_p}\left(\overline{L}(-w_0\cdot\lambda),\underline{\delta}_{\mathrm{sm}}\delta_{B_p}^{-1}\right),\Pi_{\infty}[\fm_{r_x}]^{\mathrm{an}}\right)\neq 0.\]
	Then $x_e\in X_p(\overline{\rho})$. By \cite[Thm. 5.5]{breuil2017smoothness}, $x_w\in X_p(\overline{\rho})$ for all $w\in W_{G_p}$.
\end{remark}
\begin{remark}\label{remarksmoothcompanionpoints}
	Theorem \ref{theoremmaincrystalline} only concerns the existence of all companion constituents that can be seen via the local models. In general, there exist companion points $x'=\left((\rho_p,\underline{\delta}'),z\right)\in X_p(\overline{\rho})\subset\iota\left(X_{\mathrm{tri}}(\overline{\rho}_p)\right)\times (\mathfrak{X}_{\overline{\rho}^p}\times\mathbb{U}^g)$ of $x$ such that $\underline{\delta}^{-1}\underline{\delta}'$ is not an algebraic character. In the situation of Theorem \ref{theoremmaincrystalline}, $x'$ is one of points $\left((\rho_p,\underline{\delta}_{\cR',w'}),z\right)$ for some other choice of refinements $\cR'$. The existence of all companion points of all refinements on $X_p(\overline{\rho})$ for regular crystalline points is obtained by the existence of locally algebraic constituents of the form $L(\lambda)\otimes (\mathrm{Ind}_{\overline{B}_p}^{G_p}\underline{\delta}_{\mathrm{sm}}\delta_{B_p}^{-1})^{\mathrm{sm}}$ in $\Pi_{\infty}[\fm_{r_x}]^{\mathrm{an}}$ and the intertwining between smooth principal series (see the beginning of \cite[\S5.3]{breuil2019local} for details). In the non-regular crystalline cases, there exist no such locally algebraic constituents in the socle of $\Pi_{\infty}[\fm_{r_x}]^{\mathrm{an}}$. Still, in some special cases, one can obtain the existence of some companion points of other refinements using the locally algebraic representations of $M_{Q_p}$ that appear in $J_{Q_p}(\Pi_{\infty}[\fm_{r_x}]^{\mathrm{an}})$ (cf. \cite[Prop. 2.14]{ding2019some}).
\end{remark}
\appendix
\section{Families of almost de Rham $(\varphi,\Gamma_K)$-modules}\label{sectionfamiliesofalmostdeRhamrepresetnations}
We show that the method of Berger-Colmez in \cite{berger2008familles} to construct de Rham families for Galois representations can be generalized to construct partially almost de Rham families for $(\varphi,\Gamma_K)$-modules.
\subsection{Preliminary}
Let $K$ be a finite extension of $\Q_p$ with a uniformizer $\varpi_K$, $\overline{K}$ be an algebraic closure of $K$ and $C$ be the completion of $\overline{K}$. Let $K_0$ be the maximal unramified (over $\Q_p$) subfield of $K$. Let $K_{\infty}$ be the extension of $K$ by adding all $p$-th power roots of unity, $K_m=K(\mu_{p^m})$ for $m\geq 1$. We set $\Gamma_K:=\Gal(K_{\infty}/K), \Gamma_{K_m}:=\Gal(K_{\infty}/K_m)$ and $H_K:=\Gal(\overline{K}/K_{\infty})$. Let $\epsilon:\Gamma_K\rightarrow \Z_p^{\times}$ be the cyclotomic character.\par
Let $A$ be an affinoid algebra over $\Q_p$. We keep the notation of the Robba rings and $(\varphi,\Gamma_K)$-modules in \S\ref{sectionintroductionnotation}. Recall we have Robba rings $\cR_{A,K}^{r}$ which is denoted by $\cR_{A}^{r}(\pi_K)$ in \cite[Def. 2.2.2]{kedlaya2014cohomology} for any $r>0$ less than certain constant. \par
Recall a $\varphi$-module over $\cR^{r}_{A,K}$ is a finite projective module $M^{r}$ over $\cR_{A,K}^{r}$ equipped with an isomorphism $\varphi^*M^{r}\simrightarrow M^{r/p}=M^{r}\otimes_{\cR_{A,K}^{r}}\cR_{A,K}^{r/p}$ where $\varphi: \cR_{A,K}^{r}\rightarrow \cR_{A,K}^{r/p}$ and a $\varphi$-module over $\cR_{A,K}$ is the base change of a $\varphi$-module over some $\cR_{A,K}^{r_0}$ (\cite[Def. 2.2.6]{kedlaya2014cohomology}). A $(\varphi,\Gamma_K)$-module over $\cR_{A,K}$ is the base change of a $\varphi$-module equipped with a commuting semi-linear continuous action of $\Gamma_K$ over some $\cR_{A,K}^{r_0}$ (\cite[Def. 2.2.12]{kedlaya2014cohomology}). \par
We recall some constructions from $(\varphi,\Gamma_K)$-modules.\par
Suppose that $D_A$ is a $(\varphi,\Gamma)$-module over $\cR_{A,K}$ of rank $n$. By definition, $D_A$ is the base change of a $(\varphi,\Gamma)$-module $D^{r_0}_A$ over $\cR_{A,K}^{r_0}=\cR_{\Q_p,K}^{r_0}\widehat{\otimes}_{\Q_p}A$ (\cite[Prop. 1.1.29]{emerton2017locally}) for some $r_0$ small enough. For any $r$, let $m(r)$ be the minimal integer such that $p^{m(r)-1}[K(\mu_{\infty}):K_0(\mu_\infty)]r\geq 1$. Then there exists a continuous $\Gamma_K$-equivariant injection $\iota_{m(r)}:\cR_{\Q_p,K}^r\hookrightarrow K_{m(r)}[[t]]$ (e.g. \cite[\S1.2]{berger2008construction}). As in \cite[\S4.3]{berger2008familles}, we define $A\widehat{\otimes}_{\Q_p}K_{m}[[t]]:=\varprojlim_k A\widehat{\otimes}_{\Q_p}(K_m[[t]]/t^k)$ for any $m\geq m(r_0)$ and we will always assume $m\geq m(r_0)$ in the remaining part of this appendix. \par
Let $D_{\mathrm{dif}}^{K_m,+}(D_A):=A\widehat{\otimes}K_m[[t]]\otimes_{\iota_m,\cR_{A,K}^{r}}D_A^{r}$ be the ``localization'' if $m=m(r)$ and $r\leq r_0$ where $D^r_A=D^{r_0}_A\otimes_{\cR_{A,K}^{r_0}}\cR_{A,K}^{r}$. Then $D_{\mathrm{dif}}^{K_m,+}(D_A)$ is a finite projective $A\widehat{\otimes}K_m[[t]]$-module with a semi-linear continuous action of $\Gamma_K$. Here the continuity means that for each $s\geq 1$ the action of $\Gamma_K$ on $D_{\mathrm{dif}}^{K_m,+}(D_A)/t^sD_{\mathrm{dif}}^{K_m,+}(D_A)$ is continuous. In particular, for any $x\in D_{\mathrm{dif}}^{K_m,+}(D_A)/t^sD_{\mathrm{dif}}^{K_m,+}(D_A)$, $\lim_{\gamma\in \Gamma_K,\gamma\to 1} \gamma(x)=x$ in the finite $A$-Banach module 
\[D_{\mathrm{dif}}^{K_m,+}(D_A)/t^sD_{\mathrm{dif}}^{K_m,+}(D_A).\] 
Write $D_{\mathrm{dif}}^{K_m}(D_A):=D_{\mathrm{dif}}^{K_m,+}(D_A)[\frac{1}{t}]$.\par
Let $D_{\mathrm{Sen}}^{K_m}(D_A):=D_{\mathrm{dif}}^{K_m,+}(D_A)/tD_{\mathrm{dif}}^{K_m,+}(D_A)$. Then $\varphi$ induces $\Gamma_K$-equivariant isomorphisms  (cf. \cite[\S2.3]{liu2015triangulation})
\begin{align*}
    D_{\mathrm{Sen}}^{K_m}(D_A)\otimes_{K_m}K_{m+1}&\simrightarrow D_{\mathrm{Sen}}^{K_{m+1}}(D_A),\\
    D_{\mathrm{dif}}^{K_m,+}(D_A)\otimes_{K_m[[t]]}K_{m+1}[[t]]&\simrightarrow D_{\mathrm{dif}}^{+,K_{m+1}}(D_A).
\end{align*} 
If $B$ is an affinoid algebra with a morphism $A\rightarrow B$, denote by $D_B:= M\widehat{\otimes}_AB$ the $(\varphi,\Gamma_K)$-module over $\cR_{B,K}$. Then by the construction, there is a natural isomorphism $D_{\mathrm{dif}}^{K_m,+}(D_B)\simeq D_{\mathrm{dif}}^{K_m,+}(D_A)\widehat{\otimes}_{A}B$. If $x$ is a point on $\mathrm{Sp}(A)$, we let $D_x=D_A\otimes_Ak(x)$ be the $(\varphi,\Gamma_K)$-module over $\cR_{k(x),K}$. The following lemma is not essential.
\begin{lemma}\label{lemmalocallyfreeddiff}
    There exists a finite admissible covering $\{\mathrm{Sp}(A_i)\}$ of $\mathrm{Sp}(A)$ such that each $D_{\mathrm{dif}}^{K_m,+}(D_{A_i})$ (resp. $D_{\mathrm{Sen}}^{K_m}(D_{A_i})$) is a free $A_i\widehat{\otimes}_{\Q_p}K_m[[t]]$-module (resp. $A_i\otimes_{\Q_p}K_m$-module).
\end{lemma}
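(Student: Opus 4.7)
The plan is to reduce the statement for $D_{\mathrm{dif}}^{K_m,+}(D_A)$ to that for $D_{\mathrm{Sen}}^{K_m}(D_A)$ via $t$-adic Nakayama, and to prove the $D_{\mathrm{Sen}}$ statement by a pointwise freeness argument that descends from $\mathrm{Sp}(A\widehat{\otimes}_{\Q_p}K_m)$ to $\mathrm{Sp}(A)$ using the finiteness of the natural projection $\pi:\mathrm{Sp}(A\otimes_{\Q_p}K_m)\to\mathrm{Sp}(A)$. First I would observe that both $D_{\mathrm{dif}}^{K_m,+}(D_A)$ and $D_{\mathrm{Sen}}^{K_m}(D_A)$ are finite projective of constant rank $n$ over $A\widehat{\otimes}_{\Q_p}K_m[[t]]$ and $A\otimes_{\Q_p}K_m$ respectively, being base changes along $\iota_m$ of the rank-$n$ finite projective $\cR_{A,K}^r$-module $D_A^r$ and its reduction modulo $t$.

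For a fixed point $x\in\mathrm{Sp}(A)$, the key observation is that $k(x)\otimes_{\Q_p}K_m$ is a finite product $\prod_j k(y_j)$ of finite field extensions of $k(x)$ indexed by the finite fiber $\pi^{-1}(x)=\{y_1,\ldots,y_r\}$; since a finite projective module over such a finite product of fields is automatically free of constant rank, the fiber $D_{\mathrm{Sen}}^{K_m}(D_A)\otimes_A k(x)$ is free of rank $n$ over $k(x)\otimes_{\Q_p}K_m$. Lifting a basis to elements $\widetilde{e}_1,\ldots,\widetilde{e}_n\in D_{\mathrm{Sen}}^{K_m}(D_A)$, one obtains an $(A\otimes_{\Q_p}K_m)$-linear map $\phi:(A\otimes_{\Q_p}K_m)^n\to D_{\mathrm{Sen}}^{K_m}(D_A)$ which becomes an isomorphism after tensoring with $k(y_j)$ over $A\otimes_{\Q_p}K_m$ at every $y_j\in\pi^{-1}(x)$. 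The locus $U\subset\mathrm{Sp}(A\otimes_{\Q_p}K_m)$ where $\phi$ is an isomorphism is Zariski open (it is the nonvanishing locus of $\det\phi$ with respect to any local bases of the two rank-$n$ projective modules) and contains $\pi^{-1}(x)$; since $\pi$ is finite, hence closed, we can find an affinoid subdomain $\mathrm{Sp}(A_x)\subset\mathrm{Sp}(A)$ containing $x$ with $\pi^{-1}(\mathrm{Sp}(A_x))\subset U$, whence $D_{\mathrm{Sen}}^{K_m}(D_{A_x})$ is free of rank $n$ over $A_x\otimes_{\Q_p}K_m$. By the quasi-compactness of $\mathrm{Sp}(A)$, finitely many such $\mathrm{Sp}(A_x)$ give the required admissible covering.

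To pass from $D_{\mathrm{Sen}}^{K_m}$ to $D_{\mathrm{dif}}^{K_m,+}$ on each $\mathrm{Sp}(A_i)$, I would lift a basis of $D_{\mathrm{Sen}}^{K_m}(D_{A_i})$ to elements of $D_{\mathrm{dif}}^{K_m,+}(D_{A_i})$ and consider the resulting $A_i\widehat{\otimes}_{\Q_p}K_m[[t]]$-linear map $\Phi:(A_i\widehat{\otimes}_{\Q_p}K_m[[t]])^n\to D_{\mathrm{dif}}^{K_m,+}(D_{A_i})$. By construction $\Phi$ is an isomorphism modulo $t$; since $A_i\widehat{\otimes}_{\Q_p}K_m[[t]]$ is $t$-adically complete with $t$ in its Jacobson radical and both source and target are finite projective, topological Nakayama gives surjectivity of $\Phi$, and then splitting the resulting short exact sequence (the target is projective) and looking modulo $t$ forces the kernel to vanish by Nakayama again, so $\Phi$ is an isomorphism.

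The main technical obstacle is the descent step in the second paragraph: making precise, within the admissible topology on the affinoid rigid space, that the isomorphism locus $U$ of $\phi$ on $\mathrm{Sp}(A\otimes_{\Q_p}K_m)$ contains $\pi^{-1}(\mathrm{Sp}(A_x))$ for some honest affinoid subdomain $\mathrm{Sp}(A_x)$ of $\mathrm{Sp}(A)$ about $x$. The key input is that finite morphisms of affinoid rigid spaces are closed and that affinoid subdomains form a neighborhood basis at every point, so that the complement of the closed image $\pi(\mathrm{Sp}(A\otimes_{\Q_p}K_m)\setminus U)$ in $\mathrm{Sp}(A)$ admits an affinoid subdomain containing $x$.
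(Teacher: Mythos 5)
Your proof is correct and follows essentially the same strategy as the paper's: reduce the $D_{\mathrm{dif}}^{K_m,+}$ statement to the $D_{\mathrm{Sen}}^{K_m}$ statement by $t$-adic Nakayama, establish freeness of the fiber at a point, and propagate freeness to a Zariski-open neighborhood before invoking admissibility of Zariski covers and quasi-compactness of $\mathrm{Sp}(A)$. The only real variation is in how the Zariski-open locus is produced: the paper localizes $A$ at $\mathfrak{m}$ and kills the finitely many generators of the kernel and cokernel of a lifted map directly over $A$, whereas you work on the finite cover $\mathrm{Sp}(A\otimes_{\Q_p}K_m)$ and push the (Zariski-open) isomorphism locus of $\phi$ down along the closed map $\pi$; both devices are standard and yield the same conclusion. (One small phrasing caveat: a finite projective module over a product of fields is \emph{not} automatically free — it is free precisely when it has constant rank, which you did establish in your first paragraph; the sentence should lean on that rather than on projectivity alone.)
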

\begin{proof}
    We have $A\widehat{\otimes}_{\Q_p}K_m[[t]]=\varprojlim_k A\widehat{\otimes}_{\Q_p}(K_m[[t]]/t^k)=\varprojlim_k (A\widehat{\otimes}_{\Q_p}K_m)[[t]]/t^k=K_m\otimes_{\Q_p} A[[t]]$ (cf. \cite[Prop. 1.2.5]{emerton2017locally}) is $t$-adically complete. Thus the finite projective module $D_{\mathrm{dif}}^{K_m,+}(D_A)\widehat{\otimes}_AA_i$ over $A_i\widehat{\otimes}_{\Q_p}K_m[[t]]$ is free if and only if $D_{\mathrm{Sen}}^{K_m}(D_A)\widehat{\otimes}_AA_i$ over $A_i\otimes_{\Q_p}K_m$ is free. Therefore we only need work for $D_{\mathrm{Sen}}^{K_m}(D_A)$. \par
    Let $\fm$ be a maximal ideal of $A$ corresponding to a point $x\in\mathrm{Sp}(A)$. Since $D_x^r$ is free over $\cR_{k(x),K}^r$ (\cite[Prop. 1.1.1]{berger2008construction}) , $D_{\mathrm{Sen}}^{K_m}(D_x)$ is free over $k(x)\otimes_{\Q_p} K_m$. As $D_{\mathrm{Sen}}^{K_m}(D_A)\otimes_AA_{\fm}$ is finite projective over $A_{\fm}\otimes_{\Q_p}K_m$ and $\fm\otimes_{\Q_p} K_m$ is contained in the Jacobson radical of $A_{\fm}\otimes_{\Q_p}K_m$, $D_{\mathrm{Sen}}^{K_m}(D_A)\otimes_AA_{\fm}$ is free over $A_{\fm}\otimes_{\Q_p} K_m$ by Nakayama's lemma. We choose a morphism $(A\otimes K_m)^{n}\rightarrow D_{\mathrm{Sen}}^{K_m}(D_A)$ of $A\otimes_{\Q_p}K_m$-modules (after possibly shrinking $\Spec(A)$) such that it induces an isomorphism $(A_{\fm}\otimes_{\Q_p} K_m)^{n}\rightarrow D_{\mathrm{Sen}}^{K_m}(D_A)\otimes_AA_{\fm}$ of $A_{\fm}\otimes_{\Q_p}K_m$-modules after tensoring $A_{\fm}$. Let $C_1$ (resp. $C_2$) be the kernel (resp. cokernel) of this morphism. Then $C_1\otimes_AA_{\fm}=C_2\otimes_AA_{\fm}=0$. Since $A$ is Noetherian, we can pick a finite set $\{x_i\}$ of generators of $C_1$ and $C_2$ over $A$. For any $x_i$. there exists an element $a_i\in A$ such that $a_ix_i=0$ and $a_i\notin \fm$. Let $A_S$ be the localization of $A$ by inverting all $a_i$. Then $C_1\otimes_{A}A_{S}=C_2\otimes_{A}A_{S}=0$. Thus the morphism $(A_{S}\otimes_{\Q_p}K_m)^n\rightarrow D_{\mathrm{Sen}}^{K_m}(D_A)\otimes_{A}A_{S}$ of $A_S\otimes_{\Q_p} K_m$-modules is an isomorphism. Now since Zariski open subsets are admissible open and every Zariski covering is admissible (\cite[\S5.1 Cor. 9]{bosch2014lectures}) and $\Spec(A)$ is quasi-compact, we can find a finite covering of $\mathrm{Sp}(A)$ by affinoid subdomains $\mathrm{Sp}(A_i)$ such that $D_{\mathrm{Sen}}^{K_m}(D_A)\otimes_AA_i$ is free. 
\end{proof}
Thus from now on we assume that $D_{\mathrm{Sen}}^{K_{m(r_0)}}(D_A)$ is a free $A\otimes_{\Q_p}K_{m(r_0)}$-module. Since the action of $\Gamma_K$ on $D_{\mathrm{Sen}}^{K_{m}}(D_A)$ is continuous, there exists $m_1>m(r_0)$ such that $||M_{\gamma}-\mathrm{I}||<1$ for any $\gamma\in\Gamma_{K_{m_1}}$ where $M_{\gamma}$ denotes the matrix of $\gamma$ on $D_{\mathrm{Sen}}^{K_{m(r_0)}}(D_A)$ with coefficients in $A\otimes_{\Q_p} K_{m(r_0)}$ with respect to some basis and the operator norm $||-||$ of operators on the finite Banach $A$-module $D_{\mathrm{Sen}}^{K_{m(r_0)}}(D_A)$ is equivalent to the matrix norm given by norms of coefficients (cf. \cite[Prop. 2.2.14]{kedlaya2014cohomology}, \cite[Prop. 1.2.4]{emerton2017locally}). Then for any $\gamma\in \Gamma_{K_{m_1}}$, the Sen operator $\nabla:\log(\gamma)/\log(\epsilon(\gamma))=-\frac{1}{\log(\epsilon(\gamma))}\sum_{i=1}^{\infty}\frac{(1-\gamma)^i}{i}$ on $D_{\mathrm{Sen}}^{K_{m_1}}(D_A)$ converges and acts $A\otimes_{\Q_p}K_{m_1}$-linearly on $D_{\mathrm{Sen}}^{K_{m_1}}(D_A)$. The characteristic polynomial of the Sen operator lies in $(A\otimes_{\Q_p}K_{m_1}[T])^{\Gamma_K}=A\otimes_{\Q_p}K[T]$ (cf. \cite[Prop. 2.5]{fontaine2004arithmetique} or \cite[Def. 6.2.11]{kedlaya2014cohomology}). We take $m_0\geq m_1$ such that for any $\gamma\in \Gamma_{K_{m_0}}, ||\log(\epsilon(\gamma))\nabla||<||p||^{\frac{1}{p-1}}$. Then for any $m>m_0,\gamma\in\Gamma_{K_{m}}$, the action of the $A\otimes_{\Q_p}K_m$-linear operator $\gamma$ on $D_{\mathrm{Sen}}^{K_{m}}(D_A)=K_{m}\otimes_{K_{m(r_0)}}D_{\mathrm{Sen}}^{K_{m(r_0)}}(D_A)$ can be recovered by $\gamma=\exp\left(\log(\epsilon(\gamma))\nabla\right)=\sum_{i=0}^{\infty}\frac{\left(\log(\epsilon(\gamma))\nabla\right)^i}{i!}$. Later, we will need to take $m>m_0$ to deduce the final result on $K$. We remark that the same formula for $\gamma\in\Gamma_{K_m}$ may not hold on $D_{\text{dif}}^{K_m,+}(D_A)$ in general.\par
From now on, we assume that $A$ is moreover an $L$-algebra where $L$ is a finite extension of $\Q_p$ that splits $K$. We denote by $\Sigma$ the set of all embeddings of $K$ in $L$. For $\tau\in\Sigma$, we set $D_{\mathrm{Sen},\tau}^{K_m}(D_A):=D_{\mathrm{Sen}}^{K_m}(D_A)\otimes_{A\otimes_{\Q_p}K, 1\otimes \tau} A$ (resp. $D_{\mathrm{dif},\tau}^{K_m,+}(D_A):=D_{\mathrm{dif}}^{K_m,+}(D_A)\otimes_{A\otimes_{\Q_p}K, 1\otimes \tau} A$) which is a free $A\otimes_KK_m$-module (resp. $A\widehat{\otimes}_KK_m[[t]]$-module) and is stable under the action of $\Gamma_K$ since $\Gamma_K$ commutes with $A\otimes_{\Q_p}K$.
\subsection{Almost de Rham representations}\label{sectionalmostderhamfamilyfieldcase}
We start with the cases when $A$ is a finite $L$-algebra. We will give certain characteristic properties of almost de Rham $\BdR$-representations on the level of $D_{\mathrm{dif}}^{K_m,+}(D_A)$ (Proposition \ref{propositiondifferentialequation}) as is done for de Rham representations in \cite[Prop. 5.2.1]{berger2008familles}. \par
Recall that the ring $B_{\mathrm{pdR}}=\BdR[\log(t)]$ is equipped with a $\BdR$-derivation $\nu_{\mathrm{pdR}}(\log(t))=-1$. We pick a topological generator $\gamma_K\in\Gamma_K$ (and $\gamma_{K_m}\in\Gamma_{K_m}$). Then $\gamma_K(\log(t))=\log(t)+\log(\epsilon(\gamma_K))$. Let $\nabla$ be the operator $\log(\gamma_K^{p^s})/\log(\epsilon(\gamma_K^{p^s}))$ for $s$ large enough acting on $D_{\mathrm{dif}}^{K_m,+}(D_A)$ in the sense of \cite[Prop. 3.7]{fontaine2004arithmetique}. Then $\nabla$ acts $K_m$-linearly, $\nabla(t^a)=at^{a}$ for any $a\in \Z$ and $\nabla (ax)=a\nabla(x)+\nabla(a)x$ for any $a\in K_m((t))$. Formally let $\nabla (\log(t)^a)=a\log(t)^{a-1},\forall a\in\N$ as in \cite[\S3.8]{fontaine2004arithmetique}. The following lemma will be useful.
\begin{lemma}\label{lemmagammanilpotent}
    Let $B\in\{K_m((t)), K((t))\}$ and $W$ be a continuous semi-linear $B$-representation of $\Gamma_K$. Then $(W\otimes_B B[\log(t)])^{\nabla=0}$ can be identified with $W^{\nabla-\mathrm{nil}}$, the space of elements $x\in W$ such that $\nabla$ acts nilpotently on $x$. The identification restricts to a bijection between $(W\otimes_B B[\log(t)])^{\Gamma_K}$ and $W^{(\gamma_K-1)-\mathrm{nil}}$, the space of elements $x\in W$ such that $\gamma_K-1$ acts nilpotently on $x$. Moreover, under the identification, the action of the $B$-derivation $\nu_{\mathrm{pdR}}$ on $(W\otimes_B B[\log(t)])^{\Gamma_K}$ is the action of $\nabla$ on $W^{(\gamma_K-1)-\mathrm{nil}}$.
\end{lemma}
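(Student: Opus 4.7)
The plan is to exhibit the bijection explicitly via the constant-term map. First I would write every element of $W \otimes_B B[\log(t)]$ uniquely as $y = \sum_{i=0}^N x_i \log(t)^i/i!$ with $x_i \in W$, extending $\nabla$ $B$-linearly by $\nabla(\log(t)) = 1$. Setting $\nabla y = 0$ and comparing coefficients of $\log(t)^i$ yields the recursion $x_{i+1} = -\nabla(x_i)$, so $x_i = (-1)^i \nabla^i(x_0)$; finiteness of the expansion forces $x_0 \in W^{\nabla\text{-nil}}$, and conversely for any such $x_0$ the finite sum $\sum_i (-1)^i \nabla^i(x_0) \log(t)^i/i!$ is annihilated by $\nabla$. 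This gives the first identification. The equality $W^{\nabla\text{-nil}} = W^{(\gamma_K-1)\text{-nil}}$ then follows from the reciprocal formulas $\nabla = \log(\gamma_K^{p^s})/\log\epsilon(\gamma_K^{p^s})$ and $\gamma_K^{p^s} = \exp(\log\epsilon(\gamma_K^{p^s})\,\nabla)$: evaluated on a vector in one nilpotent subspace, each reduces to a finite sum whose leading term is a unit multiple of the other operator.

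For the $\Gamma_K$-invariant statement, I would first check that the inverse map $x_0 \mapsto \sum_i (-1)^i \nabla^i(x_0) \log(t)^i/i!$ lands in $(W \otimes_B B[\log(t)])^{\Gamma_K}$. The key computation: for $\gamma \in \Gamma_K$ with $c = \log\epsilon(\gamma)$, applying $\gamma$, expanding $(\log(t) + c)^i$ by the binomial theorem, commuting $\nabla$ past $\gamma$, and re-grouping by powers of $\log(t)$ produces $\sum_j (-1)^j \log(t)^j/j! \cdot \nabla^j\bigl(\exp(-c\nabla)\,\gamma\, x_0\bigr)$; since $\gamma = \exp(c\nabla)$ on $W^{(\gamma_K-1)\text{-nil}}$, the inner expression collapses to $x_0$ and we recover $y$. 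Conversely, given $y = \sum u_j \log(t)^j/j!$ fixed by $\Gamma_K$ of polynomial degree $N$, extracting the coefficients of $\log(t)^k$ in the equation $\gamma_K y = y$ from $k = N$ downward shows by induction that $(\gamma_K - 1)^{N-j+1} u_j = 0$, so each $u_j$ lies in $W^{(\gamma_K-1)\text{-nil}}$. Then substituting $\gamma_K u_{i+j} = \exp(c\nabla)u_{i+j}$ into the coefficient equations and matching powers of $c$ yields the identities $\sum_{j=0}^{l} \binom{l}{j} \nabla^{l-j}(u_{i+j}) = 0$ for all $l \geq 1$, from which $u_j = (-1)^j \nabla^j(u_0)$ follows by induction on $j$, establishing the bijection.

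The compatibility of $\nu_{\mathrm{pdR}}$ and $\nabla$ is then a direct verification: since $\nu_{\mathrm{pdR}}$ vanishes on $W$ and sends $\log(t)$ to $-1$, one computes $\nu_{\mathrm{pdR}}(y) = -\sum_j u_{j+1} \log(t)^j/j!$; substituting $u_{j+1} = -\nabla(u_j)$ yields $\sum_j (-1)^j \nabla^{j+1}(u_0) \log(t)^j/j!$, which is precisely the image of $\nabla(u_0)$ under the bijection. The main subtlety throughout is legitimizing the formal manipulations with $\exp(c\nabla)$ and $\log(\gamma_K^{p^s})$, but this poses no genuine analytic difficulty: restricted to any vector in $W^{(\gamma_K-1)\text{-nil}}$, both series terminate because the vector generates a finite-dimensional $\gamma_K$-stable subspace on which $\gamma_K - 1$ (equivalently $\nabla$) acts nilpotently, so all identities reduce to polynomial ones.
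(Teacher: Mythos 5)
Your proof takes essentially the same route as the paper's: expand elements as finite-degree polynomials in $\log(t)$ with $W$-coefficients, extract the recursion $u_{i+1}=-\nabla(u_i)$ from $\nabla=0$, then handle $\Gamma_K$-invariance via the binomial expansion of $(\log t+c)^i$, descending induction to get $(\gamma_K-1)$-nilpotence of the coefficients, and the formal $\exp/\log$ interchange which is legitimate because everything takes place on a finite-dimensional $\gamma_K$-stable subspace where $\gamma_K-1$ is nilpotent. One step to tighten is the phrase ``matching powers of $c$'': for a single $\gamma$ this is one vector-valued equation in the fixed scalar $c=\log\epsilon(\gamma)$, not a polynomial identity, so you should either note that it holds for every $\gamma\in\Gamma_K$ (equivalently for all $\gamma_K^{n}$, hence at infinitely many values $nc$, forcing each coefficient to vanish), or — more economically, and closer to what the paper leaves implicit — observe that a $\Gamma_K$-fixed element of $W\otimes_B B[\log(t)]$ is automatically killed by the extension of $\nabla=\log(\gamma_K^{p^s})/\log\epsilon(\gamma_K^{p^s})$ to that module, so $(W\otimes_B B[\log(t)])^{\Gamma_K}$ sits inside the $\nabla$-kernel already analyzed and $u_j=(-1)^j\nabla^j(u_0)$ comes for free, with no coefficient chase needed.
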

\begin{proof}
    Assume $\sum_{i=0}^Na_i\log(t)^i\in (W\otimes_B B[\log(t)])^{\nabla=0}$ where $a_i\in W$. Then 
    \begin{align*}
        0&=\nabla\left(\sum_{i=0}^Na_i\log(t)^i\right)\\
        &=\nabla(a_0)+\sum_{i=1}^{N}\left(\nabla(a_i)\log(t)^i+ia_i\log(t)^{i-1}\right)\\
        &=\nabla(a_N)\log(t)^N+\sum_{i=0}^{N-1}\left((i+1)a_{i+1}+\nabla(a_i)\right)\log(t)^i.    
    \end{align*} 
    Hence $a_i=(-1)^{i}\frac{1}{i!}\nabla^i(a_0)$ and $\nabla^{N+1}(a_0)=0$. The map $(W\otimes_B B[\log(t)])^{\nabla=0}\rightarrow W^{\nabla-\mathrm{nil}}:\sum_{i=0}^Na_i\log(t)^i \mapsto a_0$ is then a bijection with the inverse map $a\mapsto \sum_{i=0}^{N}\frac{(-1)^{i}}{i!}\nabla^i(a_0)\log(t)^i$ if $\nabla^{N+1}(a)=0$. \par
    Now assume $\sum_{i=0}^Na_i\log(t)^i\in (W\otimes_B B[\log(t)])^{\gamma_K=1}$ where $a_i\in W$ and $a_N\neq 0$. Then 
    \begin{align*}
        \sum_{i=0}^Na_i\log(t)^i&=\sum_{i=0}^{N}\gamma_K(a_i)\left(\log(t)+\log(\epsilon(\gamma_K))\right)^{i}\\
        &=\sum_{i=0}^N\left(\sum_{j=i}^N\binom{j}{i}\gamma_K(a_j)\log(\epsilon(\gamma_K))^{j-i}\right)\log(t)^i.   
    \end{align*}
    Thus $a_i=\sum_{j=i}^N\binom{j}{i}\log(\epsilon(\gamma_K))^{j-i}\gamma_K(a_j)$ for each $i$. We get $a_N=\gamma_K(a_N)$. We now prove $(\gamma_K-1)^{i}a_{N-i+1}=0$ by descending  induction. Assume this is true for all $i>i_0$. Then 
    \begin{align*}
        (\gamma_K-1)a_{i_0}=\gamma_K(a_{i_0})-a_{i_0}=&\gamma_K(a_{i_0})-\sum_{j=i_0}^N\binom{j}{i_0}\log(\epsilon(\gamma_K))^{j-i_0}\gamma_K(a_j)\\
        =&-\sum_{j=i_0+1}^N\binom{j}{i_0}\log(\epsilon(\gamma_K))^{j-i_0}\gamma_K(a_j).
    \end{align*} 
    By the induction hypothesis, $(\gamma_K-1)^{N-i_0}\gamma_K(a_j)=\gamma_K(\gamma_K-1)^{N-i_0}(a_j)=0$ for any $j>i_0$. Thus $(\gamma_K-1)^{N-i_0+1}a_{i_0}=0$ which finishes the induction step. Hence $\gamma_K-1$ acts nilpotently on $a_{0}$. \par
    Conversely, assume we are given $a_0$ such that $(\gamma_K-1)^{N+1}a_0=0$. Then $\nabla^{N+1}(a_0)=0$ since $\nabla=\log(\gamma_K^{p^s})/\log(\epsilon(\gamma_K^{p^s}))=\log(\gamma_K)/\log(\epsilon(\gamma_K))=-\sum_{i=1}^{N}\frac{(1-\gamma_K)^i}{i\log(\epsilon(\gamma_K))}$ on $a_0$ where $s$ is any large integer (since $\nabla$ is defined using \cite[Prop. 3.7]{fontaine2004arithmetique}, the equality should be verified modulo $t^s$ for all $s$ and notice that $N$ is independent of $s$). We now verify that $x=\sum_{i=0}^{N}\frac{(-1)^{i}}{i!}\nabla^i(a_0)\log(t)^i$ is fixed by $ \gamma_K$. Since $\gamma_K$ commutes with $\nabla$, $\gamma_K-1$ and $\nabla$ act nilpotently on each $\nabla^i(a_0)$ as well as $\log(t)^i$ and $\nabla^i(a_0)\log(t)^i$ (since $(\gamma_K-1)(a\log(t)^i)=(\gamma_K-1)(a)\log(t)^i+\sum_{j=0}^{i-1}\binom{i}{j}\log(\epsilon(\gamma_K))^{i-j}\gamma_K(a)\log(t)^j$). Thus $\gamma_K(x)=\exp(\log(\epsilon(\gamma_K))\nabla)x=\left(1+\sum_{i=1}^{\infty}\frac{\log(\epsilon(\gamma_K))^i\nabla^i}{i!}\right)x=x$ where the first two identities come from the formal calculation using $\nabla=\log(\gamma_K)/\log(\epsilon(\gamma_K))$ on $x$ and the last identity comes from that $\nabla(x)=0$ by $\nabla^{N+1}(a_0)=0$ and the computation in first step.\par
    The last assertion follows from that 
    \[\nu_{\mathrm{pdR}}\left(\sum_{i=0}^{N}\frac{(-1)^{i}}{i!}\nabla^i(x)\log(t)^i\right)=\sum_{i=0}^{N-1}\frac{(-1)^i}{i!}\nabla^{i}(\nabla(x))\log(t)^i\]
    if $\nabla^{N+1}(x)=0.$
\end{proof}
The following proposition generalizes \cite[Prop. 5.2.1]{berger2008familles} using essentially the same technique in \textit{loc. cit.} and will be applied for $m>m_0$.
\begin{proposition}\label{propositiondifferentialequation}
    Assume that $D_{\mathrm{dif}}^{K_m,+}$ is a continuous semilinear $K_m[[t]]$-representation of $\Gamma_{K_m}$ of rank $n$ and $D_{\mathrm{dif}}^{K_m}=D_{\mathrm{dif}}^{K_m,+}[\frac{1}{t}]$. Assume that each eigenvalue of $\gamma_{K_m}$ on $D_{\mathrm{Sen}}^{K_m}:=D_{\mathrm{dif}}^{K_m,+}/tD_{\mathrm{dif}}^{K_m,+}$ is equal to $\epsilon(\gamma_{K_m})^c$ for some integer $c\in [a,b]$ where $a,b\in\Z$. Then the following statements hold.
    \begin{enumerate}
        \item There exists a $\Gamma_{K_m}$-invariant $K_m[[t]]$-lattice $N$ inside $D_{\mathrm{dif}}^{K_m}$ such that $(\gamma_{K_m}-1)^lN\subset tN$ for some $l\geq 1$. And we have $t^{-a}D_{\mathrm{dif}}^{K_m,+} \subset N\subset t^{-b}D_{\mathrm{dif}}^{K_m,+}$.
        \item $D_{\mathrm{pdR}}^{K_m}:=(D_{\mathrm{dif}}^{K_m}\otimes_{K_m}K_m[\log(t)])^{\Gamma_{K_m}}$ is an $n$-dimensional $K_m$-space. 
    \end{enumerate}
    Moreover, the number $l$ can be taken to be $(b-a+1)n$.
\end{proposition}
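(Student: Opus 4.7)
We will establish~(1) by induction on $b-a$ and then deduce~(2) from~(1) using Lemma~\ref{lemmagammanilpotent}.

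For the base case $b=a$ of~(1), take $N:=t^{-a}D_{\mathrm{dif}}^{K_m,+}$: this is $\Gamma_{K_m}$-stable, satisfies the required sandwich containment trivially, and the induced $\gamma_{K_m}$-action on $N/tN\simeq D_{\mathrm{Sen}}^{K_m}$ is scaled by $\epsilon(\gamma_{K_m})^{-a}$, so every eigenvalue becomes $1$ and $(\gamma_{K_m}-1)^n N\subset tN$. For the inductive step, I will construct a $K_m[[t]]$-basis $(e^{(c)}_i)$ of $D_{\mathrm{dif}}^{K_m,+}$, indexed by $c\in[a,b]\cap\Z$ and $1\le i\le n_c:=\dim_{K_m}V_c$, such that
\[
 \gamma_{K_m}(e^{(c)}_i) = \sum_{c'\le c,\,j} G^{(c',c)}_{ji}(t)\,e^{(c')}_j,\qquad G^{(c',c)}(t)\in t^{c-c'}\,\mathrm{M}_{n_{c'}\times n_c}(K_m[[t]])\ \text{for } c'<c,
\]
with $G^{(c,c)}(0)=\epsilon(\gamma_{K_m})^c\,\mathrm{I}_{n_c}+J_c$ for some nilpotent $J_c$. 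The construction is a Hensel-style iteration: starting from a basis in which $G(0)$ is block-diagonal along the generalized eigenspaces $V_c$ (available because the scalars $\epsilon(\gamma_{K_m})^c$ are pairwise distinct in $K_m$), one applies successive corrections $B=\mathrm{I}+t^k B_k$ to kill the $t^k$-coefficient of $G^{(c',c)}$ for each $c'<c-k$. The linear obstruction at each step is the operator $X\mapsto(\epsilon(\gamma_{K_m})^{c+k}-\epsilon(\gamma_{K_m})^{c'})X$, invertible because $c'<c-k$ forces $c'\ne c+k$; the corrections converge in the $t$-adic topology. Given this basis, set $N:=\bigoplus_{c,i}K_m[[t]]\cdot t^{-c}e^{(c)}_i$. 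The divisibility condition ensures $N$ is $\gamma_{K_m}$-stable, the containments $t^{-a}D_{\mathrm{dif}}^{K_m,+}\subset N\subset t^{-b}D_{\mathrm{dif}}^{K_m,+}$ follow from $c\in[a,b]$, and in the basis $\{t^{-c}e^{(c)}_i\}$ of $N/tN$ the matrix of $\gamma_{K_m}$ is upper-triangular with diagonal blocks $\mathrm{I}+\epsilon(\gamma_{K_m})^{-c}J_c$; thus $\gamma_{K_m}-1$ is strictly upper-triangular and $(\gamma_{K_m}-1)^n N\subset tN$, well within the claimed bound $l=(b-a+1)n$.

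For~(2), Lemma~\ref{lemmagammanilpotent} with $B=K_m((t))$ and $W=D_{\mathrm{dif}}^{K_m}$ identifies $D_{\mathrm{pdR}}^{K_m}$ with $(D_{\mathrm{dif}}^{K_m})^{(\gamma_{K_m}-1)\text{-nil}}$. The $n$ vectors $t^{-c}e^{(c)}_i$ constructed in~(1) are $(\gamma_{K_m}-1)$-nilpotent (by the upper-triangularity shown above) and $K_m$-linearly independent, yielding $\dim_{K_m}D_{\mathrm{pdR}}^{K_m}\ge n$. For the matching upper bound, the rank-$n$ regular-singular connection $\nabla$ on $D_{\mathrm{dif}}^{K_m}$ (viewed as a module over $K_m((t))$) has formal solution space in $K_m((t))[\log t]$ of dimension at most $n$ over $K_m$, a classical consequence of the cyclic vector theorem and the Frobenius construction of solutions attached to each integer characteristic exponent.

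The main obstacle is the Hensel-style iteration in the inductive step of~(1). Its success hinges on the fact that $\epsilon(\gamma_{K_m})$ has infinite order in $K_m^\times$ (the cyclotomic character is injective on $\Gamma_{K_m}$ into $\Z_p^\times$), so every eigenvalue difference $\epsilon(\gamma_{K_m})^{c+k}-\epsilon(\gamma_{K_m})^{c'}$ arising in the iteration is non-zero and hence invertible; once the basis is in hand, both~(1) and~(2) reduce to routine bookkeeping.
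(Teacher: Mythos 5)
Your proof of part (1) is correct and takes a genuinely different route from the paper's. Instead of the paper's explicit construction of $N$ as the $K_m[[t]]$-span of elements $t^{-k}\prod_{i}(\gamma_{K_m}-\epsilon(\gamma_{K_m})^i)^{l}x$, you normalize a $K_m[[t]]$-basis of $D_{\mathrm{dif}}^{K_m,+}$ by a finite Hensel-type iteration so that the matrix of $\gamma_{K_m}$ becomes filtered in a way compatible with the generalized eigenspaces of $\gamma_{K_m}$ on $D_{\mathrm{Sen}}^{K_m}$. The non-resonance check $\epsilon(\gamma_{K_m})^{c+k}\ne\epsilon(\gamma_{K_m})^{c'}$ for $c'<c-k$, $k\ge 1$ is correct, and the lower-$c'$ blocks are automatically harmless for the stability of $N$. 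This gives $(\gamma_{K_m}-1)^n N\subset tN$, sharpening the paper's $l=(b-a+1)n$ to $l=n$.

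Part (2), however, has a genuine gap. You assert that the basis vectors $t^{-c}e^{(c)}_i$ of $N$ are $(\gamma_{K_m}-1)$-nilpotent ``by the upper-triangularity shown above.'' But the upper-triangularity holds only modulo $t$ — it gives $(\gamma_{K_m}-1)^n N\subset tN$, which is a statement about leading terms, not genuine nilpotency of a specific vector over $K_m$. Already in rank $1$ with $a=b=0$ the claim fails: take $\gamma_{K_m}(e)=g(t)e$ with $g(t)=1+\alpha t+\cdots$ and $\alpha\ne 0$ (such continuous cocycles exist since $\epsilon(\gamma_{K_m})\in 1+p^m\Z_p$, so $\epsilon(\gamma_{K_m})^k$ is analytic in $k\in\Z_p$). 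One computes $(\gamma_{K_m}-1)^{j}(e)=\alpha(\epsilon(\gamma_{K_m})-1)^{j-1}\,t\,e+O(t^2)\ne 0$ for every $j\ge 1$, so $e$ is not $(\gamma_{K_m}-1)$-nilpotent. The unique invariant line is spanned by $f(t)e$ for a genuinely non-constant $f\in K_m[[t]]^\times$ solving $f(t)/f(\epsilon(\gamma_{K_m})t)=g(t)$. This is precisely why the paper needs the convergent operators $\alpha_k=\prod_{i\le k}f_i(\gamma_{K_m})$: they build the necessary higher-order corrections to the naive basis vectors, and the limit $\alpha$ is a $K_m$-linear projection of $N/tN$ into the nilpotent space. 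To repair your argument you must do one of the following: (i) carry out a convergent projection as in the paper; (ii) invoke the classical formal solution theory for regular-singular connections with integer exponents for the \emph{lower} bound as well as the upper (the Frobenius construction produces exactly $n$ independent solutions in $K_m((t))[\log t]$ when all exponents are integers); or (iii) continue the Hensel iteration to reach a genuine Levelt normal form in which the matrix of $\gamma_{K_m}$ on $N$ is constant, which would make your nilpotency claim valid.
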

\begin{proof}
We firstly prove that (1) implies (2). Consider operators $\alpha_k:=\prod_{i=1}^kf_i(\gamma_{K_m})$ for $k\geq 1$ where we take polynomials $f_i(T)=(T-1)^lh_i(T)+1=(\epsilon(\gamma_{K_m})^{-i}T-1)^lg_i(T)$ for $i\geq 1$ thanks to Lemma \ref{lemmapolynomialfk} below. Since $(\gamma_{K_m}-1)^l$ is trivial on $N/tN$, $\alpha_k$ is the identity on $N/tN$. We set $\alpha_0(x)=x$. We now prove by induction that $(\gamma_{K_m}-1)^l\alpha_k(x)\in t^{k+1}N$ and $\alpha_{k}(x)-\alpha_{k+1}(x)\in t^{k+1}N$ for any $x\in N$ and $k\geq 0$. If $k=0$, the result is easy. Assume that the result is true for $k$. We have $(\frac{\gamma_{K_m}}{\epsilon(\gamma_{K_m})^{k+1}}-1)(t^{k+1}x)=t^{k+1}\gamma_{K_m}(x)-t^{k+1}x=t^{k+1}(\gamma_{K_m}-1)(x)$ for any $x\in D_{\mathrm{dif}}^{K_m}$.
Then $(\gamma_{K_m}-1)^{l}\alpha_{k+1}(x)=f_{k+1}(\gamma_{K_m})(\gamma_{K_m}-1)^{l}\alpha_k(x)=g_{k+1}(\gamma_{K_m})(\frac{\gamma_{K_m}}{\epsilon(\gamma_{K_m})^{k+1}}-1)^{l}(t^{k+1}\frac{(\gamma_{K_m}-1)^{l}\alpha_k(x)}{t^{k+1}})=g_{k+1}(\gamma_{K_m})t^{k+1}(\gamma_{K_m}-1)^{l}(\frac{(\gamma_{K_m}-1)^{l}\alpha_k(x)}{t^{k+1}})\in t^{k+2}N$ since $\gamma_{K_m}(t^{k+2}N)\subset t^{k+2}N$ and we have $(\gamma_{K_m}-1)^{l}(\frac{(\gamma_{K_m}-1)^{l}\alpha_k(x)}{t^{k+1}})\in tN$ by the induction hypothesis and the assumption. Then $\alpha_{k+1}(x)-\alpha_{k+2}(x)=h_{k+2}(\gamma_{K_m})(\gamma_{K_m}-1)^l\alpha_{k+1}(x)\in t^{k+2}N$ which finishes the induction. Hence when $k\to +\infty$, the sequence $\alpha_k(x)$ converges to an element in $N$ denoted by $\alpha(x)$. Moreover $ (\gamma_{K_m}-1)^l\alpha(x)=0$ and the $K_m$-map $\alpha:N\rightarrow N$ is the identity modulo $t$. Hence the image of $\alpha$, denoted by $N_{\mathrm{pdR}}$, has $K_m$-dimension at least $\dim N/tN=n$. Applying Lemma \ref{lemmagammanilpotent} for $\Gamma_{K_m}$, the space $N_{\mathrm{pdR}}$ can be viewed as a subspace of $D_{\mathrm{pdR}}^{K_m}$. Assume that $N'_{\mathrm{pdR}}$ is a $\Gamma_{K_m}$-invariant finite-dimensional $K_m$-subspace of $(D_{\mathrm{dif}}^{K_m})^{(\gamma_{K_m}-1)-\mathrm{nil}}$ that is stable under $\Gamma_{K_m}$ and contains $N_{\mathrm{pdR}}$. Then the $K_m[[t]]$-lattice $N'$ generated by $N'_{\mathrm{pdR}}$ has full rank $n$, stable under the action of $\Gamma_{K_m}$ and satisfies $(\gamma_{K_m}-1)^{l'}N'\subset tN'$ for some $l'\geq 1$. Repeat the previous argument of $N,l$ for $N',l'$, we find there exists a $K_m$-map $\alpha':N'\rightarrow N'$ which is the identity over $N'/tN'$ and $N_{\textrm{pdR}}'$. One can also prove that $\alpha'(tN')=0$ by showing that $\alpha'_{k}(tN')\subset t^{k+1}N'$ for any $k$ (since $f_{k+1}(\gamma_{K_m})(t^{k+1}N')=g_{k+1}(\gamma_{K_m})(\frac{\gamma_{K_m}}{\epsilon(\gamma_{K_m})^{k+1}}-1)^{l'}(t^{k+1}N')\subset t^{k+1}(\gamma_{K_m}-1)^{l'}N'\subset t^{k+2}N'$). Thus $N_{\textrm{pdR}}'=\alpha'(N_{\textrm{pdR}}')=\alpha'(N'/tN')$ has $K_m$-dimension $n$. Hence the dimension of $D_{\mathrm{pdR}}^{K_m}$ is no more than $n$ (this also follows from general theory, cf. \cite[Thm. 3.22(ii) \& Prop. 2.1]{fontaine2004arithmetique}). Thus $D_{\mathrm{pdR}}^{K_m}$ is identified with $N_{\mathrm{pdR}}$ via Lemma \ref{lemmagammanilpotent}. This finishes the proof of (2).\par
To prove (1), we have the following claim.
\begin{claim}\label{claimcharacteristicpolynomials}
    Under our assumption, there exists $l$ such that $\prod_{i=a}^{2b-a}(\gamma_{K_m}-\epsilon(\gamma_{K_m})^i)^{l}D_{\mathrm{dif}}^{K_m,+}\subset t^{b-a+1}D_{\mathrm{dif}}^{K_m,+}$. 
\end{claim}
\begin{proof}[Proof of Claim \ref{claimcharacteristicpolynomials}]
    Let $F_k(T)=\prod_{i=a+k}^{b+k}(T-\epsilon(\gamma_{K_m})^i)^n$. The characteristic polynomial of $\gamma_{K_m}$ on the $K_m$-space $t^kD_{\mathrm{dif}}^{K_m,+}/t^{k+1}D_{\mathrm{dif}}^{K_m,+}=t^kD_{\mathrm{Sen}}^{K_m,+}$ divides $F_k(T)$ for any $k\in\N$ by the assumption. In particular, $F_k(\gamma_{K_m})t^kD_{\mathrm{dif}}^{K_m,+}\subset t^{k+1}D_{\mathrm{dif}}^{K_m,+}$. Hence $F_{b-a}(\gamma_{K_m})\cdots F_0(\gamma_{K_m})D_{\mathrm{dif}}^{K_m,+}\subset t^{b-a+1}D_{\mathrm{dif}}^{K_m,+}$. We take $l=(b-a+1)n$. Then $F_{b-a}(T)\cdots F_0(T)$ divides $\prod_{i=a}^{2b-a}(T-\epsilon(\gamma_{K_m})^i)^{l}$. Hence $\prod_{i=a}^{2b-a}(\gamma_{K_m}-\epsilon(\gamma_{K_m})^i)^{l}D_{\mathrm{dif}}^{K_m,+}\subset t^{b-a+1}D_{\mathrm{dif}}^{K_m,+}$.
\end{proof}
Now we prove (1). We let $N$ be the sub-$K_m[[t]]$-lattice generated by $ t^{-k}\prod_{i\in [a,2b-a]\setminus\{k\}}(\gamma_{K_m}-\epsilon(\gamma_{K_m})^i)^{l}x$ for all $x\in D_{\mathrm{dif}}^{K_m,+}$ and $k\in[a,b]$. Then $N\subset t^{-b}D_{\mathrm{dif}}^{K_m,+}$ by definition and $\gamma_{K_m}(N)\subset N$. We have $t^aN\supset \{\prod_{i\in [a,2b-a]\setminus\{k\}}(\gamma_{K_m}-\epsilon(\gamma_{K_m})^i)^{l}x\mid x\in D_{\mathrm{dif}}^{K_m,+},k\in [a,b]\}$. The image of $\{\prod_{i\in [a,2b-a]\setminus\{k\}}(\gamma_{K_m}-\epsilon(\gamma_{K_m})^i)^{l}x\mid x\in D_{\mathrm{dif}}^{K_m,+}, k\in [a,b]\}$ in $D_{\mathrm{Sen}}^{K_m}$ is equal to $\{\prod_{i\in [a,2b-a]\setminus\{k\}}(\gamma_{K_m}-\epsilon(\gamma_{K_m})^i)^{l}x \mid x\in D_{\mathrm{Sen}}^{K_m},k\in [a,b] \}$. By our assumption the characteristic polynomial of $\gamma_{K_m}$ on $D_{\mathrm{Sen}}^{K_m}$ is of the form $\prod_{i\in [a,b]} (x-\epsilon(\gamma_{K_m})^i)^{n_i}$ for some $n_i\geq 0$. The polynomials $\prod_{i\in [a,b]\setminus \{k\}} (T-\epsilon(\gamma_{K_m})^i)^l, k\in [a,b]$ share no common zero and generate the constant polynomial $1$ by Hilbert's Nullstellensatz. Hence $\{\prod_{i\in [a,b]\setminus\{k\}}(\gamma_{K_m}-\epsilon(\gamma_{K_m})^i)^{l}x \mid x\in D_{\mathrm{Sen}}^{K_m},k\in [a,b] \}$ generates $D_{\mathrm{Sen}}^{K_m}$ as a $K_m$-space. Moreover, we have assumed that $(T-\epsilon(\gamma_{K_m})^i)$ is prime to the characteristic polynomial of $\gamma_{K_m}$ on $D_{\mathrm{Sen}}^{K_m}$ if $i\notin [a,b]$. Thus $\prod_{i\in [b+1,2b-a]} (\gamma_{K_m}-\epsilon(\gamma_{K_m})^i)^{l}$ is a bijection on $D_{\mathrm{Sen}}^{K_m}$. Hence the image of $\{\prod_{i\in [a,2b-a]\setminus\{k\}}(\gamma_{K_m}-\epsilon(\gamma_{K_m})^i)^{l}x\mid x\in D_{\mathrm{dif}}^{K_m,+},k\in [a,b]\}$ in $D_{\mathrm{Sen}}^{K_m}$ generates $D_{\mathrm{Sen}}^{K_m}$ which implies that $t^{a}N\supset D_{\mathrm{dif}}^{K_m,+}$ by Nakayama's lemma. Finally, 
\begin{align*}
    (\gamma_{K_m}-1)^{l}t^{-k}\prod_{i\in [a,2b-a]\setminus\{k\}}(\gamma_{K_m}-\epsilon(\gamma_{K_m})^i)^{l}D_{\mathrm{dif}}^{K_m,+}
    &=t^{-k}\prod_{i\in [a,2b-a]}(\gamma_{K_m}-\epsilon(\gamma_{K_m})^i)^{l}D_{\mathrm{dif}}^{K_m,+}\\
    &\subset t^{b-a+1-k}D_{\mathrm{dif}}^{K_m,+} \quad \text{(Claim \ref{claimcharacteristicpolynomials})}\\
    &\subset t^{b+1-k}N\\
    &\subset tN 
\end{align*}
for any $k\in [a,b]$. Since $(\gamma_{K_m}-1)^{l}tN\subset tN$ and $N/tN$ is generated under $K_m$ by the image of $t^{-k}\prod_{i\in [a,2b-a]\setminus\{k\}}(\gamma_{K_m}-\epsilon(\gamma_{K_m})^i)^{l}D_{\mathrm{dif}}^{K_m,+},k\in [a,b]$, we conclude that $(\gamma_{K_m}-1)^{l}N\subset tN$.
\end{proof}
\begin{lemma}\label{lemmapolynomialfk}
    For any $k\in\Z\setminus\{0\}$, there exists a polynomial $f_k(T)\in K[T]$ such that $f_k(T)=(T-1)^lh_k(T)+1$ and $f_k(T)=(\epsilon(\gamma_{K_m})^{-k}T-1)^lg_k(T)$ for some polynomials $h_k(T),g_k(T)\in K[T]$. 
\end{lemma}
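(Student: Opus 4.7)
The plan is to solve the problem via a straightforward Chinese Remainder / Bezout argument in the PID $K[T]$. The lemma asks for a polynomial $f_k(T)$ simultaneously congruent to $1$ modulo $(T-1)^l$ and to $0$ modulo $(\epsilon(\gamma_{K_m})^{-k}T-1)^l$, so the only real content is that the two ideals are comaximal.

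First I would observe that since $\gamma_{K_m}$ is a topological generator of $\Gamma_{K_m} = \mathrm{Gal}(K_\infty/K_m)$ and (for the relevant $m$) this group is torsion-free pro-$p$, the cyclotomic character is injective on $\Gamma_{K_m}$. Consequently $\epsilon(\gamma_{K_m})^{k}\neq 1$ for every $k\in\Z\setminus\{0\}$, i.e.\ $\epsilon(\gamma_{K_m})^{-k}\neq 1$. The two linear polynomials $T-1$ and $\epsilon(\gamma_{K_m})^{-k}T-1$ have their unique roots at $1$ and $\epsilon(\gamma_{K_m})^{k}$ respectively, which are distinct elements of $K$ (after possibly enlarging $K$ to contain $\epsilon(\gamma_{K_m})^k$, which already lies in $\Z_p\subset K$). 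Hence $(T-1)^l$ and $(\epsilon(\gamma_{K_m})^{-k}T-1)^l$ are coprime in the PID $K[T]$.

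Next, by Bezout's identity in $K[T]$, there exist $A(T),B(T)\in K[T]$ with
\[
A(T)(T-1)^l \;+\; B(T)\bigl(\epsilon(\gamma_{K_m})^{-k}T-1\bigr)^l \;=\; 1.
\]
I would then set
\[
f_k(T) \;:=\; B(T)\bigl(\epsilon(\gamma_{K_m})^{-k}T-1\bigr)^l \;=\; 1 - A(T)(T-1)^l,
\]
so that taking $g_k(T):=B(T)$ and $h_k(T):=-A(T)$ yields both required expressions $f_k(T)=(T-1)^l h_k(T)+1$ and $f_k(T)=(\epsilon(\gamma_{K_m})^{-k}T-1)^l g_k(T)$, as desired.

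There is no real obstacle; the only point to verify carefully is the non-vanishing $\epsilon(\gamma_{K_m})^{k}\neq 1$, which is built into the choice of $m$ (and the standing assumption $p>2$ ensuring $\Gamma_{K_m}\simeq\Z_p$ for the $m$ used in the surrounding argument). Everything else is a one-line invocation of Bezout in $K[T]$.
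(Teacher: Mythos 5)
Your proof is correct and takes essentially the same approach as the paper: both reduce the lemma to comaximality of $(T-1)^l$ and $(\epsilon(\gamma_{K_m})^{-k}T-1)^l$ in $K[T]$, justified by $\epsilon(\gamma_{K_m})^k\neq 1$ for $k\neq 0$, and then invoke Bezout (the paper phrases this as Hilbert's Nullstellensatz, but in one variable over a field these amount to the same thing).
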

\begin{proof}
    The ideal $((T-1)^l,(\epsilon(\gamma_{K_m})^{-k}T-1)^l)=(1)$ by Hilbert's Nullstellensatz since $\epsilon(\gamma_{K_m})^k\neq 1$ for $k\neq 0$. Hence we can find $h_k(T),g_k(T)\in K[T]$ such that $(\epsilon(\gamma_{K_m})^{-k}T-1)^lg_k(T)-(T-1)^lh_k(T)=1$.
\end{proof}
\begin{lemma}\label{lemmadpdRgammaKinvariant}
    Assume that $A$ is a finite extension of $L$ and all the Sen weights (the roots of the Sen polynomial) of $D_A$ are in $\Z$ and $m>m_0$. Then we have
    \begin{align*}
    \left(D_{\mathrm{dif}}^{K_{m}}(D_A)\otimes_{K_{m}}K_{m}[\log(t)]\right)^{\Gamma_K}&=D_{\pdR}(W_{\mathrm{dR}}(D_A)),\\
    \left(D_{\mathrm{dif}}^{K_{m}}(D_A)\otimes_{K_{m}}K_{m}[\log(t)]\right)^{\Gamma_{K_m}}&=K_m\otimes_KD_{\pdR}(W_{\mathrm{dR}}(D_A)).
    \end{align*}
\end{lemma}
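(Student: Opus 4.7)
The plan is to prove the second equality via a dimension count combining Berger's comparison with Proposition~\ref{propositiondifferentialequation}, and then to deduce the first from it by taking $\Gal(K_m/K)$-invariants on both sides.

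For the second equality, I would first construct a natural $\Gamma_{K_m}$-equivariant injection
\[
D_{\mathrm{dif}}^{K_m}(D_A)\otimes_{K_m}K_m[\log(t)]\hookrightarrow B_{\mathrm{pdR}}\otimes_{\BdR}W_{\mathrm{dR}}(D_A)
\]
arising from Berger's construction (cf.~\cite{berger2008construction}), which provides a $\cG_{K_m}$-equivariant embedding $D_{\mathrm{dif}}^{K_m}(D_A)\hookrightarrow W_{\mathrm{dR}}(D_A)$ compatible with $K_m((t))\hookrightarrow\BdR$, combined with the $\Gamma_{K_m}$-stable inclusion $K_m[\log(t)]\hookrightarrow B_{\mathrm{pdR}}$ (stability uses $\gamma(\log(t))=\log(t)+\log\epsilon(\gamma)\in K_m[\log(t)]$). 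Taking $\Gamma_{K_m}$-invariants, and noting that $H_{K_m}$ acts trivially on the left-hand side, embeds the LHS of the second equation inside $(B_{\mathrm{pdR}}\otimes_{\BdR}W_{\mathrm{dR}}(D_A))^{\cG_{K_m}}$, which also receives the natural injection of $K_m\otimes_K D_{\pdR}(W_{\mathrm{dR}}(D_A))$.

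Next I would count $K_m$-dimensions using Proposition~\ref{propositiondifferentialequation}. Since all Sen weights of $D_A$ are integers and $m>m_0$, the identity $\gamma_{K_m}=\exp(\log\epsilon(\gamma_{K_m})\nabla)$ holds on $D_{\mathrm{Sen}}^{K_m}(D_A)$, so every $K_m$-linear eigenvalue of $\gamma_{K_m}$ on $D_{\mathrm{Sen}}^{K_m}(D_A)$ is of the form $\epsilon(\gamma_{K_m})^c$ for some $c\in\Z$. Viewing $D_{\mathrm{dif}}^{K_m,+}(D_A)$ as a free $K_m[[t]]$-module of rank $n[A:\Q_p]$ (after forgetting the $A$-structure), Proposition~\ref{propositiondifferentialequation}(2) yields $\dim_{K_m}(D_{\mathrm{dif}}^{K_m}(D_A)\otimes_{K_m}K_m[\log(t)])^{\Gamma_{K_m}}=n[A:\Q_p]$. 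On the other hand, Fontaine's general bound gives $\dim_{K_m}(B_{\mathrm{pdR}}\otimes_{\BdR}W_{\mathrm{dR}}(D_A))^{\cG_{K_m}}\leq n[A:\Q_p]$, so the inclusion above must be an equality and $W_{\mathrm{dR}}(D_A)$ is therefore almost de Rham. This forces $K_m\otimes_K D_{\pdR}(W_{\mathrm{dR}}(D_A))$ to also attain the maximal dimension $n[A:\Q_p]$, so both embeddings into $(B_{\mathrm{pdR}}\otimes_{\BdR}W_{\mathrm{dR}}(D_A))^{\cG_{K_m}}$ are isomorphisms, establishing the second equality.

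The main technical obstacle will be justifying Berger's comparison in this family setting and carefully tracking ranks via the decomposition $A\otimes_{\Q_p}K=\prod_{\tau\in\Sigma}A$ (available since $L\supseteq K$ splits $K$ and $A$ is a finite $L$-algebra), so that Proposition~\ref{propositiondifferentialequation} can be applied to each $\tau$-summand $D_{\mathrm{dif},\tau}^{K_m,+}(D_A)$ separately and the resulting dimensions summed. Finally, the first equality follows from the second by $\Gal(K_m/K)$-descent: applying $(-)^{\Gal(K_m/K)}$ to the LHS of the second equation gives the $\Gamma_K$-invariants we want, while on the RHS $(K_m\otimes_K V)^{\Gal(K_m/K)}=V$ for any $K$-vector space $V$ such as $D_{\pdR}(W_{\mathrm{dR}}(D_A))$.
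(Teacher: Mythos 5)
Your proof is correct and achieves the same result, but it is organized differently from the paper's own argument. The paper passes through Fontaine's construction $X_f$ (the ``finite'' part of $W_{\mathrm{dR}}^+(D_A)^{H_K}$) to identify $D_{\mathrm{dif}}^{K_\infty,+}(D_A)$ with $X_f$ and hence $D_{\mathrm{dif}}^{K_\infty}(D_A)$ with $\Delta_{\mathrm{dR}}(W_{\mathrm{dR}}(D_A))$; it then cites Fontaine's theory to get that integral Sen weights make $W_{\mathrm{dR}}(D_A)$ almost de Rham, and finishes by a dimension count against Proposition~\ref{propositiondifferentialequation} together with Hilbert~90. You instead embed $D_{\mathrm{dif}}^{K_m}(D_A)\otimes_{K_m}K_m[\log(t)]$ directly into $B_{\mathrm{pdR}}\otimes_{\BdR}W_{\mathrm{dR}}(D_A)$ and sandwich $\left(D_{\mathrm{dif}}^{K_m}(D_A)\otimes_{K_m}K_m[\log(t)]\right)^{\Gamma_{K_m}}$ between the $n[A:\Q_p]$ coming from Proposition~\ref{propositiondifferentialequation} and the upper bound $\dim_{K_m}(B_{\mathrm{pdR}}\otimes_{\BdR}W_{\mathrm{dR}}(D_A))^{\cG_{K_m}}\leq n[A:\Q_p]$. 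Both rely on Berger's comparison $\BdR^+\otimes_{K_m[[t]]}D_{\mathrm{dif}}^{K_m,+}(D_A)\simrightarrow W_{\mathrm{dR}}^+(D_A)$, and both use Proposition~\ref{propositiondifferentialequation} as the engine, but you bypass the $X_f$/$\Delta_{\mathrm{dR}}$ machinery and get ``almost de Rham'' as an output of the count rather than an input; the trade-off is that you must carefully verify that the injection $D_{\mathrm{dif}}^{K_m}(D_A)\otimes_{K_m}K_m[\log(t)]\hookrightarrow B_{\mathrm{pdR}}\otimes_{\BdR}W_{\mathrm{dR}}(D_A)$ is genuinely injective (which follows since $K_m((t))[\log(t)]$ is free over $K_m((t))$ and $D_{\mathrm{dif}}^{K_m}(D_A)\hookrightarrow W_{\mathrm{dR}}(D_A)$ is injective by Berger). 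Two small points worth tightening: the passage from $\dim_{K_m}(B_{\mathrm{pdR}}\otimes_{\BdR}W_{\mathrm{dR}}(D_A))^{\cG_{K_m}}=n[A:\Q_p]$ to almost-de-Rhamness and to the isomorphism with $K_m\otimes_K D_{\pdR}$ both use a Galois-descent step over $\Gal(K_m/K)$, so you should make explicit that the $\Gal(K_m/K)$-action on $K_m\otimes_K D_{\pdR}(W_{\mathrm{dR}}(D_A))$ induced by the embedding into the period module is the standard one on the $K_m$ factor; and $D_{\mathrm{dif}}^{K_m}(D_A)$ carries a $\Gamma_{K_m}$-action, not a $\cG_{K_m}$-action, so the compatibility claim is with the restriction of the $\cG_{K_m}$-action on the image, which lands in the $H_K$-invariants.
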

\begin{proof}
    By definition, $W_{\mathrm{dR}}^+(D_A)=D_A^{r}\otimes_{\cR_{\Q_p,K}^r,\iota_m}\BdR^+$ (the $m$ here is large enough since $\varphi^{*}D_A^{r}\simeq D_A^{r/p}$, cf. \cite[Prop. 2.2.6(2)]{berger2008construction}) is a free $A\otimes_{\Q_p}\BdR^+$-module of rank $n$. The map $\BdR^+\otimes_{K_m[[t]]}D_{\mathrm{dif}}^{K_m,+}(D_A)\rightarrow W_{\mathrm{dR}}^+(D_A)$ induced by the injection $K_m[[t]]\hookrightarrow \BdR^+$ is a $\cG_K$-equivariant isomorphism and we have a similar statement for $D_{\mathrm{dif}}^{K_{\infty},+}(D_A)=K_{\infty}\widehat{\otimes}_{K_m}D_{\mathrm{dif}}^{+,K_{m}}(D_A)$. Now let $X_f$ be the $K_{\infty}[[t]]$-module associated with the $\BdR^+$-representation $W_{\mathrm{dR}}^+(D_A)$ in \cite[Thm. 3.6]{fontaine2004arithmetique}. Then $X_f$ contains $D_{\mathrm{dif}}^{K_{\infty},+}(D_A)$ by \textit{loc. cit.} (our convention on $\Gamma_K$ differs from that in \cite{fontaine2004arithmetique}, however the results in \textit{loc. cit.} apply if we firstly replace $K$ by some finite extension in $K_{\infty}$ and then descent if needed). Modulo $t$ and by definition, $X_f/t= ((W_{\mathrm{dR}}^+(D_A)/t)^{H_K})_f=((C\otimes_{K_{\infty}}D_{\mathrm{Sen}}^{K_{\infty}}(D_A))^{H_K})_f$, where $C\otimes_{K_{\infty}}D_{\mathrm{Sen}}^{K_{\infty}}(D_A)$ is a semi-linear $C$-representation of $\cG_K$ on which $\cG_K$ acts on $D_{\mathrm{Sen}}^{K_{\infty}}(D_A)$ via $\Gamma_K$ and $((C\otimes_{K_{\infty}}D_{\mathrm{Sen}}^{K_{\infty}}(D_A))^{H_K})_f$ denotes the union of all finite-dimensional $K$-subspace that is stable under $\Gamma_K$ of $C\otimes_{K_{\infty}}D_{\mathrm{Sen}}^{K_{\infty}}(D_A)$ as in \cite[Thm. 2.4]{fontaine2004arithmetique}. Thus $((C\otimes_{K_{\infty}}D_{\mathrm{Sen}}^{K_{\infty}}(D_A))^{H_K})_f$ contains $D_{\mathrm{Sen}}^{K_{\infty}}(D_A)$ and has the same $K_{\infty}$-rank with $D_{\mathrm{Sen}}^{K_{\infty}}(D_A)$. Hence $X_f/t$ is identified with $D_{\mathrm{Sen}}^{K_{\infty}}(D_A)$ in $W_{\mathrm{dR}}^+(D_A)/t$. Then the inclusion $D_{\mathrm{dif}}^{K_{\infty},+}(D_A)\hookrightarrow X_f$ is a surjection. Hence $X_f=D_{\mathrm{dif}}^{K_{\infty},+}(D_A)$ in $W_{\mathrm{dR}}^+(D_A)$ and $\Delta_{\mathrm{dR}}(W_{\mathrm{dR}}(D_A))=D_{\mathrm{dif}}^{K_{\infty}}(D_A)=D_{\mathrm{dif}}^{K_{\infty},+}(D_A)[\frac{1}{t}]$ in the notation of \cite[Thm. 3.12]{fontaine2004arithmetique}. \par
    Now by the assumption on Sen weights, the $A\otimes_{\Q_p}\BdR$-representation $W_{\mathrm{dR}}(D_A)$ is almost de Rham. We have 
    \begin{align*}
        D_{\pdR}(W_{\mathrm{dR}}(D_A))=\left(W_{\mathrm{dR}}(D_A)\otimes_{\BdR}\BdR[\log(t)]\right)^{\cG_K}=&\left(\left(W_{\mathrm{dR}}(D_A)\otimes_{\BdR}\BdR[\log(t)]\right)^{H_K}\right)^{\Gamma_K}.
    \end{align*}
    If $a\in D_{\mathrm{dif}}^{K_{\infty}}(D_A)$ such that $\nabla^N(a)=0$ for some $N$, then the sub-$K_{\infty}$-space spanned by $\nabla^i(a)$ is finite-dimensional and stable under $\nabla$. Thus $(D_{\mathrm{dif}}^{K_{\infty}}(D_A)\otimes_{K_{\infty}}K_{\infty}[\log(t)])^{\nabla=0}=D_{\mathrm{dR},\infty}(W_{\mathrm{dR}}(D_A))$ by \cite[Prop. 3.25]{fontaine2004arithmetique}. Since $W_{\mathrm{dR}}(D_A)$ is almost de Rham, $D_{\mathrm{dR},\infty}(W_{\mathrm{dR}}(D_A))=K_{\infty}\otimes_{K}D_{\pdR}(W_{\mathrm{dR}}(D_A))$ by \cite[\S3.6]{fontaine2004arithmetique}. The identification 
    \begin{equation}\label{equationKinftydpdr}
        K_{\infty}\otimes_{K}D_{\pdR}(W_{\mathrm{dR}}(D_A))=\left(D_{\mathrm{dif}}^{K_{\infty}}(D_A)\otimes_{K_{\infty}}K_{\infty}[\log(t)]\right)^{\nabla=0}
    \end{equation}
    of $K_{\infty}$-subspaces in $\left(W_{\mathrm{dR}}(D_A)\otimes_{\BdR}\BdR[\log(t)]\right)^{H_K}$ is $\Gamma_K$-equivariant. Since $m\geq m_0$, eigenvalues of $\gamma_{K_m}=\exp(\log(\epsilon(\gamma_{K_m}))\nabla)$ on $D_{\mathrm{Sen}}^{K_m}(D_A)$ are of the form $\epsilon(\gamma_{K_m})^i$ for $i\in\Z$. By Proposition \ref{propositiondifferentialequation}, the dimension of $\left(D_{\mathrm{dif}}^{K_{m}}(D_A)\otimes_{K_{m}}K_{m}[\log(t)]\right)^{\Gamma_{K_m}}$ over $K_m$ is equal to $n[A:\Q_p]$. By Hilbert 90, the dimension of the $K$-space $\left(D_{\mathrm{dif}}^{K_{m}}(D_A)\otimes_{K_{m}}K_{m}[\log(t)]\right)^{\Gamma_{K}}$ is equal to $n[A:\Q_p]$. Taking $\Gamma_K$-invariants on the two sides of (\ref{equationKinftydpdr}) and counting dimensions, we get $D_{\pdR}(W_{\mathrm{dR}}(D_A))=\left(D_{\mathrm{dif}}^{K_{\infty}}(D_A)\otimes_{K_{\infty}}K_{\infty}[\log(t)]\right)^{\Gamma_K}=\left(D_{\mathrm{dif}}^{K_{m}}(D_A)\otimes_{K_{m}}K_{m}[\log(t)]\right)^{\Gamma_K}.$    
\end{proof} 
We may consider some more general cases. Suppose that $A$ is a local Artinian $L$-algebra with residue field $L'$ finite over $L$. We assume that all the $\tau$-Sen weights of $D_{L'}$ are integers for a fixed $\tau\in\Sigma$ which means that we do not require other Sen weights to be integers. As a $(\varphi,\Gamma_K)$-module over $\cR_{L,K}$, the $\tau$-Sen weights of $D_A$ are all integers since $D_A$ is a successive extension of $D_{L'}$. Recall 
\[D_{\mathrm{pdR},\tau}(W_{\mathrm{dR}}(D_A)):={D_{\pdR}(W_{\mathrm{dR}}(D_A))\otimes_{A\otimes_{\Q_p}K, 1\otimes\tau}A}.\] 
Thus 
\begin{align*}
    D_{\mathrm{pdR},\tau}(W_{\mathrm{dR}}(D_A))=&\left(\left(D_{\mathrm{dif}}^{K_{\infty}}(D_A)\otimes_{A\otimes_{\Q_p}K, 1\otimes\tau}A\right)\otimes_{K_{\infty}}K_{\infty}[\log(t)]\right)^{\Gamma_K}\\=&\left(D_{\mathrm{dif},\tau}^{K_{\infty}}(D_A)\otimes_{K_{\infty}}K_{\infty}[\log(t)]\right)^{\Gamma_K}
\end{align*}
as both the actions of $A$ and $K$ commute with $\Gamma_K$. By Proposition \ref{propositiondifferentialequation} and Hilbert 90, we get 
\[D_{\mathrm{pdR},\tau}(W_{\mathrm{dR}}(D_A))=\left(D_{\mathrm{dif},\tau}^{K_{m}}(D_A)\otimes_{K_{m}}K_{m}[\log(t)]\right)^{\Gamma_K}\]
has $L$-rank $n\dim_{L}(A)$ if $m>m_0$. An argument of \cite[Lem. 3.1.4]{breuil2019local} shows that $D_{\mathrm{pdR},\tau}(W_{\mathrm{dR}}(D_A))$ is flat over $A$ and thus free of rank $n$ over $A$.
\subsection{A family of almost de Rham $(\varphi,\Gamma_K)$-modules}
We assume that $A$ is an affinoid algebra over $L$ and show that $D_{\mathrm{pdR}}(W_{\mathrm{dR}}(D_x)), x\in \mathrm{Sp}(A)$ form a family under certain condition. \par
We take $m>m_0$ and fix $\tau\in\Sigma$. After possibly shrinking $A$, we assume that $D_{\mathrm{dif},\tau}^{K_{m},+}(D_A)$ is a free $A\widehat{\otimes}_KK_m[[t]]$-module of rank $n$ with a continuous semilinear action of $\Gamma_K$. We assume that for any point $x\in \mathrm{Sp}(A)$, the $\tau$-Sen weights, by definition the eigenvalues of $\nabla$ on $D_{\mathrm{Sen},\tau}^{K_{m}}(D_x)$, are integers and lie in $[a,b]$ where $a,b\in\Z$ is independent of $x$. If $A_0$ is a finite-dimensional local $L$-algebra with a morphism $A\rightarrow A_0$, then $D_{\mathrm{pdR},\tau}(W_{\mathrm{dR}}(D_{A_0}))$ is a finite free $A_0$-module of rank $n$ equipped with an $A_0$-linear nilpotent operator $\nu_{A_0}$ by discussions in the end of last subsection. 
\begin{lemma}\label{lemmacharacteristicpolynomialgamma}
    If the $\tau$-Sen polynomial of $D_{\mathrm{Sen},\tau}^{K_{m}}(D_A)$ is $\prod_{i\in I}(T-a_i)^{n_i}\in \Z[T]$ where $a_i$ are integers and $a_i\neq a_j$ if $i\neq j$, then we have $\prod_{i\in I}(\gamma_{K_m}-\epsilon(\gamma_{K_m})^{a_i})^{n_i}D_{\mathrm{dif},\tau}^{K_m,+}(D_A)\subset tD_{\mathrm{dif},\tau}^{K_m,+}(D_A)$.    
\end{lemma}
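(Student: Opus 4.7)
The plan is to reduce the statement to Cayley--Hamilton for the Sen operator $\nabla$ on $D_{\mathrm{Sen},\tau}^{K_m}(D_A)$, combined with the convergent exponential identity $\gamma_{K_m}=\exp(\log(\epsilon(\gamma_{K_m}))\nabla)$ that already holds on that quotient by the choice of $m>m_0$. Since $D_{\mathrm{dif},\tau}^{K_m,+}(D_A)$ is $t$-adically separated, the desired inclusion is equivalent to the vanishing of $\prod_i(\gamma_{K_m}-\epsilon(\gamma_{K_m})^{a_i})^{n_i}$ on the quotient $D_{\mathrm{Sen},\tau}^{K_m}(D_A)=D_{\mathrm{dif},\tau}^{K_m,+}(D_A)/tD_{\mathrm{dif},\tau}^{K_m,+}(D_A)$, so I will prove exactly this.

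First, since $D_{\mathrm{Sen},\tau}^{K_m}(D_A)$ is free of rank $n$ over the commutative ring $A\otimes_KK_m$ and $\nabla$ acts $A\otimes_KK_m$-linearly with characteristic polynomial equal by definition to the $\tau$-Sen polynomial $P(T)=\prod_i(T-a_i)^{n_i}\in\Z[T]\subset(A\otimes_KK_m)[T]$, Cayley--Hamilton yields $\prod_i(\nabla-a_i)^{n_i}=0$ on $D_{\mathrm{Sen},\tau}^{K_m}(D_A)$. Next, enlarging $m_0$ if necessary in a way depending only on the finite set $\{a_i\}$, the operators $Y_i:=\log(\epsilon(\gamma_{K_m}))(\nabla-a_i)$ have operator norm strictly less than $|p|^{1/(p-1)}$ on $D_{\mathrm{Sen},\tau}^{K_m}(D_A)$, by the ultrametric inequality applied to $\log(\epsilon(\gamma_{K_m}))\nabla$ (which is small by the definition of $m_0$) and to $\log(\epsilon(\gamma_{K_m}))a_i$ (which is small for $m$ large since $a_i\in\Z$). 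Commuting $\nabla$ past the scalar $a_i$ and applying the standard identity $\exp(Y)=1+Y\,E(Y)$, where $E(Y):=\sum_{k\ge0}Y^k/(k+1)!$ converges and is invertible on the disc $|Y|<|p|^{1/(p-1)}$ because $E(0)=1$, I obtain the operator identity
\begin{equation*}
\gamma_{K_m}-\epsilon(\gamma_{K_m})^{a_i}=\epsilon(\gamma_{K_m})^{a_i}\log(\epsilon(\gamma_{K_m}))\cdot(\nabla-a_i)\cdot E(Y_i)
\end{equation*}
on $D_{\mathrm{Sen},\tau}^{K_m}(D_A)$. Since each $E(Y_i)$ is a power series in $\nabla$, all factors on the right-hand side commute and each $E(Y_i)$ is invertible.

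Taking the product over $i\in I$ with multiplicity $n_i$ of the identity above then gives
\begin{equation*}
\prod_{i\in I}(\gamma_{K_m}-\epsilon(\gamma_{K_m})^{a_i})^{n_i}=c\cdot\Bigl(\prod_{i\in I}E(Y_i)^{n_i}\Bigr)\cdot\prod_{i\in I}(\nabla-a_i)^{n_i}
\end{equation*}
for a unit scalar $c=\epsilon(\gamma_{K_m})^{\sum_in_ia_i}\log(\epsilon(\gamma_{K_m}))^{\sum_in_i}\in(A\otimes_KK_m)^{\times}$, and the right-hand side annihilates $D_{\mathrm{Sen},\tau}^{K_m}(D_A)$ by Cayley--Hamilton. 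This gives the desired inclusion. The main technical point is the norm bookkeeping that guarantees $Y_i$ lies in the disc of convergence and invertibility for $E$; this is standard non-archimedean functional calculus along the lines of \cite[Prop. 3.7]{fontaine2004arithmetique}, and the only place where $m$ may need to be taken larger than the $m_0$ fixed earlier in the section (which costs nothing in the subsequent application to Proposition \ref{propositionfamilydpdr}).
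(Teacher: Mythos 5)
Your argument is correct, and it rests on exactly the same two pillars as the paper's proof: the Cayley--Hamilton identity $\prod_i(\nabla-a_i)^{n_i}=0$ on $D_{\mathrm{Sen},\tau}^{K_m}(D_A)$, and the convergent exponential identity $\gamma_{K_m}=\exp(\log(\epsilon(\gamma_{K_m}))\nabla)$ valid because $m>m_0$. The packaging, however, is genuinely different. The paper's proof reduces to the nilpotent case by decomposing $D_{\mathrm{Sen},\tau}^{K_m}(D_A)$ into generalized eigenspaces for $\nabla$; this requires shrinking $\Spec(A)$ so that the decomposition stays projective, and then on each summand $\nabla-a_i$ is nilpotent, so $(\gamma_{K_m}-\epsilon(\gamma_{K_m})^{a_i})^{n_i}$ vanishes there. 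You instead write each $\gamma_{K_m}-\epsilon(\gamma_{K_m})^{a_i}$ globally as $u_i\cdot(\nabla-a_i)$ with $u_i=\epsilon(\gamma_{K_m})^{a_i}\log(\epsilon(\gamma_{K_m}))E(Y_i)$ an invertible power series in $\nabla$; multiplying out and invoking commutativity and Cayley--Hamilton finishes the proof in one stroke without any localization on the base. This is cleaner (it avoids both the eigenspace splitting and the ``shrinking $\Spec(A)$'' step), at the modest cost of the extra norm bookkeeping you already flag (ensuring $\|Y_i\|<\|p\|^{1/(p-1)}$ and $\exp(a_i\log\epsilon(\gamma_{K_m}))=\epsilon(\gamma_{K_m})^{a_i}$, both of which hold after enlarging $m_0$ depending only on $[a,b]$, which is harmless in the application to Proposition~\ref{propositionfamilydpdr}). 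Both arguments are valid; yours trades a localization argument for a slightly more careful functional-calculus estimate, and either form would serve the paper.
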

\begin{proof}
    By the Cayley-Hamilton theorem, $\prod_{i}(\nabla-a_i)^{n_i}=0$ on $D_{\mathrm{Sen},\tau}^{K_m}(D_A)$. Since the polynomials $(T-a_i)^{n_i},i\in I$ are prime to each other in $\Q[T]$, we can focus on each generalized eigenspace after possibly shrinking $\mathrm{Spec}(A)$ and reduce to the case when $\nabla$ is nilpotent on $D_{\mathrm{Sen},\tau}^{K_m}(D_A)$, i.e. $\nabla^n=0$ on $D_{\mathrm{Sen},\tau}^{K_m}(D_A)$. Since $m>m_0, \gamma_{K_m}=\exp\left(\log(\epsilon(\gamma_{K_m}))\nabla\right)=\sum_{i=0}^{\infty}\frac{\left(\log(\epsilon(\gamma_{K_m}))\nabla\right)^i}{i!}$ on $D_{\mathrm{Sen},\tau}^{K_m}(D_A)$. We get that $ (\gamma_{K_m}-1)^n=0$ on $D_{\mathrm{Sen},\tau}^{K_m}(D_A)$.
\end{proof}
\begin{lemma}\label{lemmagamma}
    There exists an integer $l_0$ such that $\prod_{i\in [a,b]}(\gamma_{K_m}-\epsilon(\gamma_{K_m})^{i})^{l_0}D_{\mathrm{dif},\tau}^{K_m,+}(D_A)\subset tD_{\mathrm{dif},\tau}^{K_m,+}(D_A)$.
\end{lemma}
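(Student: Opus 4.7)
The plan is to deduce the statement from Cayley--Hamilton applied to the global $\tau$-Sen polynomial, using nilpotency of the nilradical of the affinoid $A$ to handle the possibly non-reduced situation, and then to transfer the resulting identity in $\nabla$ to one in $\gamma_{K_m}$ via the exponential series.

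Let $P_{A,\tau}(T)\in A[T]$ denote the characteristic polynomial of the $A\otimes_KK_m$-linear operator $\nabla$ on the free rank-$n$ module $D_{\mathrm{Sen},\tau}^{K_m}(D_A)$, which descends to $A[T]$ (and not merely $A\otimes_KK_m[T]$) by $\Gamma_K$-invariance, as recalled before Proposition \ref{propositiondifferentialequation}. Set $Q(T):=\prod_{i\in[a,b]}(T-i)^n\in\Z[T]$. The hypothesis that every fibre has $\tau$-Sen weights integer in $[a,b]$ says that for each $x\in\Sp(A)$ the specialization $P_x(T)\in k(x)[T]$ divides $Q(T)$. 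First I carry out the monic polynomial division $Q(T)=P_{A,\tau}(T)S(T)+R(T)$ in $A[T]$ with $\deg R<n$; then $R_x(T)=0$ at every $x$, so each coefficient of $R$ lies in $\bigcap_x\fm_x=\mathrm{Nil}(A)$. Since $A$ is a Noetherian affinoid algebra, $\mathrm{Nil}(A)$ is nilpotent of some index $M$, which forces $R(T)^M=0$ as a polynomial in $A[T]$. Hence $Q(T)^M\equiv R(T)^M=0\pmod{P_{A,\tau}(T)}$, i.e.\ $P_{A,\tau}(T)\mid Q(T)^M$ in $A[T]$, and Cayley--Hamilton yields
\[Q(\nabla)^M=\prod_{i\in[a,b]}(\nabla-i)^{nM}=0\quad\text{on }D_{\mathrm{Sen},\tau}^{K_m}(D_A).\]

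The second step transfers this vanishing from $\nabla$ to $\gamma_{K_m}$. On $D_{\mathrm{Sen},\tau}^{K_m}(D_A)$ one has $\gamma_{K_m}=\exp(\log(\epsilon(\gamma_{K_m}))\nabla)$ for $m>m_0$, so for each $i\in[a,b]$,
\[\gamma_{K_m}-\epsilon(\gamma_{K_m})^i=\epsilon(\gamma_{K_m})^i\bigl(\exp(X_i)-1\bigr)=\epsilon(\gamma_{K_m})^i\log(\epsilon(\gamma_{K_m}))(\nabla-i)\,G(X_i),\]
where $X_i:=\log(\epsilon(\gamma_{K_m}))(\nabla-i)$ and $G(X):=\sum_{k\geq 0}X^k/(k+1)!$. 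From $m>m_0$ one has $\|\log(\epsilon(\gamma_{K_m}))\nabla\|<\|p\|^{1/(p-1)}$, and for each integer $i\in[a,b]$ the estimate $\|i\log(\epsilon(\gamma_{K_m}))\|\leq\|p\|^m\leq\|p\|^{1/(p-1)}$ holds, so $\|X_i\|<\|p\|^{1/(p-1)}$; the series $G(X_i)$ then converges as a bounded operator on the finite Banach $A$-module $D_{\mathrm{Sen},\tau}^{K_m}(D_A)$, and $\|G(X_i)-1\|<1$ makes $G(X_i)$ a unit by the Neumann series. Since all factors are polynomials in $\nabla$ and hence commute, multiplying the displayed identity over $i\in[a,b]$ and raising to the $nM$-th power gives
\[\prod_{i\in[a,b]}(\gamma_{K_m}-\epsilon(\gamma_{K_m})^i)^{nM}=c\cdot\prod_{i\in[a,b]}(\nabla-i)^{nM}\cdot U,\]
with $c\in L^\times$ a nonzero scalar and $U=\prod_iG(X_i)^{nM}$ a unit operator. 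The middle factor vanishes on $D_{\mathrm{Sen},\tau}^{K_m}(D_A)$ by the previous paragraph, and taking $l_0:=nM$ yields the desired inclusion $\prod_{i\in[a,b]}(\gamma_{K_m}-\epsilon(\gamma_{K_m})^i)^{l_0}D_{\mathrm{dif},\tau}^{K_m,+}(D_A)\subset tD_{\mathrm{dif},\tau}^{K_m,+}(D_A)$.

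The main subtlety is the divisibility $P_{A,\tau}\mid Q^M$ in $A[T]$ over a possibly non-reduced $A$: the Noetherian property of affinoid algebras, which yields nilpotency of the nilradical, is the essential input and is what makes a uniform $l_0$ (rather than a pointwise one) accessible. Once this is in hand, the exponential translation between $\nabla$ and $\gamma_{K_m}$ is formal and uses only the norm estimates collected before Proposition \ref{propositiondifferentialequation}.
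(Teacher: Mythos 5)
Your proof is correct, but it is organized along a genuinely different route from the paper's. The paper proceeds in two stages: Lemma \ref{lemmacharacteristicpolynomialgamma} handles the case where the $\tau$-Sen polynomial has integer coefficients (as it does over the reduced quotient $A/J$, connected component by component), by a Cayley--Hamilton argument that reduces via a CRT/shrinking-of-$\Spec(A)$ step to a single nilpotent eigenvalue before invoking $\gamma_{K_m}=\exp(\log(\epsilon(\gamma_{K_m}))\nabla)$; Lemma \ref{lemmagamma} then lifts from $A/J$ to $A$ by raising to the nilpotency index of $J$. You avoid the passage to $A/J$ and the eigenspace reduction entirely: you divide $Q=\prod_{i\in[a,b]}(T-i)^n$ by the global characteristic polynomial $P_{A,\tau}$ in $A[T]$, use the pointwise divisibility $P_x\mid Q$ together with the Jacobson property of affinoid algebras to put the remainder's coefficients in $\mathrm{Nil}(A)$, raise to the nilpotency index $M$ to get $P_{A,\tau}\mid Q^M$, and then apply Cayley--Hamilton directly over $A$. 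Your translation from $\nabla$ to $\gamma_{K_m}$ is also sharper: the explicit factorization $\gamma_{K_m}-\epsilon(\gamma_{K_m})^i=\epsilon(\gamma_{K_m})^i\log(\epsilon(\gamma_{K_m}))\,(\nabla-i)\,G(X_i)$ with $G(X_i)$ a unit converts the $\nabla$-identity to the $\gamma_{K_m}$-identity in one stroke, where the paper instead reduces to a single eigenvalue to state the exponential identity. Both routes hinge on the same two ingredients (nilpotency of $\mathrm{Nil}(A)$ and the convergent exponential relating $\gamma_{K_m}$ to $\nabla$ for $m>m_0$), but yours trades off the intermediate lemma and the connected-component/primary decomposition bookkeeping for a division-with-remainder argument, and produces a concrete constant $l_0=nM$. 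The only minor gloss is the inequality $\|p\|^m\leq\|p\|^{1/(p-1)}$ in the norm estimate for $\log(\epsilon(\gamma_{K_m}))$, which is fine here since $p>2$ and $m\geq 1$, though strictly speaking the exact valuation of $\log(\epsilon(\gamma_{K_m}))$ depends on the choice of generator; what one actually needs and has is $|i\log(\epsilon(\gamma_{K_m}))|\leq|\log(\epsilon(\gamma_{K_m}))|<\|p\|^{1/(p-1)}$ for $m>m_0$.
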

\begin{proof}
    Let $J$ be the nilradical of $A$. We have $D_{\mathrm{dif},\tau}^{K_m,+}(D_{A/J})=D_{\mathrm{dif},\tau}^{K_m,+}(D_{A})/JD_{\mathrm{dif},\tau}^{K_m,+}(D_{A})$. The $(\varphi,\Gamma_K)$-module $D_{A/J}$ satisfies the condition in Lemma \ref{lemmacharacteristicpolynomialgamma} at least on each connected component of $\Spec(A/J)$. Thus there exists an integer $l'$ such that \[\prod_{i\in [a,b]}(\gamma_{K_m}-\epsilon(\gamma_{K_m})^{i})^{l'}D_{\mathrm{dif},\tau}^{K_m,+}(D_A)\subset tD_{\mathrm{dif},\tau}^{K_m,+}(D_A)+JD_{\mathrm{dif},\tau}^{K_m,+}(D_A).\] 
    Since $A$ is Noetherian, $J$ is finitely generated and there is an integer $N$ such that $J^N=0$. Then we can take $l_0=Nl'$.
\end{proof}
\begin{lemma}\label{lemmainvertiblepolynomial}
    Let $A$ be a Noetherian ring over $\Q_p$ and $J$ be its nilradical. If $P(T)\in A[T]$ is a polynomial such that the image of $P(T)$ in $A/J[T]$ is in $\Q_p[T]$ and has no factor $(T-1)$ in $\Q_p[T]$, then for any $l\geq 1$, there exist $G_1(T),G_2(T)\in A[T]$ such that $1=G_1(T)P(T)+G_2(T)(T-1)^l $ in $A[T]$.
\end{lemma}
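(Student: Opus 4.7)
The statement amounts to showing that $P(T)$ and $(T-1)^l$ generate the unit ideal in $A[T]$, i.e.\ that they are comaximal. My plan is to prove this by reducing modulo the nilradical $J$, solving the problem over $\Q_p$, lifting the solution to $A[T]$, and finally correcting the lift by a unit that exists thanks to the nilpotence of $J$.

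First, I would observe that the image $\bar P(T) \in (A/J)[T]$ lies in $\Q_p[T]$ by hypothesis. The condition that $\bar P(T)$ has no factor $(T-1)$ in $\Q_p[T]$ means, since $T-1$ is a monic irreducible in the UFD $\Q_p[T]$, that $\bar P(1)\neq 0$ in $\Q_p$. Hence $\bar P(T)$ and $(T-1)^l$ are coprime in the PID $\Q_p[T]$, so by Bezout there exist $g_1(T),g_2(T)\in\Q_p[T]$ with
\[ g_1(T)\bar P(T) + g_2(T)(T-1)^l = 1 \quad\text{in } \Q_p[T]. \]

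Since $A$ is a $\Q_p$-algebra, I can view $g_1,g_2$ as elements of $A[T]$. Then
\[ g_1(T)P(T)+g_2(T)(T-1)^l = 1 + n(T), \]
where $n(T)\in J[T]$ because after reducing modulo $J$ the right-hand side becomes $1$ by construction. The key point now is that because $A$ is Noetherian, $J$ is finitely generated, and as each generator is nilpotent there exists $N$ with $J^N=0$. It follows that $(J[T])^N\subseteq J^N[T]=0$, so $n(T)$ is nilpotent in $A[T]$, and $1+n(T)$ is a unit with explicit inverse $u(T):=\sum_{k=0}^{N-1}(-n(T))^k\in A[T]$.

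Finally, setting $G_1(T):=u(T)g_1(T)$ and $G_2(T):=u(T)g_2(T)$ in $A[T]$, I obtain
\[ G_1(T)P(T)+G_2(T)(T-1)^l = u(T)(1+n(T)) = 1, \]
which is the desired identity. The only non-trivial ingredient is the use of Noetherianness to promote $J=\sqrt{0}$ to a nilpotent ideal; once this is in hand the proof is a clean Bezout-plus-Nakayama argument, and I do not anticipate any serious obstacle.
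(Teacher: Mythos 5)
Your proof is correct and is essentially identical to the one in the paper: reduce modulo the nilradical, apply Bezout in $\Q_p[T]$ since $\bar P(1)\neq 0$, lift the Bezout coefficients to $A[T]$ so that the output is $1$ plus an element of $J[T]$, use Noetherianness to get $J^N=0$ and hence invertibility of that perturbation of $1$, and scale. (Minor quibble: your closing phrase ``Bezout-plus-Nakayama'' is a slip of the tongue; no Nakayama is involved, only the nilpotence of $J$.)
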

\begin{proof}
    Denote by $\overline{P}(T)$ the image of $P(T)$ in $\Q_p(T)\subset A/J[T]$. By  Hilbert's Nullstellensatz, there exist $G_1'(T),G_2'(T)\in \Q_p[T]$ such that $1=G_1'(T)\overline{P}(T)+G_2'(T)(T-1)^l$. Thus $G_1'(T)P(T)+G_2'(T)(T-1)^l=1-H(T)$ where $H(T)\in J[T]$. There exists $N$ such that $H(T)^N=0$. Hence $1-H(T)$ is invertible in $A[T]$. We let $G_i(T)=G_i'(T)(1-H(T))^{-1}$ for $i=1,2$.
\end{proof}
The proof of the following proposition follows that of \cite[Thm. 5.3.2]{berger2008familles}.
\begin{proposition}\label{propositionfamilydpdr}
    Assume that for every $x\in\Sp(A)$, all the roots of the $\tau$-Sen polynomial of $D_x$ are in $\Z\cap [a,b]$. Then there exists a finite projective $A$-module $D_{\mathrm{pdR},\tau}(W_{\mathrm{dR}}(D_A))$ of rank $n$ equipped with a nilpotent $A$-linear operator $\nu_A$ such that there is a natural isomorphism of pairs
    \[\left(D_{\mathrm{pdR},\tau}(W_{\mathrm{dR}}(D_A))\otimes_AA_0,\nu_A\otimes_AA_0\right)\simeq \left(D_{\mathrm{pdR},\tau}(W_{\mathrm{dR}}(D_{A_0})),\nu_{A_0}\right)\] 
    for any finite-dimensional local $L$-algebra and map $\Sp(A_0)\rightarrow \Sp(A)$.
\end{proposition}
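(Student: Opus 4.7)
\emph{Proof proposal.} The plan is to generalize the method of Berger--Colmez \cite[Thm. 5.3.2]{berger2008familles}, transporting the construction in the proof of Proposition~\ref{propositiondifferentialequation} to the family setting. After replacing $\Sp(A)$ by the members of the admissible cover of Lemma~\ref{lemmalocallyfreeddiff}, we may assume that $D_{\mathrm{dif},\tau}^{K_m,+}(D_A)$ is free over $A\widehat{\otimes}_K K_m[[t]]=(A\otimes_K K_m)[[t]]$ of rank $n$ for some $m>m_0$. Lemma~\ref{lemmagamma} furnishes a uniform integer $l_0$ such that $\prod_{i\in[a,b]}(\gamma_{K_m}-\epsilon(\gamma_{K_m})^i)^{l_0}$ raises the $t$-adic valuation by one on $D_{\mathrm{dif},\tau}^{K_m,+}(D_A)$. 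Setting $l:=l_0(b-a+1)n$ and mimicking the proof of Proposition~\ref{propositiondifferentialequation}(1), I would define the $\Gamma_K$-stable sub-$(A\otimes_K K_m)[[t]]$-lattice $N_A$ of $D_{\mathrm{dif},\tau}^{K_m}(D_A)$ generated by $t^{-k}\prod_{i\in[a,2b-a]\setminus\{k\}}(\gamma_{K_m}-\epsilon(\gamma_{K_m})^i)^{l_0}x$ for $x\in D_{\mathrm{dif},\tau}^{K_m,+}(D_A)$ and $k\in[a,b]$. The arguments of \textit{loc.~cit.} carry over; the only novelty is that $\prod_{i\in[b+1,2b-a]}(\gamma_{K_m}-\epsilon(\gamma_{K_m})^i)^{l_0}$ is invertible on the finite projective $A\otimes_K K_m$-module $D_{\mathrm{Sen},\tau}^{K_m}(D_A)$ because it is pointwise invertible (so its determinant is a nowhere-vanishing, hence unit, function on $\Sp(A)$). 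This yields $t^{-a}D_{\mathrm{dif},\tau}^{K_m,+}(D_A)\subset N_A\subset t^{-b}D_{\mathrm{dif},\tau}^{K_m,+}(D_A)$ and $(\gamma_{K_m}-1)^l N_A\subset tN_A$.

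With $N_A$ constructed, I would take the polynomials $f_i(T)\in K[T]$ of Lemma~\ref{lemmapolynomialfk} and the operators $\alpha_k:=\prod_{i=1}^k f_i(\gamma_{K_m})$, and show by the same induction as in the proof of Proposition~\ref{propositiondifferentialequation}(2) that $(\gamma_{K_m}-1)^l\alpha_k(x)\in t^{k+1}N_A$ and $\alpha_k(x)-\alpha_{k+1}(x)\in t^{k+1}N_A$ for all $x\in N_A$, together with $\alpha_k(tN_A)\subset t^{k+1}N_A$. Since $N_A$ is sandwiched between two finite free $(A\otimes_K K_m)[[t]]$-modules and $(A\otimes_K K_m)[[t]]$ is Noetherian, $N_A$ is finitely generated and $t$-adically separated and complete; therefore the limit $\alpha:=\lim_k\alpha_k$ defines a $\Gamma_K$-equivariant $A$-linear endomorphism of $N_A$ such that $\alpha\equiv\mathrm{id}\pmod t$, $(\gamma_{K_m}-1)^l\alpha=0$, $\alpha(tN_A)=0$, and hence $\alpha^2=\alpha$ (each $f_i(\gamma_{K_m})$ acts as the identity on the image of $\alpha$). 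The projection $N_A\twoheadrightarrow N_A/tN_A$ restricts to an isomorphism $M_{K_m}:=\alpha(N_A)\simrightarrow N_A/tN_A$ of $A\otimes_K K_m$-modules, showing that $M_{K_m}$ is finitely generated over $A\otimes_K K_m$. The operator $\nabla=\log(\gamma_{K_m})/\log\epsilon(\gamma_{K_m})$ is a finite sum on $M_{K_m}$ and hence a well-defined nilpotent $A$-linear endomorphism. I then set
\[
D_{\mathrm{pdR},\tau}(W_{\mathrm{dR}}(D_A)):=M_{K_m}^{\Gamma_K/\Gamma_{K_m}},
\]
with $\nu_A$ induced by $\nabla$.

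For the base change assertion, let $A\to A_0$ be as in the statement. Then $A_0$ is a finite $A$-module, the completed tensor product defining $D_{\mathrm{dif},\tau}^{K_m,+}$ reduces to an ordinary tensor product on base change, and the whole construction (lattice, operators $\alpha_k$, limit $\alpha$) is functorial because it uses only polynomials in $\gamma_{K_m}$ with coefficients in $K$. Hence $M_{K_m}\otimes_A A_0=\alpha(N_{A_0})$. Applied at the residue fields $A_0=k(x)$, combined with Proposition~\ref{propositiondifferentialequation}(2), this shows that $M_{K_m}$ has constant fiber dimension $n[K_m:K]$ over $A$ (equivalently, constant fiber rank $n$ over $A\otimes_K K_m$); after further shrinking, this forces $M_{K_m}$ to be finite locally free of rank $n$ over $A\otimes_K K_m$. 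Galois descent along $A\otimes_K K_m/A$ then yields that $D_{\mathrm{pdR},\tau}(W_{\mathrm{dR}}(D_A))$ is finite projective of rank $n$ over $A$, and the base change identity $D_{\mathrm{pdR},\tau}(W_{\mathrm{dR}}(D_A))\otimes_A A_0\cong D_{\mathrm{pdR},\tau}(W_{\mathrm{dR}}(D_{A_0}))$ follows via Lemma~\ref{lemmadpdRgammaKinvariant} and its Artinian extension at the end of \S\ref{sectionalmostderhamfamilyfieldcase}. The hardest step will be verifying in detail that the lattice construction of Proposition~\ref{propositiondifferentialequation}(1) carries over to the family setting: while Lemma~\ref{lemmagamma} provides the essential uniform estimate, one must systematically replace each pointwise Nakayama or Nullstellensatz step by its $A\otimes_K K_m$-linear analogue, which is feasible because all the polynomial identities in play already live in $K[T]$.
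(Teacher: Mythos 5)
Your overall plan is aligned with the paper's — generalize Berger--Colmez's limit-of-polynomial-operators construction to produce a $\Gamma_K$-equivariant projector onto the $(\gamma_{K_m}-1)$-nilpotent part — but you take a genuinely different route through the construction, and the difference creates a gap at the crucial projectivity step.

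\textbf{Comparison of routes.} You first build the lattice $N_A\subset t^{-b}D_{\mathrm{dif},\tau}^{K_m,+}(D_A)$, mimicking part (1) of Proposition~\ref{propositiondifferentialequation}, and then define $\alpha=\lim_k\alpha_k$ on $N_A$, where $\alpha_k=\prod_{i=1}^kf_i(\gamma_{K_m})$; since $(\gamma_{K_m}-1)^lN_A\subset tN_A$, the limit converges and $\alpha$ is an idempotent with image $M_{K_m}\simeq N_A/tN_A$. The paper, by contrast, \emph{never constructs $N_A$}: it defines $\beta_k = \bigl(\prod_{j=b-a+1}^{b-a+k}f_j(\gamma_{K_m})\bigr)\prod_{i=a-b,i\neq 0}^{b-a}\bigl(\tfrac{\gamma_{K_m}-\epsilon(\gamma_{K_m})^i}{1-\epsilon(\gamma_{K_m})^i}\bigr)^{l}$ directly on $t^{-b}D_{\mathrm{dif},\tau}^{K_m,+}(D_A)$. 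The extra normalization factor is precisely what makes $\beta_k$ converge on the \emph{whole} of $t^{-b}D^+$ (not just on a sub-lattice), so that $\beta$ factors through the finite free $A\otimes_KK_m$-module $t^{-b}D^+/t^{1-a}D^+$ and the composite $M_A^{K_m}\hookrightarrow t^{-b}D^+/t^{1-a}D^+\stackrel{\beta}{\to}M_A^{K_m}$ is an automorphism of $M_A^{K_m}$. This immediately exhibits $M_A^{K_m}$ as a direct summand of a finite free module, hence finite projective, for \emph{arbitrary} affinoid $A$.

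\textbf{The gap.} You get an injection $M_{K_m}\hookrightarrow t^{-b}D^+/t^{1-a}D^+$ but, because your $\alpha$ is only defined on $N_A$ and there is no a priori convergence of $\alpha_k$ outside $N_A$, you do not produce a retraction $t^{-b}D^+/t^{1-a}D^+\to M_{K_m}$. You instead infer projectivity by combining the (valid) computation that the fiber of $M_{K_m}$ at each closed point has constant rank $n$ with the slogan ``constant fiber rank implies locally free.'' That inference requires $A$ to be reduced: for a non-reduced Noetherian (or affinoid) ring a finitely generated module with constant fiber rank on $\mathrm{Sp}(A)$ need not be projective (e.g.\ $A=L[\varepsilon]/(\varepsilon^2)$, $M=A/(\varepsilon)$). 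The proposition is stated and applied for general affinoid $A$ — in particular the base-change clause to local Artinian $A_0$ already forces you to control the module over infinitesimal thickenings — so this is not a cosmetic hypothesis. A secondary, smaller concern: exhibiting $M_{K_m}$ as a direct summand of $N_A$ via $\alpha^2=\alpha$ does not by itself give projectivity either, because $N_A$ — a $\Gamma_K$-stable finitely generated $(A\otimes_KK_m)[[t]]$-submodule sandwiched between two free modules — has no reason to be projective when $A$ is not a field (unlike the $K_m[[t]]$-lattice in the pointwise case).

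\textbf{How to repair it.} The fix is essentially to forgo $N_A$ and adopt the paper's normalization: define $\beta$ on $t^{-b}D_{\mathrm{dif},\tau}^{K_m,+}(D_A)$ directly, check convergence from Lemma~\ref{lemmagamma} (with $l=(b-a+1)l_0$ — note your exponent $l_0(b-a+1)n$ carries an extra, superfluous $n$), use Lemma~\ref{lemmainvertiblepolynomial} to show $M_A^{K_m}\cap t^{1-a}D^+=0$, and then read off projectivity from the direct-summand factorization $M_A^{K_m}\hookrightarrow t^{-b}D^+/t^{1-a}D^+\to M_A^{K_m}$; the rest of your argument (the rank computation via Proposition~\ref{propositiondifferentialequation}(2) at points, functoriality in $A_0$, Galois descent from $K_m$ to $K$ via \cite[Prop.~2.2.1]{berger2008familles}, and the identification with $D_{\mathrm{pdR},\tau}$ via Lemma~\ref{lemmadpdRgammaKinvariant}) then goes through.
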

\begin{proof}
    For $k\geq 1$, let 
    \[\beta_k=\left(\prod_{j=b-a+1}^{b-a+k}f_j(\gamma_{K_m})\right)\prod_{i=a-b,i\neq 0}^{b-a}\left(\frac{\gamma_{K_m}-\epsilon(\gamma_{K_m})^i}{1-\epsilon(\gamma_{K_m})^i}\right)^{l}\]
    where $f_j(T)$ is chosen by Lemma \ref{lemmapolynomialfk} and $l=(b-a+1)l_0$ is determined by Lemma \ref{lemmagamma}. Since $m>m_0$, by Lemma \ref{lemmagamma} and the argument for Claim \ref{claimcharacteristicpolynomials}, 
    \[\prod_{i=a-b}^{b-a+k}(\gamma_{K_m}-\epsilon(\gamma_{K_m})^i)^{l}t^{-b}D_{\mathrm{dif},\tau}^{K_{m},+}(D_A)\subset t^{1+k-a}D_{\mathrm{dif},\tau}^{K_{m},+}(D_A)\]
    for $k\geq 0$. Since $(T-\epsilon(\gamma_{K_m})^{b-a+i})^{l}\mid f_{b-a+i}(T)$ for any $i\geq 1$, we get that if $x\in t^{-b}D_{\mathrm{dif},\tau}^{K_{m},+}(D_A)$, then $(\gamma_{K_m}-1)^l\beta_k(x) \in t^{k+1-a}D_{\mathrm{dif},\tau}^{K_{m},+}(D_A)$. Hence if $x\in t^{-b}D_{\mathrm{dif},\tau}^{K_{m},+}(D_A)$, $\beta_{k}(x)-\beta_{k+1}(x)\in t^{k+1-a}D_{\mathrm{dif},\tau}^{K_{m},+}(D_A)$ by the condition in Lemma \ref{lemmapolynomialfk}. Thus $\beta:=\varinjlim_{k\to+\infty}\beta_k$ defines an $A\otimes_KK_m$-linear map $t^{-b}D_{\mathrm{dif},\tau}^{K_{m},+}(D_A)\rightarrow t^{-b}D_{\mathrm{dif},\tau}^{K_{m},+}(D_A)$. Let $M_A^{K_m}$ be the image of $\beta$. Then $M_A^{K_m}$ is stable under the action of $\Gamma_K$ and $(\gamma_{K_m}-1)^lM_A^{K_m}=0$. Since the map $\left(\prod_{i=a-b,i\neq 0}^{b-a}\frac{\gamma_{K_m}-\epsilon(\gamma_{K_m})^i}{1-\epsilon(\gamma_{K_m})^i}\right)^{l}$ is an isomorphism on $M_A^{K_m}$ and $f_{b-a+k}(\gamma_{K_m})=(\gamma_{K_m}-1)^{l}h_{b-a+k}(\gamma_{K_m})+1$ is the identity on $M_A^{K_m}$ for $k\geq 1$, $\beta$ induces an automorphism of $M_A^{K_m}$. The image of the characteristic polynomial of $\gamma_{K_m}=\exp\left(\log(\epsilon(\gamma_{K_m}))\nabla\right)$ on $t^{-k}D_{\mathrm{dif},\tau}^{K_{m},+}(D_A)/t^{-k+1}D_{\mathrm{dif},\tau}^{K_{m},+}(D_A)$ in $A^{\mathrm{red}}[T]$ is prime to $(T-1)^l$ if $k\notin [a,b]$ (the roots have the form $\epsilon(\gamma_{K_m})^{i-k}$ for some $i\in [a,b]$). Thus by Lemma \ref{lemmainvertiblepolynomial}, we know there exists no non-zero $(\gamma_{K_m}-1)$-nilpotent element in $t^{-k}D_{\mathrm{dif},\tau}^{K_{m},+}(D_A)/t^{-k+1}D_{\mathrm{dif},\tau}^{K_{m},+}(D_A)$ for any $k\notin [a,b]$. Hence $M_A^{K_m}\cap t^{1-a}D_{\mathrm{dif},\tau}^{K_{m},+}(D_A)=\{0 \}$. 
    We get that the natural $A\otimes_KK_m$-module morphism $M_A^{K_m}\rightarrow t^{-b}D_{\mathrm{dif},\tau}^{K_{m},+}(D_A)/t^{-a+1}D_{\mathrm{dif},\tau}^{K_{m},+}(D_A)$ is an injection. The decomposition 
    \[\beta:M_{A}^{K_m}\hookrightarrow t^{-b}D_{\mathrm{dif},\tau}^{K_{m},+}(D_A)/t^{-a+1}D_{\mathrm{dif},\tau}^{K_{m},+}(D_A)\stackrel{\beta}{\rightarrow}M_A^{K_m}\] 
    implies that $M_A^{K_m}$ is a finite projective $A\otimes_KK_m$-module as a direct summand of a finite free $A\otimes_KK_m$-module. For any finite-dimensional local Artinian $L$-algebra $A_0$ with $\Sp(A_0)\rightarrow\Sp(A)$, $\beta$ also acts on $t^{-b}D_{\mathrm{dif},\tau}^{K_{m},+}(D_{A_0})=t^{-b}D_{\mathrm{dif},\tau}^{K_{m},+}(D_{A})\otimes_AA_0$ by extending the scalars since $D_{\mathrm{dif},\tau}^{K_m,+}(D_{A_0})=D_{\mathrm{dif},\tau}^{K_m,+}(D_A)\otimes_AA_0$ satisfies the same result as $A$ for the same $l_0$ in Lemma \ref{lemmagamma}. 
    We have that $\beta$ is also an automorphism on $M_{A_0}^{K_m}$ which is defined to be the $(\gamma_{K_m}-1)$-nilpotent elements in $D_{\mathrm{dif},\tau}^{K_{m}}(D_{A_0})$ (which is contained in $t^{-b}D_{\mathrm{dif},\tau}^{K_{m},+}(D_{A_0})$ by Proposition \ref{propositiondifferentialequation}). The image of $\beta$ on $t^{-b}D_{\mathrm{dif},\tau}^{K_{m}}(D_{A_0})$ is $M_A^{K_m}\otimes_AA_0$. Hence $M_A^{K_m}\otimes_AA_0$ contains and thus is equal to $M_{A_0}^{K_m}$ since $M_A^{K_m}\otimes_AA_0$ is $(\gamma_{K_m}-1)$-nilpotent. \par
    The $A\otimes_KK_m$-linear action of $\nabla=\log(\gamma_{K_m})/\log(\epsilon(\gamma_{K_m}))$ on $M_A^{K_m}$ is defined since $\gamma_{K_m}-1$ is nilpotent on $M_A^{K_m}$. We now set $D_{\mathrm{pdR},\tau}^{K_m}(W_{\mathrm{dR}}(D_A)):= \left(M_A^{K_m}\otimes_{K_m}K_m[\log(t)]\right)^{\Gamma_{K_m}}$ which is equipped with an $A\otimes_KK_m$-linear nilpotent operator $\nu_A^{K_m}$ via $\nu_{\mathrm{pdR}}$ in the usual way. By the same method in the proof of Lemma \ref{lemmagammanilpotent}, there is an isomorphism \[\left(D_{\mathrm{pdR},\tau}^{K_m}(W_{\mathrm{dR}}(D_A)), \nu_{A}^{K_m}\right)\simeq \left(M_A^{K_m},\nabla\right)\]
    of $A\otimes_K K_m$-modules with nilpotent operators which is \emph{not} compatible with the action of $\Gamma_K/\Gamma_{K_m}$. For each $A_0$ as before, via the identification in Lemma \ref{lemmagammanilpotent}, $M_{A_0}^{K_m}$ is identified with 
    \[\left(M_{A_0}^{K_m}\otimes_{K_m}K_m[\log(t)]\right)^{\Gamma_{K_m}}=\left(D_{\mathrm{dif},\tau}^{K_m}(D_{A_0})\otimes_{K_m}K_m[\log(t)]\right)^{\Gamma_{K_m}}=D_{\mathrm{pdR},\tau}(W_{\mathrm{dR}}(D_{A_0}))\otimes_KK_m\] 
    by Lemma \ref{lemmadpdRgammaKinvariant} since $m>m_0$. Hence
    \[(D_{\mathrm{pdR},\tau}^{K_m}(W_{\mathrm{dR}}(D_A))\otimes_A A_0,\nu_{A}^{K_m}\otimes_A A_0)\simeq \left(D_{\mathrm{pdR},\tau}(W_{\mathrm{dR}}(D_{A_0}))\otimes_KK_m, \nu_{A_0}\otimes_K K_m\right)\] 
    which is compatible with the action of $\Gamma_K$.\par
    Now the result follows from the descent in \cite[Prop. 2.2.1]{berger2008familles} by setting 
    \[\left(D_{\mathrm{pdR},\tau}(W_{\mathrm{dR}}(D_A)),\nu_A\right):=\left(D_{\mathrm{pdR},\tau}^{K_m}(W_{\mathrm{dR}}(D_A))^{\Gamma_K/\Gamma_{K_m}}, \nu_{A}^{K_m}|_{D_{\mathrm{pdR},\tau}(W_{\mathrm{dR}}(D_A))}\right)\]
    which is also isomorphic to $\left((M_A^{K_m})^{(\gamma_K-1)-\mathrm{nil}}, \nabla\right)$ by the same arguments in Lemma \ref{lemmagammanilpotent}.
\end{proof}
%\newpage\input{Notes.tex}
%\input{Rigid.tex}
%\newpage\input{Resolution.tex}
\bibliography{bibfile.bib} 
\bibliographystyle{plain}
\end{document}